  \DeclareRobustCommand{\SkipTocEntry}[5]{}
  \newcommand{\extrivialconfnet}{Eg.~1.3}
  \newcommand{\lemirrelevanceofpoints}{Lem.~1.4}
  \newcommand{\lemopencoverofcirclesector}{Lem.~1.9}
  \newcommand{\cornoncanonicalvacuum}{Cor.~1.16}
  \newcommand{\defnoncanonicalvacuum}{Def.~1.17}
  \newcommand{\propHaagduality}{Prop.~1.18}
  \newcommand{\warnpoorformalproperties}{Warn.~1.22}
  \newcommand{\propAoastCBindependentofHandK}{Prop.~1.23}     
  \newcommand{\lemextendstoACB}{Lem.~1.24}   
  \newcommand{\corcalaKacts}{Cor.~1.29}
  \newcommand{\corvacuumvacuumvacuum}{Cor.~1.34}
  \newcommand{\subseccentraldecomposition}{\S1.4}
  \newcommand{\lemjisimplemented}{Lem.~2.5}
  \newcommand{\lemLconformalimplementaion}{Lem.~2.7} 
  \newcommand{\defconformalcircle}{Def.~2.12}
  \newcommand{\thmVaccumSector}{Thm.~2.13}
  \newcommand{\corConformalversionofvacuumvacuumvacuum}{Cor.~2.20}
  \newcommand{\secHilbertspaceforannulus}{\S3.2}
  \newcommand{\thmKLMallirreduciblesectorsarefinite}{Thm.~3.14}
  \newcommand{\thmKLM}{Thm.~3.23}     
  \newcommand{\lemequivdefofcontinuityaxiom}{Lem~4.6} 
  \newcommand{\secExtendtomanifolds}{Sec.~1.A}
  \newcommand{\thmextendAtohatA}{Thm.~1.3}
  \newcommand{\thmcomputebfBnet}{Thm.~1.20}
  \newcommand{\eqdualityequations}{(4.2)}
  \newcommand{\eqdualitynormalization}{(4.3)}
  \newcommand{\lemLemmaB}{Lem.~4.10}
  \newcommand{\eqpropertiesofmatrixofstatdim}{Eq.~(5.14)}
  \newcommand{\leminddim}{Lem.~5.16}
  \newcommand{\corindexofcommutants}{Cor.~5.17}
  \newcommand{\propstatdimsubalg}{Prop.~5.18}
  \newcommand{\propfunctorialityofLiso}{Prop.~6.22}
  \newcommand{\lemAvBAvBfinite}{Lem.~7.18}
  \newcommand{\corbeforefinal}{Cor.~7.26}
  \newcommand{\corfinal}{Cor.~7.27}
       \newcommand{\cala}{\mathcal{A}}
       \newcommand{\calb}{\mathcal{B}}
  \newcommand{\IC}{\mathbb{C}}     \newcommand{\calc}{\mathcal{C}}
  \newcommand{\ID}{\mathbb{D}}     \newcommand{\cald}{\mathcal{D}}
  \newcommand{\IN}{\mathbb{N}}
  \newcommand{\IR}{\mathbb{R}}
  \newcommand{\bfB}{{\mathbf B}}
  \newcounter{commentcounter}
  \definecolor{AHcolor}{rgb}{0.5,0.0,0.5}   % Textcolor for AB
  \definecolor{CDcolor}{rgb}{0.7,0.0,0.3}   % Textcolor for CD
  \definecolor{ABcolor}{rgb}{0.2,0.8,0.2}   % Textcolor for AH
  \newcommand{\tikzmath}[2][]
     {\vcenter{\hbox{\begin{tikzpicture}[#1]#2
                     \end{tikzpicture}}}
     }
  \newcommand{\textscale}{.03}
  \newcommand{\planscale}{.04}
  \newcommand{\squarescale}{.07}
  \newcommand{\displscale}{.05}
  \newcommand{\displscalesmall}{.03}
  \definecolor{vacuumcolor}{gray}{.55}
  \definecolor{spacecolor}{gray}{.85}
  \theoremstyle{plain}
  \newtheorem{theorem}{Theorem}[chapter]
  \newtheorem{maintheorem}[theorem]{Main Theorem}
  \newtheorem{lemma}[theorem]{Lemma}
  \newtheorem{corollary}[theorem]{Corollary}
  \newtheorem{proposition}[theorem]{Proposition}
  \newtheorem*{theorem*}{Theorem}
  \newtheorem{introthm}{Theorem}
  \theoremstyle{definition}
  \newtheorem{definition}[theorem]{Definition}
  \newtheorem{correction}[theorem]{Correction}
  \theoremstyle{remark}
  \newtheorem{notation}[theorem]{Notation}
  \newtheorem{remark}[theorem]{Remark}
  \newtheorem*{remark*}{Remark}
  \newtheorem{warning}[theorem]{Warning}
  \newtheorem{example}[theorem]{Example}
\let\c@equation=\c@theorem\makeatother
  \newenvironment{numberlist}
     {\begin{list}{}%
      {%
       \setlength{\leftmargin}{\labelwidth+\labelsep}%
      }%
     }%
     {\end{list}}
  \DeclareMathOperator{\Ad}{Ad}
  \DeclareMathOperator{\Aut}{Aut}
  \DeclareMathOperator{\colim}{colim}
  \DeclareMathOperator{\Conf}{Conf}
  \DeclareMathOperator{\Diff}{Diff}
  \DeclareMathOperator{\id}{id}
  \DeclareMathOperator{\U}{U}
  \DeclareMathOperator{\supp}{supp}
  \newcommand{\INT}{{\mathsf{INT}}}
  \newcommand{\VN}{{\mathsf{VN}}}
  \newcommand{\CN}{{\mathsf{CN}}}
  \newcommand{\modules}[1]{{#1\text{-}\mathsf{modules}}}
  \def\halfbullet{{\tikz[scale = 0.045]{\useasboundingbox(0,-.3) circle(1);\draw[line width = 0.1] (0,0) circle(1); \filldraw[line width = 0] (84:1) -- (-84:1) arc (-84:84:1);}}}
\def\tworarrow{\hspace{.1cm}{\setlength{\unitlength}{.50mm}\linethickness{.09mm}
	\begin{picture}(8,8)(0,0)\qbezier(0,4)(4,7)(8,4)\qbezier(0,1)(4,-2)(8,1)\qbezier(3.5,4)(3.5,3)(3.5,1.5)
	\qbezier(4.5,4)(4.5,3)(4.5,1.5)\qbezier(4,0.8)(4.5,1.7)(5.5,2)\qbezier(4,0.8)(3.5,1.7)(2.5,2)
	\qbezier(8,1)(7.4,.2)(7.7,-.7)\qbezier(8,1)(7,1)(6.5,1.5)\qbezier(8,4)(7.4,4.8)(7.7,5.7)
	\qbezier(8,4)(7,4)(6.5,3.5)\end{picture}\hspace{.1cm}}}
\def\urc{{{}_{\scriptstyle\urcorner}}}
\def\ulc{{{}_{\scriptstyle\ulcorner}}}
\def\llc{{{}^{\scriptstyle\llcorner}}}
\def\lrc{{{}^{\scriptstyle\lrcorner}}}
  \newcommand{\alg}{{\mathit{alg}}}
  \newcommand{\op}{{\mathit{op}}}
  \newcommand{\x}{{\times}}
  \newcommand{\ox}{{\otimes}}
  \newcommand{\barox}{{\bar{\ox}}}
  \newcommand{\dd}{{\partial}}
  \newcommand{\e}{{\varepsilon}}
\renewcommand{\thesection}{{\sc\alph{section}}}
\newcommand{\ignore}[1]{}
\newcommand\blfootnote[1]{%
  \begingroup
  \renewcommand\thefootnote{}\footnote{#1}%
  \addtocounter{footnote}{-1}%
  \endgroup
}
\begin{document}

\frontmatter

        \title{Fusion of defects}       
       \author{Arthur Bartels}
%      \address{Westf\"alische Wilhelms-Universit\"at M\"unster\\
%               Mathematisches Institut\\
%               Einsteinstr.~62,
%               D-48149 M\"unster, Deutschland}
      \address{Westf\"alische Wilhelms-Universit\"at M\"unster}
        \email{bartelsa@wwu.de}
%      \urladdr{http://www.math.uni-muenster.de/u/bartelsa}
       \author{Christopher L. Douglas} 
      \address{University of Oxford}
%      \address{Mathematical Institute\\ Radcliffe Observatory Quarter\\ Woodstock Road\\ Oxford\\ OX2 6GG\\ United Kingdom}
        \email{cdouglas@maths.ox.ac.uk}
%      \urladdr{http://people.maths.ox.ac.uk/cdouglas}
       \author{Andr{\'e} Henriques}
      \address{Universiteit Utrecht}
%      \address{Mathematisch Instituut\\
%               Universiteit Utrecht, Postbus 80.010\\
%               3508 TA Utrecht, The Netherlands}
        \email{a.g.henriques@uu.nl}
%      \urladdr{http://www.staff.science.uu.nl/\!\raisebox{-1mm}{~}\!henri105}  
      
%      \date{June 29, 2016}

\subjclass[2010]{81T05, 46L37, 46M05 (Primary), 81T40, 46L60, 81R10 (Secondary)}
\keywords{Conformal net, defect, sector, fusion, conformal field theory, soliton, vacuum, conformal embedding, Q-system, von Neumann algebra, Connes fusion}
%!%!% \dedicatory{Dedication text (use \\[2pt] for line break if necessary)}

\maketitle

\tableofcontents

	\begin{abstract}
	 Conformal nets provide a mathematical model for conformal field theory.
We define a notion of defect between conformal nets, formalizing the idea of an interaction between two conformal field theories.
We introduce an operation of fusion of defects, and prove that the fusion of two defects is again a defect, provided the fusion occurs over a conformal net of finite index.
There is a notion of sector (or bimodule) between two defects, and operations of horizontal and vertical fusion of such sectors.
Our most difficult technical result is that the horizontal fusion of the vacuum sectors of two defects is isomorphic to the vacuum sector of the fused defect.
Equipped with this isomorphism, we construct the basic interchange isomorphism between the horizontal fusion of two vertical fusions and the vertical fusion of two horizontal fusions of sectors.\vspace*{-20pt}\blfootnote{This work first appeared as ``Conformal nets III: Fusion of defects" at arXiv:1310:8263.}
	\end{abstract}

%\vspace*{-30pt}
%\maketitle
%\tableofcontents

\chapter*{Acknowledgments}
%\addtocontents{toc}{\SkipTocEntry}
  We are grateful for the guidance and support of Michael Hopkins, Michael M\"uger, Stephan Stolz, and Peter Teichner.  AB was supported by the Sonderforschungsbereich 878, and CD was partially supported by a Miller Research Fellowship and by EPSRC grant EP/K015478/1.
  CD and AH benefited from the hospitality of MSRI (itself supported by NSF grant 0932078000) during its spring 2014 algebraic topology program.

\mainmatter

\chapter*{Introduction}

%\ignore{

  There are various different mathematical notions of field theories.
  For many of these there is also a notion of defects that 
  formalizes  interactions between different field theories.
  See for example~\cite{
     Fuchs-Schweigert-Valentino(Bicats-bound-cond-surface-defects),
     Kapustin-Saulina(Suface-ops-in-3dTFT-2dRCFT),
     Quella-Runkel-Watts(Reflection-and-transmission),
     Schweigert-Fuchs-Runkel(ICM)} and references therein.
  Depending on the context, sometimes the 
  terminology `surface operator' or `domain wall'
  is used in place of `defect'.
  Often field theories are described as functors from
  a bordism category whose objects are $(d-1)$-manifolds and
  morphisms are $d$-dimensional bordisms (usually with additional
  geometric structure) to a category of vector spaces.
  Defects allow the extension
  of such functors to a larger bordism category, where the manifolds 
  may be equipped with codimension-1 submanifolds that split the manifolds 
  into regions labeled by  field theories. 
  The codimension-1 submanifold itself is labeled by
  a defect between the field theories labeling the 
  neighboring regions.
    
  In this book we give a definition of defects for conformal nets.
  Conformal nets are often viewed as a particular model for conformal 
  field theory.\footnote{Note, however, that the precise relation of conformal nets to conformal 
    field theory in the sense of Segal~\cite{Segal(Def-CFT)} 
    is not at present clear.}
  Our main result is that under suitable finiteness assumptions
  there is a composition for defects that we call \emph{fusion}.
  We also extend the notion of representations of conformal nets, also known as sectors, to the context of defects.
  Sectors between defects are a simultaneous generalization of the notion of representations of conformal nets, and of bimodules between von Neumann algebras.
  Ultimately, this will lead to a 3-category whose objects
  are conformal nets, whose $1$-morphisms are defects, whose
  $2$-morphisms are sectors, and whose $3$-morphisms are intertwiners
  between sectors.
  The lengthy details of the construction of this $3$-category are
  postponed to~\cite{BDH(3-category)}, but the key ingredients of
  this $3$-category will all be presented here.
  In~\cite{BDH(3-category)} we will use the language of internal 
  bicategories developed in~\cite{Douglas-Henriques(Internal-bicategories)},
  but we expect that the results of the present book also provide all 
  the essential ingredients to construct a $3$-category of conformal nets,
  defects, sectors, and intertwiners in any other sufficiently weak
  model of $3$-categories.

\section{Conformal nets}
%\addtocontents{toc}{\SkipTocEntry}
  Conformal nets grew out of algebraic quantum field theory
  and have been intensively studied; see for 
  example~\cite{Buchholz-Mack-Todorov(1988current-alg), Gabbiani-Froehlich(OperatorAlg-CFT), Kawahigashi-Longo(2004classification),
     Wassermann(Operator-algebras-and-conformal-field-theory), Wassermann(ICM)}.
  In this book we will use our (non-standard) coordinate-free 
  definition of conformal nets~\cite{BDH(nets)}.
  A conformal net in this sense is a functor
  \begin{equation*}
    \cala \colon \INT \to \VN
  \end{equation*}
  from the category of compact oriented intervals 
  to the category of von Neumann algebras, subject to a number of axioms.
  The precise definition and properties of conformal nets
  are recalled in Appendix~\ref{app:nets}. 
  In contrast to the standard definition, in our coordinate-free 
  definition there is no need to fix a vacuum Hilbert space at the 
  outset---this feature will be useful in developing our definition of defects.
  Nevertheless, the vacuum Hilbert space can be reconstructed 
  from the functor $\cala$.
  The main ingredient for this reconstruction is  
  Haagerup's standard form $L^2(A)$, a bimodule that is canonically associated
  to any von Neumann algebra $A$.
  This standard form and various facts about von Neumann algebras that are used throughout this
  book are reviewed in Appendix~\ref{app:vN-algebras}.
  
\section{Defects}
%\addtocontents{toc}{\SkipTocEntry}
  To define defects we introduce the category $\INT_{\circ\bullet}$
  of bicolored intervals.
  Its objects are intervals $I$ that are equipped with a covering by two 
  subintervals $I_\circ$ and $I_\bullet$.
  If $I$ is not completely white ($I_\circ = I$, $I_\bullet = \emptyset$)
  or black ($I_\bullet = I$, $I_\circ = \emptyset$) then we require 
  that the white and the black subintervals meet in exactly one point and
  we also require the choice of a local coordinate around this point. 
  For conformal nets $\cala$ and $\calb$, a defect between them is
  a functor
  \begin{equation*}
    D \colon \INT_{\circ \bullet} \to \VN
  \end{equation*}
  such that $D$ coincides with $\cala$ on white intervals and with
  $\calb$ on black intervals and satisfies various axioms similar to those of conformal nets.
  Often we write ${}_\cala D_\calb$ to indicate that $D$ is a defect from $\cala$ to $\calb$, also called an
  $\cala$-$\calb$-defect.
  A defect from the trivial net to itself is simply a von 
  Neumann algebra (Proposition \ref{prop: CCdefects == VNalg}),
  so our notion of defect is a generalization of the 
  notion of von Neumann algebra.
  The precise definition and some basic properties of defects 
  are given in Chapter~\ref{sec:defects}. 
  
  Certain defects have already appeared in disguise in the conformal nets literature, through the notion of `solitons'
  \cite{Bockenhauer-Evans(Modular-invariants-graphs-and-alpha-induction-for-nets-of-subfactors-I),
  Kawahigashi(Generalized-Longo-Rehren-subfactors-and-alpha-induction),
  Longo-Rehren(Nets-of-subfactors), Longo-Xu(dichotomy)}.  
  % \AB{I will need some explanations about solitons.}
For a conformal net $\cala$ defined on subintervals $I$ of the real line (half-infinite intervals allowed), an endomorphism of the $C^*$-algebra $\cala_\IR := \colim \cala([a,b])$ is called a \emph{soliton} if it is localized in a half-line, that is, if it acts as the identity on elements in the image of the complementary half-line.  We bicolor any subinterval $I=[a,\infty]\cup [-\infty,b]$ of the projective line by $I_\circ=[a,\infty]$ and $I_\bullet=[-\infty,b]$.  Given a soliton $\sigma$, we can consider the von Neumann algebra $D(I):=\sigma \cala([a,\infty])\vee \sigma \cala([-\infty,b])$ generated by $\sigma\cala([a,\infty])$ and $\sigma\cala([-\infty,b])$ acting on the vacuum sector.  We believe that, under certain conditions, this construction associates an $\cala$-$\cala$-defect to a soliton.  The exact relationship between solitons and defects is, however, not yet clear.

\section{Sectors}
%\addtocontents{toc}{\SkipTocEntry}
  We will use the boundary of the square $S^1 := \dd [0,1]^2$ as our
  standard model for the circle. 
  (The unit speed parametrization gives this circle a canonical smooth structure.) 
  We equip the circle with the bicoloring defined by
  $S^1_\circ = S^1 \cap [0,\frac{1}{2}] \x [0,1]$ and
  $S^1_\bullet = S^1 \cap [\frac{1}{2},1] \x [0,1]$.
  Let $D$ and $E$ be $\cala$-$\calb$-defects.
  A $D$-$E$-sector is a Hilbert space equipped with
  compatible actions of the algebras 
  $D(I)$, for subintervals $I \subset S^1$ with $(\frac{1}{2},0) \notin I$,
  and $E(I)$, for subintervals
  $I \subset S^1$ with $(\frac{1}{2},1) \notin I$.
  Pictorially we  draw a $D$-$E$-sector as follows:
  \[
   \tikzmath[scale=\squarescale]
      {\fill[spacecolor](0,0) rectangle(12,12);
         \draw (6,0) -- (0,0) -- (0,12) -- (6,12);
         \draw[ultra thick](6,12) -- (12,12) -- (12,0) -- (6,0); 
         \draw (-3,6) node {$\cala$}(15,6) node {$\calb$}(6,15) 
             node {$D$}(6,-3) node {$E$}(6,6)node {$H$};
      } \; .  %tikzmath
  \]
  The thin line $\tikzmath[scale=\textscale]{\draw (0,0) -- (10,0);}$
  should be thought of as white and stands for the 
  conformal net $\cala$ and the thick line 
  $\tikzmath[scale=\textscale]{\draw[ultra thick] (0,0) -- (10,0);}$ 
  should be thought of as black 
  and stands for $\calb$.
  Usually we simplify the picture further by dropping the letters.
  The precise definition and some basic properties of sectors 
  are given in Chapter~\ref{sec:sectors}.

\section{The vacuum sector of a defect}
%\addtocontents{toc}{\SkipTocEntry}
  For any defect $D$ we can evaluate $D$ on the top half
  $S^1_\top := \tikzmath[scale=\textscale]
        {\draw (0,6) -- (0,12) -- (6,12);
         \draw[ultra thick] (6,12) -- (12,12) -- (12,6); }$
  of the circle.
  Applying the $L^2$ functor we obtain the Hilbert space 
  $H_0(S^1,D) := L^2(D(S^1_\top))$ and, as a consequence of the 
  vacuum sector axiom in the definition of defects, this Hilbert space is
  a sector for $D$, called the vacuum sector for $D$.
  In our $3$-category, the vacuum sector is the identity $2$-morphism for
  the $1$-morphism $D$. 
  We often draw it as
  \begin{equation*}
    \tikzmath[scale=\squarescale]{
       \fill[vacuumcolor](0,0) rectangle(12,12);
       \draw (6,0) -- (0,0) -- (0,12) -- (6,12);
       \draw[ultra thick](6,12) -- (12,12) -- (12,0) -- (6,0); 
       \draw (-3,6) node {$\cala$}(15,6) node {$\calb$}(6,15) 
            node {$D$}(6,-3) node {$D$} (6,6) node {$H_0$};
       } \; . %tikzmath
  \end{equation*}
  This darker shading is reserved for vacuum sectors.

\section{Composition of defects}
%\addtocontents{toc}{\SkipTocEntry}
  Let $D = {}_\cala D_\calb$ and $E = {}_\calb E_\calc$ be defects.
  Their composition or fusion $D \circledast_\calb E$ is defined 
  in Section~\ref{subsec:Composition of defects}.
  The definition is quite natural, but, surprisingly, it is
  not easy to see that $D \circledast_\calb E$ satisfies 
  all the axioms of defects.
  
  We outline the definition of the fusion $D \circledast_\calb E$.
  In our graphical notation, double lines 
  $\tikzmath[scale=\textscale]{\draw[thick, double] (0,0) -- (10,0);}$ 
  will now correspond to $\cala$,
  thin lines $\tikzmath[scale=\textscale]{\draw (0,0) -- (10,0);}$
  to $\calb$, and thick lines 
  $\tikzmath[scale=\textscale]{\draw[ultra thick] (0,0) -- (10,0);}$ 
  to $\calc$.
  Let us concentrate on the evaluation of $D \circledast_\calb E$ 
  on  $S^1_\top$.
  Denote by $S^1_+ := \dd([1,2] \x [0,1])$
  the translate of the standard circle and by $S^1_{+,\top}$ its top half.
  As with the vacuum sector $H_0(S^1,D) = L^2(D(S^1_\top))$ for $D$ on 
  $S^1$, we can form the vacuum sector $H_0(S^1_+,E) := L^2(E(S^1_{+,\top}))$
  for $E$ on $S^1_+$.
  Let $I = \{1\} \x [0,1]$ 
  be the intersection of the two circles $S^1$ and
  $S^1_+$, equipped with the orientation inherited from $S^1_+$.
  The Hilbert space $H_0(S^1,D)$ is a right $\calb(I)$-module, while
  $H_0(S^1_+,E)$ is a left $\calb(I)$-module.
  Consequently, we can form their Connes fusion 
  $H_0(S^1,D) \boxtimes_{\calb(I)} H_0(S^1_+,E)$, drawn as
  \begin{equation} \label{eq:H_0-H_0-intro}
    \tikzmath[scale=\squarescale]
  {   \fill[vacuumcolor]  (0,0) rectangle  (24,12);
      \draw[thick, double]  (6,0) -- (0,0) -- (0,12) -- (6,12);
      \draw (6,0) -- (18,0) (12,0) -- (12,12) 
            (6,12) -- (18,12); 
      \draw[ultra thick]  (18,12) -- (24,12) -- (24,0) -- (18,0);
       \draw (-3,6) node {$\scriptstyle\cala$} (27,6) node {$\scriptstyle\calc$}
             (6,14.5)  node {$\scriptstyle D$} (6,-2.5) node {$\scriptstyle D$} 
             (6,6) node {$H_0$}  (18,6) node {$H_0$}
             (18,14.5)  node {$\scriptstyle E$} (18,-2.5) node {$\scriptstyle E$}
             (12,14.5)  node {$\scriptstyle \calb$} (12,-2.5) node {$\scriptstyle \calb$};  
  }%tikzmath
  \; .
  \end{equation}
  In this picture the middle vertical  line $\tikzmath[scale=\textscale]
        {\draw (12,12) -- (12,0); }$ corresponds to $I$.
  By the axioms for defects the actions of 
  $D( \tikzmath[scale=\textscale]
        {\draw[thick, double] (0,6) -- (0,12) -- (6,12); 
         \draw (6,12) -- (12,12); })$
  and of $\calb(I)$ on $H_0(S^1,D)$ commute.
  Consequently, we obtain an action of 
  $D( \tikzmath[scale=\textscale]
        {\draw[thick, double] (0,6) -- (0,12) -- (6,12); 
         \draw (6,12) -- (12,12); })$
  on the Connes fusion~\eqref{eq:H_0-H_0-intro}. 
  Similarly, there is an action of 
  $E( \tikzmath[scale=\textscale]
        { \draw (12,12) -- (18,12);
          \draw[ultra thick] (18,12) -- (24,12) -- (24,6);} )$
  on~\eqref{eq:H_0-H_0-intro}. 
  Now $(D \circledast_\calb E)(S^1_\top)$ is defined to be the von Neumann
  algebra generated by 
  $D( \tikzmath[scale=\textscale]
        {\draw [thick, double] (0,6) -- (0,12) -- (6,12); 
         \draw (6,12) -- (12,12); })$
  and 
  $E( \tikzmath[scale=\textscale]
        { \draw (12,12) -- (18,12);
          \draw[ultra thick] (18,12) -- (24,12) -- (24,6);} )$
  acting on the Hilbert space~\eqref{eq:H_0-H_0-intro}. 
  A similar construction, using the local coordinate, is used to
  define the evaluation of $D \circledast_\calb E$ on arbitrary 
  bicolored intervals.
  
A main result of this book is that under the assumption that the intermediate conformal net $\calb$ has finite index (see Appendix~\ref{subsec:finite-nets}), the fusion of defects is in fact a defect:
\begin{introthm}[Existence of fusion of defects]
The fusion $D \circledast_\calb E$ of two defects
is again a defect.
\end{introthm}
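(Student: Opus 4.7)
The plan is to go through the axioms defining a defect (as laid out in Section~\ref{sec:defects}) one by one and to verify each for $D \circledast_\calb E$. On purely white intervals the fused functor is $\cala$ by construction, and on purely black ones it is $\calc$, so there the axioms reduce to the axioms for $\cala$ and for $\calc$. The real work is on genuinely bicolored intervals $I$: for such an interval, one uses the local coordinate at the color-change point to glue in a standard neighborhood on either side and to form, in direct analogy with~\eqref{eq:H_0-H_0-intro} but localized near $I$, the Connes fusion over $\calb$ of the two $L^2$-spaces coming from $D$ and $E$; the algebra $(D \circledast_\calb E)(I)$ is then the von Neumann algebra on this fused Hilbert space generated by the evident subalgebras coming from $D$ and from $E$.

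The purely algebraic axioms can then be handled as follows. Isotony for inclusions $I \subset J$ of bicolored intervals (preserving the colored point and the coordinate) is inherited from isotony in $D$ and $E$, since the generating subalgebras for $J$ contain those for $I$ and act on the same Hilbert space. Locality on disjoint subintervals splits into three cases --- both white, both black, or one of each --- of which the first two reduce to locality in $D$ or $E$ individually, and the third follows from the commuting square built into the construction: the $D$-action and the $E$-action on the Connes fusion commute because they come from opposite sides of a $\calb$-balanced tensor product (this is compatible with \propnetsandfiberproduct). Strong additivity is inherited from that of $D$ and $E$ together with the compatibility of Connes fusion with subalgebra generation, and diffeomorphism covariance and the continuity axiom (in the form of \lemequivdefofcontinuityaxiom) are checked similarly by transport of structure from $D$ and $E$ through $\calb$.

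The principal obstacle, and the place where the finite-index hypothesis on $\calb$ must actually be used, is the vacuum sector axiom. One has to produce a Hilbert space for the fused defect that implements Haag duality and carries a cyclic separating vacuum vector. The natural candidate is the Connes fusion $H_0(S^1,D) \boxtimes_{\calb(I)} H_0(S^1_+,E)$ of the vacuum sectors of $D$ and $E$ shown in~\eqref{eq:H_0-H_0-intro}, and the hard step is to identify this with $L^2((D \circledast_\calb E)(S^1_\top))$; that identification is precisely the horizontal-fusion-of-vacua statement singled out as the most difficult technical result in the abstract. Finite index of $\calb$ enters here in at least two ways: via \propAoastCBindependentofHandK\ it controls the relative commutants of the $D$- and $E$-subalgebras inside the fused picture, and via the index-theoretic content of \subseccentraldecomposition\ it ensures that the generated algebra acts in standard form on the Connes fusion with the expected cyclic separating vector. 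I expect this vacuum-identification step to be the genuine content of the theorem, with the remaining axioms following essentially formally once it is in hand.
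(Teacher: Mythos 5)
Your treatment of isotony, locality, and strong additivity matches the paper's in substance (though note that inner covariance and continuity are \emph{consequences} of the defect axioms in this paper, not axioms to be checked), and you correctly locate the crux in the vacuum sector axiom and the finite-index hypothesis there. But your plan for that crux has a real gap. First, the vacuum sector axiom does not ask for a standard-form/Haag-duality statement; it asks only that the action of $\cala(J)\otimes_{\alg}\cala(\bar J)$ on $L^2\big((D\circledast_\calb E)(I)\big)$ extend to $\cala(J\cup\bar J)$. For that it suffices to produce an \emph{equivariant injective map} from this $L^2$-space into \emph{some} module on which $\cala(J\cup\bar J)$ visibly acts --- no isomorphism is needed. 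Second, the isomorphism you propose to rely on (identifying $H_0(D)\boxtimes_{\calb(I)}H_0(E)$ with $L^2$ of the fused algebra, i.e.\ the $1\boxtimes 1$-isomorphism) is not available at this stage of the paper's development: the proof that that map is an isomorphism uses Haag duality for the composition, which is itself established \emph{after} (and partly by means of) the existence of the fused defect's embedding. Invoking it here either imports the hardest theorem of the paper as a black box or risks circularity, and you give no independent route to it.

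What the paper actually does is weaker and self-contained: it constructs an isometric embedding $\Psi$ of $L^2\big((D\circledast_\calb E)(S^1_\top)\big)$ into the \emph{keystone fusion} $G\big(H_0(D),H_0(E)\big)$ (a variant of the Connes fusion with a small vacuum sector of $\calb$ glued into a ``keyhole''), on whose target the action of $\cala(J\cup\bar J)$ is immediate from the vacuum axiom for $D$. The embedding comes from identifying the keyhole fusion with $L^2$ of an enlarged (``hatted'') algebra and then applying the $L^2$-functor to the inclusion of the fused algebra into that enlargement; finite index of $\calb$ enters precisely to show that this inclusion is a \emph{finite} homomorphism of algebras with finite-dimensional center (via the semisimplicity theorem for the fused algebra), which is what makes $L^2$ functorial for it. Finally, the paper reduces general defects to irreducible ones by direct integral decomposition, a step your proposal omits. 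So: right skeleton, but the key mechanism --- embedding rather than isomorphism, keystone fusion rather than plain Connes fusion, and finiteness of the inclusion as the point of entry for the index hypothesis --- is missing.
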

\noindent This is established in the text as Theorem~\ref{thm:fusion-of-defects-is-defect}.

\section{Fusion of sectors and the interchange isomorphism}
%\addtocontents{toc}{\SkipTocEntry}

Let $\cala$, $\calb$, and $\calc$ be conformal nets; let ${}_\cala D_\calb$, ${}_\calb E_\calc$, ${}_\cala F_\calb$, and ${}_\calb G_\calc$ be defects; and let $H =
   \tikzmath[scale=\textscale]
      {\fill[spacecolor](0,0) rectangle(12,12);
         \draw[thick, double] (6,0) -- (0,0) -- (0,12) -- (6,12);
         \draw(6,12) -- (12,12) -- (12,0) -- (6,0); 
         \draw (6,6) node {$\scriptscriptstyle H$};
      }
$ be a $D$--$F$-sector, and $K =
   \tikzmath[scale=\textscale]
      {\fill[spacecolor](0,0) rectangle(12,12);
         \draw (6,0) -- (0,0) -- (0,12) -- (6,12);
         \draw[ultra thick](6,12) -- (12,12) -- (12,0) -- (6,0); 
         \draw (6,6) node {$\scriptscriptstyle K$};
      }
$ an $E$--$G$-sector.
The horizontal composition $H \boxtimes_\calb K$ of these two sectors is a $(D \circledast_\calb E)$--$(F \circledast_\calb G)$-sector whose Hilbert space is the Connes fusion $H \boxtimes_{\calb(I)} K$ and which is depicted as
\[
    \tikzmath[scale=\squarescale]
  {   \fill[spacecolor]  (0,0) rectangle  (24,12);
      \draw[thick, double]  (6,0) -- (0,0) -- (0,12) -- (6,12);
      \draw (6,0) -- (18,0) (12,0) -- (12,12) 
            (6,12) -- (18,12); 
      \draw[ultra thick]  (18,12) -- (24,12) -- (24,0) -- (18,0);
      \draw (6,6) node {$H$} (18,6) node {$K$};
      \draw (6,14.5) node {$\scriptstyle D$} (6,-2.5) node {$\scriptstyle F$} (18,14.5) node {$\scriptstyle E$} (18,-2.5) node {$\scriptstyle G$};
  }\,.
\]
This composition operation is defined precisely in Section~\ref{ssec:Horizontal fusion}.  

Let ${}_\cala P_\calb$ be another defect and let $L =
   \tikzmath[scale=\textscale]
      {\fill[spacecolor](0,0) rectangle(12,12);
         \draw[thick, double] (6,0) -- (0,0) -- (0,12) -- (6,12);
         \draw(6,12) -- (12,12) -- (12,0) -- (6,0); 
         \draw (6,6) node {$\scriptscriptstyle L$};
      }
$ be an $F$--$P$-sector.
The vertical composition $H \boxtimes_F L$ of the sectors $H$ and $L$ is a $D$--$P$-sector whose Hilbert space is the Connes fusion $H \boxtimes_{F(S^1_\top)} L$.  Here $S^1_\top$ is the top half of the circle bounding the sector $L$; this half circle is canonically identified, by vertical reflection, with the bottom half $S^1_\bot$ of the circle bounding the sector $H$, and that identification provides the action of $F(S^1_\top)$ on $H$.  The vertical fusion operation occurs along half a circle, rather than a quarter circle as for horizontal fusion, and so is not conveniently depicted by juxtaposing squares; instead we denote this vertical composition by
\[
\tikzmath[scale=\squarescale]
	{
	\begin{scope}[xshift=6cm]
         \draw (-1.5,0) -- (1.5,-3) (-1.5,0) -- (-1.5,-3) (1.5,0) -- (-1.5,-3) (1.5,0) -- (1.5,-3);
         \end{scope}
	\fill[spacecolor](0,0) rectangle(12,12);
	\draw[thick,double] (6,0) -- (0,0) -- (0,12) -- (6,12);
         \draw(6,12) -- (12,12) -- (12,0) -- (6,0);
                  \draw (6,6) node {$H$};
         \begin{scope}[yshift=-15cm]
	\fill[spacecolor](0,0) rectangle(12,12);
         \draw[thick,double] (6,0) -- (0,0) -- (0,12) -- (6,12);
         \draw (6,12) -- (12,12) -- (12,0) -- (6,0);
                  \draw (6,6) node {$L$};
         \end{scope}
         \draw (6,14.5) node {$\scriptstyle D$} (6,-17.5) node {$\scriptstyle P$};
         }\; .
\]
This composition operation is defined precisely in Section~\ref{sec: Vertical fusion}.  Because vertical composition is Connes fusion along the algebra associated to half a circle, and the vacuum sector is defined as an $L^2$ space for the algebra associated to half a circle, the vacuum sector serves as an identity for vertical composition; that is, $H_0(S^1,D) \boxtimes_D H \cong H$ and $H \boxtimes_F H_0(S^1,F) \cong H$ as sectors.

Let ${}_\calb Q_\calc$ be yet another defect and let $M =
   \tikzmath[scale=\textscale]
      {\fill[spacecolor](0,0) rectangle(12,12);
         \draw (6,0) -- (0,0) -- (0,12) -- (6,12);
         \draw[ultra thick](6,12) -- (12,12) -- (12,0) -- (6,0); 
         \draw (6,6) node {$\scriptscriptstyle M$};
      }
$ be a $G$--$Q$-sector.  The sectors $K$ and $M$ can be composed into an $E$--$Q$-sector $K \boxtimes_G M$.  The horizontal composition and the vertical composition of sectors ought to be compatible in the sense that there is a canonical isomorphism
\[
\tikzmath[scale=\squarescale]
	{
	\begin{scope}[xshift=12cm]
         \draw (-1.5,0) -- (1.5,-3) (-1.5,0) -- (-1.5,-3) (1.5,0) -- (-1.5,-3) (1.5,0) -- (1.5,-3);
         \end{scope}
	\fill[spacecolor]  (0,0) rectangle  (24,12);
      \draw[thick, double]  (6,0) -- (0,0) -- (0,12) -- (6,12);
      \draw (6,0) -- (18,0) (12,0) -- (12,12) 
            (6,12) -- (18,12); 
      \draw[ultra thick]  (18,12) -- (24,12) -- (24,0) -- (18,0);
      \draw (6,6) node {$H$} (18,6) node {$K$};
	\begin{scope}[yshift=-15cm]
	\fill[spacecolor]  (0,0) rectangle  (24,12);
      \draw[thick, double]  (6,0) -- (0,0) -- (0,12) -- (6,12);
      \draw (6,0) -- (18,0) (12,0) -- (12,12) 
            (6,12) -- (18,12); 
      \draw[ultra thick]  (18,12) -- (24,12) -- (24,0) -- (18,0);
      \draw (6,6) node {$L$} (18,6) node {$M$};
	\end{scope}
%      \draw (6,14.5) node {$\scriptstyle D$} (18,14.5) node {$\scriptstyle E$} (6,-17.5) node {$\scriptstyle P$} (18,-17.5) node {$\scriptstyle Q$};
      }
\;\;\cong\;\;
\tikzmath[scale=\squarescale]	
{
	\begin{scope}[xshift=6cm]
         \draw (-1.5,0) -- (1.5,-3) (-1.5,0) -- (-1.5,-3) (1.5,0) -- (-1.5,-3) (1.5,0) -- (1.5,-3);
         \end{scope}
	\fill[spacecolor](0,0) rectangle(12,12);
	\draw[thick,double] (6,0) -- (0,0) -- (0,12) -- (6,12);
         \draw(6,12) -- (12,12) -- (12,0) -- (6,0);
                  \draw (6,6) node {$H$};
         \begin{scope}[yshift=-15cm]
	\fill[spacecolor](0,0) rectangle(12,12);
         \draw[thick,double] (6,0) -- (0,0) -- (0,12) -- (6,12);
         \draw (6,12) -- (12,12) -- (12,0) -- (6,0);
                  \draw (6,6) node {$L$};
         \end{scope}
	\begin{scope}[xshift=14cm]
	\begin{scope}[xshift=6cm]
         \draw (-1.5,0) -- (1.5,-3) (-1.5,0) -- (-1.5,-3) (1.5,0) -- (-1.5,-3) (1.5,0) -- (1.5,-3);
         \end{scope}
		\fill[spacecolor](0,0) rectangle(12,12);
	\draw (6,0) -- (0,0) -- (0,12) -- (6,12);
         \draw[ultra thick](6,12) -- (12,12) -- (12,0) -- (6,0);
                  \draw (6,6) node {$K$};
         \begin{scope}[yshift=-15cm]
	\fill[spacecolor](0,0) rectangle(12,12);
         \draw (6,0) -- (0,0) -- (0,12) -- (6,12);
         \draw[ultra thick] (6,12) -- (12,12) -- (12,0) -- (6,0);
                  \draw (6,6) node {$M$};
         \end{scope}
	\end{scope}
%      \draw (6,14.5) node {$\scriptstyle D$} (20,14.5) node {$\scriptstyle E$} (6,-17.5) node {$\scriptstyle P$} (20,-17.5) node {$\scriptstyle Q$};
}\; ;
\]
here, in the left picture the horizontal compositions occur first, followed by the vertical composition, whereas in the right picture, the vertical compositions occur first, followed by the horizontal composition.  This fundamental ``interchange isomorphism" is constructed in Section~\ref{subsec:interchange}.  

The construction leverages the special case of the isomorphism where all four sectors are vacuum sectors.  That isomorphism is defined as the following composite,
\[
\tikzmath[scale=\squarescale]
	{
	\begin{scope}[xshift=12cm]
         \draw (-1.5,0) -- (1.5,-3) (-1.5,0) -- (-1.5,-3) (1.5,0) -- (-1.5,-3) (1.5,0) -- (1.5,-3);
         \end{scope}
	\fill[vacuumcolor]  (0,0) rectangle  (24,12);
      \draw[thick, double]  (6,0) -- (0,0) -- (0,12) -- (6,12);
      \draw (6,0) -- (18,0) (12,0) -- (12,12) 
            (6,12) -- (18,12); 
      \draw[ultra thick]  (18,12) -- (24,12) -- (24,0) -- (18,0);
	\begin{scope}[yshift=-15cm]
	\fill[vacuumcolor]  (0,0) rectangle  (24,12);
      \draw[thick, double]  (6,0) -- (0,0) -- (0,12) -- (6,12);
      \draw (6,0) -- (18,0) (12,0) -- (12,12) 
            (6,12) -- (18,12); 
      \draw[ultra thick]  (18,12) -- (24,12) -- (24,0) -- (18,0);
	\end{scope}
}
\;\cong\;
    \tikzmath[scale=\squarescale]
    {   
	\begin{scope}[xshift=12cm]
         \draw (-1.5,0) -- (1.5,-3) (-1.5,0) -- (-1.5,-3) (1.5,0) -- (-1.5,-3) (1.5,0) -- (1.5,-3);
         \end{scope}
    \fill[vacuumcolor]  (0,0) rectangle  (24,12);
        \draw[thick, double] (6,0) -- (0,0) -- (0,12) -- (6,12);
        \draw  (6,0) -- (18,0)  
            (6,12) -- (18,12); 
        \draw[ultra thick]  (18,12) -- (24,12) -- (24,0) -- (18,0);
	\begin{scope}[yshift=-15cm]
\fill[vacuumcolor]  (0,0) rectangle  (24,12);
        \draw[thick, double] (6,0) -- (0,0) -- (0,12) -- (6,12);
        \draw  (6,0) -- (18,0)  
            (6,12) -- (18,12); 
        \draw[ultra thick]  (18,12) -- (24,12) -- (24,0) -- (18,0);
	\end{scope}
    }
\;\cong\;
    \tikzmath[scale=\squarescale]
    {   \fill[vacuumcolor]  (0,0) rectangle  (24,12);
        \draw[thick, double] (6,0) -- (0,0) -- (0,12) -- (6,12);
        \draw  (6,0) -- (18,0)  
            (6,12) -- (18,12); 
        \draw[ultra thick]  (18,12) -- (24,12) -- (24,0) -- (18,0);
    }
\;\cong\;
\tikzmath[scale=\squarescale]
	{
	\fill[vacuumcolor]  (0,0) rectangle  (24,12);
      \draw[thick, double]  (6,0) -- (0,0) -- (0,12) -- (6,12);
      \draw (6,0) -- (18,0) (12,0) -- (12,12) 
            (6,12) -- (18,12); 
      \draw[ultra thick]  (18,12) -- (24,12) -- (24,0) -- (18,0);
}
\;\cong\;
\tikzmath[scale=\squarescale]	
{
	\begin{scope}[xshift=6cm]
         \draw (-1.5,0) -- (1.5,-3) (-1.5,0) -- (-1.5,-3) (1.5,0) -- (-1.5,-3) (1.5,0) -- (1.5,-3);
         \end{scope}
	\fill[vacuumcolor](0,0) rectangle(12,12);
	\draw[thick,double] (6,0) -- (0,0) -- (0,12) -- (6,12);
         \draw(6,12) -- (12,12) -- (12,0) -- (6,0);
         \begin{scope}[yshift=-15cm]
	\fill[vacuumcolor](0,0) rectangle(12,12);
         \draw[thick,double] (6,0) -- (0,0) -- (0,12) -- (6,12);
         \draw (6,12) -- (12,12) -- (12,0) -- (6,0);
         \end{scope}
	\begin{scope}[xshift=14cm]
	\begin{scope}[xshift=6cm]
         \draw (-1.5,0) -- (1.5,-3) (-1.5,0) -- (-1.5,-3) (1.5,0) -- (-1.5,-3) (1.5,0) -- (1.5,-3);
         \end{scope}
	\fill[vacuumcolor](0,0) rectangle(12,12);
	\draw (6,0) -- (0,0) -- (0,12) -- (6,12);
         \draw[ultra thick] (6,12) -- (12,12) -- (12,0) -- (6,0);
         \begin{scope}[yshift=-15cm]
	\fill[vacuumcolor](0,0) rectangle(12,12);
         \draw (6,0) -- (0,0) -- (0,12) -- (6,12);
         \draw[ultra thick] (6,12) -- (12,12) -- (12,0) -- (6,0);
         \end{scope}
	\end{scope}
}\; ;
\]
here the middle rectangle
  \begin{equation} \label{eq:H_0-of-fusion-intro}
    \tikzmath[scale=\squarescale]
    {   \fill[vacuumcolor]  (0,0) rectangle  (24,12);
        \draw[thick, double] (6,0) -- (0,0) -- (0,12) -- (6,12);
        \draw  (6,0) -- (18,0)  
            (6,12) -- (18,12); 
        \draw[ultra thick]  (18,12) -- (24,12) -- (24,0) -- (18,0);
        \draw (-3,6) node {$\scriptstyle\cala$} (27,6) node {$\scriptstyle\calc$}
             (12,6) node {$H_0$}  
             (12,14.5)  node {$\scriptstyle D \circledast_\calb E$} 
             (12,-2.5)  node {$\scriptstyle D \circledast_\calb E$};  
    }%tikzmath
  \end{equation}
denotes the vacuum sector $H_0(S^1, D \circledast_\calb E)$ of the composite defect $D \circledast_\calb E$.  The second and fourth isomorphisms in this composite are simply applications of the aforementioned property that the vacuum is an identify for vertical composition.  The first and third isomorphisms are instances of the much more difficult fact that the horizontal composition of two vacuum sectors on defects is itself isomorphic to the vacuum sector of the composite defect.  Because the horizontal composition is a Connes fusion, and the vacuum sector is a vertical unit, we call this isomorphism the ``one times one isomorphism".

\section{The $1 \boxtimes 1$-isomorphism}
%\addtocontents{toc}{\SkipTocEntry}

  This isomorphism provides a canonical identification
  of the Hilbert space~\eqref{eq:H_0-H_0-intro},
  used to define the defect $D \circledast_\calb E$, 
  with the vacuum sector for $D \circledast_\calb E$.
  By definition the vacuum sector is $H_0(S^1,D \circledast_\calb E) 
      := L^2 ( D \circledast_\calb E(S^1_\top) )$.

  By construction the algebra $D \circledast_\calb E(S^1_\top)$ contains 
  $D(\tikzmath[scale=\textscale]
        {\draw[thick, double] (0,6) -- (0,12) -- (6,12); 
         \draw (6,12) -- (12,12); })$
  and 
  $E( \tikzmath[scale=\textscale]
        { \draw (12,12) -- (18,12);
          \draw[ultra thick] (18,12) -- (24,12) -- (24,6);} )$
  as two commuting subalgebras and is generated by those subalgebras.
  We can think of the algebra $D \circledast_\calb E(S^1_\top)$
  as associated to the tricolored interval 
  $\tikzmath[scale=\textscale]
        { \draw[thick, double] (0,6) -- (0,12) -- (6,12);
          \draw (6,12) -- (18,12);
          \draw[ultra thick] (18,12) -- (24,12) -- (24,6);}$
  which is the upper half of the circle $\dd ([0,2] \x [0,1])$;
  it is therefore natural, as above, to draw the vacuum sector for
  $D \circledast_\calb E$ as
$
\tikzmath[scale=\textscale]
	{
	\fill[vacuumcolor]  (0,0) rectangle  (24,12);
      \draw[thick, double]  (6,0) -- (0,0) -- (0,12) -- (6,12);
      \draw (6,0) -- (18,0) 
            (6,12) -- (18,12); 
      \draw[ultra thick]  (18,12) -- (24,12) -- (24,0) -- (18,0);
}
$.
%  In the language of the $3$-category the vacuum sector for a defect
%  $D$ is the identity $2$-morphism, and a fusion along the middle vertical 
%  line as 
%  in~\eqref{eq:H_0-H_0-intro}
%  is the horizontal composition of $2$-morphisms.
%  Thus~\eqref{eq:H_0-H_0-intro} is the composition of the identities
%  for the defects $D$ and $E$,
%  while~\eqref{eq:H_0-of-fusion-intro} is the identity for the    
%  composition $D \circledast_\calb E$.
%  For this reason, we refer to the desired isomorphism
%  between~\eqref{eq:H_0-H_0-intro} and~\eqref{eq:H_0-of-fusion-intro} as
%  the ``one times one isomorphism''.
  
    Another main result of this book is the existence of an isomorphism between the fusion~\eqref{eq:H_0-H_0-intro} and the vacuum~\eqref{eq:H_0-of-fusion-intro}.  We presume as before that the net $\calb$ has finite index.
\begin{introthm}[The $1 \boxtimes 1$-isomorphism]
There is a canonical isomorphism between the vacuum sector $H_0(D \circledast_\calb E)$ of the fused defect $D \circledast_\calb E$ and the Connes fusion $H_0(D) \boxtimes_{\calb(I)} H_0(E)$ of the two vacuum sectors of the defects.
\end{introthm}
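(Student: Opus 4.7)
My strategy is to identify both Hilbert spaces as Haagerup's standard form $L^2$ of a single algebra, namely $(D \circledast_\calb E)(S^1_\top)$, and then invoke the uniqueness of the standard form to produce the canonical isomorphism. The vacuum side is tautological: by definition $H_0(D \circledast_\calb E) = L^2\bigl((D \circledast_\calb E)(S^1_\top)\bigr)$. For the Connes-fusion side, observe that $H_0(D) \boxtimes_{\calb(I)} H_0(E)$ carries commuting actions of $D$ on the upper-left arc $\tikzmath[scale=\textscale]{\draw[thick, double] (0,6) -- (0,12) -- (6,12); \draw (6,12) -- (12,12);}$ and of $E$ on the upper-right arc $\tikzmath[scale=\textscale]{\draw (12,12) -- (18,12); \draw[ultra thick] (18,12) -- (24,12) -- (24,6);}$, and these together generate $(D \circledast_\calb E)(S^1_\top)$ by the very definition of the fused defect. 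The task then reduces to verifying that this representation is standard.

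I would carry this out in three steps. Step (i): construct a candidate cyclic and separating vector by fusing the two vacua, $\Omega := \Omega_D \boxtimes_{\calb(I)} \Omega_E$, where $\Omega_D$ and $\Omega_E$ are the canonical vectors $\widehat{1}$ in $L^2(D(S^1_\top))$ and $L^2(E(S^1_{+,\top}))$ respectively. Cyclicity of $\Omega$ for $(D \circledast_\calb E)(S^1_\top)$ reduces, via the definition of Connes fusion, to cyclicity of $\Omega_D$ for $D(S^1_\top)$ and of $\Omega_E$ for $E(S^1_{+,\top})$, both of which are standard. Step (ii): identify the commutant; show that the commutant of $(D \circledast_\calb E)(S^1_\top)$ acting on the Connes fusion equals the algebra generated by the analogous lower-arc actions of $D$ and $E$, i.e.\ that it is $(D \circledast_\calb E)(S^1_\bot)$. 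This is a Haag-duality statement for the fused defect on the bicolored circle $\dd([0,2] \x [0,1])$, an analogue of \propHaagduality. Step (iii): assemble the modular conjugations $J_D$ on $H_0(D)$ and $J_E$ on $H_0(E)$ into a single antilinear involution on the Connes fusion that fixes $\Omega$ and swaps $(D \circledast_\calb E)(S^1_\top)$ with its commutant. With these three ingredients in place, the universal property of Haagerup's standard form yields a unique unitary $H_0(D) \boxtimes_{\calb(I)} H_0(E) \cong H_0(D \circledast_\calb E)$ that intertwines the two upper-arc actions and sends $\Omega \mapsto \widehat{1}$.

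The main obstacle will be Step (ii), the commutant identification: this is the single place where the finite-index assumption on $\calb$ is genuinely needed, since a priori the commutant of a fused algebra acting on a Connes fusion can strictly contain the fusion of the individual commutants. I would control this via a triple-fusion argument. Using that $H_0(\calb)$ is the Connes-fusion unit over $\calb(I)$, one rewrites $H_0(D) \boxtimes_{\calb(I)} H_0(E) \cong H_0(D) \boxtimes_{\calb(I)} H_0(\calb) \boxtimes_{\calb(I)} H_0(E)$ and then invokes the finite-index machinery from the appendix on finite-index nets (basic construction and control of relative commutants) to interchange the operations of taking commutants and of Connes fusion. Once this splitting is secured, the commutant identification reduces to Haag-duality statements for the individual nets $\cala$, $\calb$, $\calc$ and defects $D$, $E$, which are already available, combined with the explicit generation of $(D \circledast_\calb E)(S^1_\top)$ by its two upper-arc subalgebras. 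The remainder is formal manipulation of the standard form.
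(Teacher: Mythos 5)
Your overall strategy---realize both sides as the standard form of $(D \circledast_\calb E)(S^1_\top)$ and invoke uniqueness---is precisely the route the paper considers and explicitly abandons (see the Remark at the end of the introduction), and the reason it fails is located in your Step (i). The standard form $L^2(M)$ of a von Neumann algebra carries no canonical vector ``$\widehat{1}$'': its elements are symbols $\sqrt{\phi}$ for positive normal functionals $\phi$, and for the type $\mathit{III}$ algebras $D(S^1_\top)$ and $E(S^1_{+,\top})$ there is no distinguished state, hence no distinguished cyclic separating vector. Worse, even after choosing cyclic separating vectors $\Omega_D$ and $\Omega_E$, the expression $\Omega_D \boxtimes_{\calb(I)} \Omega_E$ is not defined: elements of a Connes fusion $H \boxtimes_A K$ are classes of triples $\phi \otimes \xi \otimes \psi$ with $\phi \in \hom(L^2(A)_A, H_A)$ and $\psi \in \hom({}_A L^2(A), {}_A K)$, not fusions of pairs of vectors. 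The paper's Remark records exactly this obstruction in the equivalent language of cones: a natural modular conjugation on $H_0(D) \boxtimes_{\calb(I)} H_0(E)$ does exist (your Step (iii) is fine), but the authors do not know how to manufacture the required self-dual cone---equivalently, a canonical cyclic vector---from the data of the two factors. Without it, uniqueness of the standard form produces an isomorphism that is only well defined up to the unitary group of the commutant, which is not the canonical isomorphism the theorem asserts. This is why the paper instead builds $\Omega$ by hand as the composite $\Phi^{-1} \circ \Psi$ of explicit maps (keyhole fusion, the identification of Theorem~\ref{thm:G_0=L^2}, functoriality of $L^2$ for the finite inclusion of Corollary~\ref{cor: is a finite homomorphism of von Neumann algebra}, and the KLM decomposition of the annulus Hilbert space).

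There is a second, independent gap in Step (ii). The commutant identification you need is exactly Theorem~\ref{thm:Haag-duality-composition-defects} (Haag duality for fusion of defects), and it cannot be imported as a black box here, because the paper's proof of that theorem uses, as an essential input, the prior existence of the isometric embedding $L^2\bigl((D \circledast_\calb E)(S^1_\top)\bigr) \hookrightarrow H_0(D) \boxtimes_{\calb(I)} H_0(E)$ of Proposition~\ref{prop:G=L2}: that embedding supplies the entrywise lower bound $\nu_5 \ge \mathbf{1}_n$ on a matrix of statistical dimensions, which is then played against the upper bound $\|\nu_2\| \le \sqrt{\mu(\calb)}$ coming from \eqref{eq:DIS-727} to force $\nu_5 = \mathbf{1}_n$. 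Your proposed replacement---``use finite-index machinery to interchange commutants with Connes fusion''---is a restatement of the desired conclusion rather than an argument: the general results of Appendix~\ref{subsec:stat-dim+minimal-index} give only \emph{inequalities} for indices under the operations $- \vee A$ and $- \cap A$, and upgrading them to the equality $\bigl((D \circledast_\calb E)(S^1_\bot)\bigr)' = (D \circledast_\calb E)(S^1_\top)$ is the entire content of Section~\ref{sec: Haag duality for composition of defects} (factoriality of the double bridge algebra, the computation of the bridge indices as $\sqrt{\mu(\calb)}$, etc.). So even setting aside the cyclic-vector problem, your plan requires first constructing the embedding that the paper constructs---at which point surjectivity already follows from the statistical-dimension count of Theorem~\ref{thm: Omega is an iso}, and the standard-form detour becomes unnecessary.
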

\noindent This result appears in the text as Theorem~\ref{thm: Omega is an iso}.  The construction of the $1 \boxtimes 1$-iso\-mor\-phism is quite involved
  and is carried out in Chapters \ref{sec:F-G-and-G_0}, \ref{sec: Haag duality for composition of defects}, and \ref{sec:1-box-1}.
  Chapter~\ref{sec:1-box-1} also contains a short summary, on page \pageref{THE RECAP OF OMEGA},
  collecting all the necessary ingredients in one place.
  
\section{Construction of the $1 \boxtimes 1$-isomorphism}
%\addtocontents{toc}{\SkipTocEntry}
  For any von Neumann algebra $A$ the standard form $L^2(A)$ 
  carries commuting left and right actions of $A$,
  i.e., $L^2(A)$ is an $A$--$A$-bimodule.
  In the case of the vacuum sector 
  $H_0(S^1,D) = L^2(D(\tikzmath[scale=\textscale]
        {\draw[thick, double] (0,6) -- (0,12) -- (6,12);
         \draw (6,12) -- (12,12) -- (12,6); }))$ 
  these two actions correspond to the left actions 
  of $D(\tikzmath[scale=\textscale]
        {\draw[thick, double] (0,6) -- (0,12) -- (6,12);
         \draw (6,12) -- (12,12) -- (12,6); })$
  and of $D(\tikzmath[scale=\textscale]
        {\draw[thick, double] (0,6) -- (0,0) -- (6,0);
         \draw (6,0) -- (12,0) -- (12,6); })$.\footnote{
  The reflection along the horizontal axis $\IR \x \{ \frac{1}{2} \}$
  provides an orientation reversing identification
  $\tikzmath[scale=\textscale]
        {\draw[thick, double]  (0,6) -- (0,12) -- (6,12);
         \draw (6,12) -- (12,12) -- (12,6); }
   \to \tikzmath[scale=\textscale]
        {\draw[thick, double] (0,6) -- (0,0) -- (6,0);
         \draw (6,0) -- (12,0) -- (12,6); }$
  and this accounts for the fact that the right action of
  $D(\tikzmath[scale=\textscale]
        {\draw[thick, double] (0,6) -- (0,12) -- (6,12);
         \draw (6,12) -- (12,12) -- (12,6); })$
  on $L^2(D(\tikzmath[scale=\textscale]
        {\draw[thick, double] (0,6) -- (0,12) -- (6,12);
         \draw (6,12) -- (12,12) -- (12,6); }))$
  corresponds to a left action of    
  $D(\tikzmath[scale=\textscale]
        {\draw[thick, double] (0,6) -- (0,0) -- (6,0);
         \draw (6,0) -- (12,0) -- (12,6); })$
  on $H_0(S^1,D)$.}
  One difficulty in understanding the 
  Connes fusion~\eqref{eq:H_0-H_0-intro} comes from the fact that 
  the algebra $\calb(I)$, over which the Connes fusion is taken,
  intersects both $D(\tikzmath[scale=\textscale]
        {\draw[thick, double] (0,6) -- (0,12) -- (6,12);
         \draw (6,12) -- (12,12) -- (12,6); })$
  and $D(\tikzmath[scale=\textscale]
        {\draw[thick, double] (0,6) -- (0,0) -- (6,0);
         \draw (6,0) -- (12,0) -- (12,6); })$.
  To simplify the situation we will consider a variation 
  of~\eqref{eq:H_0-H_0-intro} with a hole in the middle,
  \begin{equation}
    \label{eq:H_0-H_0-with-hole-fusion-intro}
    \tikzmath[scale=\squarescale]
      {    \fill[vacuumcolor] (0,0) rectangle (10,12)
                             (14,0) rectangle (24,12) (10,0) rectangle (14,4)  
                              (10,8) rectangle (14,12); 
           \draw (6,12) -- (10,12) -- (10,0) -- (6,0) 
                 (18,12) -- (14,12) -- (14,0) -- (18,0)
                 (10,0) -- (14,0) (10,4) -- (14,4)  
                 (10,8) -- (14,8) (10,12) -- (14,12);
           \draw[thick, double] (6,12) -- (0,12) -- (0,0) -- (6,0);
           \draw[ultra thick] (18,12) -- (24,12) -- (24,0) -- (18,0);
      } %tikzmath
      \;;
  \end{equation}
  we refer to this construction as
  keyhole fusion.
  This Hilbert space is built from vacuum sectors for $D$ and $E$ together
  with two (small) copies of the vacuum sector for $\calb$. 
  Its formal definition is given in Chapter~\ref{sec:F-G-and-G_0}; see in particular~\eqref{eq: pictorial notation 1}.
  The Connes fusion of $\calb(I)$ is now replaced by four Connes fusion
  operations along smaller algebras.
  This allows us to identify, in Theorem~\ref{thm:G_0=L^2},
  the Hilbert space~\eqref{eq:H_0-H_0-with-hole-fusion-intro}
  with the $L^2$-space of a certain von Neumann algebra that we represent by the graphical notation
  $\tikzmath[scale=\textscale]
               { \useasboundingbox (-2,0) rectangle (26,14);
                 \draw[thick, double] (0,6) -- (0,12) -- (6,12);
                 \draw (6,12) -- (18,12) 
                       (10,6) -- (10,8) -- (14,8) -- (14,6);
                 \draw[ultra thick] (18,12) -- (24,12) -- (24,6); 
                 \draw[densely dotted] (12,8) -- (12,12); 
               }$. %tikzmath
  It is generated by algebras 
  $D(\tikzmath[scale=\textscale]
               { \useasboundingbox (-2,0) rectangle (12,14);
                 \draw[thick, double] (0,6) -- (0,12) -- (6,12);
                 \draw (6,12) -- (10,12) 
                       (10,6) -- (10,8); 
               })$, %tikzmath
  $\hat\calb(\tikzmath[scale=\textscale]
               { \useasboundingbox (8,0) rectangle (16,14);
                 \draw (10,12) -- (14,12) 
                       (10,8) -- (14,8);
%                 \draw[densely dotted] (12,8) -- (12,12); 
               })$, and %tikzmath
  $E(\tikzmath[scale=\textscale]
               { \useasboundingbox (12,0) rectangle (26,14);
                 \draw (14,12) -- (18,12) 
                       (14,8) -- (14,6);
                 \draw[ultra thick] (18,12) -- (24,12) -- (24,6); 
               })$ %tikzmath
  acting on the Hilbert space 
  $\tikzmath[scale=\textscale]
      {    \useasboundingbox (-2,-1) rectangle (26,14); \fill[vacuumcolor] (0,0) rectangle (10,12) (14,0) rectangle (24,12); 
\draw[ultra thin] (6,12) -- (10,12) -- (10,0) -- (6,0) (18,12) -- (14,12) -- (14,0) -- (18,0); \draw[thick, double] (6,12) -- (0,12) -- (0,0) -- (6,0);
\draw[ultra thick] (18,12) -- (24,12) -- (24,0) -- (18,0); \filldraw[fill = vacuumcolor, ultra thin] (10,8) rectangle (14,12);}%tikzmath
$; here $\hat\calb(\tikzmath[scale=\textscale]
               { \useasboundingbox (8,0) rectangle (16,14);
                 \draw (10,12) -- (14,12) 
                       (10,8) -- (14,8);
%                 \draw[densely dotted] (12,8) -- (12,12); 
               })$ is a certain enlargement of the algebra
               $\calb(\tikzmath[scale=\textscale]
               { \useasboundingbox (8,0) rectangle (16,14);
                 \draw (10,12) -- (14,12) 
                       (10,8) -- (14,8);
%                 \draw[densely dotted] (12,8) -- (12,12); 
               })$ that we abbreviate graphically by 
  $\tikzmath[scale=\textscale]
               { \useasboundingbox (8,0) rectangle (16,14);
                 \draw (10,12) -- (14,12) 
                       (10,8) -- (14,8);
                 \draw[densely dotted] (12,8) -- (12,12); 
               }$.
We defer to (\ref{eq:  J_l  J_r  K_u  J_u}, \ref{eq: pictorial notation 2}) for the details of the definitions, and to \ref{not: Dhat} for an explanation of the notation~$\hat\calb$.
  Theorem~\ref{thm:G_0=L^2} then reads
  \begin{equation}
    \label{eq:G_0=L^2-intro}
    L^2 \left(
           \tikzmath[scale=\squarescale]
               { \useasboundingbox (-2,0) rectangle (26,14);
                 \draw[thick, double] (0,6) -- (0,12) -- (6,12);
                 \draw (6,12) -- (18,12) 
                       (10,6) -- (10,8) -- (14,8) -- (14,6);
                 \draw[ultra thick] (18,12) -- (24,12) -- (24,6); 
                 \draw[densely dotted] (12,8) -- (12,12); 
               }  
        \right)
    \; \cong \;
    \tikzmath[scale=\squarescale]
      {    \fill[vacuumcolor] (0,0) rectangle (10,12)
                             (14,0) rectangle (24,12) (10,0) rectangle (14,4)  
                              (10,8) rectangle (14,12); 
           \draw (6,12) -- (10,12) -- (10,0) -- (6,0) 
                 (18,12) -- (14,12) -- (14,0) -- (18,0)
                 (10,0) -- (14,0) (10,4) -- (14,4)  
                 (10,8) -- (14,8) (10,12) -- (14,12);
           \draw[thick, double] (6,12) -- (0,12) -- (0,0) -- (6,0);
           \draw[ultra thick] (18,12) -- (24,12) -- (24,0) -- (18,0);
      } \; . %tikzmath  
  \end{equation}
  
  At this point we blur the distinction between intervals 
  and algebras in our graphical notation
  and often draw only an interval to denote an algebra. 
  For example we abbreviate 
  $\calb( \, \tikzmath[scale=\textscale]
               { \draw  (10,6) -- (10,8) -- (14,8) -- (14,6); } \, )$ 
as simply
    $\tikzmath[scale=\textscale]
               { \draw  (10,6) -- (10,8) -- (14,8) -- (14,6); }$, and
  $D \circledast_\calb E (S^1_\top)$
  as 
    $\tikzmath[scale=\textscale]
               { \useasboundingbox (-2,0) rectangle (26,14);
                 \draw[thick, double] (0,6) -- (0,12) -- (6,12);
                 \draw (6,12) -- (18,12);
                 \draw[ultra thick] (18,12) -- (24,12) -- (24,6);
               }$.
  We therefore write, for instance
    $D \circledast_\calb E (S^1_\top)  \otimes 
     \calb(  \tikzmath[scale=\squarescale]
               { \draw  (10,6) -- (10,8) -- (14,8) -- (14,6); } )$ 
as
    $\tikzmath[scale=\textscale]
               { \useasboundingbox (-2,0) rectangle (26,14);
                 \draw[thick, double] (0,6) -- (0,12) -- (6,12);
                 \draw (6,12) -- (18,12) 
                       (10,6) -- (10,8) -- (14,8) -- (14,6);
                 \draw[ultra thick] (18,12) -- (24,12) -- (24,6); 
               }$. %tikzmath
  As the notation indicates, this tensor product is a subalgebra
  of $\tikzmath[scale=\textscale]
               { \useasboundingbox (-2,0) rectangle (26,14);
                 \draw[thick, double] (0,6) -- (0,12) -- (6,12);
                 \draw (6,12) -- (18,12) 
                       (10,6) -- (10,8) -- (14,8) -- (14,6);
                 \draw[ultra thick] (18,12) -- (24,12) -- (24,6); 
                 \draw[densely dotted] (12,8) -- (12,12); 
               }$.
  (Note the additional dotted line in the middle.)
  If $\calb$ has finite index, then we show in 
  Corollary~\ref{cor: is a finite homomorphism of von Neumann algebra}
  that this inclusion
  $\tikzmath[scale=\textscale]
               { \useasboundingbox (-2,0) rectangle (26,14);
                 \draw[thick, double] (0,6) -- (0,12) -- (6,12);
                 \draw (6,12) -- (18,12) 
                       (10,6) -- (10,8) -- (14,8) -- (14,6);
                 \draw[ultra thick] (18,12) -- (24,12) -- (24,6); 
               } \subseteq
   \tikzmath[scale=\textscale]
               { \useasboundingbox (-2,0) rectangle (26,14);
                 \draw[thick, double] (0,6) -- (0,12) -- (6,12);
                 \draw (6,12) -- (18,12) 
                       (10,6) -- (10,8) -- (14,8) -- (14,6);
                 \draw[ultra thick] (18,12) -- (24,12) -- (24,6); 
                 \draw[densely dotted] (12,8) -- (12,12); 
               }$
  is a finite homomorphism of von Neumann algebras.
  As the $L^2$-construction is functorial for such homomorphisms \cite{BDH(Dualizability+Index-of-subfactors)},
  we can apply $L^2$ to it. 
  Combining this with~\eqref{eq:G_0=L^2-intro}  we obtain a map
  \begin{equation}
    \label{eq:no-dotted-in-dotted-intro}
    L^2 \left( 
          \tikzmath[scale=\squarescale]
               { \useasboundingbox (-2,0) rectangle (26,14);
                 \draw[thick, double] (0,6) -- (0,12) -- (6,12);
                 \draw (6,12) -- (18,12) 
                       (10,6) -- (10,8) -- (14,8) -- (14,6);
                 \draw[ultra thick] (18,12) -- (24,12) -- (24,6); 
               }
        \right)  
    \; \to \;
    L^2 \left(
        \tikzmath[scale=\squarescale]
               { \useasboundingbox (-2,0) rectangle (26,14);
                 \draw[thick, double] (0,6) -- (0,12) -- (6,12);
                 \draw (6,12) -- (18,12) 
                       (10,6) -- (10,8) -- (14,8) -- (14,6);
                 \draw[ultra thick] (18,12) -- (24,12) -- (24,6); 
                 \draw[densely dotted] (12,8) -- (12,12); 
               }  %tikzmath
       \right)
    \; \cong \;
    \tikzmath[scale=\squarescale]
      {    \fill[vacuumcolor] (0,0) rectangle (10,12)
                             (14,0) rectangle (24,12) (10,0) rectangle (14,4)  
                              (10,8) rectangle (14,12); 
           \draw (6,12) -- (10,12) -- (10,0) -- (6,0) 
                 (18,12) -- (14,12) -- (14,0) -- (18,0)
                 (10,0) -- (14,0) (10,4) -- (14,4)  
                 (10,8) -- (14,8) (10,12) -- (14,12);
           \draw[thick, double] (6,12) -- (0,12) -- (0,0) -- (6,0);
           \draw[ultra thick] (18,12) -- (24,12) -- (24,0) -- (18,0);
      } \; . %tikzmath  
  \end{equation}
  
  In the next step, we need to fill the keyhole 
  in~\eqref{eq:H_0-H_0-with-hole-fusion-intro}. 
  Formally, this is done by applying Connes fusion with a further
  (small) vacuum sector for $\calb$.
  We fancy this vacuum sector as the keystone and refer to the result as
  keystone fusion.
  On the domain of~\eqref{eq:no-dotted-in-dotted-intro} the keystone 
  cancels the algebra 
  $\tikzmath[scale=\textscale]
               { \draw  (10,6) -- (10,8) -- (14,8) -- (14,6); }$.
  On the target, we simply denote the result by filling the keyhole
  with the keystone, the (small) vacuum sector for $\calb$.
  In this way we obtain, in Proposition~\ref{prop:G=L2}, an isometric
  embedding
  \begin{equation}
    \label{eq:L^2=G-intro}
    \tikzmath[scale=\squarescale]
    {   \fill[vacuumcolor]  (0,0) rectangle  (24,12);
        \draw[thick, double] (6,0) -- (0,0) -- (0,12) -- (6,12);
        \draw  (6,0) -- (18,0)  
            (6,12) -- (18,12); 
        \draw[ultra thick]  (18,12) -- (24,12) -- (24,0) -- (18,0); 
    }
    \; = \;
    L^2 \left( \tikzmath[scale=\squarescale]
	  {\useasboundingbox (-2,0) rectangle (26,14);
           \draw[thick, double] (0,6) -- (0,12) -- (6,12);
           \draw (6,12) -- (18,12);
           \draw[ultra thick] (18,12) -- (24,12) -- (24,6);} %tikzmath
	\right) 
    \;\; \rightarrow \;\;
    \tikzmath[scale=\squarescale]
	{\fill[vacuumcolor](0,0) rectangle (10,12)(14,0) rectangle (24,12); 
         \draw (6,12) -- (10,12) -- (10,0) -- (6,0) 
            (18,12) -- (14,12) -- (14,0) -- (18,0);
	 \draw[thick, double] (6,12) -- (0,12) -- (0,0) -- (6,0);
         \draw[ultra thick] (18,12) -- (24,12) -- (24,0) -- (18,0);
         \fill[vacuumcolor]  (10,0) rectangle (14,4)  
                    (10,8) rectangle (14,12);
	 \draw (10,0) rectangle (14,4) (10,8) rectangle (14,12);
         \filldraw[fill=vacuumcolor]  (10.5,4.5) rectangle (13.5,7.5);
        }\;. %tikzmath 
  \end{equation}
  The existences of this isometric embedding enables us to prove
  that $D \circledast_\calb E$ is a defect. 
  To produce the $1 \boxtimes 1$-isomorphism from~\eqref{eq:L^2=G-intro}, we construct, in Proposition \ref{prop: local-fusion}, an isomorphism
  \begin{equation}
    \label{eq:fill-hole-intro}
    \tikzmath[scale=\squarescale]
    	{\fill[vacuumcolor](0,0) rectangle (10,12)(14,0) rectangle (24,12); 
         \draw (6,12) -- (10,12) -- (10,0) -- (6,0) 
            (18,12) -- (14,12) -- (14,0) -- (18,0);
	 \draw[thick, double] (6,12) -- (0,12) -- (0,0) -- (6,0);
         \draw[ultra thick] (18,12) -- (24,12) -- (24,0) -- (18,0);
         \fill[vacuumcolor]  (10,0) rectangle (14,4)  
                    (10,8) rectangle (14,12);
	 \draw (10,0) rectangle (14,4) (10,8) rectangle (14,12);
         \filldraw[fill=vacuumcolor]  (10.5,4.5) rectangle (13.5,7.5);
        }
    \; \cong \;
    \tikzmath[scale=\squarescale]
     {   \fill[vacuumcolor]  (0,0) rectangle  (24,12);
         \draw[thick, double]  (6,0) -- (0,0) -- (0,12) -- (6,12);
         \draw (6,0) -- (18,0) (12,0) -- (12,12) 
            (6,12) -- (18,12); 
         \draw[ultra thick]  (18,12) -- (24,12) -- (24,0) -- (18,0);
     } \;  %tikzmath 
  \end{equation}
and then define the ``$1 \boxtimes 1$-isomorphism'' $\Omega$ as the composite of the two maps~\eqref{eq:L^2=G-intro} 
  and~\eqref{eq:fill-hole-intro}.
  
  It remains to prove that the composite 
  of~\eqref{eq:L^2=G-intro} and~\eqref{eq:fill-hole-intro} 
  is indeed an isomorphism.
  The proof proceeds as follows: both the domain
  $\tikzmath[scale=\textscale]
    {   \fill[vacuumcolor]  (0,0) rectangle  (24,12);
        \draw[thick, double] (6,0) -- (0,0) -- (0,12) -- (6,12);
        \draw  (6,0) -- (18,0)  
            (6,12) -- (18,12); 
        \draw[ultra thick]  (18,12) -- (24,12) -- (24,0) -- (18,0); 
    }$
  and the target 
  $\tikzmath[scale=\textscale]
     {   \fill[vacuumcolor]  (0,0) rectangle  (24,12);
         \draw[thick, double]  (6,0) -- (0,0) -- (0,12) -- (6,12);
         \draw (6,0) -- (18,0) (12,0) -- (12,12) 
            (6,12) -- (18,12); 
         \draw[ultra thick]  (18,12) -- (24,12) -- (24,0) -- (18,0);
     }$
  of $\Omega$ carry commuting actions of the algebras 
  $(D \circledast_\calb E)(S^1_\top) = 
     \tikzmath[scale=\textscale]
	  {\useasboundingbox (-2,0) rectangle (26,14);
           \draw[thick, double] (0,6) -- (0,12) -- (6,12);
           \draw (6,12) -- (18,12);
           \draw[ultra thick] (18,12) -- (24,12) -- (24,6);}$
  and 
  $(D \circledast_\calb E)(S^1_\bot) = 
     \tikzmath[scale=\textscale]
	  {\useasboundingbox (-2,0) rectangle (26,14);
           \draw[thick, double] (0,6) -- (0,0) -- (6,0);
           \draw (6,0) -- (18,0);
           \draw[ultra thick] (18,0) -- (24,0) -- (24,6);}$\,.
  On $\tikzmath[scale=\textscale]
    {   \fill[vacuumcolor]  (0,0) rectangle  (24,12);
        \draw[thick, double] (6,0) -- (0,0) -- (0,12) -- (6,12);
        \draw  (6,0) -- (18,0)  
            (6,12) -- (18,12); 
        \draw[ultra thick]  (18,12) -- (24,12) -- (24,0) -- (18,0); 
    }
    \; = \;
    L^2 \left( \tikzmath[scale=\textscale]
	  {\useasboundingbox (-2,0) rectangle (26,14);
           \draw[thick, double] (0,6) -- (0,12) -- (6,12);
           \draw (6,12) -- (18,12);
           \draw[ultra thick] (18,12) -- (24,12) -- (24,6);} %tikzmath
	\right)$ 
  these two actions are clearly each other's commutants and so
  to prove that $\Omega$ is an isomorphism it suffices to 
  show that the same holds for 
  $H_0(D) \boxtimes_{\calb(I)} H_0(E) = 
   \tikzmath[scale=\textscale]
     {   \fill[vacuumcolor]  (0,0) rectangle  (24,12);
         \draw[thick, double]  (6,0) -- (0,0) -- (0,12) -- (6,12);
         \draw (6,0) -- (18,0) (12,0) -- (12,12) 
            (6,12) -- (18,12); 
         \draw[ultra thick]  (18,12) -- (24,12) -- (24,0) -- (18,0);
     }$\,.  That these two actions are each other's commutants on this fusion of vacuum sectors, provided as before that the intermediate net has finite index, is a main technical results of this book:

  \begin{introthm}[Haag duality for fusion of defects]
    The algebras $(D \circledast_\calb E)(S^1_\top)$ and 
    $(D \circledast_\calb E)(S^1_\bot)$, associated by the defect 
    $D \circledast_\calb E$ to the two halves of the circle, 
    are each other's commutants in their action on the fusion 
    $H_0(D) \boxtimes_{\calb(I)} H_0(E)$ of the vacuum sectors of the defects.
  \end{introthm}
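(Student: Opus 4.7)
The statement is an equality of commutants on the Hilbert space $H := H_0(D) \boxtimes_{\calb(I)} H_0(E)$. Let me write $\tilde T$ and $\tilde B$ for the two algebras $(D \circledast_\calb E)(S^1_\top)$ and $(D \circledast_\calb E)(S^1_\bot)$ acting on $H$. My plan is to establish the two inclusions $\tilde B \subseteq \tilde T'$ and $\tilde T' \subseteq \tilde B$ separately.

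The inclusion $\tilde B \subseteq \tilde T'$ is the easy direction, and I would verify it by reducing to commutations among the half-circle generators. By construction $\tilde T$ is generated on $H$ by the images of $D(S^1_\top)$ and $E(S^1_{+,\top})$, and $\tilde B$ by those of $D(S^1_\bot)$ and $E(S^1_{+,\bot})$. The commutation of the $D$-top with the $D$-bot pieces, and of the $E$-top with the $E$-bot pieces, are Haag duality for the vacuum sectors of $D$ and $E$ individually, available from \propHaagduality{} and \thmVaccumSector{} in the preceding sections. The two cross-commutations are automatic: in each case one algebra acts by a left action on the first tensor factor of the Connes fusion while the other acts by a right action on the second tensor factor, so they descend to commuting actions across the $\calb(I)$-balancing.

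The reverse inclusion $\tilde T' \subseteq \tilde B$ is the hard direction, and this is where the finite-index assumption on $\calb$ is essential. My strategy is to exploit the keyhole and keystone machinery of the previous sections. By Theorem~\ref{thm:G_0=L^2}, the keyhole variant of $H$ is canonically identified with the $L^2$-space of the dotted-line algebra, and by Corollary~\ref{cor: is a finite homomorphism of von Neumann algebra} the inclusion of $\tilde T \otimes \calb(I)$ into that dotted-line algebra is a finite homomorphism of von Neumann algebras, precisely because $\calb$ has finite index. Applying the $L^2$-functor and filling the keyhole with the keystone produces the isometric embedding $\Omega \colon L^2(\tilde T) \hookrightarrow H$ from Proposition~\ref{prop:G=L2}. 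On $L^2(\tilde T)$, the algebras $\tilde T$ and its opposite (spatially identified with $\tilde B$ via the vertical reflection swapping $S^1_\top$ and $S^1_\bot$) are each other's commutants by standard-form theory. I would then transport this commutant structure from $L^2(\tilde T)$ to $H$: using a Pimsner-Popa basis for the finite-index inclusion of the $\calb$-algebras on the upper and lower halves of $I$ inside $\calb(I)$, together with the $L^2$-functoriality that defines $\Omega$, any $x \in \tilde T'$ on $H$ can be decomposed as a finite sum whose constituents are forced to lie in $\tilde B$ by their behavior on the $\Omega$-image.

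The main obstacle I anticipate is the simultaneous handling of two bimodularity constraints. Haag duality on the individual vacuum sectors $H_0(D)$ and $H_0(E)$ forces any element of $\tilde T'$ to satisfy bimodularity with respect to $D(S^1_\bot)$ and $E(S^1_{+,\bot})$ on each factor, but these constraints a priori live on the separate factors rather than on $H$; meanwhile, the action on the Connes fusion is only visible through the $\calb(I)$-balancing that intertwines the two factors. The delicate step will be to use the finite-index Pimsner-Popa basis to bridge these two levels of information, showing that the combined bimodularity together with the finite-index decomposition is enough to pin down $x$ as an element of $\tilde B$.
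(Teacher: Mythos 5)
Your easy direction is fine and agrees with the paper's: the generators of $(D \circledast_\calb E)(S^1_\top)$ and $(D \circledast_\calb E)(S^1_\bot)$ commute pairwise, the same-letter pairs by Haag duality for $D$ and $E$ on their own vacuum sectors and the cross pairs because they act through opposite sides of the $\calb(I)$-balancing. The problem is the reverse inclusion, where your argument is a strategy rather than a proof, and the one concrete mechanism you name does not work as stated. First, there is a circularity hazard: at this stage of the paper $\Omega$ (equivalently $\Psi$ of Proposition~\ref{prop:G=L2}) is only known to be an \emph{isometric embedding}; its surjectivity is Theorem~\ref{thm: Omega is an iso}, whose proof \emph{uses} Haag duality. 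Knowing that $\tilde T$ and $\tilde T^{\op}$ are each other's commutants on $L^2(\tilde T)\cong\Omega(L^2(\tilde T))\subseteq H$ says nothing about the commutant of $\tilde T$ on the orthogonal complement of the image, so ``transporting the commutant structure along $\Omega$'' cannot close the argument without the very surjectivity you are not entitled to. Second, the Pimsner--Popa step is not substantiated: you do not say for which finite-index inclusion the basis is taken, why the resulting decomposition of an arbitrary $x\in\tilde T'$ converges in the relevant topology, or why each constituent lands in $\tilde B$; note also that for the two \emph{adjacent} halves of $I$ strong additivity gives $\calb(I_{\mathrm{top}})\vee\calb(I_{\mathrm{bot}})=\calb(I)$, so the inclusion you name is not even proper.

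The paper avoids element-chasing entirely and instead runs a statistical-dimension squeeze. It introduces the auxiliary ``hatted'' algebras $\hat\calb$, $\hat D$, $\hat E$ of Notation~\ref{not: Dhat}, arranges the Haag inclusion into a commuting diamond of six inclusions with dimension matrices $\nu_1,\dots,\nu_6$, and proves: $\nu_5$ equals the dimension matrix of the Haag inclusion (Lemma~\ref{lem:equality of XxX matrices}); $\nu_5\ge\mathbf 1_n$ entrywise, which is the \emph{only} place the isometric embedding $\Omega$ enters; $\nu_1=\nu_6^T$ (Corollary~\ref{cor: nu_1=nu_6^t}, via the factoriality statement Lemma~\ref{lem:commutants}); and $\nu_3=\nu_4=\sqrt{\mu(\calb)}$ (Corollary~\ref{cor:muB=mu3=mu4}). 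Multiplicativity of $\llbracket\,\cdot:\cdot\,\rrbracket$ then gives $\|\nu_2\|^2=\nu_1\nu_5^T\nu_5\nu_6\ge\nu_3\nu_4=\mu(\calb)$ with equality iff $\nu_5=\mathbf 1_n$, while~\eqref{eq:DIS-727} gives $\|\nu_2\|\le\sqrt{\mu(\calb)}$; hence $\nu_5=\mathbf 1_n$ and the inclusion is an equality. If you want to salvage your route you would need to supply the entire quantitative content of these index computations in some other form; as written, the proposal has a genuine gap at its central step.
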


\noindent  This is established in the text as Theorem~\ref{thm:Haag-duality-composition-defects}; 
  (see also Corollary~\ref{cor:fiber-product-and-nets}).
  All of Chapter~\ref{sec: Haag duality for composition of defects} 
  is devoted to its proof.

\begin{remark*}  
In constructing the $3$-category of conformal nets, it is essential to know that the $1 \boxtimes 1$-isomorphism $\Omega$
satisfies certain axioms, such as associativity.  In Proposition~\ref{prop: associativity of Omega} we prove that the isomorphism is
appropriately associative, but unfortunately this is done directly by tracing through the entire construction of 
$\Omega$.  Better would be to use a \emph{characterization} of $\Omega$ (and thus of composites of multiple $\Omega$ 
maps) as the unique map satisfying certain properties.  Haagerup's standard form (that is, the $L^2$-space of a von 
Neumann algebra) does admit such a characterization: it is determined up to unique unitary isomorphism by the module
structure, the modular conjugation, and a self-dual cone.  There is a natural choice of modular conjugation on
$\tikzmath[scale=\textscale]
     {   \fill[vacuumcolor]  (0,0) rectangle  (24,12);
         \draw[thick, double]  (6,0) -- (0,0) -- (0,12) -- (6,12);
         \draw (6,0) -- (18,0) (12,0) -- (12,12) 
            (6,12) -- (18,12); 
         \draw[ultra thick]  (18,12) -- (24,12) -- (24,0) -- (18,0);
     }$.
Thus, to characterize the isomorphism $\Omega$, it suffices to specify a self-dual cone in that fusion of vacuum sectors.  Unfortunately, we do not know how to construct such a self-dual cone from the self-dual cones of
$\tikzmath[scale=\textscale]
     {   \fill[vacuumcolor]  (0,0) rectangle  (12,12);
         \draw[thick, double]  (6,0) -- (0,0) -- (0,12) -- (6,12);
         \draw (6,0) --  (12,0) -- (12,12) -- (6,12); 
     }$  
and of
  $\tikzmath[scale=\textscale]
     {   \fill[vacuumcolor]  (12,0) rectangle  (24,12);
         \draw (18,0) -- (12,0) -- (12,12) -- (18,12); 
         \draw[ultra thick]  (18,12) -- (24,12) -- (24,0) -- (18,0);
     }$.
\end{remark*}

\renewcommand{\thesection}{\arabic{chapter}.{\sc\alph{section}}}

\chapter{Defects} \label{sec:defects}

\section{Bicolored intervals and circles}

An interval is a smooth oriented $1$-manifold diffeomorphic to $[0,1]$.
We write $\Diff(I)$ for the group of diffeomorphisms of $I$ and $\Diff_0(I)$ for the subgroup that fixes a neighborhood of $\dd I$.
A \emph{bicolored interval} is an interval $I$ (always oriented)
equipped with a cover by two closed, connected, possibly empty subsets $I_{\circ}, I_{\bullet}\subset I$ with disjoint interiors, 
along with a local coordinate (that is, an embedding $(-\e,\e) \hookrightarrow I$) at $I_{\circ} \cap I_{\bullet}$.  We disallow the cases when $I_{\circ}$ or $I_{\bullet}$ consist of a single point.
The local coordinate does {\it not} need to preserve the orientation, but is required to send $(-\e,0]$ into $I_{\circ}$ and $[0,\e)$ into $I_{\bullet}$.
A bicolored interval necessarily falls in one of the following three classes:

\begin{numberlist}
\item[\label{nl:bicolored:bicolored}] $I_{\circ}$, $I_{\bullet}$ are intervals and $I_{\circ}\cap I_{\bullet}$ is a point;
the local coordinate is a smooth embedding $(-\e,\e) \hookrightarrow I$ that sends $(-\e,0]$ to $I_{\circ}$ and $[0,\e)$ to $I_{\bullet}$;
\item[\label{nl:bicolored:source}] $I_{\circ}=I$, $I_{\bullet}=\emptyset$, and there is no data of   local coordinate;
\item[\label{nl:bicolored:target}] $I_{\bullet}=I$, $I_{\circ}=\emptyset$, and there is no data of local coordinate.
\end{numberlist}
An embedding $f \colon J\hookrightarrow I$ from one bicolored interval to another is called color preserving if $f^{-1}(I_{\circ})= J_{\circ}$ and $f^{-1}(I_{\bullet}) = J_{\bullet}$.
The bicolored intervals form a category $\INT_{\circ\bullet}$, whose morphisms are the color preserving embeddings that respect the local coordinates (that is, such that the embedding intertwines the local coordinates on a sufficiently small neighborhood of 0).
We let $\INT_{\circ}$ and $\INT_{\bullet}$ be the full subcategories on the objects of the form \eqref{nl:bicolored:source} and \eqref{nl:bicolored:target}, respectively.
Both of them are canonically isomorphic to $\INT$, the category of uncolored intervals.
Elements of $\INT_{\circ}$ and $\INT_{\bullet}$ are called \emph{white} intervals and \emph{black} intervals, respectively.
Those of type \eqref{nl:bicolored:bicolored} are called \emph{genuinely bicolored} intervals.
The full subcategory of genuinely bicolored intervals is denoted $\INT_{\halfbullet}$.

Similarly, a \emph{bicolored circle} $S$ is a circle (always oriented) equipped with a cover by two closed, connected, possibly empty subsets with disjoint interiors $S_{\circ}, S_{\bullet}\subset S$, 
along with local coordinates in the neighborhood of $S_{\circ} \cap S_{\bullet}$.
We disallow the cases when $S_{\circ}$ or $S_{\bullet}$ consists of a single point.
A bicolored circle necessarily falls in one of the following three categories:
\begin{numberlist}
\item[\label{nl: circle genuine}] $S_{\circ}$ and $S_{\bullet} \subset S$ are intervals.
The intersection $S_{\circ} \cap S_{\bullet}$ consist of two points
and the two local coordinates are embeddings $(-\e,\e) \hookrightarrow S$ sending $(-\e,0]$ to $S_{\circ}$ and $[0,\e)$ to $S_{\bullet}$.
\item[\label{nl: circle white}] $S_\circ=S$, $S_\bullet =\emptyset$, and there are no local coordinates;
\item[\label{nl: circe black}] $S_\bullet=S$, $S_\circ =\emptyset$, and there are no local coordinates.
\end{numberlist}
Bicolored circles of type \eqref{nl: circle genuine} are called \emph{genuinely bicolored}.

\section{Definition of defects}\label{sec: def defects}

  Let $\VN$ be the category whose objects are 
  von Neumann algebras with separable preduals, 
  and whose morphisms are $\IC$-linear homomorphisms, and 
  $\IC$-linear antihomomorphisms\footnote{An antihomomorphism is a  
               map satisfying $f(1)=1$ and $f(ab) = f(b)f(a)$.}.

Recall our definition of conformal nets (see Appendix \ref{app:nets}).
For the following definition of defect, we do not require that the conformal nets $\cala$ and $\calb$ are irreducible:

  \begin{definition}\label{def:Defect}
    Let $\cala$ and $\calb$ be two conformal nets.
    A defect from $\cala$ to $\calb$ is a functor
    \[
      D \colon \INT_{\circ\bullet}\to \VN
    \]
    that assigns to each bicolored interval $I$ a von 
    Neumann algebra $\cala(I)$,
    and whose restrictions to $\INT_{\circ}$ and $\INT_{\bullet}$ 
    are given by $\cala$ and $\calb$, respectively.
    It sends orientation-preserving embeddings to $\IC$-linear 
    homomorphisms, and orientation-reversing embeddings to 
    $\IC$-linear antihomomorphisms.
    The functor $D$ is subject to the following axioms:
    \begin{enumerate}
      \item \emph{Isotony:} 
         If $I$ and $J$ are genuinely bicolored intervals and 
         $f:J\hookrightarrow I$ is an embedding, 
         then $D(f):D(J)\rightarrow D(I)$ is injective.
      \item \emph{Locality:} 
         If $J \subset I$ and $K\subset I$ 
         have disjoint interiors, 
         then the images of $D(J)$ and $D(K)$ are commuting 
         subalgebras of $D(I)$.
      \item \emph{Strong additivity:} 
         If $I = J \cup K$, then the images of 
         $D(J)$ and $D(K)$ topologically generate $D(I)$.
      \item \label{def:Defect:action-on-L2}
         \emph{Vacuum sector:}
         Let $S$ be a genuinely bicolored circle, 
         $I\subset S$ a genuinely bicolored interval, 
         and $j \colon S\to S$ a color preserving
         orientation reversing involution that fixes $\partial I$.
         Equip $I':=j(I)$ with the orientation induced 
         from $S$, and consider the following two maps of algebras:
        \begin{equation}       \label{eq:actions-on-L2}
        \begin{split} 
         \alpha \quad \colon \quad& D(I)  
            \xrightarrow{} \bfB(L^2D(I))\\
         \beta \quad \colon \quad& D(I') \xrightarrow{D(j)} 
           D(I)^\op \xrightarrow{} \bfB(L^2D(I))
        \end{split}
        \end{equation}
        (Here $\alpha$ is the left action of $D(I)$ on $L^2 D(I)$, and in $\beta$, the map $D(I)^\op \rightarrow \bfB(L^2 D(I))$ is the right action of $D(I)$ on $L^2 D(I)$.)  Let $J\in \INT_{\circ}\cup\INT_{\bullet}$ be a subinterval of $I$
        such that $J \cap \partial I$ consists of a single point, and
        equip $\bar J:=j(J)$ with the orientation induced from $S$.
        We then require that the action
        \begin{equation} 
           \label{eq: l otimes r : D otimes D --> B(L2(D))}
          \alpha\otimes 
          \beta \,\colon D(J) \,\ox_{\alg}\, D(\bar J) \longrightarrow 
          \bfB\big(L^2D(I)\big)
        \end{equation}
        of the algebraic tensor product extends to an action of $D(J\cup \bar J)$.
    \end{enumerate}
    The defect $D$ is said to be \emph{irreducible} if for every genuinely bicolored interval $I$, 
    the algebra $D(I)$ is a factor.
    We will write $_{\cala}D_{\calb}$ to indicate that 
    $D$ is a defect from $\cala$ to $\calb$.

  \end{definition}

Note that in the above definition of a defect $D$, for an embedding $I \to J$ of a white or black interval $I$ into a genuinely bicolored interval $J$, the induced map of von Neumann algebras $D(I) \to D(J)$ is not required to be an injection.
 
The following properties are consequences of the listed axioms and the corresponding properties of conformal nets: inner covariance (Proposition \ref{prop:inncovdef}), the split property (Proposition \ref{prop:split-property-defects}), Haag duality (Proposition \ref{prop: [Haag duality for defects]}), and continuity (Proposition \ref{prop:Continuity for defects}).

\subsection*{\hspace*{-18pt}Inner covariance and the split property}
Recall that $\Diff_0(I)$ is the subgroup of diffeomorphisms of $I$ that fix some neighborhood of $\partial I$.

\begin{proposition}[Inner covariance for defects] \label{prop:inncovdef}
Let $I$ be a genuinely bicolored interval, and let $\varphi\in\Diff_0(I)$ be a diffeomorphism that preserves the bicoloring and the local coordinate.
Then $D(\varphi)$ is an inner automorphism of $D(I)$.
\end{proposition}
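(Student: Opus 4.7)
The strategy parallels the proof of inner covariance for plain conformal nets: realize $D(\varphi)$ as conjugation by a unitary on $L^2 D(I)$, then invoke Haag duality to show that this unitary already lies inside $D(I)$.

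First, I would embed $I$ as one of the two halves of a genuinely bicolored circle $S$, cut out by a color-preserving orientation-reversing involution $j \colon S\to S$ that fixes $\partial I$; write $I':=j(I)$ for the complementary half. Since $\varphi$ is the identity on a neighborhood of $\partial I$ and preserves both the bicoloring and the local coordinate at the color-change point of $I$, it extends by the identity on $I'$ to a color-preserving diffeomorphism $\tilde\varphi$ of $S$ that still preserves both local coordinates of $S$. Next, apply the Haagerup $L^2$ functor to the $*$-isomorphism $D(\varphi)$ of $D(I)$ to obtain a unitary $U := L^2(D(\varphi))$ on $L^2 D(I)$ that intertwines the left action $\alpha$ with itself twisted by $D(\varphi)$, i.e.\ $U\alpha(a)U^* = \alpha(D(\varphi)(a))$ for all $a\in D(I)$.

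The crux is to show that $U$ commutes with the right action $\beta(D(I'))$. Here I would use the vacuum sector axiom: for any $J\in \INT_\circ\cup\INT_\bullet$ inside $I$ meeting $\partial I$ in a single point, and with $\bar J:=j(J)$, the algebra $D(J\cup \bar J)$ acts on $L^2 D(I)$ extending $\alpha\otimes\beta$. The diffeomorphism $\tilde\varphi$ restricts to $J\cup \bar J$ as $\varphi|_J$ on $J$ and $\id$ on $\bar J$; naturality of $L^2$, together with $L^2(\id)=\id$, then forces $U$ to commute with $\beta(D(\bar J))$. Letting $J$ exhaust the color-preserving half-intervals of $I$ on each side and invoking strong additivity on $I'$ yields that $U$ commutes with all of $\beta(D(I'))$. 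Finally, Haag duality for defects (Proposition~\ref{prop: [Haag duality for defects]}) identifies $\beta(D(I'))'$ with $\alpha(D(I))$, so $U = \alpha(u)$ for some unitary $u\in D(I)$, whence $D(\varphi) = \Ad(u)$ is inner.

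The main obstacle will be the third step: making precise the sense in which naturality of $L^2$—a priori a statement about automorphisms of a single algebra—interacts with the extended action of $D(J\cup \bar J)$ provided by the vacuum sector axiom, so that $L^2(\tilde\varphi|_{J\cup \bar J})$ genuinely identifies with $U$ on the $D(J)$-side and with the identity on the $D(\bar J)$-side. A related subtlety is logical: one must verify that the Haag-duality proposition invoked at the end does not itself depend on the present proposition.
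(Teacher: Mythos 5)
Your proposal has a fatal gap at its central step. The unitary $U := L^2(D(\varphi))$ does \emph{not} commute with the right action $\beta(D(I'))$. The standard-form functor applied to an automorphism $\theta$ of $A$ yields a unitary satisfying $L^2(\theta)\,(a\xi b) = \theta(a)\,L^2(\theta)(\xi)\,\theta(b)$: it twists \emph{both} the left and the right actions by $\theta$ (equivalently, it commutes with the modular conjugation $J_D$, which implements $j$). Geometrically, $U$ therefore implements the $j$-symmetric extension $\varphi \cup (j\varphi j)$ of $\varphi$ to the circle $S$, not your extension $\tilde\varphi = \varphi \cup \id_{I'}$. Concretely, $U\beta(b')U^* = \beta\bigl(D(j\varphi j)(b')\bigr)$ for $b'\in D(I')$, and $j\varphi j$ is a nontrivial diffeomorphism of $I'$, so $U$ conjugates $\beta(D(I'))$ nontrivially and hence cannot lie in $\alpha(D(I)) = \beta(D(I'))'$. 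No appeal to the vacuum sector axiom or to naturality can repair this: a unitary that induces $D(\varphi)$ on $\alpha(D(I))$ and the identity on $\beta(D(I'))$ would have to implement the non-symmetric extension $\varphi\cup\id$, and producing such a unitary is exactly the content of the proposition, not a consequence of functoriality of $L^2$. (Your side worry about circularity is unfounded, though: Haag duality for defects is proved from the vacuum sector axiom alone and does not use inner covariance of $D$. This is also why the paper can later prove, in the ``canonical quantization'' lemma, that $L^2\cala(\varphi_0)\in\cala(K)$ --- but only for $\varphi_0$ the restriction of a \emph{symmetric} diffeomorphism supported in a $j$-invariant interval $K$, which is precisely the hypothesis your setup lacks.)

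The paper's actual proof is much more elementary and does not pass through $L^2$ at all. Since $\varphi$ preserves the local coordinate, it is the identity on a neighborhood of the color-change point, so it factors as $\varphi = \varphi_\circ\circ\varphi_\bullet$ with $\supp(\varphi_\circ)\subset I_\circ$ and $\supp(\varphi_\bullet)\subset I_\bullet$. Choosing a cover $\{J,K,L\}$ of $I$ with $J$ white containing $\supp(\varphi_\circ)$, $L$ black containing $\supp(\varphi_\bullet)$, and $K$ genuinely bicolored with $\varphi|_K=\id$, inner covariance of the \emph{nets} $\cala$ and $\calb$ supplies unitaries $u\in\cala(J)$ and $v\in\calb(L)$ implementing $\varphi_\circ$ and $\varphi_\bullet$; their product $w\in D(I)$ satisfies $waw^*=D(\varphi)(a)$ on the images of $\cala(J)$, $D(K)$, and $\calb(L)$, hence on all of $D(I)$ by strong additivity. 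You should redo the argument along these lines, reducing to the inner covariance axiom of the underlying conformal nets rather than trying to manufacture the implementing unitary from the standard form.
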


\begin{proof}
Write $\varphi=\varphi_\circ\circ\varphi_\bullet$ with $\supp(\varphi_\circ)\subset I_\circ$ and $\supp(\varphi_\bullet)\subset I_\bullet$.
Let $\{J, K, L\}$ be a cover of $I$ such that $J$ is a white interval, $K$ is a genuinely bicolored interval, $L$ is a black interval, $\supp(\varphi_\circ)$ is contained in the interior of $J$,
$\supp(\varphi_\bullet)$ is contained in the interior of $L$, and $\varphi$ acts as the identity on $K$.
By inner covariance for the nets $\cala$ and $\calb$ (see Appendix \ref{subsec:defnets}), there are unitaries $u\in \cala(J)$ and $v\in \calb(L)$ that implement $\varphi_\circ$ and $\varphi_\bullet$.
Let $w$ be their product in $D(I)$.
Then $waw^* = D(\varphi)a$ holds for every $a\in D(I)$ that is in the image of $\cala(J)$, of $D(K)$, or of $\calb(L)$.
By strong additivity, it therefore holds for every element of $D(I)$.
\end{proof}

\begin{proposition}[Split property for defects] \label{prop:split-property-defects}
If $J \subset I$ and $K \subset I$ are disjoint, then the map $D(J) \,\ox_{\alg}\, D(K) \to D(I)$ extends to the spatial tensor product $D(J) \, \bar{\ox} \, D(K)$.
\end{proposition}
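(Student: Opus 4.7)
The plan is to reduce to the split properties of the underlying conformal nets $\cala$ and $\calb$ by case analysis on how $J, K$ sit relative to the bicoloring of $I$. If both $J$ and $K$ lie in $I_\circ$, then $D(J) = \cala(J)$ and $D(K) = \cala(K)$ both sit inside $\cala(I_\circ) \subseteq D(I)$ via isotony on $\INT_\circ$, and the desired extension is provided directly by the split property of $\cala$ applied to the disjoint white subintervals $J, K$ of $I_\circ$; symmetrically if both lie in $I_\bullet$.

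In the opposite-color case $J \subseteq I_\circ$, $K \subseteq I_\bullet$ (automatically disjoint, since $J \cap K \subseteq I_\circ \cap I_\bullet$ is at most the bicolor point, which the hypothesis excludes), I would pass to the equivalent formulation of the split property in terms of intermediate Type I factors. After enlarging to proper intermediate intervals $J \subset \hat J \subseteq I_\circ$ and $K \subset \hat K \subseteq I_\bullet$, the split properties of $\cala$ and $\calb$ furnish Type I factors $N_J$, $N_K$ with $\cala(J) \subseteq N_J \subseteq \cala(\hat J)$ and $\calb(K) \subseteq N_K \subseteq \calb(\hat K)$. By locality of $D$, the algebras $\cala(\hat J)$ and $\calb(\hat K)$ commute in $D(I)$, hence so do $N_J$ and $N_K$; two commuting Type I factors on a common Hilbert space automatically satisfy $N_J \vee N_K \cong N_J \bar{\otimes} N_K$, yielding the spatial extension $D(J) \bar{\otimes} D(K) \hookrightarrow N_J \bar{\otimes} N_K \subseteq D(I)$.

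The remaining and hardest case is when one of $J, K$, say $J$, is genuinely bicolored, forcing $K$ to be monochromatic and strictly on one side of the bicolor point of $I$, say $K \subseteq I_\bullet \setminus J$. By strong additivity, $D(J)$ is generated by $\cala(J_\circ)$ and $\calb(J_\bullet)$, but these two pieces share the bicolor point and cannot be separated by a direct appeal to split for either net. My plan is to pass to a slightly larger genuinely bicolored interval $\hat J \supset J$ still disjoint from $K$, realize $D(\hat J)$ faithfully on $L^2 D(\hat J)$ via the vacuum sector axiom, and combine split for $\cala$ applied to $J_\circ \subset (\hat J)_\circ$ and split for $\calb$ applied to $J_\bullet \subset (\hat J)_\bullet$ with the compatible left and right actions on $L^2 D(\hat J)$ supplied by the vacuum sector axiom to build an intermediate Type I factor $N_J$ with $D(J) \subseteq N_J \subseteq D(\hat J)$. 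A Type I factor $N_K$ with $\calb(K) \subseteq N_K \subseteq \calb(\hat K)$ for some $\hat K \subseteq I_\bullet \setminus \hat J$ comes from split for $\calb$; locality then gives the commutation of $N_J$ and $N_K$ inside $D(I)$, and the spatial extension follows as in the previous paragraph.

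The main obstacle is this genuinely bicolored case: since $D(J)$ is not the value of either underlying net on a single monochromatic interval, the intermediate Type I factor $N_J$ cannot be produced directly from a net split, and one must use the vacuum sector axiom to assemble it from the two monochromatic halves by exploiting the simultaneous realization of $\cala(J_\circ)$ and $\calb(J_\bullet)$ as commuting algebras on a standard form together with the compatible right actions. All the other cases follow essentially formally from splits for $\cala$ and $\calb$ combined with isotony and locality of $D$.
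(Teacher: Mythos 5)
Your first two cases are fine and agree in substance with the paper's argument (the paper phrases splitness of an inclusion via extension to the spatial tensor product rather than via intermediate type~I factors, but the content is the same). The problem is your third case. The inclusions you propose to split there, $\cala(J_\circ)\subset\cala(\hat J_\circ)$ and $\calb(J_\bullet)\subset\calb(\hat J_\bullet)$, are inclusions of intervals sharing a boundary point, namely the color-change point of $I$: since $J$ and $\hat J$ are both genuinely bicolored, $J_\circ$ and $\hat J_\circ$ both terminate at that point. The split property of a conformal net applies only to disjoint intervals (equivalently, to an interval contained in the \emph{interior} of a larger one) and genuinely fails for touching intervals, so the intermediate type~I factors you want do not exist, and the vacuum sector axiom does not repair this. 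Moreover, the splitness of $D(J)\subset D(\hat J)$ for genuinely bicolored $J\subset\hat J$ is, via Haag duality, the statement that $D(J)$ and $D(\hat J')$ are split with \emph{both} intervals genuinely bicolored --- i.e.\ an instance of exactly what you are trying to prove --- so this route is circular.

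The case is also unnecessary, and avoiding it is the one idea your proposal is missing. Since $J$ and $K$ are disjoint, at most one of them contains the color-change point, so at least one --- after relabelling, say $J$ --- is entirely white or entirely black. The splitting only ever needs to be performed on that side: choose a monochromatic $J^+\subset I$ containing $J$ in its interior and disjoint from $K$; by the split property and Haag duality for $\cala$ the inclusion $\cala(J)\subseteq\cala(J^+)$ is split; by locality of $D$ the image of $\cala(J^+)$ in $D(I)$ commutes with $D(K)$, so the inclusion $\cala(J)\hookrightarrow D(K)'$ is split, which is precisely the assertion that $D(J)\ox_{\alg}D(K)\to D(I)$ extends to $D(J)\,\bar{\ox}\,D(K)$. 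This is the paper's proof; the genuinely bicolored interval, if there is one, enters only through $D(K)$ and its commutant, and no splitness ``around'' it is ever required.
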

   
\begin{proof}
We assume without loss of generality that the interval $J$ is entirely white
and that it does not meet the boundary of $I$ (otherwise, replace $I$ by a slightly larger interval).
Let $J^+\subset I$ be a white interval that contains $J$ in its interior and that does not intersect $K$.
Finally, let $\iota:\cala(J^+)\to D(I)$ be the map induced by the inclusion $J^+\hookrightarrow I$.
By the split property and Haag duality for conformal nets, the inclusion $\iota\,\cala(J) \subseteq \iota\,\cala(J^+)$ is split in the sense of Definition~\ref{def:split}. 
%\AB{Why does $\iota$ preserve split?}
As $\cala(J^+)$ commutes with $D(K)$, the inclusion $\iota\,\cala(J) \to D(K)'$ is then also split, where the commutant is taken in any faithful representation of $D(I)$.
Thus,
\[
D(J) \otimes_{\alg} D(K)=(\iota\,\cala(J)\oplus \ker\iota)\otimes_\alg D(K) \to D(I)
\]
extends to the spatial tensor product $D(J) \, \bar{\ox} \, D(K)$.
\end{proof}

\subsection*{\hspace*{-18pt}Vacuum properties}
Let $S$ be a genuinely bicolored circle, along with an orientation reversing diffeomorphism $j \colon S \to S$, compatible with the bicoloring and with the local coordinates.
Let $I\subset S$ be an interval whose boundary is fixed by $j$ and let $I':=j(I)$.
The Hilbert space $H_0:=L^2(D(I))$ is called the {\em vacuum sector} of $D$ associated to $S$, $I$, and $j$.
It is endowed with actions of $D(J)$ for every bicolored intervals $J\subset S$, as follows. (Recall that bicolored intervals contain at most one color-change point.)
The maps \eqref{eq:actions-on-L2} provide natural actions of $D(J)$ on $H_0$ for all subintervals $J\subset I$ and $J\subset I'$.
By the vacuum sector axiom for defects, these extend to the algebras $D(J)$ associated to white and to black subintervals of $S$. 
To define the action $\rho_J:D(J)\to\bfB(H_0)$ of an arbitrary genuinely bicolored interval $J\subset S$,
pick a white interval $K_1 \subset S$, a black interval $K_2 \subset S$, and diffeomorphisms $\varphi_i\in\Diff_0(K_i)$ such that $\varphi_1\varphi_2(J)$ does not cross $\dd I$.
If $u_1\in \cala(K_1)$ and $u_2\in \calb(K_2)$ are unitaries implementing $\varphi_1$ and $\varphi_2$, then the action on $H_0$ of an element $a\in D(J)$ is defined by
\begin{equation}\label{eq: diff trick for L^2(D)}
  \rho_J(a) := u_2^*u_1^*\,
  \rho_{\varphi_1\varphi_2(J)}\big(D(\varphi_1\varphi_2)(a)\big)\,
  u_1u_2.
\end{equation}
This action is compatible with the actions associated to other intervals, and is independent of the choices of $\varphi_1$, $\varphi_2$ and $u_1$, $u_2$
(see Lemma~\ref{lem: open cover of circle => sector -- BIS} for a similar construction in a more general context).

The following result, constructing isomorphisms between different vacuum sectors, is analogous to~\cite[\cornoncanonicalvacuum]{BDH(nets)}:

\begin{lemma}\label{lem: non canonical vacuum -- defects}
Let $S$ be a genuinely bicolored circle.
Let $I_1$ and $I_2$ be genuinely bicolored subintervals and let $j_1$ and $j_2$ be involutions fixing $\partial I_1$ and $\partial I_2$.
Then the corresponding vacuum sectors $L^2D(I_1)$ and $L^2D(I_2)$ are non-canonically isomorphic
as representations of the algebras $D(J)$ for $J\subset S$.
\end{lemma}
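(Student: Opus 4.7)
The plan is to find a bicoloring-preserving diffeomorphism of $S$ that carries one pair $(I_i,j_i)$ to the other, apply the $L^2\circ D$ functor to obtain a Hilbert-space isomorphism, and then correct the result by composing with unitaries arising from inner covariance; this is analogous to the corresponding result for conformal nets.

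Without loss of generality, one may assume $I_1$ and $I_2$ contain the same color-change point of $S$. Indeed, a color-preserving orientation-reversing involution $j$ of $S$ with only the two fixed points $\partial I$ must swap the two color-change points, so $I$ and $I'=j(I)$ each contain exactly one of them. If $I_1\ni p$ while $I_2\ni q$, replace $(I_2,j_2)$ by $(j_2(I_2),j_2)$; the antihomomorphism $D(j_2)\colon D(I_2')\to D(I_2)^{\op}$ together with functoriality of $L^2$ supplies a Hilbert space isomorphism $L^2D(I_2')\cong L^2D(I_2)$ that intertwines all the $D(J)$-actions, because the vacuum-sector axiom and the diff-trick formula \eqref{eq: diff trick for L^2(D)} are manifestly symmetric under $I\leftrightarrow I'$.

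Now assuming $I_1$ and $I_2$ both contain the color-change point $p$, one produces a diffeomorphism $\varphi$ of $S$ preserving the bicoloring and the local coordinates with $\varphi(I_1)=I_2$ and $\varphi j_1 \varphi^{-1}=j_2$. Any such $\varphi$ is the identity near each color-change point and hence factors as $\varphi_\circ\circ\varphi_\bullet$ with $\varphi_\circ\in\Diff_0(S_\circ)$ and $\varphi_\bullet\in\Diff_0(S_\bullet)$; existence of $\varphi$ then follows from the standard transitivity of $\Diff_0(S_\circ)\times\Diff_0(S_\bullet)$ on the space of admissible pairs $(I,j)$ with $I\ni p$, via isotopy extension. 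Setting $\Phi:=L^2(D(\varphi|_{I_1}))\colon L^2D(I_1)\to L^2D(I_2)$, functoriality gives $\Phi\circ\rho_J^{(1)}(a)=\rho_{\varphi(J)}^{(2)}(D(\varphi)a)\circ\Phi$ for every $J\subset S$ and $a\in D(J)$.

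To correct the index mismatch $J\leftrightarrow\varphi(J)$, choose a white subinterval $K_\circ\subset S$ whose interior contains $\supp(\varphi_\circ)$, and analogously $K_\bullet$. By inner covariance for the nets $\cala$ and $\calb$ (see Appendix~\ref{subsec:defnets}), there are unitaries $u_\circ\in\cala(K_\circ)$ and $u_\bullet\in\calb(K_\bullet)$ implementing $\varphi_\circ$ and $\varphi_\bullet$; let $U:=\rho_{K_\circ}^{(2)}(u_\circ)\rho_{K_\bullet}^{(2)}(u_\bullet)\in\bfB(L^2D(I_2))$. Using that $\rho_{K_\circ}^{(2)}$ and $\rho_{K_\bullet}^{(2)}$ factor through the actions of $\cala$ and $\calb$ on the vacuum sector, and that they commute by locality, one checks $U\rho_J^{(2)}(a)U^*=\rho_{\varphi(J)}^{(2)}(D(\varphi)a)$ for any $J$ lying in $K_\circ$ or in $K_\bullet$. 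Therefore $U^{-1}\Phi$ intertwines $\rho_J^{(1)}$ with $\rho_J^{(2)}$ for all such $J$, and the diff-trick formula \eqref{eq: diff trick for L^2(D)} together with strong additivity propagates the intertwining to every $J\subset S$. The main obstacle is the initial reduction to a common color-change point, which relies on the built-in symmetry between $I$ and $I'$ in the vacuum-sector axiom; once that symmetry is exploited, the existence of $\varphi$ and the subsequent correction by inner unitaries are routine.
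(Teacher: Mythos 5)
Your proposal is correct and follows essentially the same route as the paper's proof: reduce to the case of a common color-change point via $L^2(D(j))$ applied to the orientation-reversing involution, then transport by $L^2(D(\varphi))$ for a bicoloring-preserving diffeomorphism $\varphi=\varphi_\circ\circ\varphi_\bullet$ and correct by the inner-covariance unitaries implementing $\varphi_\circ$ and $\varphi_\bullet$. The only difference is expository (you supply more detail on the existence of $\varphi$ and on propagating the intertwining via strong additivity), not mathematical.
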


\begin{proof}
If $I_1$ and $I_2$ contain the same color-change point, then
let $\varphi\in\Diff(S)$ be a diffeomorphism that sends $I_1$ to $I_2$, that intertwines $j_1$ and $j_2$,
and that can be written as $\varphi=\varphi_\circ\circ\varphi_\bullet$ where $\varphi_\circ$ acts on the white part only and $\varphi_\bullet$ acts on the black part only.
Let $K$ be a white interval that contains $\supp(\varphi_\circ)$ in its interior and let $L$ be a black interval that contains $\supp(\varphi_\bullet)$ in its interior.
Finally, let $u\in\cala(K)$ and $v\in \calb(L)$ be unitaries implementing $\varphi_\circ$ and $\varphi_\bullet$.
Then
\[
L^2(D(I_1))\xrightarrow{L^2(D(\varphi))} L^2(D(I_2))\xrightarrow{u^*v^*} L^2(D(I_2))
\]
is the desired isomorphism.

If $I_1$ and $I_2$ contain opposite color-change points, then we may assume without loss of generality that $j_1=j_2$ and $I_2=j_1(I_1)$.
The isomorphism from $L^2(D(I_1))$ to $L^2(D(I_2))$ is then given by $L^2(D(j_1))$.
\end{proof}

\begin{notation}
Given a genuinely bicolored circle $S$ and a defect ${}_\cala D_\calb$, we denote by $H_0(S,D)$ the vacuum sector associated to \emph{some} interval $I\subset S$ and 
\emph{some} involution $j$ fixing $\partial I$.
By the previous lemma, that representation of the algebras $D(J)$ (for $J\subset S$) is well defined up to non-canonical unitary isomorphism.
\end{notation}

%\begin{remark}
%If $S$ is a circle that is either entirely white (or entirely black),
%then the above description of $H_0(S,D)$ still makes sense and recovers the notion of vacuum sector of a conformal net $H_0(S,\cala)$ (or $H_0(S,\calb)$); 
%see Appendix~\ref{subsec:vacuum-sector-net}. 
%\end{remark}

Our next result, concerning the gluing of vacuum sectors, is a straightforward generalization of~\cite[\corvacuumvacuumvacuum]{BDH(nets)} in the presence of defects (compare Appendix~\ref{subsec:glueing}).
Let $S_1$ and $S_2$ be bicolored circles, let $I_i\subset S_i$ be bicolored intervals (whose boundaries do not touch the color change points), and let $I_i'$ be the closure of $S_i\setminus I_i$.
Assume that there exists an orientation reversing diffeomorphism $\varphi:I_2\to I_1$ compatible with the bicolorings, and let $S_3:=I_1'\cup_{\partial I_2} I_2'$.
Assume that $(S_3)_\circ$ and $(S_3)_\bullet$ are connected and non-empty.
Then, up to exchanging $S_1$ and $S_2$, we are in one of the following three situations:
\def\inbetweenarrows{\draw[<->] (-3,0) -- (3,0);
\draw[<->] (-3,5) to[in=190, out=-10] (3,5);
\draw[<->] (-3,-5) to[in=170, out=10] (3,-5);
\draw[<->] (-3,10) to[in=200, out=-20] (3,10);
\draw[<->] (-3,-10) to[in=160, out=20] (3,-10);
%\node at (0,16) {$\scriptstyle\varphi$};
}
\[
\tikzmath[scale=\textscale]{\useasboundingbox (-27,-15) rectangle (27,15);\def\h{12}
\node[yshift=-2, xshift=-2] at ($(-\h,0)+(100:3)$) {$\scriptstyle S_1$};\draw (-\h,0) +(60:14) arc (60:300:14);\draw (-\h,0) +(60:14) -- +(300:14);
\node[yshift=-2, xshift=2.5] at ($(\h,0)+(80:3)$) {$\scriptstyle S_2$}; \draw (\h,0) +(120:14) arc (120:-120:14);\draw (\h,0) +(120:14) -- +(-120:14);
\draw[ultra thick] (\h,0)+(90:14) arc (90:-90:14);\inbetweenarrows}%tikzmath
\rightsquigarrow\!\tikzmath[scale=\textscale]{\node[xshift=-2, yshift=-2] at ($(14,0)+(80:3)$) {$\scriptstyle S_3$};
\draw (60:14) arc (60:300:14);\draw (14,0) +(120:14) arc (120:-120:14);\draw[ultra thick] (14,0) +(90:14) arc (90:-90:14);},%tikzmath
\qquad
\tikzmath[scale=\textscale]{\useasboundingbox (-27,-15) rectangle (27.5,15);\def\h{12.5}
\node[yshift=-2, xshift=-2] at ($(-\h,0)+(100:3)$) {$\scriptstyle S_1$};\draw (-\h,0) +(90:14) arc (90:270:14);\draw[ultra thick] (-\h,0) +(90:14) arc (90:60:14) -- ($(-\h,0) +(-60:14)$) arc (-60:-90:14);
\node[yshift=-2, xshift=2.5] at ($(\h,0)+(80:3)$) {$\scriptstyle S_2$}; \draw[ultra thick] (\h,0) +(120:14) arc (120:-120:14) -- cycle;\inbetweenarrows}%tikzmath
\rightsquigarrow\tikzmath[scale=\textscale]{\node[xshift=-2, yshift=-2] at ($(14,0)+(80:3)$) {$\scriptstyle S_3$};\draw (90:14) arc (90:270:14);\draw[ultra thick] (90:14) arc (90:60:14) arc (120:-120:14) arc (-60:-90:14);
},%tikzmath
\qquad
\tikzmath[scale=\textscale]{\useasboundingbox (-27,-15) rectangle (27.5,15);\def\h{12.5}
\node[yshift=-2, xshift=-2] at ($(-\h,0)+(100:3)$) {$\scriptstyle S_1$};
\node[yshift=-2, xshift=2.5] at ($(\h,0)+(80:3)$) {$\scriptstyle S_2$}; \draw (-\h,0) +(180:14) arc (180:60:14) -- ($(-\h,0)+(7,0)$);\draw[ultra thick] (-\h,0) +(0:7) -- +(300:14) arc (300:180:14);
\draw (\h,0) +(0:14) arc (0:120:14) -- ($(\h,0)-(7,0)$);\draw[ultra thick] (\h,0) +(0:-7) -- +(240:14) arc (240:360:14);\inbetweenarrows}%tikzmath
\rightsquigarrow
\tikzmath[scale=\textscale]{\node[xshift=-2, yshift=-2] at ($(14,0)+(80:3)$) {$\scriptstyle S_3$};\draw (180:14) arc (180:60:14) arc (120:0:14);\draw[ultra thick] (180:14) arc (-180:-60:14) arc (-120:0:14);}\,.%tikzmath
\]
Equip $S_1\cup_{I_2}S_2$ with a smooth structure that is compatible with the given smooth structures on $S_1$ and $S_2$ in the sense 
of \cite[Def. 1.4]{BDH(modularity)}.
That is, provide smooth structures on $S_1$, $S_2$, and $S_3$ such that there exists an action of the symmetric group $\mathfrak S_3$ on $S_1\cup_{I_2}S_2$ (with no compatibility with the bicoloring)
that permutes the three circles and has $\pi|_{S_a}$ smooth for every $\pi\in \mathfrak S_3$ and $a\in \{1,2,3\}$.

When ${}_\cala D_\calb$ is a defect, it will be convenient to write $H_0(S,D):=H_0(S,\cala)$ if $S$ is entirely white and $H_0(S,D):=H_0(S,\calb)$ if $S$ is entirely black.

\begin{lemma}\label{lem: vacuum * vacuum = vacuum -- with defects}
Let $S_1$, $S_2$, $S_3$, and $\varphi$ be as above, and let $D$ be a defect.
Use the map $D(\varphi)$ to equip $H_0(S_1,D)$ with the structure of a right $D(I_2)$-module.
Then there exists a non-canonical isomorphism
\begin{equation*}
H_0(S_1,D) \,\boxtimes_{D(I_2)} H_0(S_2,D)\,\,\cong\,\, H_0(S_3,D),
\end{equation*}
compatible with the actions of $D(J)$ for $J\subset S_3$.
\end{lemma}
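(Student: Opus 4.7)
The plan is to directly adapt the proof of the corresponding conformal-net statement \corvacuumvacuumvacuum\ of~\cite{BDH(nets)}, leveraging the freedom provided by Lemma~\ref{lem: non canonical vacuum -- defects} to choose convenient representatives of the vacuum sectors.  Specifically, we model $H_0(S_i, D) = L^2(D(I_i))$ for $i=1,2$ using any involution $j_i$ fixing $\partial I_i$, and we take $H_0(S_3, D) = L^2(D(I_1'))$ using the involution $j_3:S_3\to S_3$ that fixes $\partial I_1' = \partial I_2'$ and swaps the two complementary arcs $I_1'$ and $I_2'$.

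The required Hilbert-space isomorphism is then the composite of two pieces.  The first is the standard bimodule identification $L^2(A) \boxtimes_A L^2(A) \cong L^2(A)$ applied with $A = D(I_1)$, where the antihomomorphism $D(\varphi):D(I_2) \to D(I_1)$ (which is orientation-reversing since $\varphi$ is) provides the identification $D(I_2)\cong D(I_1)^\op$ needed to match up the right $D(I_2)$-module $H_0(S_1, D)$ with the left $D(I_2)$-module $H_0(S_2, D)$.  The second is the functorial isometry $L^2(D(j_1)):L^2(D(I_1)) \to L^2(D(I_1'))$.  This produces a unitary from $H_0(S_1, D) \boxtimes_{D(I_2)} H_0(S_2, D)$ to $L^2(D(I_1')) = H_0(S_3, D)$.

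It remains to verify that this unitary intertwines the actions of $D(J)$ for all bicolored subintervals $J\subset S_3 = I_1' \cup I_2'$.  For $J\subset I_1'$ (respectively $J\subset I_2'$), the action of $D(J)$ on $H_0(S_1, D)$ (respectively $H_0(S_2, D)$) commutes with the action of $D(I_1) \cong D(I_2)$ being absorbed in the fusion, by the vacuum-sector axiom on $S_1$ (respectively $S_2$), and therefore passes through to the Connes fusion; matching this against the $S_3$-action on $L^2(D(I_1'))$ is straightforward.  For a genuinely bicolored $J \subset S_3$ straddling the color-change point of $S_3$ that arises from $\partial I_1'$, one uses the diffeomorphism trick \eqref{eq: diff trick for L^2(D)}: conjugating by $\Diff_0$-diffeomorphisms supported in the white and black parts of $S_3$ moves $J$ entirely into $I_1'$ or $I_2'$, and the compatibility there, together with inner covariance (Proposition~\ref{prop:inncovdef}) and strong additivity, closes the case.

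The three geometric cases pictured differ only in whether $I_2$ is monochrome (the first two cases) or genuinely bicolored (the third case); in the third case one must additionally track the local coordinate on $I_2$ through $\varphi$, but the argument above applies verbatim after adjusting $j_1,j_2,j_3$ to respect these local coordinates.  The main obstacle is bookkeeping: keeping track of the orientation reversals produced by $j_1, j_2, j_3, \varphi$ and the local-coordinate compatibilities at color-change points, so that the Hilbert space isomorphism constructed above genuinely carries each action on the fusion to the corresponding action on the $S_3$-vacuum sector.
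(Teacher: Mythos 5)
Your plan hinges on being able to write all three vacuum sectors as $L^2$-spaces of the algebras of the distinguished intervals: $H_0(S_1,D)=L^2(D(I_1))$, $H_0(S_2,D)=L^2(D(I_2))$, and $H_0(S_3,D)=L^2(D(I_1'))$. This only typechecks in the third of the three pictured configurations. The vacuum sector of a genuinely bicolored circle is by definition $L^2(D(K))$ for a \emph{genuinely bicolored} interval $K$: an involution compatible with the bicoloring cannot carry a monochrome interval of a genuinely bicolored circle onto its complement, since the complement must then contain both color-change points. In the first configuration $I_2$ is white while $S_2$ is genuinely bicolored, so $L^2(D(I_2))=L^2(\cala(I_2))$ is not $H_0(S_2,D)$; moreover $I_1'$ is entirely white (all of $S_1$ is) while $I_2'$ contains both color-change points of $S_2$ and hence is not even a bicolored interval, so your target identification $H_0(S_3,D)=L^2(D(I_1'))$ --- together with the involution $j_3$ that would have to exchange a white interval with one having two color changes --- is unavailable. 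The second configuration fails symmetrically. Your closing remark that the three cases ``differ only in whether $I_2$ is monochrome'' and that ``the argument applies verbatim'' is exactly where the gap sits.

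The paper's proof avoids this by never identifying more than one vacuum sector with an $L^2$-space: it picks whichever of $H_0(S_1,D)\cong L^2(D(I_1))$ or $H_0(S_2,D)\cong L^2(D(I_2))$ is actually available (at least one always is), converts that single $L^2$-factor into $L^2(D(I_2))$ via $L^2(D(\varphi))$, absorbs it into the \emph{other} vacuum sector using the Connes-fusion unit $L^2(A)\boxtimes_A K\cong K$, and finally transports $H_0(S_2,D)\cong H_0(S_3,D)$ along the diffeomorphism $(j\circ\varphi)\cup\mathrm{Id}_{I_2'}\colon S_2\to S_3$. Equivariance is then automatic from the functoriality of the vacuum sector and of Connes fusion, with no hand-verification over subintervals of $S_3$. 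Your construction does go through in the third configuration, where $I_1$, $I_2$, $I_1'$, $I_2'$ are all genuinely bicolored, but to cover the other two you should adopt the paper's asymmetric route.
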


\begin{proof}
Depending on the topology of the bicoloring, we can either identify $H_0(S_1,D)$ with $L^2(D(I_1))$ or identify $H_0(S_2,D)$ with $L^2(D(I_2))$.
We assume without loss of generality that we are in the first case.

Let $j\in\Diff_-(S_1)$ be an involution that is compatible with the bicoloring and that fixes $\partial I_1$,
and let $H_0(S_1,D)=L^2(D(I_1))$ be the vacuum sector associated to $S_1$, $I_1$, and $j$.
We then have
\[
L^2(D(I_1))\!\underset{D(I_2)}\boxtimes\! H_0(S_2,D)\,\cong\, L^2(D(I_2))\!\underset{D(I_2)}\boxtimes\! H_0(S_2,D) \,\cong\, H_0(S_2,D) \,\cong\, H_0(S_3,D),
\]
where the first isomorphism uses $L^2(D(\varphi)):L^2(D(I_2))\to L^2(D(I_1)^\op)= L^2(D(I_1))$
and the third one is induced by the map $(j\circ \varphi) \cup \mathrm{Id}_{I_2'}:S_2\to S_3$.
\end{proof}

\subsection*{\hspace*{-18pt}Haag duality} 
In certain cases, the geometric operation of complementation corresponds to the algebraic operation of relative commutant:

\begin{proposition}[Haag duality] \label{prop: [Haag duality for defects]}
(1) Let $S$ be a genuinely bicolored circle, let $I\subset S$ be a genuinely bicolored interval, and let $I'$ be the closure of the complement of $I$ in $S$.
Then the algebras $D(I)$ and $D(I')$ are each other's commutants on $H_0(S,D)$.

(2) Let ${}_\cala D_\calb$ be a defect and let $J\in\INT_{\halfbullet}$ and $K\in\INT_\circ\cup\INT_\bullet$ be subintervals of $I\in\INT_{\halfbullet}$.
Assume that $J \cup K=I$ and that $J \cap K$ is a point.
Then $D(J)$ is the relative commutant of (the image of) $D(K)$ in $D(I)$.
\end{proposition}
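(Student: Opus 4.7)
The plan is to deduce part (1) directly from Haagerup's standard form theory, and then bootstrap part (2) from part (1) using strong additivity on an extended bicolored circle.

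For part (1), I will choose an orientation-reversing, color-preserving involution $j \colon S \to S$ fixing $\partial I$. Such an involution exists because both $I$ and its complement $I'$ are genuinely bicolored and meet exactly along the two points of $\partial I$, so one can reflect $S$ across an axis through these two points. Realize $H_0(S, D) = L^2(D(I))$ using the pair $(I, j)$. Haagerup's standard form theory (Appendix~\ref{app:vN-algebras}) yields that the left action $\alpha$ of $D(I)$ on $L^2(D(I))$ and the right action $D(I)^{\op} \to \bfB(L^2 D(I))$ are each other's commutants. Because $j$ is an involution, functoriality of $D$ implies that $D(j) \colon D(I') \to D(I)^{\op}$ is an isomorphism, so the action $\beta$ of $D(I')$ supplied by the vacuum sector axiom coincides with the right action of $D(I)$. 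Therefore $D(I') = D(I)'$ on $H_0(S, D)$, and this equality is independent of the chosen realization by Lemma~\ref{lem: non canonical vacuum -- defects}.

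For part (2), I will extend $I$ to a genuinely bicolored circle $S$ by attaching a genuinely bicolored interval $I' := \overline{S \setminus I}$ whose interior contains a color-change point complementary to the one in $I$. The key geometric observation is that the complement $J' := \overline{S \setminus J}$ of $J$ in $S$ decomposes as $K \cup I'$, with $K$ and $I'$ meeting at a single point of $\partial I$: since $J \cap K = \{p\}$ and $J$ runs from $\partial I$ to $p$, the interval $J'$ runs from $p$ through $K$ to $\partial I$ and continues around $I'$ back to the other endpoint of $J$. Strong additivity applied to this cover yields $D(J') = D(K) \vee D(I')$. Now take $x \in D(I) \cap D(K)'$ acting on $H_0(S, D)$. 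Part (1) applied to $(I, I')$ gives $D(I) = D(I')'$, so $x$ commutes with $D(I')$; combined with the hypothesis that $x$ commutes with $D(K)$, we conclude that $x$ commutes with all of $D(J')$. Applying part (1) a second time, now to the genuinely bicolored subinterval $J \subseteq S$, gives $x \in D(J')' = D(J)$. The reverse inclusion $D(J) \subseteq D(I) \cap D(K)'$ is immediate from locality.

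The main technical point to get right is the geometry of part (2): the extension $I \hookrightarrow S$ must be arranged so that $J'$ sits inside $S$ as a single genuinely bicolored interval decomposing as $K \cup I'$ along one endpoint of $\partial I$, which is precisely what makes strong additivity available to identify $D(J')$ with $D(K) \vee D(I')$. Once this configuration is set up, the relative Haag duality reduces cleanly to two applications of the circle Haag duality of part (1)---first to $(I, I')$ to control the commutation of $x$ with $D(I')$, and then to $(J, J')$ to convert commutation with $D(J')$ into membership in $D(J)$---with no further modular-theoretic input required beyond the standard form used in part (1).
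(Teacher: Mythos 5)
Your proposal is correct and follows essentially the same route as the paper: part (1) is the identification of $H_0(S,D)$ with $L^2(D(I))$ so that $D(I)$ and $D(I')$ become the left and right standard-form actions, and part (2) doubles $I$ to a bicolored circle and runs the chain $D(K)'\cap D(I)=D(K)'\cap D(I')'=(D(K)\vee D(I'))'=D(J')'=D(J)$ via strong additivity and two applications of part (1). The only cosmetic difference is that the paper takes $S=I\cup_{\partial I}\bar I$ explicitly as the mirror double with its exchanging involution, while you phrase the same computation as an element chase.
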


\begin{proof}
(1) Let $j\in\Diff_-(S)$ be an involution that exchanges $I$ and $I'$ and that is compatible with the bicoloring and the local coordinates.
By definition, we may take $H_0(S,D)=L^2(D(I))$ with the actions of $D(I)$ and $D(I')$ provided by \eqref{eq:actions-on-L2}.
The result follows, as the left and right actions of $D(I)$ on $L^2(D(I))$ are each other's commutants.

(2) We assume without loss of generality that $K\in\INT_\circ$. 
Let $S:=I\cup_{\partial I} (\bar I)$ be a circle 
formed by gluing two copies of $I$ along their boundary, 
such that there is a smooth involution $j$ that exchanges them:
\[
\tikzmath[scale = \displscale]{
\useasboundingbox (-35,-25) rectangle (25,10);
\draw (.1,0) arc (-75:-109:20) arc (69.5:111.5:18.7) arc (110:150:12) arc (150:179.2:10);
\draw[ultra thick] (0,-.1) arc (110:70:20) arc (70:30:12) arc (30:.5:10);
\node at (-6,-6) {$I$};
\draw (-.4,2) arc (-75:-107:19) ++(-2,0.55) arc (74:110:21) arc (110:150:14) arc (150:180:12);
\draw[ultra thick] (-.5,2.1) arc (110:70:21.5) arc (70:30:15) arc (30:0:11.5);
\node at (13,8) {$J$};
\node at (-31,6) {$K$};
} %tikzmath
\qquad\qquad
\tikzmath[scale = \displscale]{
\useasboundingbox (-35,-18.5) rectangle (25,18.5);
\draw (.1,9.97) arc (-75:-109:20) arc (71:110:20) arc (110:150:12) arc (150:180:10);
\draw[ultra thick] (0,9.97) arc (110:70:20) arc (70:30:12) arc (30:0:10);
\node at (0,0) {$S$};
\node at (-20,0) {$\scriptstyle j$};
\draw[<->] (-24,-8) to[in=-75, out=75] (-24,8);
\pgftransformyscale{-1} 
\draw (.1,9.97) arc (-75:-109:20) arc (71:110:20) arc (110:150:12) arc (150:180:10);
\draw[ultra thick] (0,9.97) arc (110:70:20) arc (70:30:12) arc (30:0:10);
} %tikzmath
\]
By strong additivity and the first part of the proposition, and considering actions on $H_0(S,D)$ we then have
\[
D(K)'\cap D(I) = D(K)'\cap D(\bar I)' = \big(D(K)\vee D(\bar I)\big)' = D(K\cup \bar I)' = D(J). \qedhere
\] 
\end{proof}

\subsection*{\hspace*{-18pt}Canonical quantization}
Let $S$ be a bicolored circle and $I\subset S$ a genuinely bicolored interval.
Let $j\in\Diff_-(S)$ be an involution that fixes $\partial I$ and that is compatible with the bicoloring and the local coordinates.
Also let $K \subset S$ be a white interval such that $j(K) = K$.
We call a diffeomorphism $\varphi\in\Diff_0(K)\subset \Diff(S)$ \emph{symmetric} if it commutes with $j$,
and set
\[
\Diff_0^\mathrm{sym}(K):=\big\{\varphi\in\Diff_0(K)\,\big|\,\varphi j=j\varphi\big\}.
\]
Given a symmetric diffeomorphism $\varphi$, we also write $\varphi_0\in\Diff(I)$ for $\varphi |_I$; to be precise, $\varphi_0:=\varphi |_{I\cap K}\cup \mathrm{id}_{I\setminus K}$.

For an irreducible defect ${}_\cala D_\calb$, we want to understand the automorphism \linebreak $L^2D(\varphi_0)$ of $H_0(S,D) := L^2D(I)$,
and its relation to the automorphism $L^2\cala(\varphi_0)$ of $H_0(S,\cala) := L^2\cala(I)$, where in these expressions involving $\cala$ the circle $S$ has now been painted all white.

By~\cite[\lemLconformalimplementaion]{BDH(nets)} the unitary $u_\varphi := L^2\cala(\varphi_0)$ on  $L^2\cala(I)$  implements $\varphi$, that is,
\begin{equation}\label{eq: music sign}
  \cala(\varphi)(a) = u_\varphi a {u_\varphi}^*
    \qquad \text{for all intervals $J \subseteq S$ and all 
                  $a \in \cala(J)$.} 
\end{equation}
Let $K'$ be the closure of the complement of $K$ in $S$.
Since $u_\varphi$ commutes with $\cala(K')$, we have $u_\varphi\in\cala(K)$ by Haag duality (Proposition~\ref{prop: [Haag duality for defects]-duality-nets}).
We call $u_\varphi \in \cala(K)$ the {\em canonical quantization of the symmetric diffeomorphism $\varphi$}.

The map $\Diff_0^\mathrm{sym}(K)\to \Diff_+(I)$ given by $\varphi \mapsto \varphi_0$ is continuous for the $\calc^\infty$-topology.
The map $\cala \colon \Diff_+(I) \to \Aut(\cala(I))$ is continuous because $\cala$ is a continuous functor\footnote{
       This refers to Haagerup's
       $u$-topology on $\Aut(\cala(I))$,
       see~\cite[Def.~3.4]{Haagerup(1975standard-form)}
       or~\cite[Appendix]{BDH(nets)}.}.
The map $\Aut(\cala(I )) \to \U(L^2\cala(I))$ given by $\psi \mapsto L^2(\psi)$ is continuous by~\cite[Prop.~3.5]{Haagerup(1975standard-form)}.
Therefore, altogether, $\varphi \mapsto u_\varphi$ defines a continuous map from the group of symmetric diffeomorphisms of $K$ to $\U(\cala(K))$. 

\begin{lemma}\label{lem: canonical quantization of the symmetric diffeomorphism}
Let $S$, $I $, $K$, $\varphi$, $\varphi_0$, and $u_\varphi$ be as above,
let ${}_\cala D_\calb$ be an irreducible defect, and let $H_0:=L^2D(I)$ be the vacuum sector of $D$ associated to $S$, $I$, and $j$.
Then, letting $\rho_K$ be the action of $\cala(K)$ on $H_0$ (given by the vacuum sector axiom), we have $L^2(D(\varphi_0))=\rho_K(u_\varphi)$.
\end{lemma}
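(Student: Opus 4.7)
The plan is to prove $\rho_K(u_\varphi)=L^2(D(\varphi_0))$ in three steps: verify that the element $\rho_K(u_\varphi)$ has the correct implementing action on $\alpha(D(I))$; locate the canonical unitary $L^2(D(\varphi_0))$ inside $\rho_K(\cala(K))$; and then pin down the resulting scalar ambiguity using continuity. The geometry is constrained: $j$-symmetry of $K$ together with $\partial I=\mathrm{Fix}(j)$ forces $K$ to be a neighborhood inside $S_\circ$ of the white endpoint of $\partial I$, so $K$ is disjoint from the color-change points of $S$. Write $K_1:=K\cap I$ and $K_2:=K\cap I'=j(K_1)$.

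For the first step, I would cover $I$ by small intervals of three types: white pieces $J\subset K_1$, intervals disjoint from $K$, and a bicolored piece around the color-change point of $I$ (which I can take disjoint from $K$ by the geometric observation above). For $J\subset K_1$ and $a\in\cala(J)$ the homomorphism property of $\rho_K$ and the net-level identity~\eqref{eq: music sign} give
\[
\rho_K(u_\varphi)\alpha(a)\rho_K(u_\varphi)^*=\rho_K(u_\varphi a u_\varphi^*)=\rho_K(\cala(\varphi)(a))=\alpha(D(\varphi_0)(a)).
\]
For intervals $J$ disjoint from $K$, locality of $D$ forces $\rho_K(u_\varphi)$ and $\alpha(D(J))$ to commute, matching the triviality $D(\varphi_0)|_{D(J)}=\mathrm{id}$. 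Strong additivity then extends the implementation to all of $D(I)$.

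For the second step, I would argue that $L^2(D(\varphi_0))$ lies in $\rho_K(\cala(K))$. Since $L^2(D(\varphi_0))$ commutes with the modular conjugation $J_I$ and implements $D(\varphi_0)$ via $\alpha$, it also implements $D(\varphi_0)$ via the right action $\pi_r\colon D(I)\to\bfB(H_0)$. As $\varphi_0$ is the identity on $I\setminus K_1$, the unitary $L^2(D(\varphi_0))$ commutes with both $\alpha(D(I\setminus K_1))$ and $\pi_r(D(I\setminus K_1))$. By part~(2) of Haag duality for defects, $D(I\setminus K_1)'\cap D(I)=\cala(K_1)$, and using that $\alpha(D(I))$ is a factor in standard form on $L^2D(I)$ (by irreducibility of $D$), a tensor-product-style commutant computation yields
\[
\big(\alpha(D(I\setminus K_1))\vee\pi_r(D(I\setminus K_1))\big)'=\alpha(\cala(K_1))\vee\pi_r(\cala(K_1))=\rho_K(\cala(K)),
\]
where the last identification uses $\beta(\cala(K_2))=\pi_r(D(j)(\cala(K_2)))=\pi_r(\cala(K_1))$. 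Hence $L^2(D(\varphi_0))=\rho_K(\tilde u)$ for a unique unitary $\tilde u\in\cala(K)$.

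Finally, the two previous steps applied together show that both $\tilde u$ and $u_\varphi$ are unitaries in $\cala(K)$ implementing $\cala(\varphi)$; hence $\tilde u u_\varphi^{-1}$ is central in $\cala(K)$ and therefore a scalar $\lambda\in\U(1)$ by factoriality. The continuity of $\varphi\mapsto u_\varphi$ (cited in the paragraph preceding the lemma) and of $\varphi\mapsto L^2(D(\varphi_0))$ make $\lambda$ depend continuously on $\varphi\in\Diff_0^\mathrm{sym}(K)$; since this group is contractible (it is in bijection with $\Diff_0(K_1)$ via restriction) and $\lambda=1$ at $\varphi=\mathrm{id}$, we conclude $\lambda=1$ identically, giving the desired equality. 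The main obstacle I anticipate is the commutant computation in the second step, which requires combining part~(2) of Haag duality for defects with standard $L^2$-theory for the factor $D(I)$, and a careful bookkeeping of the translation between $\beta$ and $\pi_r$ via $D(j)$.
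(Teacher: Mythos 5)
Your first two steps are sound: showing that $\rho_K(u_\varphi)$ implements $D(\varphi_0)$ via a cover-and-strong-additivity argument matches the paper, and your commutant computation locating $L^2(D(\varphi_0))$ inside $\rho_K(\cala(K))$ is a somewhat more elaborate (but workable) route to the scalar relation $L^2(D(\varphi_0))=\lambda_\varphi\,\rho_K(u_\varphi)$ than the paper's, which simply notes that two unitaries implementing the same automorphisms of an irreducible family of algebras differ by a phase. The genuine gap is in the final step. From continuity of $\varphi\mapsto\lambda_\varphi$, contractibility of $\Diff_0^{\mathrm{sym}}(K)$, and $\lambda_{\mathrm{id}}=1$, you conclude $\lambda_\varphi\equiv 1$. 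This inference is false: a continuous map from a contractible space to $\U(1)$ that equals $1$ at one point need not be constant ($t\mapsto e^{it}$ on $[0,1]$), and even observing that $\varphi\mapsto\lambda_\varphi$ is a group homomorphism does not help, since $t\mapsto e^{it}$ is a continuous homomorphism from the contractible group $\IR$ to $\U(1)$. Connectedness or contractibility of the domain only forces constancy once the target has been cut down to something discrete.

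That reduction to a discrete target is exactly what the paper's proof supplies and what your argument is missing. The paper shows $\lambda_\varphi\in\{\pm1\}$ by a modular-conjugation argument: on $L^2(\cala(I))$ the unitary $\pi_K(u_\varphi)=L^2(\cala(\varphi_0))$ commutes with $J_\cala$, and since $J_\cala$ implements $j$ this yields the identity $\cala(j)(u_\varphi^*)=u_\varphi$ in $\cala(K)$; pushing this through $\rho_K$ gives $J_D\,\rho_K(u_\varphi)\,J_D=\rho_K(u_\varphi)$. Since $L^2(D(\varphi_0))$ also commutes with $J_D$ and $J_D$ is \emph{antilinear}, conjugating the relation $L^2(D(\varphi_0))=\lambda_\varphi\,\rho_K(u_\varphi)$ by $J_D$ forces $\bar\lambda_\varphi=\lambda_\varphi$, hence $\lambda_\varphi\in\{\pm1\}$; only then do continuity and connectedness of $\Diff_0^{\mathrm{sym}}(K)$ give $\lambda_\varphi=1$. (Alternatively, one could try to invoke perfectness of $\Diff_0(K_1)$ to kill the homomorphism $\lambda$ into the abelian group $\U(1)$, but that is a different and heavier input than the contractibility you actually cite, and it is not the paper's route.)
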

\[
\tikzmath{\draw  (-90:1) arc (-90:-270:1);\draw[ultra thick] (-90:1) arc (-90:90:1);\node at (0,0) {$S$};\draw (-118+10:.9) arc (-118+10:-242-10:.9);\node[scale=.9] at (-.7,0) {$K$};
\draw[decorate,decoration=brace] (-1.2,0) --node[left, scale=.9]{$\scriptstyle\supp(\varphi_0)$} +(0,.8);
\draw[decorate,decoration=brace] (-2.5,-.8) --node[left, scale=.9]{$\scriptstyle\supp(\varphi)$} +(0,1.6);
\draw[dotted] (-2.4,-.8) -- +(2,0)(-2.4,.8) -- +(2,0);\draw (180:1.1) arc (180:90:1.1);\draw[ultra thick] (90:1.1) arc (90:0:1.1);\useasboundingbox;\node at (.9,1.1) {$I $};}
\]

\begin{proof}
We first show that the map
\begin{equation}\label{eq: continuous symetric diffeo}
\begin{split}
\Diff_0^\mathrm{sym}(K)&\to \Aut(D(I))\\
\varphi\,\,\,\,\,&\mapsto\,\,\, D(\varphi_0)
\end{split}
\end{equation}
is continuous for the $\mathcal C^\infty$ topology on $\Diff_0^\mathrm{sym}(K)$ and the $u$-topology on $\Aut(D(I))$.
Since $\Ad(u_\varphi)=\cala(\varphi)$, the operator $\rho_K(u_\varphi)$ implements $\varphi$ on $H_0$.
In particular, $D(\varphi_0)$ is the restriction of $\Ad(\rho_K(u_\varphi))$ under the embedding $D(I) \hookrightarrow \bfB(H_0)$.
The map
\[
\Diff_0^\mathrm{sym}(K) \to \U(\cala(K)) \to \U(H_0),\quad\,\,\,\,\varphi \,\mapsto\, u_\varphi \,\mapsto\, \rho_K(u_\varphi)
\]
is continuous and lands in the subgroup $\mathrm{N}:=\{u \in \U(H_0)\,|\, uD(I)u^*=D(I)\}$.
Since $D(\varphi_0)=\mathrm{Ad}(\rho_K(u_\varphi))$ and $\mathrm{Ad}:\mathrm{N}\to \Aut(D(I))$ is continuous~\cite[A.18]{BDH(nets)},
the map \eqref{eq: continuous symetric diffeo} is therefore also continuous.
Recalling~\cite[Prop.~3.5]{Haagerup(1975standard-form)} that $L^2:\Aut(D(I))\to \U(L^2D(I))$ is continuous,
we have therefore shown that
\[
\Diff_0^\mathrm{sym}(K)\to \bfB(H_0),\quad\,\,\,\,
\varphi \mapsto L^2(D(\varphi_0))
\]
is a continuous homomorphism.

Recall that $\rho_K(u_\varphi)$ implements $\varphi$.
By the same argument as in~\cite[\lemLconformalimplementaion]{BDH(nets)}, $L^2(D(\varphi_0))$ also implements $\varphi$.
It follows that
\[
L^2(D(\varphi_0))=\lambda_\varphi\rho_K(u_\varphi)
\]
for some scalar $\lambda_\varphi\in S^1$.
Thus, we get a continuous map
\(
\varphi\mapsto \lambda_\varphi
\)
from the group of symmetric diffeomorphisms of $K$ into $\U(1)$.
Our goal is to show that $\lambda_\varphi=1$.

Let $J_\cala$ and $J_D$ be the modular conjugations on $L^2(\cala(I))$ and $L^2(D(I))$,
and let $\pi_K$ be the natural action of $\cala(K)$ on $L^2(\cala(I))$.
Since $\pi_K(u_\varphi)=L^2(\cala(\varphi_0))$ commutes with $J_\cala$, we have $J_\cala \pi_K(u_\varphi) J_\cala = \pi_K(u_\varphi)$.
Combined with the fact that $J_\cala$ implements $j$~\cite[\lemjisimplemented]{BDH(nets)}, 
this implies the equation
\begin{equation} \label{eq: A(j)(u_phi^*)=u_phi}
\cala(j)(u_\varphi^*)=u_\varphi.
\end{equation}
Applying $\rho_K$ to \eqref{eq: A(j)(u_phi^*)=u_phi}, then, by a straightforward analog of~\cite[\lemjisimplemented]{BDH(nets)}, 
we learn that $J_D\, \rho_K(u_\varphi)\, J_D = \rho_K(u_\varphi)$.
Since both $L^2(D(\varphi_0))$ and $\rho_K(u_\varphi)$ commute with $J_D$ and since the latter is antilinear,
the phase factor $\lambda_\varphi$ must be real. It follows that $\lambda_\varphi\in\{\pm1\}$.

To finish the argument, note that $\Diff_0^\mathrm{sym}(K)$ is connected and that $\{\pm1\}$ is discrete.
The map $\varphi\mapsto \lambda_\varphi$ being continuous, it must therefore be constant.
\end{proof} %!%\CDhmarg{CDTD}

\subsection*{\hspace*{-18pt}Continuity}
Given genuinely bicolored intervals $I$ and $J$, and a neighborhood $N$ of $I_\circ\cap I_\bullet$,
let $\mathrm{Hom}^{(N)}(I,J)$ denote the set of embeddings $I\to J$ that preserve the local coordinate on the whole of $N$
(this only makes sense if $N$ is contained in the domain of definition of the local coordinate).
We equip
$
\mathrm{Hom}_{\INT_{\circ\bullet}}(I,J)=
\bigcup_N \mathrm{Hom}^{(N)}(I,J)
$
with the colimit of the $\mathcal C^\infty$ topologies on $\mathrm{Hom}^{(N)}(I,J)$.

Given two von Neumann algebras $A$ and $B$, the Haagerup $u$-topology on $\mathrm{Hom}_{\VN}(A,B)$
is the topology of pointwise convergence for the induced map on preduals~\cite[Appendix]{BDH(nets)}.

\begin{proposition}[Continuity for defects]\label{prop:Continuity for defects}
Let $D \colon \INT_{\circ\bullet} \to \VN$ be a defect.
Then $D$ is a continuous functor: for bicolored intervals $I$ and $J$ the map
\begin{equation*}
\mathrm{Hom}_{\INT_{\circ\bullet}}(I,J) \to \mathrm{Hom}_{\VN}(D(I),D(J))
\end{equation*}
is continuous with the above topology on $\mathrm{Hom}_{\INT_{\circ\bullet}}(I,J)$ and with Haagerup's $u$-topology on $\mathrm{Hom}_{\VN}(D(I),D(J))$.
\end{proposition}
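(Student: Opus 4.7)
The plan is to reduce the claim, step by step, to the known continuity of $\cala$ and $\calb$, together with strong additivity. First, if either $I$ or $J$ is entirely white or entirely black, then $\mathrm{Hom}_{\INT_{\circ\bullet}}(I,J)$ either coincides with the corresponding morphism set in $\INT$ or is empty, so continuity reduces immediately to the continuity of $\cala$ or of $\calb$. Assume henceforth that $I$ and $J$ are both genuinely bicolored. Since the source topology is a colimit over $N$, it is enough to prove continuity of the restriction $D\colon\mathrm{Hom}^{(N)}(I,J)\to\mathrm{Hom}_{\VN}(D(I),D(J))$ for each neighborhood $N$ of the color-change point. Given a convergent sequence $f_n\to f$ in $\mathrm{Hom}^{(N)}(I,J)$, all the $f_n$ coincide with $f$ on the image of $N$, so an isotopy-extension argument (after possibly enlarging $J$ to a slightly bigger bicolored interval so as to accommodate the boundaries) produces, for $n$ large, diffeomorphisms $\psi_n\in\Diff_0(J)$ that fix $f(N)$ pointwise, satisfy $\psi_n\circ f=f_n$, and tend to $\mathrm{id}_J$ in the $\calc^\infty$ topology. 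By functoriality $D(f_n)=D(\psi_n)\circ D(f)$, and since composition is separately continuous in the $u$-topology, the problem reduces to showing $D(\psi_n)\to\mathrm{id}_{D(J)}$.

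Because $\psi_n$ fixes a neighborhood of the color-change point of $J$, I next factor $\psi_n=\psi_{n,\circ}\circ\psi_{n,\bullet}$, where $\psi_{n,\circ}\in\Diff_0(J)$ is supported in the interior of $J_\circ$ and $\psi_{n,\bullet}\in\Diff_0(J)$ is supported in the interior of $J_\bullet$, both tending to $\mathrm{id}_J$ in $\calc^\infty$.  The automorphism $D(\psi_{n,\circ})$ of $D(J)$ restricts on the subalgebra $\cala(J_\circ)\subseteq D(J)$ to $\cala(\psi_{n,\circ}|_{J_\circ})$, and acts as the identity on $\calb(J_\bullet)$ since $\psi_{n,\circ}|_{J_\bullet}=\mathrm{id}$.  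By the continuity axiom for the conformal net $\cala$ (and trivially for $\calb$), the restrictions of $D(\psi_{n,\circ})$ to $\cala(J_\circ)$ and to $\calb(J_\bullet)$ tend to the identity in the $u$-topology of $\mathrm{Hom}_{\VN}$; the analogous statement holds for $D(\psi_{n,\bullet})$ with the roles of the two colors swapped.

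The step I expect to be the main obstacle is upgrading these convergences on the two generating subalgebras to $u$-convergence of $D(\psi_{n,\circ})$ in $\Aut(D(J))$, since $u$-convergence is norm convergence on the predual and not merely pointwise $\sigma$-weak convergence on the algebra. By strong additivity, $\cala(J_\circ)$ and $\calb(J_\bullet)$ topologically generate $D(J)$, and the $D(\psi_{n,\circ})$ are automorphisms and hence uniformly bounded, so a Kaplansky-density argument combined with the joint $\sigma$-strong-$*$ continuity of multiplication on bounded sets should extend the convergence to all of $D(J)$.  If the passage from pointwise convergence to norm convergence on the predual turns out to be delicate, an alternative route is to invoke inner covariance (Proposition~\ref{prop:inncovdef}) to write $D(\psi_{n,\circ})=\mathrm{Ad}(u_n)$ for a unitary $u_n\in\cala(J^+)$ on a slightly enlarged white interval $J^+$, and then show $u_n\to 1$ by passing through $L^2\cala(J^+)$ and applying the continuity of $L^2\colon\Aut\to\U$ from~\cite[Prop.~3.5]{Haagerup(1975standard-form)}, in direct analogy with the canonical-quantization argument of Lemma~\ref{lem: canonical quantization of the symmetric diffeomorphism}.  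Combining the convergences of $D(\psi_{n,\circ})$ and $D(\psi_{n,\bullet})$ then yields $D(\psi_n)\to\mathrm{id}$, and hence $D(f_n)\to D(f)$.
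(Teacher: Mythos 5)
Your overall reduction---pass to $\mathrm{Hom}^{(N)}(I,J)$, write $f_n=\psi_n\circ f$ with $\psi_n\to\mathrm{id}$ equal to the identity near the colour-change point, factor $\psi_n=\psi_{n,\circ}\circ\psi_{n,\bullet}$ into pieces supported in the white and black parts, and treat each colour separately---is exactly the structure of the paper's proof (the paper works inside an ambient bicolored interval $K$, uses the decomposition $\Diff^{(N)}_0(K)=\Diff_\circ\times\Diff_\bullet$, and controls the non-uniqueness of the extensions with a filter argument, but these are the same devices). The divergence is in how one proves $D(\psi_{n,\circ})\to\mathrm{id}$ in the $u$-topology on $\Aut(D(J))$, and there your primary route has a genuine gap.

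The Kaplansky-density argument cannot deliver $u$-convergence. Convergence of $D(\psi_{n,\circ})|_{\cala(J_\circ)}$ in the $u$-topology of $\Aut(\cala(J_\circ))$ is a statement about functionals in $\cala(J_\circ)_*$; it tells you nothing directly about $\|\xi\circ D(\psi_{n,\circ})-\xi\|$ for $\xi\in D(J)_*$, since $\xi$ is not determined by its restrictions to the two generating subalgebras. What Kaplansky density plus joint $\sigma$-strong-$*$ continuity of multiplication on bounded sets can give you is $D(\psi_{n,\circ})(x)\to x$ $\sigma$-weakly for each $x\in D(J)$, i.e.\ weak-$*$ convergence of $\xi\circ D(\psi_{n,\circ})$ in $D(J)^*$---strictly weaker than the norm convergence in the predual that the $u$-topology demands. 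Your fallback route is the one that works and is what the paper does, with one refinement you should make explicit: inner covariance produces implementing unitaries only up to the relative commutant, with no continuity in $\psi$, so you must take the \emph{canonical} quantization $u_\psi:=L^2\cala(\psi_0)$ of a symmetric extension of $\psi_{n,\circ}$. Then $\psi\mapsto u_\psi$ is continuous by Haagerup, $\rho(u_\psi)$ lies in the normalizer $\mathrm{N}=\{u\in\U(H_0)\mid uD(J)u^*=D(J)\}$, and $\Ad:\mathrm{N}\to\Aut(D(J))$ is continuous; this is precisely the continuity of the map \eqref{eq: continuous symetric diffeo} established in the proof of Lemma~\ref{lem: canonical quantization of the symmetric diffeomorphism}, which the paper's proof of the present proposition then invokes.
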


\begin{proof}
For every $N$ as above, we need to show that the map $D:\mathrm{Hom}^{(N)}(I,J)\to \mathrm{Hom}_{\VN}(D(I),D(J))$ is continuous. 
We argue as in~\cite[\lemequivdefofcontinuityaxiom]{BDH(nets)}.
Pick a bicolored interval $K$,  and identify $I$ and $J$ with subintervals of $K$ via some fixed embeddings into its interior.
Given a generalized sequence  $\varphi_i \in \mathrm{Hom}^{(N)}(I,J)$, $i\in\mathcal I$, with  limit $\varphi$, and given a vector $\xi$ in the predual of $D(J)$, 
we need to show that $D(\varphi_i)_*(\xi)$ converges to  $D(\varphi)_*(\xi)$ in $D(I)_*$.

Let $\Diff^{(N)}_0(K)$ be the subgroup of diffeomorphisms of $K$ that fix $N$ and also fix a neighborhood of $\partial K$.
Pick an extension $\hat \varphi \in \Diff^{(N)}_0(K)$ of $\varphi$,
and let $\hat\varphi_{n,i}\in \Diff^{(N)}_0(K)$, $n \in \IN$, be extensions of $\varphi_i$ such that
$\|\hat\varphi_{n,i}-\hat \varphi\|_{\mathcal C^n}<\|\varphi_{i}- \varphi\|_{\mathcal C^n}$, where $\|\,\,\|_{\mathcal C^n}$ is any norm that induces the $\mathcal C^n$ topology.
Letting $F$ be the filter on $\mathbb N\times \mathcal I$ generated by the sets $\{(n,i)\in \IN\times \mathcal I\,|\,n\ge n_0, i\ge i_0(n)\}$ (see \cite[\lemequivdefofcontinuityaxiom]{BDH(nets)}),
then $F\text{-lim}\,\,\hat\varphi_{n,i}=\hat\varphi$ in the $\calc^\infty$-topology.

Write $\Diff^{(N)}_0(K)$ as $\Diff_\circ\times \Diff_\bullet$, where $\Diff_\circ$ is the subgroup of $\Diff^{(N)}_0(K)$ consisting of diffeomorphisms whose support is contained in the white part,
and $\Diff_\bullet$ is the subgroup of diffeomorphisms whose support is contained in the black part.
The continuity of \eqref{eq: continuous symetric diffeo} shows that the map $\Diff^{(N)}_0(K)\to\Aut(D(K)):\psi \mapsto D(\psi)$ is continuous when restricted to either $\Diff_\circ$ or $\Diff_\bullet$.
The composite
\[
\Diff^{(N)}_0(K)=\Diff_\circ\times \Diff_\bullet\rightarrow \Aut(D(K))\times \Aut(D(K))\xrightarrow{\mathit{mult.}\!} \Aut(D(K))
\] 
is therefore also continuous.
It follows that $F\text{-lim}\,\,D(\hat \varphi_{n,i})=D(\hat \varphi)$ in the $u$-topology on $\Aut(D(K))$.
Given a lift $\hat\xi\in D(K)_*$ of $\xi$, the vectors $D(\hat\varphi_{n,i})_*(\hat\xi)$ therefore converge to $D(\hat\varphi)_*(\hat\xi)$.
Composing with the projection  $\pi:D(K)_*\twoheadrightarrow D(I)_*$, it follows that  $D(\varphi_i)_*(\xi)=\pi(D(\hat\varphi_{n,i})_*(\hat\xi))$  converges to $\pi(D(\hat\varphi)_*(\hat\xi))=D(\varphi)_*(\xi)$.  
\end{proof} %!%\CDhmarg{CDTD}

\section{Examples of defects}\label{sec: examples of defects}

\subsection*{\hspace*{-18pt}Von Neumann algebras as defects, free boundaries, and defects coming from conformal embeddings}
The trivial conformal net $\underline \IC$ evaluates to $\IC$ on every interval~\cite[\extrivialconfnet]{BDH(nets)}.

\begin{proposition} \label{prop: CCdefects == VNalg}
  There is a one-to-one correspondence (really an equivalence of categories) between 
  $\underline \IC$-$\underline \IC$-defects and von Neumann algebras.
\end{proposition}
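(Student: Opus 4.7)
The plan is to construct explicit functors in both directions and verify they are mutually quasi-inverse. In one direction, the functor $\Phi$ sends a $\underline\IC$--$\underline\IC$-defect $D$ to the von Neumann algebra $D(I_0)$ for a fixed reference genuinely bicolored interval $I_0$. In the other direction, the functor $\Psi$ assigns to a von Neumann algebra $A$ the defect $D_A$ with $D_A(I):=A$ whenever $I\in\INT_{\halfbullet}$ and $D_A(I):=\IC$ whenever $I\in\INT_\circ\cup\INT_\bullet$.

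For $\Phi$ the key step will be to show that if $f\colon J\hookrightarrow I$ is any morphism in $\INT_{\circ\bullet}$ between two genuinely bicolored intervals, then $D(f)$ is an isomorphism. To establish this, I will enlarge $f(J)$ to all of $I$ by choosing a monochromatic white subinterval $K_\circ\subset I_\circ$ and a monochromatic black subinterval $K_\bullet\subset I_\bullet$, each overlapping $f(J)$ in a subinterval with nonempty interior and together with $f(J)$ covering $I$; since $D(K_\circ)=D(K_\bullet)=\IC$, iterated applications of strong additivity force the image of $D(f)$ to generate $D(I)$, while isotony gives injectivity. Combined with Proposition~\ref{prop:inncovdef}, this shows that $D(I_0)$ is independent of the choice of $I_0$ up to canonical isomorphism, and that $\Phi$ extends to a functor on morphisms of defects in the obvious way.

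For $\Psi$ I must specify $D_A$ on morphisms: embeddings of a monochromatic interval into a monochromatic interval go to $\id_\IC$; embeddings of a monochromatic interval into a genuinely bicolored interval go to the unit $\IC\to A$; and embeddings between two genuinely bicolored intervals go to the identity map of the underlying set of $A$, viewed as a homomorphism when the embedding is orientation-preserving and as an antihomomorphism when it is orientation-reversing. The defect axioms are then routine to verify: isotony is clear, locality holds because at most one of two subintervals of a genuinely bicolored interval with disjoint interiors can itself be genuinely bicolored (so at least one of $D_A(J), D_A(K)$ equals $\IC$ and commutes with everything), strong additivity holds because any nondegenerate cover of a genuinely bicolored interval contains at least one piece which is itself genuinely bicolored and hence assigned $A$, and the vacuum sector axiom is automatic since in this setting the algebraic tensor product $D_A(J)\otimes_\alg D_A(\bar J)=\IC\otimes\IC$ extends trivially to $D_A(J\cup\bar J)=\IC$.

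The composite $\Phi\circ\Psi$ gives back $A$ on the nose, and the identifications from the forward direction provide a canonical natural isomorphism $\Psi\circ\Phi(D)\cong D$; passing to morphisms of defects and homomorphisms of von Neumann algebras then upgrades this to the claimed equivalence of categories. The main obstacle I anticipate is the orientation bookkeeping for $\Psi$: morphisms in $\INT_{\circ\bullet}$ between genuinely bicolored intervals always respect the local coordinate at the color-change point, so one must carefully compare the intrinsic orientation of a genuinely bicolored interval with the orientation of its local coordinate in order to decide whether the identity of $A$ should be interpreted as a homomorphism or as an antihomomorphism, and to check that this assignment is compatible with composition. Once this is handled consistently, $D_A$ is a well-defined functor to $\VN$ and the equivalence is established.
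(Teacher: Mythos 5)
Your overall strategy coincides with the paper's: both directions hinge on showing that every embedding between genuinely bicolored intervals induces an isomorphism (the paper deduces this from Haag duality, Proposition~\ref{prop: [Haag duality for defects]}, by taking the relative commutant of $D(K)=\IC$; your strong-additivity argument with a white and a black monochromatic collar is an equally valid and slightly more elementary route). However, there are two genuine defects. The more serious one is in your inverse functor $\Psi$: you set $D_A(I):=A$ for \emph{every} genuinely bicolored $I$ and propose to send an orientation-reversing embedding between two such intervals to ``the identity map of the underlying set of $A$, viewed as an antihomomorphism.'' The identity map of $A$ is an antihomomorphism only when $A$ is commutative, so for a general von Neumann algebra this assignment is not a morphism of $\VN$ and $\Psi$ is not defined. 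You correctly anticipate that orientation bookkeeping is the crux, but the resolution cannot be a reinterpretation of the identity map: one must change the \emph{value} of the functor. The paper does this by letting $\underline{A}$ take the value $A$ on genuinely bicolored intervals whose local coordinate preserves orientation and the complex conjugate $\overline{A}$ on those whose local coordinate reverses it; the structure maps are then honest (anti)homomorphisms such as $a\mapsto a^*$.

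The second, smaller, gap is your appeal to Proposition~\ref{prop:inncovdef} to get that $D(I_0)$ is independent of $I_0$ ``up to canonical isomorphism.'' Inner covariance only tells you that $D(\varphi)$ is \emph{inner}, which does not give canonicity. What is actually needed (and what the paper proves) is that $D(\phi)=\mathrm{Id}_{D(I)}$ for every self-embedding $\phi\colon I\to I$ in $\INT_{\circ\bullet}$: since morphisms intertwine the local coordinates, $\phi$ restricts to the identity on a neighborhood $J$ of $I_\circ\cap I_\bullet$, and since $D(J)\to D(I)$ is already known to be an isomorphism, the triangle $D(J)\to D(I)\xrightarrow{D(\phi)}D(I)$ forces $D(\phi)=\mathrm{Id}$. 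You have all the ingredients for this argument, but as written the cited proposition does not deliver it.
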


\begin{proof}
Given a von Neumann algebra $A$, the associated defect is
\[
\underline A(I) :=
\begin{cases}
\,\IC&\quad\text{if }\, I\in\INT_\circ\,\text{ or }\,I\in\INT_\bullet\\
A &\quad\text{if }\, I\in\INT_\halfbullet\,\text{ and the local coordinate is orientation preserving}\\
A^\op &\quad\text{if }\, I\in\INT_\halfbullet\,\text{ and the local coordinate is orientation reversing}\\
\end{cases}
\] 
where $A^\op$ denotes the opposite of $A$.

Conversely, let $D$ be a $\underline \IC$-$\underline \IC$-defect.
Given a bicolored interval $I$, the orientation reversing map $\mathrm{Id}_I:I\to -I$ identifies $D(-I)$ with ${D(I)}^\op$, where $-I$ denotes $I$ with opposite orientation.
So we just need to show is that the restriction of $D$ to the subcategory of genuinely bicolored intervals with orientation preserving maps (compatible with the local coordinates) is equivalent to a constant functor.
By applying Proposition \ref{prop: [Haag duality for defects]}, we see that every embedding $J\rightarrow I$ between two such intervals induces an isomorphism $D(J)\to D(I)$.

To finish the proof, we need to check that $D(\phi)=\mathrm{Id}_{D(I)}$ for any $\phi:I\rightarrow I$.
Pick a neighborhood $J\subset I$ of $I_\circ\cap I_\bullet$ on which $\phi$ is the identity. 
Then the two arrows $D(J)\to D(I)$ in the commutative diagram
\[
\def\hh{2}\def\vv{1}
\tikzmath{
\draw[->] 
(-\hh,0) node (a) {$D(I)$} 
(\hh,0) node (b) {$D(I)$} 
(0,-\vv) node (c) {$D(J)$}
(a.east) --node[above]{$\scriptstyle D(\phi)$} (b.west);
\draw[->] (c) --node[below]{$\scriptstyle \cong$} (a);
\draw[->] (c) --node[below]{$\scriptstyle \cong$} (b);
}
\]
are equal to each other, showing that $D(\phi)=\mathrm{Id}$.
\end{proof} 

\begin{proposition} \label{prop: free boundary condition}
\def\overharp{\overset{\raisebox{-1mm}{$\,\scriptstyle\leftharpoonup$}}} 
Let $\cala$ be a conformal net.
Then the functor $\overharp \cala:\INT_{\circ\bullet}\to \VN$ given by
\[
\overharp \cala(I) :=
\begin{cases}
\cala(I_\circ)&\quad\text{\rm if }\, I_\circ\not = \emptyset,\\ \,\,\IC &\quad\text{\rm otherwise}
\end{cases}
\]
is an $\cala$-$\underline{\IC}$-defect.
\end{proposition}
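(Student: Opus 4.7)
\def\overharp{\overset{\raisebox{-1mm}{$\,\scriptstyle\leftharpoonup$}}}
My plan is to define $\overharp\cala$ on morphisms and then verify each of the four defect axioms by reducing to the corresponding property of $\cala$ applied to the white part $I_\circ$ of each bicolored interval $I$. For a color-preserving embedding $f\colon J\hookrightarrow I$, set $\overharp\cala(f):=\cala(f|_{J_\circ})$ when $I_\circ\neq\emptyset$ and $\overharp\cala(f):=\id_\IC$ otherwise; the restrictions to $\INT_\circ$ and $\INT_\bullet$ then visibly coincide with $\cala$ and $\underline\IC$, and isotony for $\overharp\cala$ is immediate from isotony for $\cala$. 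For locality and strong additivity, I would use the observation that every subinterval of $I$ is either entirely white, entirely black, or genuinely bicolored (containing the unique color change point of $I$ in its interior), and that two subintervals with disjoint interiors cannot both be genuinely bicolored: the assertion is trivial whenever one of the two subalgebras is $\IC$, and otherwise reduces to the corresponding property of $\cala$ applied to white parts inside $I_\circ$.

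The main step is the vacuum sector axiom. For $(S,I,j)$ and $J\in\INT_\circ\cup\INT_\bullet$ with $J\subset I$ and $J\cap\dd I$ a single point, I split into two cases. If $J$ is entirely black, then $\bar J=j(J)$ and $J\cup\bar J$ are also entirely black, so the required extension of $\IC\ox_{\alg}\IC\to\bfB(L^2\overharp\cala(I))$ to $\IC=\overharp\cala(J\cup\bar J)$ is the canonical scalar action. If $J$ is entirely white, I would introduce the auxiliary smooth circle
\[
\tilde S\;:=\;I_\circ\cup_{\dd I_\circ}I'_\circ,
\]
obtained by gluing $I_\circ$ to $I'_\circ=j(I_\circ)$ along their common boundary via $j$; equivalently, $\tilde S$ is $S_\circ$ with its two color-change endpoints identified to a single point. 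The involution $j$ descends to an orientation-reversing involution $\tilde j$ of $\tilde S$ fixing $\dd\tilde I:=\dd I_\circ$, where $\tilde I:=I_\circ$. Since $L^2\cala(\tilde I)=L^2\cala(I_\circ)=L^2\overharp\cala(I)$ and the natural left/right actions of $\cala(I_\circ)$ on this Hilbert space match the maps $\alpha$ and $\beta$ of~\eqref{eq:actions-on-L2} for $\overharp\cala$, applying the vacuum sector axiom for the conformal net $\cala$ to $(\tilde S,\tilde I,\tilde j)$ with subinterval $\tilde J:=J$ (so that $\tilde j(\tilde J)=\bar J$) produces the required extension to $\cala(J\cup\bar J)=\overharp\cala(J\cup\bar J)$.

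The main obstacle I anticipate is carefully setting up the smooth structure on the doubled circle $\tilde S$ so that the gluing maps are smooth and the orientation conventions match when identifying $\bar J$ with $\tilde j(\tilde J)$. One also has to treat the edge case $J=I_\circ$ separately: then $\tilde J$ exhausts $\tilde I$ and the vacuum axiom for $\cala$ does not apply directly, but this case can be handled by a limiting argument from proper subintervals exhausting $I_\circ$, combined with strong additivity for $\cala$.
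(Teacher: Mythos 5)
Your argument is correct and is essentially the paper's proof written out in detail: the paper disposes of this proposition in a single sentence, asserting that the defect axioms follow immediately from the corresponding axioms for $\cala$ (applied to the white parts $I_\circ$), which is precisely the reduction you carry out. The only stylistic difference is that your doubled circle $\tilde S$ is unnecessary scaffolding, since the vacuum-sector axiom for conformal nets is stated directly for an interval and a subinterval containing a boundary point and so applies to $J\subset I_\circ$ without any gluing; this does not affect correctness.
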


\begin{proof}
The axioms for defects follow immediately from the corresponding axioms for $\cala$ (Appendix \ref{subsec:defnets}).
\end{proof}

Conformal embeddings provide examples of defects.
Recall that a morphism of conformal nets $\tau:\cala\to \calb$ is a called a \emph{conformal embedding} \cite[\S1.5]{BDH(nets)} if
\[
\mathrm{Ad}(u) = \cala(\varphi)\quad\Rightarrow\quad \mathrm{Ad} (\tau_I(u)) = \calb(\varphi)
\]
for every diffeomorphism $\varphi\in\Diff_0(I)$ and unitary $u\in \cala(I)$.

\begin{proposition}\label{prop: conformal inclusion defect}
Let $\cala$ and $\calb$ be conformal nets
and let $\tau:\cala\to\calb$ be a conformal embedding. Then
\begin{equation}\label{eq: D_tau}
D_\tau(I):=\begin{cases}
\cala(I)\quad\text{\rm for }\, I\in\INT_\circ\\
\calb(I)\quad\text{\rm for }\, I\in\INT_\halfbullet \cup \INT_\bullet
\end{cases}
\end{equation}
is an $\cala$-$\calb$-defect.
\end{proposition}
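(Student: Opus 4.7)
The plan is to verify each of the four defect axioms of Definition~\ref{def:Defect} for $D_\tau$, reducing them to the corresponding axioms for $\cala$ and $\calb$ together with the conformal embedding property of $\tau$. \emph{Isotony} is immediate: if $I$ is bicolored or black then $D_\tau(I)=\calb(I)$, and a color-preserving embedding $f\colon J\hookrightarrow I$ of genuinely bicolored intervals gives $D_\tau(f)=\calb(f)$, which is injective by isotony for $\calb$. \emph{Locality} is similarly immediate: if $J,K\subseteq I$ have disjoint interiors, the images of $D_\tau(J)$ and $D_\tau(K)$ in $D_\tau(I)$ lie inside $\calb(J)$ and $\calb(K)$ (via $\tau$ whenever the corresponding subinterval is white), and commute by locality for $\calb$.

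For the \emph{vacuum sector axiom}, I would identify $L^2 D_\tau(I)=L^2\calb(I)$, which is itself a vacuum sector for $\calb$. For a black subinterval $J\subseteq I$, the required extension of $\alpha\ox\beta$ to $D_\tau(J\cup \bar J)=\calb(J\cup\bar J)$ is just the vacuum sector axiom for $\calb$. For a white subinterval $J\subseteq I$, the naturality of $\tau$ as a morphism of conformal nets yields a commutative square whose top row is the multiplication $\cala(J)\ox_\alg \cala(\bar J)\to \cala(J\cup \bar J)$, whose bottom row is $\calb(J)\ox_\alg \calb(\bar J)\to \calb(J\cup \bar J)$, and whose vertical maps are $\tau\ox\tau$ and $\tau$. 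Composing with the action $\calb(J\cup \bar J)\to \bfB(L^2\calb(I))$ then gives the required extension for $D_\tau$.

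For \emph{strong additivity}, when $I$ is entirely one color every cover $I=J\cup K$ consists of intervals of that color, and the statement follows directly from strong additivity of $\cala$ or $\calb$. When $I$ is bicolored, the only covers $I=J\cup K$ not immediately handled by strong additivity of $\calb$ are those for which at least one of $J$, $K$ is entirely white. Applying strong additivity of $\cala$ within $I_\circ$ allows one to enlarge the white piece to all of $I_\circ$, and the resulting extra algebra factors through $\calb(K)$. In this way every problematic case reduces to the single identity
\[
\tau(\cala(I_\circ))\,\vee\, \calb(I_\bullet) \,=\, \calb(I).
\]

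The \emph{main obstacle} is the proof of this identity. It is a ``strong additivity of a subnet'' statement which fails for a general morphism of conformal nets; it is precisely here that the defining property of a conformal embedding must be invoked. Specifically, since any unitary $u\in\cala(J)$ implementing a diffeomorphism $\varphi\in\Diff_0(J)$ for $\cala$ has the property that $\tau(u)\in\calb(J)$ implements the same $\varphi$ for $\calb$, the algebra $\tau(\cala(I_\circ))$ contains enough diffeomorphism implementers that, combined with $\calb(I_\bullet)$ and strong additivity of $\calb$, it generates all of $\calb(I)$.
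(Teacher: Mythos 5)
Your overall architecture is the same as the paper's: isotony, locality, and the vacuum sector axiom do follow directly from the corresponding axioms for $\calb$ (with naturality of $\tau$ handling the white-subinterval case of the vacuum axiom, exactly as you describe), and strong additivity does correctly reduce to the single identity $\tau(\cala(I_\circ))\vee\calb(I_\bullet)=\calb(I)$. The problem is your final paragraph, which is the entire mathematical content of the proposition, and whose mechanism as described does not work. Unitaries in $\tau(\cala(I_\circ))$ implementing diffeomorphisms in $\Diff_0(I_\circ)$ are useless here: such diffeomorphisms fix a neighborhood of $\partial I_\circ$, in particular of the color-change point, so conjugation by their implementers commutes with $\calb(I_\bullet)$ and cannot move anything across that point; and ``strong additivity of $\calb$'' only says $\calb(I)=\calb(I_\circ)\vee\calb(I_\bullet)$, leaving untouched the real question of why $\calb(I_\circ)$ (or enough of it) lies in the join.

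The argument that closes this (and is the paper's) differs in two essential respects. First, for each interior point $x$ of $I_\circ$ one takes a diffeomorphism $\varphi_x\in\Diff_0(I)$ of the \emph{whole} interval that moves the color-change point to $x$, and a unitary $u_x\in\cala(I)$ implementing $\cala(\varphi_x)$; it is strong additivity of $\cala$ (not of $\calb$) that yields $u_x\in\cala(I_\circ)\vee\cala(I_\bullet)$, hence $\tau(u_x)\in\tau(\cala(I_\circ))\vee\calb(I_\bullet)$. The conformal-embedding hypothesis then gives $\Ad(\tau(u_x))=\calb(\varphi_x)$, so conjugation carries $\calb(I_\bullet)$ onto $\calb(\varphi_x(I_\bullet))$, an interval that now straddles the color-change point and reaches down to $x$. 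Second, one still needs a limiting step: letting $x$ tend to the far endpoint of $I_\circ$ and invoking the lemma from \cite{BDH(nets)} that boundary points are irrelevant, i.e.\ $\calb([0,2])=\bigvee_{x\in(0,1)}\calb([x,2])$, to conclude that these conjugates generate all of $\calb(I)$. Your sketch contains neither the correct localization of the implementers nor this limit, so as written the key step does not go through.
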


\begin{proof}
The axioms of isotony, locality, and vacuum sector follow directly from the corresponding axioms for $\calb$.
It remains to prove strong additivity.
We need to show that
\[
\tau_{[0,1]}(\cala([0,1]))\vee\calb([1,2])=\calb([0,2]).
\]

For every point $x\in (0,1)$, pick a diffeomorphism $\varphi_x\in\Diff_0([0,2])$ sending $1$ to $x$, and let $u_x\in\cala([0,2])$ be a unitary implementing $\cala(\varphi_x)$.
Since $\tau$ is a conformal embedding, we then have $u_xbu_x^*=\calb(\varphi)(b)$ for all $b\in \calb([0,2])$.
Moreover, since
\[
u_x\in \cala([0,2])=\cala([0,1])\vee\cala([1,2])\subset\cala([0,1])\vee\calb([1,2])
\]
and since $u_x$ conjugates $\calb([1,2])$ to $\calb([x,2])$, we have $\calb([x,2])\subset \cala([0,1])\vee\calb([1,2])$.
The argument being applicable to any $x\in (0,1)$, 
it follows from \cite[\lemirrelevanceofpoints]{BDH(nets)} that
\[
\calb([0,2])=\bigvee_{x\in(0,1)}\calb([x,2])\subset \cala([0,1])\vee\calb([1,2]). \qedhere
\]
\end{proof}

\subsection*{\hspace*{-18pt}Defects from $Q$-systems}
Longo and Rehren
\cite{Longo(A-duality-for-Hopf-algebras-and-for-subfactors),
            Longo-Rehren(Nets-of-subfactors)}
showed that given a conformal net $\cala$ and a unitary Frobenius algebra 
object in the category of $\cala$-sectors,
one can construct an extension $\cala\subset \calb$, where $\calb$ is a possibly non-local conformal net.

Here, the unitary Frobenius algebra object is an 
$\cala$-sector $A$ along with unit and multiplication maps
$\eta:H_0\to A$, $\mu:A\boxtimes A\to A$
subject to the relations
\begin{alignat*}{3}
&\mu(\eta\boxtimes \id_A)=\mu(1\boxtimes \eta)=\id_A && \text{(unitality)}\\
&\mu(\mu\boxtimes \id_A)=\mu(\id_A\boxtimes \mu) && \text{(associativity)}\\
&\mu^* \mu=(\id_A\boxtimes \mu)(\mu^*\boxtimes \id_A) &\,\,\,\,& \text{(Frobenius)},
\end{alignat*}
and the normalization $\mu\mu^*=\eta^*\eta\cdot\id_A$.\footnote{This last condition is only appropriate for simple Frobenius algebra objects. The $Q$-systems considered below will all correspond to simple Frobenius algebra objects.}
Here, $H_0$ is the vacuum sector of $\cala$ (the identity sector on the identity defect; see Section~\ref{sec: The category CN2 of sectors})
and $\boxtimes$ is the operation of vertical fusion (see Section~\ref{sec: Vertical fusion}).

If one encodes, as is usually done in the literature, the $\cala$-sector $A$ by a 
localized endomorphism $\theta:\cala([0,1])\to \cala([0,1])$,\footnote{Here, `localized' means that $\theta$ restricts to the identity map on $\cala([0,\epsilon])$ and $\cala([1-\epsilon,1])$.  One recovers the sector $A$ from the endomorphism $\theta$ by letting the underlying Hilbert space of $A$ be $H_0$ and twisting the action on the top half of the circle (identified with $[0,1]$) by $\theta$.}
then a unitary Frobenius structure on $A$ can be specified by a choice of two elements $w,x\in\cala([0,1])$ ($w$ is the unit and $x^*$ is the multiplication) satisfying the relations \cite[(4.1)]{Bischoff-Kawahigashi-Longo-Rehren(Phase-boundaries-in-algebraic-conformal-QFT)}:
\begin{alignat*}{3}
\quad\,\, wa=\theta(a)w,\,       x\theta(&a)=\theta^2 (a)x,     \,\,\forall a\in\cala([0,&&1])\,\, \text{(source and target of $w$ and $x$)}\\
&w^*x=\theta(w^*)x=1       &&\text{(unitality)}\\
&xx=\theta(x)x                   &&\text{(associativity)}\\
&xx^*=\theta(x^*)x                 &&\text{(Frobenius)}\\
&w^*w=x^*x=d\cdot1     &&\text{(normalization)}
\end{alignat*}
for some scalar $d$.
A triple $(\theta,w,x)$ subject to this set of equations is called a 
$Q$-system~\cite{Longo(A-duality-for-Hopf-algebras-and-for-subfactors)}.

We now use the $Q$-system $(\theta,w,x)$ to construct an $\cala$-$\cala$-defect $D$.
Given a genuinely bicolored interval $I$, we use the local coordinate to construct
a new interval $I^+ := I_\circ \cup [0,1] \cup I_\bullet$.
As $\theta$ is localized, there is a unique extension $\theta^+$ of $\theta$ to
$\cala(I^+)$.
It is determined by requiring $\theta^+(a) = a$ for $a \in \cala(K)$ with 
$K \subset I^+ \setminus (0,1)$ and $\theta^+(a) = \theta(a)$ for $a \in \cala([0,1])$.
We define $D(I)$ as the algebra generated by $\cala(I^+)$ and one extra element $v$, 
subject to the relations
\cite[(4.3)]{Bischoff-Kawahigashi-Longo-Rehren(Phase-boundaries-in-algebraic-conformal-QFT)}:
\begin{equation}\label{eq:  va=...}
\begin{split}
va=\theta(a)v\quad  \forall a\in\cala&(I^+),\\
v^*=w^*x^*v,\quad vv=xv,\quad &w^*v=1.
\end{split}
\end{equation}
The Longo--Rehren non-local extension $\cala \subset \calb$ associated to the $Q$-system is defined such that $\calb(I^+) = D(I)$~\cite{Longo-Rehren(Nets-of-subfactors)}.

\begin{proposition} \label{prop:non-local-ext-defect}
  Let $\cala$  be a conformal net, and let $(\theta,w,x)$ be a $Q$-system in $\cala([0,1])$.
  Then $D$, as defined above, is a $\cala$-$\cala$-defect.
\end{proposition}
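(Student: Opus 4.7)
My plan is to exhibit $D(I)$ as the value of the Longo--Rehren non-local extension $\calb \supset \cala$ (associated to the $Q$-system) on a suitably stretched interval, and then transport axioms. Precisely, for a genuinely bicolored $I$ with local coordinate $c \colon (-\e,\e)\hookrightarrow I$, the presentation \eqref{eq:  va=...} is exactly the presentation of $\calb(I^+)$ in \cite{Longo-Rehren(Nets-of-subfactors)}, so $D(I) \cong \calb(I^+)$ with $I^+ := I_\circ \cup [0,1] \cup I_\bullet$ glued via $c$. A morphism $f \colon J \hookrightarrow I$ of genuinely bicolored intervals extends to $f^+ \colon J^+\hookrightarrow I^+$ by using $f$ on $J_\circ \cup J_\bullet$ and the identity on the inserted $[0,1]$; this is well defined because $f$ intertwines local coordinates near $0$, and it makes $D(f) = \calb(f^+)$. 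Functoriality and the fact that $D$ restricts to $\cala$ on $\INT_\circ \cup \INT_\bullet$ are built into this identification.

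Isotony then follows from injectivity of $\calb$, which is a standard property of the Longo--Rehren extension. For locality, let $J, K\subset I$ have disjoint interiors; at most one of them, say $J$, contains the color-change point. Then $D(K) = \cala(K)$ is supported in $I^+$ disjointly from the interior of the inserted $[0,1]$ (where $\theta^+ = \id$), so the relation $va = \theta^+(a) v$ reduces to $va = av$ for $a\in \cala(K)$, while $\cala(J^+)$ commutes with $\cala(K)$ by locality of $\cala$ in $I^+$. For strong additivity, decompose $I = J \cup K$: in the generic case where (say) $J$ is genuinely bicolored and has the color-change point in its interior, one has $v\in D(J)$ and also $\cala(I^+) = \cala(J^+)\vee \cala(K^+)$ by strong additivity of $\cala$, so $D(J)\vee D(K) = D(I)$. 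The remaining boundary case, where $J \cap K$ collapses to the color-change point and both pieces are monochromatic, is handled using the $Q$-system relation $w^*v = 1$ together with strong additivity of $\cala$ to recover $v$ from neighborhoods on both sides of the color-change point.

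The most delicate axiom is the vacuum sector axiom. Given a bicolored circle $S$, a bicolored interval $I \subset S$, and an involution $j$, I plan to identify $L^2 D(I) = L^2 \calb(I^+)$ with the vacuum sector of $\calb$ on the stretched circle $S^+$ obtained by inserting $[0,1]$ at each color-change point. The two monochromatic subintervals $J\subset I$ and $\bar J = j(J)$ used in the axiom lie entirely in the white (resp.\ black) part of $S$, so $D(J\cup \bar J) = \cala(J\cup \bar J)$; the required extension of $\alpha\otimes\beta$ to $\cala(J\cup \bar J)$ therefore reduces to the vacuum sector property for $\cala$ acting via the Longo--Rehren inclusion $\cala \subset \calb$ on $L^2\calb(I^+)$.

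The main obstacle will be checking that the left and right actions of $D(I)$ on $L^2 D(I)$ that feature in \eqref{eq:actions-on-L2} agree, upon restriction to $\cala(J)$ and $\cala(\bar J)$, with the actions coming from the vacuum sector of $\calb$ on $S^+$. Once this compatibility is established, the required extension property is automatic from the vacuum sector axiom of $\calb$, which holds because $\calb$ is a (possibly non-local) conformal net in the sense of \cite{Longo-Rehren(Nets-of-subfactors)}. The verification of compatibility is a direct computation using the defining relations \eqref{eq:  va=...} of the $Q$-system, the modular data of $\cala([0,1])\subset\calb([0,1])$, and Haag duality for $\cala$ on the stretched circle $S^+$.
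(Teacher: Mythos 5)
Your treatment of isotony, locality, and strong additivity is essentially the paper's: identify $D(I)$ with the Longo--Rehren algebra on the stretched interval $I^+$ (the paper does this via the bijection $a\mapsto av$, $\cala(I^+)\to D(I)$) and transport the corresponding properties of $\cala$. One small remark there: the ``boundary case'' you worry about for strong additivity, where $I=J\cup K$ with both pieces monochromatic meeting at the color-change point, does not occur, because a white subinterval of $I$ must satisfy $J\cap I_\bullet=\emptyset$ and so cannot contain the color-change point; in every admissible decomposition at least one piece is genuinely bicolored, which is the only case the paper treats.

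The genuine gap is in the vacuum sector axiom, and it sits exactly where you flag ``the main obstacle.'' What must be shown is that the action of $\cala(J)\otimes_{\alg}\cala(\bar J)$ on $L^2 D(I)$ (the standard form of the \emph{extension}, not of $\cala(I^+)$) extends to $\cala(J\cup\bar J)$. This cannot be deduced by citing ``the vacuum sector axiom of $\calb$'': the Longo--Rehren extension is a non-local net in a different axiomatic framework, and no such axiom is available for it off the shelf; nor does the vacuum axiom for $\cala$ apply directly, since it concerns $L^2\cala(I^+)$ rather than $L^2 D(I)$, and as an $\cala(I^+)$-bimodule $L^2 D(I)$ is genuinely larger than $L^2\cala(I^+)$ (it decomposes according to $\theta$). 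The extension-across-a-boundary-point property is precisely what distinguishes the vacuum among bimodules, so it does not pass to $L^2 D(I)$ for free. What saves the day is that one only needs equivariance for the two $\cala(J)$-actions (not full $\cala(I^+)$-bimodule equivariance), and the paper constructs the required unitary explicitly: $u(\xi):=\sqrt{d}\cdot p(v\xi)$, where $p:L^2 D(I)\to L^2\cala(I^+)$ is the projection attached to the conditional expectation $E(av)=d^{-1}aw$. Left and right $\cala(J)$-equivariance uses $va=\theta(a)v$ with $\theta|_{\cala(J)}=\id$, and unitarity ($uu^*=1$ and $u^*u=1$) is a computation with the $Q$-system relations (Frobenius, normalization, $v^*ev=1/d$ for the Jones projection $e=p^*p$). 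This construction and verification is the substantive content of the proof; your proposal names the right target but does not supply it, so as it stands the argument is incomplete.
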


\begin{proof}
Isotony for $D$ follows from isotony for $\cala$ because the map $\cala(I^+)\to D(I):a\mapsto av$ is a bijection.
Indeed, $D(I)$ could alternatively be defined to be the set $\{ av | a \in \cala(I^+) \}$ with unit element $w^*v$, multiplication $av\cdot bv:=a\theta(b)x v$, and star operation $(av)^*:= w^*x^* \theta(a^*) v$.

To prove locality, consider the situation where $K \subset I$ are genuinely bicolored,
and $J$ has disjoint interior from $K$.
We need to show that $\cala(J)$ and $D(K)=\cala(K^+)\vee\{v\}$ commute inside $D(I)$.
By locality for $\cala$, the algebras $\cala(J)$ and $\cala(K^+)$ commute inside $\cala(I^+)$.
The algebra $\cala(J)$ also commutes with $v$, because $va=\theta(a)v$ and $\theta|_{\cala(J)}=\id$.

To prove strong additivity, consider the situation of a genuinely bicolored interval $I$ 
that is the union of a black or white interval $J$ and a genuinely bicolored interval $K$.
By definition the algebra $D(I)$ is generated by $\cala(I^+)$ and $v$, and similarly $D(K)$ is generated by $\cala(K^+)$ and $v$.
By the strong additivity of $\cala$, we have
\[
D(J) \vee D(K)=\cala(J) \vee \cala(K^+) \vee \{v\} =\cala(I^+) \vee \{v\} = D(I).
\]

We now address the vacuum sector axiom.
Consider the situation of a genuinely bicolored interval $I$ and 
a white subinterval $J$ that touches one of the boundary points of $I$ 
(the other case is identical).
By the vacuum sector axiom for $\cala$, it is enough to construct a unitary map
\[
u:L^2D(I) \to L^2\cala(I^+)
\]
that is equivariant with respect to the left and right actions of $\cala(J)$.
The formula $E:D(I)\to\cala(I^+)$, $E(av):=d^{-1}\cdot aw$, %\ABcomm{Check this formula.}  AH: looks good.
$a\in\cala(I^+)$ defines a conditional expectation,
and the corresponding orthogonal projection $p:L^2D(I)\to L^2\cala(I^+)$ 
satisfies $p\hspace{.2mm}b\hspace{.2mm}p^*=E(b)$ for $b\in D(I)$ \cite[Lem~3.2]{Kosaki(Extension-of-Jones-index-to-arbitrary-factors)}.
We claim that
\[
u(\xi):=\sqrt{d}\cdot p(v\xi)
\]
is the desired unitary map.
The map $u$ is both left and right $\cala(J)$ equivariant because 
\[
u(a\xi a')=\sqrt{d}\cdot p(va\xi a')=\sqrt{d}\cdot p(\theta(a)v\xi a')=\sqrt{d}\cdot p(av\xi a')=\sqrt{d}\cdot ap(v\xi)a'=au(\xi)a'
\]
for $a,a'\in\cala(J)$.
To check that it is unitary, we compute
\begin{gather*}
uu^*(\xi)=d\cdot p(vv^*(p^*\xi)) = d\cdot E(vv^*)\xi\,\,\,\,\,\,  \text{ and }\\
d\cdot E(vv^*) = d\cdot E(vw^*x^*v) = d\cdot E(\theta(w^*x^*)vv) =
d\cdot E(\theta(w^*x^*)xv)=\\= \theta(w^*x^*)xw = \theta(w^*)\theta(x^*)xw = \theta(w^*)xx^*w = 1  
\end{gather*}
and
\begin{gather*}
u^*u(\xi)= d\cdot v^*p^*p(v\xi) = d\cdot v^*ev\xi\,\,\,\,\,\,  \text{ and }\\ v^*ev = v^*v_1v_1^*v= 1/d
\end{gather*}
where $e=p^*p$ is the Jones projection, and $v_1$, as in \cite[Sec~2.5]{Longo-Rehren(Nets-of-subfactors)},
satisfies $v_1v_1^*=e$ and $v^*v_1 = 1/\sqrt d$ (this last formula holds by substituting $v/\sqrt d$ in place of $w$, and $v_1$ in place of $v$, in \cite[(2.14)]{Longo-Rehren(Nets-of-subfactors)}; see also \cite[(4.3)]{Longo(A-duality-for-Hopf-algebras-and-for-subfactors)}).
\end{proof}

\begin{remark}
  The considerations 
  in~\cite{Bischoff-Kawahigashi-Longo-Rehren(Phase-boundaries-in-algebraic-conformal-QFT)}
  suggest a generalization of Proposition~\ref{prop:non-local-ext-defect} to
  the following situation.
  Let $\cala$ be a conformal net together with three $Q$-systems $(\theta^L,w^L,x^L)$,
  $(\theta^R,w^R,x^R)$, and $(\theta,w,x)$.
  If $(\theta^L,w^L,x^L)$ and $(\theta^R,w^R,x^R)$ are in addition assumed to be 
  commutative, then~\eqref{eq:  va=...} can be used to define extensions
  $\cala \subset \calb^L$ and $\cala \subset \calb^R$, as in \cite{Longo-Rehren(Nets-of-subfactors)}.
  If $(\theta,w,x)$ is such that the corresponding non-local extension  
  $\calc$ contains $\calb^L$ and $\calb^R$ and, 
  moreover, $\calb^L$ is \emph{left-local} with respect to $\calc$ and 
  $\calb^R$ is \emph{right-local}\footnote{
    This means that $\calb^L(I)$ and $\calb^R(I)$ should commute with $\calc(J)$ 
    whenever $I$ is to the left, respectively to the right, of $J$.} with respect to $\calc$,
  then the construction $D$ as in Proposition~\ref{prop:non-local-ext-defect} should define
  a $\calb^L$-$\calb^R$-defect.
%  Since $Q$-systems classify extensions, the required properties of the extensions
%  $\calb^L$, $\calb^R$ and $\calc$ are also expected to be expressible at
%  the level of $Q$-systems.
  %In this way we expect the rich structure of $Q$-systems in the representation categories of conformal nets to also manifest itself in our $3$-category of conformal nets.
  % We hope to come back to this elsewhere.
\end{remark}

\subsection*{\hspace*{-18pt}Direct sums and direct integrals of defects}

\begin{lemma}
Let $D_1$ and $D_2$ be $\cala$-$\calb$-defects.
Then their direct sum $E:=D_1\oplus D_2$ is also an $\cala$-$\calb$-defect. Here, $E$ is defined by
\begin{gather*}
E(I) = D_1(I)\oplus D_2(I) \,\,\,\text{\rm for}\,\,\, I\in\INT_\halfbullet\\ E(I) = \cala(I) \,\,\,\text{\rm for}\,\, I\in\INT_\circ\,\,\qquad E(I) = \calb(I) \,\,\,\text{\rm for}\,\, I\in\INT_\bullet. 
\end{gather*}
\end{lemma}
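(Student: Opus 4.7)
The plan is to verify each of the four defect axioms for $E := D_1 \oplus D_2$ by reducing it, case by case, to the corresponding property of $D_1$ and $D_2$. First I will spell out how $E$ acts on morphisms. On an embedding $f \colon J \hookrightarrow I$ between two genuinely bicolored intervals, set $E(f) := D_1(f) \oplus D_2(f)$. On an embedding from a monocolored interval $J$ (where $E(J)$ is just $\cala(J)$ or $\calb(J)$) into a genuinely bicolored interval $I$, $E(f)$ must be the diagonal map $a \mapsto (D_1(f)(a), D_2(f)(a))$ in order for the restrictions of $E$ to $\INT_\circ$ and $\INT_\bullet$ to recover $\cala$ and $\calb$. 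Functoriality is straightforward, and isotony is immediate since both direct sums and diagonals of pairs of injective homomorphisms are injective.

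For locality, I will check that the images of $E(J)$ and $E(K)$ in $E(I) = D_1(I) \oplus D_2(I)$ commute whenever $J, K \subset I$ have disjoint interiors. This reduces componentwise to locality for $D_1$ and $D_2$; the only point worth noting is that a diagonally embedded monocolored factor $\cala(J)$ commutes with $D_1(K) \oplus D_2(K)$ precisely because $\cala(J) = D_i(J)$ commutes with $D_i(K)$ inside each $D_i(I)$.

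Strong additivity for a decomposition $I = J \cup K$ is immediate when both $J$ and $K$ are genuinely bicolored: the images of $E(J)$ and $E(K)$ generate $E(I)$ because each pair $D_i(J), D_i(K)$ generates $D_i(I)$. The slightly more interesting case is when, say, $J$ is monocolored and hence embedded diagonally. Here the key observation is that the central projections $e_1 := (1, 0)$ and $e_2 := (0, 1)$ of $E(I)$ lie in the image of $E(K) = D_1(K) \oplus D_2(K)$; multiplying the diagonal image of $\cala(J)$ by $e_i$ recovers the image of $\cala(J) = D_i(J)$ inside the $i$-th summand, after which strong additivity for $D_i$ generates all of $D_i(I)$ in each component, hence all of $E(I)$.

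Finally, for the vacuum sector axiom I will use the canonical identification $L^2(E(I)) = L^2(D_1(I)) \oplus L^2(D_2(I))$. With $J \subset I$ monocolored and meeting $\partial I$ in a single point, the actions $\alpha$ and $\beta$ for $E$ factor, via the diagonal $\cala(J) \hookrightarrow D_1(J) \oplus D_2(J)$ (and similarly for $\cala(\bar J)$), as direct sums of the corresponding actions for $D_1$ and $D_2$. The required extension of $\alpha \otimes \beta$ to $\cala(J \cup \bar J)$ on $L^2(E(I))$ is then obtained by applying the vacuum sector axiom of each $D_i$ separately and taking the direct sum of the resulting representations. I do not expect any genuine obstacle in this argument: the only recurring subtlety is keeping track of the diagonal embedding on monocolored intervals, which is a routine bookkeeping matter rather than a substantive difficulty.
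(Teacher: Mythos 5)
Your proof is correct, and on the one axiom the paper considers non-trivial --- strong additivity --- you take a genuinely different route. The paper's proof (which dismisses isotony, locality, and the vacuum sector axiom as immediate, exactly as you argue) handles the decomposition $I=K\cup J$ with $K$ monocolored by inserting a small white interval $L\subset J$ adjacent to $K$ and invoking the strong-additivity computation from the preceding proposition on conformal embeddings, applied to the diagonal $\Delta\colon\cala\to\cala^{\oplus 2}$: one gets $\Delta\cala(K)\vee\cala^{\oplus2}(L)=\cala^{\oplus2}(K\cup L)$ by implementing diffeomorphisms that sweep the junction point, and then concludes by strong additivity of $E$ restricted to $J$. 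Your argument instead exploits the fact that the genuinely bicolored piece already carries the full direct sum, so the central projections $e_1=(1,0)$ and $e_2=(0,1)$ of $E(I)$ lie in the image of $E$ of that piece; cutting the diagonal copy of $\cala(K)$ down by $e_i$ recovers $D_i(K)$ in each summand and strong additivity of each $D_i$ finishes the job. This is more elementary and avoids the diffeomorphism machinery entirely. What the paper's phrasing buys is that it transfers verbatim to direct integrals of defects (Remark 1.33), where pointwise central projections are unavailable; your argument also adapts to that setting, but one must replace ``multiply by $e_i$'' with the observation that $L^\infty(X)$ together with the diagonal copy of $\cala(K)$ generates $L^\infty(X)\,\bar\otimes\,\cala(K)$, which is a small extra step. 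Two cosmetic points: calling the diagonally embedded $\cala(J)$ a ``factor'' in the locality step is inaccurate terminology (though nothing is used of it), and the case where both subintervals are genuinely bicolored never actually arises, since a genuinely bicolored interval has a single color-change point.
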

\begin{proof}
The only non-trivial axiom is strong additivity.
Consider the situation where $I=K\cup J$, with $J$ genuinely bicolored and $K$ white.
Letting $\Delta:\cala\to\cala^{\oplus 2}$ denote the diagonal map, we need to show that $E(I)$ is equal to the subalgebra
\[
\Delta\cala(K)\vee E(J)\subset E(I)
\]
generated by the images of $\Delta\cala(K)$ and $E(J)$.
(Note that our notation is a little bit misleading, as the map $\Delta\cala(K)\to E(I)$ might fail to be injective).
Pick a white interval $L\subset J$ that touches $K$ in a point.
Since $\Delta$ is a conformal embedding, it follows from the previous proof that $\Delta\cala(K)\vee \cala^{\oplus 2}(L)=\cala^{\oplus 2}(K\cup L)$.
%
%By inner covariance, $\Delta(\cala(K))\vee \cala(L)^2$ contains $\cala(L_1)^2$ whenever $L\subset L_1\subsetneq K\cup L$.
%This being true for every such interval $L_1$, then 
%by~\cite[\lemirrelevanceofpoints]{BDH(nets)}, 
%it follows that $\Delta(\cala(K))\vee\cala(L)^2=\cala(K\cup L)^2$.
\[
  \tikzmath[scale=1.5]
  { \draw (-2,0) -- node[above]{$K$} (-1,0)(-1,.05)--(-1,-.05)(-1,0) --node[above]{$J$} (1,0)%(-1.5,-.1) -- node[below, xshift=-5]{$L_1$} (-.5,-.1)
  (-1,-.1) -- node[below]{$L$} (-.5,-.1);
    \draw[ultra thick] (.05,0) -- (1,0) (-.5,.3) node {$I$};}
\]
Thus, we have the following equalities between subalgebras of $E(I)$:
$$\hspace*{50pt}\Delta\cala(K)\vee E(J) = \Delta\cala(K)\vee \cala^{\oplus 2}(L)\vee E(J) = \cala^{\oplus 2}(K\cup L) \vee E(J) = E (I).\hspace*{50pt}\qedhere$$
\end{proof}

\begin{remark}\label{rem: direct integral of defects}
By the same argument as above, one can also show that a direct integral of $\cala$-$\calb$-defects is an $\cala$-$\calb$-defect.
\end{remark}

\subsection*{\hspace*{-18pt}Disintegrating defects}

We show that defects between semisimple nets can be disintegrated.
We warn the reader that our proof of the corresponding statement for conformal nets was incomplete:

\begin{correction} \label{corr:no-nets-disintegration}
In~\cite[\subseccentraldecomposition, Eq~1.42]{BDH(nets)}, we claimed that every conformal net decomposes as a direct integral of irreducible ones. However, the group $\Diff(I)$ is not locally compact and it is not clear that its action on $\cala(I)$ decomposes as a direct integral of actions on the irreducible components $\cala(I)_x$ of $\cala(I)$.\footnote{We thank Sebastiano Carpi for pointing this out.} 
At present, we do not know how to fill this gap.
This issue with~\cite[Eq~1.42]{BDH(nets)} does not affect any of the other results in~\cite{BDH(nets)}.
\end{correction}

\noindent The above issue with disintegrating diffeomorphism actions does not arise here when disintegrating defects, because the relevant actions are inherited from the conformal nets.

Let $D$ be a defect and 
let $f:J\to I$ be an embedding of genuinely bicolored intervals.
Then one can show as follows that
$D(f)$ induces an isomorphism between $Z(D(J))$ and $Z(D(I))$; compare the proof of \cite[Prop. 1.40]{BDH(nets)}. We may as well assume that $I$ and $J$ share a boundary point.
Let $K$ be the closure of $I\setminus J$.
The image of $Z(D(J))$ in $D(I)$ commutes with both $D(J)$ and $D(K)$, and so it is in $Z(D(I))$ by strong additivity.
Conversely, $Z(D(I))$ commutes with $D(K)$, and is therefore contained in the image of $Z(D(J))$ by Haag duality (Proposition \ref{prop: [Haag duality for defects]}).

As in the case for conformal nets~\cite[\subseccentraldecomposition]{BDH(nets)}, 
we can then introduce an algebra $Z(D)$ that only depends on $D$, and that is canonically isomorphic to $Z(D(I))$ for every genuinely bicolored interval $I$.
Disintegrating each $D(I)$ over that algebra, we can then write
\begin{equation*}
\qquad D(I) = \int_{x\in X}^\oplus D_x(I) \,\,\,\,\,\text{for every}\,\,I\in\INT_\halfbullet\,
\end{equation*}
where $X$ is any measure space with an isomorphism $L^\infty X\cong Z(D)$.

Recall that a conformal net is called semisimple if it is a finite direct sum 
of irreducible conformal nets (Appendix \ref{subsec:defnets}). 
Similarly, we call a defect \emph{semisimple} if it is a finite direct sum of irreducible defects. 

\begin{lemma} \label{lem: disintegrate}
Any $\cala$-$\calb$-defect between semisimple conformal nets\footnote{Arbitrary direct sums of irreducible conformal nets would also work.} is isomorphic to a direct integral of irreducible $\cala$-$\calb$-defects.
\end{lemma}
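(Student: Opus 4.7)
My plan is to imitate the disintegration procedure for conformal nets discussed in~\cite[\subseccentraldecomposition]{BDH(nets)} and applied to defects in the paragraph preceding the lemma, using the richer center $Z(D)$ that encodes the central structure of both $\cala$ and $\calb$ together with the ``residual'' center of $D$.

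First I would observe that the center $Z(D)$ constructed above receives a canonical unital homomorphism from $Z(\cala)\otimes Z(\calb)$. Namely, if $I$ is a genuinely bicolored interval, then the images of $Z(\cala(I_\circ))=Z(D(I_\circ))$ and of $Z(\calb(I_\bullet))=Z(D(I_\bullet))$ inside $D(I)$ lie in $Z(D(I))$: being central in $D(I_\circ)$ (resp.\ $D(I_\bullet)$) and commuting, by locality, with $D(I_\bullet)$ (resp.\ $D(I_\circ)$), they commute with a strongly additive generating set for $D(I)$. These maps are independent of $I$ once we identify all the $Z(D(I))$ with $Z(D)$, so together they yield a homomorphism $Z(\cala)\otimes Z(\calb)\to Z(D)$. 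Under the semisimplicity hypothesis, $Z(\cala)\otimes Z(\calb)$ is finite-dimensional, so we get a finite partition of unity by central projections $\{z_{ij}\}_{(i,j)\in\Lambda}\subset Z(D)$ indexed by the pairs $(i,j)$ with $\cala_i\subset\cala$, $\calb_j\subset\calb$ irreducible summands and $z_{ij}\neq 0$.

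Next I would cut $D$ by these projections to reduce to the irreducible case. For each $(i,j)\in\Lambda$, define $D_{ij}(I):=z_{ij}D(I)z_{ij}$ on genuinely bicolored $I$, and set $D_{ij}(I):=\cala(I)$ or $\calb(I)$ on white or black intervals, with the structure maps induced from those of $D$ (so that the $\cala_{i'}$-part for $i'\neq i$ and the $\calb_{j'}$-part for $j'\neq j$ simply get killed on passing to $D_{ij}(I)$). One checks from the original defect axioms for $D$ that each $D_{ij}$ is again an $\cala$-$\calb$-defect—isotony holds because $D(f)$ stays injective after cutting by central projections, locality and strong additivity are inherited from $D$, and the vacuum sector axiom reduces, via the finite decomposition $L^2 D(I)=\bigoplus_{ij}L^2 D_{ij}(I)$, to the same axiom for $D$. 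Thus $D\cong\bigoplus_{(i,j)\in\Lambda}D_{ij}$ as a finite direct sum of $\cala$-$\calb$-defects, and without loss of generality we may now assume $Z(\cala)\otimes Z(\calb)\to Z(D)$ sends $1\otimes 1$ to $1$ with $\cala$ and $\calb$ irreducible—equivalently, all centrality ``comes from'' $Z(D)$ itself.

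Now I would disintegrate $D$ over $Z(D)$. Choose a standard measure space $X$ with $L^\infty(X)\cong Z(D)$, and disintegrate $D(I)=\int_X^\oplus D_x(I)\,d\mu(x)$ for every genuinely bicolored $I$, as was done above for $\cala$ and $\calb$ in~\cite[\subseccentraldecomposition]{BDH(nets)}. For each $x$ set $D_x(I):=\cala(I)$ on white intervals and $D_x(I):=\calb(I)$ on black intervals; on morphisms use the disintegration of the structure maps of $D$. By standard direct integral theory, $D_x(I)$ is a factor for $\mu$-a.e.\ $x$ (and, choosing a countable family of intervals and a countable dense set of morphisms, simultaneously so for $\mu$-a.e.\ $x$). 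Each fiber $D_x$ satisfies isotony, locality, strong additivity, and the vacuum sector axiom almost everywhere, because each of these axioms is expressible as the $\mu$-a.e.\ disintegration of the corresponding assertion for $D$ (using that $L^2$ of a direct integral of von Neumann algebras disintegrates accordingly, and that the vacuum sector axiom is an equation between normal representations). Hence $D_x$ is an irreducible $\cala$-$\calb$-defect for $\mu$-a.e.\ $x$, and by construction $D\cong\int_X^\oplus D_x\,d\mu(x)$, which by Remark~\ref{rem: direct integral of defects} is the desired presentation.

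The main obstacle I anticipate is the bookkeeping for the measurable structure: one needs the disintegration $x\mapsto D_x$ to be simultaneously compatible with all structure maps of the functor $D$ (there are uncountably many intervals and embeddings), and one needs the almost-everywhere verification of the vacuum sector axiom, which involves infinite-dimensional $L^2$-spaces and their dependence on $x$. As for conformal nets, this is handled by restricting first to a countable cofinal family of intervals with rational endpoints and a countable dense set of embeddings, carrying out the disintegration there, and then extending by continuity using Proposition~\ref{prop:Continuity for defects}.
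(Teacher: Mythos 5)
Your overall strategy (disintegrate over $Z(D)$, check the axioms fiberwise) is the same as the paper's, but there is a genuine gap at the step you dismiss with ``on morphisms use the disintegration of the structure maps of $D$'' and ``by standard direct integral theory.'' The crux of the lemma is precisely that this is \emph{not} standard: a normal homomorphism $N\to\int^\oplus M_x$ from a von Neumann algebra $N$ into a direct integral does \emph{not} in general induce homomorphisms $N\to M_x$ for almost every $x$. For each fixed $a\in N$ one gets a measurable field $(a_x)$ defined up to a null set, but the relations $a_xb_x=(ab)_x$ only hold a.e.\ for each pair, and since $\cala(K)$ is not norm-separable (it is typically a type $\mathit{III}$ factor) one cannot extract a single common null set. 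Your countability reduction over intervals with rational endpoints addresses the uncountably-many-intervals issue but not this one, which already arises for a \emph{single} structure map $\cala(K)\to D(I)$. The same problem recurs in your treatment of the vacuum sector axiom: commuting with $Z(D)$ guarantees that the action of $\cala(J\cup\bar J)$ on $\int^\oplus H_0(S,D_x)$ is decomposable, but not that it disintegrates into actions on the individual fibers.

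The paper's proof supplies the missing idea: disintegration of homomorphisms \emph{does} work when the source is a direct sum of type $I$ factors (via the norm-separable ideal of compact operators and the unique normal extension of a $C^*$-homomorphism from the compacts), and the split property together with semisimplicity of $\cala$ lets one interpolate $\cala(K)\subset N\subset\cala(K^+)$ with $N$ a direct sum of type $I$ factors, then restrict the resulting fiber maps $N\to D_x(I^+)$ back to $\cala(K)$ and check their images land in $D_x(I)$. Without some such device your argument does not go through; this is also where the semisimplicity hypothesis on $\cala$ and $\calb$ actually enters, whereas in your write-up it is only used for the (correct but inessential) preliminary cutting by the central projections coming from $Z(\cala)\otimes Z(\calb)$.
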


\begin{proof}
Fix a genuinely bicolored interval $I$.
The algebra $D(I)$ disintegrates as explained above.  We first need to show that for $K \subset I$ a white subinterval (respectively a black subinterval), the map $\cala(K)\to D(I)$ (respectively $\calb(K) \to D(I)$) similarly disintegrates.  It suffices to see that $\cala(K) \to D(I)$ induces maps $\cala(K)\to D_x(I)$ for almost every $x$.

Note that it is in general not true that a map $N\to\int^\oplus M_x$ from a von Neumann algebra $N$ (even a factor) into a direct integral induces maps $N\to M_x$ for almost every $x$.
This is however true when $N$ is a direct sum of type $I$ factors.
Indeed, letting $\mathcal K\subset N$ be the ideal of compact operators, we obtain maps $\mathcal K\to M_x$ by standard separability arguments.
One then uses the fact that a $C^*$-algebra homomorphism from $\mathcal K$ into a von Neumann algebra extends uniquely to a von Neumann algebra homomorphism from $N$.

We can leverage this observation about direct sums of type $I$ factors to construct the desired maps $\cala(K)\to D_x(I)$.
Consider a slightly larger interval $I^+$ that contains $I$, and let $K^+\subset I^+$ be a white interval that contains $K$ in its interior.
\[
  \tikzmath[scale=1.5]
  {\draw (-1.25,.2) -- (1,.2);\draw[ultra thick] (.05,.2) -- (1,.2);
  \draw(-1.25,.1) -- (-.25,.1);
  \draw(-1,0) -- (1,0) (-1,-.1) -- node[below]{$K$} (-.5,-.1);
  \draw[ultra thick] (.05,0) -- (1,0) (-.08,.4) node {$I^+$}(-.1,-.2) node {$I$} (-1.3,-.02) node {$K^+$};}
\]
By the split property and the semisimplicity of $\cala$, we can find an intermediate algebra $\cala(K)\subset N \subset \cala(K^+)$ that is a direct sum of type $I$ factors. 
The map $N\to D(I^+)=\int^\oplus D_x(I^+)$ induces maps $\tilde\iota_x: N\to D_x(I^+)$ for almost every $x$; let $\iota_x$ denote the restriction of $\tilde\iota_x$ to $\cala(K)$.
The map $\int^\oplus \iota_x:\cala(K)\to \int^\oplus D_x(I^+)$ is the composite of our original map $\cala(K)\to D(I)$ with the inclusion $D(I)\hookrightarrow D(I^+)$.
The image of $\int^\oplus \iota_x$ is contained in $\int^\oplus D_x(I)$. 
For almost every $x$ the image of $\iota_x$ is therefore contained in $D_x(I)$,
and we have our desired maps $\cala(K)\to D_x(I)$.
 
Let $f \colon I \to I$ be an embedding of genuinely bicolored intervals.
We need to know that $D(f)$ induces maps $D_x(I) \to D_x(I)$ for almost all $x$.
As $f$ fixes the local coordinate it suffices to consider the case where $f$ is the identity outside of
the interval $K$ used above. 
We can then extend $f$ to a diffeomorphism $f^+ \colon I^+ \to I^+$ that is the identity outside a small neighborhood of $K$. 
By inner covariance, $\cala(f^+ |_{K^+})$ is implemented by a unitary $u \in \cala(K^+)$.  The adjoint action $\Ad(u)$ induces the desired map $D_x(I) \to D_x(I)$.   Since any genuinely bicolored interval $J$ is isomorphic to $I$, we can transport the disintegration of the embedding $f \colon I \to I$ to a disintegration of any embedding of genuinely bicolored intervals. 

The isotony, locality, and strong additivity axioms for $D_x$ are immediate; the vacuum sector axiom requires a little bit more work, as follows. 
Let $S$, $I$, and $J$ be as in the formulation of the vacuum sector axiom, and let us assume without loss of generality that $J$ is white.
We need to show that, for almost every $x$, the representation of $\cala(J)\otimes_{alg}\cala(\bar J)$ on $H_0(S, D_x)$ extends to $\cala(J\cup \bar J)$.
We know that the corresponding representation of $\cala(J)\otimes_{alg}\cala(\bar J)$ on $H_0(S, D)=\int^\oplus H_0(S, D_x)$ does extend to $\cala(J\cup \bar J)$;
we want to see that this extension disintegrates into actions of $\cala(J\cup \bar J)$ on $H_0(S, D_x)$.
Certainly the action of $\cala(J\cup \bar J)$ on $H_0(S, D)$ commutes with that of $Z(D)$, but that is not enough to guarantee the action disintegrates into actions on the individual summands $H_0(S, D_x)$.
Pick a white interval $K\subset S$ that contains $J\cup \bar J$ in its interior, and an intermediate algebra $\cala(J\cup \bar J)\subset N \subset \cala(K)$ that is a direct sum of type $I$ factors.
By the same argument as used earlier in this proof, the action of $N$ on $\int^\oplus H_0(S, D_x)$ disintegrates into actions on $H_0(S, D_x)$, and therefore so does the action of $\cala(J\cup \bar J)$.
\end{proof}

\subsection*{\hspace*{-18pt}Irreducible defects over semisimple nets}
In Section~\ref{subsec:Composition of defects}, we will define 
the operation of fusion of defects, which is the composition of 1-morphisms in the 3-category of conformal nets.
That operation does not preserve irreducibility (even if the conformal nets are irreducible) and so, unlike for conformal nets, it is not advisable to restrict attention to irreducible defects.

%We pause to consider defects between non-irreducible conformal nets.
We call a defect $D$ \emph{faithful} if the homomorphisms $D(f)$ are injective for every embedding $f:I\to J$ of bicolored intervals. 

\begin{lemma} \label{lem:irred-defect==>irred-nets}        %--- this is a result about non-irreducible nets ---%
Let $\cala$ and $\calb$ be conformal nets, and
let $D$ be an $\cala$-$\calb$-defect that is irreducible and faithful.
Then $\cala$ and $\calb$ are irreducible.
\end{lemma}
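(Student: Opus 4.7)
The plan is to show, symmetrically, that $\cala(K)$ is a factor for every white interval $K$ and $\calb(K)$ is a factor for every black interval $K$.  Throughout, the strategy is to realize $\cala(K)$ as a subalgebra of $D(I)$ for a cleverly chosen genuinely bicolored interval $I$, and then exploit the defect axioms to push centrality of an element of $\cala(K)$ up to centrality in $D(I)$, where it must be scalar by irreducibility of $D$.

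Concretely, given a white interval $K$, I would form a genuinely bicolored interval $I$ by gluing a black interval $L$ to $K$ along a single common endpoint, arranging that $I_\circ = K$ and $I_\bullet = L$ meet precisely at the color-change point.  By the faithfulness of $D$, the map $\cala(K) = D(I_\circ) \hookrightarrow D(I)$ is injective, so it suffices to prove that the image of $Z(\cala(K))$ in $D(I)$ consists of scalars.

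Let $z \in Z(\cala(K))$.  Since $I_\circ$ and $I_\bullet$ have disjoint interiors, locality gives that $\cala(K) = D(I_\circ)$ and $\calb(L) = D(I_\bullet)$ commute inside $D(I)$; in particular $z$ commutes with the image of $\calb(L)$.  By construction $z$ also commutes with $\cala(K)$.  Strong additivity applied to the decomposition $I = K \cup L$ then says that $D(I)$ is topologically generated by the images of $\cala(K)$ and $\calb(L)$, so $z$ lies in $Z(D(I))$.  Irreducibility of $D$ gives $Z(D(I)) = \IC$, whence $z$ is scalar.  Faithfulness of $D$ then forces $z$ to be scalar in $\cala(K)$ itself, so $\cala(K)$ is a factor.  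The argument for black intervals is obtained by swapping the roles of $\cala$ and $\calb$ throughout.

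This proof is very short, and the only real conceptual point to verify is that the three defect axioms (faithfulness, locality, and strong additivity) apply in the degenerate decomposition $I = I_\circ \cup I_\bullet$ in which each piece is monochromatic.  Each is stated for arbitrary subintervals of a bicolored interval, so this case is covered directly.
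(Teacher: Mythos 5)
Your proof is correct, and it takes a genuinely different route from the paper's. The paper works on a genuinely bicolored \emph{circle} $S$: it observes that, because $D$ is irreducible, the vacuum sector $H_0(S,D)$ is jointly irreducible under the algebras $D(J)$ for $J\subset S$ (this uses Haag duality on $L^2(D(S_\top))$), and then shows that a nontrivial central projection $p\in\cala(I)$ would commute with every $D(J)$ --- by splitting $J$ into $J\cap I$ and $J\cap I'$ and invoking strong additivity --- thereby producing a forbidden direct-sum decomposition of $H_0(S,D)$. Your argument stays entirely at the level of intervals and algebras: you embed $\cala(K)=D(I_\circ)$ into $D(I)$ for a single genuinely bicolored interval $I=K\cup L$, push $Z(\cala(K))$ into $Z(D(I))=\IC$ using locality and strong additivity, and pull back along the injection supplied by faithfulness. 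This is the same mechanism the authors use a page later when identifying $Z(D(J))\cong Z(D(I))$ in the disintegration discussion, so it is squarely within the paper's toolkit; what it buys is the avoidance of the vacuum sector, Haag duality, and any Hilbert-space representation. Your one flagged worry is also correctly resolved: isotony as stated only covers embeddings of \emph{genuinely} bicolored intervals, so the injectivity of $\cala(K)\to D(I)$ for the monochromatic subinterval $K$ really does require the faithfulness hypothesis (as defined in the paper, for all bicolored intervals), which you invoke; locality and strong additivity are stated for arbitrary subintervals and apply to the decomposition $I=I_\circ\cup I_\bullet$ without further comment.
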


\begin{proof}
Let $S$ be a genuinely bicolored circle, $I\subset S$ a white interval and $I'$ the closure of its complement.
Since $D$ is irreducible, the vacuum sector $H_0(S,D)$ is acted on jointly irreducibly by the algebras $D(J)$, $J\subset S$.

Since $D$ is faithful, $\cala(I)$ acts faithfully on $H_0(S,D)$.
A non-trivial central projection $p\in \cala(I)$ would thus induce a non-trivial direct sum decomposition of $H_0(S,D)$, contradicting the fact that it is irreducible.
Indeed, for a bicolored interval $J\subset S$, the projection $p$ commutes with both $D(J\cap I)$ and $D(J\cap I')$.
By strong additivity, $p$ therefore commutes with $D(J)$.
\end{proof}       

\noindent Here, as for conformal nets \cite[\S3.1]{BDH(nets)}, we have used the split property to extend the functor $D$ to disjoint unions of bicolored intervals by setting
\begin{equation}\label{eq: extend D to disconnected things}
D(I_1\cup\ldots\cup I_n):=D(I_1)\,\bar\ox\,\ldots\,\bar\ox\, D(I_n).
\end{equation}

\begin{corollary}\label{cor: irreducible defect}
Let $\cala=\bigoplus\cala_i$ and $\calb=\bigoplus\calb_j$ be semisimple conformal nets,
where $\cala_i$ and $\calb_j$ are irreducible.
Let $D$ be an irreducible $\cala$-$\calb$-defect.
Then there exist indices $i$ and $j$ such that $D$ is induced from a faithful irreducible $\cala_i$-$\calb_j$-defect
under the projections maps $\cala \rightarrow \cala_i$
and $\calb\rightarrow \calb_j$, respectively. \qed
\end{corollary}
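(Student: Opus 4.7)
The plan is to extract the required indices $i_0, j_0$ from the images in $D$ of the central projections of $\cala$ and $\calb$. For each $i$, write $e_i \in Z(\cala)$ for the central idempotent projecting onto the summand $\cala_i$; for any interval $K$, $e_i$ sits in $Z(\cala(K))$ as the unit of $\cala_i(K)$, and the inclusions $\cala(K_1) \hookrightarrow \cala(K_2)$ are compatible with these central projections. Fix any $I \in \INT_\halfbullet$, pick a white subinterval $K \subset I_\circ$, and let $p_i \in D(I)$ denote the image of $e_i$ under $\cala(K) \to D(I)$; this is independent of $K$ by the compatibility just noted.

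The key step is to show $p_i \in Z(D(I))$. Since $e_i$ is central in $\cala(I_\circ)$, the projection $p_i$ commutes with the image of $\cala(I_\circ) = D(I_\circ)$ in $D(I)$. By locality applied to $I_\circ$ and $I_\bullet$ (whose interiors are disjoint), $p_i$ also commutes with the image of $\calb(I_\bullet) = D(I_\bullet)$. Strong additivity for the cover $I = I_\circ \cup I_\bullet$ then yields $p_i \in Z(D(I))$. Irreducibility of $D$ gives $Z(D(I)) = \IC$, so $p_i \in \{0,1\}$, and $\sum_i p_i = 1$ forces exactly one index $i_0$ with $p_{i_0} = 1$. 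Symmetrically, one obtains a unique $j_0$ on the $\calb$ side. The equation $p_{i_0} = 1$ says precisely that $\cala(K) \to D(I)$ factors through the projection $\cala(K) \twoheadrightarrow \cala_{i_0}(K)$, and similarly for $\calb$.

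Next I would verify that $(i_0, j_0)$ is independent of the choice of $I$: given any embedding $J \hookrightarrow I$ of genuinely bicolored intervals, isotony provides an injection $D(J) \hookrightarrow D(I)$ sending $p_{i_0}^J = 1_{D(J)}$ to a nonzero element of $Z(D(I))$, which must then coincide with $p_{i_0}^I = 1_{D(I)}$. Having fixed $i_0$ and $j_0$, I would define $D'$ by $D'(I) := D(I)$ for $I \in \INT_\halfbullet$, $D'(I) := \cala_{i_0}(I)$ for $I \in \INT_\circ$, and $D'(I) := \calb_{j_0}(I)$ for $I \in \INT_\bullet$, with structure maps the factorizations just produced. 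The defect axioms for $D'$ are inherited from those of $D$. Faithfulness of $D'$ amounts to checking that the nonzero maps $\cala_{i_0}(J) \to D(I)$ (and similarly for $\calb_{j_0}$) are injective, which holds because their source is a factor. Irreducibility of $D'$ is automatic since $D'(I) = D(I)$ for $I \in \INT_\halfbullet$. By construction $D$ is then induced from $D'$ via the projections $\cala \twoheadrightarrow \cala_{i_0}$ and $\calb \twoheadrightarrow \calb_{j_0}$.

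The main obstacle is the centrality step: one must combine centrality of $e_i$ in $\cala$, locality across the color-change point, and strong additivity of the cover $I = I_\circ \cup I_\bullet$ to conclude that $p_i$ lies in $Z(D(I))$. Once this is established, factoriality of $D(I)$ collapses everything to a single surviving index on each side, and the remaining verifications are routine checks of the defect axioms for $D'$.
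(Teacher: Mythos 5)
Your proof is correct and follows exactly the route the paper intends: the paper marks this corollary \qed{} with no written proof, and your centrality argument (the image $p_i$ of a central projection of $\cala$ commutes with $D(I_\circ)$ and, by locality, with $D(I_\bullet)$, hence by strong additivity lies in $Z(D(I))=\IC$) is the same technique used in the proof of the immediately preceding Lemma~\ref{lem:irred-defect==>irred-nets}. Your handling of faithfulness — a unital normal homomorphism out of the factor $\cala_{i_0}(J)$ is automatically injective — correctly supplies the one point the statement adds beyond the lemma.
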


The above discussion shows that defects between semisimple conformal nets can be entirely understood in terms of defects between irreducible conformal nets.
In the rest of this book, we will therefore mostly restrict attention to irreducible conformal nets.

\section{The category $\CN_1$ of defects}\label{sec: CN_1}
\begin{definition}  \label{def:CN_1}
  Defects form a symmetric monoidal category $\CN_1$.
  An object in that category is a triple $(\cala,\calb,D)$, where $\cala$ and $\calb$ are semisimple conformal nets, and $D$ is a defect from $\cala$ to $\calb$.
  A morphism between the objects $(\cala,\calb,D)$ and $(\cala',\calb',D')$ is triple of natural transformations 
  $\alpha \colon \cala \to \cala'$, $\beta \colon \calb\to \calb'$, $\delta \colon D \to D'$, 
  with the property that
  $\delta|_{\mathsf{INT}_{\circ}}=\alpha$ and 
  $\delta|_{\mathsf{INT}_{\bullet}}=\beta$.
  The symmetric monoidal structure on this category is given by objectwise spatial tensor product.
\end{definition}

Recall that a map between von Neumann algebras with finite-dimensional centers is said to be finite if the associated bimodule $_A{L^2B}_B$ is dualizable (Appendix~\ref{subsec:dualizability}). 

\begin{definition} \label{def:finite defects maps}
A natural transformation $\tau:D\to E$ between semisimple defects ${}_\cala D_\calb$ and ${}_{\calc} E_\cald$
is called finite if $\tau_I:D(I)\to E(I)$ is a finite homomorphism for every $I\in \INT_{\circ\bullet}$.
\end{definition}

\begin{remark}
We believe that the condition of having finite-dimensional centers is not really needed to define the notion of finite homomorphism between von Neumann algebras \cite[Conj. 6.17]{BDH(Dualizability+Index-of-subfactors)}.
If that is indeed the case, then we can extend the notion of finite natural transformations to non-semisimple defects.
\end{remark}

Let us denote by $\CN_0$ the symmetric monoidal category of semisimple conformal nets and their natural transformations (Appendix~\ref{subsec:defnets}),
and by $\CN_0^f$ the symmetric monoidal category of semisimple conformal nets all of whose irreducible summands have finite index (Appendix~\ref{subsec:finite-nets}),
together with the finite natural transformations (Appendix~\ref{subsec:defnets}).
Later on, we will denote by $\CN_1^f$ the symmetric monoidal category of semisimple defects (between semisimple conformal nets all of whose irreducible summands have finite index), together with finite natural transformations.
The category $\CN_1$ is equipped with two forgetful functors
\[
  \mathsf{source} \colon \CN_1\to \CN_0
  \qquad\quad
  \mathsf{target} \colon \CN_1\to \CN_0
\]
given by $\mathsf{source}(\cala,\calb,D) := \cala$ and 
$\mathsf{target}(\cala,\calb,D):=\calb$, respectively,
and a functor
\begin{equation}\label{eq: identity functor}
\mathsf{identity} \colon \CN_0 \to \CN_1
\end{equation}
given by  $\mathsf{identity} (\cala) (I) := \cala(I)$, where we forget the bicoloring of $I$ in order to evaluate $\cala$.
We sometimes abbreviate $\mathsf{identity} (\cala)$ by $1_\cala$.
Note that the above functors are all compatible with the symmetric monoidal structure.

\begin{remark}\label{rem : weak identity} A conformal net $\cala$ also has a \emph{weak identity} given on genuinely bicolored intervals $I$ by $I\mapsto \cala(I_\circ\cup [0,1]\cup I_\bullet)$.
That defect is not isomorphic to $1_\cala$ in the category $\CN_1$.
It is nevertheless equivalent to $1_\cala$ in the sense that there is an invertible sector between them; see Example \ref{ex: 1=/=1*1}.
\end{remark}

%----------------------------------------------------------------------
%\comment{
\section{Composition of defects}\label{subsec:Composition of defects}

Given conformal nets $\cala$, $\calb$, $\calc$, and defects $_{\cala}D_{\calb}$ and $_{\calb}E_{\calc}$, we will now define their fusion $D \circledast_{\calb} E$,
which is an $\cala$-$\calc$-defect if the conformal net $\calb$ has finite index.
If $\calb$ does not have finite index, then $D \circledast_{\calb} E$ might still be a defect, but we do not know how to prove this.

If $I$ is in $\INT_\circ$ or $\INT_\bullet$, then $(D\circledast_{\calb}E)(I)$ is given by $\cala(I)$ or $\calc(I)$, respectively.
If $I$ is genuinely bicolored, then  we use the local coordinate to construct intervals
\begin{equation*}
I^+:=I_{\circ}\cup [0,{\textstyle\frac{1}{2}}],\quad
I^{++}:=I_{\circ}\cup [0,{\textstyle\frac{3}{2}}],\quad
{}^+\hspace{-.15mm}I:=[-{\textstyle\frac{1}{2}},0]\cup I_{\bullet},\quad
{}^{++}\hspace{-.15mm}I:=[-{\textstyle\frac{3}{2}},0]\cup I_{\bullet},
\end{equation*}
bicolored by
\[
\begin{split}
I^{+}_{\circ}=I^{++}_{\circ} = I_{\circ},\quad \,\,
I^{+}_{\bullet} &= [0,{\textstyle\frac{1}{2}}],\qquad\,
I^{++}_{\bullet} = \,[0,{\textstyle\frac{3}{2}}],\\
{}^{+}\hspace{-.15mm}I_{\bullet} = {}^{++}\hspace{-.15mm}I_{\bullet} = I_{\bullet},\quad 
{}^{+}\hspace{-.15mm}I_{\circ} &= [-{\textstyle\frac{1}{2}},0],\quad
{}^{++}\hspace{-.15mm}I_{\circ} = [-{\textstyle\frac{3}{2}},0].
\end{split}
\]
Let $J:=[0,1]$ and consider the maps $J \to I^{++}_{\bullet}\hookrightarrow I^{++}$ and $J \to {}^{++}\hspace{-.15mm}I_\circ\hookrightarrow {}^{++}\hspace{-.15mm}I$ 
given by $x\mapsto \frac{3}{2}-x$ and $x\mapsto x-\frac{3}{2}$, respectively.
These embeddings induce homomorphisms 
$D(I^{++})\leftarrow \calb(J)^{op}$ and $\calb(J) \to E({}^{++}\hspace{-.15mm}I)$
that we use to form the fusion of the von Neumann algebras $D(I^{++})$ and $E({}^{++}\hspace{-.15mm}I)$
(Definition~\ref{def: Fusion of vN alg}).
We define 
\begin{equation}\label{eq:def-fusion-Defect}
\big(D \circledast_{\calb} E\big)(I):=
\begin{cases}
\,\cala(I)&\text{for }\, I\in\INT_\circ\\
D(I^{++}) \circledast_{\calb(J)} E({}^{++}\hspace{-.15mm}I)&\text{for }\, I\in\INT_\halfbullet\\
\,\calc(I)&\text{for }\, I\in\INT_\bullet\\
\end{cases}
\end{equation}
Pictorially, this is
\begin{equation}\label{eq:def-fusion-Defect (pictorial)}
\big( D\circledast_{\calb}E \big) \left( \tikzmath[scale=.4] {\useasboundingbox (-2.1,-1.1) rectangle (2.1,1.1);\draw (-2,-.7) [rounded corners=7pt]-- (-2,-.1) -- 
(-1.5,1) [rounded corners=4pt]-- (-.6,0) -- (0,0); \draw[ultra thick] (0,0) [rounded corners=3pt]-- (.5,0) -- (1,-.3) -- (1.7,.5) -- (2.2,.2);} %tikzmath
\right)  :=    D \left( \tikzmath[scale=.4] {\useasboundingbox (-2.1,-1.1) rectangle (.9,1.05); \draw (-2,-.7) [rounded corners=7pt]-- (-2,-.1) -- 
(-1.5,1) [rounded corners=4pt]-- (-.6,0) -- (0,0); \draw[thick, double, rounded corners=1.5](0,0) -- (.55,0) -- (.55,-1);} %tikzmath
\right) \circledast_{\calb \left(\tikzmath[scale=.3]{\draw[thick, double] (0,0) -- (0,-1);} %tikzmath
\right)}   E \left( \tikzmath[scale=.4] {\useasboundingbox (-.75,-1.1) rectangle (2.1,1.1);  \draw[thick, double, rounded corners=1.5] (-.5,-1) -- (-.5,0) -- (0,0);
\draw[ultra thick] (0,0) [rounded corners=3pt]-- (.5,0) -- (1,-.3) -- (1.7,.5) -- (2.2,.2);} %tikzmath
\right)
\end{equation}
If $I$ is genuinely bicolored then, by Proposition \ref{prop: [Haag duality for defects]}, we have
\[
\big(D \circledast_{\calb} E\big)(I)=\big(D(I^{++})\cap{\calb(J)^\op\hspace{.2mm}}'\hspace{.2mm}\big)\vee\big(E({}^{++}\hspace{-.15mm}I)\cap\calb(J)'\big)=
D(I^+)\vee E({}^{+}\hspace{-.15mm}I),
\]
where the algebras act on $H\boxtimes_{\calb(J)}K$
for some faithful $D(I^{++})$-module $H$ and some faithful $E({}^{++}\hspace{-.15mm}I)$-module $K$.
Therefore, we obtain the following equivalent definition of composition of defects:\medskip
\[
\tikzmath{
\node[draw, inner ysep=6, inner xsep=10] at (0,0) {\parbox{11cm}{
\addtocounter{theorem}{1}
{\bf Definition \arabic{chapter}.\arabic{theorem}.}
The algebra $(D \circledast_{\calb} E)(I)$ is the completion of the algebraic tensor product
$D\big(I^+\big)\otimes_\alg E\big({}^{+}\hspace{-.15mm}I\big)$
inside $\bfB(H\boxtimes_{\calb(J)}K)$, where $H$ is a faithful 
$D(I^{++})$-module, and $K$ is a faithful $E({}^{++}\hspace{-.15mm}I)$-module.}};
}\medskip
\]
We conjecture that that $D \circledast_\calb E$ is always an $\cala$-$\calc$-defect.
Our first main theorem says that this holds when $\calb$ has finite index.

\begin{maintheorem}
  \label{thm:fusion-of-defects-is-defect}
  Let $\cala$, $\calb$, and $\calc$ be irreducible conformal nets,
  and let us assume that $\calb$ has finite index.
  If $D$ is a defect from $\cala$ to $\calb$,
  and $E$ a defect from $\calb$ to $\calc$, then $D \circledast_\calb E$ is a
  defect from $\cala$ to $\calc$.
\end{maintheorem}

\begin{proof}
We first prove isotony.
Let $I_1\subset I_2$ be genuinely bicolored intervals, let $H$ be a faithful $D(I_2^{++})$-module and let $K$ be a faithful $E({}^{++}\hspace{-.15mm}I_2)$-module.
By the isotony property of $D$ and $E$, the actions of $D(I_1^{++})$ on $H$ and of $E({}^{++}\hspace{-.15mm}I_1)$ on $K$ are faithful.
Therefore, both $(D \circledast_{\calb} E)(I_1)$ and $(D \circledast_{\calb} E)(I_2)$ can be defined as subalgebras of $\bfB(H\boxtimes_{\calb(J)}K)$.
It is then clear that $(D \circledast_{\calb} E)(I_1)$ is a subalgebra of $(D \circledast_{\calb} E)(I_2)$.

We next show locality and strong additivity.
Let $J \subset I$ and $K\subset I$ be bicolored intervals whose union is $I$ and that intersect in a single point.
We assume without loss of generality that $K$ is white and that $I$ and $J$ are genuinely bicolored.
In particular, we then have ${}^{+}\hspace{-.15mm}I={}^{+}\hspace{-.15mm}J$.
By the strong additivity of $D$, we have
\[
\cala(K)\vee(D \circledast_{\calb} E)(J) = \cala(K)\vee D(J^+)\vee E({}^{+}\hspace{-.15mm}J) = D(I^+)\vee E({}^{+}\hspace{-.15mm}I) = (D \circledast_{\calb} E)(I),
\]
which proves that $D \circledast_{\calb} E$ is also strongly additive.
Since $D$ satisfies locality, the images of $\cala(K)$ and $D(J^+)$ commute in $D(I^+)$.
The algebra $D(I^+)$ commutes with $E({}^{+}\hspace{-.15mm}I) = E({}^{+}\hspace{-.15mm}J)$ by the definition of $\circledast$.
It follows that all three algebras $\cala(K)$, $D(J^+)$, and $E({}^{+}\hspace{-.15mm}J)$ commute with one another.
The algebras $\cala(K)$ and $(D \circledast_{\calb} E)(J)$ therefore also commute, as required.

The vacuum sector axiom is much harder.
Let us first assume that $D$ and $E$ are irreducible.
Let $J\subset I$ be as in the formulation of the vacuum sector axiom (Definition~\ref{def:Defect}), and let us assume without loss of generality that $J$ is white.
We need to show that the 
$\cala(J)\otimes_\alg\cala(\bar J)$-module structure
on $L^2((D \circledast_{\calb} E)(I))$ given by (\ref{eq:actions-on-L2}, \ref{eq: l otimes r : D otimes D --> B(L2(D))}) extends to an action of $\cala(J\cup\bar J)$.
This will follow from the existence of an injective homomorphism from $L^2((D \circledast_{\calb} E)(I))$ into some other $\cala(J)\otimes_\alg\cala(\bar J)$-module
that is visibly an $\cala(J\cup\bar J)$-module.
The desired homomorphism is \eqref{eq: main equation of thm G=L2} and will be constructed in Proposition~\ref{prop:G=L2}.
The fact that $\cala(J\cup\bar J)$ acts on the codomain of \eqref{eq: main equation of thm G=L2} is an immediate consequence of the vacuum sector axiom for $D$.

For general defects $D$ and $E$, write them as direct integrals
$D=\int^\oplus D_x$ and $E=\int^\oplus E_y$ of irreducible defects, and note that $D \circledast_\calb E=\iint^\oplus D_x \circledast_\calb E_y$ is a defect by Remark~\ref{rem: direct integral of defects}.
\end{proof}

In view of Corollary~\ref{cor: irreducible defect} and the fact that any defect between semisimple conformal nets can be disintegrated into irreducible defects (Lemma~\ref{lem: disintegrate}),
the above theorem generalizes in a straightforward way to the situation where $\cala$, $\calb$, and $\calc$ are not necessarily irreducible but merely semisimple: in this case,
if all the irreducible summands of $\calb$ have finite index, then the composition of an $\cala$-$\calb$-defect with a $\calb$-$\calc$-defect is an $\cala$-$\calc$-defect.

One might hope that composition of defects induces a functor
\begin{equation}\label{eq: composition that doesn't exist}
\mathsf{composition} \colon \CN_1 \times_{\CN_0} \CN_1 \to \CN_1.
\end{equation}
However, some caution is needed. 
First, we used the finite index condition on $\calb$ for our proof that $D\circledast_\calb E$ is a defect.  Second and
more important, the operation of fusion of von Neumann algebras 
is only functorial with respect to isomorphisms of von Neumann algebras:
given homomorphisms $A_1\leftarrow C_1^\op$, $C_1\to B_1$ and $A_2\leftarrow C_2^\op$, $C_2\to B_2$ it is \emph{not} true
that a triple of maps $a:A_1\to A_2$, $b:B_1\to B_2$, $c:C_1\to C_2$ (subject to the obvious compatibility conditions) induces a map
\begin{equation}\label{eq: does NOT exist}
a\circledast_c b: A_1\circledast_{C_1} B_1 \to A_2\circledast_{C_2} B_2.
\end{equation}
Moreover, requiring that the maps $a$, $b$, and $c$ be finite homomorphisms does not help to construct the map \eqref{eq: does NOT exist}.
However, unlike the fusion of von Neumann algebras, the composition of defects is functorial for more than just isomorphisms.
%Recall that all the conformal nets are assumed irreducible unless otherwise stated.

\begin{proposition}\label{prop: fusion of defects is a functor}
Let $a:\cala_1\to\cala_2$, $b:\calb_1\to\calb_2$, and $c:\calc_1\to\calc_2$ be natural transformations between irreducible conformal nets.
Let ${}_{\cala_1} {D_1}_{\,\calb_1}$, ${}_{\cala_2} {D_2}_{\,\calb_2}$, ${}_{\calb_1} {E_1}_{\,\calc_1}$, and ${}_{\calb_2} {E_2}_{\,\calc_2}$ be defects,
and let $d:D_1\to D_2$, $e:E_1\to E_2$ be natural transformations such that $d|_{\INT_\circ}=a$, $d|_{\INT_\bullet}=e|_{\INT_\circ}=b$, and $e|_{\INT_\bullet}=c$.
If the natural transformation $b$ is finite (Appendix~\ref{subsec:defnets}), %Appendix~\ref{subsec:finite-nets} and Appendix~\ref{subsec:dualizability}).
then the above maps induce a natural transformation
\[ %\begin{equation}\label{eq: BDE --> BDE}
D_1\circledast_{\calb_1} E_1 \to D_2\circledast_{\calb_2} E_2.
\] %\end{equation}

Moreover, if $\calb_i$ have finite index, $D$ and $E$ are semisimple, and $d$ and $e$ are finite,
then the defects $D_i\circledast_{\calb_i} E_i$ are semisimple and the above natural transformation is finite.
\end{proposition}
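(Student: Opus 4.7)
The plan is to construct the natural transformation $\tau : D_1 \circledast_{\calb_1} E_1 \to D_2 \circledast_{\calb_2} E_2$ componentwise. On white intervals $I \in \INT_\circ$ set $\tau_I := a_I$ and on black intervals $I \in \INT_\bullet$ set $\tau_I := c_I$; these choices are forced by the requirement that $\tau|_{\INT_\circ} = a$ and $\tau|_{\INT_\bullet} = c$. The content lies in the case when $I$ is genuinely bicolored. Using the definition
\[
(D_i \circledast_{\calb_i} E_i)(I) \;=\; D_i(I^{++}) \circledast_{\calb_i(J)} E_i({}^{++}I),
\]
this reduces to the following general functoriality statement for fusion of von Neumann algebras: given a \emph{finite} homomorphism $\gamma : C_1 \to C_2$ and compatible homomorphisms $\alpha : A_1 \to A_2$, $\beta : B_1 \to B_2$ (where $C_i^{\op} \hookrightarrow A_i$ and $C_i \hookrightarrow B_i$ and the restrictions of $\alpha, \beta$ to $C_1$ agree with $\gamma$), there is an induced normal $\ast$-homomorphism $\alpha \circledast_\gamma \beta : A_1 \circledast_{C_1} B_1 \to A_2 \circledast_{C_2} B_2$. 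Applied to $\alpha = d_{I^{++}}$, $\gamma = b_J$, $\beta = e_{{}^{++}I}$ (compatible thanks to the hypothesis $d|_{\INT_\bullet} = b = e|_{\INT_\circ}$), and restricted along the inclusion $D_1(I^+) \vee E_1({}^+I) = (D_1 \circledast_{\calb_1} E_1)(I)$ inside the domain (Haag duality, Proposition~\ref{prop: [Haag duality for defects]}), one obtains a map $\tau_I$ whose image lies in $D_2(I^+) \vee E_2({}^+I) = (D_2 \circledast_{\calb_2} E_2)(I)$. Naturality of $\tau$ in $I$ then follows from the naturality of $a, b, c, d, e$.

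To construct $\alpha \circledast_\gamma \beta$, fix a faithful representation $V = H \boxtimes_{C_2} K$ of $A_2 \circledast_{C_2} B_2$, with $H$ faithful over $A_2$ and $K$ faithful over $B_2$. Pulling back via $\alpha, \beta, \gamma$ equips $V$ with commuting algebraic actions of $A_1$ and $B_1$ whose restrictions to $C_1$ agree, so there is an algebraic representation of $A_1 \otimes_{\alg} B_1$ on $V$ with image inside $A_2 \circledast_{C_2} B_2 \subseteq \bfB(V)$. The main obstacle is to see that this extends to a \emph{normal} representation of the fusion $A_1 \circledast_{C_1} B_1$: this is exactly where the finiteness of $\gamma$ is essential. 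Dualizability of the $C_1$-$C_2$ bimodule $L^2(C_2)$ (coming from $\gamma$ being finite) provides the boundedness needed to compare the topology of the algebraic image in $\bfB(V)$ with the topology inherited from the defining faithful representation of $A_1 \circledast_{C_1} B_1$ on, say, $L^2(A_1) \boxtimes_{C_1} L^2(B_1)$, and so to conclude that the pulled-back algebraic representation factors through a normal representation of $A_1 \circledast_{C_1} B_1$. Without finiteness of $\gamma$ no such comparison is available and no lifting need exist, which is precisely the obstruction noted in the paragraph preceding the proposition.

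For the second assertion, assume the stronger hypotheses. Theorem~\ref{thm:fusion-of-defects-is-defect} gives that both $D_i \circledast_{\calb_i} E_i$ are defects. To establish semisimplicity, invoke Lemma~\ref{lem: disintegrate} to reduce to the case of irreducible conformal nets and irreducible input defects; the finite-index hypothesis on $\calb_i$ then makes each inclusion $\calb_i(J) \hookrightarrow D_i(I^{++})$ and $\calb_i(J) \hookrightarrow E_i({}^{++}I)$ a finite inclusion of factors, and the fusion of such a pair is a finite direct sum of factors. Finiteness of $\tau_I$ on white and black intervals is immediate from the finiteness of $a$ and $c$; on a genuinely bicolored $I$, the bimodule implementing $\tau_I$ is built as a Connes fusion of the dualizable bimodules implementing the finite homomorphisms $d_{I^{++}}$, $b_J$, and $e_{{}^{++}I}$, and is hence itself dualizable by multiplicativity of dualizability under Connes fusion.
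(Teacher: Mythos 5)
Your overall architecture matches the paper's (work interval by interval, with the finiteness of $b$ as the crucial input, and cite Theorem~\ref{thm: semi-simplicity of DoE}-type results for the second half), but there are two genuine problems. First, the ``general functoriality statement for fusion of von Neumann algebras'' to which you reduce the first claim is \emph{false}, and the paper explicitly warns about this in the discussion preceding the proposition: a compatible triple $(\alpha,\beta,\gamma)$ with $\gamma$ (or even all three maps) finite does \emph{not} induce a map $A_1\circledast_{C_1}B_1\to A_2\circledast_{C_2}B_2$. The obstruction is that $\alpha$ carries $A_1\cap(C_1^{\op})'$ into the commutant of $\gamma(C_1)^{\op}$ only, not of all of $C_2^{\op}$, so the pulled-back ``commuting algebraic actions'' you assert on $V=H\boxtimes_{C_2}K$ do not exist in general. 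What rescues the defect case is extra structure you must take as input rather than derive: the relative commutants are themselves the values $D_i(I^+)$, $E_i({}^+\hspace{-.15mm}I)$ of the defect functors on subintervals, and $d,e$ restrict to maps between them that land in $\calb_2(J)'$ by locality for $D_2$ and $E_2$. Second, even granting that, your mechanism for using finiteness of $b$ --- ``dualizability provides the boundedness needed to compare the topology of the algebraic image with the topology inherited from the defining faithful representation'' --- is an assertion, not an argument. The actual mechanism (the paper's appeal to \cite[Thm.~6.23]{BDH(Dualizability+Index-of-subfactors)}) is that the finite homomorphism $\calb_1(J)\to\calb_2(J)$ induces a bounded \emph{surjective} equivariant map $H\boxtimes_{\calb_1(J)}K\to H\boxtimes_{\calb_2(J)}K$ (after reducing, as the paper does, to $d,e$ faithful so that the source carries a faithful action of $(D_1\circledast_{\calb_1}E_1)(I)$); the von Neumann completion is then transported along this map. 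Your proposal never produces this map.

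The second half also contains an error. You claim that finite index of $\calb_i$ makes $\calb_i(J)\hookrightarrow D_i(I^{++})$ a finite inclusion of factors; this is false except in degenerate cases, since by Haag duality for defects its relative commutant is $D_i(I^+)$, an infinite-dimensional algebra, whereas a finite-index subfactor has finite-dimensional relative commutant. Semisimplicity of $D_i\circledast_{\calb_i}E_i$ is not a one-line consequence of the definition of fusion: it is the content of Theorem~\ref{thm: semi-simplicity of DoE}, whose proof bounds the statistical dimension of the vacuum sector of a defect as a bimodule over four-interval algebras using $\mu(\calb)<\infty$. Your argument for finiteness of $\tau_I$ on genuinely bicolored intervals (Connes fusion of the dualizable bimodules for $d$, $b$, $e$) is in the right spirit but would need to be justified; the paper instead invokes \cite[\lemAvBAvBfinite]{BDH(Dualizability+Index-of-subfactors)} directly.
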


\begin{proof}
Given a genuinely bicolored interval $I$, we need to construct a homomorphism 
$(D_1\circledast_{\calb_1} E_1)(I) \to (D_2\circledast_{\calb_2} E_2)(I)$.
We assume without loss of generality that $d$ and $e$ are faithful (otherwise, their kernels are direct summands).
Let $H$ be a faithful $D_2(I^{++})$-module, and let $K$ be a faithful $E_2({}^{++}\hspace{-.15mm}I)$-module.
By ~\cite[Thm. 6.23]{BDH(Dualizability+Index-of-subfactors)},
the natural transformation $b$ induces a bounded 
linear map $H\boxtimes_{\calb_1(J)}K\to H\boxtimes_{\calb_2(J)}K$, which is surjective by construction.
That map is equivariant with respect to the homomorphism
$D_1\big(I^+\big)\otimes_\alg E_1\big({}^{+}\hspace{-.15mm}I\big)\to
D_2\big(I^+\big)\otimes_\alg E_2\big({}^{+}\hspace{-.15mm}I\big)$,
and therefore induces a map from the completion of $D_1\big(I^+\big)\otimes_\alg E_1\big({}^{+}\hspace{-.15mm}I\big)$
in $\bfB(H\boxtimes_{\calb_1(J)}K)$ to the completion of $D_2\big(I^+\big)\otimes_\alg E_2\big({}^{+}\hspace{-.15mm}I\big)$
in $\bfB(H\boxtimes_{\calb_2(J)}K)$.

Let us now assume that the conformal nets $\calb_i$ have finite index, the defects $D$ and $E$ are semisimple, and the natural transformations $d$ and $e$ are finite.
The semisimplicity of $D_i\circledast_{\calb_i} E_i$ is then the content of Theorem~\ref{thm: semi-simplicity of DoE}, and
the homomorphism $(D_1\circledast_{\calb_1} E_1)(I) \to (D_2\circledast_{\calb_2} E_2)(I)$ is finite 
by~\cite[\lemAvBAvBfinite]{BDH(Dualizability+Index-of-subfactors)}.
%\footnote{Note that there is a missing assumption in \cite[\lemAvBAvBfinite]{BDH(Dualizability+Index-of-subfactors)}: the map $h:H_1\to H_2$ should be surjective.}
\end{proof} 

We do not know whether the functor \eqref{eq: composition that doesn't exist} exists as stated. However, instead of trying to compose over the full category $\CN_0$ of semisimple conformal nets, we can restrict attention to the subcategory $\CN_0^f \subset \CN_0$ of semisimple conformal nets all of whose irreducible summands have finite index, together with their finite natural transformations.  If we let $\CN_1 \times_{\CN^f_0} \CN_1$ be a shorthand notation for 
$\CN_1 \times_{\CN_0} \CN^f_0\times_{\CN_0} \CN_1$,
%\AB{Having three terms is confusing. The reason is that we want to avoid
% an notation for the category of defects where the white or black net is
% finite?}
then the composition functor
\begin{equation}\label{eq: composition -- this time between the correct categories}
\mathsf{composition} \,\colon\, \CN_1 \times_{\CN^f_0} \CN_1 \to \CN_1
\end{equation}
exists by Theorem \ref{thm:fusion-of-defects-is-defect} and Proposition \ref{prop: fusion of defects is a functor}.

\section{Associativity of composition}
It will be convenient to work with the square model $S^1:=\partial[0,1]^2$ of the ``standard circle'' (see the beginning of Chapter \ref{sec:sectors})
and to use the following notation.

\begin{notation}\label{not:names-for-intervals}
Given real numbers $a<b$ and $c<d$ and $M=[a,b]\!\times\![c,d]$, we let
\begin{alignat*}{2}
\partial^\sqsubset M&:=(\{a\}\!\times\! [c,d]) \cup ([a,b]\!\times\!\{c,d\})\quad &\partial^{\hspace{.2mm}\tikzmath[scale=.15]{\draw(0,0)--(0,1)--(1,1);}} M :=(\{a\}\!\times\! [c,d]) \cup ([a,b]\!\times\!\{d\})\\
\partial^\sqsupset M&:=(\{b\}\!\times\! [c,d]) \cup ([a,b]\!\times\!\{c,d\})\quad &\partial^{\hspace{.2mm}\tikzmath[scale=.15]{\draw(1,0)--(0,0)--(0,1);}} M :=(\{a\}\!\times\! [c,d]) \cup ([a,b]\!\times\!\{c\})\\
\partial^\sqcap M&:=(\{a,b\}\!\times\! [c,d]) \cup ([a,b]\!\times\!\{d\})\quad &\partial^{\hspace{.2mm}\tikzmath[scale=.15]{\draw(0,1)--(1,1)--(1,0);}} M :=(\{b\}\!\times\! [c,d]) \cup ([a,b]\!\times\!\{d\})\\
\partial^\sqcup M&:=(\{a,b\}\!\times\! [c,d]) \cup ([a,b]\!\times\!\{c\})\quad &\partial^{\hspace{.2mm}\tikzmath[scale=.15]{\draw(0,0)--(1,0)--(1,1);}} M :=(\{b\}\!\times\! [c,d]) \cup ([a,b]\!\times\!\{c\})
\end{alignat*}
be the subsets of $\partial M$ hinted by the pictorial superscript.
\end{notation}

\begin{definition}\label{def: L2(D)}
The standard bicolored circle is
$S^1=\partial [0,1]^2$ with bicoloring $S^1_\circ=\partial^\sqsubset ([0,\frac12]\!\times\![0,1])$ and $S^1_\bullet=\partial^\sqsupset ([\frac12,1]\!\times\![0,1])$.  The upper half of this circle is $S^1_\top=\partial^\sqcap([0,1]\!\times\![\frac12,1])$, and the standard involution is $(x,y)\mapsto (x,1-y)$.  The vacuum sector associated to the standard bicolored circle, its upper half, and its standard involution, is
\[
H_0(D):=L^2(D(S^1_\top)).
\]
It has left actions of $D(I)$ for every interval $I\subset S^1$
such that $(\frac12,0)$ and $(\frac12,1)$ are not both in $I$ and $\{(\frac12,0),(\frac12,1)\}\cap \partial I=\emptyset$.
\end{definition}

The fiber product $\ast$ of von Neumann algebras
was studied in~\cite{Timmermann(Invitation-2-quantum-groups)}.
It is an alternative to the fusion $\circledast$ of
von Neumann algebras which remedies the formal shortcomings of the fusion operation
(see Appendix~\ref{subsec:fusion+fiber-prod}).
In view of this, one might rather have defined the composition of defects as
\begin{equation}\label{eq: alternative def of fusion}
\big(D \ast_{\calb} E\big)(I):=
\begin{cases}
\,\cala(I)&\text{for }\, I\in\INT_\circ\\
D(I^{++}) \ast_{\calb(J)} E({}^{++}\hspace{-.15mm}I)&\text{for }\, I\in\INT_\halfbullet\\
\,\calc(I)&\text{for }\, I\in\INT_\bullet
\end{cases}
\end{equation}
where $I^{++}$, ${}^{++}\hspace{-.15mm}I$, and $J$ are as in~\eqref{eq:def-fusion-Defect}.
This is related to the previous 
definition~\eqref{eq:def-fusion-Defect} as follows.
Let $S^1$ be the standard bicolored circle
 (see Definition \ref{def: L2(D)}), with upper and lower 
halves $S^1_\top$ and $S^1_\bot$. 

\begin{lemma}\label{lem: circledast versus boxast}
Let ${}_\cala D_\calb$ and ${}_\calb E_\calc$ be defects, with corresponding vacuum sectors $H:=H_0(D)$ and $K:=H_0(E)$.
Viewed as algebras acting on $H\boxtimes_\calb K$, we then have
\[
(D\ast E)(S^1_\top) = \big((D\circledast E)(S^1_\bot)\big)'\qquad\text{and}\qquad (D\ast E)(S^1_\bot) = \big((D\circledast E)(S^1_\top)\big)'.
\]
\end{lemma}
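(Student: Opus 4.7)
The plan is to combine Haag duality for defects on each vacuum sector separately (Proposition~\ref{prop: [Haag duality for defects]}) with the general identification between commutants on a Connes fusion and the fiber product of von Neumann algebras (Appendix~\ref{subsec:fusion+fiber-prod}).

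First I unpack both algebras as subalgebras of $\bfB(H\boxtimes_{\calb(I)} K)$, where $I = \{1\}\!\times\![0,1]$ is the common boundary of the two squares. Using the reformulation of $\circledast$ in the boxed definition just above the Main Theorem, $(D\circledast_\calb E)(S^1_\bot)$ is the von Neumann algebra generated on $H\boxtimes_{\calb(I)} K$ by the image of $D(S^1_\bot)$ (acting on $H$) and the image of $E(S^1_\bot)$ (acting on $K$); these actions descend to the Connes fusion because, by locality of each defect, they commute with the right-$\calb(I)$-action on $H$ and the left-$\calb(I)$-action on $K$, respectively. Dually, by the definition~\eqref{eq: alternative def of fusion}, the algebra $(D\ast_\calb E)(S^1_\top)$ is, by fiat, the fiber product $D(S^1_\top) \ast_{\calb(I)} E(S^1_\top)$ acting on the same space.

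Second, Haag duality (Proposition~\ref{prop: [Haag duality for defects]}) applied to the vacuum sectors $H$ and $K$ gives $D(S^1_\top)=D(S^1_\bot)'$ in $\bfB(H)$ and $E(S^1_\top)=E(S^1_\bot)'$ in $\bfB(K)$. Combined with the standard commutant/fiber-product identity for Connes fusion recalled in Appendix~\ref{subsec:fusion+fiber-prod} --- namely that on $H\boxtimes_{\calb(I)} K$ the commutant of the algebra generated by $A\subset\bfB(H)$ and $B\subset\bfB(K)$ (each compatible with the balancing $\calb(I)$-action) is the fiber product $A'\ast_{\calb(I)} B'$ of the commutants taken separately on $H$ and $K$ --- and applied to $A=D(S^1_\bot)$ and $B=E(S^1_\bot)$, this yields
\[
\bigl((D\circledast_\calb E)(S^1_\bot)\bigr)'
\;=\; D(S^1_\top) \ast_{\calb(I)} E(S^1_\top)
\;=\; (D\ast_\calb E)(S^1_\top),
\]
which is the first equation. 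The second is obtained by exchanging the roles of $S^1_\top$ and $S^1_\bot$ and taking double commutants.

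The main technical point will be to check that the generic commutant/fiber-product identity really applies here. The subtle feature is that $\calb(I)$ does not sit inside $D(S^1_\top)$ alone but splits as $\calb(I_\top)\vee \calb(I_\bot)$, with $\calb(I_\top)\subset D(S^1_\top)$ and $\calb(I_\bot)\subset D(S^1_\bot)$, so the right-$\calb(I)$-structure on $H$ involves both the left and the right actions of $D(S^1_\top)$ on its $L^2$-space. I expect that unpacking this using the vacuum sector axiom and locality for $D$ and $E$ places the two bimodules $H$ and $K$ in precisely the form required by the Appendix result, after which the equalities above are automatic.
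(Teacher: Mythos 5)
Your argument is essentially the paper's proof run in the opposite direction: the paper likewise unfolds the definition of the fiber product $\ast$ as the commutant of the algebraic tensor product of commutants, applies Haag duality for defects to each vacuum sector $H$ and $K$ separately, and recognizes the resulting algebra as $\big((D\circledast E)(S^1_\bot)\big)'$. The subtlety you flag in your final paragraph is resolved exactly as you anticipate: the fiber product defining $(D\ast_\calb E)(S^1_\top)$ is taken over the enlarged intervals $I^{++}$ and ${}^{++}\hspace{-.15mm}I$, which absorb the whole gluing interval, so that their commutants on $H$ and on $K$ are precisely the complementary intervals generating $(D\circledast E)(S^1_\bot)$.
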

\begin{proof}
Using a graphical representation as 
in~\eqref{eq:def-fusion-Defect (pictorial)}, 
we have:
\[
\begin{split}
\hspace{1cm}(D\ast E)(S^1_\top)\,&=
D\big(\tikzmath[scale=\textscale]
  {   \draw (0,6) -- (0,12) -- (6,12);
      \draw[thick, double]  (6,12) -- (12,12) -- (12,0); }\big)%
\ast_{\calb \left(\tikzmath[scale=.3]{\draw[thick, double] (0,0) -- (0,-1);}\right)}
E\big(\tikzmath[scale=\textscale]
  {   \draw[ultra thick] (12,6) -- (12,12) -- (6,12);
      \draw[thick, double]  (6,12) -- (0,12) -- (0,0); }\big)\\%
&=\Big(D\big(\tikzmath[scale=\textscale]
  {   \draw (0,6) -- (0,12) -- (6,12);
      \draw[thick, double]  (6,12) -- (12,12) -- (12,0); }\big)'%
\otimes_\alg
E\big(\tikzmath[scale=\textscale]
  {   \draw[ultra thick] (12,6) -- (12,12) -- (6,12);
      \draw[thick, double]  (6,12) -- (0,12) -- (0,0); }\big)'\Big)'\qquad\tikzmath{\draw (0,0) node[scale=.85, anchor=west] {(commutants taken on $H$,} (.1,-.35) node[scale=.85, anchor=west] {$K$, and $H\boxtimes_\calb K$, respectively)};}\\%
&=\Big(D\big(\tikzmath[scale=\textscale]
  {   \draw (0,6) -- (0,0) -- (6,0);
      \draw[thick, double]  (6,0) -- (12,0); }\big)%
\otimes_\alg
E\big(\tikzmath[scale=\textscale]
  {   \draw[ultra thick] (12,6) -- (12,0) -- (6,0);
      \draw[thick, double]  (6,0) -- (0,0); }\big)\Big)'\\%
&=\Big(D\big(\tikzmath[scale=\textscale]
  {   \draw (0,6) -- (0,0) -- (6,0);
      \draw[thick, double]  (6,0) -- (12,0); }\big)%
\vee E\big(\tikzmath[scale=\textscale]
  {   \draw[ultra thick] (12,6) -- (12,0) -- (6,0);
      \draw[thick, double]  (6,0) -- (0,0); }\big)\Big)'%
=\big((D\circledast E)(S^1_\bot)\big)'.
\end{split}
\]
where the third equality follows by Haag duality (Proposition \ref{prop: [Haag duality for defects]}).
The second equation is similar.
\end{proof}

When the conformal net $\calb$ has finite index (and conjecturally even without that restriction), the two definitions of fusion \eqref{eq:def-fusion-Defect} and \eqref{eq: alternative def of fusion} actually agree:
%This will be a key ingredient in the proof of Theorem \ref{thm: Omega is an iso}.

\begin{theorem}\label{thm:fusion-of-defects-is-generated}
  Let ${}_\cala D_\calb$ and ${}_\calb E_\calc$ be defects.
  If $\calb$ has finite index, then for every bicolored interval $I$ the inclusion
  \begin{equation}\label{eq: circledast --> ast}
  \big(D \circledast_{\calb} E\big)(I) 
  % =
  %D(I^{++}) \!\underset{\calb(J)}\circledast\! E({}^{++}\hspace{-.15mm}I)\\
  \,\,\,\hookrightarrow\,\,\,  \big(D \ast_{\calb} E\big)(I)  %:= D(I^{++}) \!\underset{\calb(J)}\ast\! E({}^{++}\hspace{-.15mm}I)
  %\end{split}
  \end{equation}
  is an isomorphism.
\end{theorem}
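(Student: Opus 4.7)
The plan is to reduce the statement to Haag duality for the fused defect, namely Theorem~\ref{thm:Haag-duality-composition-defects}. First, if $I \in \INT_\circ$ or $\INT_\bullet$, both sides of the inclusion~\eqref{eq: circledast --> ast} tautologically coincide with $\cala(I)$ or $\calc(I)$, and there is nothing to prove; so the relevant case is that of a genuinely bicolored $I$. Since both $(D \circledast_\calb E)(I)$ and $(D \ast_\calb E)(I)$ are built from $D(I^{++})$, $E({}^{++}\hspace{-.15mm}I)$, and $\calb(J)$ by a procedure that depends only on the germ of the local coordinate at the color-change point of $I$, and since both constructions are manifestly isotone in $I$, an embedding of $I$ into $S^1_\top$ compatible with the local coordinates reduces the problem to the case $I = S^1_\top$.

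In that case, the forward inclusion~\eqref{eq: circledast --> ast} is automatic: by Haag duality for $D$ (Proposition~\ref{prop: [Haag duality for defects]}), $D(I^+)$ is contained in the relative commutant of the image of $\calb(J)^\op$ in $D(I^{++})$, and symmetrically for $E({}^{+}\hspace{-.15mm}I)$, so both algebras lie inside the fiber product when acting on $H_0(D) \boxtimes_\calb H_0(E)$. For the reverse inclusion, I plan to chain two identifications. Lemma~\ref{lem: circledast versus boxast} gives
\[
(D \ast_\calb E)(S^1_\top) \,=\, \bigl((D \circledast_\calb E)(S^1_\bot)\bigr)'
\]
as subalgebras of $\bfB(H_0(D) \boxtimes_\calb H_0(E))$, while Haag duality for the fused defect (Theorem~\ref{thm:Haag-duality-composition-defects}), which is where the finite-index hypothesis on $\calb$ enters, asserts that the right-hand side is in turn equal to $(D \circledast_\calb E)(S^1_\top)$. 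Concatenating the two equalities yields the desired result.

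The hard part is entirely absorbed into the Haag duality theorem, whose proof occupies all of Section~\ref{sec: Haag duality for composition of defects}; once that is available, the present theorem is a short formal consequence, saying that, for defects fused over a finite-index net, the algebraic fusion and the von Neumann algebraic fiber product are forced to coincide by the fact that the two opposite-half algebras of the fused defect are each other's commutants on the fusion of vacuum sectors.
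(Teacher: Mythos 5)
Your proof is correct and follows essentially the same route as the paper: reduce to the standard half-circle, observe that $(D \ast_\calb E)(S^1_\top) = \bigl((D \circledast_\calb E)(S^1_\bot)\bigr)'$, and conclude by Haag duality for the fused defect. The only caveat is that Theorem~\ref{thm:Haag-duality-composition-defects} is stated under a standing irreducibility hypothesis on $D$ and $E$, so for general defects you should cite its disintegrated form, Corollary~\ref{cor:fiber-product-and-nets}, which is exactly what the paper does.
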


\begin{proof}
If $I\in \INT_\circ$ or $I \in \INT_\bullet$, then there is nothing to show. 
Recall the Notation~\ref{not:names-for-intervals}. 
%\tikzmath[scale=\textscale] {\draw (12,6) -- (12,12) -- (18,12);\draw[ultra thick] (18,12) -- (24,12) -- (24,6);}:
For $I := \partial^\sqcap([0,1]  \times   [\frac{1}{2},1])$, with bicoloring $I_\circ:=I_{x\le \frac12}$ and $I_\bullet:=I_{x\ge \frac12}$,
the equality $(D \circledast_{\calb} E)(I) = (D \ast_{\calb} E)(I)$ is the content of Corollary \ref{cor:fiber-product-and-nets}.
The result follows as every genuinely bicolored interval is isomorphic to this interval.
\end{proof}

Using the above theorem, the associator
\begin{equation}\label{eq:associator for defects}
\big(D \circledast_{\calb}E\big) \circledast_{\calc}F\cong D \circledast_{\calb}\big(E \circledast_{\calc}F\big)
\end{equation}
is then induced from the associator for the operation $\ast$ of 
fiber product of von Neumann algebras.
If $I$ is a genuinely bicolored interval, 
% %(Definition \ref{def: Fusion of vN alg}). then following \eqref{eq:def-fusion-Defect},
then evaluating the two sides 
of~\eqref{eq:associator for defects} on $I$ yields
\begin{equation}\label{eq:associator for defects, expanded}
\Big(D(I^{++}) \underset{\calb(J)}\ast E(K)\Big) \underset{\calc(J')}\ast F({}^{++}\hspace{-.15mm}I)\,\,\,\,\text{and}\,\,\,\,
D(I^{++}) \underset{\calb(J)}\ast \Big( E(K) \underset{\calc(J')}\ast F({}^{++}\hspace{-.15mm}I)\Big),
\end{equation}
where
\[
I^{++}=I_{\circ}\cup [0,{\textstyle\frac{3}{2}}],\quad K=[-{\textstyle\frac{3}{2}},{\textstyle\frac{3}{2}}],\quad {}^{++}\hspace{-.15mm}I=[-{\textstyle\frac{3}{2}},0]\cup I_{\bullet},\quad J=J'=[0,1],
\]
and the embeddings $I^{++}\hookleftarrow J \hookrightarrow K\hookleftarrow J'\hookrightarrow {}^{++}\hspace{-.15mm}I$ are as in \eqref{eq:def-fusion-Defect}.
The associator relating the two sides 
of~\eqref{eq:associator for defects, expanded} 
(see \cite[Prop.~9.2.8]{Timmermann(Invitation-2-quantum-groups)} 
for a construction) is the desired natural 
isomorphisms~\eqref{eq:associator for defects}.
The properties of \eqref{eq:associator for defects} can then be 
summarized by saying that it provides a natural transformation
\begin{equation}\label{eq: associator natural transformation}
\mathsf{associator}\colon \CN_1 \times_{\CN_0^f} \CN_1\times_{\CN_0^f} \CN_1\, \tworarrow\, \CN_1
\end{equation}
that is an associator for the 
composition~\eqref{eq: composition -- this time between the correct categories}.
This associator satisfies the pentagon identity
by the corresponding pentagon identity for the operation $\ast$.

%==================================================================

\chapter{Sectors}
  \label{sec:sectors}

We will use the constant speed parametrization
to identify the standard circle $\{ z \in \IC : |z| = 1 \}$ with the boundary  of the unit square $\dd[0,1]^2$.
Under our identification, the points $1$, $i$, $-1$, and $-i$ get mapped to $(1,\frac12)$, $(\frac12,1)$, $(0,\frac12)$, and $(\frac12,0)$, respectively.

Recall the Notation~\ref{not:names-for-intervals}.
Our standard circle $S^1=\partial [0,1]^2$ has a 
standard bicoloring given by
$S^1_{\circ} := \partial^\sqsubset ([0,\frac{1}{2}] \!\times\! [0,1])$
and 
$S^1_{\bullet} := \partial^\sqsupset([\frac{1}{2},1] \!\times\! [0,1])$.
Let $\INT_{S^1}$ be the poset of subintervals of $S^1$, and 
let $\INT_{\!S^1\!,\circ\bullet}$ be the sub-poset of intervals 
$I \subset S^1$ such that $(I \cap S^1_{\circ}, I \cap S^1_{\bullet})$ is a bicoloring.
Thus, an interval $I$ is in $\INT_{\!S^1\!,\circ\bullet}$ if neither of the color-change points $(\frac12,0)$ and $(\frac12,1)$
are in its boundary, and if both $I \cap S^1_{\circ}$ and $I \cap S^1_{\bullet}$ are connected (possibly empty).
We view $\INT_{\!S^1\!,\circ\bullet}$ as a (non-full) subcategory of $\INT_{\circ\bullet}$.

\section{The category $\CN_2$ of sectors}\label{sec: The category CN2 of sectors}
The elements of $\INT_{\!S^1\!,\circ\bullet}$
naturally fall into four classes:
\begin{equation}\label{eq:4-INT-in-S1}
\begin{split}
\phantom{\big(} & \INT_{\!S^1\!,\circ}\\
\phantom{\big(} & \INT_{\!S^1\!,\bullet}\\
\phantom{\big(} & \INT_{\!S^1\!,\top}\\
\phantom{\big(} & \INT_{\!S^1\!,\bot}
\end{split}
\begin{split}
& := \{ I \subset S^1 \mid I \cap S^1_{\bullet}=\emptyset\}\\
& := \{ I \subset S^1 \mid I \cap S^1_{\circ}=\emptyset\}\\
& := \{ I \subset S^1 \mid ({\textstyle\frac{1}{2}},1)\in I\setminus\partial I 
          \,\text{ and }\, ({\textstyle\frac{1}{2}},0)\not\in I\}\\
& := \{ I \subset S^1 \mid ({\textstyle\frac{1}{2}},0)\in I\setminus\partial I 
          \,\text{ and }\, ({\textstyle\frac{1}{2}},1)\not\in I\}.
\end{split}
\end{equation}

\begin{definition}
  \label{def:sector-for-defects}
  Let $\cala$ and $\calb$ be conformal nets, and let
  $_{\cala}D_{\calb}$ and $_{\cala}E_{\calb}$ be $\cala$-$\calb$-defects.
  A $D$-$E$-sector is a Hilbert space $H$, equipped with homomorphisms
  \begin{equation} \label{eq:4rho}
   \begin{split}
    \rho_I \colon \cala(I) \to \bfB(H)\qquad 
    &\text{for}\quad I \in \INT_{\!S^1\!,\circ}\\
    \rho_I \colon D(I)\to \bfB(H)\qquad 
    &\text{for}\quad I \in \INT_{\!S^1\!,\top}\\
    \rho_I \colon \calb(I)\to \bfB(H)\qquad 
    &\text{for}\quad I \in \INT_{\!S^1\!,\bullet}\\
    \rho_I \colon E(I)\to \bfB(H)\qquad 
    &\text{for}\quad I \in \INT_{\!S^1\!,\bot}\,
   \end{split}
  \end{equation}
  subject to the condition $\rho_I|_{J} = \rho_J$ whenever $J\subset I$.
  Moreover, if $I \in \INT_{\!S^1\!,\top}$ and
  $J \in \INT_{\!S^1\!,\bot}$ 
  are intervals with disjoint interiors, then $\rho_I(D(I))$ and 
  $\rho_J(E(J))$ are required to commute with each other.
  We write $_DH_E$ to indicate that $H=(H,\rho)$ is a $D$-$E$-sector.
  If $D = E$, then we say that $H$ is a $D$-sector.
\end{definition}

Pictorially we will draw a $D$-$E$-sector as follows:
\[
\tikzmath[scale=\squarescale]
{\fill[spacecolor](0,0) rectangle(12,12);\draw (6,0) -- (0,0) -- (0,12) -- (6,12);\draw[ultra thick](6,12) -- (12,12) -- (12,0) -- (6,0); 
\draw (-3,6) node {$\cala$}(15,6) node {$\calb$}(6,15) node {$D$}(6,-3) node {$E$}(6,6)node {$H$};
}  %tikzmath
\]
The thin line stands for the conformal net $\cala$ and the thick line stands for $\calb$.

\begin{remark}
  If $I \in \INT_{\!S^1\!,\top}$ and 
  $J \in \INT_{\!S^1\!,\bot}$ 
  are disjoint intervals, we do {\em not} require that the action
  of $D(I)\ox_{\mathit{alg}}E(J)$ extends to an action of
  $D(I) \, \bar\ox \, E(J)$.
\end{remark}

Recall from Proposition \ref{prop: CCdefects == VNalg} that if $\cala$ and $\calb$ are both equal to the trivial conformal net $\underline\IC$,
then a $\underline\IC$-$\underline\IC$-defect may be viewed simply as a von Neumann algebra.
A $D$-$E$-sector between two such defects is given by a bimodule between the corresponding von Neumann algebras.

The following lemma is a straightforward analog 
of~\cite[\lemopencoverofcirclesector]{BDH(nets)}.

\begin{lemma}\label{lem: open cover of circle => sector -- BIS}
Let $S^1$ be the standard bicolored circle, and
let $\{I_i\subset S^1\}$ be a family of bicolored intervals whose interiors cover $S^1$.
Suppose that we have actions 
\begin{equation*}
\begin{split}
\rho_i \colon \cala(I_i) \to \bfB(H),\qquad 
&\text{\rm for}\quad I_i \in \INT_{\!S^1\!,\circ}\\
\rho_i \colon D(I_i)\to \bfB(H),\qquad 
&\text{\rm for}\quad I_i \in \INT_{\!S^1\!,\top}\\
\rho_i \colon\, \calb(I_i)\to \bfB(H),\qquad 
&\text{\rm for}\quad I_i \in \INT_{\!S^1\!,\bullet}\\
\rho_i \colon E(I_i)\to \bfB(H),\qquad 
&\text{\rm for}\quad I_i \in \INT_{\!S^1\!,\bot}\,,
\end{split}
\end{equation*}
subject to the following two conditions: 1. $\rho_i|_{I_i\cap I_j}=\rho_j|_{I_i\cap I_j}$,
2. if $J\subset I_i$ and $K\subset I_j$ are disjoint, then $\rho_{i}(\cala(J))$ commutes with $\rho_{j}(\cala(K))$.
%2. if $I_i\cap I_j=\emptyset$, then the images of $\rho_i$ and $\rho_j$ commute.
Then these actions endow $H$ with the structure of a $D$-$E$-sector.
\end{lemma}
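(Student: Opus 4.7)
The plan is to extend the given family of actions $\{\rho_i\}$ to an action $\rho_I$ for every $I \in \INT_{\!S^1\!,\circ\bullet}$ and then verify the sector axioms of Definition~\ref{def:sector-for-defects}. This follows the template of \cite[\lemopencoverofcirclesector]{BDH(nets)}, with the added bookkeeping that at each stage we must keep track of which of the algebras $\cala$, $\calb$, $D$, or $E$ governs the interval in question (determined by the class in \eqref{eq:4-INT-in-S1}).

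First I would define $\rho_I$ for $I$ contained in one of the $I_i$ by restriction: $\rho_I := \rho_i|_I$, using condition~1 (agreement on pairwise intersections) to see that this is independent of the choice of $i$. For a general $I \in \INT_{\!S^1\!,\circ\bullet}$, which is compact, pick a finite subcover $I \subset I_{i_1} \cup \cdots \cup I_{i_n}$ and choose subintervals $J_k \subset I \cap I_{i_k}$ so that $\{J_k\}$ still covers $I$. Each $J_k$ lies in $I_{i_k}$ and is of the type determined by $I$'s class (so $D(J_k) \subset D(I)$, etc.\ as appropriate), so the $\rho_{J_k}$ are already defined. Using strong additivity of $D$, $E$, $\cala$, or $\calb$ (whichever applies to $I$), the algebra associated to $I$ is topologically generated by the images of the algebras associated to the $J_k$; one then defines $\rho_I$ by declaring its image to be generated by the $\rho_{J_k}(\cdot)$. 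The existence of $\rho_I$ as a genuine homomorphism follows because on each $J_k \cap J_\ell$ the two definitions agree by condition~1, and on disjoint pairs $J_k, J_\ell \subset I$ the images commute by condition~2 combined with locality of the ambient net/defect (applied in a slightly enlarged cover interval), matching the commutation relations in the target of any extension.

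To check well-definedness, I would use a refinement argument: any two choices of finite subcover and of subintervals $\{J_k\}$ admit a common refinement, and passing to the common refinement does not change the image algebra (again by strong additivity) nor the defining action on any fixed element (by condition~1 on overlaps). Compatibility $\rho_I|_J = \rho_J$ for $J \subset I$ is then automatic: cover $J$ by pieces of a cover of $I$ and invoke the same refinement argument.

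Finally I would verify the commutation condition for a pair $I \in \INT_{\!S^1\!,\top}$, $J \in \INT_{\!S^1\!,\bot}$ with disjoint interiors. Cover $I$ and $J$ by subintervals $I \supset J_k \subset I_{i_k}$ and $J \supset J'_\ell \subset I_{i'_\ell}$ chosen small enough that each $J_k$ is disjoint from each $J'_\ell$; condition~2 then implies $\rho_{J_k}(D(J_k))$ commutes with $\rho_{J'_\ell}(E(J'_\ell))$, and since $\rho_I(D(I))$ and $\rho_J(E(J))$ are generated by these respective images, they commute. The main technical obstacle is the well-definedness of the extension across the color-change points, where the interval class can change under restriction; the remedy is to always work with covers fine enough that each piece belongs unambiguously to exactly one of the four categories in \eqref{eq:4-INT-in-S1} and then appeal to strong additivity of the relevant algebra to reassemble the action on $I$.
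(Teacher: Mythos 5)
Your overall strategy---restrict to get $\rho_J$ for small $J$ inside cover elements, then reassemble---runs into a genuine gap at the reassembly step, and it is precisely the step the paper's proof is designed to avoid. You propose to define $\rho_I$ for a general interval $I$ by covering $I$ with subintervals $J_k\subset I\cap I_{i_k}$ and ``declaring its image to be generated by the $\rho_{J_k}(\cdot)$,'' citing strong additivity. But strong additivity only tells you that $D(I)$ is topologically generated by the $D(J_k)$; it does \emph{not} produce a normal homomorphism $D(I)\to\bfB(H)$ out of a compatible family of homomorphisms on the generating subalgebras. Whether a representation of the subalgebras $D(J_k)$ extends to the von Neumann algebra they generate is exactly the kind of question that can fail and that must be addressed by hand---compare the vacuum sector axiom in Definition~\ref{def:Defect}, whose entire content is that such an extension (from $D(J)\ox_\alg D(\bar J)$ to $D(J\cup\bar J)$) is being \emph{assumed} rather than deduced. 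Strong additivity gives you uniqueness of an extension if one exists, not existence.

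The paper's proof sidesteps this by never gluing: given $J$, it picks a diffeomorphism $\varphi\in\Diff_+(S^1)$, trivial near the two color-change points, with $\varphi(J)\subset I_{i_0}$ for some single cover element, factors $\varphi=\varphi_n\circ\cdots\circ\varphi_1$ into diffeomorphisms supported in cover elements, implements each $\varphi_k$ by a unitary $u_k$ in the corresponding local algebra (inner covariance, Proposition~\ref{prop:inncovdef}), and sets $\rho_J(a):=u_1^*\cdots u_n^*\,\rho_{i_0}(\varphi(a))\,u_n\cdots u_1$ as in~\eqref{eq:diff trick for nets ---- bis}. This is manifestly a well-defined homomorphism on all of $D(J)$, being a unitary conjugate of $\rho_{i_0}\circ D(\varphi)$. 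Strong additivity then enters only where it is legitimately useful: to check that $\rho_J$ agrees with the given $\rho_{I_\ell}$ on $J\cap I_\ell$, after first verifying agreement on sufficiently small subintervals. To repair your argument you would need to replace the gluing step by this conjugation construction (or supply an independent extension argument, which is not available in this generality).
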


\begin{proof}
Given an interval $J\subset S^1$, pick a diffeomorphism $\varphi\in \Diff_+(S^1)$ that is trivial in a neighborhood $N$ of the two color changing points, 
and such that $\varphi(J)\subset I_{i_0}$ for some $I_{i_0}$ in our cover.
Write $\varphi=\varphi_n\circ\ldots\circ \varphi_1$ for diffeomorphisms $\varphi_k$ that are trivial on $N$ and whose supports lie in elements of the cover.
Let $u_k$ be unitaries implementing $\varphi_k$ (Proposition \ref{prop:inncovdef}).
Upon identifying $u_k$ with its image (under the relevant $\rho_i$) in $\bfB(H)$, we set
\begin{equation}\label{eq:diff trick for nets ---- bis}
\qquad\rho_J(a):=u^*_1\ldots u^*_n\rho_{i_0}\big(\varphi(a)\big)u_n\ldots u_1.
\end{equation}
Here we have used $\varphi(a)$ as an abbreviation for $\cala(\varphi)(a)$, $D(\varphi)(a)$, $\calb(\varphi)(a)$, or $E(\varphi)(a)$, depending on whether $J$ is a white, top, black, or bottom interval.  Finally, as in the proof of~\cite[\lemopencoverofcirclesector]{BDH(nets)}, one checks that $\rho_J|_K=\rho_{I_\ell}|_K$ for any sufficiently small interval $K\subset J \cap I_\ell$,
and then uses strong additivity to conclude that $\rho_J|_{J\cap I_\ell}=\rho_{I_\ell}|_{J\cap I_\ell}$.
\end{proof}

As before let $S^1_{\top} = \partial^\sqcap([0,1] \! \x \!  [\frac{1}{2},1])$ and $S^1_{\bot} = \partial^\sqcup([0,1] \! \x \! [0,\frac{1}{2}])$
be the upper and lower halves of the standard bicolored circle.

\begin{definition} \label{def:CN2}
  Sectors form a category that we call $\CN_2$.
  Its objects are quintuples $(\cala, \calb,D,E,H)$, where 
  $\cala$, $\calb$ are semisimple conformal nets,
  $D$, $E$ are $\cala$-$\calb$-defects,
  and $H$ is a $D$-$E$-sector.
  A morphism from $(\cala, \calb,D,E,H)$ to 
  $(\cala', \calb',D',E',H')$
  consists of four compatible invertible natural transformations
  \[
  \alpha \colon \cala \to \cala',\quad
  \beta \colon \calb \to \calb',\quad 
  \delta \colon D \to D',\quad 
  \varepsilon \colon E \to E',
  \]
  along with a bounded linear map $h \colon H \to H'$ that is
  equivariant in the sense that
  \[
  \begin{split}
  \rho'_I(\alpha(a))\circ h=h\circ \rho_I(a),\qquad\quad
  \rho'_I(\beta(b))\circ h=h\circ \rho_I(b),\\
  \rho'_I(\delta(d))\circ h=h\circ \rho_I(d),\qquad\quad
  \rho'_I(\varepsilon(e))\circ h=h\circ \rho_I(e),
  \end{split}
  \]
  for 
  $a\in\cala(I)$, $b\in\calb(I)$, $d\in D(I)$, $e\in E(I)$,
  and $I \in 
   \INT_{\!S^1\!,\circ}$,
  $\INT_{\!S^1\!,\bullet}$,
  $\INT_{\!S^1\!,\top}$, 
  $\INT_{\!S^1\!,\bot}$
  respectively.

\end{definition}

\noindent 
There is also a symmetric monoidal structure on $\CN_2$ given by objectwise spatial tensor product for the functors $\cala$, $\calb$, $D$, $E$, and by tensor product of Hilbert spaces.

The category $\CN_2$ is equipped with two forgetful functors
\[
\mathsf{source_v} \colon \CN_2 \to \CN_1
\qquad\quad
\mathsf{target_v} \colon \CN_2\to \CN_1
\]
called `vertical source' and `vertical target', given by $\mathsf{source_v}(\cala, \calb,D,E,H) = \linebreak(\cala,\calb,D)$ and $\mathsf{target_v}(\cala,\calb,D,E,H) = (\cala,\calb,E)$.
They satisfy
\[
\mathsf{source}\circ \mathsf{source_v}  = \mathsf{source} \circ \mathsf{target_v}
\quad\text{and}\quad
\mathsf{target} \circ \mathsf{source_v}  = \mathsf{target} \circ \mathsf{target_v}.
\]
Provided we restrict to the subcategory $\CN_1^f\subset \CN_1$ whose objects are semisimple defects between semisimple conformal nets and whose morphisms are finite natural transformations (another option is to 
allow all defects between semisimple conformal nets but restrict the morphisms to be only the isomorphisms),
there is also a `vertical identity' functor  
\begin{equation}\label{eq: functor identity_v}
\mathsf{identity_v} \colon \CN_1^f\to \CN_2
\end{equation}
that sends an $\cala$-$\calb$-defect $D$ to the object $(\cala,\calb,D,D,H_0(D))$ of $\CN_2$.
Here, the \hyphenation{va-cu-um}vacuum sector $H_0(D) := L^2( D(S^1_{\top}))=L^2(D(\tikzmath[scale=0.04]{\draw (0,6) -- (0,12) -- (6,12);\draw[ultra thick] (6,12) -- (12,12) -- (12,6); }))$
is as described in Definition \ref{def: L2(D)}.
We represent it pictorially as follows:
\begin{equation}\label{pic: L^2(D)}
\mathsf{identity_v} \big( {\textstyle _\cala D_\calb} \big) 
\,\,\,\, = \,\,\,\,
\tikzmath[scale=0.12]
{\fill[vacuumcolor](0,0) rectangle(12,12);\draw (6,0) -- (0,0) -- (0,12) -- (6,12);\draw[ultra thick](6,12) -- (12,12) -- (12,0) -- (6,0); 
\draw (-2.5,6) node {$\cala$}(14.5,6) node {$\calb$}(6,14.5) node {$D$}(6,-2.5) node {$D$}(6,6)node {$H_0(D)$};
}  %tikzmath
\,\,\,\, = \,\,\,\,
\tikzmath[scale=0.18] {\fill[vacuumcolor](0,0) rectangle(12,12);\draw (6,0) -- (0,0) -- (0,12) -- (6,12);
\draw[ultra thick](6,12) -- (12,12) -- (12,0) -- (6,0); \draw (-2,6) node {$\cala$}(14,6) node {$\calb$}(6,14) node {$D$}(6,-2) node {$D$}(6,6)
node {$L^2 \left( D (\tikzmath[scale=0.04]{ \draw (0,6) -- (0,12) -- (6,12);\draw[ultra thick](6,12) -- (12,12) -- (12,6); } ) \right)$ };
}  %tikzmath
\end{equation}
We reserve this darker shading of the above squares for vacuum sectors.
Note that it is essential to restrict to the 
subcategory $\CN_1^f\subset \CN_1$ because the $L^2$-space construction 
is only functorial 
with respect to finite homomorphisms of von Neumann 
algebras~\cite{BDH(Dualizability+Index-of-subfactors)} (see also \cite[Conj. 6.17]{BDH(Dualizability+Index-of-subfactors)}).

\begin{remark}
We will see later, in Warning \ref{warning:not-natural!}, that we will have to further restrict our morphisms,
and only allow natural \emph{isomorphisms} between defects
%\AB{Maybe the paper would be less confusing if we do this right from the start, but then it is less complete.}
(even if the defects are semisimple).
This will render otiose the subtleties related to \cite[Conj. 6.17]{BDH(Dualizability+Index-of-subfactors)};
in particular, there is no need to restrict to semisimple defects.
\end{remark}
%An alternative way of addressing that issue is to only allow 
%natural \emph{isomorphisms} between defects.
%In that way, we could include arbitrary defects between 
%semisimple conformal nets in our formalism
%and would not need to restrict attention to semisimple ones.

%==================================================================

\section{Horizontal fusion}\label{ssec:Horizontal fusion}

Consider the translate $S^1_{+}
:=\partial([1,2]\times[0,1])\subset \IR^2$ of the standard circle $S^1=\partial \,[0,1]^2$, and let 
$\INT_{\!S^1_{+}, \circ}$, 
$\INT_{\!S^1_{+}, \bullet}$, 
$\INT_{\!S^1_{+}, \top}$, 
$\INT_{\!S^1_{+}, \bot}$ 
be the obvious analogs of \eqref{eq:4-INT-in-S1}.
Given conformal nets $\cala$, $\calb$, $\calc$, defects $_\cala D_\calb$, $_\cala E_\calb$,
$_\calb F_\calc$, $_\calb G_\calc$, and sectors $_D H_E$, $_F K_G$, let us replace the structure maps
\eqref{eq:4rho} of $K$ by actions
\begin{equation*}
\begin{split}
\rho_I:\calb(I)\to \bfB(K)\,\,\,\text{for}\,\,\, &I\in \INT_{\!S^1_{+}, \circ}\\
\rho_I:F(I)\to \bfB(K)\,\,\,\text{for}\,\,\, &I\in \INT_{\!S^1_{+}, \top}
\end{split}
\qquad\quad
\begin{split}
\rho_I:\calc(I)\to \bfB(K)\,\,\,\text{for}\,\,\, &I\in \INT_{\!S^1_{+}, \bullet}\\
\rho_I:G(I)\to \bfB(K)\,\,\,\text{for}\,\,\, &I\in \INT_{\!S^1_{+}, \bot}
\end{split}
\end{equation*}
by precomposing with the translation.
Let $J$ be $\{1\}\times [0,1]=S^1\cap S^1_{+}$, with the orientation inherited from $S^1_{+}$.
The algebra $\calb(J)$ has actions of opposite variance on $H$ and on $K$, so it makes sense to take the Connes fusion
\[
H\boxtimes_{\calb}K:=H\boxtimes_{\calb(J)}K.
\]
We now show that $H\boxtimes_{\calb}K$ is a $(D\circledast_{\calb}F)$-$(E\circledast_{\calb}G)$-sector.
Given $I\in \mathsf{INT}_{\!S^1\!,\circ\bullet}$, let $I_+$ be the subinterval
of $\partial([0,2]\times [0,1])$ given by
\begin{alignat*}{3}
&I_+:=I
&\text{if }\,I&\in \mathsf{INT}_{\!S^1\!,\circ},\\
&I_+:=I+(1,0)
&\text{if }\,I&\in \mathsf{INT}_{\!S^1\!,\bullet},\\
&I_+:=I_\circ\cup \big([{\textstyle\frac{1}{2},\frac{3}{2}}]\times\{1\}\big)\cup \big(I_\bullet+(1,0)\big)
&\text{if }\,I&\in \mathsf{INT}_{\!S^1\!,\top},\\
&I_+:=I_\bullet\cup \big([{\textstyle\frac{1}{2},\frac{3}{2}}]\times\{0\}\big)\cup \big(I_\bullet+(1,0)\big)
\quad 
&\text{if }\,I&\in \mathsf{INT}_{\!S^1\!,\bot}.
\end{alignat*}
For $I \in \mathsf{INT}_{\!S^1\!,\circ}$ and $I \in \mathsf{INT}_{\!S^1\!,\bullet}$, the structure maps \eqref{eq:4rho} are given by
the obvious actions of $\cala(I_+)$ and $\calc(I_+)$ on the Hilbert space $H\boxtimes_{\calb}K$.
For an interval $I \in \mathsf{INT}_{\!S^1\!,\top}$ or $I\in\mathsf{INT}_{\!S^1\!,\bot}$, 
the algebras that act on $H\boxtimes_{\calb(J)}K$ are given by
\begin{equation}\label{TBbF}
\begin{split}
D\big((I_+\cap S^1)\cup J\big)
&\circledast_{\calb(J)}
F\big(J\cup(I_+\cap S^1_+)\big)\\
\text{and}\qquad \quad E\big((I_+\cap S^1)\cup J\big)
&\circledast_{\calb(J)}
G\big(J\cup(I_+\cap S^1_+)\big)\qquad \quad 
\end{split}
\end{equation}
respectively---see Appendix~\ref{subsec:fusion+fiber-prod}.
Upon identifying the intervals 
$(I_+\cap S^1)\cup J$ and $J \cup (I_+\cap S^1_{+})$ of (\ref{TBbF})
with the intervals
$I^{++}$ and ${}^{++}\hspace{-.15mm}I$ of \eqref{eq:def-fusion-Defect}, we see that the algebras \eqref{TBbF} are equal to
$\big(D\circledast_\calb F\big)(I)$ and $\big(E\circledast_\calb G\big)(I)$, respectively.
We can now define the functor of horizontal fusion
\begin{equation}\label{eq: The functor fusion_h}
\mathsf{fusion_h}:\CN_2\times_{\CN^f_0}\CN_2\rightarrow \CN_2
\end{equation}
by
$\mathsf{fusion_h}\big((\cala,\calb,D,E,H),(\calb,\calc,F,G,K)\big):=
\big(\cala,\,\calc,\,D\circledast_\calb F,\,E\circledast_\calb G,\,H\boxtimes_\calb K\big)$.
Here, as in \eqref{eq: composition -- this time between the correct categories}, $\CN_2\times_{\CN^f_0}\CN_2$ is a shorthand notation for $\CN_2\times_{\CN_0} \CN^f_0\times_{\CN_0}\CN_2$, 
and the relevant maps $\CN_2 \rightarrow \CN_0$ are $\mathsf{target} \circ \mathsf{source_v}$ and $\mathsf{source} \circ \mathsf{source_v}$, respectively.
%where the subcategory $\CN^f_2$ of $\CN_2$ is the preimage of $\CN^f_1\times \CN^f_1\subset\CN_1\times \CN_1$ under the functor $\mathsf{source_v}\times \mathsf{target_v}$.

Pictorially, we understand the functor $\mathsf{fusion_h}$ as the operation of gluing two squares along a common edge as follows:

\[
\mathsf{fusion_h}\,\,
\Big(  \tikzmath[scale=\squarescale]
  {   \fill[spacecolor]  (0,0) rectangle  (12,12);
      \draw (6,0) -- (0,0) -- (0,12) -- (6,12);
      \draw[thick, double]  (6,12) -- (12,12) -- (12,0) -- (6,0); 
      \draw (-3,6) node {$\cala$}
            (15,6) node {$\calb$}
            (6,15) node {$D$}
            (6,-3) node {$E$}
            (6,6)  node {$H$};
  }%tikzmath
  ,
  \tikzmath[scale=\squarescale]
  {   \fill[spacecolor]  (0,0) rectangle  (12,12);
      \draw[thick, double] (6,0) -- (0,0) -- (0,12) -- (6,12);
      \draw[ultra thick]  (6,12) -- (12,12) -- (12,0) -- (6,0); 
      \draw (-3,6) node {$\calb$}
            (15,6) node {$\calc$}
            (6,15) node {$F$}
            (6,-3) node {$G$}
            (6,6)  node {$K$};
  }%tikzmath      
  \Big) = 
  \tikzmath[scale=\squarescale]
  {   \fill[spacecolor]  (0,0) rectangle  (24,12);
      \draw (6,0) -- (0,0) -- (0,12) -- (6,12);
      \draw[thick, double]  (6,0) -- (18,0)  (6,12) -- (18,12); 
      \draw[ultra thick]  (18,12) -- (24,12) -- (24,0) -- (18,0);
      \draw (-3,6) node {$\cala$}
            (27,6) node {$\calc$}
            (12,15) node {$\scriptstyle D\circledast_\calb F$}
            (12,-3) node {$\scriptstyle E\circledast_\calb G$}
            (12,5)  node {$H\!\underset{\scriptscriptstyle\calb(|\hspace{-.55mm}|)}{\boxtimes}\!K$};
  }%tikzmath
\]
\smallskip

\noindent
The associator for $\mathsf{fusion_h}$ is induced by the usual associator for Connes fusion. It consists of a natural transformation
\begin{equation}\label{eq: nt associator_h}
\mathsf{associator_h}:\CN_2\times_{\CN^f_0}\CN_2\times_{\CN^f_0}\CN_2\,\tworarrow\, \CN_2,
\end{equation}
and satisfies the pentagon identity.

% . . . . . . . . . . . . . . . . . . . . . . . . . . . . . . . . . . . . . . . . . . . . . . . . . . . . . . . . . . . . . . . . . . . . . . . . . . 
\section{Vertical fusion}\label{sec: Vertical fusion}

Unlike horizontal fusion, vertical fusion is not the operation of gluing two squares along a common edge.
Rather, it consists of gluing those two squares along half of their boundary:
\[
\mathsf{fusion_v}\,\,
\Big(\tikzmath[scale=\squarescale]{\fill[spacecolor] (0,0) -- (0,12) -- (12,12) -- (12,0);\draw (6,0) -- (0,0) -- (0,12) -- (6,12);\draw[ultra thick](6,12) -- (12,12) -- (12,0) -- (6,0); 
\draw (-3,6) node {$\cala$}(15,6) node {$\calb$}(6,15) node {$D$}(6,-3) node {$E$}(6,6)node {$H$};}%tikzmath
,\tikzmath[scale=\squarescale]{\fill[spacecolor] (0,0) -- (0,12) -- (12,12) -- (12,0);\draw (6,0) -- (0,0) -- (0,12) -- (6,12);\draw[ultra thick](6,12) -- (12,12) -- (12,0) -- (6,0); 
\draw (-3,6) node {$\cala$}(15,6) node {$\calb$}(6,15) node {$E$}(6,-3) node {$F$}(6,6)node {$K$};}%tikzmath
\Big) =\tikzmath[scale=\squarescale]
{\fill[spacecolor] (0,3) -- (0,15) -- (12,15) -- (12,3);\draw (6,3) -- (0,3) -- (0,15) -- (6,15);\draw[ultra thick](6,15) -- (12,15) -- (12,3) -- (6,3); 
\fill[spacecolor] (0,-15) -- (0,-3) -- (12,-3) -- (12,-15);\draw (6,-15) -- (0,-15) -- (0,-3) -- (6,-3);\draw[ultra thick](6,-3) -- (12,-3) -- (12,-15) -- (6,-15); 
\draw %(-9,0) node {$\cala$}(21,0) node {$\calb$}
(6,18) node {$D$}(6,9)node {$H$}(6,-18) node {$F$}(6,-9)node {$K$};
\draw[<->] (-.2,9) to[out=180,in=180] (-.2,-9); 
\draw[<->] (-.2,6) to[out=200,in=160] (-.2,-6); 
\draw[<->] (-.2,2.9) to[out=230,in=130] (-.2,-2.9); 
\draw[<->] (2.7,2.7) to[out=255,in=105] (2.7,-2.7); 
\draw[<->] (12.5,9) to[out=0,in=0] (12.5,-9); 
\draw[<->] (12.5,6) to[out=-20,in=20] (12.5,-6); 
\draw[<->] (12.5,2.6) to[out=-50,in=50] (12.5,-2.6); 
\draw[<->] (9.3,2.6) to[out=-75,in=75] (9.3,-2.6); 
\draw[<->] (6,2.6) -- (6,-2.6); 
  }\,.
\]

A sector is called \emph{dualizable} if it has a dual with respect to the operation of vertical fusion; equivalently:

\begin{definition}\label{def: finite sectors}
A $D$-$E$-sector $H$ between semisimple defects 
is called dualizable if it is dualizable (Appendix~\ref{subsec:dualizability})
as an $S^1_{\top}(D)$--$S^1_{\bot}(E)$-bimodule.
\end{definition}

We now describe in detail the functor $\mathsf{fusion_v}$ of vertical fusion.
Given conformal nets $\cala$, $\calb$, defects $_\cala D_\calb$, $_\cala E_\calb$,
$_\cala F_\calb$, and sectors $_D H_E$, $_E K_F$, 
we want to construct a $D$-$F$-sector $H\boxtimes_E K$.
Let $S^1_{\top} = \partial^\sqcap( [0,1] \!\x \! [\frac{1}{2},1])$ and
$S^1_{\bot} = \partial^\sqcup( [0,1] \!\x \! [0,\frac{1}{2}] )$ be the top and bottom halves of our standard circle $\partial [0,1]^2$,
and let $j:S^1_{\top}\xrightarrow{\scriptscriptstyle\sim} S^1_{\bot}$ be the reflection map along the horizontal symmetry axis.
The algebra $E(S^1_{\top})$ has two actions 
\[
\begin{split} 
& E(S^1_{\top})^\op  \xrightarrow{E(j)} E(S^1_{\bot}) \rightarrow \bfB(H)\\
& E(S^1_{\top})  \xrightarrow{\hspace{2.45cm}} \bfB(K)
\end{split}
\]
of opposite variance on $H$ and $K$,
and so it makes sense to take the Connes fusion
\[
H\boxtimes_{E}K:=H\boxtimes_{E(S^1_{\top})}K.
\]
To see that $H\boxtimes_{E}K$ is a $D$-$F$-sector, we have to show that the algebras
$\cala(I)$, $\calb(I)$, $D(I)$, and $F(I)$ act on it for $I \in \mathsf{INT}_{\!S^1\!,\circ}$, 
$\mathsf{INT}_{\!S^1\!,\bullet}$, 
$\mathsf{INT}_{\!S^1\!,\top}$, 
$\mathsf{INT}_{\!S^1\!,\bot}$, respectively.

We first treat the case $I \in \mathsf{INT}_{\!S^1\!,\circ}$.
If $I$ is contained in $S^1_\top$ (or $S^1_\bot$), then the action of $\cala(I)$ on $H\boxtimes_{E}K$ is induced by its action on $H$ (or $K$).
If $I$ contains the point $(0,\frac12)$ in its interior, then the algebra 
\begin{equation}\label{XI}
E(I\cup S^1_{\bot})\underset{E(S^1_{\top})}\circledast
E(I\cup S^1_{\top})
\end{equation} 
acts
on $H\boxtimes_{E(S^1_{\top})}K$, where the homomorphism $E(S^1_{\top})\to E(I\cup S^1_{\bot})^\op$ implicit in \eqref{XI} is given by $E(j)$.
We observe, as follows, that there is a canonical homomorphism (typically not an isomorphism) from $\cala(I)$ to the algebra \eqref{XI}.
In the definition of that fusion product, we are free to chose any faithful $E(I\cup S^1_{\bot})$-module and any faithful $E(I\cup S^1_{\top})$-module (see Appendix~\ref{subsec:fusion+fiber-prod}):
let us take both of them to be the vacuum $H_0(E)$.
Then, by definition, the algebra \eqref{XI} is generated on
\[
H_0(E)\boxtimes_{E(S^1_{\top})} H_0(E) \cong H_0(E)
\]
by $E(I\cup S^1_{\bot})\cap E(S^1_{\bot})'$ and $E(I\cup S^1_{\top})\cap E(S^1_{\top})'$.
By the strong additivity, vacuum, and locality axioms, we have natural homomorphisms (the first one is an isomorphism when $E$ is a faithful defect):
\[
\begin{split}
\cala(I)\,&\to E(I\cap S^1_{\top})\vee E(I\cap S^1_{\bot})\\
& \to \big(E(I\cup S^1_{\bot})\cap E(S^1_{\bot})'\big)\vee\big(E(I\cup S^1_{\top})\cap E(S^1_{\top})'\big)\\
&=E(I\cup S^1_{\bot})\circledast_{E(S^1_{\top})} E(I\cup S^1_{\top}).
\end{split}
\]
Composing this composite with the action of \eqref{XI} on $H\boxtimes_E K$ gives our desired action of $\cala(I)$.

By the same argument, we also have actions of $\calb(I)$ on $H\boxtimes_E K$ for $I\in\mathsf{INT}_{\!S^1\!,\bullet}$.
Furthermore, there are actions of $D(S^1_\top)$ and $F(S^1_\bot)$ on $H\boxtimes_E K$ coming from their respective actions on $H$ and on $K$.
We can therefore apply Lemma \ref{lem: open cover of circle => sector -- BIS} to all the actions constructed so far,
and conclude that $H\boxtimes_E K$ is a $D$-$F$-sector.

One might expect vertical fusion to be a functor $\CN_2\times_{\CN_1}\CN_2\rightarrow \CN_2$.
However, just like the vertical identity \eqref{eq: functor identity_v} which is only a functor on the smaller category $\CN_1^f$,
and the horizontal fusion which is only a functor on the restricted product $\CN_2 \times_{\CN_0^f} \CN_2$, so too
vertical fusion only gives a functor on the restricted product:
\begin{equation}\label{eq: functor of vertical fusion}
\mathsf{fusion_v}:\CN_2\times_{\CN_1^f}\CN_2\rightarrow \CN_2
\end{equation}
\[
\mathsf{fusion_v}\big((\cala,\calb,D,E,H),(\cala,\calb,E,F,K)\big):=
\big(\cala,\,\calb,\,D, F,\,H\boxtimes_E K\big).
\]
The restriction is necessary to ensure the Connes fusion $H \boxtimes_E K$ is functorial with respect to the relevant natural transformations of the defect $E$ \cite{BDH(Dualizability+Index-of-subfactors)}.

The associator for vertical fusion
\begin{equation}\label{eq: nt associator_v}
\mathsf{associator_v}:\CN_2\times_{\CN_1^f}\CN_2\times_{\CN_1^f}\CN_2\tworarrow \CN_2
\end{equation}
comes from the associator of Connes fusion and satisfies the pentagon identity.
There are also `top' and `bottom' identity natural transformations, 
\begin{equation}\label{eq: `top' and `bottom' identity nt}
\mathsf{unitor}_\mathsf{t}:\CN_2\tworarrow \CN_2,\qquad\mathsf{unitor}_\mathsf{b}:\CN_2\tworarrow \CN_2
\end{equation}
that 
describe the
way $\mathsf{fusion_v}$ and $\mathsf{identity_v}$ interact.
Given a sector $_D H_E$, they provide natural isomorphisms
\begin{equation}\label{eabd}
{}_D H_0(D)\boxtimes_D H_E \cong {}_D H_E\quad\qquad\text{and}\quad\qquad {}_D H\boxtimes_E H_0(E)_E \cong {}_D H_E
\end{equation}
subject to the usual triangle axioms.
Strictly speaking, the source functor of $\mathsf{unitor}_\mathsf{t}$ is only defined on the subcategory $\CN_1^f\times_{\CN_1}\CN_2$ of $\CN_2$,
and so the transformation $\mathsf{unitor}_\mathsf{t}$ itself is only defined on that subcategory.
Similarly, $\mathsf{unitor}_\mathsf{b}$ is only defined on the subcategory $\CN_2\times_{\CN_1}\CN_1^f$.

%==================================================================

\chapter{Properties of the composition of defects}
  \label{sec:elem-prop-of-defect-composition}

\section{Left and right units} Units are a subtle business.
One might guess that the left unit is a natural isomorphism $\CN_1 \tworarrow\, \CN_1$ whose source is
the functor $\mathsf{composition} \circ ((\mathsf{identity}\circ\mathsf{source})\times \id_{\CN_1})$ and whose target is the identity functor.  (Here $\id_{\CN_1} : \CN_1 \rightarrow \CN_1$ is the identity functor and $\mathsf{identity} : \CN_0 \rightarrow \CN_1$ takes a net to the identity defect, as in \eqref{eq: identity functor}.)  But, unfortunately, in general there is no such natural isomorphism.  Instead, we have the following `weaker' piece of data: a functor
\[
\mathsf{unitor}_\mathsf{tl}:\CN_1^f \to \CN_2
\]
(`tl' stands for top left) 
with the property that
\[\begin{split}
\mathsf{source}_\mathsf{v}\circ \mathsf{unitor}_\mathsf{tl}\,&=\mathsf{composition} \circ ((\mathsf{identity}\circ\mathsf{source})\times \id_{\CN_1^f})\\
\qquad\text{and}&\qquad
\mathsf{target}_\mathsf{v}\circ \mathsf{unitor}_\mathsf{tl}=\id_{\CN_1^f}.
\end{split}\]
%and $\mathsf{target}_\mathsf{v}\circ \mathsf{unitor}_\mathsf{tl}=1$.
This functor takes values in sectors that are invertible with respect to vertical
fusion.  Its construction is based on the following lemma:

\begin{lemma}\label{lem: unitor}
Let ${}_\cala D_\calb$ be a defect. Then $1\circledast D:=\mathsf{identity}(\cala)\circledast_\cala D$ is given on genuinely bicolored intervals $I$ by
\[
\big(1\circledast D\big)(I)=D({}^<\!I)
\]
where ${}^<\!I:=I_\circ\cup[0,1]\cup I_\bullet$ with bicoloring ${}^<\!I_\circ:=I_\circ\cup[0,1]$ and ${}^<\!I_\bullet:=I_\bullet$.

Similarly, on genuinely bicolored intervals we have
\[
(D\circledast 1)(I)=D(I^>)
\]
where $I^>:=I_\circ\cup[0,1]\cup I_\bullet$ with bicoloring $I^>_\circ:=I_\circ$ and $I^>_\bullet:=[0,1]\cup I_\bullet$.
\end{lemma}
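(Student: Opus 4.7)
The plan is to prove the first identity; the second follows by reflection symmetry. Both sides are naturally the join $\cala(I^+) \vee D({}^+I)$ of two commuting subalgebras, and the task is to show that these two realizations coincide as von Neumann algebras.

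First I identify the right-hand side. The genuinely bicolored interval ${}^<\!I = I_\circ \cup [0,1] \cup I_\bullet$ (color change at $1$, with the canonical local coordinate) is covered by the white subinterval $K_1 := I_\circ \cup [0,\tfrac{1}{2}]$ and the genuinely bicolored subinterval $K_2 := [\tfrac{1}{2},1] \cup I_\bullet$, which overlap only at $\{\tfrac{1}{2}\}$. Canonical isomorphisms identify $K_1 \cong I^+$ as white intervals and $K_2 \cong {}^+I$ via the translation $x \mapsto x+1$ on the white part (compatible with local coordinates). By locality and strong additivity of the defect $D$, the images of $\cala(K_1) \hookrightarrow D({}^<\!I)$ and $D(K_2) \hookrightarrow D({}^<\!I)$ commute and generate $D({}^<\!I)$; under the above identifications, this yields $D({}^<\!I) = \cala(I^+) \vee D({}^+I)$.

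Next I identify the left-hand side. By the alternative formulation of $\circledast$ (displayed just after Definition \ref{def:Defect}), $(1 \circledast D)(I)$ is the von Neumann subalgebra of $\bfB(H \boxtimes_{\cala(J)} K)$ generated by $\cala(I^+)$ and $D({}^+I)$, for any faithful $\cala(I^{++})$-module $H$ and any faithful $D({}^{++}I)$-module $K$. Haag duality for $\cala$ and the defect version (Proposition \ref{prop: [Haag duality for defects]}) ensure that $\cala(I^+) \subset \cala(I^{++})$ and $D({}^+I) \subset D({}^{++}I)$ are precisely the relative commutants of the image of $\cala(J)$.

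To match the two realizations, I will take $H$ and $K$ to be the vacuum sector of $D$ on a genuinely bicolored circle $\bar S$ chosen large enough so that ${}^<\!I$, $I^{++}$, and ${}^{++}I$ all embed compatibly into $\bar S$. The Connes fusion $H \boxtimes_{\cala(J)} K$ is then canonically unitary equivalent to $H_0(\bar S, D)$, which in turn is a faithful $D({}^<\!I)$-module, and this equivalence intertwines the actions of both $\cala(I^+)$ and $D({}^+I)$. Consequently the two von Neumann algebras generated by these actions coincide, proving $(1 \circledast D)(I) = D({}^<\!I)$.

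The main obstacle is the final identification $H \boxtimes_{\cala(J)} K \cong H_0(\bar S, D)$ equivariantly for $\cala(I^+)$ and $D({}^+I)$. This is morally a degenerate instance of the $1 \boxtimes 1$ isomorphism of Theorem B, but substantially simpler: here we fuse over the conformal net $\cala$ rather than over a non-trivial defect, and both sides of the fusion are the same vacuum sector of $D$ on $\bar S$. Haag duality on $\bar S$ together with the standard-form structure of the $L^2$-spaces therefore suffices, and no finite-index hypothesis is needed.
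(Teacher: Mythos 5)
Your strategy---realize both sides as $\cala(I^+)\vee D({}^{+}I)$ acting on a suitable vacuum sector and conclude by strong additivity---is workable and genuinely different from the paper's argument, but as written it has a gap in the choice of modules. The definition of $\circledast$ (Appendix~\ref{subsec:fusion+fiber-prod}) requires the module $H$ for the first factor to be \emph{faithful} over $1_\cala(I^{++})=\cala(I^{++})$; with a non-faithful module the algebra generated on $H\boxtimes_{\cala(J)}K$ is only a quotient of the fusion. You take $H$ to be the vacuum sector of $D$ on a bicolored circle $\bar S$ in which $I^{++}$ sits as a white subinterval. The resulting action of $\cala(I^{++})$ factors through a map from $\cala(I^{++})$ into $D$ of a genuinely bicolored interval, and the isotony axiom only guarantees injectivity of $D(f)$ for embeddings of \emph{genuinely} bicolored intervals: for a defect that is not faithful in the sense of Section~\ref{sec: examples of defects} (e.g.\ one induced from a summand as in Corollary~\ref{cor: irreducible defect}) this map has a kernel, so your $H$ need not be faithful and your computation only proves the lemma for faithful $D$. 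The fix is cheap: take $H$ to be the vacuum sector $H_0(S_1,\cala)$ of the \emph{net} $\cala$ on an all-white circle $S_1\supset I^{++}$ (automatically faithful), and $K$ the vacuum sector of $D$ on a genuinely bicolored circle $S_2\supset{}^{++}I$. Your gluing step is then precisely Lemma~\ref{lem: vacuum * vacuum = vacuum -- with defects}, which identifies $H\boxtimes_{\cala(J)}K$ with $H_0(S_3,D)$ for the glued circle $S_3\supset{}^{<}\!I$, equivariantly for the subalgebras of $S_3$; strong additivity on $S_3$ finishes the proof. (Two minor points: that gluing isomorphism is non-canonical, not canonical as you assert, though only the generated von Neumann algebra matters; and you should cite the gluing lemma rather than gesture at ``Haag duality plus the standard-form structure''.)

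For comparison, the paper's proof avoids circles and gluing entirely. It exploits the freedom in the definition of $\circledast$ by choosing the faithful module for the identity factor to be the standard form $L^2$ of the middle algebra itself, turned into a module for the enlarged white interval via the vacuum sector axiom for the net $\cala$. The Connes fusion then collapses onto the second module by the unit property $L^2(A)\boxtimes_A H\cong H$, leaving a single faithful $D({}^{++}I)$-module on which the two generating subalgebras visibly generate $D({}^{<}\!I)$ by strong additivity. This is shorter and sidesteps the faithfulness subtlety, since the standard form of a conformal-net algebra is always faithful; your route, once repaired, buys nothing extra here, though it is the natural one to try if one has the $1\boxtimes 1$ picture in mind.
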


\begin{proof}
We prove the first statement; the second one is entirely similar.
Consider the intervals $K:=\{\frac12\}\times[0,1]$, 
$J:=I_\circ\cup[0,\frac12]\times\{1\}$,
$J^+:=J\cup K$, ${}^{+}\hspace{-.15mm}I:=([\frac12,1]\times\{1\})\cup I_\bullet$, and
${}^{++}\hspace{-.15mm}I:={}^{+}\hspace{-.15mm}I\cup K$.
These intervals are bicolored by ${}^{+}\hspace{-.15mm}I_\bullet={}^{++}\hspace{-.15mm}I_\bullet=I_\bullet$ and $K_\bullet=J_\bullet=J^+_\bullet=\emptyset$.
\begin{equation*}\tikzmath[scale=\displscale]{\draw (-20,-7) [rounded corners=7pt]-- (-20,-1) -- (-15,10) [rounded corners=4pt]-- (-6,0) -- (0,0);
\draw[ultra thick] (0,0) [rounded corners=3pt]-- (5,0) -- (10,-3) -- (17,5) -- (22,2)(0,8) node {$I$}(-13,3) node {$I_\circ$}(8.8,3.3) node {$I_\bullet$};}%tikzmath
\qquad\quad\leadsto\qquad\quad
\tikzmath[scale=\displscale]{\draw (-14,5) [rounded corners=7pt]-- (-14,11) -- (-9,22) [rounded corners=4pt]-- (0,12) -- (6,12);\draw (6,12) -- (12,12) -- (12,0) (12,12) -- (18,12)
(1,4) node {$J^+$} (11,20) node {${}^<\!I$}(25,4) node {${}^{++}\hspace{-.15mm}I$}(-8.5,15) node {$J$}(25.5,15.5) node {${}^{+}\hspace{-.15mm}I$}(12,-3) node {$K$};
\draw[ultra thick] (18,12) [rounded corners=3pt]-- (23,12) -- (28,9) -- (35,17) -- (40,14);}%tikzmath
\end{equation*}
Extend the map 
$[0,\frac12]\times\{1\}\to K: (t,1)\mapsto(\frac12,t+\frac12)$ 
to an embedding $f:J\to K$ so that $K\setminus f(J)$ is non-empty. 
Using $\cala(f)$, we can then equip $L^2(\cala(K))$ with a left action of $\cala(J)$.
Combining this left action with the natural right action of $\cala(K)$, we get a faithful action of $\cala(J)\otimes_\alg \cala(K)^\op$ on $L^2(\cala(K))$, 
which extends to $\cala(J^+)$ by the vacuum sector axiom for conformal nets
(see Appendix~\ref{subsec:defnets}).
Pick a faithful $D({}^{++}\hspace{-.15mm}I)$-module $H$.
By definition,
\[
\big(1 \circledast D\big)(I)\,=\,\cala(J^+)\circledast_{\cala(K)}D({}^{++}\hspace{-.15mm}I)
\]
is the von Neumann algebra generated by
$\cala(J)$ and $D({}^{+}\hspace{-.15mm}I)$
on the Hilbert space $L^2(\cala(K))\boxtimes_{\cala(K)} H\cong H$.
This algebra is equal to $D(J \cup {}^{+}\hspace{-.15mm}I) = D({}^<\!I)$ by strong additivity.
\end{proof}

Recall that our standard circle $S^1$ is the square $\partial[0,1]^2$.
Let
\[
\begin{split}
\textstyle S^1_\ulc:=\partial^{\hspace{.2mm}\tikzmath[scale=.15]{\draw(0,0)--(0,1)--(1,1);}}\big([0,\frac12]\!\times\![\frac12,1]\big),\qquad&
\textstyle S^1_\urc:=\partial^{\hspace{.2mm}\tikzmath[scale=.15]{\draw(0,1)--(1,1)--(1,0);}}\big([\frac12,1]\!\times\![\frac12,1]\big),\\
\textstyle S^1_\llc:=\partial^{\hspace{.2mm}\tikzmath[scale=.15]{\draw(1,0)--(0,0)--(0,1);}}\big([0,\frac12]\!\times\![0,\frac12]\big),\qquad&
\textstyle S^1_\lrc:=\partial^{\hspace{.2mm}\tikzmath[scale=.15]{\draw(0,0)--(1,0)--(1,1);}}\big([\frac12,1]\!\times\![0,\frac12]\big)
\end{split}
\]
be the four ``quarter circles''. Let us also pick, once and for all, a diffeomorphism $\phi_\ulc:S^1_\ulcorner\cup[0,1]\to S^1_\ulcorner$ (here $(\frac12,1) \in S^1_\ulcorner$ is glued to $0 \in [0,1]$) whose derivative is equal to one in a neighborhood of the boundary.
The three mirror images of $\phi_\ulc$ are called
$\phi_\urc:[0,1]\cup S^1_\urcorner\to S^1_\urcorner$,
$\phi_\llcorner:S^1_\llc\cup [0,1]\to S^1_\llc$, and
$\phi_\lrcorner:[0,1]\cup S^1_\lrc\to S^1_\lrc$:
\[
\phi_\ulc:\,\,\tikzmath[scale = 0.45]{\draw (0,2) -- (0,3) -- (3,3)(1.7,0.5)--(1.7,1.5)--(2.7,1.5);\pgfsetshortenstart{1}\pgfsetshortenend{1}
\draw[->] (0,2) -- (1.7,0.5);\draw[->] (0,2.5) -- (1.7,1);\draw[->] (.2,3) -- (1.7,1.3);\draw[->] (1.05,3) -- (1.8,1.5);\draw[->] (1.9,3) -- (2.07,1.5);\draw[->] (2.65,3) -- (2.35,1.5);
\draw[->] (3,3) -- (2.7,1.5);} %tikzmath
\qquad\phi_\urc:\,\,\tikzmath[scale = 0.45]{\pgftransformxscale{-1}\draw (0,2) -- (0,3) -- (3,3)(1.7,0.5)--(1.7,1.5)--(2.7,1.5);\pgfsetshortenstart{1}\pgfsetshortenend{1}
\draw[->] (0,2) -- (1.7,0.5);\draw[->] (0,2.5) -- (1.7,1);\draw[->] (.2,3) -- (1.7,1.3);\draw[->] (1.05,3) -- (1.8,1.5);\draw[->] (1.9,3) -- (2.07,1.5);\draw[->] (2.65,3) -- (2.35,1.5);
\draw[->] (3,3) -- (2.7,1.5);} %tikzmath
\qquad\phi_\llc:\,\,\tikzmath[scale = 0.45]{\pgftransformyscale{-1}\draw (0,2) -- (0,3) -- (3,3)(1.7,0.5)--(1.7,1.5)--(2.7,1.5);\pgfsetshortenstart{1}\pgfsetshortenend{1}
\draw[->] (0,2) -- (1.7,0.5);\draw[->] (0,2.5) -- (1.7,1);\draw[->] (.2,3) -- (1.7,1.3);\draw[->] (1.05,3) -- (1.8,1.5);\draw[->] (1.9,3) -- (2.07,1.5);\draw[->] (2.65,3) -- (2.35,1.5);
\draw[->] (3,3) -- (2.7,1.5);} %tikzmath
\qquad\phi_\lrc:\,\,\tikzmath[scale = 0.45]{\pgftransformscale{-1}\draw (0,2) -- (0,3) -- (3,3)(1.7,0.5)--(1.7,1.5)--(2.7,1.5);\pgfsetshortenstart{1}\pgfsetshortenend{1}
\draw[->] (0,2) -- (1.7,0.5);\draw[->] (0,2.5) -- (1.7,1);\draw[->] (.2,3) -- (1.7,1.3);\draw[->] (1.05,3) -- (1.8,1.5);\draw[->] (1.9,3) -- (2.07,1.5);\draw[->] (2.65,3) -- (2.35,1.5);
\draw[->] (3,3) -- (2.7,1.5);} %tikzmath
\]
{
\def \Stopw {\tikz{\useasboundingbox (-.11,-.12) rectangle (.12,.19); \draw node {$S$} (-.08,.18)--(.12,.18);}}
\def \Stopb {\tikz{\useasboundingbox (-.11,-.12) rectangle (.12,.19); \draw[very thick] node {$S$} (-.08,.18)--(.12,.18);}}
\def \Sbotw {\tikz{\useasboundingbox (-.11,-.12) rectangle (.12,.19); \draw node {$S$} (-.11,-.18)--(.09,-.18);}}
\def \Sbotb {\tikz{\useasboundingbox (-.11,-.12) rectangle (.12,.19); \draw[very thick] node {$S$} (-.11,-.18)--(.09,-.18);}}

\noindent We are now ready to define the functor
\begin{equation}\label{eq: unitor_tl}
\mathsf{unitor}_\mathsf{tl}\,:\,\CN_1\to\CN_2.
\end{equation}
It assigns to every $\cala$-$\calb$-defect $D$ an invertible $(1\circledast D)$-$D$-sector.
As a Hilbert space, $\mathsf{unitor}_\mathsf{tl}(D)$ is simply the vacuum sector $H_0(D)$.
Let $\Stopw$ be the bicolored circle with white half $\Stopw_\circ:=S^1_\circ \cup_{(\frac12,1)}[0,1]$ and black half $\Stopw_\bullet:=S^1_\bullet$.
One should imagine $\Stopw$ as being the standard bicolored circle $S^1$, to which an extra white interval $[0,1]$ has been inserted at the top---see \eqref{eq:4cirleswithBAR}.
In view of Lemma \ref{lem: unitor}, 
a $(1\circledast D)$-$D$-sector is the same thing as a Hilbert space $H$ equipped with compatible actions of $D(I)$ for every bicolored interval $I\subset \Stopw$.
Let $\hat\phi_\ulc\!:\Stopw\to S^1$ be the diffeomorphism given by $\phi_\ulc$ on $S^1_\ulcorner\cup[0,1]$, and by the identity on the complement.
The $(1\circledast D)$-$D$-sector structure on $H_0(D)=\mathsf{unitor}_\mathsf{tl}(D)$ is given by letting $D(I)$ 
act by the composition of $D(\hat\phi_\ulc):D(I)\to D(\hat\phi_\ulc(I))$ with the natural action of $D(\hat\phi_\ulc(I))$ on $H_0(D)$.

We also have functors 
\begin{equation}\label{eq: unitor_tl+}
\begin{split}
\mathsf{unitor}_\mathsf{tr}\,:\,\CN_1^f \to \CN_2\\
\mathsf{unitor}_\mathsf{bl}\,:\,\CN_1^f \to \CN_2\\
\mathsf{unitor}_\mathsf{br}\,:\,\CN_1^f \to \CN_2
\end{split}
\end{equation}
that are defined in a similar fashion.
The underlying Hilbert spaces of $\mathsf{unitor}_\mathsf{tr}(D)$,
$\mathsf{unitor}_\mathsf{bl}(D)$, and
$\mathsf{unitor}_\mathsf{br}(D)$ are all $H_0(D)$, and they are equipped with the structures of 
$(D\circledast 1)$-$D$-sector,
$D$-$(1\circledast D)$-sector, and 
$D$-$(D\circledast 1)$-sector, respectively.
Let $\Stopb$, $\Sbotw$, and $\Sbotb$ be the bicolored circles given by
$\Stopb_\circ:=S^1_\circ$, $\Stopb_\bullet:= [0,1]\cup_{(\frac12,1)} S^1_\bullet$,
$\Sbotw_\circ:=S^1_\circ \cup_{(\frac12,0)}[0,1]$, $\Sbotw_\bullet:=S^1_\bullet$, and
$\Sbotb_\circ:=S^1_\circ$, $\Sbotb_\bullet:=[0,1]\cup_{(\frac12,0)} S^1_\bullet$:
\begin{equation}\label{eq:4cirleswithBAR}
\,\,\Stopw\,=\,\tikzmath[scale = 0.35]{
\draw (1,0.1)to[out = 180,in = 10] (0,0)to[out = 100,in = -45](-1,2)to[out = 36,in = 198](0,2.5)(.1,2.525) to[out = 15,in = 165](1.9,2.525);
\draw[ultra thick] (1,0.1)to[out = 0,in = 170] (2,0) to[out = 80,in = 225](3,2)to[out = 144,in = -18](2,2.5);} %tikzmath
\qquad\Stopb\,=\,\tikzmath[scale = 0.35]{
\draw (1,0.1)to[out = 180,in = 10] (0,0)to[out = 100,in = -45](-1,2)to[out = 36,in = 198](0,2.5);
\draw[ultra thick] (1,0.1)to[out = 0,in = 170] (2,0) to[out = 80,in = 225](3,2)to[out = 144,in = -18](2,2.5)(.1,2.525) to[out = 15,in = 165](1.9,2.525);} %tikzmath
\qquad\Sbotw\,=\,\tikzmath[scale = 0.35]{\pgftransformyscale{-1}
\draw (1,0.1)to[out = 180,in = 10] (0,0)to[out = 100,in = -45](-1,2)to[out = 36,in = 198](0,2.5)(.1,2.525) to[out = 15,in = 165](1.9,2.525);
\draw[ultra thick] (1,0.1)to[out = 0,in = 170] (2,0) to[out = 80,in = 225](3,2)to[out = 144,in = -18](2,2.5);} %tikzmath
\qquad\Sbotb\,=\,\tikzmath[scale = 0.35]{\pgftransformyscale{-1}
\draw (1,0.1)to[out = 180,in = 10] (0,0)to[out = 100,in = -45](-1,2)to[out = 36,in = 198](0,2.5);
\draw[ultra thick] (1,0.1)to[out = 0,in = 170] (2,0) to[out = 80,in = 225](3,2)to[out = 144,in = -18](2,2.5)(.1,2.525) to[out = 15,in = 165](1.9,2.525);}\,. %tikzmath
\end{equation}
By Lemma \ref{lem: unitor}, a $(D\circledast 1)$-$D$-sector structure on a Hilbert space is the same thing as 
a collection of compatible actions of the algebras $D(I)$ for every bicolored interval $I \subset \Stopb$.
Similarly, being a $D$-$(1\circledast D)$-sector means that there are compatible actions of $D(I)$ for every $I\subset\Sbotw$,
and being $D$-$(D\circledast 1)$-sector means that there are compatible actions of $D(I)$ for every $I\subset \Sbotb$.
We equip $H_0(D)$ with the above structures by the appropriate analogs
$\hat\phi_\urc\!:\Stopb\to S^1$,
$\hat\phi_\llcorner\!:\Sbotw\to S^1$,
$\hat\phi_\lrcorner\!:\Sbotb\to S^1$ of $\hat\phi_\ulc$,
defined as $\phi_\urc$, $\phi_\llcorner$, $\phi_\lrcorner$ on the relevant subintervals, and as the identity on the rest.
}

\begin{example}\label{ex: 1=/=1*1}
Given a non-trivial conformal net $\cala$, the identity defect $1_\cala := \mathsf{identity}(\cala):I\mapsto \cala(I)$ is not isomorphic to $1_\cala\circledast_\cala 1_\cala$.
The defect $1_\cala \circledast_\cala 1_\cala$ is the weak identity for $\cala$ discussed in Remark~\ref{rem : weak identity}.
It maps a genuinely bicolored interval $I$ to $\cala(I_\circ\cup[0,1]\cup I_\bullet)$. 
As a way of distinguishing those two defects, note that the intersection
\[
\underset{\text{bicolored}}{\underset{J\subset I, \text{ genuinely}}{\bigcap}}\big(1_\cala\circledast_\cala 1_\cala\big)(J)=\cala([0,1])
\]
is non-trivial, which is not the case if $1_\cala \circledast_\cala 1_\cala$ is replaced by $1_\cala$ in the above expression.

The invertible sector between $1_\cala$ and $1_\cala\circledast_\cala 1_\cala$ is the vacuum module of $\cala$ associated to the ``circle''
$\tikzmath[scale = 0.2]{
\draw (1,0.1)to[out = 180,in = 10] (0,0)to[out = 100,in = -45](-1,2)to[out = 36,in = 198](0,2.5)(.1,2.525) to[out = 15,in = 165](1.9,2.525);
\draw (1,0.1)to[out = 0,in = 170] (2,0) to[out = 80,in = 225](3,2)to[out = 144,in = -18](2,2.5);}$
constructed by inserting a copy of $[0,1]$ at the point $(\frac12,1) \in \partial[0,1]^2$.
\end{example}

%========================================

\section{Semisimplicity of the composite defect} 

\,\,Given two semisimple defects (finite direct sums of irreducible defects), we can ask whether their fusion is again a semisimple defect.
From now on, we always assume that our conformal nets are irreducible.
The purpose of this section is to prove the following theorem:

\begin{theorem}\label{thm: semi-simplicity of DoE}
  Let ${}_\cala D_\calb$ and  ${}_\calb E_\calc$ be semisimple defects.
  If the conformal net $\calb$ has finite index, then 
  for any genuinely bicolored interval $I$ the algebra
  $(D\circledast_\calb E)(I)$ has finite dimensional center.
\end{theorem}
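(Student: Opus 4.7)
The plan is to reduce to the case of irreducible defects and then combine Haag duality for fusion of defects with the finite-index hypothesis on $\calb$ to control the center.

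First, I would reduce to the case where $D$ and $E$ are irreducible. Writing $D = \bigoplus_i D_i$ and $E = \bigoplus_j E_j$ as finite direct sums of irreducible defects, the vacuum sectors decompose as $H_0(D) = \bigoplus_i H_0(D_i)$ and $H_0(E) = \bigoplus_j H_0(E_j)$; the embeddings of $\calb(J)$ into these algebras are block-diagonal, and Connes fusion distributes over direct sums, so that
\[
(D \circledast_\calb E)(I) \cong \bigoplus_{i,j}(D_i \circledast_\calb E_j)(I).
\]
Since the center of a finite direct sum of von Neumann algebras is the direct sum of the centers, it suffices to prove the statement for irreducible $D$ and $E$. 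By Lemma~\ref{lem:irred-defect==>irred-nets}, the nets $\cala$, $\calb$, $\calc$ are then irreducible, and $D(I^+)$, $E({}^+I)$ are factors.

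Next, I would apply Haag duality. Because the center of the algebras associated to a defect is canonically independent of the genuinely bicolored interval (by the analysis preceding Lemma~\ref{lem: disintegrate}), it suffices to compute $Z((D \circledast_\calb E)(S^1_\top))$. By Theorem~\ref{thm:Haag-duality-composition-defects}, the algebras $(D \circledast_\calb E)(S^1_\top)$ and $(D \circledast_\calb E)(S^1_\bot)$ are each other's commutants on the Hilbert space $H_0(D) \boxtimes_{\calb(J)} H_0(E)$, so
\[
Z\bigl((D \circledast_\calb E)(S^1_\top)\bigr) = (D \circledast_\calb E)(S^1_\top) \cap (D \circledast_\calb E)(S^1_\bot).
\]
A central element must commute with the factor $\cala(S^1_\circ)$, generated by strong additivity from the copies of $\cala$ on the upper-left and lower-left white arcs of the fused circle (which sit inside the top and bottom defect algebras respectively), and likewise with the factor $\calc(S^1_\bullet)$. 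Thus
\[
Z \subset \bigl(\cala(S^1_\circ) \vee \calc(S^1_\bullet)\bigr)' \cap (D \circledast_\calb E)(S^1_\top).
\]

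The main obstacle, which I expect to be the hardest step, is to show that this relative commutant is finite-dimensional. The finite index of $\calb$ implies, via the KLM theorem (\cite[\thmKLM]{BDH(nets)}), that $\calb$ has only finitely many irreducible sectors; decomposing $H_0(D) \boxtimes_{\calb(J)} H_0(E)$ as a bimodule via the intermediate $\calb(J)$-action should exhibit the space of $\cala(S^1_\circ)$-$\calc(S^1_\bullet)$-bimodule intertwiners sitting in the top defect algebra as a finite direct sum indexed by the finitely many $\calb$-sectors, hence finite-dimensional. Making this bound rigorous will rely on the finite-homomorphism machinery of \cite[\lemAvBAvBfinite]{BDH(Dualizability+Index-of-subfactors)} together with the analysis of commutants developed in Section~\ref{sec: Haag duality for composition of defects}.
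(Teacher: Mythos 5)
There is a genuine gap: your argument is circular within the logical structure of the paper. You invoke Theorem~\ref{thm:Haag-duality-composition-defects} (Haag duality for the fused defect) to identify $Z\bigl((D\circledast_\calb E)(S^1_\top)\bigr)$ with $(D \circledast_\calb E)(S^1_\top) \cap (D \circledast_\calb E)(S^1_\bot)$, but the proof of that theorem itself requires Theorem~\ref{thm: semi-simplicity of DoE}: it needs to know in advance that the algebra denoted $\tikzmath[scale=\textscale]{\useasboundingbox (-2,-2) rectangle (32,14);\draw[thick, double]  (0,6) -- (0,12) -- (6,12);\draw (6,12) -- (9,12) (12,12) -- (24,12) (18,0) -- (21,0);\draw[ultra thick]   (24,12) -- (30,12) -- (30,6);\draw[dash pattern=on .4pt off .62pt] (7.5,12) arc (180:360:3 and 4);}$ has finite-dimensional center (of some dimension $n$) before it can even set up the matrices $\nu_1,\dots,\nu_6$ of statistical dimensions. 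The same dependency appears in Lemma~\ref{lem:equality of XxX matrices} and in the remark following the theorem statement, which notes that even Theorem~\ref{thm:fusion-of-defects-is-defect} depends on the semisimplicity result. So Haag duality for the composite defect is not available at this stage. In addition, your final step --- that the relative commutant of $\cala(S^1_\circ)\vee\calc(S^1_\bullet)$ is finite-dimensional because $\calb$ has finitely many sectors --- is only a heuristic; the relative commutant of two factors inside $\bfB(H)$ can be enormous without a quantitative index bound, and the KLM decomposition of the fusion over $\calb(J)$ is not justified here.

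The paper's actual route avoids all of this. After the same reduction to irreducible $D$ and $E$, it observes that $H_0(D)\boxtimes_\calb H_0(E)$ is a faithful $(D\circledast E)(S^1_\top)$--$(D\circledast E)(S^1_\bot)$-bimodule, so by Lemma~\ref{lem: Lemma A} it suffices to show the bimodule endomorphism algebra is finite-dimensional. That endomorphism algebra is computed with respect to the \emph{factors} $\tilde D(\partial^\sqsubset[0,1]^2)$ and $\tilde E(\partial^\sqsupset[0,1]^2)$ (Lemma~\ref{lem: tildeD}), and by multiplicativity of statistical dimension under Connes fusion the problem reduces to showing that each vacuum sector $H_0(D)$, $H_0(E)$ has finite statistical dimension over the relevant three-interval algebras (Lemma~\ref{lem: Dtilde H B is finite}). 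That finiteness is where the hypothesis $\mu(\calb)<\infty$ enters, via the 4-interval splitting inequality $[\hat D(I_1\cup I_3):D(I_1\cup I_3)]\le\mu(\calb)$ of Lemma~\ref{lem: [ : ] < mu(A)} and the monotonicity properties \eqref{eq:DIS-726}, \eqref{eq:DIS-727} of the minimal index. If you want to salvage your approach, you would need to replace the appeal to Theorem~\ref{thm:Haag-duality-composition-defects} by a direct index estimate of this kind.
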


\noindent In light of Theorem~\ref{thm:fusion-of-defects-is-defect} (whose proof, note well, depends on Theorem~\ref{thm: semi-simplicity of DoE}, via Corollary~\ref{cor: is a finite homomorphism of von Neumann algebra} and Proposition~\ref{prop:G=L2}), we can rephrase this result as follows:

\begin{corollary}\label{cor: composite is semisimple}
The fusion of two semisimple defects ${}_\cala D_\calb$ and  ${}_\calb E_\calc$, over a finite index conformal net $\calb$, is a semisimple $\cala$-$\calc$-defect.
\end{corollary}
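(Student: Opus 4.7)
The plan is to proceed in three stages: reduce to irreducible defects, describe the fused algebra in terms of three commuting factors, and then invoke the finite index hypothesis on $\calb$ to bound the center via the sector decomposition of the Connes fusion. For the first stage, using the semisimplicity of $D$ and $E$, I write them as finite direct sums of irreducible defects $D = \bigoplus_i D_i$ and $E = \bigoplus_j E_j$. Fusion distributes over direct sums, so $(D \circledast_\calb E)(I) = \bigoplus_{i,j} (D_i \circledast_\calb E_j)(I)$; since a finite direct sum of von Neumann algebras has finite-dimensional center if and only if each summand does, it suffices to prove the statement assuming $D$ and $E$ are irreducible.

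For irreducible $D, E$, I would analyze $A := (D \circledast_\calb E)(I) = D(I^+) \vee E({}^+I)$ acting on $\calh := H_0(D) \boxtimes_{\calb(J)} H_0(E)$, with $J = [0,1]$ and $I^+, {}^+I$ as in the definition of fusion. Both $D(I^+)$ and $E({}^+I)$ are factors (by irreducibility, since $I^+$ and ${}^+I$ are genuinely bicolored), and they commute on $\calh$ (each commutes with $\calb(J)$ by locality for defects, so they descend to commuting actions on the Connes fusion). Strong additivity of defects gives $D(I^+) = \cala(I_\circ) \vee \calb([0, \tfrac12])$ and $E({}^+I) = \calb([-\tfrac12, 0]) \vee \calc(I_\bullet)$; combined with strong additivity for $\calb$, the algebra $A$ is therefore generated by three pairwise commuting factors: $\cala(I_\circ)$, $\calb(\tilde J) := \calb([-\tfrac12, \tfrac12])$, and $\calc(I_\bullet)$.

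The heart of the argument is the finite-dimensionality of $Z(A)$. Since $Z(A) \subset D(I^+)' \cap E({}^+I)' = \mathrm{End}_{D(I^+), E({}^+I)}(\calh)$ (operators commuting with both commuting factors), it suffices to show that $\calh$ decomposes into finitely many isotypic components as a $D(I^+)$-$E({}^+I)$-bimodule. The Connes fusion $\calh = H_0(D) \boxtimes_{\calb(J)} H_0(E)$ is governed by the sector structure of $H_0(D)$ and $H_0(E)$ as $\calb(J)$-modules: finite index of $\calb$ ensures, via the Kawahigashi-Longo-M\"uger analysis of completely rational nets, that $\calb$ has a finite and modular category of DHR sectors, which should control the $D(I^+)$-$E({}^+I)$-bimodule decomposition of $\calh$.

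The hard part will be turning the finiteness of $\calb$'s sector category into finite-dimensionality of $\mathrm{End}_{D(I^+), E({}^+I)}(\calh)$. The vacuum sectors $H_0(D)$ and $H_0(E)$ are not assumed dualizable as $\calb(J)$-modules, so they may carry infinite multiplicities of each $\calb$-sector type; yet only finitely many sector types appear, and one must argue that only the sector types (rather than their multiplicities) contribute to $Z(A)$. I expect to overcome this by applying Haag duality for the defects $D$ and $E$ on appropriate bicolored circles to identify the relevant relative commutants in terms of algebras manifestly controlled by $\calb$'s two-interval algebra, whose center is finite-dimensional under the finite index hypothesis.
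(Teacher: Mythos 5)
Your opening reduction is exactly the paper's: write $D=\bigoplus_i D_i$, $E=\bigoplus_j E_j$, use compatibility of $\circledast$ with direct sums, and reduce to irreducible $D$, $E$. You have also correctly located the obstacle, namely that $H_0(D)$ and $H_0(E)$ need not be dualizable over $\calb(J)$, so one must somehow see only finitely many ``types'' while ignoring multiplicities. But the proposal stops at precisely that point. Observing that $Z(A)$ is finite-dimensional if and only if $\calh$ has finitely many isotypic components for $A=D(I^+)\vee E({}^+I)$ is a restatement of the goal, not a reduction. And the tool you invoke to close the gap --- the Kawahigashi--Longo--M\"uger finiteness of the DHR category of $\calb$ --- does not apply here: $H_0(D)$ is not a representation of the net $\calb$ on a circle (it carries actions of $\calb(K)$ only for intervals $K$ in the black half of $S^1$), and the Connes fusion is taken over the single-interval algebra $\calb(J)$, over which all faithful modules are equivalent. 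So the finiteness of $\Delta$ gives no direct control over the decomposition of $H_0(D)\boxtimes_{\calb(J)}H_0(E)$ under $D(I^+)\vee E({}^+I)$. (The paper does use the KLM theorem, but for the $1\boxtimes 1$-isomorphism, not for semisimplicity of the fused defect.)

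The mechanism that actually closes the gap, and that is absent from your sketch, is the statistical dimension and minimal index calculus. The paper enlarges the commuting pair to the factors $\tilde D$ and $\tilde E$ generated by two overlapping genuinely bicolored intervals (Notation~\ref{not: tildeD}, Lemma~\ref{lem: tildeD}), so that $H_0(D)\boxtimes_\calb H_0(E)$ becomes a bimodule between \emph{factors}; the center of $(D\circledast E)(S^1_\top)$ embeds into its bimodule endomorphism algebra (Lemma~\ref{lem: Lemma A}), which is finite-dimensional as soon as the statistical dimension of the bimodule is finite. Multiplicativity of statistical dimension under Connes fusion reduces everything to $\dim\bigl({}_{\tilde D}H_0(D)_{\calb(I)}\bigr)<\infty$, which is Lemma~\ref{lem: Dtilde H B is finite}: one identifies the relevant relative commutants (Lemma~\ref{lem: B_2 is the relative commutant}) and bounds the minimal index by $\mu(\calb)$ using its monotonicity under the operations $-\vee A$ and $-\cap A$ (Lemma~\ref{lem: [ : ] < mu(A)} and the inequalities of Appendix~\ref{subsec:stat-dim+minimal-index}). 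Your final sentence about using Haag duality to reach ``algebras controlled by $\calb$'s two-interval algebra'' points in this direction, but without these dimension and index estimates the finiteness does not follow, so the proof is incomplete as it stands.
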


\subsection*{\hspace*{-18pt}Detecting semisimplicity} %\hspace*{\fill} \vspace{5pt}

We begin with a few lemmas.

\begin{lemma}\label{lem: Lemma A}
  Let $A$, $B$ be von Neumann algebras and let $H$ be a faithful $A$--$B$-bimodule.
  If the algebra of $A$--$B$-bimodule endomorphisms of $H$
  is finite-dimensional, then $A$ is a finite direct sums of factors.
\end{lemma}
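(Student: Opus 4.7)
The plan is to argue that the center $Z(A)$ embeds into the algebra of $A$--$B$-bimodule endomorphisms of $H$, which is assumed to be finite-dimensional; since a von Neumann algebra with finite-dimensional center is a finite direct sum of factors, this will finish the proof.

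First I would note that, as $H$ is a faithful $A$-module, the map $\pi \colon A \to \bfB(H)$ describing the left action is injective, so in particular its restriction $\pi|_{Z(A)} \colon Z(A) \to \bfB(H)$ is injective.  Next I would check that $\pi(Z(A))$ lies inside the bimodule endomorphism algebra $\mathrm{End}_{A,B}(H) = \pi(A)' \cap \rho(B)'$, where $\rho \colon B \to \bfB(H)$ is the right action.  Indeed, elements $z \in Z(A)$ commute with all of $A$ by definition of the center, so $\pi(z) \in \pi(A)'$; and the bimodule axiom states precisely that $\pi(A)$ and $\rho(B)$ commute in $\bfB(H)$, so $\pi(z) \in \rho(B)'$ as well.

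Combining these two observations gives an injection $Z(A) \hookrightarrow \mathrm{End}_{A,B}(H)$.  Since the target is finite-dimensional by assumption, $Z(A)$ is finite-dimensional, and therefore $A$ decomposes as a finite direct sum of factors (corresponding to the finitely many minimal projections in $Z(A)$).

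There is no real obstacle here; the whole content of the statement is the elementary observation that the center always acts as bimodule endomorphisms, and faithfulness of the module promotes this to an injection on the level of centers.
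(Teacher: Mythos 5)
Your argument is correct and is exactly the paper's proof: the paper simply states that the center of $A$ acts faithfully by $A$--$B$-bimodule endomorphisms and is therefore finite-dimensional, which is what you have spelled out in detail. No difference in approach.
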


\begin{proof}
The center of $A$ acts faithfully by $A$--$B$-bimodule endomorphisms.
It is therefore finite-dimensional.
\end{proof}

From now on, we fix a faithful defect ${}_\cala D_\calb$, and denote its vacuum sector $H_0$.
Recall that our standard circle is $S^1:=\partial [0,1]^2$, and that its top and bottom halves are denoted $S^1_\top$ and $S^1_\bot$.

\begin{notation}\label{not: tildeD}
Given an interval $I\subset S^1$ that contains the two color-change points $(\frac12,0)$ and $(\frac12,1)$ in its interior,
we define an algebra $\tilde D(I)\subset \bfB(H_0)$ as follows.
It is the algebra generated by $D(I_1)$ and $D(I_2)$, where $I_1$ and $I_2$ are any two intervals covering $I$ with the property that $(\frac12,1)\not\in I_1$ and $(\frac12,0)\not\in I_2$.
By strong additivity, the algebra $\tilde D(I)$ does not depend on the choice of covering.
\end{notation}

\begin{lemma}\label{lem: tildeD}
Let $I \subset S^1$ be an interval containing both color-change points in its interior.
If $D$ is an irreducible defect, then $\tilde D(I)$ is a factor.
\end{lemma}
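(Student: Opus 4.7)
The plan is to prove a form of Haag duality for the (non-standard) interval $I$ on the vacuum sector $H_0 := H_0(S^1,D)$, namely that $\tilde D(I) \vee D(I') = \bfB(H_0)$, where $I' := \overline{S^1 \setminus I}$.  Since both color-change points lie in the interior of $I$, the complementary arc $I'$ avoids them, so $I'$ is monochromatic and $D(I')$ is literally $\cala(I')$ or $\calb(I')$.  Locality gives $[\tilde D(I), D(I')] = 0$, and once joint irreducibility is in hand the factor property of $\tilde D(I)$ follows at once: any $z \in Z(\tilde D(I))$ lies in $\tilde D(I)'$ and (being in $\tilde D(I)$) also commutes with $D(I')$, so
\[
Z(\tilde D(I)) \subseteq \tilde D(I)' \cap D(I')' = (\tilde D(I) \vee D(I'))' = \mathbb{C}\cdot 1.
\]

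For the joint irreducibility, I would fix a bicolored cover $\{I_1,I_2\}$ of $I$ as in Notation \ref{not: tildeD}, and set $K_1 := \overline{I_1 \setminus I_2}$ and $K_2 := \overline{I_2 \setminus I_1}$.  Both $K_1$ and $K_2$ are genuinely bicolored intervals, each containing exactly one color-change point, and they have disjoint interiors.  The key geometric observation is that $I_i \cup I'$ is itself a genuinely bicolored interval in $S^1$---on a neighborhood of the shared boundary point of $I_i$ and $I'$, both intervals are monochromatic of the same color, so the union has a unique color-change point, namely the one belonging to $I_i$---and its complement in $S^1$ is precisely $K_{3-i}$.  Strong additivity together with Haag duality (Proposition \ref{prop: [Haag duality for defects]}(1)) then give
\[
D(I_i) \vee D(I') \,=\, D(I_i \cup I') \,=\, D(K_{3-i})'.
\]
Joining over $i = 1, 2$ and taking the commutant of both sides yields
\[
(\tilde D(I) \vee D(I'))' \,=\, D(K_1) \cap D(K_2).
\]

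The main (and only non-trivial) step is then to show that $D(K_1) \cap D(K_2) = \mathbb{C}\cdot 1$.  Since $K_1$ and $K_2$ have disjoint interiors, $K_2 \subseteq \overline{S^1 \setminus K_1}$, so Haag duality gives $D(K_2) \subseteq D(K_1)'$.  Hence
\[
D(K_1) \cap D(K_2) \,\subseteq\, D(K_1) \cap D(K_1)' \,=\, Z(D(K_1)),
\]
which is trivial because $K_1$ is genuinely bicolored and $D$ is irreducible by hypothesis.  This closes the argument.  The only point requiring any care is verifying that $I_i \cup I'$ is genuinely bicolored in each of the two cases (whether $I'$ is white or black); this is an elementary case analysis of the colors at the shared boundary, and everything else is a formal chain of applications of strong additivity, Haag duality, and the definition of irreducibility.
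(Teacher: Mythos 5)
Your proof is correct, but it takes a genuinely different route from the paper's. You establish the joint irreducibility $\tilde D(I)\vee D(I')=\bfB(H_0)$ by cutting along the two single-color-change intervals $K_1,K_2$ and applying strong additivity plus Haag duality (Proposition \ref{prop: [Haag duality for defects]}) twice, reducing everything to $D(K_1)\cap D(K_2)\subseteq Z(D(K_1))=\IC$. The paper instead cuts horizontally: it observes that $Z(\tilde D(I))$ commutes with $D(S^1_\top\cap I)$, $D(S^1_\bot\cap I)$ and $D(I')$, hence by strong additivity with all of $D(S^1_\top)$ and $D(S^1_\bot)$, and then uses that these two algebras are the left and right actions on the standard form $H_0=L^2(D(S^1_\top))$, whose joint commutant is $Z(D(S^1_\top))=\IC$ by irreducibility. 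The paper's argument is shorter because it piggybacks on the $L^2$-bimodule structure; yours is slightly longer but yields the stronger intermediate statement that $\tilde D(I)$ and $D(I')$ act jointly irreducibly on the vacuum (a Haag-duality-flavoured fact for the two-color-change interval, correctly stopping short of the false claim $\tilde D(I)=D(I')'$, which fails by the index $\mu(\calb)$). Your geometric verifications --- that $I'$ is monochromatic, that each $I_i\cup I'$ is a genuinely bicolored interval with complement $K_{3-i}$, and that $I_i$ meets $I'$ in a single point --- are all sound, and the reduction to factoriality of $D(K_1)$ is exactly where the irreducibility hypothesis enters, as it must.
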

\begin{proof}
Let $I'$ be the closure of $S^1\!\setminus\! I$.
The center of $\tilde D(I)$ commutes with both $D(I)$ and $D(I')$.
Since $D(S^1_\top\cap I)$ and $D(S^1_\top\cap I')$ generate $D(S^1_\top)$, $Z(\tilde D(I))$ commutes with $D(S^1_\top)$.
Similarly, $Z(\tilde D(I))$ commutes with $D(S^1_\bot)$.
Therefore, $Z(\tilde D(I))$ acts on $H_0$ by $D(S^1_\top)$--$D(S^1_\bot)^\op$-bimodule automorphisms.
If $\tilde D(I)$ was not a factor, that action could be used to construct a non-trivial direct sum decomposition of $H_0=L^2(D(S^1_\top))$, contradicting the irreducibility of $D$.
\end{proof}

\subsection*{\hspace*{-18pt}Finiteness of the defect vacuum as a 4-interval bimodule\nopunct} \hspace{-.15cm} ({\it splitting
$\tikzmath[scale=.02]
{\useasboundingbox (-15,-7) rectangle (15,7);
\draw[ultra thick] (-90:15) arc (-90:-4:15) (4:15) arc (4:90:15);
\draw (90:15) arc (90:127:15) (135:15) arc (135:176:15) (-90:15) arc (-90:-127:15) (-135:15) arc (-135:-176:15);}$
}). 

\begin{notation}\label{not: Dhat}
Let $S^1=I_1\cup I_2\cup I_3\cup I_4$ be a partitioning of the standard bicolored circle into four intervals so that $I_1$ and $I_4$ are genuinely bicolored, and so that each intersection $I_i\cap I_{i+1}$ (cyclic numbering) is a single point.
For such a partition, we let $\hat D(I_1\cup I_3)$ denote the commutant of $D(I_2\cup I_4)=D(I_2)\,\bar\otimes\,D(I_4)$
acting on the vacuum sector $H_0(D)$.

Similarly, if $\cala$ is a conformal net and
$S^1=I_1\cup I_2\cup I_3\cup I_4$ is a partitioning of the standard (not bicolored) circle, we let $\hat \cala(I_1\cup I_3)$ denote the commutant of $\cala(I_2\cup I_4)=\cala(I_2)\,\bar\otimes\,\cala(I_4)$ on the vacuum sector $H_0(\cala)$.
\end{notation}

Note that the choice of ambient circle does not affect the resulting algebras $\hat D(I_1\cup I_3)$ and $\hat \cala(I_1\cup I_3)$:
they only depend (up to canonical isomorphism) on the intervals $I_1$ and $I_3$, and on the bicoloring of those intervals.

\begin{lemma}\label{lem: needed for dotted lines}
Let $I_1$, $I_2$, $I_3$, $I_4$ be as in Notation \ref{not: Dhat}.
Assume furthermore that $I_2$ and $I_3$ are white.
Write $I_1=J_1\cup J_2$, with $J_1$ genuinely bicolored, $J_2$ white, and $J_1\cap J_2$ a single point:
\begin{equation}\label{eq: fig of I_1 ... I_4 and J_1, J_2}
\tikzmath[scale=.07]
{\draw[ultra thick] (-90:15) arc (-90:-2:15) (2:15) arc (2:90:15)(2:13.5) arc (2:90:13.5) ;
\draw (90:15) arc (90:129:15) (133:15) arc (133:178:15) (-90:15) arc (-90:-129:15) (-133:15) arc (-133:-178:15) (90:13.5) arc (90:100:13.5)(105:13.5) arc (105:129:13.5) ;
\draw(60:19) node {$I_1$} (-60:19) node {$I_4$} (-155:19) node {$I_3$} (155:19) node {$I_2$} (116:9.5) node {$J_2$} (50:9.5) node {$J_1$} ;}
\end{equation}
%Let $\hat \cala (J_2 \cup I_3)$ be the commutant of $\cala(I_2)\,\bar\otimes\,\cala(S^1 \backslash (J_2 \cup I_2 \cup I_3))$ acting on $H_0(\cala)$.
Then there is a natural action of the algebra $\hat \cala (J_2\cup I_3)$ on the vacuum sector $H_0(D)$, and we have $\hat D(I_1\cup I_3) = D(J_1)\vee \hat\cala(J_2\cup I_3)$.
\end{lemma}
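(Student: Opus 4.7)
The plan is to first construct the natural action of $\hat\cala(J_2\cup I_3)$ on $H_0(D)$, and then to deduce the equality $\hat D(I_1\cup I_3)=D(J_1)\vee \hat\cala(J_2\cup I_3)$ by combining Haag duality for defects, strong additivity, and the split property.

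To construct the action, I will use that the algebras $\cala(J_2)$ and $\cala(I_3)$ already act on $H_0(D)$ through the $\cala$-subalgebras of $D$, and by the split property (Proposition~\ref{prop:split-property-defects}) these actions combine into a faithful action of $\cala(J_2)\,\bar\ox\,\cala(I_3)$. To extend to all of $\hat\cala(J_2\cup I_3)$, I would identify the latter with the relative commutant $\cala(J_2\cup I_2\cup I_3)\cap\cala(I_2)'$, computed on $H_0(\cala)$ via Haag duality for conformal nets: writing $K_{\mathrm{rest}}$ for the complementary arc, one has $\hat\cala(J_2\cup I_3)=\bigl(\cala(I_2)\vee\cala(K_{\mathrm{rest}})\bigr)'=\cala(I_2)'\cap\cala(J_2\cup I_2\cup I_3)$. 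Computing the same expression inside $\bfB(H_0(D))$, one verifies by comparison of vacuum sectors (using the vacuum sector axiom for $D$) that the resulting algebra is canonically isomorphic to $\hat\cala(J_2\cup I_3)$, which gives the desired action.

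For the identity itself, applying Haag duality for defects (Proposition~\ref{prop: [Haag duality for defects]}) to the genuinely bicolored interval $I_4$ gives
\[
D(I_4)' \;=\; D(I_4^c) \qquad\text{on }H_0(D),
\]
where $I_4^c = I_1\cup I_2\cup I_3$ is a single genuinely bicolored interval whose unique color-change point lies in $I_1$. Strong additivity for $D$ and for $\cala$ then yields $D(I_4^c) = D(J_1)\vee\cala(J_2\cup I_2\cup I_3)$, since $J_2\cup I_2\cup I_3$ is a single white arc. Hence
\[
\hat D(I_1\cup I_3) \;=\; D(I_2)'\cap D(I_4)' \;=\; \cala(I_2)'\cap\bigl(D(J_1)\vee\cala(J_2\cup I_2\cup I_3)\bigr).
\]
Because $D(J_1)$ commutes with $\cala(I_2)$ by locality, and $D(J_1)$ and $\cala(J_2\cup I_2\cup I_3)$ are in split position (after shrinking $J_1$ slightly away from its shared boundary point with $J_2$ and passing to the limit, as justified by Proposition~\ref{prop:split-property-defects}), the intersection with $\cala(I_2)'$ distributes across the join, delivering
\[
\hat D(I_1\cup I_3) \;=\; D(J_1)\vee\bigl(\cala(J_2\cup I_2\cup I_3)\cap\cala(I_2)'\bigr) \;=\; D(J_1)\vee\hat\cala(J_2\cup I_3).
\]

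The hard part will be the first step: constructing the action of $\hat\cala(J_2\cup I_3)$ on $H_0(D)$ and checking that the Haag-duality identification $\hat\cala(J_2\cup I_3) = \cala(J_2\cup I_2\cup I_3)\cap\cala(I_2)'$ survives the passage from $H_0(\cala)$ to $H_0(D)$. The distributivity step at the end also requires care, since the intersection of a join with a commutant does not in general split; the argument rests essentially on the fact that $D(J_1)$ and $\cala(J_2\cup I_2\cup I_3)$ generate their spatial tensor product inside $\bfB(H_0(D))$, which in turn depends on the split property.
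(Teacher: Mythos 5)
Your overall route is the paper's chain of identities run backwards: you start from $\hat D(I_1\cup I_3)=\cala(I_2)'\cap D(I_4)'$, use Haag duality and strong additivity to rewrite $D(I_4)'=D(J_1)\vee\cala(J_2\cup I_2\cup I_3)$, and then distribute the intersection with $\cala(I_2)'$ across the join; the paper instead starts from $D(J_1)\vee\hat\cala(J_2\cup I_3)$ and pushes the join inside the relative commutant, landing on $D(I_2\cup I_4)'$. These are the same computation, and your construction of the action of $\hat\cala(J_2\cup I_3)$ on $H_0(D)$ via the relative-commutant description $\cala(J_2\cup I_2\cup I_3)\cap\cala(I_2)'$ is also the paper's (the ``comparison of vacuum sectors'' you gesture at is really just the observation that a relative commutant is representation-independent once $D$, hence the action of $\cala(J_2\cup I_2\cup I_3)$ on $H_0(D)$, is faithful; the non-faithful case is trivial since then $D=0$).

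The genuine gap is in your justification of the distributivity step. You claim the identity
\[
\cala(I_2)'\cap\bigl(D(J_1)\vee\cala(J_2\cup I_2\cup I_3)\bigr)=D(J_1)\vee\bigl(\cala(J_2\cup I_2\cup I_3)\cap\cala(I_2)'\bigr)
\]
rests on $D(J_1)$ and $\cala(J_2\cup I_2\cup I_3)$ generating their spatial tensor product in $\bfB(H_0(D))$. That is false: $J_1$ and $J_2\cup I_2\cup I_3$ are \emph{adjacent} intervals sharing a boundary point, and the split property (Proposition~\ref{prop:split-property-defects}) only applies to disjoint intervals; adjacent algebras in a conformal net are essentially never split. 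Your proposed fix of shrinking $J_1$ and ``passing to the limit'' does not work either — the spatial tensor product property is not preserved under such limits, and in the limit the algebras are again adjacent. The correct hypothesis, which is what Lemma~\ref{lem:commutant-spacial-vee 1} actually requires, is that the \emph{inclusion} $\cala(I_2)\hookrightarrow\cala(J_2\cup I_2\cup I_3)$ is split (with $B=D(J_1)$ merely required to commute with $\cala(J_2\cup I_2\cup I_3)$, which it does by locality). That split-inclusion condition does hold here, because $\partial I_2$ is disjoint from $\partial(J_2\cup I_2\cup I_3)$, so by Haag duality for $\cala$ the commutant of $\cala(J_2\cup I_2\cup I_3)$ is $\cala(J_1\cup I_4)$, which is split from $\cala(I_2)$ since the two underlying $1$-manifolds are disjoint. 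With the hypothesis corrected in this way your argument closes up and agrees with the paper's.
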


\begin{proof}
We assume that $D$ is faithful; otherwise $D=0$ (since $\cala$ and $\calb$ are irreducible) and there is nothing to show.
By Haag duality for $\cala$, the algebra $\hat \cala (J_2\cup I_3)$ is $\cala (J_2\cup I_2\cup I_3) \cap \cala(I_2)'$, where the commutant is taken in the action on $H_0(\cala)$.  The algebra $\cala(J_2 \cup I_2 \cup I_3)$ also naturally acts on $H_0(D)$, and therefore so does $\hat \cala (J_2 \cup I_3)$.  Because of the faithfulness assumption, the algebra $\hat \cala(J_2 \cup I_3)$ may equally well be expressed as $\cala (J_2\cup I_2\cup I_3) \cap \cala(I_2)'$, where the commutant is now interpreted with respect to the action on $H_0(D)$.

By Lemma~\ref{lem:commutant-spacial-vee 1},
\[
D(J_1)\vee \hat \cala (J_2\cup I_3) =  
   D(J_1)\vee \big(\cala (J_2\cup I_2\cup I_3)\cap \cala (I_2)'\big)
\]
is equal to 
\begin{align*}
\big(D(J_1)\vee \cala (J_2\cup I_2\cup I_3)\big) \cap \cala (I_2)'
&= D(I_1\cup I_2\cup I_3) \cap \cala (I_2)'\\
&= \big(D(I_1\cup I_2\cup I_3)' \vee \cala (I_2)\big)'.
\end{align*}
The last algebra is equal to $\big(D(I_4) \vee \cala (I_2)\big)' = D(I_2 \cup I_4)'$ 
by Haag duality for defects (Proposition~\ref{prop: [Haag duality for defects]}).
\end{proof}

\begin{lemma}\label{lem: B_2 is the relative commutant}
Let $I_1$, $I_2$, $I_3$, $I_4$ be arranged as in \eqref{eq: fig of I_1 ... I_4 and J_1, J_2}.
Assuming $D$ is irreducible, then $\cala(I_2)$ is the relative commutant of $\hat D(I_1\cup I_3)$ inside $D(I_1\cup I_2\cup I_3)$.
\end{lemma}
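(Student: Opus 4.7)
The plan is to identify both algebras as commutants/joins of algebras acting on $H_0(D)$, and then reduce to the standard fact that in a spatial tensor product $M \,\bar\ox\, N$ with $N$ a factor, the relative commutant of $N$ is $M$.

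First I would unpack the two algebras in play. By Notation~\ref{not: Dhat} we have $\hat D(I_1\cup I_3) = D(I_2 \cup I_4)'$ on $H_0(D)$, and by the split property for defects (Proposition~\ref{prop:split-property-defects}) together with the definition $D(I_2) = \cala(I_2)$ (since $I_2$ is white), we have
\[
\hat D(I_1\cup I_3)' \;=\; D(I_2 \cup I_4) \;=\; \cala(I_2)\,\bar\ox\, D(I_4).
\]
Similarly, Haag duality for defects (Proposition~\ref{prop: [Haag duality for defects]}) applied to the decomposition $S^1 = (I_1 \cup I_2 \cup I_3) \cup I_4$ yields $D(I_1 \cup I_2 \cup I_3) = D(I_4)'$ on $H_0(D)$. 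Consequently, the relative commutant that we want to compute is
\[
\hat D(I_1\cup I_3)' \cap D(I_1 \cup I_2 \cup I_3) \;=\; \bigl(\cala(I_2)\,\bar\ox\, D(I_4)\bigr)\cap D(I_4)'.
\]

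Next I would use irreducibility. Since $D$ is irreducible and $I_4$ is genuinely bicolored, $D(I_4)$ is a factor. The split property realizes $\cala(I_2)\,\bar\ox\, D(I_4)$ spatially inside $\bfB(H_0(D))$ with $\cala(I_2)$ and $D(I_4)$ acting on tensor factors. The standard fact about spatial tensor products of a von Neumann algebra $M$ with a factor $N$ then gives
\[
(M \,\bar\ox\, N) \cap N' \;=\; M \,\bar\ox\, Z(N) \;=\; M \,\bar\ox\, \IC \;=\; M,
\]
applied here to $M = \cala(I_2)$ and $N = D(I_4)$. This yields exactly $\cala(I_2)$.

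Finally, the reverse inclusion $\cala(I_2) \subseteq \hat D(I_1\cup I_3)' \cap D(I_1 \cup I_2 \cup I_3)$ is essentially tautological: $\cala(I_2) = D(I_2) \subseteq D(I_1 \cup I_2 \cup I_3)$ by isotony, and $\cala(I_2) \subseteq D(I_2 \cup I_4) = \hat D(I_1\cup I_3)'$ by construction. The only step that requires any real care is invoking the spatial-tensor-product identity $(M\,\bar\ox\, N)\cap N' = M\,\bar\ox\, Z(N)$; everything else is assembling the right picture from Haag duality, the split property, and the factoriality of $D(I_4)$.
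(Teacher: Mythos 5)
Your argument is correct and arrives at the same reduction as the paper, but it is packaged differently. The paper's proof is a one-liner that feeds the data into the appendix Lemma~\ref{lem:commutant-spacial-vee 2} (with $A = D(I_1\cup I_2\cup I_3)$, $A_0 = \cala(I_2)$, $A' = D(I_4)$ via Haag duality), whose proof handles by hand the fact that the split inclusion behaves like a tensor factor; you instead pass to $\hat D(I_1\cup I_3)' = D(I_2\cup I_4) = \cala(I_2)\,\bar\ox\,D(I_4)$ and quote the relative-commutant formula $(M\,\bar\ox\,N)\cap N' = M\,\bar\ox\,Z(N)$. The one place you should be more careful is exactly there: the commutant $D(I_4)'$ is taken on $H_0(D)$, which is \emph{not} a tensor product Hilbert space, so $\cala(I_2)$ and $D(I_4)$ do not literally ``act on tensor factors''; what saves the argument is that $(\cala(I_2)\,\bar\ox\,D(I_4))\cap D(I_4)'$ is the relative commutant of $1\,\bar\ox\,D(I_4)$ inside $\cala(I_2)\,\bar\ox\,D(I_4)$, which is intrinsic to the inclusion of algebras and hence may be computed in the standard spatial representation, where Tomita's commutation theorem gives $M\,\bar\ox\,Z(N)$. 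Spelling out that representation-independence (or simply citing Lemma~\ref{lem:commutant-spacial-vee 2}, which encapsulates it) closes the gap; with that addendum your proof is complete, and it needs $D(I_4)$ to be a factor where the paper needs $D(I_1\cup I_2\cup I_3)$ to be one --- equivalent conditions under irreducibility. The closing paragraph on the reverse inclusion is harmless but redundant, since the tensor-product computation already yields equality.
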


\begin{proof}
By Lemma \ref{lem:commutant-spacial-vee 2}, we have
$
\cala(I_2) = 
\big(\cala(I_2) \vee D(I_1\cup I_2\cup I_3)'\big) \cap D(I_1\cup I_2\cup I_3).
$
This algebra is equal to 
$
(\cala(I_2) \vee D(I_4)) \cap D(I_1\cup I_2\cup I_3) = \hat D(I_1\cup I_3)'\cap D(I_1\cup I_2\cup I_3). 
$
\end{proof}

In the next Lemma we will use the notion of minimal index $[A : B]$ 
of a subfactor $B \subseteq A$; 
see Appendix~\ref{subsec:stat-dim+minimal-index} for a definition.

\begin{lemma}\label{lem: [ : ] < mu(A)}
Let $\cala$ be a conformal net with finite index $\mu(\cala)$, 
and let ${}_\cala D_\calb$ be an irreducible defect.
Let $I_1$, $I_2$, $I_3$, $I_4$ be arranged as in \eqref{eq: fig of I_1 ... I_4 and J_1, J_2}.
Then $[\hat D(I_1\cup I_3) : D(I_1\cup I_3)]\le \mu(\cala)$.
\end{lemma}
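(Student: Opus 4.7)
We apply Lemma~\ref{lem: needed for dotted lines} to rewrite $\hat D(I_1\cup I_3) = D(J_1)\vee\hat\cala(J_2\cup I_3)$. On the other hand, the identification $D(I_1\cup I_3) = D(I_1)\,\bar\otimes\,\cala(I_3)$ from \eqref{eq: extend D to disconnected things}, combined with the strong-additivity decomposition $D(I_1) = D(J_1)\vee\cala(J_2)$ (using $D(J_2) = \cala(J_2)$ since $J_2$ is white), gives $D(I_1\cup I_3) = D(J_1)\vee\cala(J_2\cup I_3)$. Since $\mu(\cala) = [\hat\cala(J_2\cup I_3):\cala(J_2\cup I_3)]$ by the very definition of the $\mu$-index (Appendix~\ref{subsec:finite-nets}), the task reduces to showing that joining this finite-index net inclusion with the commuting factor $D(J_1)$ does not increase the index.

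The strategy is to build a conditional expectation $\tilde E\colon\hat D(I_1\cup I_3)\to D(I_1\cup I_3)$ of index $\mu(\cala)$ by extending the minimal expectation $E\colon\hat\cala(J_2\cup I_3)\to\cala(J_2\cup I_3)$ trivially along $D(J_1)$. Locality for defects applied to the disjoint interiors of $J_1$ and $J_2\cup I_2\cup I_3$ shows that $D(J_1)$ commutes on $H_0$ with $\cala(J_2\cup I_2\cup I_3)$, hence with both $\hat\cala(J_2\cup I_3)$ and $\cala(J_2\cup I_3)$; this makes the formula $\tilde E(ma) := mE(a)$ morally correct. The difficulty is that $J_1$ and $J_2\cup I_2\cup I_3$ share their common boundary point, so the split property does not directly identify the join as a spatial tensor product. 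I circumvent this by an exhaustion: choose an increasing sequence of closed subintervals $\tilde J_1^{(n)}\subset J_1$, each disjoint from $J_2\cup I_2\cup I_3$, with $\bigvee_n D(\tilde J_1^{(n)}) = D(J_1)$ (a consequence of isotony together with continuity and strong additivity of $D$). The split property (Proposition~\ref{prop:split-property-defects}) then yields, for every $n$,
\[
D(\tilde J_1^{(n)})\vee\hat\cala(J_2\cup I_3) \,\cong\, D(\tilde J_1^{(n)})\,\bar\otimes\,\hat\cala(J_2\cup I_3),
\]
and similarly with $\cala$ in place of $\hat\cala$, on which $\id\otimes E$ is a normal conditional expectation of index $\mu(\cala)$.

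The main obstacle is the limit $n\to\infty$: the compatible family $\{\id_{D(\tilde J_1^{(n)})}\otimes E\}_n$ must be shown to assemble into a single normal conditional expectation $\tilde E$ on the $\sigma$-weak closure $\hat D(I_1\cup I_3)$ with $\ind(\tilde E)\le\mu(\cala)$. The cleanest resolution is the observation that a Pimsner--Popa basis $\{u_1,\ldots,u_k\}\subset\hat\cala(J_2\cup I_3)$ for $E$ serves unchanged, and independently of $n$, as a Pimsner--Popa basis for each tensored expectation $\id\otimes E$; the expansion $x = \sum_i u_i\cdot(\id\otimes E)(u_i^* x)$ holds with the same finite set $\{u_i\}$ at every stage. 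This uniformity lets the basis and its expansion formula pass to the limit and control $\tilde E$, yielding $\ind(\tilde E)\le\mu(\cala)$ and hence $[\hat D(I_1\cup I_3):D(I_1\cup I_3)]\le\mu(\cala)$ as required.
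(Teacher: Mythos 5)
Your reduction is exactly the paper's: decompose $I_1=J_1\cup J_2$, use Lemma~\ref{lem: needed for dotted lines} to write $\hat D(I_1\cup I_3)=D(J_1)\vee\hat\cala(J_2\cup I_3)$ and $D(I_1\cup I_3)=D(J_1)\vee\cala(J_2\cup I_3)$, and recall that $\mu(\cala)=[\hat\cala(J_2\cup I_3):\cala(J_2\cup I_3)]$ by definition. (You should also record, as the paper does in its first line, that $\hat D(I_1\cup I_3)$ and $D(I_1\cup I_3)$ are factors, so that the minimal index is defined.) At this point the paper simply invokes the inequality \eqref{eq:DIS-727}, namely $\bigl\|\llbracket M\vee A:N\vee A\rrbracket\bigr\|_2\le\llbracket M:N\rrbracket$ for $A\subset M'$, which is imported as a black box from the companion paper on dualizability. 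You instead set out to re-derive that inequality by constructing a conditional expectation, and that is where your argument has a genuine gap.

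The preparatory steps of your derivation are defensible: the exhaustion $\bigvee_n D(\tilde J_1^{(n)})=D(J_1)$, the split property giving $D(\tilde J_1^{(n)})\vee\hat\cala(J_2\cup I_3)\cong D(\tilde J_1^{(n)})\,\bar\otimes\,\hat\cala(J_2\cup I_3)$, and the observation that a fixed Pimsner--Popa basis $\{u_i\}\subset\hat\cala(J_2\cup I_3)$ for the minimal expectation $E$ is a basis for $\id\otimes E$ at every finite stage. But these facts only give the expansion formula on the $\sigma$-weakly dense union $\bigcup_n M_n$; they do not produce a \emph{normal} conditional expectation $\tilde E$ on the $\sigma$-weak closure $\hat D(I_1\cup I_3)$ with the required Pimsner--Popa bound. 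The formula $x=\sum_i u_i\,\tilde E(u_i^*x)$ for $x$ in the closure presupposes the very map $\tilde E$ you are trying to build, and a compatible family of normal expectations on an increasing union of von Neumann algebras need not extend normally to the closure: pointwise $\sigma$-weak limits of normal maps are not normal in general, and there is no compactness available from which to extract a limit expectation. The claim that ``the basis and its expansion formula pass to the limit and control $\tilde E$'' is precisely the statement that the minimal index does not increase under such an inductive limit --- which is the entire content of \eqref{eq:DIS-727}, not a routine consequence of the finite-stage computations. So either cite \eqref{eq:DIS-727} as the paper does (in which case your proof collapses to the paper's five-line argument), or supply an actual proof of this limiting step; as written, the key inequality is asserted rather than proved.
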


\begin{proof}
Note that $\hat D(I_1 \cup I_3)$ and $D(I_1 \cup I_3) = D(I_1)\,\bar\otimes\,\cala(I_3)$ are both factors.

Let us decompose $I_1$ into intervals $J_1$, $J_2$ as in \eqref{eq: fig of I_1 ... I_4 and J_1, J_2}.
By Lemma~\ref{lem: needed for dotted lines}, we have
\[
\hat \cala(J_2\cup I_3)\vee D(J_1) = \hat D(I_1\cup I_3).
\]
We also have
\[
\cala(J_2\cup I_3)\vee D(J_1) = D(I_1\cup I_3).
\]
By definition $\mu(\cala) = [ \hat \cala(J_2\cup I_3) : \cala(J_2\cup I_3) ]$.
The result follows because the minimal index cannot increase under the operation $-\vee D(J_1)$,
see~\eqref{eq:DIS-727} in Appendix~\ref{subsec:stat-dim+minimal-index}. 
\end{proof}

\begin{remark}
We will see later, in Corollary \ref{cor: [[ : ]] = sqrt mu(A)}, that in fact we have an equality $[\hat D(I_1\cup I_3) : D(I_1\cup I_3)]= \mu(\cala)$.
\end{remark}

\subsection*{\hspace*{-18pt}Finiteness implies semisimplicity}

We can now prove the semisimplicity of the fusion of semisimple defects.

\begin{proof}[Proof of Theorem \ref{thm: semi-simplicity of DoE}]
Because the defects $D$ and $E$ are semisimple, we may write them as finite direct sums of irreducible defects: $D=\bigoplus D_i$ and $E=\bigoplus E_j$.
Fusion of defects is compatible with direct sums
\[
{{\textstyle\big(\bigoplus D_i\big)}\circledast_\calb {\textstyle\big(\bigoplus E_j\big)}} = \bigoplus_{ij} {D_i\circledast_\calb {E_j}}.
\]
It therefore suffices to assume $D$ and $E$ are irreducible, and to show that for $I$ genuinely bicolored, the von Neumann algebra $(D \circledast E)(I)$ has finite-dimensional center.

Without loss of generality, we assume that $I=S^1_\top$.
Let $H:=H_0(D)$ and $K:=H_0(E)$ be the vacuum sectors of $D$ and $E$.
The algebra $\tilde D(\partial^\sqsubset[0,1]^2)$ acts on $H$ (see notation \ref{not: tildeD}).
Similarly, the algebra $\tilde E(\partial^\sqsupset[0,1]^2)$ acts on $K$.
Let us denote those two algebras graphically by
$\tilde D(\,\tikzmath[scale=\textscale]
  {   \draw[thick, double] (6,0) -- (0,0) -- (0,12) -- (6,12);
      \draw  (6,12) -- (12,12) (12,0) -- (6,0); 
}\,)$ and
$\tilde E(\,\tikzmath[scale=\textscale]
  {   \draw (6,0) -- (0,0) (0,12) -- (6,12);
      \draw[ultra thick]  (6,12) -- (12,12) -- (12,0) -- (6,0); 
}\,)$.

The Hilbert space $H\boxtimes_\calb K$ is a faithful $(D\circledast E)(S^1_\top)$\,--\,$(D\circledast E)(S^1_\bot)$\,-bimodule.
So by Lemma \ref{lem: Lemma A}, it is enough to show that the algebra of bimodule endomorphisms of $H\boxtimes K$ is finite-dimensional.
This algebra of endomorphisms is equal to the algebra of $\tilde D(\,\tikzmath[scale=\textscale]
  {   \draw[thick, double] (6,0) -- (0,0) -- (0,12) -- (6,12);
      \draw  (6,12) -- (12,12) (12,0) -- (6,0); 
}\,)$-%
$\tilde E(\,\tikzmath[scale=\textscale]
  {   \draw (6,0) -- (0,0) (0,12) -- (6,12);
      \draw[ultra thick]  (6,12) -- (12,12) -- (12,0) -- (6,0); 
}\,)$-%
endomorphisms of $H\boxtimes K$.
Note that the algebras
$\tilde D(\,\tikzmath[scale=\textscale]
  {   \draw[thick, double] (6,0) -- (0,0) -- (0,12) -- (6,12);
      \draw  (6,12) -- (12,12) (12,0) -- (6,0); 
}\,)$ and
$\tilde E(\,\tikzmath[scale=\textscale]
  {   \draw (6,0) -- (0,0) (0,12) -- (6,12);
      \draw[ultra thick]  (6,12) -- (12,12) -- (12,0) -- (6,0); 
}\,)$ %
are factors by
Lemma \ref{lem: tildeD}.
If a bimodule has finite statistical dimension (see Appendix \ref{subsec:stat-dim+minimal-index}), then its 
algebra of bimodule endomorphisms is finite 
dimensional~\cite[\lemLemmaB]{BDH(Dualizability+Index-of-subfactors)}. 
It is therefore enough to show that the statistical dimension of 
$H\boxtimes_\calb K$ as a
$\tilde D(\,\tikzmath[scale=\textscale]
  {   \draw[thick, double] (6,0) -- (0,0) -- (0,12) -- (6,12);
      \draw  (6,12) -- (12,12) (12,0) -- (6,0); 
}\,)$-%
$\tilde E(\,\tikzmath[scale=\textscale]
  {   \draw (6,0) -- (0,0) (0,12) -- (6,12);
      \draw[ultra thick]  (6,12) -- (12,12) -- (12,0) -- (6,0); 
}\,)$-%
bimodule is finite.

Using the multiplicativity of the statistical dimension with respect to
Connes fusion~\eqref{eq:dim-and-Connes-fusion}
the dimension in question can be computed as
\[
\dim\left({}_{\tilde D(\,
\tikzmath[scale=.02]
  {   \draw[thick, double] (6,0) -- (0,0) -- (0,12) -- (6,12);
      \draw  (6,12) -- (12,12) (12,0) -- (6,0); 
}%tikzmath
\,)}H\boxtimes_\calb K_{\tilde E(\,
\tikzmath[scale=.02]
  {   \draw (6,0) -- (0,0) (0,12) -- (6,12);
      \draw[ultra thick]  (6,12) -- (12,12) -- (12,0) -- (6,0); 
}%tikzmath
\,)}\right)
=
\dim\left({}_{\tilde D(\,
\tikzmath[scale=.02]
  {   \draw[thick, double] (6,0) -- (0,0) -- (0,12) -- (6,12);
      \draw  (6,12) -- (12,12) (12,0) -- (6,0); 
}%tikzmath
\,)}H_{\raisebox{-1.5pt}{$\scriptstyle\calb(\tikzmath[scale=.02]
  {   \useasboundingbox (-3,0) rectangle (3,12);
      \draw  (0,12) -- (0,0);
}%tikzmath
)$}}\right)
\cdot
\dim\left({}_{\raisebox{-1.5pt}{$\scriptstyle\calb(\tikzmath[scale=.02]
  {   \useasboundingbox (-3,0) rectangle (3,12);
      \draw  (0,12) -- (0,0); 
}%tikzmath
)$}}K{}_{\tilde E(\,
\tikzmath[scale=.02]
  {   \draw (6,0) -- (0,0) (0,12) -- (6,12);
      \draw[ultra thick]  (6,12) -- (12,12) -- (12,0) -- (6,0); 
}%tikzmath
\,)}\right).
\]
So it suffices to argue that the dimension of $H$ as a 
$\tilde D(\,\tikzmath[scale=\textscale]
  {   \draw[thick, double] (6,0) -- (0,0) -- (0,12) -- (6,12);
      \draw  (6,12) -- (12,12) (12,0) -- (6,0); 
}\,)$-%
$\calb(\tikzmath[scale=\textscale]
  {   \useasboundingbox (-3,0) rectangle (3,12);
      \draw  (0,12) -- (0,0);
})$-%
bimodule 
and the dimension of $K$ as a 
$\calb(\tikzmath[scale=\textscale]
  {   \useasboundingbox (-3,0) rectangle (3,12);
      \draw  (0,12) -- (0,0);
})$-%
$\tilde E(\,
\tikzmath[scale=\textscale]
  {   \draw (6,0) -- (0,0) (0,12) -- (6,12);
      \draw[ultra thick]  (6,12) -- (12,12) -- (12,0) -- (6,0); 
}\,)$-%
bimodule are finite.
This is the content of Lemma \ref{lem: Dtilde H B is finite} below.
\end{proof}

Before proceeding, let us fix new names for certain subintervals of our standard circle:
\begin{alignat*}{3}
I_1 &:= \partial^{\,\tikzmath[scale=.15]{\draw(0,1)--(0,0)--(1,0);}} \big(\textstyle [0,\frac34] \times [0,\frac12]\big)&\qquad      I_2 &:= \textstyle[\frac34,1] \times \{0\}\\
I_3 &:= \{1\} \times [0,1]&\qquad      I_4 &:= \partial^{\,\tikzmath[scale=.15]{\draw(0,0)--(0,1)--(1,1);}} \big([0,1] \!\times\! \textstyle[\frac12,1]\big)
\end{alignat*}
\[
\tikzmath[scale=.15]{
\draw (6,0) -- (0,0) -- (0,5.75)(0,6.25) -- (0,12) -- (6,12);\draw[ultra thick] (6,12) -- (7,12) (6,0) -- (7,0);
\draw[ultra thick, line cap=round] (7,12) -- (11.5,12) (12,11.5) -- (12,.5) (7,0) -- (8.6,0) (9.3,0) -- (11.5,0);
\draw (-2,1) node {$I_1$};\draw (11,-2) node {$I_2$};\draw (14,6) node {$I_3$};\draw (4,14) node {$I_4$};}
\]
Given a defect ${}_\cala D_\calb$, let us also introduce the following shorthand notations:
\begin{alignat*}{4}
D_{234} &:= D(I_2 \cup I_3 \cup I_4) &\qquad D_{24}&:= \calb(I_2)\, \bar\otimes\, D(I_4)\qquad \calb_3 := \calb(I_3)\\
\hat D_{24} &:= \big( D(I_1)\,\bar\otimes\, \calb(I_3)\big)' &
\tilde D_{412} &:= D(I_4) \vee D(I_1\cup I_2).
\end{alignat*}

\begin{lemma}\label{lem: Dtilde H B is finite}
Let $\calb$ be a conformal net with finite index and let ${}_\cala D_\calb$ be an irreducible defect.
Then $H_0(D)$ has finite statistical dimension as a $\tilde D_{412}$--$\calb_3$-bimodule.
\end{lemma}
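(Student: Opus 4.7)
My plan is to identify $\tilde D_{412}$ as a factor, compute its commutant in $\bfB(H_0(D))$ explicitly, and bound the resulting relative index over $\calb(I_3)$ using the $\mu$-index of $\calb$.

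First, by Lemma~\ref{lem: tildeD} the algebra $\tilde D_{412} = \tilde D(I_1 \cup I_2 \cup I_4)$ is a factor, and $\calb(I_3)$ is a factor by irreducibility of $\calb$. Since $I_1 \cup I_2 \cup I_4$ and $I_3$ have disjoint interiors, locality forces $\tilde D_{412}$ and $\calb(I_3)$ to commute as subalgebras of $\bfB(H_0(D))$. By the general theory of dualizability of bimodules between factors (see Appendix~\ref{subsec:dualizability} and~\cite{BDH(Dualizability+Index-of-subfactors)}, in particular~\corindexofcommutants), the statistical dimension of ${}_{\tilde D_{412}} H_0(D) {}_{\calb(I_3)}$ is finite as soon as the commutant index $[\tilde D_{412}' : \calb(I_3)]$ in $\bfB(H_0(D))$ is finite; I will establish this.

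By strong additivity, $\tilde D_{412} = D(I_4) \vee D(I_1 \cup I_2) = D(I_4) \vee \calb(I_2) \vee D(I_1) = D_{24} \vee D(I_1)$. Taking commutants in $\bfB(H_0(D))$ and using Notation~\ref{not: Dhat} to identify $D_{24}'$ with $\hat D(I_1 \cup I_3)$,
\[
\tilde D_{412}' = D_{24}' \cap D(I_1)' = \hat D(I_1 \cup I_3) \cap D(I_1)',
\]
that is, $\tilde D_{412}'$ is the relative commutant of $D(I_1)$ inside $\hat D(I_1 \cup I_3)$.

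Next I apply the color-swapped analog of Lemma~\ref{lem: [ : ] < mu(A)}: since the proof of that lemma is symmetric in the roles of $\cala$ and $\calb$, and in our configuration $I_2, I_3$ are black rather than white, the corresponding bound becomes
\[
[\hat D(I_1 \cup I_3) : D(I_1) \,\bar\otimes\, \calb(I_3)] \le \mu(\calb).
\]
The minimal conditional expectation $E \colon \hat D(I_1 \cup I_3) \to D(I_1) \,\bar\otimes\, \calb(I_3)$ for this finite-index inclusion is $D(I_1)$-bimodular, so it restricts to a conditional expectation from $\tilde D_{412}' = D(I_1)' \cap \hat D(I_1 \cup I_3)$ onto $D(I_1)' \cap (D(I_1) \,\bar\otimes\, \calb(I_3)) = \calb(I_3)$, the last equality using that $D(I_1)$ is a factor. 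The restricted expectation has index at most that of the original, yielding $[\tilde D_{412}' : \calb(I_3)] \le \mu(\calb) < \infty$; a parallel argument, or equality of commutant indices for commuting factor pairs, handles $[\calb(I_3)' : \tilde D_{412}]$, completing the dualizability check.

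The principal obstacles will be making precise the color-swapped analog of Lemma~\ref{lem: [ : ] < mu(A)} (straightforward, but requires reproducing the argument with $\hat\calb$ in place of $\hat\cala$) and citing or deriving the fact that restriction of a finite-index minimal conditional expectation to the relative commutant of a subfactor preserves finiteness of index with the bound by the original index; the formulation of dualizability of a bimodule between commuting factors in terms of commutant indices must also be pinned down from~\cite{BDH(Dualizability+Index-of-subfactors)}.
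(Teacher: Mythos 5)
Your proof is correct and, once unwound, coincides with the paper's: both reduce the statement to showing $[{\tilde D_{412}}' : \calb_3] < \infty$, exhibit ${\tilde D_{412}}'$ and $\calb_3$ as the relative commutants of $D_{24}$ and $\hat D_{24}$ inside $D(I_1)' = D_{234}$, and bound $[\hat D_{24} : D_{24}] \le \mu(\calb)$ by the color-swapped Lemma~\ref{lem: [ : ] < mu(A)}. The monotonicity statement you flag as the main obstacle (restricting the minimal conditional expectation to a relative commutant can only decrease the index) is precisely the paper's inequality~\eqref{eq:DIS-726}, applied with $N = D_{24}$, $M = \hat D_{24}$, and $A = D(I_1)'$, so no new fact needs to be derived.
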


\begin{proof} 
The statistical dimension of $H_0(D)$ as a $\tilde D_{412}$--$\calb_3$-bimodule is the square root
of the minimal index of the subfactor $\calb_3 \subseteq {\tilde D_{412}}'$,
see~\eqref{eq:dim-on-H-is-dim-on-L2} and the definition of minimal index in Appendix \ref{subsec:stat-dim+minimal-index}. Therefore, we need to show that
$[{\tilde D_{412}}' : \calb_3] < \infty$.
We know by Lemma \ref{lem: B_2 is the relative commutant} (with $\cala$ and $\calb$ interchanged)
that the algebra $\calb_3$ is the relative commutant of $\hat D_{24}$ 
inside $D_{234}$. 
The algebra ${\tilde D_{412}}'$ is the relative commutant of $D_{24}$ inside $D_{234}$, as can be seen by taking the commutant of
the equation $(D_{24}'\cap D_{234})' = D_{24}\vee D_{234}' = \tilde D_{412}$.
The index is unchanged by taking commutants, and can only decrease
under the operation $-\cap D_{234}$ by~\eqref{eq:DIS-726}. 
Thus
\[
[{\tilde D_{412}}' : \calb_3] 
     = [D_{24}' \cap D_{234} : {\hat D_{24}}' \cap D_{234}] 
       \le [\hat D_{24} : D_{24}],
\]
and we have already seen in Lemma \ref{lem: [ : ] < mu(A)} that $[\hat D_{24} : D_{24}] \le \mu(\calb) < \infty$.
\end{proof}

\subsection*{\hspace*{-18pt}Finiteness of the defect vacuum as a 4-interval bimodule\nopunct} \hspace{-.15cm} ({\it splitting
$\tikzmath[scale=.02]
{\useasboundingbox (-15,-10) rectangle (15,10);
\draw[ultra thick](90:15) arc (90:45:15) (37:15) arc (37:-37:15) (-90:15) arc (-90:-45:15);
\draw (90:15) arc (90:135:15) (143:15) arc (143:217:15) (-90:15) arc (-90:-135:15);}$
\smallskip}).

\vspace*{-1pt} \noindent We record the following finiteness result, somewhat similar to Lemma \ref{lem: [ : ] < mu(A)}, for future reference. 

Let $I_1$, $I_2$, $I_3$, $I_4$ now be the four sides of our 
standard bicolored circle:
\[
\tikzmath[scale=\squarescale]
{\draw (6,0) -- (0,0) -- (0,12) -- (6,12);\draw[ultra thick] (6,0) -- (12,0) -- (12,12) -- (6,12);
\draw (-3,6) node {$I_2$}(15.5,6) node {$I_4$}(6.15,15.5) node {$I_1$}(6.3,-3.4) node {$I_3$};
}  %tikzmath
\]
The intervals $I_1$ and $I_3$ are genuinely bicolored, $I_2$ is white, and $I_4$ is black.

\begin{proposition}\label{prop: KLM finiteness for a defect's vacuum}
Let $\cala$ and $\calb$ be conformal nets with finite index, and let ${}_\cala D_\calb$ be an irreducible defect.
Then the vacuum sector $H_0(D)$ has finite statistical dimension as a $D(I_1)\vee D(I_3)$ -- $(\cala(I_2)\vee \calb(I_4))^\op$-bimodule.
\end{proposition}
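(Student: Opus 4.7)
Set $A := D(I_1) \vee D(I_3)$ and $B := \cala(I_2) \vee \calb(I_4)$, both acting on $H_0(D)$. The split property (Proposition \ref{prop:split-property-defects}, via \eqref{eq: extend D to disconnected things}) identifies each with a spatial tensor product of factors, so $A$ and $B$ are themselves factors, and $H_0(D)$ is a faithful $A$--$B^{\op}$-bimodule. Its statistical dimension is therefore the square root of the minimal index $[A' : B]$, so the task reduces to showing $[A' : B] < \infty$.

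The first move is to pass to the enveloping algebra $\widehat A := (D(I_2) \, \bar\otimes \, D(I_4))' = B'$ on $H_0(D)$. Locality gives $A \subseteq \widehat A$, and the commutation theorem for finite-index subfactors (\corindexofcommutants) yields $[A' : B] = [\widehat A : A]$ once the latter is known to be finite. Therefore it suffices to bound the index of the subfactor inclusion $A \subseteq \widehat A$.

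To bound $[\widehat A : A]$, I would adapt the strategy of Lemma \ref{lem: [ : ] < mu(A)} to the mixed-color configuration at hand. Split the two genuinely bicolored intervals at their color-change points, $I_1 = I_1^\circ \cup I_1^\bullet$ and $I_3 = I_3^\circ \cup I_3^\bullet$; strong additivity for $D$ then gives $A = \cala(I_1^\circ) \vee \cala(I_3^\circ) \vee \calb(I_1^\bullet) \vee \calb(I_3^\bullet)$. The vacuum sector axiom for $D$ provides commuting actions on $H_0(D)$ of the two monochromatic half-circle algebras $\cala(S^1_\circ) = \cala(I_1^\circ \cup I_2 \cup I_3^\circ)$ and $\calb(S^1_\bullet) = \calb(I_1^\bullet \cup I_4 \cup I_3^\bullet)$. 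Inside these, I would define partial hatted algebras $\widehat A_\circ$ and $\widehat A_\bullet$ as the respective commutants of $\cala(I_2)$ and $\calb(I_4)$; by the Jones-index characterization of the $\mu$-index of a finite-index net, the hypothesis then gives $[\widehat A_\circ : \cala(I_1^\circ) \vee \cala(I_3^\circ)] \le \mu(\cala)$ and $[\widehat A_\bullet : \calb(I_1^\bullet) \vee \calb(I_3^\bullet)] \le \mu(\calb)$.

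The main obstacle is establishing the two-color decomposition $\widehat A = \widehat A_\circ \vee \widehat A_\bullet$, which is the analog of Lemma \ref{lem: needed for dotted lines} in the present configuration. The inclusion ``$\supseteq$'' is immediate from locality, since $\widehat A_\circ$ and $\widehat A_\bullet$ each commute with both $\cala(I_2)$ and $\calb(I_4)$. The reverse inclusion ``$\subseteq$'' is the subtle step: it requires showing that every element of $\widehat A = B'$ lies in the join of the white-side and black-side hatted pieces. I would prove it by interleaving Haag duality for the nets $\cala$ and $\calb$ (on their respective vacuum sectors, transported to $H_0(D)$ via the vacuum sector axiom for $D$) with Haag duality for the defect $D$ on the full circle (Proposition \ref{prop: [Haag duality for defects]}), together with strong additivity applied to both half-circles. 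Once the decomposition is in hand, the product bound $[\widehat A : A] \le \mu(\cala)\mu(\calb) < \infty$ follows from multiplicativity of the index under commuting intermediate subfactors, completing the proof.
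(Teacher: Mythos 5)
Your reduction to bounding $[\widehat A : A]$ with $\widehat A := (\cala(I_2)\,\bar\otimes\,\calb(I_4))'$ and $A := D(I_1)\vee D(I_3)$ is equivalent to the paper's target $[(D(I_1)\vee D(I_3))' : \cala(I_2)\vee\calb(I_4)]$, and your final bound $\mu(\cala)\mu(\calb)$ is the one the paper obtains. The problem is the step you yourself flag as ``the subtle step'': the decomposition $\widehat A = \widehat A_\circ\vee\widehat A_\bullet$ with the cut taken \emph{exactly} at the two color-change points. This is not reachable with the tools you invoke. Haag duality for defects (Proposition~\ref{prop: [Haag duality for defects]}) is only available when the interval being commuted against is genuinely bicolored; it says nothing about the complementary pair $(S^1_\circ,S^1_\bullet)$ obtained by cutting at the color-change points, and duality across those two points is precisely what is \emph{not} known for a general irreducible defect (it amounts to a finiteness property of $D$ itself, which is not a hypothesis here). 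Your inclusion ``$\supseteq$'' is fine, but for ``$\subseteq$'' you would have to show that the commutant of $\widehat A_\circ\vee\widehat A_\bullet$ is no larger than $\cala(I_2)\vee\calb(I_4)$; what one gets easily is only that the relative commutant of $\cala(I_2)\vee\calb(I_4)$ in $(\widehat A_\circ\vee\widehat A_\bullet)'$ is trivial, which does not exclude a genuine irreducible inclusion of index $>1$, i.e.\ ``extra'' elements of $(\cala(I_2)\vee\calb(I_4))'$ localized at the color-change points.

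The paper's proof is engineered to avoid exactly this. It keeps two small genuinely bicolored buffer intervals $J_4\subset I_1$ and $J_8\subset I_3$ around the color-change points and works with the hatted algebra $M := \hat\calb(J_1\cup J_3)\vee D(J_4)\vee\hat\cala(J_5\cup J_7)$, which it identifies as the commutant of $\cala(I_2)\vee\calb(I_4)\vee D(J_8)$ --- note the extra factor $D(J_8)$, which is what makes the split-property Lemma~\ref{lem:commutant-spacial-vee 1} and defect Haag duality (in the form $D(J_1\cup\cdots\cup J_7)'=D(J_8)$) applicable. It then exhibits both $(D(I_1)\vee D(I_3))'$ and $\cala(I_2)\vee\calb(I_4)$ as relative commutants, inside $D(J_1\cup\cdots\cup J_7)$, of the unhatted algebra $N := \calb(J_1\cup J_3)\vee D(J_4)\vee\cala(J_5\cup J_7)$ and of $M$ respectively, and concludes from \eqref{eq:DIS-726} and \eqref{eq:DIS-727} that the index in question is at most $[M:N]\le\mu(\cala)\mu(\calb)$. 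Your configuration is the degenerate limit in which $J_4$ and $J_8$ shrink to points, and in that limit this mechanism is unavailable. To repair your argument, reinstate the buffer intervals and prove the thickened decomposition (as in Lemma~\ref{lem: needed for dotted lines} and the displayed commutant computation in the paper's proof) rather than the sharp one.
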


\begin{proof}
Consider the following intervals:
\[\begin{split}
J_1 := \textstyle[\frac34,1] \times \{0\}
\qquad 
J_2 := I_4
\qquad
J_3 := \textstyle[\frac34,1] \times \{1\}
\qquad
J_4 := \textstyle[\frac14,\frac34] \times \{1\}\\
J_5 := \textstyle[0,\frac14] \times \{1\}
\qquad 
J_6 := I_2
\qquad
J_7 := \textstyle[0,\frac14] \times \{0\}
\qquad
J_8 := \textstyle[\frac14,\frac34] \times \{0\}
\end{split}\]
which we draw here:
\[
\tikzmath[scale=.15]{
\draw (6,0) --(3.25,0)(2.75,0)-- (0.3,0)(0,0.3) -- (0,11.7);\draw[ultra thick] (6,12) -- (7,12) (6,0) -- (7,0);
\draw (6,12) --(3.25,12)(2.75,12)-- (0.3,12)(0,0.3) -- (0,11.7);
\draw[ultra thick, line cap=round] (12,11.5) -- (12,.5) (7,0) -- (8.6,0) (9.3,0) -- (11.5,0)(7,12) -- (8.6,12) (9.3,12) -- (11.5,12);
\draw (-2,6) node {$J_6$}(14.5,6) node {$J_2$};
\draw (1.5,-2) node {$J_7$} (6,-2) node {$J_8$}(10.5,-2) node {$J_1$};
\draw (1.5,14) node {$J_5$} (6,14) node {$J_4$}(10.5,14) node {$J_3$};}
\]
It will be convenient to introduce a graphical notation for the subalgebras of $\bfB(H_0(D))$ used in this proof:
\begin{alignat*}{3}
\tikzmath[scale=\displscale]{\useasboundingbox (-2,-2) rectangle (14,14);\draw (3,0)--(0,0)--(0,12)--(6,12);\draw[ultra thick] (9,0)--(12,0)--(12,12)--(6,12);} %tikzmath
\,\,&:=\,\,D(J_1\cup J_2\cup J_3\cup J_4\cup J_5\cup J_6\cup J_7)\quad&
\tikzmath[scale=\displscale]{\useasboundingbox (-2,-2) rectangle (14,14);\draw (3,12)--(6,12);\draw[ultra thick] (9,12)--(6,12);} %tikzmath
\,\,&:=\,\,D(J_4)\\
\tikzmath[scale=\displscale]{\useasboundingbox (-2,-2) rectangle (14,14);\draw (3,0)--(0,0)--(0,12)--(3,12);\draw[ultra thick] (9,0)--(12,0)--(12,12)--(9,12);} %tikzmath
\,\,&:=\calb(J_1\cup J_2\cup J_3)\vee \cala(J_5\cup J_6\cup J_7)\quad&
\tikzmath[scale=\displscale]{\useasboundingbox (-2,-2) rectangle (14,14);\draw (3,0)--(6,0);\draw[ultra thick] (9,0)--(6,0);} %tikzmath
\,\,&:=\,\,D(J_8)\\
\tikzmath[scale=\displscale]{\useasboundingbox (-2,-2) rectangle (14,14);\draw (3,0)--(0,0)(0,12)--(6,12);\draw[ultra thick] (9,0)--(12,0)(12,12)--(6,12);} %tikzmath
\,\,&:=\,\,\calb(J_1)\vee D(J_3\cup J_4\cup J_5)\vee \cala(J_7)\quad&
\tikzmath[scale=\displscale]{\useasboundingbox (-2,-2) rectangle (14,14);\draw (0,0)--(0,12);\draw[ultra thick] (12,0)--(12,12);} %tikzmath
\,\,&:=\,\, \calb(J_2)\vee \cala(J_6)\\\tikzmath[scale=\displscale]{\useasboundingbox (-2,-2) rectangle (14,14);
\draw (3,0)--(0,0)(0,12)--(6,12);\draw[ultra thick] (9,0)--(12,0)(12,12)--(6,12);\draw[densely dotted] (1.5,0)--(1.5,12)(10.5,0)--(10.5,12);} %tikzmath
\,\,&:=\,\,\hat\calb(J_1\cup J_3)\vee D(J_4)\vee\hat\cala(J_5\cup J_7)\quad&
\tikzmath[scale=\displscale]{\useasboundingbox (-2,-2) rectangle (14,14);\draw (0,0)--(0,12)(3,0)--(6,0);\draw[ultra thick] (12,0)--(12,12)(9,0)--(6,0);} %tikzmath
\,\,&:=\,\, \calb(J_2)\vee \cala(J_6)\vee D(J_8)\\
\tikzmath[scale=\displscale]{\useasboundingbox (-2,-2) rectangle (14,14);\draw (3,0)--(0,0)(0,12)--(3,12);
\draw[ultra thick] (9,0)--(12,0)(12,12)--(9,12);\draw[densely dotted] (1.5,0)--(1.5,12)(10.5,0)--(10.5,12);} %tikzmath
\,\,&:=\,\,\hat\calb(J_1\cup J_3)\vee\hat\cala(J_5\cup J_7)\quad&
\tikzmath[scale=\displscale]{\useasboundingbox (-2,-2) rectangle (14,14);\draw (3,0)--(0,0)(0,12)--(3,12);\draw[ultra thick] (9,0)--(12,0)(12,12)--(9,12);} %tikzmath
\,\,&:=\,\,\calb(J_1\cup J_3)\vee\cala(J_5\cup J_7),
\end{alignat*}
where, as in \ref{not: Dhat}, $\hat \cala(J_5\cup J_7)$ is the relative commutant of $\cala(J_6)$ in $\cala(J_5\cup J_6\cup J_7)$ and
$\hat \calb(J_1\cup J_3)$ is the relative commutant of $\calb(J_2)$ in $\calb(J_1\cup J_2\cup J_3)$.
Note that $\tikzmath[scale=\textscale]{\useasboundingbox (-2,-2) rectangle (14,14);
\draw (3,0)--(0,0)(0,12)--(6,12);\draw[ultra thick] (9,0)--(12,0)(12,12)--(6,12);\draw[densely dotted] (1.5,0)--(1.5,12)(10.5,0)--(10.5,12);}$ %tikzmath
is the commutant of 
$\tikzmath[scale=\textscale]{\useasboundingbox (-2,-2) rectangle (14,14);\draw (0,0)--(0,12);\draw[ultra thick] (12,0)--(12,12);\draw (3,0)--(6,0);\draw[ultra thick] (6,0)--(9,0);}$ %tikzmath
since, by Lemma \ref{lem:commutant-spacial-vee 1}, we have
\[\begin{split}
\tikzmath[scale=\displscale]{\useasboundingbox (-2,-2) rectangle (14,14);
\draw (3,0)--(0,0)(0,12)--(6,12);\draw[ultra thick] (9,0)--(12,0)(12,12)--(6,12);\draw[densely dotted] (1.5,0)--(1.5,12)(10.5,0)--(10.5,12);} %tikzmath
&=\tikzmath[scale=\displscale]{\useasboundingbox (-2,-2) rectangle (14,14);\draw (3,12)--(6,12);\draw[ultra thick] (9,12)--(6,12);} %tikzmath
\vee\Big(\tikzmath[scale=\displscale]{\useasboundingbox (-2,-2) rectangle (14,14);\draw (3,0)--(0,0)--(0,12)--(3,12);\draw[ultra thick] (9,0)--(12,0)--(12,12)--(9,12);} %tikzmath
\cap\Big(\tikzmath[scale=\displscale]{\useasboundingbox (-2,-2) rectangle (14,14);\draw (0,0)--(0,12);\draw[ultra thick] (12,0)--(12,12);} %tikzmath
\Big)'\Big)\\&=\Big(\tikzmath[scale=\displscale]{\useasboundingbox (-2,-2) rectangle (14,14);\draw (3,12)--(6,12);\draw[ultra thick] (9,12)--(6,12);} %tikzmath
\vee\tikzmath[scale=\displscale]{\useasboundingbox (-2,-2) rectangle (14,14);\draw (3,0)--(0,0)--(0,12)--(3,12);\draw[ultra thick] (9,0)--(12,0)--(12,12)--(9,12);} %tikzmath
\Big)\cap\Big(\tikzmath[scale=\displscale]{\useasboundingbox (-2,-2) rectangle (14,14);\draw (0,0)--(0,12);\draw[ultra thick] (12,0)--(12,12);} %tikzmath
\Big)'=\tikzmath[scale=\displscale]{\useasboundingbox (-2,-2) rectangle (14,14);\draw (3,0)--(0,0)--(0,12)--(6,12);\draw[ultra thick] (9,0)--(12,0)--(12,12)--(6,12);} %tikzmath
\cap\Big(\tikzmath[scale=\displscale]{\useasboundingbox (-2,-2) rectangle (14,14);\draw (0,0)--(0,12);\draw[ultra thick] (12,0)--(12,12);} %tikzmath
\Big)'=\Big(\tikzmath[scale=\displscale]{\useasboundingbox (-2,-2) rectangle (14,14);\draw (3,0)--(6,0);\draw[ultra thick] (9,0)--(6,0);} %tikzmath
\vee\tikzmath[scale=\displscale]{\useasboundingbox (-2,-2) rectangle (14,14);\draw (0,0)--(0,12);\draw[ultra thick] (12,0)--(12,12);} %tikzmath
\Big)'.
%=\Big(\tikzmath[scale=\displscale]{\useasboundingbox (-2,-2) rectangle (14,14);\draw (0,0)--(0,12);\draw[ultra thick] (12,0)--(12,12);\draw (3,0)--(6,0);\draw[ultra thick] (9,0)--(6,0);}\Big)'
\end{split}\]
In particular, the algebra $\tikzmath[scale=\textscale]{\useasboundingbox (-2,-2) rectangle (14,14);\draw (3,0)--(0,0)(0,12)--(6,12);
\draw[ultra thick] (9,0)--(12,0)(12,12)--(6,12);\draw[densely dotted] (1.5,0)--(1.5,12)(10.5,0)--(10.5,12);}$ %tikzmath
is a factor.

We have to show that 
\[
\big[\big(D(I_1)\vee D(I_3)\big)':\cala(I_2)\vee \calb(I_4)\big]<\infty.
\]
Using Haag duality and strong additivity, note that the algebra $(D(I_1)\vee D(I_3))'$ is the relative commutant of 
$\tikzmath[scale=\textscale]{\useasboundingbox (-2,-2) rectangle (14,14);\draw (3,0)--(0,0)(0,12)--(6,12);\draw[ultra thick] (9,0)--(12,0)(12,12)--(6,12);}$  %tikzmath
inside
$\tikzmath[scale=\textscale]{\useasboundingbox (-2,-2) rectangle (14,14);\draw (3,0)--(0,0)--(0,12)--(6,12);\draw[ultra thick] (9,0)--(12,0)--(12,12)--(6,12);}$.  %tikzmath
Similarly, it follows from Lemma \ref{lem:commutant-spacial-vee 2} that the algebra $\cala(I_2)\vee \calb(I_4)$ is the relative commutant of 
$\tikzmath[scale=\textscale]{\useasboundingbox (-2,-2) rectangle (14,14);
\draw (3,0)--(0,0)(0,12)--(6,12);\draw[ultra thick] (9,0)--(12,0)(12,12)--(6,12);\draw[dash pattern=on .4pt off .62pt] (1.5,0)--(1.5,12)(10.5,0)--(10.5,12);}$ %tikzmath
in
$\tikzmath[scale=\textscale]{\useasboundingbox (-2,-2) rectangle (14,14);\draw (3,0)--(0,0)--(0,12)--(6,12);\draw[ultra thick] (9,0)--(12,0)--(12,12)--(6,12);}$\,: %tikzmath
\[
\tikzmath[scale=\displscale]{\useasboundingbox (-2,-2) rectangle (14,14);\draw (0,0)--(0,12);\draw[ultra thick] (12,0)--(12,12);} \,=\, 
\Big(\tikzmath[scale=\displscale]{\useasboundingbox (-2,-2) rectangle (14,14);\draw (0,0)--(0,12);\draw[ultra thick] (12,0)--(12,12);} 
\vee \Big(\tikzmath[scale=\displscale]{\useasboundingbox (-2,-2) rectangle (14,14);\draw (3,0)--(0,0)--(0,12)--(6,12);\draw[ultra thick] (9,0)--(12,0)--(12,12)--(6,12);} %tikzmath
\Big)'\Big)\cap\tikzmath[scale=\displscale]{\useasboundingbox (-2,-2) rectangle (14,14);\draw (3,0)--(0,0)--(0,12)--(6,12);\draw[ultra thick] (9,0)--(12,0)--(12,12)--(6,12);} %tikzmath
 \,=\, \tikzmath[scale=\displscale]{\useasboundingbox (-2,-2) rectangle (14,14);\draw (0,0)--(0,12);\draw[ultra thick] (12,0)--(12,12);\draw (3,0)--(6,0);\draw[ultra thick] (6,0)--(9,0);} %tikzmath
\cap\tikzmath[scale=\displscale]{\useasboundingbox (-2,-2) rectangle (14,14);\draw (3,0)--(0,0)--(0,12)--(6,12);\draw[ultra thick] (9,0)--(12,0)--(12,12)--(6,12);} %tikzmath
\,=\,\Big(\tikzmath[scale=\displscale]{\useasboundingbox (-2,-2) rectangle (14,14);
\draw (3,0)--(0,0)(0,12)--(6,12);\draw[ultra thick] (9,0)--(12,0)(12,12)--(6,12);\draw[dash pattern=on .4pt off .62pt] (1.5,0)--(1.5,12)(10.5,0)--(10.5,12);} %tikzmath
\Big)'\cap\tikzmath[scale=\displscale]{\useasboundingbox (-2,-2) rectangle (14,14);\draw (3,0)--(0,0)--(0,12)--(6,12);\draw[ultra thick] (9,0)--(12,0)--(12,12)--(6,12);}. %tikzmath
\]
By~\eqref{eq:DIS-726} and~\eqref{eq:DIS-727}, 
we then have
\[
\begin{split}
\Big[\big(D(I_1)\vee D(I_3)\big)':\cala(I_2)\vee \calb(I_4)\Big]\,&=\,
\bigg[
\tikzmath[scale=\displscale]{\useasboundingbox (-2,-2) rectangle (14,14);
\draw (3,0)--(0,0)--(0,12)--(6,12);\draw[ultra thick] (9,0)--(12,0)--(12,12)--(6,12);} %tikzmath
\cap \Big(\tikzmath[scale=\displscale]{\useasboundingbox (-2,-2) rectangle (14,14);\draw (3,0)--(0,0)(0,12)--(6,12);\draw[ultra thick] (9,0)--(12,0)(12,12)--(6,12);} %tikzmath
\Big)':\tikzmath[scale=\displscale]{\useasboundingbox (-2,-2) rectangle (14,14);\draw (3,0)--(0,0)--(0,12)--(6,12);\draw[ultra thick] (9,0)--(12,0)--(12,12)--(6,12);} %tikzmath
\cap \Big(\tikzmath[scale=\displscale]{\useasboundingbox (-2,-2) rectangle (14,14);
\draw (3,0)--(0,0)(0,12)--(6,12);\draw[ultra thick] (9,0)--(12,0)(12,12)--(6,12);\draw[densely dotted] (1.5,0)--(1.5,12)(10.5,0)--(10.5,12);} %tikzmath
\Big)'\bigg]\\
\le\,\bigg[\tikzmath[scale=\displscale]{\useasboundingbox (-2,-2) rectangle (14,14);\draw (3,0)--(0,0)(0,12)--(6,12);
\draw[ultra thick] (9,0)--(12,0)(12,12)--(6,12);\draw[densely dotted] (1.5,0)--(1.5,12)(10.5,0)--(10.5,12);} %tikzmath
:\tikzmath[scale=\displscale]{\useasboundingbox (-2,-2) rectangle (14,14);\draw (3,0)--(0,0)(0,12)--(6,12);\draw[ultra thick] (9,0)--(12,0)(12,12)--(6,12);} %tikzmath
\bigg]\,&=\,\bigg[\tikzmath[scale=\displscale]{\useasboundingbox (-2,-2) rectangle (14,14);\draw (3,12)--(6,12);\draw[ultra thick] (9,12)--(6,12);} %tikzmath
\vee\tikzmath[scale=\displscale]{\useasboundingbox (-2,-2) rectangle (14,14);\draw (3,0)--(0,0)(0,12)--(3,12);
\draw[ultra thick] (9,0)--(12,0)(12,12)--(9,12);\draw[densely dotted] (1.5,0)--(1.5,12)(10.5,0)--(10.5,12);} %tikzmath
:\tikzmath[scale=\displscale]{\useasboundingbox (-2,-2) rectangle (14,14);\draw (3,12)--(6,12);\draw[ultra thick] (9,12)--(6,12);} %tikzmath
\vee\tikzmath[scale=\displscale]{\useasboundingbox (-2,-2) rectangle (14,14);\draw (3,0)--(0,0)(0,12)--(3,12);\draw[ultra thick] (9,0)--(12,0)(12,12)--(9,12);} %tikzmath
\bigg]\\
\le\,\bigg[\tikzmath[scale=\displscale]{\useasboundingbox (-2,-2) rectangle (14,14);\draw (3,0)--(0,0)(0,12)--(3,12);
\draw[ultra thick] (9,0)--(12,0)(12,12)--(9,12);\draw[densely dotted] (1.5,0)--(1.5,12)(10.5,0)--(10.5,12);} %tikzmath
:\tikzmath[scale=\displscale]{\useasboundingbox (-2,-2) rectangle (14,14);\draw (3,0)--(0,0)(0,12)--(3,12);\draw[ultra thick] (9,0)--(12,0)(12,12)--(9,12);} %tikzmath
\bigg]\,&=\, \mu(\cala)\mu(\calb). \qedhere
\end{split}
\]
\end{proof}

%\end{document}
%==================================================================

\chapter{A variant of horizontal fusion}  \label{sec:F-G-and-G_0}

In Section \ref{ssec:Horizontal fusion} we saw how to define the horizontal fusion of two sectors. 
We will now define a variant of the horizontal fusion, called keystone fusion, which itself depends on an intermediate construction we refer to as keyhole fusion. 
In Section \ref{sec: Comparison between F and G}, we will show that horizontal fusion and keystone fusion are in fact naturally isomorphic, and we will construct a canonical isomorphism $\Phi$ between them.
That isomorphism will be essential in our construction of the $1\boxtimes 1$-isomorphism $\Omega$~\eqref{eq: definition of Omega}.

Recall that we implicitly assume that all our conformal nets are irreducible.

\section{The keyhole and keystone fusion}
 \label{sec: def of F G0 G}
Recall the Notation~\ref{not:names-for-intervals}.
Consider the intervals $I_l :=  \partial^\sqsupset ( [2/3,1] \!\times\! [0,1] )$, $I_r :=  \partial^\sqsubset ( [1,4/3] \!\times\! [0,1] )$, 
and $I := I_l \cap I_r$.
Orient $I_l$ and $I_r$ counterclockwise, and orient $I$ 
so that the inclusion $I \hookrightarrow I_r$ is orientation preserving---see \eqref{eq: figures of intervals 1}.
The inclusion $I \hookrightarrow I_l$ is then orientation reversing.
Let $J$ be the closure of $(I_l \cup I_r) \setminus I$.
We orient $J$ so that it agrees with the orientation of $I_l$ on $J \cap I_l$.
We draw these intervals as follows:
\begin{equation}\label{eq: figures of intervals 1}
I_l = \tikzmath[scale=\displscale] {\draw[<-] (9.2,12) -- (12,12) -- (12,0) -- (8,0);\draw (9.3,12) -- (8,12);}\;, %tikzmath
\quad I_r = \tikzmath[scale=\displscale]{\draw (13.6,12) -- (12,12) -- (12,0) -- (16,0);\draw[<-] (13.5,12) -- (16,12);}\;, %tikzmath
\quad I= \, \tikzmath[scale=\displscale]{\draw (12,5.8) -- (12,0); \draw[->] (12,12) -- (12,5.7);} %tikzmath
\quad \text{and} \quad J = \tikzmath[scale=\displscale] {\draw (8,12) -- (11.5,12); \draw[->](8,0) -- (12.6,0); \draw[<-] (11.4,12) -- (16,12); \draw (12.5,0) -- (16,0);}\;. %tikzmath
\end{equation}
Given a conformal net $\cala$ with finite index, we will define three functors
\begin{equation*}
  F,\, G_0,\, G\,\, \colon\,\, \modules{\cala(I_l)} \; \x \; \modules{\cala(I_r)}\,\, \to\,\, \modules{\cala(J)}.
\end{equation*}
These operations will be called respectively the fusion, the keyhole fusion, and the keystone fusion, and will be denoted graphically as follows:
\begin{equation*}
F(H_l,H_r) =\tikzmath[scale=\displscale] {\draw (8,12) -- (16,12) (8,0) -- (16,0) (12,0) -- (12,12);
\draw[ultra thin, dash pattern=on .5pt off 1pt](8,12) -- (0,12) -- (0,0) -- (8,0)(16,12) -- (24,12) -- (24,0) -- (16,0);\draw (6,6) node {$H_l$} (18,6) node {$H_r$};} %tikzmath
\, , \,\,\,\,\,\,\quad G_{0}(H_l,H_r) = \tikzmath[scale=\displscale] {\useasboundingbox (0,-4) rectangle (24,16);\draw (8,12) -- (10,12) -- (10,0) -- (8,0) (16,12) -- (14,12) -- (14,0) -- (16,0);
\draw[ultra thin, dash pattern=on .5pt off 1pt](8,12) -- (0,12) -- (0,0) -- (8,0) (16,12) -- (24,12) -- (24,0) -- (16,0); \draw (5,6) node {$H_l$} (19,6) node {$H_r$};
\fill[vacuumcolor] (10,0) rectangle (14,4) (10,8) rectangle (14,12); \draw (10,0) rectangle (14,4) (10,8) rectangle (14,12); } %tikzmath 
\, , \,\,\,\,\,\,\quad G(H_l,H_r) =\tikzmath[scale=\displscale] {\draw (8,12) -- (10,12) -- (10,0) -- (8,0)  (16,12) -- (14,12) -- (14,0) -- (16,0);
\draw[ultra thin, dash pattern=on .5pt off 1pt] (8,12) -- (0,12) -- (0,0) -- (8,0) (16,12) -- (24,12) -- (24,0) -- (16,0); \draw (5,6) node {$H_l$} (19,6) node {$H_r$};
\fill[vacuumcolor] (10,0) rectangle (14,4) (10,8) rectangle (14,12) (10.5,4.5) rectangle (13.5,7.5); \draw (10,0) rectangle (14,4) (10,8) rectangle (14,12) (10.5,4.5) rectangle (13.5,7.5); }\,. %tikzmath
\end{equation*}
When we want to stress the dependence on the conformal net $\cala$, we will denote these functors $F_\cala$, $G_{0,\cala}$, $G_\cala$.

\subsection*{\hspace*{-18pt}The ordinary horizontal fusion}

The functor $F$ is defined by fusion over $\cala(I)$:
using the orientation preserving inclusion $I \hookrightarrow I_r$, any left $\cala(I_r)$-module is also a left $\cala(I)$-module, and
using the orientation reversing inclusion $I_0 \hookrightarrow I_l$, any left $\cala(I_l)$-module is also a right $\cala(I)$-module.
We can therefore define the horizontal fusion functor as follows:
\begin{equation*}
F(H_l,H_r) := H_l \boxtimes_{\cala(I)} H_r.
\end{equation*}
Write $J$ as $J_1\sqcup J_2$;  
we obtain actions of $\cala(J_1)$ and $\cala(J_2)$ on $H_l \boxtimes_{\cala(I)} H_r$, by~\cite[\corcalaKacts]{BDH(nets)}.
Note that in the case $H_l=L^2\cala(I_l)$ and $H_r=L^2\cala(I_r)$, the actions of $\cala(J_1)$ and $\cala(J_2)$ extend to an action of $\cala(J)=\cala(J_1)\,\bar\otimes\,\cala(J_2)$; the same therefore holds for arbitrary $H_l$ and $H_r$.
The only difference between the functor $F$ and the functor $\mathsf{fusion_h}$ from~\eqref{eq: The functor fusion_h} is that they have somewhat different source and target categories---the main construction is identical in both functors.

\subsection*{\hspace*{-18pt}The keyhole fusion}
We will need to name a few more manifolds.
Let
\[
\begin{split}
\tilde I_l  \,&:=\,  \partial^\sqsupset ( [2/3,5/6] \x [0,1] ) \\
S_u &:=\, \dd ( [5/6,7/6] \x [2/3,1] )\\
S_d &:=\, \dd ( [5/6,7/6] \x [0,1/3] )
\end{split}
\qquad\quad
\begin{split}
\tilde I_r  \,&:=\,  \partial^\sqsubset ( [7/6,4/3] \x [0,1] )\\
S_m &:=\, \dd ( [5/6,7/6] \x [1/3,2/3] )\\
K  \,&:=\,  (S_u \cup S_d) \cap (\tilde I_l \cup \tilde I_r).
\end{split}
\]
We draw these as
\begin{equation}\label{eq: figures of intervals 2}
\tilde I_l = \tikzmath[scale=\displscale]{\draw[<-] (8.2,12) -- (10,12) -- (10,0) -- (8,0);\draw (8.5,12) -- (8,12); }\;,\quad\tilde I_r = \tikzmath[scale=\displscale]
{ \draw (14.4,12) -- (14,12) -- (14,0) -- (16,0);\draw[<-] (14.3,12) -- (16,12); }\;,%tikzmath
\quad S_u= \tikzmath[scale=\displscale]{\useasboundingbox (10,0) rectangle (14,12);\draw (10,8) rectangle (14,12);\draw[->] (11.5,12) -- (11.4,12);}\;,%tikzmath
\quad S_m= \tikzmath[scale=\displscale]{\useasboundingbox (10,0) rectangle (14,12);\draw (10,4) rectangle (14,8);\draw[->] (11.5,8) -- (11.4,8);}\;,%tikzmath
\quad S_d= \tikzmath[scale=\displscale]{\useasboundingbox (10,0) rectangle (14,12);\draw (10,0) rectangle (14,4);\draw[->] (11.5,4) -- (11.4,4);}%tikzmath
\quad \text{and} \quad K = \tikzmath[scale=\displscale]{\draw (14,12) -- (14,8) (14,4) -- (14,0)(10,12) -- (10,8) (10,4) -- (10,0);\draw[->] (10,9.41) -- (10,9.4);
\draw[->] (10,1.41) -- (10,1.4);\draw[->] (14,2.59) -- (14,2.6);\draw[->] (14,10.59) -- (14,10.6);}\;.%tikzmath
\end{equation}
The intervals $\tilde I_l$ and $\tilde I_r$ are oriented counterclockwise, as were $I_l$ and $I_r$.
The manifolds $S_u$, $S_m$ and $S_d$ are conformal circles via their constant speed parametrizations and are also oriented counterclockwise.
(A conformal circle is a circle together with a homeomorphism with
$S^1$ that is only determined up to orientation preserving conformal diffeomorphisms 
of $S^1$.)
Finally, the manifold $K$ inherits its orientation from $S_u \cup S_d$.
Note that the inclusion $K \hookrightarrow \tilde I_l \cup \tilde I_r$ is orientation reversing.
We will also need the reflection $j$ along the horizontal axis $y = 1/2$.

Let us fix orientation preserving identifications $\phi_l:\tilde I_l \stackrel{\scriptscriptstyle\cong}\to I_l$ and $\phi_r:\tilde I_r \stackrel{\scriptscriptstyle\cong}\to I_r$ that are symmetric with respect to the reflection $j$,
restrict to the identity in a neighborhood of $\dd \tilde I_l = \dd I_l$ and $\dd \tilde I_r = \dd I_r$,
and satisfy $\phi_l(5/6,t)=(1,t)$ and $\phi_r(7/6,t)=(1,t)$ for all $t\in[0,1]$.
Using these identifications, any $\cala(I_l)$-module becomes an $\cala(\tilde I_l)$-module and any $\cala(I_r)$-module becomes an $\cala(\tilde I_r)$-module. 
We can now define the keyhole fusion functor as follows:
\begin{equation*}
G_0(H_l,H_r) \,:=\, \big( H_l \otimes H_r \big) \underset{\cala(K)}\boxtimes \big(H_0(S_u) \otimes H_0(S_d)\big),
\end{equation*}
where $H_0(S_u)$ and $H_0(S_d)$ are the canonical vacuum sectors;
see~\eqref{eq:vacuum-sector-functor}.
The right-hand side is an instance of what we call cyclic fusion---see Appendix~\ref{subsec:cyclic-fusion}.
In the notation of cyclic fusion, we have
\begin{equation*}
G_0(H_l,H_r) \phantom{:}=\,\tikzmath{\node (a) at (0,0) {$H_l \underset{\cala(K_1)}\boxtimes H_0(S_u) \underset{\cala(K_2)}\boxtimes H_r \underset{\cala(K_3)}\boxtimes H_0(S_d) \underset{\cala(K_4)}\boxtimes$};
\def\dd{.5}\def\ll{.35}\def\rr{.25}\draw[dashed, rounded corners = 6] (a.east)++(0,.1) -- ++(\rr,0) -- ++(0,-\dd) -- ($(a.west) + (0,.1) + (-\ll,-\dd)$) -- ++(0,\dd) -- ++(\rr,0);} % tikzmath
\end{equation*} 
where $K_1 = \tilde I_l\cap S_u$, $K_2 = \tilde I_r\cap S_u$, $K_3 = \tilde I_r\cap S_d$, and $K_4 = \tilde I_l\cap S_d$, appropriately oriented.
%We denote it pictorially as
%\[
%G_0(H_l,H_r) = \tikzmath[scale=\displscale]{\draw (8,12) -- (10,12) -- (10,0) -- (8,0) (16,12) -- (14,12) -- (14,0) -- (16,0);
%\draw[ultra thin, dash pattern=on .5pt off 1pt](8,12) -- (0,12) -- (0,0) -- (8,0)(16,12) -- (24,12) -- (24,0) -- (16,0);
%\draw (5,6) node {$H_l$} (19,6) node {$H_r$};\fill[vacuumcolor] (10,0) rectangle (14,4)(10,8) rectangle (14,12);\draw (10,0) rectangle (14,4)(10,8) rectangle (14,12);}. %tikzmath 
%\]
It follows from~\cite[\corcalaKacts]{BDH(nets)} that the algebras $\cala(J \cap (\tilde I_l \cup \tilde I_r))$ and $\cala(J \cap (S_u \cup S_d))$ generate an action of $\cala(J)$ on $G_0(H_l,H_r)$.

\subsection*{\hspace*{-18pt}The keystone fusion}
Note that the algebra
\begin{equation*}
%\cala(S_m)^\op \cong\, \cala(-S_m) \,\cong\, 
\cala \left( \;
\tikzmath[scale=\displscale]{\draw (14,12) --(14,0)(10,12) -- (10,0);\draw[->] (10,6.59) -- (10,6.6);\draw[->] (14,5.41) -- (14,5.4); } \; %tikzmath
\right) \underset { \cala \left( \;
\tikzmath[scale=\displscalesmall]{\draw (14,12) -- (14,8) (14,4) -- (14,0)(10,12) -- (10,8) (10,4) -- (10,0);
\draw[->] (10,9.41) -- (10,9.4);\draw[->] (10,1.41) -- (10,1.4);
\draw[->] (14,2.59) -- (14,2.6);\draw[->] (14,10.59) -- (14,10.6);} \; %tikzmath
\right)} {\circledast}\cala \left( \;
\tikzmath[scale=\displscale]{\draw (10,12) -- (10,8) -- (14,8) -- (14,12)(10,0) -- (10,4) -- (14,4) -- (14,0);\draw[->] (11.41,4) -- (11.4,4);\draw[->] (12.59,8) -- (12.6,8); } \; %tikzmath
\right)
\end{equation*}
has
a natural left action on $G_0(H_l,H_r)$ commuting with the action of $\cala(J)$.
This algebra can be identified
with  $\cala(S_m)^\op \cong\, \cala(-S_m)$,
where $-S_m$ denotes the circle $S_m$ equipped with the opposite 
(i.e., clockwise) orientation.
Here, we use the extension of $\cala$ from intervals to $1$-manifolds
constructed in~\cite[\secExtendtomanifolds]{BDH(modularity)}; see also Appendix~\ref{subsec:cala(1-mfd)}.
By definition,\footnote{For this, we implicitly identify the surfaces 
$\tikzmath[scale=\textscale]{\useasboundingbox (-1,-3) rectangle (25,12);\filldraw[fill=vacuumcolor] (0,0) rectangle (24,12);\filldraw[fill=white] (14,4) rectangle (10,8);}%tikzmath
$ and $S_m\times [0,1]$.} the algebra $\cala(S_m)$ is generated by all the $\cala(I)$, for $I\subset S_m$, acting on the Hilbert space
$\tikzmath[scale=\textscale]{\useasboundingbox (-2,-1) rectangle (26,14);
\fill[vacuumcolor] (10,8) rectangle (14,12)(10,0) rectangle (14,4);\filldraw[fill=vacuumcolor] (0,0) rectangle (10,12) (14,0) rectangle (24,12);
\draw (10,0) -- (14,0) (10,4) -- (14,4) (10,8) -- (14,8) (10,12) -- (14,12);}%tikzmath
$.

By Theorem~\ref{thm:cala(S)},
the algebra $\cala(S_m)$ contains a direct summand that is canonically 
isomorphic to $\bfB(H_0(S_m,\cala))$.
We can therefore define the keystone fusion functor as follows:
\begin{equation*}
  G(H_l,H_r) \,:=\, G_0(H_l,H_r)\, \boxtimes_{\cala(S_m)}\, H_0(S_m).  
\end{equation*}
Moreover, since $\bfB(H_0(S_m,\cala))$ and $\cala(J)$ commute on $G_0(H_l,H_r)$, there is a residual action of $\cala(J)$ on the Hilbert space $G(H_l,H_r)$.

\subsection*{\hspace*{-18pt}Fusion and keystone fusion are isomorphic}
We will show presently that the functors
\[
  F,\, G\,\, \colon\,\, \modules{\cala(I_l)} \; \x \; \modules{\cala(I_r)}\,\, \to\,\, \modules{\cala(J)}
\]
are naturally isomorphic to one another, and then later (in Proposition~\ref{prop: local-fusion}) construct a specific such natural isomorphism.

We use the following straightforward generalization of 
Lemma~\ref{lem: NT between module categories}.

\begin{lemma}\label{lem: Mod(A1) x Mod(A2) --> Mod(B)}
Let
\(
F,G:\modules{A_1}\; \x \; \modules{A_2} \to \modules{B}
\)
be normal functors (see Appendix~\ref{subsec: Functors between module categories}), and let $M_i$ be a faithful $A_i$-module, for $i=1,2$.
Then, in order to uniquely define a natural transformation $a:F\to G$, it is enough to specify its
value on $(M_1,M_2)$ and to check that for each pair $(r_1,r_2)$ with $r_i\in\mathrm{End}_A(M_i)$, the diagram
\[
\tikzmath{
\matrix [matrix of math nodes,column sep=1.5cm,row sep=5mm]
{ 
|(a)| F(M_1,M_2) \pgfmatrixnextcell |(b)| F(M_1,M_2)\\ 
|(c)| G(M_1,M_2) \pgfmatrixnextcell |(d)| G(M_1,M_2)\\ 
}; 
\draw[->] (a) -- node [above]	{$\scriptstyle F(r_1,r_2)$} (b);
\draw[->] (c) -- node [above]	{$\scriptstyle G(r_1,r_2)$} (d);
\draw[->] (a) -- node [left]		{$\scriptstyle a_{M_1,M_2}$} (c);
\draw[->] (b) -- node [right]	{$\scriptstyle a_{M_1,M_2}$} (d);
}
\]
commutes.\qed
\end{lemma}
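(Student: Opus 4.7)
The plan is to reduce the two-variable statement to the one-variable case established in Lemma~\ref{lem: NT between module categories}, applying that lemma once in each argument.

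First I would recall that, since $M_i$ is a faithful $A_i$-module, every $A_i$-module $N_i$ can be realized as $p_i(M_i \otimes H_i)$ for some Hilbert space $H_i$ and some projection $p_i \in \mathrm{End}_{A_i}(M_i \otimes H_i)$. Normality of $F$ as a functor of two variables forces the identifications
\[
F(M_1 \otimes H_1,\, M_2 \otimes H_2) \,\cong\, F(M_1, M_2) \otimes H_1 \otimes H_2,
\]
and similarly for $G$. So I would define
\[
a_{M_1 \otimes H_1,\, M_2 \otimes H_2} := a_{M_1, M_2} \otimes \id_{H_1 \otimes H_2},
\]
and then set $a_{N_1, N_2}$ to be the restriction of this map along the range projections $F(p_1, p_2)$ and $G(p_1, p_2)$.

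The next step is to check that $a_{N_1, N_2}$ is well defined (independent of the presentation $N_i = p_i(M_i \otimes H_i)$) and assembles into a natural transformation. By a standard argument, both tasks reduce to verifying the naturality square against morphisms between modules of the form $M_i \otimes H_i$. Since $\mathrm{End}_{A_i}(M_i \otimes H_i) = \mathrm{End}_{A_i}(M_i) \,\barox\, \bfB(H_i)$, normality of $F$ and $G$ further reduces this to naturality against endomorphisms of the form $r_i \otimes b_i$ with $r_i \in \mathrm{End}_{A_i}(M_i)$ and $b_i \in \bfB(H_i)$. The $b_i$ part is automatic from the amplification formula, while the $r_i$ part is precisely the commutation hypothesized at $(M_1, M_2)$. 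Uniqueness is immediate: once $a_{M_1, M_2}$ is fixed, the value on $(M_1 \otimes H_1, M_2 \otimes H_2)$ is dictated by normality, and every other module pair is a direct summand of such a pair.

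The main obstacle I anticipate is ensuring that the amplification identifications are compatible with morphisms that mix the two arguments---that is, the bifunctorial nature of $F$ and $G$ must be used to show that amplifying first in the first variable and then in the second yields the same map as amplifying in the opposite order, and that arbitrary $A_1$-linear maps between the first arguments intertwine correctly with $A_2$-linear maps between the second arguments. This coherence follows from the interpretation of normal bifunctors as being additive and ultraweakly continuous in each argument separately, together with the factorization of general morphisms between $p_i(M_i \otimes H_i)$ and $p_i'(M_i \otimes H_i')$ through partial isometries inside the amplified endomorphism algebras; but it is the technical heart of the argument, and is where the two applications of the one-variable Lemma~\ref{lem: NT between module categories} must be glued together.
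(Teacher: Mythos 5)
Your proposal is correct and follows essentially the same route the paper intends: the paper gives no written proof for this two-variable statement, declaring it a ``straightforward generalization'' of Lemma~\ref{lem: NT between module categories}, whose proof is exactly the amplification-and-restriction argument you spell out (realize each module as a summand of $M_i\otimes\ell^2$, define $a$ by $a_{M_1,M_2}\otimes\mathrm{id}$ restricted along range projections, and reduce naturality to elementary tensors $r_i\otimes b_i$ via normality). Your explicit treatment of the two-variable coherence is a sound filling-in of what the paper leaves implicit.
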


Using Theorem~\ref{thm:KLM} 
and the above lemma, we prove that the two different versions of horizontal fusion are naturally 
isomorphic to each other:

\begin{proposition}\label{prop: non-canonical construction of Phi}
There exists a natural isomorphism between the fusion functor $F$ and the keystone fusion functor $G$.
\end{proposition}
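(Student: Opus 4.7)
My plan is to apply Lemma~\ref{lem: Mod(A1) x Mod(A2) --> Mod(B)} to reduce the construction of the natural isomorphism $F \cong G$ to the problem of constructing a single isomorphism
$F(M_l,M_r) \xrightarrow{\cong} G(M_l,M_r)$
on a pair of faithful modules, plus checking equivariance against endomorphisms. The natural choice is to take the vacuum modules $M_l := L^2(\cala(I_l))$ and $M_r := L^2(\cala(I_r))$, since the vacuum is faithful and since the various vacuum gluing results give us tools to compute both sides explicitly.

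For $F(M_l,M_r) = L^2(\cala(I_l)) \boxtimes_{\cala(I)} L^2(\cala(I_r))$, the gluing of vacuum sectors (the conformal-net analog of Lemma~\ref{lem: vacuum * vacuum = vacuum -- with defects}, i.e.\ \cite[\corvacuumvacuumvacuum]{BDH(nets)}) identifies this Connes fusion with a vacuum sector $H_0(S, \cala)$ for the circle $S$ obtained by gluing $I_l$ to $I_r$ along $I$. For $G(M_l,M_r)$, I first treat the keyhole fusion
\begin{equation*}
G_0(M_l,M_r) = M_l \underset{\cala(K_1)}\boxtimes H_0(S_u) \underset{\cala(K_2)}\boxtimes M_r \underset{\cala(K_3)}\boxtimes H_0(S_d) \underset{\cala(K_4)}\boxtimes
\end{equation*}
as an iterated cyclic fusion of four vacuum sectors; repeated application of vacuum--vacuum gluing identifies it with the vacuum sector $H_0(S^\bullet, \cala)$ for the ``keyholed'' $1$-manifold $S^\bullet = (\tilde I_l \cup \tilde I_r) \cup (S_u \cup S_d)$ (with the hole along $S_m$ not filled in). Using the canonical identification of a direct summand of $\cala(S_m)$ with $\bfB(H_0(S_m,\cala))$ from Theorem~\ref{thm:cala(S)}, the further Connes fusion with $H_0(S_m)$ ``fills in the keystone'' and yields, by the extension of the gluing theorem for conformal nets on $1$-manifolds (\cite[\secExtendtomanifolds]{BDH(modularity)}, compare Appendix~\ref{subsec:glueing}), the vacuum sector on the completed circle $S$ (with the hole filled back in). Composing these identifications gives the desired isomorphism $\Phi_{M_l,M_r}\colon F(M_l,M_r) \xrightarrow{\cong} G(M_l,M_r)$.

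For the equivariance hypothesis of Lemma~\ref{lem: Mod(A1) x Mod(A2) --> Mod(B)}, I need to check that $\Phi_{M_l,M_r}$ intertwines $F(r_l,r_r)$ and $G(r_l,r_r)$ for any pair of endomorphisms $r_l \in \mathrm{End}_{\cala(I_l)}(L^2(\cala(I_l)))$, $r_r \in \mathrm{End}_{\cala(I_r)}(L^2(\cala(I_r)))$. Such endomorphisms are given by the right actions of $\cala(I_l)$ and $\cala(I_r)$ on their respective $L^2$-spaces, i.e.\ by the actions associated (via the standard involution $j$) to complementary bicolored regions of the glued circle $S$. Under each of the vacuum-gluing identifications, these right actions are carried to the corresponding actions of $\cala$ on the sector for the glued manifold; the compatibility is built into the statement of the vacuum gluing theorem, since that isomorphism is compatible with the actions of $\cala(J)$ for $J$ in the glued circle. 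The keystone fusion step preserves equivariance because the action of $\cala(S_m)$ on $G_0(M_l,M_r)$ commutes with the actions attached to the outer $1$-manifold, and therefore the residual action on $G(M_l,M_r)$ agrees with the one on $H_0(S,\cala)$.

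The routine pieces (the functoriality of $L^2$, the basic properties of Connes fusion, the existence of the vacuum gluing isomorphisms) are all available from the earlier parts of the paper and from \cite{BDH(nets), BDH(modularity)}. The main obstacle is that the isomorphism $\Phi_{M_l,M_r}$ is built as a composite of several independently chosen non-canonical gluing isomorphisms, so one must verify that its $\cala(I_l) \otimes \cala(I_r)$-equivariance is independent of those choices; this is precisely the verification demanded by Lemma~\ref{lem: Mod(A1) x Mod(A2) --> Mod(B)}, and it reduces to tracing through the compatibility of each elementary gluing step with the relevant right actions. Once that is done, Lemma~\ref{lem: Mod(A1) x Mod(A2) --> Mod(B)} extends $\Phi$ uniquely to a natural isomorphism on all of $\modules{\cala(I_l)} \x \modules{\cala(I_r)}$, completing the proof. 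A canonical (rather than merely existent) choice of $\Phi$ is the subject of Proposition~\ref{prop: local-fusion}, and will be needed for the construction of $\Omega$.
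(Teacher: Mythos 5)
Your overall strategy is the same as the paper's: reduce to the pair of vacuum modules via Lemma~\ref{lem: Mod(A1) x Mod(A2) --> Mod(B)}, identify $F(H_0(S_l),H_0(S_r))$ with $H_0(S_b)$ by vacuum--vacuum gluing, compute $G$ on the same pair, and verify equivariance against $\mathrm{End}_{\cala(I_l)}(H_0(S_l))$ and $\mathrm{End}_{\cala(I_r)}(H_0(S_r))$ by rewriting those endomorphism algebras, via Haag duality, as $\cala(I_l')$ and $\cala(I_r')$. The treatment of $F$ and the equivariance discussion are fine.

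The gap is in the keystone step. The keyhole fusion $G_0(H_0(S_l),H_0(S_r))$ is not the vacuum sector of a $1$-manifold: it is the Hilbert space $H_\Sigma$ of an \emph{annulus} with boundary $-S_m \sqcup S_b$, and there is no gluing theorem that directly converts $H_\Sigma \boxtimes_{\cala(S_m)} H_0(S_m)$ into the vacuum sector of the filled-in circle. Fusion here is taken over the algebra $\cala(S_m)$ of an entire closed circle, not over an interval algebra, so the vacuum-gluing results you cite do not apply. To compute that fusion one must first know the $\cala(S_m)$-module structure of $H_\Sigma$, and that is exactly the content of the KLM theorem (Theorem~\ref{thm:KLM}): $H_\Sigma \cong \bigoplus_{\lambda\in\Delta} H_\lambda(-S_m)\otimes H_{\bar\lambda}(S_b)$. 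Combined with $\cala(S_m)\cong\bigoplus_{\lambda}\bfB(H_\lambda(S_m))$ from Theorem~\ref{thm:cala(S)}, the fusion with $H_0(S_m)$ kills every summand except $\lambda=0$ (the two actions of $\cala(S_m)$ factor through different central summands unless $\lambda=0$, and $\overline{H_0(S_m)}\boxtimes_{\bfB(H_0(S_m))}H_0(S_m)\cong\IC$), leaving $H_0(S_b)$; this is the computation~\eqref{eq: computation of square with hole}. Citing Theorem~\ref{thm:cala(S)} for the summand $\bfB(H_0(S_m,\cala))$ of $\cala(S_m)$ is not enough: without the KLM decomposition of $H_\Sigma$ you cannot conclude that the remaining summands contribute nothing. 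Note also that this is precisely where the finite-index hypothesis on $\cala$ enters --- both Theorem~\ref{thm:KLM} and Theorem~\ref{thm:cala(S)} require it --- and your argument never invokes finiteness, which is a sign that the essential input is missing.
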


\begin{proof}
Consider the circles 
\[
\begin{split}
S_l &:= \dd ([0,1] \x [0,1]),\hspace{.15cm}\qquad S_r := \dd ([1,2] \x [0,1])\qquad S_b := \dd ([0,2] \x [1,2])\\
\tilde S_l&:=\dd([0,5/6]\times[0,1]),\quad \tilde S_r:=\dd([7/6,2]\times[0,1])\qquad\\
\hat S_l&:=\partial([0,5/6]\times[0,1]\cup[5/6,7/6]\times[0,1/3]\cup[5/6,7/6]\times[2/3,1])
\end{split}
\]
which we draw as follows:
\begin{equation}\label{eq: figures of intervals 3}
\begin{matrix}
S_l = \tikzmath[scale=\displscale]
        { \useasboundingbox (0,-1.5) rectangle (24,13.5);
          \draw (0,0) rectangle (12,12);
          \draw[->] (5.6,12) -- (5.5,12);
        } %tikzmath 
  \; , &
S_r = \tikzmath[scale=\displscale]
         {\useasboundingbox (0,-1.5) rectangle (24,13.5);
           \draw (12,0) rectangle (24,12);
          \draw[->] (17.6,12) -- (17.5,12);
         }\,, %tikzmath  
   & &
S_b = \tikzmath[scale=\displscale]
        {\useasboundingbox (0,-1.5) rectangle (24,13.5);
         \draw (0,0) rectangle (24,12);
         \draw[->] (11.6,12) -- (11.5,12);
        }\,, %tikzmath
&\\
\tilde S_l = \tikzmath[scale=\displscale]
        { \useasboundingbox (0,-1.5) rectangle (24,13.5);
          \draw (0,0) rectangle (10,12);
          \draw[->] (4.6,12) -- (4.5,12);
        } %tikzmath 
  \; , &
\tilde S_r = \tikzmath[scale=\displscale]
         { \useasboundingbox (0,-1.5) rectangle (24,13.5);
           \draw (14,0) rectangle (24,12);
          \draw[->] (18.6,12) -- (18.5,12);
         }\,, %tikzmath  
  & \text{and} &
\hspace{.03cm}\hat S_l = \tikzmath[scale=\displscale]
  {\useasboundingbox (0,-1.5) rectangle (24,13.5);
  \draw[->] (6.1,12) -- (0,12) -- (0,0) -- (14,0) -- (14,4) -- (10,4) -- (10,8) -- (14,8) -- (14,12) -- (6,12);}&
\end{matrix}
\end{equation}
(`{\it b}' stands for big). The identifications $\phi_l:\tilde I_l \stackrel{\scriptscriptstyle\cong}\to I_l$ and $\phi_r:\tilde I_r \stackrel{\scriptscriptstyle\cong}\to I_r$ induce isomorphisms
$H_0(\tilde S_l)\cong H_0(S_l)$ and $H_0(\tilde S_r)\cong H_0(S_r)$ that are equivariant with respect to 
$\cala(I_l')$ and $\cala(I_r')$ (here, $I_l'$ and $I_r'$ are the closures of $S_l\!\setminus\! I_l$ and $S_r\!\setminus\! I_r$, respectively).
From the isomorphism
$H_0(\hat S_l) \cong H_0(S_d) 
  \boxtimes_{\cala(K_4)} H_0(\tilde S_l) \boxtimes_{\cala(K_1)} H_0(S_u)$
it follows that $G_0\big(H_0(S_l),H_0(S_r)\big)$ represents the
Hilbert space of an annulus; see Appendix~\ref{subsec:sectors+KLM}.
Using Theorem~\ref{thm:KLM} we therefore have
\[
\begin{split}
G_0\big(H_0(S_l),H_0(S_r)\big) &= \,
			\tikzmath{
			\node (a) at (0,0) {$H_0(\tilde S_l) \underset{\cala(K_1)}\boxtimes H_0(S_u) \underset{\cala(K_2)}\boxtimes H_0(\tilde S_r) \underset{\cala(K_3)}\boxtimes H_0(S_d) \underset{\cala(K_4)}\boxtimes$};
			\def\dd{.5}\def\ll{.35}\def\rr{.25}
			\draw[dashed, rounded corners = 6] (a.east)++(0,.1) -- ++(\rr,0) -- ++(0,-\dd) -- ($(a.west) + (0,.1) + (-\ll,-\dd)$) -- ++(0,\dd) -- ++(\rr,0);} % tikzmath
			\\
			&\cong \tikzmath{
			\node (a) at (0,0) {$H_0(\hat S_l) \underset{\cala(K_2)}\boxtimes H_0(\tilde S_r) \underset{\cala(K_3)}\boxtimes$};
			\def\dd{.5}\def\ll{.35}\def\rr{.25}
			\draw[dashed, rounded corners = 6] (a.east)++(0,.1) -- ++(\rr,0) -- ++(0,-\dd) -- ($(a.west) + (0,.1) + (-\ll,-\dd)$) -- ++(0,\dd) -- ++(\rr,0);} % tikzmath
			\\
			&\cong \bigoplus_{\lambda\in\Delta}^{\phantom .} H_\lambda(-S_m) \otimes H_{\bar \lambda}(S_b).
\end{split}
\]
We draw the above isomorphisms as follows:
\begin{equation*}
G_0\big(H_0(S_l),H_0(S_r)\big) \,= \,  \tikzmath[scale=\displscale]
  {\filldraw[fill=vacuumcolor, draw=black]  (10,0) rectangle (14,4) (10,8) rectangle (14,12) (0,0) rectangle (10,12) (14,0) rectangle (24,12);
  } %tikzmath 
  \; \xrightarrow{\cong} \;
  \tikzmath[scale=\displscale]
  {
  \filldraw[fill=vacuumcolor, draw=black] (0,12) -- (0,0) -- (14,0) -- (14,4) -- (10,4) -- (10,8) -- (14,8) -- (14,12) -- (6,12) -- cycle;  
  \filldraw[fill=vacuumcolor, draw=black]  (14,0) rectangle (24,12);
  } %tikzmath 
  \; \xrightarrow{\cong} \;
  \bigoplus_{\lambda\in\Delta}\, \;
  \tikzmath[scale=\displscale]
  { \useasboundingbox (8,0) rectangle (16,12);
    \filldraw[fill=spacecolor, draw=black]  (10,4) rectangle (14,8) (12,0) node{\tiny $\lambda$};
  } %tikzmath
  \otimes\,
  \tikzmath[scale=\displscale]
  {\filldraw[fill=spacecolor, draw=black]  (0,0) rectangle (24,12) (12,6) node{\tiny $\bar\lambda$};
  }\,. %tikzmath 
\end{equation*}
Note that the two isomorphisms intertwine the natural actions of $\{\cala(I)\}_{I\subset S_b}$ and $\{\cala(I)\}_{I\subset -S_m}$.
We can now compute
\begin{equation}\label{eq: computation of square with hole}
\begin{split}
  G\big(H_0(S_l),H_0(S_r)\big)\, &:= G_0\big(H_0(S_l),H_0(S_r)\big) \boxtimes_{\cala(S_m)} H_0(S_m)  \\
		&\phantom{:}\cong \Big(\bigoplus_{\lambda\in\Delta} H_\lambda(-S_m) \otimes H_{\bar \lambda}(S_b)\Big) \boxtimes_{\cala(S_m)} H_0(S_m) \\
		&\phantom{:}\cong \Big(H_0(-S_m) \otimes H_0(S_b)\Big) \boxtimes_{\bfB(H_0(S_m))} H_0(S_m) \\
		&\phantom{:}\cong H_0(S_b)\otimes \Big(H_0(-S_m) \boxtimes_{\bfB(H_0(S_m))} H_0(S_m)\Big) \\
		&\phantom{:}\cong H_0(S_b)\otimes \IC \,\,\cong\,\, H_0(S_b)^{\phantom{\big |}}\!\!.\\
\end{split}
\end{equation}
Combining \eqref{eq: computation of square with hole} with 
the non-canonical isomorphism $F(H_0(S_l),H_0(S_r))\cong H_0(S_b)$ 
from~\eqref{eq:non-canonical-upsilon},
we get an isomorphism
\begin{equation*}%\label{eq: G cong F}
\varphi\,:\,G(H_0(S_l),H_0(S_r))\,\xrightarrow{\scriptscriptstyle\cong}\, F(H_0(S_l),H_0(S_r)),
\end{equation*}
compatible with the actions of $\cala(I_l')$ and of $\cala(I_r')$.
%In particular, it is also an isomorphism of $\cala(J)$-sectors.

Since $H_0(S_l)$ and $H_0(S_r)$ are faithful $\cala(I_l)$- and $\cala(I_r)$-modules, we can
use Lemma \ref{lem: Mod(A1) x Mod(A2) --> Mod(B)} to finish the argument:
it remains only to check that $\varphi$ is equivariant with respect to all 
$r_1\in \mathrm{End}_{\cala(I_l)}(H_0(S_l))$ and $r_2\in \mathrm{End}_{\cala(I_r)}(H_0(S_r))$.
That equivariance follows immediately from 
Haag duality for nets (Proposition~\ref{prop: [Haag duality for defects]-duality-nets}) and the fact that 
$\varphi$ is equivariant with respect to $\cala(I_l')$ and $\cala(I_r')$.
\end{proof}

Unfortunately, the above proposition is not sufficient for our purposes: it does not construct a natural isomorphism $\Phi_\cala:F_\cala \to G_\cala$, but only proves that one exists.  This leaves unsettled, for instance, the question of whether these natural isomorphisms can be chosen so that $\Phi_{\cala\otimes \calb} = \Phi_\cala \otimes \Phi_\calb$.
In the following sections, we will construct a canonical choice of such natural isomorphisms for which the desired symmetric monoidal property is clear.

\section{The keyhole fusion of vacuum sectors of defects}\label{sec: The computation of G_0(L^2(D),L^2(E))}

Let $S_l$, $S_r$, $S_b$, $\tilde S_l$, $\tilde S_r$, $S_u$, $S_m$, $S_d$, $I_l$, $I_r$, $\tilde I_l$, $\tilde I_r$, $K$ be as in 
\eqref{eq: figures of intervals 1}, \eqref{eq: figures of intervals 2}, and \eqref{eq: figures of intervals 3}.
We bicolor $S_l$, $S_r$, $\tilde S_l$, $\tilde S_r$ by setting
\begin{equation}\label{eq: def colors of S and tildeS}
\begin{split}
(S_l)_\circ := (S_l)_{x\le \frac12}\quad
(S_l)_\bullet := (S_l)_{x\ge \frac12}&\quad
(S_r)_\circ := (S_r)_{x\le \frac32}\quad
(S_r)_\bullet := (S_r)_{x\ge \frac32}\\
(\tilde S_l)_\circ := (\tilde S_l)_{x\le \frac12}\quad
(\tilde S_l)_\bullet := (\tilde S_l)_{x\ge \frac12}&\quad
(\tilde S_r)_\circ := (\tilde S_r)_{x\le \frac32}\quad
(\tilde S_r)_\bullet := (\tilde S_r)_{x\ge \frac32}.
\end{split}
\end{equation}
Denote by $j$ the reflection across the horizontal axis $y=1/2$, 
and  let 
\[
\begin{split}
S_{l,\top}&\!:=\!(S_l)_{y\ge \frac12}\qquad S_{r,\top}\!:=\!(S_r)_{y\ge \frac12}\qquad S_{b,\top}\!:=(S_b)_{y\ge \frac12}\\
\tilde S_{l,\top}\!:=\!(\tilde S_l&)_{y\ge \frac12}\quad\tilde S_{r,\top}\!:=\!(\tilde S_r)_{y\ge \frac12}\quad \tilde I_{l,\top}\!:=\!(\tilde I_l)_{y\ge \frac12}\quad \tilde I_{r,\top}\!:=\!(\tilde I_r)_{y\ge \frac12}.
\end{split}
\]
Let $\cala$, $\calb$, $\calc$ be conformal nets, and let $_\cala D_\calb$ and $_\calb E_\calc$ be defects. 
We are interested in evaluating $G_0:=G_{0,\calb}$ on 
the vacuum sectors
\[
H_0(S_l,D):=L^2(D(S_{l,\top}))\qquad\text{and}\qquad H_0(S_r,E):=L^2(E(S_{r,\top}))
\]
from Definition \ref{def: L2(D)}.  These have compatible actions of the algebras $\{D(I)\}_{I \subset S_l}$ and $\{E(I)\}_{I \subset S_r}$ respectively.  In particular, they are respectively $\calb(I_l)$- and 
$\calb(I_r)$-modules, and so we can apply the functor $G_0$.

Let us also define
\[
H_0(\tilde S_l,D):=L^2(D(\tilde S_{l,\top}))\qquad\text{and}\qquad H_0(\tilde S_r,E):=L^2(E(\tilde S_{r,\top})).
\]
Recall that the definition of the functor $G_0$ uses identifications $\phi_l:\tilde I_l\xrightarrow{\scriptscriptstyle\cong} I_l$ and $\phi_r:\tilde I_r\xrightarrow{\scriptscriptstyle\cong} I_r$
to endow $H_0(S_l,D)$ with a $\calb(\tilde I_l)$-action,
and $H_0(S_r,D)$ with a $\calb(\tilde I_r)$-action.
We write $\phi_l^*H_0(S_l,D)$ and $\phi_r^*H_0(S_l,D)$ for the resulting 
$\tilde S_l$-sector of $D$ and $\tilde S_r$-sector of $E$.
(Here, given a bicolored circle $S$ and a defect $D$, a Hilbert space is called an $S$-sector of $D$ if it has compatible actions of $D(I)$ for all bicolored intervals $I\subset S$---compare Appendix \ref{subsec:sectors+KLM}.)
Recall that the maps $\phi_l$ and $\phi_r$ were chosen to commute with $j$, and to be the identity near to the boundary.
Let us call
\[
\phi_{l,\top}:\tilde S_{l,\top}\xrightarrow\cong S_{l,\top}\,,\qquad \phi_{r,\top}:\tilde S_{r,\top}\xrightarrow\cong S_{r,\top}
\]
the extension by the identity of the maps $\phi_{l}|_{\tilde I_{l,\top}}$ and $\phi_{r}|_{\tilde I_{r,\top}}$.
We then have canonical identifications
\[
\begin{split}
L^2(D(\phi_{l\hspace{.03cm},\top}))\,:\, H_0(\tilde S_l\hspace{.03cm},D) \to \phi_l^*\hspace{.03cm}H_0(S_l\hspace{.03cm},D) \\
L^2(D(\phi_{r,\top}))\,:\, H_0(\tilde S_r,D) \to \phi_r^*H_0(S_r,D)
\end{split}
\]
of $\tilde S_l$-sectors of $D$ and $\tilde S_r$-sectors of $E$.

We now have an isomorphism
\[
\begin{split}
G_{0,\calb}&\big(H_0(S_l,D),H_0(S_r,E)\big) \\
&\quad\cong		\, \big( H_0(\tilde S_l,D) \ox H_0(\tilde S_r,E) \big) \underset{\calb(K)}\boxtimes  \big( H_0(S_u,\calb) \ox H_0(S_d,\calb) \big)\\
&\quad =			\tikzmath{
			\node (a) at (0,0) {$H_0(\tilde S_l,D) \underset{\calb(K_1)}\boxtimes H_0(S_u,\calb) \underset{\calb(K_2)}\boxtimes H_0(\tilde S_r,E) \underset{\calb(K_3)}\boxtimes H_0(S_d,\calb) \underset{\calb(K_4)}\boxtimes$};
			\def\dd{.5}\def\ll{.35}\def\rr{.25}
			\draw[dashed, rounded corners = 6] (a.east)++(0,.1) -- ++(\rr,0) -- ++(0,-\dd) -- ($(a.west) + (0,.1) + (-\ll,-\dd)$) -- ++(0,\dd) -- ++(\rr,0);}\; % tikzmath
\end{split}
\]
which we draw as follows:
\begin{equation}\label{eq: pictorial notation 1}
G_{0,\calb}\big(H_0(S_l,D),H_0(S_r,E)\big) \,\,\,\cong\,\,\,
\tikzmath[scale=\displscale]
      {    \fill[vacuumcolor] (0,0) rectangle (10,12)
                             (14,0) rectangle (24,12); 
           \draw (6,12) -- (10,12) -- (10,0) -- (6,0) 
                 (18,12) -- (14,12) -- (14,0) -- (18,0);
           \draw[thick, double] (6,12) -- (0,12) -- (0,0) -- (6,0);
           \draw[ultra thick] (18,12) -- (24,12) -- (24,0) -- (18,0);
           \fill[vacuumcolor]  (10,0) rectangle (14,4)  
                              (10,8) rectangle (14,12);
           \draw (10,0) rectangle (14,4) (10,8) rectangle (14,12);
      }\,\,\,. %tikzmath
\end{equation}
Here, the lines \tikz{\draw[thick, double] (0,0) -- (.5,0);}, \tikz{\draw (0,0) -- (.5,0);}, and \tikz{\draw[ultra thick] (0,0) -- (.5,0);} correspond to the conformal nets 
$\cala$, $\calb$, and $\calc$, and the transition points \tikz{\draw[thick, double] (0,0) -- (.3,0);\draw (.3,0) -- (.6,0);}, and 
\tikz{\draw (0,0) -- (.3,0);\draw[ultra thick] (.3,0) -- (.6,0);} indicate the defects $D$ and $E$.

\subsection*{\hspace*{-18pt}Keyhole fusion as an $L^2$-space}

We need to introduce yet more manifolds.
We have already encountered $K_1=\tilde S_l\cap S_u$ and $K_2= \tilde S_r\cap S_u$.
We define $K_u:=K_1\cup K_2$ and 
$J_u:=J_1\cup J_2$, where $J_1:=S_b\cap S_u$ and $J_2:=S_u\cap S_m$.
We orient $K_u$ and $J_u$ compatibly with $S_u$.
Let $J_l$ be the closure of $\tilde S_{l,\top}\setminus K_1$ and,
similarly, let $J_r$ be the closure of $\tilde S_{r,\top}\setminus K_2$.
The orientations and the bicolorings of $J_l$ and $J_r$ are inherited from $\tilde S_l$ and $\tilde S_r$.
We include pictures of these manifolds:
\begin{equation}\label{eq:  J_l  J_r  K_u  J_u} 
  J_l   =  \tikzmath[scale=\displscale]
               { \useasboundingbox (-2,0) rectangle (26,14);
                 \draw[ultra thick] (6,12) -- (10,12) (10,8) -- (10,6);
                 \draw %[thick, double] 
                    (0,6) -- (0,12) -- (6,12);
                 \draw[->] (2.6,12) -- (2.5,12);
                 \draw[->, thick] (10,7.9) -- (10,8);
}   %tikzmath
        \;,\,\,\,\,
  J_r   =  \tikzmath[scale=\displscale]
               { \useasboundingbox (-2,0) rectangle (26,14);
                 \draw   (14,6) -- (14,8)  (14,12) -- (18,12);
                 \draw[ultra thick] %[ultra thick]  
                 (18,12) -- (24,12) -- (24,6);
                 \draw[->, thick] (20.1,12) -- (20,12);
                 \draw[->] (14,6.6) -- (14,6.5);
               }   %tikzmath
        \;,\,\,\,\,
  K_u   = \tikzmath[scale=\displscale]
         { \useasboundingbox (-2,0) rectangle (26,12);
         \draw (14,12) -- (14,8)  (10,12) -- (10,8);
         \draw[->] (10,9.41) -- (10,9.4);
         \draw[->] (14,10.59) -- (14,10.6);
         }    %tikzmath
        \;,\,\,\,\,
  J_u   = \tikzmath[scale=\displscale]
         {\useasboundingbox (-2,0) rectangle (26,12);
         \draw (14,12) -- (10,12)  (14,8) -- (10,8);
         \draw[->] (12.59,8) -- (12.6,8);
         \draw[->] (11.41,12) -- (11.4,12);
         }\;. %tikzmath 
\end{equation}

Following Notation \ref{not: Dhat}, we let $\hat\calb(J_u)$ denote the commutant of $\calb(K_u)$ on $H_0(S_u,\calb)$.
Our computation of the keyhole fusion will be in terms of the algebra
\[
D(J_l) \vee \hat\calb(J_u) \vee E(J_r) \subset \bfB\big(H_0(\tilde S_l,D) \underset{\calb(K_1)}\boxtimes H_0(S_u,\calb) \underset{\calb(K_2)}\boxtimes H_0(\tilde S_r,E)\big),
\]
which we denote pictorially by
\begin{equation}\label{eq: pictorial notation 2}
D(J_l) \vee \hat\calb(J_u) \vee E(J_r) \,\,=\,\, \tikzmath[scale=\displscale]
               { \useasboundingbox (-2,0) rectangle (26,14);
                 \draw[thick, double] (0,6) -- (0,12) -- (6,12);
                 \draw (6,12) -- (18,12) (10,6) -- (10,8) -- (14,8) -- (14,6);
                 \draw[ultra thick] (18,12) -- (24,12) -- (24,6); 
                 \draw[densely dotted] (12,8) -- (12,12); 
               }\;. %tikzmath
\end{equation}
The dotted line in this picture picture serves to remind us that $\hat\calb(J_u)$ was used instead of $\calb(J_u)$.
Note that the algebra \eqref{eq: pictorial notation 2} also acts on $G_{0,\calb}(H_0(S_l,D),H_0(S_r,E))$ because that fusion is obtained from
\[
\tikzmath[scale=\displscale]
      {    \useasboundingbox (-2,-1) rectangle (26,14);
           \fill[vacuumcolor] (0,0) rectangle (10,12) (14,0) rectangle (24,12); 
           \draw[ultra thin] (6,12) -- (10,12) -- (10,0) -- (6,0) (18,12) -- (14,12) -- (14,0) -- (18,0);
           \draw[thick, double] (6,12) -- (0,12) -- (0,0) -- (6,0);
           \draw[ultra thick] (18,12) -- (24,12) -- (24,0) -- (18,0);
           \filldraw[fill = vacuumcolor, ultra thin] (10,8) rectangle (14,12);
      }  %tikzmath
=H_0(\tilde S_l,D) \boxtimes_{\calb(K_1)} H_0(S_u,\calb) \boxtimes_{\calb(K_2)} H_0(\tilde S_r,E)
\] 
by fusing it over $\calb(K_3\cup K_4)$ with $H_0(S_d,\calb)$.

Let
\[
\begin{split}
S_{b,\,\tikzmath[scale=.15]{\draw(0,0)--(0,1)--(1,1);}}&:=\textstyle\{(x,y)\in S_b\,|\,x-y<\frac12\}\\
S_{b,\,\tikzmath[scale=.15]{\draw(1,0)--(0,0)--(0,1);}}&:=\textstyle\{(x,y)\in S_b\,|\,x+y<\frac32\}\\
S_{b,\,\tikzmath[scale=.15]{\draw(0,1)--(1,1)--(1,0);}}&:=\textstyle\{(x,y)\in S_b\,|\,x+y>\frac32\}\\
S_{b,\,\tikzmath[scale=.15]{\draw(0,0)--(1,0)--(1,1);}}&:=\textstyle\{(x,y)\in S_b\,|\,x-y>\frac12\},
\end{split}
\]
with orientations and bicolorings as in the following pictures
\[
S_{b,\,\tikzmath[scale=.15]{\draw(0,0)--(0,1)--(1,1);}} = 
\tikzmath[scale=\displscale]{\useasboundingbox (-2,0) rectangle (26,12);\draw(6,0)--(0,0)--(0,12)--(6,12);\draw[ultra thick](6,12)--(18,12);\draw[->](0,5.6) -- (0,5.5);}\;,\;\;
S_{b,\,\tikzmath[scale=.15]{\draw(1,0)--(0,0)--(0,1);}} = 
\tikzmath[scale=\displscale]{\useasboundingbox (-2,0) rectangle (26,12);\draw(6,0)--(0,0)--(0,12)--(6,12);\draw[ultra thick](6,0)--(18,0);\draw[->](0,5.6) -- (0,5.5);}\;,\;\;
S_{b,\,\tikzmath[scale=.15]{\draw(0,1)--(1,1)--(1,0);}} = 
\tikzmath[scale=\displscale]{\useasboundingbox (-2,0) rectangle (26,12);\draw[ultra thick](18,0)--(24,0)--(24,12)--(18,12);\draw(6,12)--(18,12);\draw[->, thick](24,6.6) -- (24,6.7);}\;,\;\;
S_{b,\,\tikzmath[scale=.15]{\draw(0,0)--(1,0)--(1,1);}} = 
\tikzmath[scale=\displscale]{\useasboundingbox (-2,0) rectangle (26,12);\draw[ultra thick](18,0)--(24,0)--(24,12)--(18,12);\draw(6,0)--(18,0);\draw[->, thick](24,6.6) -- (24,6.7);}\;.
\]
Note that these manifolds do not include their boundary points.

\begin{theorem} \label{thm:G_0=L^2}
Let $\cala$, $\calb$, $\calc$ be conformal nets, and let $_\cala D_\calb$ and $_\calb E_\calc$ be defects.
Then there is a canonical unitary isomorphism
\begin{equation}\label{eq: the map Psi_0} 
      \Psi_0 \,\,\colon\,\,
    L^2 \left( \tikzmath[scale=\displscale]
               { \useasboundingbox (-2,0) rectangle (26,14);
                 \draw[thick, double] (0,6) -- (0,12) -- (6,12);
                 \draw (6,12) -- (18,12) (10,6) -- (10,8) -- (14,8) -- (14,6);
                 \draw[ultra thick] (18,12) -- (24,12) -- (24,6); 
                 \draw[densely dotted] (12,8) -- (12,12); 
               } %tikzmath
        \right)
    \; \xrightarrow{\cong} \;
      \tikzmath[scale=\displscale]
      {    \fill[vacuumcolor] (0,0) rectangle (10,12)
                             (14,0) rectangle (24,12); 
           \draw (6,12) -- (10,12) -- (10,0) -- (6,0) 
                 (18,12) -- (14,12) -- (14,0) -- (18,0);
           \draw[thick, double] (6,12) -- (0,12) -- (0,0) -- (6,0);
           \draw[ultra thick] (18,12) -- (24,12) -- (24,0) -- (18,0);
           \fill[vacuumcolor]  (10,0) rectangle (14,4)  
                              (10,8) rectangle (14,12);
           \draw (10,0) rectangle (14,4) (10,8) rectangle (14,12);
      } %tikzmath
      \,.
  \end{equation}
In formulas, this is a map
\[
\Psi_0 = (\Psi_0)_{D,E} \colon
L^2\big(D(J_l) \vee \hat\calb(J_u) \vee E(J_r)\big)
\xrightarrow{\cong} G_{0,\calb}\big(H_0(S_l,D),H_0(S_r,E)\big),
\]
where $S_l$, $S_r$, $J_l$, $J_u$, $J_r$ are as in \eqref{eq: def colors of S and tildeS} and \eqref{eq:  J_l  J_r  K_u  J_u}.
The map $\Psi_0$ is
equivariant for the natural left actions of
$\{D(I)\}_{I\subset S_{b,\tikzmath[scale=.1]{\draw(0,0)--(0,1)--(1,1);}}}$,
$\{D(I)\}_{I\subset S_{b,\tikzmath[scale=.1]{\draw(1,0)--(0,0)--(0,1);}}}$, 
$\{E(I)\}_{I\subset S_{b,\tikzmath[scale=.1]{\draw(0,1)--(1,1)--(1,0);}}}$, and
$\{E(I)\}_{I\subset S_{b,\tikzmath[scale=.1]{\draw(0,0)--(1,0)--(1,1);}}}$,
and for the natural right actions of
$\{\calb(I)\}_{I\subset S_m}$.
\end{theorem}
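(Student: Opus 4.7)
The proof proceeds in two stages: first identifying the open portion of the keyhole fusion with a vacuum sector for a composite defect on a larger bicolored circle, and then performing the cyclic closure to obtain the $L^2$-space.

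For the first stage, I would apply Lemma~\ref{lem: vacuum * vacuum = vacuum -- with defects} twice to identify
\[
H_0(\tilde S_l,D)\boxtimes_{\calb(K_1)}H_0(S_u,\calb)\boxtimes_{\calb(K_2)}H_0(\tilde S_r,E) \;\cong\; H_0(\hat S,\hat D),
\]
where $\hat S$ is the bicolored circle obtained by gluing $\tilde S_l$, $S_u$, $\tilde S_r$ along the intervals $K_1$ and $K_2$, and $\hat D$ is the composite defect that restricts to $D$ on the left portion, to the identity defect $1_\calb$ on the middle, and to $E$ on the right portion. By definition, this vacuum sector is canonically isomorphic to $L^2$ of $\hat D$ evaluated on a half-interval of $\hat S$; after decomposing that half-interval into its contributions from $\tilde S_l$, $S_u$, $\tilde S_r$ and applying strong additivity together with Haag duality for defects (Proposition~\ref{prop: [Haag duality for defects]}), one obtains a canonical isomorphism with $L^2(D(J_l)\vee\calb(J_u)\vee E(J_r))$.

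For the second stage, I would perform the cyclic closure $\boxtimes_{\calb(K_3)}H_0(S_d,\calb)\boxtimes_{\calb(K_4)}$, which attaches $S_d$ along the two free intervals $K_3$ and $K_4$ to produce the annular keyhole configuration. Because $H_0(S_d,\calb)$ is itself an $L^2$-space of $\calb$ on a half-circle, and because (via the reflection $j$) the algebra $\calb(K_3\cup K_4)$ corresponds to $\calb(K_u)^{\op}$ acting on the $S_u$-factor, the cyclic closure has the net effect of enlarging the upper-half algebra from $\calb(J_u)$ to its $\calb(K_u)$-commutant $\hat\calb(J_u)$, producing a canonical isomorphism with $L^2(D(J_l)\vee\hat\calb(J_u)\vee E(J_r))$. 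Composing the two stages yields the desired canonical unitary $\Psi_0$, and the equivariance assertions follow from the naturality of the vacuum-gluing isomorphism combined once more with Haag duality for defects.

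The principal obstacle lies in the second stage: showing that the cyclic closure produces exactly the $\calb(K_u)$-commutant $\hat\calb(J_u)$ rather than merely $\calb(J_u)$---equivalently, that the annular Hilbert space is a single $L^2$-space rather than the KLM-type direct sum $\bigoplus_\lambda H_\lambda(-S_m)\otimes H_{\bar\lambda}(S_b)$ that would be expected for an arbitrary annular configuration. This requires a 2-interval Haag duality statement for $\calb$ and relies essentially on the finite-index hypothesis on $\calb$ (compare the related finiteness result in Proposition~\ref{prop: KLM finiteness for a defect's vacuum}).
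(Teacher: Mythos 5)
There is a genuine gap, and it sits exactly where you placed your ``principal obstacle.'' Note first that Theorem~\ref{thm:G_0=L^2} carries \emph{no} finite-index hypothesis on $\calb$: it is asserted for arbitrary conformal nets, so an argument that ``relies essentially on the finite-index hypothesis'' cannot prove it. Worse, the mechanism you invoke---a 2-interval Haag duality for $\calb$ forcing the cyclic closure to enlarge $\calb(J_u)$ to $\hat\calb(J_u)$---is not available even for finite-index nets: the failure of 2-interval duality is precisely what the index $\mu(\calb)$ measures, and by Theorem~\ref{thm:KLM} the relevant annular Hilbert space really is the direct sum $\bigoplus_\lambda H_\lambda(-S_m)\otimes H_{\bar\lambda}(S_b)$, not a single irreducible piece. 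There is no contradiction with the statement of the theorem, because $\hat\calb(J_u)$ is \emph{strictly larger} than $\calb(J_u)$ (by exactly that index), and $L^2$ of the larger algebra accommodates the whole KLM sum; but this means your stage~2, as described, proves nothing, and the dichotomy you set up (single $L^2$-space versus KLM sum) misreads what the hat is doing. Your stage~1 also has problems: Lemma~\ref{lem: vacuum * vacuum = vacuum -- with defects} only provides \emph{non-canonical} isomorphisms, whereas the theorem asserts a canonical one; and the identification of the three-fold fusion with $L^2(D(J_l)\vee\calb(J_u)\vee E(J_r))$ for the \emph{unhatted} algebra is itself a disguised 2-interval duality claim, since $J_u=J_1\cup J_2$ sits as two disjoint intervals in the glued circle.

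The missing idea is Proposition~\ref{prop:hat-M}: for commuting subalgebras $M_0, A$ generating $M$ and a faithful right $A$-module $H$ with commutant $B$, the two-sided fusion $H\boxtimes_A L^2(M)\boxtimes_A \overline H$ is canonically a standard form for $M_0\vee B$. The paper's proof is a one-line application of this with $M=D(\tilde S_{l,\top})\,\barox\,E(\tilde S_{r,\top})$, $M_0=D(J_l)\,\barox\,E(J_r)$, $A=\calb(K_u)^{\op}$, and $H=H_0(S_u,\calb)$; the hat appears because $B=A'=\hat\calb(J_u)$ is the commutant of $\calb(K_u)$ on the vacuum $H_0(S_u,\calb)$ \emph{by definition}, not as the output of any duality or index computation. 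The proof of Proposition~\ref{prop:hat-M} is pure von Neumann algebra theory (embed $H$ into $\ell^2\otimes L^2(A)$, identify $\widehat M$ with a corner $q(\bfB(\ell^2)\,\barox\,M)q$, and use that corners of standard forms are standard forms), which is why the theorem holds with no index hypothesis and why the isomorphism is canonical. If you want to salvage your two-stage outline, stage~2 must be replaced by this algebraic argument; no geometric gluing lemma will produce the commutant $\hat\calb(J_u)$ for you.
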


\begin{proof}
%Since the Haagerup $L^2$-space is uniquely determined up to unique unitary isomorphism (see Appendix~\ref{subsec:Haagerup-L^2}),
This is the special case of Proposition~\ref{prop:hat-M} where
  $M:=D(\tilde S_{l,\top})\,\barox\,E(\tilde S_{r,\top})$, $M_0=D(J_l)\,\barox\,E(J_r)$, $A=\calb(K_u)^\op$, and $H=H_0(S_u,\calb)$.
  In pictures, these are\vspace{.2cm}
  \begin{equation}\label{eq: M, M0, A, H}
    M  :=  \tikzmath[scale=\displscale]
             { \useasboundingbox (-2,0) rectangle (26,12);
               \draw (6,12) -- (10,12) -- (10,6) (14,6) -- (14,12) -- (18,12);
               \draw[thick, double] (0,6) -- (0,12) -- (6,12);
               \draw[ultra thick]  (18,12) -- (24,12) -- (24,6);
             } %tikzmath
    , \quad
    M_0  :=  \tikzmath[scale=\displscale]
               { \useasboundingbox (-2,0) rectangle (26,12);
                 \draw (6,12) -- (10,12) (10,8) -- (10,6) 
                       (14,6) -- (14,8)  (14,12) -- (18,12);
                 \draw[thick, double] (0,6) -- (0,12) -- (6,12);
                 \draw[ultra thick]  (18,12) -- (24,12) -- (24,6);
               } %tikzmath
    , \quad
    A  :=  \tikzmath[scale=\displscale]
             { \useasboundingbox (2,0) rectangle (22,12);
               \draw  (10,12) -- (10,8) (14,8) -- (14,12);
             } %tikzmath
    ,\;\; \text{and} \quad
    H  :=  \tikzmath[scale=\displscale]
             { \useasboundingbox (2,0) rectangle (22,12);
               \fill[vacuumcolor] (10,8) rectangle (14,12);
               \draw   (10,8) rectangle (14,12);
             }%tikzmath
    .
  \end{equation}
The equivariance of $\Psi_0$ is clear for intervals $I$ that are contained in the upper half $\{(x,y)|y\ge \frac12\}$ or in the lower half $\{(x,y)|y\le \frac12\}$,
and follows by strong additivity for more general intervals.
\end{proof}

\subsection*{\hspace*{-18pt}Associativity of the $L^2$-space identification}

The isomorphism $\Psi_0$ is in an appropriate sense associative, as follows.  Suppose that we have three defects ${}_\cala D_\calb$, ${}_\calb E_\calc$, and ${}_\calc F_\cald$.
We then have various applications of $\Psi_0$ forming the square
  \def\SingleLtwoWithTwoColors#1#2{
	L^2 \hspace{-.5ex}
	\left( \tikzmath[scale=\textscale] {\draw[#1] (12,6) -- (12,12) -- (18,12);\draw[#2] (18,12) -- (24,12) -- (24,6);} \right)}
  \def\MoreComplicatedLtwoWithThreeColors#1#2#3{
	L^2 \hspace{-.5ex}
	\left(\tikzmath[scale=\textscale]
	{\draw[#1] (0,6) -- (0,12) -- (6,12);
	\draw[#2] (6,12) -- (18,12) (10,6) -- (10,8) -- (14,8) -- (14,6);
	\draw[dash pattern=on .4pt off .62pt] (12,8) -- (12,12);
	\draw[#3] (18,12) -- (24,12) -- (24,6);} \right)}
  \def\FillLeftUpperAndLowerSquares{
	\fill[vacuumcolor] (10,0) rectangle (14,4)  (10,8) rectangle (14,12);
	\draw[ultra thin] (8,0) -- (10,0) -- (10,12) -- (8,12) (16,0) -- (14,0) -- (14,12) -- (16,12) (10,0) rectangle (14,4) (10,8) rectangle (14,12);}
  \def\FillRightUpperAndLowerSquares{
	\fill[vacuumcolor](38,0) rectangle (42,4)  (38,8) rectangle (42,12);
	\draw[thick] (36,0) -- (38,0) -- (38,12) -- (36,12) (44,0) -- (42,0) -- (42,12) -- (44,12) (38,0) rectangle (42,4)  (38,8) rectangle (42,12);}
\begin{equation}\label{eq: Assoc of Psi_0}
  \begin{matrix}\xymatrix{
      %TOP LEFT 
      \tikzmath[scale=\displscale]
      {\draw[ultra thin, dash pattern=on .5pt off 1pt](-12,0) rectangle (66,12);
      	\node at (27,6) {$L^2 \hspace{-.5ex} \left(\tikzmath[scale=\textscale]
	{\draw[thick, double](0,6) -- (0,12) -- (6,12);
	\draw[ultra thin] (6,12) -- (18,12)(10,6) -- (10,8) -- (14,8) -- (14,6);
	\draw[thick] (18,12) -- (30,12)(22,6) -- (22,8) -- (26,8) -- (26,6);
	\draw[ultra thick](30,12) -- (36,12) -- (36,6); 
	\draw[dash pattern=on .4pt off .62pt] (12,8) -- (12,12)(24,8) -- (24,12);}
	\right)$};}  %tikzmath
      \ar[r]^{\cong} \ar[d]^{\cong}   &
      %TOP RIGHT
      \tikzmath[scale=\displscale]
      {\FillRightUpperAndLowerSquares
        \node at    (12,6)	{$\MoreComplicatedLtwoWithThreeColors{thick, double}{ultra thin}{thick}$};
        \node at (54.5,6)	{$\SingleLtwoWithTwoColors{thick}{ultra thick}$}; 
        \draw[ultra thin, dash pattern=on .5pt off 1pt]
                (36,0) -- (-12,0) -- (-12,12) -- (36,12)
                (44,12) -- (66,12) -- (66,0) -- (44,0);}  %tikzmath
      \ar[d]^{\cong}   \\
      %BOTTOM LEFT
      \tikzmath[scale=\displscale]
      {\FillLeftUpperAndLowerSquares 
        \node at     (-1,6)	{$\SingleLtwoWithTwoColors{thick, double}{ultra thin}$}; 
        \node at    (40,6)	{$\MoreComplicatedLtwoWithThreeColors{ultra thin}{thick}{ultra thick}$};
        \draw[ultra thin, dash pattern=on .5pt off 1pt]
                (10,0) -- (-12,0) -- (-12,12) --(10,12)
                (14,12) -- (66,12) -- (66,0) -- (14,0);}  %tikzmath
      \ar[r]^{\cong}   &
      %BOTTOM RIGHT 
      \tikzmath[scale=\displscale]
      {\FillLeftUpperAndLowerSquares\FillRightUpperAndLowerSquares
        \node at     (-1,6)	{$\SingleLtwoWithTwoColors{thick, double}{ultra thin}$}; 
        \node at    (26,6)	{$\SingleLtwoWithTwoColors{ultra thin}{thick}$}; 
        \node at (54.5,6)	{$\SingleLtwoWithTwoColors{thick}{ultra thick}$}; 
        \draw[ultra thin, dash pattern=on .5pt off 1pt]
                (14,12) -- (36,12) (14,0) -- (36,0)
                (10,0) -- (-12,0) -- (-12,12) --(10,12)
                (44,12) -- (66,12) -- (66,0) -- (44,0);} %tikzmath
    }\put(0,-50){,} %xymatrix 
  \end{matrix}\end{equation}
This diagram commutes by Proposition \ref{prop: associativity of Psi (Appendix)}.

We explain the meaning of the pictures in this square.
The conformal nets $\cala$, $\calb$, $\calc$, $\cald$ are
indicated by lines of various thickness \tikz{\draw[thick, double] (0,0) -- (.5,0);}, \tikz{\draw[ultra thin] (0,0) -- (.5,0);}, \tikz{\draw[thick] (0,0) -- (.5,0);}, \tikz{\draw[ultra thick] (0,0) -- (.5,0);},
and the defects ${}_\cala D_\calb$, ${}_\calb E_\calc$, ${}_\calc F_\cald$ are indicated by the transitions
\tikz{\draw[thick, double] (0,0) -- (.3,0);\draw[ultra thin] (.3,0) -- (.6,0);},
\tikz{\draw[ultra thin] (0,0) -- (.3,0);\draw[thick] (.3,0) -- (.6,0);}, and
\tikz{\draw[thick] (0,0) -- (.3,0);\draw[ultra thick] (.3,0) -- (.6,0);}.
The notations 
$\SingleLtwoWithTwoColors{thick, double}{ultra thin}$, 
$\SingleLtwoWithTwoColors{ultra thin}{thick}$, 
$\SingleLtwoWithTwoColors{thick}{ultra thick}$, 
stand for $L^2(D(S^1_\top))$, $L^2(E(S^1_\top))$, $L^2(F(S^1_\top))$.
The lower right corner of \eqref{eq: Assoc of Psi_0} is
\[
\begin{split}
\tikzmath[scale=\displscale]
      {\FillLeftUpperAndLowerSquares\FillRightUpperAndLowerSquares
        \node at     (-1,6)	{$\SingleLtwoWithTwoColors{thick, double}{ultra thin}$}; 
        \node at    (26,6)	{$\SingleLtwoWithTwoColors{ultra thin}{thick}$}; 
        \node at (54.5,6)	{$\SingleLtwoWithTwoColors{thick}{ultra thick}$}; 
        \draw[ultra thin, dash pattern=on .5pt off 1pt]
                (14,12) -- (36,12) (14,0) -- (36,0)
                (10,0) -- (-12,0) -- (-12,12) --(10,12)
                (44,12) -- (66,12) -- (66,0) -- (44,0);}  %tikzmath
&= G_{0,\calb}\big(L^2(D(S^1_\top)),G_{0,\calc}\big(L^2(E(S^1_\top)),L^2(F(S^1_\top))\big)\big)\\
&= G_{0,\calc}\big(G_{0,\calb}\big(L^2(D(S^1_\top)),L^2(E(S^1_\top))\big),L^2(F(S^1_\top))\big).
\end{split}
\]
Note that, following \eqref{eq: pictorial notation 1}, this Hilbert space is also denoted
$\tikzmath[scale=\textscale]
      {    \fill[vacuumcolor] (0,0) rectangle (10,12)
                             (14,0) rectangle (22,12)
                             (26,0) rectangle (36,12); 
           \draw[ultra thin] (6,12) -- (10,12) -- (10,0) -- (6,0) 
                 (18,12) -- (14,12) -- (14,0) -- (18,0);
           \draw[thick] (18,12) -- (22,12) -- (22,0) -- (18,0) 
                 (30,12) -- (26,12) -- (26,0) -- (30,0);
           \draw[thick, double] (6,12) -- (0,12) -- (0,0) -- (6,0);
           \draw[ultra thick] (30,12) -- (36,12) -- (36,0) -- (30,0);
           \filldraw[fill = vacuumcolor, ultra thin]  (10,0) rectangle (14,4) (10,8) rectangle (14,12);
           \filldraw[fill = vacuumcolor, thick] (22,0) rectangle (26,4) (22,8) rectangle (26,12);
      }$\,.

As in \eqref{eq: pictorial notation 2},
$\MoreComplicatedLtwoWithThreeColors{thick, double}{ultra thin}{thick}$ and 
$\MoreComplicatedLtwoWithThreeColors{ultra thin}{thick}{ultra thick}$
denote the Hilbert spaces $L^2(D(J_l) \vee \hat\calb(J_u) \vee E(J_r))$
and $L^2(E(J_l) \vee \hat\calc(J_u) \vee F(J_r))$, respectively.
The upper right and lower left corners of \eqref{eq: Assoc of Psi_0} are therefore given by
\[
\tikzmath[scale=\displscale]{\useasboundingbox(-12,0) rectangle (66,12);
       \FillRightUpperAndLowerSquares
        \node at    (12,6)	{$\MoreComplicatedLtwoWithThreeColors{thick, double}{ultra thin}{thick}$};
        \node at (54.5,6)	{$\SingleLtwoWithTwoColors{thick}{ultra thick}$}; 
        \draw[ultra thin, dash pattern=on .5pt off 1pt]
                (36,0) -- (-12,0) -- (-12,12) -- (36,12)
                (44,12) -- (66,12) -- (66,0) -- (44,0);}  %tikzmath
= G_{0,\calc}\big(L^2(D(J_l) \vee \hat\calb(J_u) \vee E(J_r)),L^2(F(S^1_\top))\big)\]and\[
      \tikzmath[scale=\displscale]{\useasboundingbox(-12,0) rectangle (66,12);
       \FillLeftUpperAndLowerSquares 
        \node at     (-1,6)	{$\SingleLtwoWithTwoColors{thick, double}{ultra thin}$}; 
        \node at    (40,6)	{$\MoreComplicatedLtwoWithThreeColors{ultra thin}{thick}{ultra thick}$};
        \draw[ultra thin, dash pattern=on .5pt off 1pt]
                (10,0) -- (-12,0) -- (-12,12) --(10,12)
                (14,12) -- (66,12) -- (66,0) -- (14,0);}  %tikzmath
= G_{0,\calb}\big(L^2(D(S^1_\top)),L^2(E(J_l) \vee \hat\calc(J_u) \vee F(J_r))\big).
\]
Finally, the vector space 
$L^2 \hspace{-.5ex} \left(\tikzmath[scale=\textscale]
	{\draw[thick, double](0,6) -- (0,12) -- (6,12);
	\draw[ultra thin] (6,12) -- (18,12)(10,6) -- (10,8) -- (14,8) -- (14,6);
	\draw[thick] (18,12) -- (30,12)(22,6) -- (22,8) -- (26,8) -- (26,6);
	\draw[ultra thick](30,12) -- (36,12) -- (36,6); 
	\draw[dash pattern=on .4pt off .62pt] (12,8) -- (12,12)(24,8) -- (24,12);}\right)$
that appears in the upper left corner of \eqref{eq: Assoc of Psi_0}
is the $L^2$ space of the von Neumann algebra
\[
\begin{split}
\tikzmath[scale=\displscale]{\useasboundingbox (-2,1) rectangle (38,14);
	\draw[thick, double](0,6) -- (0,12) -- (6,12);
	\draw[ultra thin] (6,12) -- (18,12)(10,6) -- (10,8) -- (14,8) -- (14,6);
	\draw[thick] (18,12) -- (30,12)(22,6) -- (22,8) -- (26,8) -- (26,6);
	\draw[ultra thick](30,12) -- (36,12) -- (36,6); 
	\draw[densely dotted] (12,8) -- (12,12)(24,8) -- (24,12);}\;\;:=\;\;
D(\tikzmath[scale=\displscale]
	{\useasboundingbox (-2,1) rectangle (38,14);
	\draw[ultra thin] (6,12) -- (10,12) (10,8) -- (10,6);
	\draw[double,thick] (0,6) -- (0,12) -- (6,12);
	\draw[->, thick] (2.6,12) -- (2.5,12);
	\draw[->] (10,7.9) -- (10,8);
	} %tikzmath
)\vee\hat\calb&(\tikzmath[scale=\displscale]
	{\useasboundingbox (-2,1) rectangle (38,14);
	\draw[ultra thin] (14,12) -- (10,12)  (14,8) -- (10,8);
	\draw[->] (12.59,8) -- (12.6,8);
	\draw[->] (11.41,12) -- (11.4,12);
	} %tikzmath
)\vee E(\tikzmath[scale=\displscale]
	{\useasboundingbox (-2,1) rectangle (38,14);
	\draw[ultra thin]   (14,6) -- (14,8)  (14,12) -- (18,12);
	\draw[thick] (18,12) -- (22,12) (22,8) -- (22,6);
	\draw[->] (18.1,12) -- (18,12);
	\draw[->] (22,7.9) -- (22,8);
	\draw[->] (14,6.6) -- (14,6.5);
	} %tikzmath
)\\\vee\,\,\hat\calc&(\tikzmath[scale=\displscale]
	{\useasboundingbox (-2,1) rectangle (38,14);
	\draw[thick] (26,12) -- (22,12)  (26,8) -- (22,8);
	\draw[->] (24.59,8) -- (24.6,8);
	\draw[->] (23.41,12) -- (23.4,12);
	} %tikzmath
)\vee F(\tikzmath[scale=\displscale]
	{\useasboundingbox (-2,1) rectangle (38,14);
	\draw[thick] (26,6) -- (26,8)  (26,12) -- (30,12);
	\draw[ultra thick](30,12) -- (36,12) -- (36,6);
	\draw[->, thick] (32.1,12) -- (32,12);
	\draw[->] (26,6.6) -- (26,6.5);
	} %tikzmath
)\,,\end{split}
\]
where the completion is taken on the Hilbert space
$\tikzmath[scale=\textscale]
      {    \fill[vacuumcolor] (0,0) rectangle (10,12) (14,0) rectangle (22,12) (26,0) rectangle (36,12); 
           \draw[ultra thin] (6,12) -- (10,12) -- (10,0) -- (6,0) (18,12) -- (14,12) -- (14,0) -- (18,0);
           \draw[thick] (18,12) -- (22,12) -- (22,0) -- (18,0) (30,12) -- (26,12) -- (26,0) -- (30,0);
           \draw[thick, double] (6,12) -- (0,12) -- (0,0) -- (6,0);
           \draw[ultra thick] (30,12) -- (36,12) -- (36,0) -- (30,0);
           \filldraw[fill = vacuumcolor, ultra thin] (10,8) rectangle (14,12);
           \filldraw[fill = vacuumcolor, thick] (22,8) rectangle (26,12);
      }$
or, equivalently, on the Hilbert space
$\tikzmath[scale=\textscale]
      {    \fill[vacuumcolor] (0,0) rectangle (10,12) (14,0) rectangle (22,12) (26,0) rectangle (36,12); 
           \draw[ultra thin] (6,12) -- (10,12) -- (10,0) -- (6,0) (18,12) -- (14,12) -- (14,0) -- (18,0);
           \draw[thick] (18,12) -- (22,12) -- (22,0) -- (18,0) (30,12) -- (26,12) -- (26,0) -- (30,0);
           \draw[thick, double] (6,12) -- (0,12) -- (0,0) -- (6,0);
           \draw[ultra thick] (30,12) -- (36,12) -- (36,0) -- (30,0);
           \filldraw[fill = vacuumcolor, ultra thin]  (10,0) rectangle (14,4) (10,8) rectangle (14,12);
           \filldraw[fill = vacuumcolor, thick] (22,0) rectangle (26,4) (22,8) rectangle (26,12);
      }$\,.

%
%-----------------------------------------------------------------------------
%

\section{The keystone fusion of vacuum sectors of defects}

In this section, the defects ${}_\cala D_\calb$ and ${}_\calb E_\calc$ are assumed to be irreducible.
As before, the conformal net $\calb$ is taken to be of finite index.

Recall the algebra
\[
\tikzmath[scale=\displscale]
{ \useasboundingbox (-2,0) rectangle (26,14); \draw[thick, double] (0,6) -- (0,12) -- (6,12); \draw (6,12) -- (18,12) (10,6) -- (10,8) -- (14,8) -- (14,6);
\draw[ultra thick] (18,12) -- (24,12) -- (24,6);  \draw[densely dotted] (12,8) -- (12,12);}\; %tikzmath
:=\;D\Big(\tikzmath[scale=\displscale] {\useasboundingbox (-2,-1) rectangle (26,14);
\draw (6,12) -- (10,12) (10,8) -- (10,6); \draw[double,thick](0,6) -- (0,12) -- (6,12); \draw[->,thick] (2.6,12) -- (2.5,12); \draw[->] (10,7.9) -- (10,8);} %tikzmath
\Big) \vee \hat\calb\Big(\tikzmath[scale=\displscale] {\useasboundingbox (-2,-1) rectangle (26,14);
\draw (14,12) -- (10,12)(14,8) -- (10,8); \draw[->] (12.59,8) -- (12.6,8); \draw[->] (11.41,12) -- (11.4,12);} %tikzmath
\Big) \vee E\Big(\tikzmath[scale=\displscale]
{\useasboundingbox (-2,-1) rectangle (26,14); \draw(14,6) -- (14,8)(14,12) -- (18,12);
\draw[ultra thick](18,12) -- (24,12) -- (24,6); \draw[->, thick] (20.1,12) -- (20,12); \draw[->] (14,6.6) -- (14,6.5);} %tikzmath
\Big) \;\subset\; \bfB\Big(\tikzmath[scale=\displscale]
{\useasboundingbox (-2,-1) rectangle (26,14); \fill[vacuumcolor] (0,0) rectangle (10,12) (14,0) rectangle (24,12); 
\draw[ultra thin] (6,12) -- (10,12) -- (10,0) -- (6,0) (18,12) -- (14,12) -- (14,0) -- (18,0); \draw[thick, double] (6,12) -- (0,12) -- (0,0) -- (6,0);
\draw[ultra thick] (18,12) -- (24,12) -- (24,0) -- (18,0); \filldraw[fill = vacuumcolor, ultra thin] (10,8) rectangle (14,12);}\Big),%tikzmath
\]
from \eqref{eq: pictorial notation 2}. Let us also introduce
\[
\tikzmath[scale=\displscale] { \useasboundingbox (-2,0) rectangle (26,14); \draw[thick, double] (0,6) -- (0,12) -- (6,12); \draw (6,12) -- (18,12) (10,6) -- (10,8) -- (14,8) -- (14,6);
\draw[ultra thick] (18,12) -- (24,12) -- (24,6);  }\; %tikzmath
:=\;D\Big(\tikzmath[scale=\displscale] {\useasboundingbox (-2,-1) rectangle (26,14); \draw (6,12) -- (10,12) (10,8) -- (10,6);
\draw[double,thick] (0,6) -- (0,12) -- (6,12);\draw[->,thick] (2.6,12) -- (2.5,12);\draw[->] (10,7.9) -- (10,8);} %tikzmath
\Big) \vee \calb\Big(\tikzmath[scale=\displscale] {\useasboundingbox (-2,-1) rectangle (26,14);
\draw (14,12) -- (10,12)(14,8) -- (10,8);\draw[->] (12.59,8) -- (12.6,8);\draw[->] (11.41,12) -- (11.4,12);} %tikzmath
\Big) \vee E\Big(\tikzmath[scale=\displscale]{\useasboundingbox (-2,-1) rectangle (26,14);\draw(14,6) -- (14,8)(14,12) -- (18,12);
\draw[ultra thick](18,12) -- (24,12) -- (24,6);\draw[->, thick] (20.1,12) -- (20,12);\draw[->] (14,6.6) -- (14,6.5);} %tikzmath
\Big) \;\subset\; \bfB\Big(\tikzmath[scale=\displscale]
{\useasboundingbox (-2,-1) rectangle (26,14); \fill[vacuumcolor] (0,0) rectangle (10,12) (14,0) rectangle (24,12);  \draw[ultra thin] (6,12) -- (10,12) -- (10,0) -- (6,0) (18,12) -- (14,12) -- (14,0) -- (18,0);
 \draw[thick, double] (6,12) -- (0,12) -- (0,0) -- (6,0); \draw[ultra thick] (18,12) -- (24,12) -- (24,0) -- (18,0); \filldraw[fill = vacuumcolor, ultra thin] (10,8) rectangle (14,12);}\Big).%tikzmath
\]
The algebra $\tikzmath[scale=\textscale]
{\draw[thick, double] (0,6) -- (0,12) -- (6,12); \draw (6,12) -- (18,12) (10,6) -- (10,8) -- (14,8) -- (14,6); \draw[dash pattern=on .4pt off .62pt] (12,8) -- (12,12); 
\draw[ultra thick] (18,12) -- (24,12) -- (24,6);}$ %tikzmath
is a factor, as can be seen by applying Lemma \ref{lem: factoriality of alg with dotted lines} in the situation of \eqref{eq: M, M0, A, H}, but its subalgebra
$\tikzmath[scale=\textscale] {\draw[thick, double] (0,6) -- (0,12) -- (6,12); \draw (6,12) -- (18,12) (10,6) -- (10,8) -- (14,8) -- (14,6); \draw[ultra thick] (18,12) -- (24,12) -- (24,6);}$ %tikzmath
will typically not be a factor.
However, since $\calb$ has finite index, we know by Theorem~\ref{thm: semi-simplicity of DoE} that the subalgebra has finite dimensional center.
%is at least semisimple.

\begin{lemma}
  \label{lem:no-dots-finite-index}
  Let $\calb$ be a conformal net with finite index $\mu(\calb)$, and let
  $_\cala D_\calb$ and $_\calb E_\calc$ be irreducible defects.
  Let $p_1,\ldots, p_n$ be the minimal central projections of the algebra 
  $\tikzmath[scale=\textscale]
	{\draw[thick, double] (0,6) -- (0,12) -- (6,12);
	\draw (6,12) -- (18,12) (10,6) -- (10,8) -- (14,8) -- (14,6);
	\draw[ultra thick] (18,12) -- (24,12) -- (24,6);}$\,.
  Then we have
  \begin{equation*}
    \sum_i \left[ \;
       p_i\, \tikzmath[scale=\displscale]
       {   \useasboundingbox (-2,0) rectangle (26,14);
           \draw[thick, double] (0,6) -- (0,12) -- (6,12);
           \draw (6,12) -- (18,12) (10,6) -- (10,8) -- (14,8) -- (14,6);
           \draw[ultra thick] (18,12) -- (24,12) -- (24,6); 
           \draw[densely dotted] (12,8) -- (12,12); 
       } \,p_i %tikzmath
       \; : \;
       p_i\, \tikzmath[scale=\displscale]
       {  \useasboundingbox (-2,0) rectangle (26,14);
          \draw[thick, double] (0,6) -- (0,12) -- (6,12);
          \draw (6,12) -- (18,12) (10,6) -- (10,8) -- (14,8) -- (14,6);
          \draw[ultra thick] (18,12) -- (24,12) -- (24,6); 
       } %tikzmath
    \; \right] \leq \mu(\calb).   
  \end{equation*}
\end{lemma}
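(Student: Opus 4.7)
The plan is to follow the strategy of Lemma~\ref{lem: [ : ] < mu(A)}. Write $P := D(J_l) \vee E(J_r)$, so the inclusion in question becomes $M := P \vee \calb(J_u) \subseteq P \vee \hat\calb(J_u) =: \hat M$. In the cyclic fusion model of Section~\ref{sec: The computation of G_0(L^2(D),L^2(E))}, the algebras $D(J_l)$ and $E(J_r)$ act only on the outer tensorands $H_0(\tilde S_l, D)$ and $H_0(\tilde S_r, E)$, while $\calb(J_u)$ and $\hat\calb(J_u)$ act on the middle tensorand $H_0(S_u, \calb)$, so $P$ commutes with both $\calb(J_u)$ and $\hat\calb(J_u)$. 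The algebra $\hat M$ is a factor by Lemma~\ref{lem: factoriality of alg with dotted lines} applied in the setup of \eqref{eq: M, M0, A, H}, and hence each $p_i \hat M p_i$ is again a factor.

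First, I would recall from Theorem~\ref{thmKLM} that the 2-interval subfactor $\calb(J_u) \subseteq \hat\calb(J_u)$ has minimal index $\mu(\calb)$, with associated minimal conditional expectation $E_0 : \hat\calb(J_u) \to \calb(J_u)$ of that Pimsner--Popa index. Since $P$ commutes with $\hat\calb(J_u)$, I would extend $E_0$ to a conditional expectation $\tilde E : \hat M \to M$ that acts as the identity on $P$, again of Pimsner--Popa index $\mu(\calb)$.

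Second, I would deduce the bound $\sum_i [p_i \hat M p_i : p_i M] \leq \mu(\calb)$ from the existence of $\tilde E$. The natural route is via bimodule statistical dimensions: the finite-index expectation $\tilde E$ exhibits $L^2(\hat M)$ as an $\hat M$--$M$-bimodule whose right $M$-dimension is controlled by $\mu(\calb)$, and the decomposition $L^2(\hat M) = \bigoplus_i p_i L^2(\hat M)$ induced by the central projections of $M$ (which commute with the right $M$-action) identifies each summand, as a right $p_i M$-module, with a standard form of the subfactor inclusion $p_i M \subseteq p_i \hat M p_i$. The additivity of the right-module dimension then yields $\sum_i [p_i \hat M p_i : p_i M]$ on the one side and a quantity bounded by $\mu(\calb)$ on the other.

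The main obstacle will be this second step, which is the non-factor analog of the monotonicity inequality~\eqref{eq:DIS-727}: one has to verify that $p_i L^2(\hat M)$ is isomorphic, as a right $p_i M$-module, to the standard form of $p_i \hat M p_i$, and that the total right $M$-dimension of $L^2(\hat M)$ is bounded by the Pimsner--Popa index of $\tilde E$. Both are delicate because $p_i$ is central in $M$ but not in $\hat M$, so some care is needed with the standard forms; once settled, the required inequality is immediate.
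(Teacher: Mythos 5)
Your reduction is set up correctly: with $N := \calb(J_u)$, $M := \hat\calb(J_u)$ and $A := D(J_l)\vee E(J_r)$, the inclusion in question is $N\vee A \subseteq M\vee A$, the algebra $M$ commutes with $A$, and $[M:N]=\mu(\calb)$ holds by the very definition of the index of a conformal net (Appendix~\ref{subsec:finite-nets}); no appeal to Theorem~\ref{thm:KLM} is needed for that. The problem is that your second step --- which you yourself flag as ``the main obstacle'' --- is the entire content of the lemma, and your sketch does not resolve it. The paper disposes of it in one line: by \eqref{eq:dim-on-H-is-dim-on-L2}, the minimal index $[\,p_i(M\vee A)p_i : p_i(N\vee A)\,]$ is the square of the statistical dimension of ${}_{p_i(N\vee A)}\,p_iL^2(M\vee A)_{M\vee A}$, so the sum over $i$ equals $\big\|\llbracket M\vee A : N\vee A\rrbracket\big\|_2^2$, and the inequality \eqref{eq:DIS-727} gives $\big\|\llbracket M\vee A : N\vee A\rrbracket\big\|_2 \le \llbracket M:N\rrbracket = \sqrt{\mu(\calb)}$. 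Crucially, \eqref{eq:DIS-727} as recorded in Appendix~\ref{subsec:stat-dim+minimal-index} already covers exactly this mixed situation --- $N\subset M$ factors of finite index, $A\subset M'$, one of $N\vee A$, $M\vee A$ a factor and the other with finite-dimensional center --- and its hypotheses are verified here by Lemma~\ref{lem: factoriality of alg with dotted lines} and Theorem~\ref{thm: semi-simplicity of DoE}. So the ``non-factor analog'' you say you would still need to establish is precisely the statement that is being cited; there is nothing left to prove once this identification is made.

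The alternative route you propose via conditional expectations does not close the gap and has its own unaddressed difficulties. First, the existence of an expectation $\tilde E : A\vee\hat\calb(J_u)\to A\vee\calb(J_u)$ extending the minimal expectation $E_0$ and fixing $A$ is not automatic: the join $A\vee\hat\calb(J_u)$ is in general only a quotient of the spatial tensor product, and $E_0\otimes\mathrm{id}$ need not descend. Second, even granting $\tilde E$, the Pimsner--Popa index of a single global expectation onto a non-factor does not directly compute $\sum_i[\,p_i\hat Mp_i : p_iM\,]$: the minimal index of each corner $p_iM\subseteq p_i\hat Mp_i$ is realized by an expectation that need not be the restriction of $\tilde E$, and the passage from one global index to a weighted sum of corner indices is exactly the bookkeeping that the matrix of statistical dimensions $\llbracket\,\cdot:\cdot\,\rrbracket$ is designed to carry out. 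As written, then, the argument is incomplete at its central step; the repair is not to re-derive the monotonicity statement but simply to apply \eqref{eq:DIS-727} together with \eqref{eq:dim-on-H-is-dim-on-L2}.
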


\begin{proof}
  To simplify the notation we abbreviate
  $N:= \calb\big(\tikzmath[scale=\planscale]
	{\useasboundingbox (-2,-1) rectangle (26,14);
	\draw (14,12) -- (10,12)  (14,8) -- (10,8);
	\draw[->] (12.59,8) -- (12.6,8);
	\draw[->] (11.41,12) -- (11.4,12);
	} %tikzmath
	\big)$, 
  $M:= \hat\calb\big(\tikzmath[scale=\planscale]
	{\useasboundingbox (-2,-1) rectangle (26,14);
	\draw (14,12) -- (10,12)  (14,8) -- (10,8);
	\draw[->] (12.59,8) -- (12.6,8);
	\draw[->] (11.41,12) -- (11.4,12);
	} %tikzmath
	\big)$, and
  $A:=D\big(\tikzmath[scale=\planscale]
	{\useasboundingbox (-2,-1) rectangle (26,14);
	\draw[ultra thick] (6,12) -- (10,12) (10,8) -- (10,6);
	\draw(0,6) -- (0,12) -- (6,12);
	\draw[->] (2.6,12) -- (2.5,12);
	\draw[->] (10,7.9) -- (10,8);
	} %tikzmath
	\big) \,\bar\otimes\, E\big(\tikzmath[scale=\planscale]
	{\useasboundingbox (-2,-1) rectangle (26,14);
	\draw(14,6) -- (14,8)  (14,12) -- (18,12);
	\draw[ultra thick](18,12) -- (24,12) -- (24,6);
	\draw[->] (20.1,12) -- (20,12);
	\draw[->] (14,6.6) -- (14,6.5);
	} %tikzmath
	\big)$.
  Then $\tikzmath[scale=\planscale]
       {   \useasboundingbox (-2,0) rectangle (26,14);
           \draw[thick, double] (0,6) -- (0,12) -- (6,12);
           \draw (6,12) -- (18,12) (10,6) -- (10,8) -- (14,8) -- (14,6);
           \draw[ultra thick] (18,12) -- (24,12) -- (24,6); 
           \draw[densely dotted] (12,8) -- (12,12); 
       } = M \vee A$ and
  $\tikzmath[scale=\planscale]
       {  \useasboundingbox (-2,0) rectangle (26,14);
          \draw[thick, double] (0,6) -- (0,12) -- (6,12);
          \draw (6,12) -- (18,12) (10,6) -- (10,8) -- (14,8) -- (14,6);
          \draw[ultra thick] (18,12) -- (24,12) -- (24,6); 
       } %tikzmath
   = N \vee B$.
  By definition, $\mu(\calb) = [ M : N ]$ is the square of $\llbracket M:N\rrbracket$, the latter being our notation for the statistical dimension of $_N L^2(M)_M$.
  Also, $[p_i ( M \vee A )p_i : p_i (N \vee A)]$
  is the square of the statistical dimension
  of $_{p_i (N \vee A)} p_i L^2(M \vee A)_{M \vee A}$, as can be seen 
  using~\eqref{eq:dim-on-H-is-dim-on-L2}; cf. the proof of~\cite[\propstatdimsubalg]{BDH(Dualizability+Index-of-subfactors)}.
  The vector whose entries are the statistical dimensions of 
  $_{p_i (N \vee A)} p_i L^2(M \vee A)_{M \vee A}$ 
  is denoted by $\llbracket M \vee A : N \vee A \rrbracket$.
  By~\eqref{eq:DIS-727}
  \begin{equation*}
    \big\| \llbracket M\vee A:N\vee A \rrbracket \big\|_2\, 
         \,\,\le\,\, \llbracket M:N\rrbracket
  \end{equation*}
  and the result follows.
\end{proof}

\begin{corollary}\label{cor: is a finite homomorphism of von Neumann algebra}
The inclusion   $\tikzmath[scale=\textscale]
	{\draw[thick, double] (0,6) -- (0,12) -- (6,12);
	\draw (6,12) -- (18,12) (10,6) -- (10,8) -- (14,8) -- (14,6);
	\draw[ultra thick] (18,12) -- (24,12) -- (24,6);} \,\hookrightarrow\,
\tikzmath[scale=\textscale]
	{\draw[thick, double] (0,6) -- (0,12) -- (6,12);
	\draw (6,12) -- (18,12) (10,6) -- (10,8) -- (14,8) -- (14,6);
	\draw[dash pattern=on .4pt off .62pt] (12,8) -- (12,12);
	\draw[ultra thick] (18,12) -- (24,12) -- (24,6);}$ 
is a finite homomorphism of von Neumann algebras with finite-dimensional center
(see Appendix~\ref{subsec:dualizability}).
\end{corollary}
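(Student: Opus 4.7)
The plan is to derive this essentially directly from Lemma~\ref{lem:no-dots-finite-index}, with a small amount of bookkeeping to match the hypothesis of that lemma to the definition of ``finite homomorphism of von Neumann algebras with finite-dimensional center'' as given in Appendix~\ref{subsec:dualizability} (cf.\ \deffinitehomomrphism{} and \eqpropertiesofmatrixofstatdim).

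First, I would verify that both algebras have finite-dimensional center.  For the larger algebra $\tikzmath[scale=\textscale]{\draw[thick, double] (0,6) -- (0,12) -- (6,12);\draw (6,12) -- (18,12) (10,6) -- (10,8) -- (14,8) -- (14,6);\draw[dash pattern=on .4pt off .62pt] (12,8) -- (12,12);\draw[ultra thick] (18,12) -- (24,12) -- (24,6);}$, this is in fact a factor: that was already noted in the paragraph preceding Lemma~\ref{lem:no-dots-finite-index}, and follows from Lemma~\ref{lem: factoriality of alg with dotted lines} applied to the quadruple $(M,M_0,A,H)$ of~\eqref{eq: M, M0, A, H}.  For the smaller algebra $\tikzmath[scale=\textscale]{\draw[thick, double] (0,6) -- (0,12) -- (6,12);\draw (6,12) -- (18,12) (10,6) -- (10,8) -- (14,8) -- (14,6);\draw[ultra thick] (18,12) -- (24,12) -- (24,6);}$, I would identify it, up to the canonical isomorphism of intervals, with $(D\circledast_\calb E)(S^1_\top)\,\bar\otimes\,\calb(J_u)$; then Theorem~\ref{thm: semi-simplicity of DoE} gives that the first factor has finite-dimensional center, while $\calb(J_u)=\calb(K_1)\,\bar\otimes\,\calb(K_2)$ is a factor (as each $K_i$-piece is a single interval and $\calb$ is irreducible), so the tensor product again has finite-dimensional center.

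Second, I would invoke the characterization of finiteness: for an inclusion $B\subseteq A$ of von Neumann algebras with finite-dimensional centers in which $A$ is moreover a factor, the inclusion is a finite homomorphism iff $\sum_i [\,p_i A p_i : p_i B\,]<\infty$, where $p_1,\ldots,p_n$ are the minimal central projections of $B$ (this is one of the standard criteria from Appendix~\ref{subsec:dualizability}, see \propbasicpropertiesofindex{} and \eqpropertiesofmatrixofstatdim).  But this is exactly the sum estimated in Lemma~\ref{lem:no-dots-finite-index}, which is bounded above by $\mu(\calb)<\infty$.  This concludes the proof.

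In short, the only real obstacle has already been dispatched by the preceding lemma: all that remains is to check the center hypotheses and to translate ``finite index'' into ``finite homomorphism'' via the matrix-of-indices criterion.  Consequently, I would expect the proof to be only a few lines long, with the identification of the smaller algebra's center as the sole point requiring any care.
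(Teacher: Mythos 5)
Your proposal is correct and follows essentially the same route as the paper: the paper also reduces the claim to Lemma~\ref{lem:no-dots-finite-index} by noting that the larger algebra is a factor, that the smaller one has finite-dimensional center (via the split-property decomposition and Theorem~\ref{thm: semi-simplicity of DoE}, as recorded in the paragraph preceding that lemma), and then checking finiteness corner by corner -- phrased there as dualizability of the summands ${}_{p_iX}(p_iL^2Y)_Y$, whose statistical dimension is exactly $\llbracket p_iYp_i : p_iX\rrbracket$, which is your sum-of-indices criterion. The only blemish is a labeling slip (the second tensor factor of the smaller algebra is $\calb$ of the bracket-shaped upper half of $S_m$, not $\calb(J_u)=\calb(J_1)\,\bar\otimes\,\calb(J_2)$), but either way it is a factor, so the finite-dimensional-center check goes through unchanged.
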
 
\begin{proof}
Let 
$X:=\tikzmath[scale=\textscale] {\draw[thick, double] (0,6) -- (0,12) -- (6,12);\draw (6,12) -- (18,12) (10,6) -- (10,8) -- (14,8) -- (14,6);\draw[ultra thick] (18,12) -- (24,12) -- (24,6);}$\,,
with minimal central projections $p_1,\ldots, p_n$, and let
$Y:=\tikzmath[scale=\textscale] {\draw[thick, double] (0,6) -- (0,12) -- (6,12);\draw (6,12) -- (18,12) (10,6) -- (10,8) -- (14,8) -- (14,6);\draw[dash pattern=on .4pt off .62pt] (12,8) -- (12,12);
\draw[ultra thick] (18,12) -- (24,12) -- (24,6);}$\,.
Recall that $Y$ is a factor.
By definition, the inclusion $X\to Y$ is finite if and only if the bimodule ${}_XL^2Y_Y$ is dualizable, which happens if and only if its summands ${}_{p_iX}(p_iL^2Y)_Y$ are dualizable.  A bimodule between factors is dualizable if and only if its statistical dimension is finite; see Appendix~\ref{subsec:stat-dim+minimal-index}.
The commutant of $Y$ on $p_iL^2Y$ is $p_iYp_i$, and the inclusion $p_i X\hookrightarrow p_i Y p_i$ is finite by the previous lemma.
\end{proof}

\subsection*{\hspace*{-18pt}Keystone fusion contains the vacuum sector of the fused defect}

Let $\calb$ be a conformal net with finite index.
Recall from Section \ref{sec: def of F G0 G} that, given a $\calb(I_l)$-module $H_l$ and a $\calb(I_r)$-module $H_r$,
then the keystone fusion $G_\calb(H_r, H_l)$ is defined by
\begin{equation*}
  G_\calb(H_l,H_r) := 
     G_{0,\calb}(H_l,H_r) \boxtimes_{\calb(S_m)} H_0(S_m,\calb).  
\end{equation*}
This construction uses the isomorphism 
$\calb(S_m) \cong \bigoplus_{\lambda\in\Delta} \bfB(H_\lambda(S_m,\calb))$
from Theorem~\ref{thm:cala(S)}.

\begin{lemma}\label{lem: Hbar x_B(S_m) H = C}
There is a canonical isomorphism
\[
\overline{H_0(S_m,\calb)} \boxtimes_{\calb(S_m)} H_0(S_m,\calb) \cong \IC
\]
\end{lemma}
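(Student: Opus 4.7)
The plan is to reduce the computation of $\overline{H_0(S_m,\calb)} \boxtimes_{\calb(S_m)} H_0(S_m,\calb)$ to the elementary fact that, for a type~$\mathrm{I}$ factor $\bfB(H)$ with its defining representation $H$, the Connes fusion $\bar H\boxtimes_{\bfB(H)} H$ is canonically isomorphic to $\IC$.

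First I would invoke Theorem~\ref{thm:cala(S)} to obtain the canonical direct sum decomposition
\[
\calb(S_m)\,\cong\,\bigoplus_{\lambda\in\Delta} \bfB\bigl(H_\lambda(S_m,\calb)\bigr),
\]
in which the natural action of $\calb(S_m)$ on $H_0(S_m,\calb)$ is, by construction, the defining representation of the $\lambda=0$ summand. Let $p_0\in Z(\calb(S_m))$ denote the associated central projection. Then $p_0$ acts as the identity on both $H_0(S_m,\calb)$ and its conjugate $\overline{H_0(S_m,\calb)}$, while the complementary projection $1-p_0$ annihilates both.

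Because only the $p_0$-summand acts nontrivially on either factor of the Connes fusion, the fusion over $\calb(S_m)$ collapses to a fusion over the type~$\mathrm{I}$ factor $p_0\calb(S_m)p_0\cong \bfB(H_0(S_m,\calb))$. I would then exhibit the canonical isomorphism
\[
\overline{H_0(S_m,\calb)}\, \boxtimes_{\bfB(H_0(S_m,\calb))}\, H_0(S_m,\calb)\,\xrightarrow{\;\cong\;}\,\IC,\qquad \bar\xi\boxtimes\eta\mapsto \langle\xi,\eta\rangle,
\]
which is well-defined on the fusion by the identity $\langle a^*\xi,\eta\rangle=\langle \xi,a\eta\rangle$ for $a\in \bfB(H_0(S_m,\calb))$, and is unitary since the standard form of $\bfB(H)$ is $H\otimes\bar H$ with the obvious bimodule structure.

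There is no serious obstacle: the only thing to check carefully is that both sides of the fusion factor through $p_0$, and that is immediate from Theorem~\ref{thm:cala(S)}. The canonicity of the resulting isomorphism $\IC\to\overline{H_0(S_m,\calb)}\boxtimes_{\calb(S_m)} H_0(S_m,\calb)$ is then inherited from the canonicity of Theorem~\ref{thm:cala(S)} together with the canonical nature of the trace pairing on $\bar H\otimes H$.
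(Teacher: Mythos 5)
Your proposal is correct and follows essentially the same route as the paper: the paper's proof likewise observes that both actions of $\calb(S_m)$ factor through the summand $\bfB(H_0(S_m,\calb))$ provided by Theorem~\ref{thm:cala(S)}, and then invokes the general isomorphism $\bar H \boxtimes_{\bfB(H)} H \cong \IC$. Your version merely spells out the central projection $p_0$ and the explicit trace pairing, which the paper leaves implicit.
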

\begin{proof}
The two actions of $\calb(S_m)$ factor through its summand $\bfB(H_0(S_m,\calb))$.
The result is therefore a special case of the general isomorphism $\bar H \boxtimes_{\bfB(H)} H\cong \IC$.
\end{proof}

%In the formulation of the following Theorem we use the 
%isomorphism $\Psi_0$ from Theorem~\ref{thm:G_0=L^2}.
%\ABcomm{What follows is a Proposition and it seems to use $\Psi_0$ 
%only in the proof, not its formulation.}

\begin{proposition}\label{prop:G=L2}
Let $\calb$ be a conformal net with finite index, and let $_\cala D_\calb$ and $_\calb E_\calc$ be irreducible defects.
Then there is an isometric embedding
\begin{equation}\label{eq: main equation of thm G=L2}
	\Psi \,\,\colon\,\,L^2 \left( \tikzmath[scale=\displscale]
	{\useasboundingbox (-2,0) rectangle (26,14);\draw[thick, double] (0,6) -- (0,12) -- (6,12);\draw (6,12) -- (18,12);\draw[ultra thick] (18,12) -- (24,12) -- (24,6);} %tikzmath
	\right) \;\; \rightarrow \;\;\tikzmath[scale=\displscale]
	{\fill[vacuumcolor](0,0) rectangle (10,12)(14,0) rectangle (24,12); \draw (6,12) -- (10,12) -- (10,0) -- (6,0) (18,12) -- (14,12) -- (14,0) -- (18,0);
	\draw[thick, double] (6,12) -- (0,12) -- (0,0) -- (6,0);\draw[ultra thick] (18,12) -- (24,12) -- (24,0) -- (18,0);\fill[vacuumcolor]  (10,0) rectangle (14,4)  (10,8) rectangle (14,12);
	\draw (10,0) rectangle (14,4)(10,8) rectangle (14,12);\filldraw[fill=vacuumcolor]  (10.5,4.5) rectangle (13.5,7.5);}\;; %tikzmath 
\end{equation}
that is, there is a map
\[
\Psi = \Psi_{D,E} \colon
L^2\big((D \circledast_{\calb} E)(S^1_\top)\big)
\to G_\calb\big(H_0(S_l,D),H_0(S_r,E)\big).
\]
  As in Theorem~\ref{thm:G_0=L^2}, the map $\Psi$ is
equivariant with respect to the left actions of
$\{D(I)\}_{I\subset S_{b,\tikzmath[scale=.1]{\draw(0,0)--(0,1)--(1,1);}}}$,
$\{D(I)\}_{I\subset S_{b,\tikzmath[scale=.1]{\draw(1,0)--(0,0)--(0,1);}}}$,
$\{E(I)\}_{I\subset S_{b,\tikzmath[scale=.1]{\draw(0,1)--(1,1)--(1,0);}}}$,
and
$\{E(I)\}_{I\subset S_{b,\tikzmath[scale=.1]{\draw(0,0)--(1,0)--(1,1);}}}$.
\end{proposition}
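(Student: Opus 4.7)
The plan is to realize $\Psi$ as a composite of three maps: the $L^2$-functoriality map of a finite inclusion of von Neumann algebras (via Corollary~\ref{cor: is a finite homomorphism of von Neumann algebra}), the isomorphism $\Psi_0$ of Theorem~\ref{thm:G_0=L^2}, and a Connes fusion with the keystone vacuum sector $H_0(S_m,\calb)$.

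First I would fix notation for two intermediate algebras:
\[
Y \;:=\; (D \circledast_\calb E)(S^1_\top) \vee \calb(J_u) \;=\; \tikzmath[scale=\squarescale]{\draw[thick, double] (0,6) -- (0,12) -- (6,12);\draw (6,12) -- (18,12) (10,6) -- (10,8) -- (14,8) -- (14,6);\draw[ultra thick] (18,12) -- (24,12) -- (24,6);}, \quad X \;:=\; (D \circledast_\calb E)(S^1_\top) \vee \hat\calb(J_u) \;=\; \tikzmath[scale=\squarescale]{\draw[thick, double] (0,6) -- (0,12) -- (6,12);\draw (6,12) -- (18,12) (10,6) -- (10,8) -- (14,8) -- (14,6);\draw[densely dotted] (12,8) -- (12,12);\draw[ultra thick] (18,12) -- (24,12) -- (24,6);}.
\]
Because $J_1 \subset S^1_\top$ is a subinterval of the outer top edge that lies in the $\calb$-colored middle region of the composite defect, $\calb(J_1)$ is already contained in $(D \circledast_\calb E)(S^1_\top)$; consequently $Y = (D \circledast_\calb E)(S^1_\top) \vee \calb(J_2)$ is a spatial tensor product of commuting subalgebras supported on geometrically disjoint intervals, and $L^2(Y) = L^2((D \circledast_\calb E)(S^1_\top)) \otimes L^2(\calb(J_2))$. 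The inclusion $Y \hookrightarrow X$ is a finite homomorphism of von Neumann algebras by Corollary~\ref{cor: is a finite homomorphism of von Neumann algebra}, so the $L^2$-functor of \cite{BDH(Dualizability+Index-of-subfactors)} produces an isometric embedding $\iota_* \colon L^2(Y) \hookrightarrow L^2(X)$.

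I would then define $\Psi$ as the composite
\[
L^2((D \circledast_\calb E)(S^1_\top)) \xrightarrow{(-)\otimes\Omega} L^2(Y) \xrightarrow{\iota_*} L^2(X) \xrightarrow{\Psi_0} G_0 \xrightarrow{\text{keystone}} G,
\]
where $\Omega \in L^2(\calb(J_2))$ is the distinguished vacuum vector, and the last arrow is the Connes fusion $G_0 \to G_0 \boxtimes_{\calb(S_m)} H_0(S_m,\calb) = G$ pairing against the vacuum vector of $H_0(S_m,\calb)$. The equivariance of $\Psi$ with respect to $\{D(I)\}$ and $\{E(I)\}$ on the four exterior quarter-arcs will follow from the equivariance of $\Psi_0$ asserted in Theorem~\ref{thm:G_0=L^2}, together with the observation that these exterior actions commute with both the finite inclusion $\iota$ and the keystone fusion over the interior boundary $S_m$.

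The main technical obstacle will be verifying that the composite $\Psi$ is genuinely isometric. The first three arrows are manifestly isometric (respectively: tensoring with a unit vector, the $L^2$-map of a finite inclusion, and an isomorphism), but the final Connes fusion map is merely contractive in general. The isometry of the full composite hinges on the precise interaction between the $L^2(\calb(J_2))$-tensor factor introduced in the first arrow and the keystone fusion: since $J_2 = S_u \cap S_m \subset S_m$, the inclusion $\calb(J_2) \subset \calb(S_m)$ ensures that the keystone fusion absorbs the $L^2(\calb(J_2))$-factor against $H_0(S_m,\calb)$ by a cancellation analogous to Lemma~\ref{lem: Hbar x_B(S_m) H = C}, restoring isometry at the end. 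The finite-index hypothesis on $\calb$ is essential in two places: it guarantees that the inclusion $Y \hookrightarrow X$ is finite (via Corollary~\ref{cor: is a finite homomorphism of von Neumann algebra}), and it provides the good behavior of the keystone fusion needed for the cancellation to succeed.
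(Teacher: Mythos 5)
Your overall architecture is the same as the paper's: introduce a keyhole tensor factor via the split property, use Corollary~\ref{cor: is a finite homomorphism of von Neumann algebra} to get an $L^2$-map for the finite inclusion $Y\hookrightarrow X$, apply $\Psi_0$ from Theorem~\ref{thm:G_0=L^2}, and absorb the keystone. But two of your steps fail as written. First, the $L^2$-map of a finite inclusion is \emph{not} isometric: as recalled in Appendix~\ref{subsec:dualizability}, $L^2(\iota)$ is bounded and bilinear but usually not an isometry. The paper therefore replaces $L^2(\iota)$ by the isometry $L^2(\iota)_{\mathrm{iso}}$ in its polar decomposition; this is an essential ingredient, not a formality. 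Your claim that the first three arrows are ``manifestly isometric'' is wrong at exactly this point, and the keystone fusion cannot repair it, since it acts on a tensor factor unrelated to the defect of isometry of $L^2(\iota)$.

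Second, the complementary tensor factor in $Y=D(J_l)\vee\calb(J_u)\vee E(J_r)$ is not $\calb(J_2)$ but $\calb(S_{m,\top})$, the algebra of the whole upper half of the middle circle $S_m$: besides the edge $J_2$ it contains the two short vertical segments $\{\tfrac56\}\times[\tfrac12,\tfrac23]$ and $\{\tfrac76\}\times[\tfrac12,\tfrac23]$, which lie in $J_l$ and $J_r$ but not in $S^1_\top$ (so your $(D\circledast_\calb E)(S^1_\top)\vee\calb(J_u)$ is not the algebra to which Corollary~\ref{cor: is a finite homomorphism of von Neumann algebra} applies). This matters because the cancellation you invoke is Lemma~\ref{lem: Hbar x_B(S_m) H = C}, namely $\overline{H_0(S_m,\calb)}\boxtimes_{\calb(S_m)}H_0(S_m,\calb)\cong\IC$, and it is $L^2(\calb(S_{m,\top}))\cong \overline{H_0(S_m,\calb)}$ that makes this applicable, whereas $L^2(\calb(J_2))$ is not a vacuum sector for $S_m$. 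Relatedly, your first and last arrows are not canonical, and the last is not even well defined: there is no ``insert the vacuum vector'' map $G_0\to G_0\boxtimes_{\calb(S_m)}H_0(S_m,\calb)$, since Connes fusion does not accept a vector in one slot. The paper packages the creation and annihilation of the keystone into a single canonical unitary $U\colon L^2\big((D\circledast_\calb E)(S^1_\top)\big)\cong L^2(Y)\boxtimes_{\calb(S_m)}H_0(S_m,\calb)$ built from the split property and Lemma~\ref{lem: Hbar x_B(S_m) H = C}, and then applies $L^2(\iota)_{\mathrm{iso}}\otimes\mathrm{id}$ and $\Psi_0\otimes\mathrm{id}$, carrying the factor $-\boxtimes_{\calb(S_m)}H_0(S_m,\calb)$ along throughout instead of applying a fusion map at the end.
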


\begin{proof}
By the split property, we can identify
$\tikzmath[scale=\textscale]
   {  \useasboundingbox (-2,4) rectangle (26,14);
      \draw[thick, double] (0,6) -- (0,12) -- (6,12);
      \draw (6,12) -- (18,12) (10,6) -- (10,8) -- (14,8) -- (14,6);
      \draw[ultra thick] (18,12) -- (24,12) -- (24,6); 
   }$ with
$\tikzmath[scale=\textscale]
   {  \draw[thick, double] (0,6) -- (0,12) -- (6,12);
      \draw (6,12) -- (18,12);
      \draw[ultra thick] (18,12) -- (24,12) -- (24,6);}
      \,\,\bar\otimes\,\,\tikzmath[scale=\textscale]
      {\useasboundingbox (22,6) rectangle (26,12);\draw (22,6) -- (22,8) -- (26,8) -- (26,6);}%tikzmath	
$\,,
and thus
   $L^2 \left(\tikzmath[scale=\textscale]
   {  \useasboundingbox (-2,0) rectangle (26,14);
      \draw[thick, double] (0,6) -- (0,12) -- (6,12);
      \draw (6,12) -- (18,12) (10,6) -- (10,8) -- (14,8) -- (14,6);
      \draw[ultra thick] (18,12) -- (24,12) -- (24,6); 
   } \right)$
with
\[\begin{split}
   L^2 \left(
   \tikzmath[scale=\displscale]
   {  \useasboundingbox (-2,0) rectangle (26,14);
      \draw[thick, double] (0,6) -- (0,12) -- (6,12);
      \draw (6,12) -- (18,12); 
      \draw[ultra thick] (18,12) -- (24,12) -- (24,6); \draw[->] (11.5,12) -- (11.4,12);
   } \right) 
   \otimes
   L^2 \left(
   \tikzmath[scale=\displscale]
   {  \useasboundingbox (8,0) rectangle (16,14);
      \draw (10,6) -- (10,8) -- (14,8) -- (14,6);\draw[->] (12.59,8) -- (12.6,8);      
   } \right) 
\,&=\, L^2\Big((D \circledast_{\calb} E)(S^1_\top)\Big)
\otimes H_0(-S_m,\calb)\\
\,&=\, L^2\Big((D \circledast_{\calb} E)(S^1_\top)\Big)
\otimes\, \overline{H_0(S_m,\calb)}.
\end{split}\]
Fusing with $H_0(S_m,\calb)$ and applying Lemma \ref{lem: Hbar x_B(S_m) H = C}, we get a canonical isomorphism
\begin{align*}
U\,:\,\, L^2 \Big(
   \tikzmath[scale=\displscale]
   {  \useasboundingbox (-2,0) rectangle (26,14);
      \draw[thick, double] (0,6) -- (0,12) -- (6,12);
      \draw (6,12) -- (18,12); 
      \draw[ultra thick] (18,12) -- (24,12) -- (24,6); \draw[->] (11.5,12) -- (11.4,12);
   } \Big)
\,=\, & L^2\Big((D \circledast_{\calb} E)(S^1_\top)\Big)\\
\xrightarrow{\cong}\, & L^2\Big((D \circledast_{\calb} E)(S^1_\top)\Big)
\otimes\, \overline{H_0(S_m,\calb)}\underset{\calb(S_m)}\boxtimes H_0(S_m,\calb)\\
\xrightarrow{\cong}\,& L^2 \left(
   \tikzmath[scale=\displscale]
   {  \useasboundingbox (-2,0) rectangle (26,14);
      \draw[thick, double] (0,6) -- (0,12) -- (6,12);
      \draw (6,12) -- (18,12)  (10,6) -- (10,8) -- (14,8) -- (14,6);
      \draw[ultra thick] (18,12) -- (24,12) -- (24,6); \draw[->] (11.5,12) -- (11.4,12);\draw[->] (12.59,8) -- (12.6,8);
   } \right) \underset{\calb(S_m)}\boxtimes H_0(S_m,\calb).
\end{align*}
Recall from Appendix~\ref{subsec:dualizability} that
the $L^2$-space construction is functorial for finite homomorphisms
between von Neumann algebras with finite-dimensional center.
By Corollary~\ref{cor: is a finite homomorphism of von Neumann algebra},
the inclusion 
  $\iota \colon 
    \tikzmath[scale=\textscale]
    {  \useasboundingbox (-2,4) rectangle (26,14);
       \draw[thick, double] (0,6) -- (0,12) -- (6,12);
       \draw (6,12) -- (18,12) (10,6) -- (10,8) -- (14,8) -- (14,6);
       \draw[ultra thick] (18,12) -- (24,12) -- (24,6); 
    } %tikzmath
    \to
    \tikzmath[scale=\textscale]
       {   \useasboundingbox (-2,4) rectangle (26,14);
           \draw[thick, double] (0,6) -- (0,12) -- (6,12);
           \draw (6,12) -- (18,12) (10,6) -- (10,8) -- (14,8) -- (14,6);
           \draw[ultra thick] (18,12) -- (24,12) -- (24,6); 
           \draw[densely dotted] (12,8) -- (12,12); 
       } %tikzmath
  $
  therefore induces a map 
  $L^2(\iota)\colon 
    L^2(\tikzmath[scale=\textscale]
    {  \useasboundingbox (-2,4) rectangle (26,14);
       \draw[thick, double] (0,6) -- (0,12) -- (6,12);
       \draw (6,12) -- (18,12) (10,6) -- (10,8) -- (14,8) -- (14,6);
       \draw[ultra thick] (18,12) -- (24,12) -- (24,6); 
    }) %tikzmath
    \to
    L^2(\tikzmath[scale=\textscale]
       {   \useasboundingbox (-2,4) rectangle (26,14);
           \draw[thick, double] (0,6) -- (0,12) -- (6,12);
           \draw (6,12) -- (18,12) (10,6) -- (10,8) -- (14,8) -- (14,6);
           \draw[ultra thick] (18,12) -- (24,12) -- (24,6); 
           \draw[densely dotted] (12,8) -- (12,12); 
       }) %tikzmath
  $.
  Let $L^2(\iota)_{\mathrm{iso}}$ be the isometry in the polar decomposition of $L^2(\iota)$.
  We set $\Psi$ to be the composite
  \begin{equation}\label{eq: def of Psi}\begin{split}
  \Psi\,\colon\, L^2 \Big(
   \tikzmath[scale=\displscale]
   {  \useasboundingbox (-2,0) rectangle (26,14);
      \draw[thick, double] (0,6) -- (0,12) -- (6,12);
      \draw (6,12) -- (18,12); 
      \draw[ultra thick] (18,12) -- (24,12) -- (24,6); %\draw[->] (11.5,12) -- (11.4,12);
   } \Big)
   \xrightarrow{\,\,U\,\,}&\,\,
   L^2 \left(
   \tikzmath[scale=\displscale]
   {  \useasboundingbox (-2,0) rectangle (26,14);
      \draw[thick, double] (0,6) -- (0,12) -- (6,12);
      \draw (6,12) -- (18,12)  (10,6) -- (10,8) -- (14,8) -- (14,6);
      \draw[ultra thick] (18,12) -- (24,12) -- (24,6); %\draw[->] (11.5,12) -- (11.4,12);\draw[->] (12.59,8) -- (12.6,8);
   } \right) \boxtimes_{\calb(S_m)} H_0(S_m,\calb)\\
   \xrightarrow{L^2(\iota)_\mathrm{iso}\otimes \mathrm{id}}&\,\,
   L^2 \left(
   \tikzmath[scale=\displscale]
   {  \useasboundingbox (-2,0) rectangle (26,14);
      \draw[thick, double] (0,6) -- (0,12) -- (6,12);
      \draw (6,12) -- (18,12)  (10,6) -- (10,8) -- (14,8) -- (14,6);
      \draw[ultra thick] (18,12) -- (24,12) -- (24,6); %\draw[->] (11.5,12) -- (11.4,12);\draw[->] (12.59,8) -- (12.6,8);
      \draw[densely dotted] (12,8) -- (12,12); 
   } \right) \boxtimes_{\calb(S_m)} H_0(S_m,\calb)\\
   \xrightarrow{\hspace{.3cm}\Psi_0\otimes \mathrm{id}\hspace{.3cm}}&\,\,\,\,
   \tikzmath[scale=\displscale]
      {    \fill[vacuumcolor] (0,0) rectangle (10,12)
                             (14,0) rectangle (24,12); 
           \draw (6,12) -- (10,12) -- (10,0) -- (6,0) 
                 (18,12) -- (14,12) -- (14,0) -- (18,0);
           \draw[thick, double] (6,12) -- (0,12) -- (0,0) -- (6,0);
           \draw[ultra thick] (18,12) -- (24,12) -- (24,0) -- (18,0);
           \fill[vacuumcolor]  (10,0) rectangle (14,4)  
                              (10,8) rectangle (14,12);
           \draw (10,0) rectangle (14,4) (10,8) rectangle (14,12);
      }\,\underset{\calb(S_m)}\boxtimes\! H_0(S_m,\calb) = G\big(H_0(D),H_0(E)\big),
  \end{split}\end{equation}
  where $\Psi_0$ is the unitary isomorphism from Theorem~\ref{thm:G_0=L^2}.
\end{proof}

We will prove later, in Theorem \ref{thm: Omega is an iso} concerning the composite map \eqref{eq: definition of Omega}, that the map $\Psi$ is actually an isomorphism.  We can already observe the following special case of that result:

\begin{lemma}\label{lem: Psi_1,1 is an iso}
If $D$ and $E$ are the identity defects of some finite-index conformal net $\cala$,
then $\Psi_{D,E}$ is a unitary isomorphism.
\end{lemma}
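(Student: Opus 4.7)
My plan is to exhibit both the source and the target of $\Psi_{1_\cala,1_\cala}$ as copies of the same irreducible bimodule, and then invoke Schur's lemma together with the isometry property from Proposition~\ref{prop:G=L2}. The candidate bimodule is the vacuum sector $H_0(S_b,\cala)$ of $\cala$ on the large circle $S_b$ introduced in \eqref{eq: figures of intervals 3}.

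First I would identify the source: by Lemma~\ref{lem: unitor}, the algebra $(1_\cala \circledast_\cala 1_\cala)(S^1_\top)$ is canonically isomorphic to $\cala({}^<\!S^1_\top)$, and under the natural identification of the interval ${}^<\!S^1_\top$ with the top half $S_{b,\top}$ of $S_b$, this algebra becomes $\cala(S_{b,\top})$. Hence the source $L^2((1_\cala \circledast_\cala 1_\cala)(S^1_\top))$ is canonically $L^2(\cala(S_{b,\top})) = H_0(S_b,\cala)$. For the target, I would specialize the chain of isomorphisms in the proof of Proposition~\ref{prop: non-canonical construction of Phi} to the case where all three conformal nets coincide and both defects are the identity: that chain passes through the KLM-style decomposition $G_{0,\cala}(H_0(S_l,\cala),H_0(S_r,\cala)) \cong \bigoplus_\lambda H_\lambda(-S_m,\cala)\otimes H_{\bar\lambda}(S_b,\cala)$ provided by Theorem~\ref{thm:KLM}, and collapses after fusion over $\cala(S_m)$ with $H_0(S_m,\cala)$ to yield $G_\cala(H_0(S_l,\cala),H_0(S_r,\cala)) \cong H_0(S_b,\cala)$.

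With both source and target identified as bimodules with $H_0(S_b,\cala)$, the map $\Psi$ becomes a bounded intertwiner between two copies of this bimodule. Proposition~\ref{prop:G=L2}, specialized to $D = E = 1_\cala$, gives equivariance of $\Psi$ under the $\cala$-actions on the four quadrant arcs $S_{b,\ulcorner}, S_{b,\urcorner}, S_{b,\llcorner}, S_{b,\lrcorner}$ of $S_b$. The top two arcs cover $S_{b,\top}$ (meeting in a single interior point), so by strong additivity they jointly generate $\cala(S_{b,\top})$; analogously the bottom two arcs generate $\cala(S_{b,\bot})$. Hence $\Psi$ intertwines the full left/right actions of $\cala(S_{b,\top})$ and $\cala(S_{b,\bot})^{\mathrm{op}}$. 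Since $\cala$ is irreducible, $\cala(S_{b,\top})$ is a factor, so $L^2(\cala(S_{b,\top})) = H_0(S_b,\cala)$ is an irreducible $\cala(S_{b,\top})$-$\cala(S_{b,\bot})^{\mathrm{op}}$-bimodule. Schur's lemma then forces $\Psi$ to be a scalar multiple of a unitary isomorphism, and the isometry property from Proposition~\ref{prop:G=L2} forces this scalar to have absolute value one; hence $\Psi$ is itself a unitary isomorphism.

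The hard part will be the geometric bookkeeping needed to ensure that the two identifications with $H_0(S_b,\cala)$---the one on the source coming from Lemma~\ref{lem: unitor} and the one on the target coming from the proof of Proposition~\ref{prop: non-canonical construction of Phi}---are both genuinely compatible with the four quadrant actions on which $\Psi$ is equivariant, so that the strong additivity argument can legitimately upgrade quadrant-level equivariance to full bimodule equivariance. The non-canonicity of the target-side identification introduces only an overall unitary factor, which is absorbed harmlessly into the final scalar and does not affect the conclusion.
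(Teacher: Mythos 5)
Your proposal is correct and follows essentially the same route as the paper: the paper identifies the source with $H_0(S_b,\cala)$, uses the computation \eqref{eq: computation of square with hole} (via Theorem~\ref{thm:KLM}) to see the target is also isomorphic to $H_0(S_b,\cala)$ and hence irreducible as an $S_b$-sector, and then concludes that the injective $S_b$-sector homomorphism $\Psi$ must be an isomorphism. Your Schur-lemma formulation with the $\cala(S_{b,\top})$--$\cala(S_{b,\bot})^{\op}$-bimodule structure and the strong-additivity upgrade from quadrant equivariance is just a more explicit phrasing of the same argument.
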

\begin{proof}
We need to show that the map 
\[
\big(\big(\Psi_0\circ L^2(\iota)_\mathrm{iso}\big)\otimes \mathrm{id}\big)\circ U\,:\,
H_0(S_b) =
  L^2 \left(\tikzmath[scale=\displscale]
  {\useasboundingbox (-2,0) rectangle (26,13);
   \draw (0,6) -- (0,12) -- (24,12) -- (24,6);
  }%tikzmath
   \right)
\,\,\to\,\,
   \tikzmath[scale=\displscale]
  {\filldraw[fill=vacuumcolor] (0,0) rectangle (10,12)
   (14,0) rectangle (24,12) (10,0) rectangle (14,4) (10,8) rectangle (14,12);
  }%tikzmath
  \,\underset{\cala(S_m)}\boxtimes\! H_0(S_m)
\]
is an isomorphism.
By the computation \eqref{eq: computation of square with hole}, we know that the right-hand side is isomorphic to $H_0(S_b)$, and thus is irreducible as an $S_b$-sector of $\cala$.
The above map is a homomorphism of $S_b$-sectors and is injective by the previous proposition.
It is therefore an isomorphism.
\end{proof}

\subsection*{\hspace*{-18pt}Associativity for the inclusion of the vacuum sector}

Using the isometric embedding $\Psi$ from \eqref{eq: def of Psi} in place of the unitary isomorphism $\Psi_0$ from \eqref{eq: the map Psi_0}, we can form the following diagram analogous to \eqref{eq: Assoc of Psi_0}:
  \def\SingleLtwoWithTwoColors#1#2{
	L^2 \hspace{-.5ex}
	\left( \tikzmath[scale=\textscale] {\draw[#1] (12,6) -- (12,12) -- (18,12);\draw[#2] (18,12) -- (24,12) -- (24,6);} \right)}
  \def\LtwoWithThreeColors#1#2#3#4{
	L^2 \hspace{-.5ex}
	\left(\tikzmath[scale=\textscale]
	{\draw[#1] (0,6) -- (0,12) -- (6,12);
	\draw[#2] (6,12) -- (18,12);
	\draw[#3] (18,12) -- (24,12) -- (24,6);#4} \right)}
  \def\FillLeftUpperAndLowerSquares{
	\fill[vacuumcolor] (10,0) rectangle (14,4)  (10,8) rectangle (14,12);
	\draw[ultra thin] (8,0) -- (10,0) -- (10,12) -- (8,12) (16,0) -- (14,0) -- (14,12) -- (16,12) (10,0) rectangle (14,4) (10,8) rectangle (14,12);}
  \def\FillRightUpperAndLowerSquares{
	\fill[vacuumcolor](38,0) rectangle (42,4)  (38,8) rectangle (42,12);
	\draw[thick] (36,0) -- (38,0) -- (38,12) -- (36,12) (44,0) -- (42,0) -- (42,12) -- (44,12) (38,0) rectangle (42,4)  (38,8) rectangle (42,12);}
\begin{equation}\label{eq: Assoc of Psi}
  \begin{matrix}\xymatrix{
      %TOP LEFT 
      \tikzmath[scale=\displscale]
      {\draw[ultra thin, dash pattern=on .5pt off 1pt](-12,0) rectangle (66,12);
      	\node at (27,6) {$L^2 \hspace{-.5ex} \left(\tikzmath[scale=\textscale]
	{\draw[thick, double](0,6) -- (0,12) -- (6,12);
	\draw[ultra thin] (6,12) -- (18,12);
	\draw[thick] (18,12) -- (30,12);
	\draw[ultra thick](30,12) -- (36,12) -- (36,6); 
                }  %tikzmath
	\right)$};}
      \ar[r] \ar[d]   &
      %TOP RIGHT
      \tikzmath[scale=\displscale]
      {\FillRightUpperAndLowerSquares
        \node at    (12,6)	{$\LtwoWithThreeColors{thick, double}{ultra thin}{thick}{}$};
        \node at (54.5,6)	{$\SingleLtwoWithTwoColors{thick}{ultra thick}$}; 
        \draw[ultra thin, dash pattern=on .5pt off 1pt]
                (36,0) -- (-12,0) -- (-12,12) -- (36,12)
                (44,12) -- (66,12) -- (66,0) -- (44,0);
        \filldraw[thick, fill=vacuumcolor]  (38.7,4.7) rectangle (41.3,7.3);
                }  %tikzmath
      \ar[d]   \\
      %BOTTOM LEFT
      \tikzmath[scale=\displscale]
      {\FillLeftUpperAndLowerSquares 
        \node at     (-1,6)	{$\SingleLtwoWithTwoColors{thick, double}{ultra thin}$}; 
        \node at    (40,6)	{$\LtwoWithThreeColors{ultra thin}{thick}{ultra thick}{}$};
        \draw[ultra thin, dash pattern=on .5pt off 1pt]
                (10,0) -- (-12,0) -- (-12,12) --(10,12)
                (14,12) -- (66,12) -- (66,0) -- (14,0);
        \filldraw[ultra thin, fill=vacuumcolor]  (10.4,4.4) rectangle (13.6,7.6);
                }  %tikzmath
      \ar[r]   &
      %BOTTOM RIGHT 
      \tikzmath[scale=\displscale]
      {\FillLeftUpperAndLowerSquares\FillRightUpperAndLowerSquares
        \node at     (-1,6)	{$\SingleLtwoWithTwoColors{thick, double}{ultra thin}$}; 
        \node at    (26,6)	{$\SingleLtwoWithTwoColors{ultra thin}{thick}$}; 
        \node at (54.5,6)	{$\SingleLtwoWithTwoColors{thick}{ultra thick}$}; 
        \draw[ultra thin, dash pattern=on .5pt off 1pt]
                (14,12) -- (36,12) (14,0) -- (36,0)
                (10,0) -- (-12,0) -- (-12,12) --(10,12)
                (44,12) -- (66,12) -- (66,0) -- (44,0);
        \filldraw[ultra thin, fill=vacuumcolor]  (10.4,4.4) rectangle (13.6,7.6);
        \filldraw[thick, fill=vacuumcolor]  (38.7,4.7) rectangle (41.3,7.3);
                } %tikzmath
    }\put(0,-50){.} %xymatrix 
  \end{matrix}\end{equation}

\begin{lemma}\label{lem: Associativity of Psi}
Let $\calb$ and $\calc$ be finite-index conformal nets,
and let ${}_\cala D_\calb$, ${}_\calb E_\calc$, and ${}_\calc F_\cald$ be irreducible defects.
Then the diagram \eqref{eq: Assoc of Psi} is commutative.
\end{lemma}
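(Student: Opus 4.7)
The plan is to reduce the associativity of $\Psi$ to the already-established associativity of $\Psi_0$ by decomposing $\Psi$ into its three constituent pieces, as in \eqref{eq: def of Psi}: the unit isomorphism $U$ supplied by Lemma~\ref{lem: Hbar x_B(S_m) H = C}, the polar isometry $L^2(\iota)_{\mathrm{iso}}$ of the $L^2$-image of the keyhole inclusion (Corollary~\ref{cor: is a finite homomorphism of von Neumann algebra}), and the unitary $\Psi_0$ from Theorem~\ref{thm:G_0=L^2}. One then stacks three commuting squares—one for each piece—to obtain commutativity of \eqref{eq: Assoc of Psi}. Throughout, the two keystones being inserted live over disjoint middle intervals (one for $\calb$, one for $\calc$), so all the operations attached to the $\calb$-keystone commute with those attached to the $\calc$-keystone.

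For the $\Psi_0$-square, I would invoke Proposition~\ref{prop: associativity of Psi (Appendix)} twice—once for each intermediate net, inserting the respective $S_m$-vacuum—to assemble diagram \eqref{eq: Assoc of Psi_0}, which is precisely the $\Psi_0$-shadow of \eqref{eq: Assoc of Psi}. For the $U$-square, the point is that the split property identifies
\[
\tikzmath[scale=\textscale]{\draw[thick, double](0,6) -- (0,12) -- (6,12);\draw[ultra thin] (6,12) -- (18,12)(10,6) -- (10,8) -- (14,8) -- (14,6);\draw[thick] (18,12) -- (30,12)(22,6) -- (22,8) -- (26,8) -- (26,6);\draw[ultra thick](30,12) -- (36,12) -- (36,6);}
\;\cong\;
\tikzmath[scale=\textscale]{\draw[thick, double](0,6) -- (0,12) -- (6,12);\draw[ultra thin] (6,12) -- (18,12);\draw[thick] (18,12) -- (30,12);\draw[ultra thick](30,12) -- (36,12) -- (36,6);}
\,\barox\,\calb\big(\tikzmath[scale=\textscale]{\useasboundingbox(8,4) rectangle (16,10);\draw(10,6) -- (10,8) -- (14,8) -- (14,6);}\big)\,\barox\,\calc\big(\tikzmath[scale=\textscale]{\useasboundingbox(20,4) rectangle (28,10);\draw(22,6) -- (22,8) -- (26,8) -- (26,6);}\big),
\]
and the two applications of $U$ corresponding to inserting first the $\calb$-keystone and then the $\calc$-keystone (or vice versa) both realize the same identification with the doubly-keyholed $L^2$-space tensored with $H_0(S_m,\calb)\ox H_0(S_m,\calc)$; the fusion operations with the two keystone vacua act on disjoint tensor factors and hence commute on the nose.

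For the $L^2(\iota)_{\mathrm{iso}}$-square, the key input is functoriality of $L^2$ for finite homomorphisms (recalled in the paragraph preceding \eqref{eq: def of Psi}), applied to the commutative square of finite inclusions one gets by opening the two keyholes in either order:
\[
\tikzmath[scale=\textscale]{\draw[thick, double](0,6) -- (0,12) -- (6,12);\draw[ultra thin] (6,12) -- (18,12);\draw[thick] (18,12) -- (30,12);\draw[ultra thick](30,12) -- (36,12) -- (36,6);}\;\subset\;
\tikzmath[scale=\textscale]{\draw[thick, double](0,6) -- (0,12) -- (6,12);\draw[ultra thin] (6,12) -- (18,12)(10,6) -- (10,8) -- (14,8) -- (14,6);\draw[densely dotted](12,8)--(12,12);\draw[thick] (18,12) -- (30,12);\draw[ultra thick](30,12) -- (36,12) -- (36,6);}\;\subset\;
\tikzmath[scale=\textscale]{\draw[thick, double](0,6) -- (0,12) -- (6,12);\draw[ultra thin] (6,12) -- (18,12)(10,6) -- (10,8) -- (14,8) -- (14,6);\draw[thick] (18,12) -- (30,12)(22,6) -- (22,8) -- (26,8) -- (26,6);\draw[densely dotted](12,8)--(12,12)(24,8)--(24,12);\draw[ultra thick](30,12) -- (36,12) -- (36,6);}
\]
and symmetrically through the $\calc$-keystoned algebra. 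Each inclusion is finite, and the two keystone openings commute because they are implemented at disjoint subintervals. Under $L^2$, the two resulting finite homomorphisms commute, so taking polar decompositions, their isometric parts commute as well (the positive parts being functions of the moduli, which commute with one another). This gives the commutativity of the $L^2(\iota)_{\mathrm{iso}}$-square.

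The main obstacle is the $L^2(\iota)_{\mathrm{iso}}$-step: one must verify that the polar decomposition is compatible with the commuting composition of two finite inclusions that act on disjoint portions of the algebra. This is not automatic for polar decomposition of a composite in general, but here the two inclusions factor through an algebra of the form $A_1\,\barox\,A_2$ (with the two keystones sitting in the separate tensor factors), so the induced $L^2$-maps are of the form $\phi_1\otimes\phi_2$ under a corresponding tensor splitting of the $L^2$-spaces. For such product maps, the polar decomposition factors, and the isometric part of the composite is the composite of the isometric parts, which closes out the argument.
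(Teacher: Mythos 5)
Your high-level decomposition --- a $\Psi_0$-layer handled by Proposition~\ref{prop: associativity of Psi (Appendix)}, a keystone-fusion layer handled by disjointness of the two middle circles, and an $L^2(\iota)_{\mathrm{iso}}$-layer --- is the same as the paper's (which organizes the argument as a $4\times 4$ grid of nine squares), and your treatment of the first two layers is essentially fine. The gap is in the $L^2(\iota)_{\mathrm{iso}}$-layer. You assert that the two keyhole-opening inclusions factor through an algebra of the form $A_1\,\bar\otimes\,A_2$ with the two keystones in separate tensor factors, so that the induced $L^2$-maps have the form $\phi_1\otimes\phi_2$ and their polar parts commute. No such factorization exists: the middle defect $E$ contributes a single connected interval running from the $\calb$-keyhole to the $\calc$-keyhole, so its algebra cannot be distributed between two tensor factors; and the hatted algebras $\hat\calb(J_u)$ and $\hat\calc(J_u')$ are commutants taken on vacuum sectors, entangled with the adjacent defect algebras (indeed $D(J_l)\vee\hat\calb(J_u)\vee E(J_r)$ is itself a four-interval commutant $\hat{(\,\cdot\,)}$, by Lemma~\ref{lem: needed for dotted lines}), not tensor complements. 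The split property only peels off the small \emph{un}-hatted squares $\calb(S_m)$ and $\calc(S_m^{\mathrm{tr}})$, which is not enough to split the hatted inclusions.

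Without that splitting, the commutativity of the polar parts is precisely the point at issue: $L^2(-)_{\mathrm{iso}}$ is \emph{not} functorial for composites of finite homomorphisms in general. The paper's proof of the corresponding square invokes the conditional functoriality of $L^2(-)_{\mathrm{iso}}$ from \cite[\propfunctorialityofLiso]{BDH(Dualizability+Index-of-subfactors)}, whose hypotheses are (i) finiteness of the inclusions (Corollary~\ref{cor: is a finite homomorphism of von Neumann algebra}), (ii) factoriality of the doubly-dotted target (Lemma~\ref{lem: factoriality of alg with dotted lines}, applied twice), and, crucially, (iii) the center condition $Z(B)\subseteq\iota(A)$ for the relevant inclusion $\iota\colon A\to B$. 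Condition (iii) is the content of Lemma~\ref{lem: check conditions in order to apply functoriality of L^2_iso}, and it requires a genuine argument: an $\ell^2$-stabilization converting $\hat\calb(J_u)$ into $\bfB(\ell^2)\,\bar\otimes\,\calb(K_u)$, followed by an identification of centers that uses the finite-dimensionality from Theorem~\ref{thm: semi-simplicity of DoE}. Your proposal contains no substitute for (ii) or (iii), so the key square of the diagram remains unproven.
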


\begin{proof}
By definition, each side of \eqref{eq: Assoc of Psi} is a composite of three maps.
Replacing each side by its definition, that diagram can be expanded to a $4\times 4$ grid 
\[
\tikzmath{
\begin{scope}[cm={1,0,0,.5,(0,0)}]
\foreach \source in {{(0,0)},{(1,0)},{(2,0)},{(0,1)},{(1,1)},{(2,1)},{(0,2)},{(1,2)},{(2,2)},{(0,3)},{(1,3)},{(2,3)}}     {\draw[->] \source + (.1,0) -- +(.9,0); }
\foreach \source in {{(0,3)},{(1,3)},{(2,3)},{(0,1)},{(1,1)},{(2,1)},{(0,2)},{(1,2)},{(2,2)},{(3,3)},{(3,1)},{(3,2)}}     {\draw[->] \source + (0,-.1) -- +(0,-.9); }
\foreach \ite/\x/\y in {1/.5/2.5,2/1.5/2.5,3/2.5/2.5,4/.5/1.5,5/1.5/1.5,6/2.5/1.5,7/.5/.5,8/1.5/.5,9/2.5/.5}             {\node[draw, inner sep = 1.5] at (\x,\y) {\tiny\ite};}
\end{scope}}
\]
that contains $9$ squares.
The four upper left squares of that grid are given by
\def\squared#1{\tikz{\useasboundingbox (-.13,-.11) rectangle (.13,.12);\node[draw, inner sep = 1.5]{\tiny #1};}}
{
\def\thinsquare{\tikzmath[scale=\textscale]{\draw[ultra thin] (0,0) rectangle (4,4);}}
\def\thicksquare{\tikzmath[scale=\textscale]{\draw[thick] (0,0) rectangle (4,4);}}
\def\thinsqfill{\tikzmath[scale=\textscale]{\filldraw[ultra thin, fill=vacuumcolor] (0,0) rectangle (4,4);}}
\def\thicksqfill{\tikzmath[scale=\textscale]{\filldraw[thick, fill=vacuumcolor] (0,0) rectangle (4,4);}}
\[
  \begin{matrix}\xymatrix{
        %TOP LEFT 
L^2 \hspace{-.5ex} \left(\tikzmath[scale=\textscale]
	{\draw[thick, double](0,6) -- (0,12) -- (6,12);
	\draw[ultra thin] (6,12) -- (18,12);
	\draw[thick] (18,12) -- (30,12);
	\draw[ultra thick](30,12) -- (36,12) -- (36,6); }%tikzmath
	\right)
      \ar[r] \ar[d] \ar@{}[dr]|{\squared1}  &
      %TOP MID
L^2 \hspace{-.5ex} \left(\tikzmath[scale=\textscale]
	{\draw[thick, double](0,6) -- (0,12) -- (6,12);
	\draw[ultra thin] (6,12) -- (18,12);
	\draw[thick] (18,12) -- (30,12)(22,6) -- (22,8) -- (26,8) -- (26,6);
	\draw[ultra thick](30,12) -- (36,12) -- (36,6); }%tikzmath
	\right)\boxtimes_{\thicksquare}\thicksqfill
      \ar[r] \ar[d] \ar@{}[dr]|{\squared2}  &
      %TOP RIGHT
L^2 \hspace{-.5ex} \left(\tikzmath[scale=\textscale]
	{\draw[thick, double](0,6) -- (0,12) -- (6,12);
	\draw[ultra thin] (6,12) -- (18,12);
	\draw[thick] (18,12) -- (30,12)(22,6) -- (22,8) -- (26,8) -- (26,6);
	\draw[ultra thick](30,12) -- (36,12) -- (36,6);
	\draw[dash pattern=on .4pt off .62pt] (24,8) -- (24,12);}%tikzmath
	\right)\boxtimes_{\thicksquare}\thicksqfill
	\ar[d]\\
        %MID LEFT 
L^2 \hspace{-.5ex} \left(\tikzmath[scale=\textscale]
	{\draw[thick, double](0,6) -- (0,12) -- (6,12);
	\draw[ultra thin] (6,12) -- (18,12)(10,6) -- (10,8) -- (14,8) -- (14,6);
	\draw[thick] (18,12) -- (30,12);
	\draw[ultra thick](30,12) -- (36,12) -- (36,6); }%tikzmath
	\right)\boxtimes_{\thinsquare}\thinsqfill
      \ar[r] \ar[d] \ar@{}[dr]|{\squared4}  &
      %MID MID
L^2 \hspace{-.5ex} \left(\tikzmath[scale=\textscale]
	{\draw[thick, double](0,6) -- (0,12) -- (6,12);
	\draw[ultra thin] (6,12) -- (18,12)(10,6) -- (10,8) -- (14,8) -- (14,6);
	\draw[thick] (18,12) -- (30,12)(22,6) -- (22,8) -- (26,8) -- (26,6);
	\draw[ultra thick](30,12) -- (36,12) -- (36,6); }%tikzmath
	\right)\boxtimes_{\thinsquare\bar\otimes\thicksquare}(\thinsqfill\!\otimes\! \thicksqfill)
      \ar[r] \ar[d] \ar@{}[dr]|{\squared5}  &
      %MID RIGHT
L^2 \hspace{-.5ex} \left(\tikzmath[scale=\textscale]
	{\draw[thick, double](0,6) -- (0,12) -- (6,12);
	\draw[ultra thin] (6,12) -- (18,12)(10,6) -- (10,8) -- (14,8) -- (14,6);
	\draw[thick] (18,12) -- (30,12)(22,6) -- (22,8) -- (26,8) -- (26,6);
	\draw[ultra thick](30,12) -- (36,12) -- (36,6); 
	\draw[dash pattern=on .4pt off .62pt] (24,8) -- (24,12);}%tikzmath
	\right)\boxtimes_{\thinsquare\bar\otimes\thicksquare}(\thinsqfill\!\otimes\! \thicksqfill)
	\ar[d]\\
        %BOTTOM LEFT 
L^2 \hspace{-.5ex} \left(\tikzmath[scale=\textscale]
	{\draw[thick, double](0,6) -- (0,12) -- (6,12);
	\draw[ultra thin] (6,12) -- (18,12)(10,6) -- (10,8) -- (14,8) -- (14,6);
	\draw[thick] (18,12) -- (30,12);
	\draw[ultra thick](30,12) -- (36,12) -- (36,6); 
	\draw[dash pattern=on .4pt off .62pt] (12,8) -- (12,12);}%tikzmath
	\right)\boxtimes_{\thinsquare}\thinsqfill
      \ar[r]   &
      %BOTTOM MID
L^2 \hspace{-.5ex} \left(\tikzmath[scale=\textscale]
	{\draw[thick, double](0,6) -- (0,12) -- (6,12);
	\draw[ultra thin] (6,12) -- (18,12)(10,6) -- (10,8) -- (14,8) -- (14,6);
	\draw[thick] (18,12) -- (30,12)(22,6) -- (22,8) -- (26,8) -- (26,6);
	\draw[ultra thick](30,12) -- (36,12) -- (36,6); 
	\draw[dash pattern=on .4pt off .62pt] (12,8) -- (12,12);}%tikzmath
	\right)\boxtimes_{\thinsquare\bar\otimes\thicksquare}(\thinsqfill\!\otimes\! \thicksqfill)
      \ar[r]   &
      %BOTTOM RIGHT
L^2 \hspace{-.5ex} \left(\tikzmath[scale=\textscale]
	{\draw[thick, double](0,6) -- (0,12) -- (6,12);
	\draw[ultra thin] (6,12) -- (18,12)(10,6) -- (10,8) -- (14,8) -- (14,6);
	\draw[thick] (18,12) -- (30,12)(22,6) -- (22,8) -- (26,8) -- (26,6);
	\draw[ultra thick](30,12) -- (36,12) -- (36,6); 
	\draw[dash pattern=on .4pt off .62pt] (12,8) -- (12,12)(24,8) -- (24,12);}%tikzmath
	\right)\boxtimes_{\thinsquare\bar\otimes\thicksquare}(\thinsqfill\!\otimes\! \thicksqfill)
}\end{matrix}
\]
where $\boxtimes_{\thinsquare}\thinsqfill$ and $\boxtimes_{\thicksquare}\thicksqfill$ stand for 
$\boxtimes_{\calb(S_m)}H_0(S_m,\calb)$ and $\boxtimes_{\calc(S^{\mathrm{tr}}_m)}H_0(S^{\mathrm{tr}}_m,\calc)$, respectively, and
$\text{$\boxtimes_{\thinsquare\bar\otimes\thicksquare}(\thinsqfill\!\otimes\! \thicksqfill)$}$ stands for
$\boxtimes_{\calb(S_m)\bar\otimes \calc(S^{\mathrm{tr}}_m)}(H_0(S_m,\calb)\otimes H_0(S^{\mathrm{tr}}_m,\calc))$.   } %ends the "{" before the def of \thinsquare
Here, $S^{\mathrm{tr}}_m$ denotes a translated copy of the circle $S_m$.

The squares \squared1\,, \squared2\,, and \squared4 clearly commute.
To see that \squared5 commutes, note first that $\tikzmath[scale=\textscale]
	{\draw[thick, double](0,6) -- (0,12) -- (6,12);
	\draw[ultra thin] (6,12) -- (18,12)(10,6) -- (10,8) -- (14,8) -- (14,6);
	\draw[thick] (18,12) -- (30,12)(22,6) -- (22,8) -- (26,8) -- (26,6);
	\draw[ultra thick](30,12) -- (36,12) -- (36,6); 
	\draw[dash pattern=on .4pt off .62pt] (12,8) -- (12,12);
	\draw[dash pattern=on .4pt off .62pt] (24,8) -- (24,12);}$ is a factor, as can be seen by applying 
Lemma~\ref{lem: factoriality of alg with dotted lines} twice.
That square then commutes by the functoriality of $L^2(-)_\mathrm{iso}$---see~\cite[\propfunctorialityofLiso]{BDH(Dualizability+Index-of-subfactors)} and note that the necessary conditions for that functoriality are satisfied by Corollary~\ref{cor: is a finite homomorphism of von Neumann algebra} and by Lemma~\ref{lem: check conditions in order to apply functoriality of L^2_iso} below.
The upper right squares of our $4\times 4$ gird are given by
\[
\def\thinsquare{\tikzmath[scale=\textscale]{\draw[ultra thin] (0,0) rectangle (4,4);}}
\def\thicksquare{\tikzmath[scale=\textscale]{\draw[thick] (0,0) rectangle (4,4);}}
\def\thinsqfill{\tikzmath[scale=\textscale]{\filldraw[ultra thin, fill=vacuumcolor] (0,0) rectangle (4,4);}}
\def\thicksqfill{\tikzmath[scale=\textscale]{\filldraw[thick, fill=vacuumcolor] (0,0) rectangle (4,4);}}
  \begin{matrix}\xymatrix{
      %TOP LEFT
L^2 \big(\,\tikzmath[scale=\planscale]
	{\draw[thick, double](0,6) -- (0,12) -- (6,12);
	\draw[ultra thin] (6,12) -- (18,12);
	\draw[thick] (18,12) -- (30,12)(22,6) -- (22,8) -- (26,8) -- (26,6);
	\draw[ultra thick](30,12) -- (36,12) -- (36,6);
	\draw[dash pattern=on .4pt off .62pt] (24,8) -- (24,12);}%tikzmath
	\,\big)\boxtimes_{\thicksquare}\thicksqfill
	\ar[d]\ar[r] \ar@{}[dr]|{\squared3} &
	%TOP RIGHT
	      \tikzmath[scale=\displscale]
      {\FillRightUpperAndLowerSquares
        \node at    (12,6)	{$\LtwoWithThreeColors{thick, double}{ultra thin}{thick}{}$};
        \node at (54.5,6)	{$\SingleLtwoWithTwoColors{thick}{ultra thick}$}; 
        \draw[ultra thin, dash pattern=on .5pt off 1pt]
                (36,0) -- (-12,0) -- (-12,12) -- (36,12)
                (44,12) -- (66,12) -- (66,0) -- (44,0);
        \filldraw[thick, fill=vacuumcolor]  (38.7,4.7) rectangle (41.3,7.3);
                }  %tikzmath
	\ar[d]\\
      %MID LEFT
L^2 \big(\,\tikzmath[scale=\planscale]
	{\draw[thick, double](0,6) -- (0,12) -- (6,12);
	\draw[ultra thin] (6,12) -- (18,12)(10,6) -- (10,8) -- (14,8) -- (14,6);
	\draw[thick] (18,12) -- (30,12)(22,6) -- (22,8) -- (26,8) -- (26,6);
	\draw[ultra thick](30,12) -- (36,12) -- (36,6); 
	\draw[dash pattern=on .4pt off .62pt] (24,8) -- (24,12);}%tikzmath
	\,\big)\boxtimes_{\thinsquare\bar\otimes\thicksquare}(\thinsqfill\!\otimes\! \thicksqfill)
	\ar[d]\ar[r] \ar@{}[dr]|{\squared6} &
	%MID RIGHT
	      \tikzmath[scale=\displscale]
      {\FillRightUpperAndLowerSquares
        \node at    (12,6)	{$\LtwoWithThreeColors{thick, double}{ultra thin}{thick}{\draw[ultra thin] (10,6) -- (10,8) -- (14,8) -- (14,6);}$};
        \node at (54.5,6)	{$\SingleLtwoWithTwoColors{thick}{ultra thick}$}; 
        \draw[ultra thin, dash pattern=on .5pt off 1pt]
                (36,0) -- (-12,0) -- (-12,12) -- (36,12)
                (44,12) -- (66,12) -- (66,0) -- (44,0);
        \filldraw[thick, fill=vacuumcolor]  (38.7,4.7) rectangle (41.3,7.3);
                }  %tikzmath
	\boxtimes_{\thinsquare}\thinsqfill
	\ar[d]\\
      %BOTTOM LEFT
L^2 \big(\,\tikzmath[scale=\planscale]
	{\draw[thick, double](0,6) -- (0,12) -- (6,12);
	\draw[ultra thin] (6,12) -- (18,12)(10,6) -- (10,8) -- (14,8) -- (14,6);
	\draw[thick] (18,12) -- (30,12)(22,6) -- (22,8) -- (26,8) -- (26,6);
	\draw[ultra thick](30,12) -- (36,12) -- (36,6); 
	\draw[dash pattern=on .4pt off .62pt] (12,8) -- (12,12)(24,8) -- (24,12);}%tikzmath
	\,\big)\boxtimes_{\thinsquare\bar\otimes\thicksquare}(\thinsqfill\!\otimes\! \thicksqfill)
	\ar[r] &
	%BOTTOM RIGHT
	      \tikzmath[scale=\displscale]
      {\FillRightUpperAndLowerSquares
        \node at    (12,6)	{$\LtwoWithThreeColors{thick, double}{ultra thin}{thick}{\draw[ultra thin] (10,6) -- (10,8) -- (14,8) -- (14,6);
	\draw[dash pattern=on .4pt off .62pt] (12,8) -- (12,12)(24,8) -- (24,12);}$};
        \node at (54.5,6)	{$\SingleLtwoWithTwoColors{thick}{ultra thick}$}; 
        \draw[ultra thin, dash pattern=on .5pt off 1pt]
                (36,0) -- (-12,0) -- (-12,12) -- (36,12)
                (44,12) -- (66,12) -- (66,0) -- (44,0);
        \filldraw[thick, fill=vacuumcolor]  (38.7,4.7) rectangle (41.3,7.3);
                }  %tikzmath
	\boxtimes_{\thinsquare}\thinsqfill
}\end{matrix}
\]
and their commutativity is unproblematic.
We refrain from drawing the last row of the gird.
The squares \squared7 and \squared8 are similar to \squared3 and \squared6\,.
The commutativity of \squared9 follows from that of \eqref{eq: Assoc of Psi_0}.
\end{proof}

\begin{lemma}\label{lem: check conditions in order to apply functoriality of L^2_iso}
Let ${}_\cala D_\calb$, ${}_\calb E_\calc$, and ${}_\calc F_\cald$ be as in Lemma \ref{lem: Associativity of Psi}.
Let $A:= \tikzmath[scale=\textscale]
	{\draw[thick, double](0,6) -- (0,12) -- (6,12);
	\draw[ultra thin] (6,12) -- (18,12)(10,6) -- (10,8) -- (14,8) -- (14,6);
	\draw[thick] (18,12) -- (30,12)(22,6) -- (22,8) -- (26,8) -- (26,6);
	\draw[ultra thick](30,12) -- (36,12) -- (36,6); }$\,,
$B:= \tikzmath[scale=\textscale]
	{\draw[thick, double](0,6) -- (0,12) -- (6,12);
	\draw[ultra thin] (6,12) -- (18,12)(10,6) -- (10,8) -- (14,8) -- (14,6);
	\draw[thick] (18,12) -- (30,12)(22,6) -- (22,8) -- (26,8) -- (26,6);
	\draw[ultra thick](30,12) -- (36,12) -- (36,6); 
	\draw[dash pattern=on .4pt off .62pt] (12,8) -- (12,12);}$\,,
and let $\iota:A\to B$ denote the inclusion.
Then $Z(B)\subset \iota(A)$.
\end{lemma}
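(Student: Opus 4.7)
The strategy is to decompose $B$ as a product of a ``left dotted'' factor $B_0$ and a ``right'' sub-algebra $C$ that is already contained in $\iota(A)$, and then use factoriality of $B_0$ to force the center of $B$ into $C$.

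First, I will partition the middle $E$-interval $J_m$ (on which the $E$-portion of $A$ and $B$ lives) into two closed sub-intervals $J_m^{(1)}$ and $J_m^{(2)}$ with disjoint interiors meeting at a single point. Define
\[
B_0 \,:=\, D(J_l) \vee \hat\calb(J_u^{(1)}) \vee E(J_m^{(1)}), \qquad C \,:=\, E(J_m^{(2)}) \vee \calc(J_u^{(2)}) \vee F(J_r).
\]
By strong additivity for $E$ one has $B = B_0 \vee C$, and by construction $C \subseteq \iota(A)$ (as $C$ involves only the un-hatted second keyhole). Factoriality of $B_0$ is an instance of Lemma~\ref{lem: factoriality of alg with dotted lines}, applied to the two-defect sub-configuration consisting of $D$ and $E|_{J_m^{(1)}}$ joined by the dotted first keyhole across the net $\calb$---this is precisely the situation of Theorem~\ref{thm:G_0=L^2} truncated to the left sub-picture. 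Locality for the defect $E$, together with the spatial separation of the left- and right-keyhole regions, then shows that $B_0$ and $C$ commute inside the ambient $\bfB(\mathcal H)$.

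The key remaining step is to upgrade this commutation to the identity $B_0' \cap B = C$. For this, slightly thin $J_m^{(1)}$ and $J_m^{(2)}$ into a pair of strictly disjoint sub-intervals $\tilde J_m^{(1)}, \tilde J_m^{(2)}$ and apply the split property for defects (Proposition~\ref{prop:split-property-defects}) to conclude that the corresponding smaller algebras $\tilde B_0$ and $\tilde C$ generate their spatial tensor product. Since $\tilde B_0$ is still a factor, one has $\tilde B_0' \cap (\tilde B_0 \,\bar\ox\, \tilde C) = \tilde C$. Shrinking the gap between $\tilde J_m^{(1)}$ and $\tilde J_m^{(2)}$ while invoking strong additivity then yields $B_0' \cap B = C$ in the limit. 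Granting this, any $z \in Z(B)$ satisfies $z \in B_0' \cap B = C \subseteq \iota(A)$, and we are done.

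The main obstacle is the tensor-product step: plain locality delivers only commutation, while the equality $B_0' \cap B = C$ requires a genuine spatial factorization of the ambient Hilbert space---available via the split property only for strictly disjoint intervals, whereas strong additivity needs $J_m^{(1)} \cup J_m^{(2)} = J_m$. Reconciling these two requirements through a limiting argument, in the spirit of the proofs of Theorem~\ref{thm: semi-simplicity of DoE} and Lemma~\ref{lem: Dtilde H B is finite}, is the technical heart of the proof.
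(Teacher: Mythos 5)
Your reduction to showing $B_0'\cap B=C$, with $B_0$ the ``dotted'' left part and $C$ the right part already inside $\iota(A)$, is where the argument breaks. The identity $B_0'\cap B=C$ is a Haag-duality--type statement for an algebra containing $\hat\calb(J_u)$; it is nowhere available in the paper and is of essentially the same depth as the lemma you are trying to prove. Your proposed route to it does not close: after thinning $J_m^{(1)},J_m^{(2)}$ to disjoint intervals $\tilde J_m^{(1)},\tilde J_m^{(2)}$, the split property gives only $\tilde B_0'\cap\bigl(\tilde B_0\,\bar\otimes\,\tilde C\bigr)=\tilde C$, and this says nothing about $B_0'\cap B$, because $B\supsetneq \tilde B_0\,\bar\otimes\,\tilde C$ (the gap algebra is missing) while $B_0'\subseteq\tilde B_0'$ -- the two effects pull in opposite directions and relative commutants are not continuous under the limit ``shrink the gap.'' An element of $Z(B)$ is not known to lie in $\tilde B_0\,\bar\otimes\,\tilde C$ for any proper thinning, so the tensor-product computation never applies to it. This is exactly the reason Haag duality for nets and defects is an axiom/theorem rather than a formal consequence of locality, strong additivity, and the split property, and a limiting argument ``in the spirit of'' Theorem~\ref{thm: semi-simplicity of DoE} cannot supply it. (Your preliminary steps -- $B=B_0\vee C$, $C\subseteq\iota(A)$, factoriality of $B_0$ via Lemma~\ref{lem: factoriality of alg with dotted lines} -- are fine, but they only yield $Z(B)\subseteq B_0'\cap B$, which is where you are stuck.)

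The paper's proof avoids any relative-commutant computation against the hatted algebra. After splitting off the second keyhole by the split property, it picks an $M$-linear isomorphism $\ell^2\otimes H_0(S_u,\calb)\cong\ell^2\otimes L^2(M)$ for $M=\calb(K_u)$; under this identification $\hat\calb(J_u)=M'$ becomes the \emph{right} action $M^{\op}$ of $\calb(K_u)$, so the dotted algebra is replaced (up to tensoring with $\bfB(\ell^2)$, which does not change the center) by an undotted one in which the $D$-side and the $E$-side acquire the two vertical legs $K_1$ and $K_2$ and hence live on \emph{disjoint} intervals. Only then does the split property give a genuine tensor factorization of the center; the left factor's center is trivial by irreducibility of $D$, and the right factor's center is identified with the center of a subalgebra of $\iota(A)$ using finite-dimensionality of centers (Theorem~\ref{thm: semi-simplicity of DoE}). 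That change of representation is the essential idea missing from your proposal.
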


\begin{proof}
By the split property, we have isomorphisms
$	\tikzmath[scale=\textscale]
	{\draw[thick, double](0,6) -- (0,12) -- (6,12);
	\draw[ultra thin] (6,12) -- (18,12)(10,6) -- (10,8) -- (14,8) -- (14,6);
	\draw[thick] (18,12) -- (30,12)(22,6) -- (22,8) -- (26,8) -- (26,6);
	\draw[ultra thick](30,12) -- (36,12) -- (36,6); }%tikzmath
	\,\cong\, \tikzmath[scale=\textscale]
	{\draw[thick, double](0,6) -- (0,12) -- (6,12);
	\draw[ultra thin] (6,12) -- (18,12)(10,6) -- (10,8) -- (14,8) -- (14,6);
	\draw[thick] (18,12) -- (30,12);
	\draw[ultra thick](30,12) -- (36,12) -- (36,6); }%tikzmath
	\,\,\bar\otimes\,\,\tikzmath[scale=\textscale]
	{\useasboundingbox (22,6) rectangle (26,12);\draw[thick](22,6) -- (22,8) -- (26,8) -- (26,6);}%tikzmath	
$ and
$	\tikzmath[scale=\textscale]
	{\draw[thick, double](0,6) -- (0,12) -- (6,12);
	\draw[ultra thin] (6,12) -- (18,12)(10,6) -- (10,8) -- (14,8) -- (14,6);
	\draw[thick] (18,12) -- (30,12)(22,6) -- (22,8) -- (26,8) -- (26,6);
	\draw[ultra thick](30,12) -- (36,12) -- (36,6);
	\draw[dash pattern=on .4pt off .62pt] (12,8) -- (12,12);}%tikzmath
	\,\cong\, \tikzmath[scale=\textscale]
	{\draw[thick, double](0,6) -- (0,12) -- (6,12);
	\draw[ultra thin] (6,12) -- (18,12)(10,6) -- (10,8) -- (14,8) -- (14,6);
	\draw[thick] (18,12) -- (30,12);
	\draw[ultra thick](30,12) -- (36,12) -- (36,6);
	\draw[dash pattern=on .4pt off .62pt] (12,8) -- (12,12);}%tikzmath
	\,\,\bar\otimes\,\,\tikzmath[scale=\textscale]
	{\useasboundingbox (22,6) rectangle (26,12);\draw[thick](22,6) -- (22,8) -- (26,8) -- (26,6);}%tikzmath	
$\,.
It is therefore sufficient to show that 
\[	Z\Big(\tikzmath[scale=\displscale]
	{\useasboundingbox (-2,2) rectangle (38,14);
	\draw[thick, double](0,6) -- (0,12) -- (6,12);
	\draw[ultra thin] (6,12) -- (18,12)(10,6) -- (10,8) -- (14,8) -- (14,6);
	\draw[thick] (18,12) -- (30,12);
	\draw[ultra thick](30,12) -- (36,12) -- (36,6);
	\draw[dash pattern=on .4pt off .62pt] (12,8) -- (12,12);}%tikzmath
\Big)\,\,\subset\,\,\iota\Big(\tikzmath[scale=\displscale]
	{\useasboundingbox (-2,2) rectangle (38,14);
	\draw[thick, double](0,6) -- (0,12) -- (6,12);
	\draw[ultra thin] (6,12) -- (18,12)(10,6) -- (10,8) -- (14,8) -- (14,6);
	\draw[thick] (18,12) -- (30,12);
	\draw[ultra thick](30,12) -- (36,12) -- (36,6);}%tikzmath
\Big).
\]
Let $S_u$, $K_u$, $J_u$ be as in \eqref{eq: figures of intervals 2} and \eqref{eq:  J_l  J_r  K_u  J_u},
and let $H:=H_0(S_u,\calb)$, and $M:=\calb(K_u)$, with commutant $M'=\hat\calb(J_u)$.
Since $H$ is a faithful $M$-module, we can pick an $M$-linear isomorphism $\ell^2\otimes H\cong \ell^2\otimes L^2(M)$.\footnote{Here, $\ell^2:=\ell^2(\mathbb N)$
could be removed from this isomorphism if we knew that $M$ was a type $\mathit{III}$ factor, a fact which is likely to be true (unless $\calb$ is trivial) but which we haven't proven in our setup.}
Under the corresponding isomorphism of Hilbert spaces
$\ell^2\otimes\,
\tikzmath[scale=\textscale]
      {    \fill[vacuumcolor] (0,0) rectangle (10,12) (14,0) rectangle (22,12) (26,0) rectangle (36,12); 
           \draw[ultra thin] (6,12) -- (10,12) -- (10,0) -- (6,0) (18,12) -- (14,12) -- (14,0) -- (18,0);
           \draw[thick] (18,12) -- (22,12) -- (22,0) -- (18,0) (30,12) -- (26,12) -- (26,0) -- (30,0);
           \draw[thick, double] (6,12) -- (0,12) -- (0,0) -- (6,0);
           \draw[ultra thick] (30,12) -- (36,12) -- (36,0) -- (30,0);
           \filldraw[fill = vacuumcolor, ultra thin] (10,8) rectangle (14,12);
           \filldraw[fill = vacuumcolor, thick] (22,8) rectangle (26,12);
      } %tikzmath
\,\,\cong\, \ell^2\otimes\,
\tikzmath[scale=\textscale]
      {    \fill[vacuumcolor] (0,0) rectangle (10,12) (14,0) rectangle (22,12) (26,0) rectangle (36,12); 
           \draw[ultra thin] (6,12) -- (10,12) -- (10,0) -- (6,0) (18,12) -- (14,12) -- (14,0) -- (18,0);
           \draw[thick] (18,12) -- (22,12) -- (22,0) -- (18,0) (30,12) -- (26,12) -- (26,0) -- (30,0);
           \draw[thick, double] (6,12) -- (0,12) -- (0,0) -- (6,0);
           \draw[ultra thick] (30,12) -- (36,12) -- (36,0) -- (30,0);
           \filldraw[fill = vacuumcolor, thick] (22,8) rectangle (26,12);
      } %tikzmath
$\,, the algebra
\[	\bfB(\ell^2)\,\,\bar\otimes\,\,\tikzmath[scale=\displscale]
	{\useasboundingbox (-2,3) rectangle (38,14);
	\draw[thick, double](0,6) -- (0,12) -- (6,12);
	\draw[ultra thin] (6,12) -- (18,12)(10,6) -- (10,8) -- (14,8) -- (14,6);
	\draw[thick] (18,12) -- (30,12);
	\draw[ultra thick](30,12) -- (36,12) -- (36,6);
	\draw[dash pattern=on .4pt off .62pt] (12,8) -- (12,12);}%tikzmath
\,=\,	\big(\bfB(\ell^2) \,\bar\otimes\, M'\big) \,\vee\, \tikzmath[scale=\displscale]
	{\useasboundingbox (-2,3) rectangle (38,14);
	\draw[thick, double](0,6) -- (0,12) -- (6,12);
	\draw[ultra thin] (6,12) -- (10,12)(14,12) -- (18,12)(10,6) -- (10,8)(14,8) -- (14,6);
	\draw[thick] (18,12) -- (30,12);
	\draw[ultra thick](30,12) -- (36,12) -- (36,6);}%tikzmath
\]
corresponds to
\[
\big(\bfB(\ell^2) \,\bar\otimes\, M^\op\big) \,\vee\, \tikzmath[scale=\displscale]
	{\useasboundingbox (-2,3) rectangle (38,14);
	\draw[thick, double](0,6) -- (0,12) -- (6,12);
	\draw[ultra thin] (6,12) -- (10,12)(14,12) -- (18,12)(10,6) -- (10,8)(14,8) -- (14,6);
	\draw[thick] (18,12) -- (30,12);
	\draw[ultra thick](30,12) -- (36,12) -- (36,6);}%tikzmath
\,=\,\bfB(\ell^2)\,\,\bar\otimes\,\,\tikzmath[scale=\displscale]
	{\useasboundingbox (-2,3) rectangle (38,14);
	\draw[thick, double](0,6) -- (0,12) -- (6,12);
	\draw[ultra thin] (6,12) -- (10,12) -- (10,6)(18,12) -- (14,12) -- (14,6);
	\draw[thick] (18,12) -- (30,12);
	\draw[ultra thick](30,12) -- (36,12) -- (36,6);}%tikzmath
.\]
It follows that 
\[	Z\Big(\tikzmath[scale=\displscale]
	{\useasboundingbox (-2,2) rectangle (38,14);
	\draw[thick, double](0,6) -- (0,12) -- (6,12);
	\draw[ultra thin] (6,12) -- (18,12)(10,6) -- (10,8) -- (14,8) -- (14,6);
	\draw[thick] (18,12) -- (30,12);
	\draw[ultra thick](30,12) -- (36,12) -- (36,6);
	\draw[dash pattern=on .4pt off .62pt] (12,8) -- (12,12);}%tikzmath
\Big)\cong Z\Big(\tikzmath[scale=\displscale]
	{\useasboundingbox (-2,3) rectangle (38,14);
	\draw[thick, double](0,6) -- (0,12) -- (6,12);
	\draw[ultra thin] (6,12) -- (10,12) -- (10,6)(18,12) -- (14,12) -- (14,6);
	\draw[thick] (18,12) -- (30,12);
	\draw[ultra thick](30,12) -- (36,12) -- (36,6);}%tikzmath
\Big)\cong Z\Big(\tikzmath[scale=\displscale]
	{\useasboundingbox (-2,3) rectangle (12,14);
	\draw[thick, double](0,6) -- (0,12) -- (6,12);
	\draw[ultra thin] (6,12) -- (10,12) -- (10,6);}%tikzmath
\Big)\,\bar\otimes\, Z\Big(\tikzmath[scale=\displscale]
	{\useasboundingbox (12,3) rectangle (38,14);
	\draw[ultra thin](18,12) -- (14,12) -- (14,6);
	\draw[thick] (18,12) -- (30,12);
	\draw[ultra thick](30,12) -- (36,12) -- (36,6);}%tikzmath
\Big)\cong Z\Big(\tikzmath[scale=\displscale]
	{\useasboundingbox (12,3) rectangle (38,14);
	\draw[ultra thin](18,12) -- (14,12) -- (14,6);
	\draw[thick] (18,12) -- (30,12);
	\draw[ultra thick](30,12) -- (36,12) -- (36,6);}%tikzmath
\Big),
\]
where the last equality is because ${}_\cala D_\calb$ is irreducible.

We now argue that the natural inclusion
$\tikzmath[scale=\textscale]
	{\useasboundingbox (12,6) rectangle (38,14);
	\draw[ultra thin](18,12) -- (14,12);
	\draw[thick] (18,12) -- (30,12);
	\draw[ultra thick](30,12) -- (36,12);}%tikzmath
\hookrightarrow\tikzmath[scale=\textscale]
	{\useasboundingbox (12,3) rectangle (38,14);
	\draw[ultra thin](18,12) -- (14,12) -- (14,6);
	\draw[thick] (18,12) -- (30,12);
	\draw[ultra thick](30,12) -- (36,12) -- (36,6);}%tikzmath
$
induces an isomorphism of centers.
By Theorem \ref{thm: semi-simplicity of DoE},
the center of these algebras is finite-dimensional.
The center
$Z(\tikzmath[scale=\textscale]
	{\useasboundingbox (12,6) rectangle (38,14);
	\draw[ultra thin](18,12) -- (14,12);
	\draw[thick] (18,12) -- (30,12);
	\draw[ultra thick](30,12) -- (36,12);}%tikzmath
)$ certainly maps to the center
$Z(\tikzmath[scale=\textscale]
	{\useasboundingbox (12,3) rectangle (38,14);
	\draw[ultra thin](18,12) -- (14,12) -- (14,6);
	\draw[thick] (18,12) -- (30,12);
	\draw[ultra thick](30,12) -- (36,12) -- (36,6);}%tikzmath
)$ and that the map is injective.
It is therefore an isomorphism.
The claim now follows, as
\[	Z\Big(\tikzmath[scale=\displscale]
	{\useasboundingbox (-2,2) rectangle (38,14);
	\draw[thick, double](0,6) -- (0,12) -- (6,12);
	\draw[ultra thin] (6,12) -- (18,12)(10,6) -- (10,8) -- (14,8) -- (14,6);
	\draw[thick] (18,12) -- (30,12);
	\draw[ultra thick](30,12) -- (36,12) -- (36,6);
	\draw[dash pattern=on .4pt off .62pt] (12,8) -- (12,12);}%tikzmath
\Big)\,\cong\, Z\Big(\tikzmath[scale=\displscale]
	{\useasboundingbox (12,3) rectangle (38,14);
	\draw[ultra thin](18,12) -- (14,12) -- (14,6);
	\draw[thick] (18,12) -- (30,12);
	\draw[ultra thick](30,12) -- (36,12) -- (36,6);}%tikzmath
\Big)\,\cong\, Z\Big(\tikzmath[scale=\displscale]
	{\useasboundingbox (12,6) rectangle (38,14);
	\draw[ultra thin](18,12) -- (14,12);
	\draw[thick] (18,12) -- (30,12);
	\draw[ultra thick](30,12) -- (36,12);}%tikzmath
\Big)\,\subset\, \tikzmath[scale=\displscale]
	{\useasboundingbox (-2,5) rectangle (38,14);
	\draw[thick, double](0,6) -- (0,12) -- (6,12);
	\draw[ultra thin] (6,12) -- (18,12)(10,6) -- (10,8) -- (14,8) -- (14,6);
	\draw[thick] (18,12) -- (30,12);
	\draw[ultra thick](30,12) -- (36,12) -- (36,6);}%tikzmath
.\qedhere\]
\end{proof}

\begin{remark}\label{rem: we assumed irreducibility}
All the defects in this section were assumed to be irreducible.
However, using the compatibility of directs integrals with various operations, it is straightforward to extend 
Proposition \ref{prop:G=L2} and Lemma \ref{lem: Associativity of Psi} to arbitrary defects.
\end{remark}

\section{Comparison between fusion and keystone fusion}\label{sec: Comparison between F and G}

Let $\cala$ be a conformal net with finite index (implicitly irreducible as before).
In this section, we will define a unitary natural transformation $\Phi_\cala:F_\cala\to G_\cala$ between the functors introduced in Section \ref{sec: def of F G0 G}.
Graphically, this natural transformation is denoted
\begin{equation*}
\Phi\colon
\tikzmath[scale=\displscale]{\draw (8,12) -- (16,12) (8,0) -- (16,0) (12,0) -- (12,12);
\draw[ultra thin, dash pattern=on .5pt off 1pt](8,12) -- (0,12) -- (0,0) -- (8,0)(16,12) -- (24,12) -- (24,0) -- (16,0); \draw (6,6) node {$H_r$} (18,6) node {$H_l$};}\; %tikzmath
\rightarrow\; \tikzmath[scale=\displscale]{\draw (8,12) -- (10,12) -- (10,0) -- (8,0) (16,12) -- (14,12) -- (14,0) -- (16,0);
\draw[ultra thin, dash pattern=on .5pt off 1pt] (8,12) -- (0,12) -- (0,0) -- (8,0) (16,12) -- (24,12) -- (24,0) -- (16,0); \draw (5,6) node {$H_r$} (19,6) node {$H_l$};
\fill[vacuumcolor] (10,0) rectangle (14,4) (10,8) rectangle (14,12)(10.5,4.5) rectangle (13.5,7.5);\draw (10,0) rectangle (14,4)(10,8) rectangle (14,12)(10.5,4.5) rectangle (13.5,7.5);}\,. %tikzmath 
\end{equation*}

Recall the circles $S_l$, $S_r$, and $S_b$ introduced in \eqref{eq: figures of intervals 3}:
\begin{equation}\label{eq: S_l and S_r once again}
S_l = \tikzmath[scale=\displscale]{\useasboundingbox (0,0) rectangle (24,12); \draw (0,0) rectangle (12,12);\draw[->] (5.6,12) -- (5.5,12);} %tikzmath 
\; , \quad S_r = \tikzmath[scale=\displscale] {\useasboundingbox (0,0) rectangle (24,12); \draw (12,0) rectangle (24,12); \draw[->] (17.6,12) -- (17.5,12);}\,, %tikzmath
\quad \text{and} \quad S_b = \tikzmath[scale=\displscale] {\draw (0,0) rectangle (24,12); \draw[->] (11.6,12) -- (11.5,12);}\; .  %tikzmath
\end{equation}
As before, we let $I:= S_l\cap S_r$, with orientation inherited from $S_r$.
The circles $S_l$ and $S_r$ are given conformal structures by their unit speed parametrizations.
The circle $S_b$ is also given a conformal structure, as follows.
Let $j_l\in\Conf_-(S_l)$ and $j_r\in\Conf_-(S_r)$ be the unique conformal involutions fixing $\partial I$.
The conformal structure on $S_b$ is the one making $\epsilon_l := j_l|_I\cup \mathrm{Id}_{j_r(S_r \backslash I)}:S_r\to S_b$ into a conformal map.
Equivalently, it is the one for which $\epsilon_r := j_r|_I\cup \mathrm{Id}_{j_l(S_l \backslash I)}:S_l\to S_b$ is a conformal map.
\begin{warning}
The conformal structure on $S_b$ is \emph{not} the one 
induced by its constant speed parametrization.
Nevertheless, the reflection along the horizontal and vertical symmetry 
axes of  $S_b$ are  conformal involutions.
\end{warning}

Consider the vacuum sectors $H_0(S_l)$, $H_0(S_r)$, and $H_0(S_b)$ for the net $\cala$ on the circles~\eqref{eq: S_l and S_r once again}.  By the construction~\eqref{eq:Upsilon}, there is a canonical identification 
$\Upsilon:H_0(S_l)\boxtimes_{\cala(I)} H_0(S_r)\cong H_0(S_b)$
that is equivariant with respect to the actions of $\cala(J)$ 
for every $J\subset S_b$ (i.e., it is an isomorphism of $S_b$-sectors of $\cala$).
We denote it graphically as
\begin{equation}\label{eq: Upsilon display}
\Upsilon\;:\;\, \tikzmath[scale=\displscale] {\fill[vacuumcolor] (0,0) rectangle (24,12); \draw (0,0) rectangle (24,12) (12,0) -- (12,12);} %tikzmath
\;\; \xrightarrow{\,\,\cong\,\,} \;\; \tikzmath[scale=\displscale] {\filldraw[fill=vacuumcolor] (0,0) rectangle (24,12);}\,. %tikzmath
\end{equation}
The following proposition improves on Proposition \ref{prop: non-canonical construction of Phi}
by providing an explicit construction of the natural isomorphism $\Phi:F\to G$.

\begin{proposition}
\label{prop: local-fusion}
There is a unitary natural isomorphism $\Phi=\Phi_\cala$ between the fusion functor $F_\cala$ and the keystone fusion functor $G_\cala$:
  \begin{equation*}
     \Phi %= \Phi_{H_r,H_l} 
     \;\colon\;
     \tikzmath[scale=\displscale]
     { \draw (8,12) -- (16,12) (8,0) -- (16,0) (12,0) -- (12,12); \draw[ultra thin, dash pattern=on .5pt off 1pt]
         (8,12) -- (0,12) -- (0,0) -- (8,0) (16,12) -- (24,12) -- (24,0) -- (16,0); \draw (6,6) node {$H_r$} (18,6) node {$H_l$};
     }\; %tikzmath
     \xrightarrow{\cong}\; 
     \tikzmath[scale=\displscale]
     { \draw (8,12) -- (10,12) -- (10,0) -- (8,0) (16,12) -- (14,12) -- (14,0) -- (16,0);
       \draw[ultra thin, dash pattern=on .5pt off 1pt] (8,12) -- (0,12) -- (0,0) -- (8,0) (16,12) -- (24,12) -- (24,0) -- (16,0);
       \draw (5,6) node {$H_r$} (19,6) node {$H_l$};
       \fill[vacuumcolor] (10,0) rectangle (14,4) (10,8) rectangle (14,12) (10.5,4.5) rectangle (13.5,7.5);
       \draw (10,0) rectangle (14,4)  (10,8) rectangle (14,12) (10.5,4.5) rectangle (13.5,7.5);
     }\,. %tikzmath   
  \end{equation*} 
This natural isomorphism is symmetric monoidal in the sense that $\Phi_{\cala\otimes \calb} = \Phi_\cala \otimes \Phi_\calb$.
\end{proposition}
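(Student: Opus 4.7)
The non-canonical existence of such a natural isomorphism has already been established in Proposition~\ref{prop: non-canonical construction of Phi}; the content here is to produce a specific canonical choice and to verify that this choice is symmetric monoidal.  My plan is to define $\Phi$ first on the pair of vacuum sectors $(H_0(S_l),H_0(S_r))$ via a composite of already-constructed canonical maps, and then to propagate it to arbitrary modules by Lemma~\ref{lem: Mod(A1) x Mod(A2) --> Mod(B)}.

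\medskip
\noindent\emph{Step 1: Definition on the vacuum pair.}  Specializing the isometric embedding $\Psi$ of Proposition~\ref{prop:G=L2} to $D=E=1_\cala$ gives, by Lemma~\ref{lem: Psi_1,1 is an iso}, a unitary isomorphism
\[
\Psi_{1_\cala,1_\cala}\,:\,L^2\bigl((1_\cala\circledast_\cala 1_\cala)(S^1_\top)\bigr)\;\xrightarrow{\cong}\;G_\cala\bigl(H_0(S_l),H_0(S_r)\bigr).
\]
The left-hand side is, by Lemma~\ref{lem: unitor} (together with an appropriate choice of identification of the resulting half-circle with $S^1_\top$), canonically the vacuum sector $H_0(S_b)$.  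Composing with $\Upsilon^{-1}:H_0(S_b)\xrightarrow{\cong}H_0(S_l)\boxtimes_{\cala(I)}H_0(S_r)$ from~\eqref{eq: Upsilon display}, reversed, I define
\[
\Phi_{H_0(S_l),H_0(S_r)}\,:=\,\Psi_{1_\cala,1_\cala}\circ\Upsilon^{-1}\,\colon\,F_\cala\bigl(H_0(S_l),H_0(S_r)\bigr)\xrightarrow{\cong}G_\cala\bigl(H_0(S_l),H_0(S_r)\bigr).
\]
This is manifestly unitary, and by construction it is equivariant with respect to the actions of all algebras $\cala(J)$ for $J\subset S_b$, in particular with respect to $\cala(I_l')$ and $\cala(I_r')$.

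\medskip
\noindent\emph{Step 2: Extension by Lemma~\ref{lem: Mod(A1) x Mod(A2) --> Mod(B)}.}  Since $H_0(S_l)$ and $H_0(S_r)$ are faithful $\cala(I_l)$- and $\cala(I_r)$-modules, Lemma~\ref{lem: Mod(A1) x Mod(A2) --> Mod(B)} promotes $\Phi_{H_0(S_l),H_0(S_r)}$ to a natural transformation $\Phi_\cala:F_\cala\to G_\cala$, provided that $\Phi_{H_0(S_l),H_0(S_r)}$ is equivariant for endomorphisms $r_l\in\mathrm{End}_{\cala(I_l)}(H_0(S_l))$ and $r_r\in\mathrm{End}_{\cala(I_r)}(H_0(S_r))$.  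By Haag duality for conformal nets, any such $r_l$ lies in $\cala(I_l')$ and any $r_r$ in $\cala(I_r')$, so the required equivariance follows from the $\cala(I_l')$- and $\cala(I_r')$-equivariance noted at the end of Step~1.  Naturality of the resulting transformation on all module maps follows because each constituent ($\Upsilon$ and $\Psi_{1_\cala,1_\cala}$) is, by its construction in~\cite{BDH(nets)} and in Proposition~\ref{prop:G=L2}, normal on the relevant module factor.  Unitarity at the vacuum pair plus the fact that extension by Lemma~\ref{lem: Mod(A1) x Mod(A2) --> Mod(B)} preserves unitarity implies that each component of $\Phi_\cala$ is a unitary isomorphism.

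\medskip
\noindent\emph{Step 3: Symmetric monoidality.}  I need to verify that for two finite-index nets $\cala,\calb$, one has $\Phi_{\cala\otimes\calb}=\Phi_\cala\otimes\Phi_\calb$.  By Step~2 it suffices to check this equality at the single pair of vacuum sectors $(H_0(S_l,\cala\otimes\calb),H_0(S_r,\cala\otimes\calb))$, which splits canonically as the tensor product of the corresponding pairs for $\cala$ and for $\calb$.  So the verification reduces to checking that each ingredient of the definition of $\Phi$ on vacua is itself symmetric monoidal:  that $\Upsilon$ is symmetric monoidal is immediate from its construction in~\eqref{eq:Upsilon}; that $\Psi_{1,1}$ is symmetric monoidal reduces, via its definition~\eqref{eq: def of Psi}, to symmetric monoidality of the split-property identification, of $L^2(\iota)_{\mathrm{iso}}$, and of $\Psi_0$ from Theorem~\ref{thm:G_0=L^2}; and symmetric monoidality of $\Psi_0$ is built into its construction from Proposition~\ref{prop:hat-M} (the ingredients being ordinary Connes fusion and the vacuum sector functor, both of which are symmetric monoidal).

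\medskip
The main technical hurdle is Step~2, specifically the equivariance check: one has to know that endomorphisms of $H_0(S_l)$ as an $\cala(I_l)$-module are really implemented by elements of $\cala(I_l')$, and that the composite $\Psi_{1_\cala,1_\cala}\circ\Upsilon^{-1}$ transports these actions correctly through both the KLM-type identification implicit in $\Psi_0$ and the polar-decomposition step $L^2(\iota)_{\mathrm{iso}}$.  The first point is Haag duality, but the second requires one to trace through the construction of $\Psi_0$ in Appendix (via Proposition~\ref{prop:hat-M}) and check that the resulting equivariance for all algebras $\cala(J)$ with $J\subset S_b$ specializes to equivariance for $\cala(I_l')$ and $\cala(I_r')$; this is where the care taken in the statement of Theorem~\ref{thm:G_0=L^2} pays off directly.
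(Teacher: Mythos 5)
Your proposal is correct and follows essentially the same route as the paper: the paper also defines $\Phi$ on the pair of vacuum sectors as the composite $\Psi_{1_\cala,1_\cala}\circ v_{S_{b,\top}}\circ\Upsilon$ and then extends by Lemma~\ref{lem: Mod(A1) x Mod(A2) --> Mod(B)}, using Haag duality to identify $\mathrm{End}_{\cala(I_l)}(H_0(S_l))$ with $\cala(I_l')$ for the equivariance check. The only place the paper is more precise is your ``appropriate choice of identification'' in Step~1: it is the canonical unitary $v_{S_{b,\top}}\colon H_0(S_b)\to L^2(\cala(S_{b,\top}))$ of~\eqref{eq:v_I} attached to the specific (non--constant-speed) conformal structure on $S_b$, which is what makes the choice canonical rather than merely existent; also note your displayed formula should read $\Psi_{1_\cala,1_\cala}\circ v_{S_{b,\top}}\circ\Upsilon$ rather than $\Psi_{1_\cala,1_\cala}\circ\Upsilon^{-1}$.
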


\begin{proof}
Let $I_l$ and $I_r$ be as in \eqref{eq: figures of intervals 1}, and let $I_l'$ and $I_r'$ be the closures of their complements in $S_l$ and $S_r$, respectively.
Since the actions of $\cala(I_l)$ and $\cala(I_r)$ on $H_0(S_l)$ and $H_0(S_r)$ are faithful, by Lemma \ref{lem: Mod(A1) x Mod(A2) --> Mod(B)}
it is enough to define the isomorphism $\Phi_{H_0(S_l),H_0(S_r)}:
\tikzmath[scale=\textscale]
{ \fill[vacuumcolor]  (0,0) rectangle (12,12) (12,0) rectangle (24,12);\draw (0,0) rectangle (12,12) (12,0) rectangle (24,12);} %tikzmath
\to\tikzmath[scale=\textscale]
{  \fill[vacuumcolor] (0,0) rectangle (10,12) (14,0) rectangle (24,12) (10,0) rectangle (14,4) (10,8) rectangle (14,12) (10.5,4.5) rectangle (13.5,7.5);
\draw  (0,0) rectangle (10,12) (14,0) rectangle (24,12)(10,0) rectangle (14,4) (10,8) rectangle (14,12)(10.5,4.5) rectangle (13.5,7.5);} %tikzmath
$\,, and to check that it commutes with the natural actions of $\cala(I_l)'=\cala(I_l')$ and $\cala(I_r)'=\cala(I_r')$.
We define this isomorphism as the composite
\begin{equation}\label{eq: Upsilon -- v_K -- Psi}
\xymatrix@C=1.2cm{
\tikzmath[scale=\displscale] {\fill[vacuumcolor]  (0,0) rectangle (12,12) (12,0) rectangle (24,12);\draw (0,0) rectangle (12,12) (12,0) rectangle (24,12);} %tikzmath
\ar[r]^{\Upsilon}_\cong   &
\tikzmath[scale=\displscale] {\fill[vacuumcolor] (0,0) rectangle  (24,12);\draw (0,0) rectangle  (24,12);} %tikzmath
\ar[r]^(.43){v_{S_{b,\top}}}_(.43)\cong   &
L^2\left(\tikzmath[scale=\displscale] {\useasboundingbox (-2,1) rectangle (26,14);\draw (0,6) -- (0,12) -- (24,12) -- (24,6);}\right) %tikzmath
\ar[r]^(.55){\Psi}_(.55)\cong   &
\tikzmath[scale=\displscale] {\fill[vacuumcolor] (0,0) rectangle (10,12) (14,0) rectangle (24,12) (10,0) rectangle (14,4) (10,8) rectangle (14,12) (10.5,4.5) rectangle (13.5,7.5);
\draw  (0,0) rectangle (10,12) (14,0) rectangle (24,12)(10,0) rectangle (14,4) (10,8) rectangle (14,12) (10.5,4.5) rectangle (13.5,7.5);} %tikzmath
}, %xymatrix 
\end{equation}
where 
$v_{S_{b,\top}} \colon H_0(S_b,\cala) \to L^2 (\cala({S_{b,\top}}))$ is 
the canonical unitary isomorphism~\eqref{eq:v_I} associated to the upper half ${S_{b,\top}}$ of the conformal circle $S_b$,
and $\Psi$ is the unitary isomorphism from 
Lemma~\ref{lem: Psi_1,1 is an iso}.  The symmetric monoidal condition is clear by construction.
\end{proof} 

Let $\cala$ and $\calc$ be conformal nets, let $\calb$ be a conformal net with finite index, and let ${}_\cala D_\calb$ and ${}_\calb E_\calc$ be defects.
Let us introduce the notation 
$\tikzmath[scale=\textscale]{\fill[vacuumcolor]  (0,0) rectangle (24,12);\draw[thick, double]  (6,0) -- (0,0) -- (0,12) -- (6,12);
\draw (6,12) -- (18,12)  (6,0) -- (18,0);\draw[ultra thick] (18,0) -- (24,0) -- (24,12) -- (18,12);}$ for the Hilbert space
$L^2(\tikzmath[scale=\textscale]{\useasboundingbox (-2,0) rectangle (26,14);\draw[thick, double] (0,6) -- (0,12) -- (6,12);\draw (6,12) -- (18,12);\draw[ultra thick] (18,12) -- (24,12) -- (24,6);})
= L^2((D \circledast_{\calb} E)(S^1_\top))$
that appears in the left-hand side of \eqref{eq: main equation of thm G=L2}.
Combining Proposition~\ref{prop:G=L2} (see also Remark \ref{rem: we assumed irreducibility}) and Proposition~\ref{prop: local-fusion}, we can construct an isometric map
\begin{equation}\label{eq: definition of Omega}
\Omega\;:\;\xymatrix@C=1.2cm{
\tikzmath[scale=\displscale]{\fill[vacuumcolor]  (0,0) rectangle (24,12);\draw[thick, double]  (6,0) -- (0,0) -- (0,12) -- (6,12);
\draw (6,12) -- (18,12)  (6,0) -- (18,0);\draw[ultra thick] (18,0) -- (24,0) -- (24,12) -- (18,12);} %tikzmath
\ar[r]^{\Psi_{D,E}}   &
\tikzmath[scale=\displscale]{\fill[vacuumcolor] (0,0) rectangle (10,12)(14,0) rectangle (24,12);\draw (6,12) -- (10,12) -- (10,0) -- (6,0)(18,12) -- (14,12) -- (14,0) -- (18,0);
\draw[thick, double] (6,12) -- (0,12) -- (0,0) -- (6,0);\draw[ultra thick]  (18,12) -- (24,12) -- (24,0) -- (18,0);\fill[vacuumcolor]  (10,0) rectangle (14,4)(10,8) rectangle (14,12);
\draw (10,0) rectangle (14,4) (10,8) rectangle (14,12);\fill[vacuumcolor]  (10.5,4.5) rectangle (13.5,7.5);\draw (10.5,4.5) rectangle (13.5,7.5);} %tikzmath
\ar[r]^{\Phi^{-1}}_{\cong}   &
\tikzmath[scale=\displscale] {\fill[vacuumcolor] (0,0) rectangle (24,12);\draw[thick, double]  (6,0) -- (0,0) -- (0,12) -- (6,12);
\draw (6,12) -- (12,12) -- (12,0) -- (6,0)(18,12) -- (12,12) -- (12,0) -- (18,0);\draw[ultra thick] (18,0) -- (24,0) -- (24,12) -- (18,12);} %tikzmath
},\hspace{.71cm} %xymatrix 
\end{equation}
where $\Phi$ stands for $\Phi_{H_0(S_l,D),H_0(S_r,E)}$.
We will show later, in Theorem \ref{thm: Omega is an iso}, that the map $\Omega=\Omega_{D,E}$ is in fact an isomorphism.  This map is the fundamental ``$1 \boxtimes 1 = 1$ isomorphism" comparing $1_{D \circledast E}$ with $1_D \boxtimes 1_E$.

\begin{proposition}\label{prop: associativity of Omega}
Let $\cala$, $\calb$, $\calc$, $\cald$ be conformal nets, of which the second and third are assumed to be finite, and
let ${}_\cala D_\calb$, ${}_\calb E_\calc$, ${}_\calc F_\cald$ be defects. 
Then the maps
\[
\tikzmath{
\matrix [matrix of math nodes,column sep=1.5cm,row sep=5mm]
{|(a)|\tikzmath[scale=\displscale]{\fill[vacuumcolor](0,0) rectangle (36,12);\draw[thick, double] (6,0)--(0,0)--(0,12)--(6,12);
\draw[ultra thin](6,12)--(18,12)(6,0)--(18,0);\draw[thick](18,12)--(30,12)(18,0)--(30,0);\draw[ultra thick] (30,0)--(36,0)--(36,12)--(30,12);}%tikzmath
\pgfmatrixnextcell |(b)|\tikzmath[scale=\displscale]{\fill[vacuumcolor](0,0) rectangle (36,12);\draw[thick, double] (6,0)--(0,0)--(0,12)--(6,12);
\draw[ultra thin](6,12)--(18,12)(6,0)--(18,0);\draw[thick](18,12)--(30,12)(18,0)--(30,0)(24,0)--(24,12);\draw[ultra thick] (30,0)--(36,0)--(36,12)--(30,12);}\\%tikzmath
|(c)|\tikzmath[scale=\displscale]{\fill[vacuumcolor](0,0) rectangle (36,12);\draw[thick, double] (6,0)--(0,0)--(0,12)--(6,12);
\draw[ultra thin](6,12)--(18,12)(6,0)--(18,0)(12,0)--(12,12);\draw[thick](18,12)--(30,12)(18,0)--(30,0);\draw[ultra thick] (30,0)--(36,0)--(36,12)--(30,12);}%tikzmath
\pgfmatrixnextcell |(d)|\tikzmath[scale=\displscale]{\fill[vacuumcolor](0,0) rectangle (36,12);\draw[thick, double] (6,0)--(0,0)--(0,12)--(6,12);
\draw[ultra thin](6,12)--(18,12)(6,0)--(18,0)(12,0)--(12,12);\draw[thick](18,12)--(30,12)(18,0)--(30,0)(24,0)--(24,12);\draw[ultra thick] (30,0)--(36,0)--(36,12)--(30,12);}\\%tikzmath
};
\draw[->](a)--node[above]{$\scriptstyle\Omega_{D\circledast E,F}$}(b);\draw[->](c)--node[above]{$\scriptstyle \id \boxtimes\Omega_{E,F}$}(d);
\draw[->](a)--node[left]{$\scriptstyle\Omega_{D,E\circledast F}$}(c);\draw[->](b)--node[right]{$\scriptstyle\Omega_{D,E}\boxtimes \id$}(d);}
\]
form a commutative diagram.
\end{proposition}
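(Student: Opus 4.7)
The plan is to expand each arrow $\Omega_{-,-}$ according to its definition as the composite $\Phi^{-1}\circ \Psi$ from \eqref{eq: definition of Omega}, and then decompose the square of the proposition into a larger diagram all of whose sub-regions we can already recognize as commutative. The argument will rely on three ingredients: the associativity of the isometry $\Psi$ established in Lemma~\ref{lem: Associativity of Psi} (diagram \eqref{eq: Assoc of Psi}), the associativity of the intermediate isomorphism $\Psi_0$ established in diagram \eqref{eq: Assoc of Psi_0}, and the explicit construction of $\Phi$ through $\Upsilon$, the canonical unitary $v_{S_{b,\top}}$, and $\Psi$ for the trivial defect (the composite \eqref{eq: Upsilon -- v_K -- Psi} from Proposition~\ref{prop: local-fusion}).

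More concretely, I would proceed as follows. First, in each of the four corners of the proposition's square I would insert the corresponding keystone Hilbert space (i.e. each Hilbert space with a keyhole plus inserted keystone vacuum for $\calb$, for $\calc$, or for both), giving a $3\times 3$ refinement of the original $2\times 2$ square. The top-left and bottom-right are the original corners; the intermediate squares then split into two types. The first type involves only $\Psi$'s applied to the various vacua; commutativity of this subsquare is precisely diagram \eqref{eq: Assoc of Psi} from Lemma~\ref{lem: Associativity of Psi}, applied once with $(D\circledast E, F)$ versus $(D, E, F)$ and once with $(D, E\circledast F)$ versus $(D, E, F)$, together with the obvious naturality of $\Psi$ with respect to the two separate keystone insertions (so that $\Psi$ for an intermediate net can be computed in either order when the two keyholes are geometrically disjoint).

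The second type of subsquare involves only $\Phi^{-1}$'s, removing the keystones and rebuilding the full vacuum squares. To verify commutativity here, I would trace through the definition \eqref{eq: Upsilon -- v_K -- Psi}: each $\Phi$ is a composition of the cyclic-fusion identification $\Upsilon$, the canonical vacuum unitary $v$, and $\Psi$ in the identity-defect case. Associativity of $\Upsilon$ (gluing three circles) and of the $v$-maps is standard and compatible with \eqref{eq: Assoc of Psi_0}; this reduces the $\Phi$-associativity question to an instance of \eqref{eq: Assoc of Psi_0} for the conformal nets $\cala$, $\calb$, $\calc$, $\cald$ with identity defects between them. Finally, the two subsquares mixing a $\Psi$ on one side of a square with a $\Phi^{-1}$ on the other commute by the evident naturality of $\Psi_0$ (hence of $\Psi$) and of $\Phi$ with respect to disjoint keystone insertions; this is essentially the observation that operations localized in geometrically disjoint regions commute strictly on the Hilbert-space level.

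The main obstacle I expect is bookkeeping: one has to choose compatible families of identifications $\phi_l,\phi_r$ for both keystone cuts (the one between $D$ and $E$ and the one between $E$ and $F$) so that the two $\Omega$-maps on each side of the square genuinely refer to the same ambient Hilbert spaces, and one has to verify that the factorization $\Phi = (\Psi_0\otimes \id)\circ(L^2(\iota)_{\mathrm{iso}}\otimes \id)\circ U$ implicit in \eqref{eq: def of Psi} is itself compatible with iteration. Once the identifications are fixed coherently, each of the mixed subsquares reduces to a disjointness/naturality statement, and the only non-formal inputs are Lemma~\ref{lem: Associativity of Psi} and diagram \eqref{eq: Assoc of Psi_0}, both already proved. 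Hence the full square commutes.
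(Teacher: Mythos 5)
Your proposal is correct and follows essentially the same route as the paper: expand each $\Omega$ as $\Phi^{-1}\circ\Psi$ to obtain a $3\times3$ grid, verify the all-$\Psi$ subsquare by Lemma~\ref{lem: Associativity of Psi}, and handle the remaining subsquares by naturality/disjointness. The only difference is that the paper disposes of all three remaining subsquares at once by the naturality of $\Phi^{-1}$ (the other leg of each square being a morphism of the relevant modules), which makes your unwinding of $\Phi$ into $\Upsilon$, $v$, and $\Psi_{1,1}$ and the extra appeal to \eqref{eq: Assoc of Psi_0} unnecessary.
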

\begin{proof}
By the definition of $\Omega$, the above diagram can be expanded to
\[
\tikzmath{
\matrix [matrix of math nodes,column sep=1.5cm,row sep=5mm]
{|(a)|\tikzmath[scale=\displscale]{\fill[vacuumcolor](0,0) rectangle (36,12);\draw[thick, double] (6,0)--(0,0)--(0,12)--(6,12);
\draw[ultra thin](6,12)--(18,12)(6,0)--(18,0);\draw[thick](18,12)--(30,12)(18,0)--(30,0);\draw[ultra thick] (30,0)--(36,0)--(36,12)--(30,12);}%tikzmath
\pgfmatrixnextcell 
|(b)|\tikzmath[scale=\displscale]{\fill[vacuumcolor](0,0) rectangle (36,12);\draw[thick, double] (6,0)--(0,0)--(0,12)--(6,12);\fill[white](22,4) rectangle (26,8);
\draw[ultra thin](6,12)--(18,12)(6,0)--(18,0);\draw[thick](18,12)--(30,12)(18,0)--(30,0)(22,0)--(22,12)(26,0)--(26,12)(22,4)--(26,4)(22,8)--(26,8);
\draw[ultra thick] (30,0)--(36,0)--(36,12)--(30,12);\filldraw[thick, fill=vacuumcolor]  (22.7,4.7) rectangle (25.3,7.3);}%tikzmath
\pgfmatrixnextcell 
|(c)|\tikzmath[scale=\displscale]{\fill[vacuumcolor](0,0) rectangle (36,12);\draw[thick, double] (6,0)--(0,0)--(0,12)--(6,12);
\draw[ultra thin](6,12)--(18,12)(6,0)--(18,0);\draw[thick](18,12)--(30,12)(18,0)--(30,0)(24,0)--(24,12);\draw[ultra thick] (30,0)--(36,0)--(36,12)--(30,12);}\\%tikzmath
|(d)|\tikzmath[scale=\displscale]{\fill[vacuumcolor](0,0) rectangle (36,12);\draw[thick, double] (6,0)--(0,0)--(0,12)--(6,12);\fill[white](10,4) rectangle (14,8);
\draw[ultra thin](6,12)--(18,12)(6,0)--(18,0)(10,0)--(10,12)(14,0)--(14,12)(10,4)--(14,4)(10,8)--(14,8);\draw[thick](18,12)--(30,12)(18,0)--(30,0);
\draw[ultra thick] (30,0)--(36,0)--(36,12)--(30,12);\filldraw[ultra thin, fill=vacuumcolor]  (10.4,4.4) rectangle (13.6,7.6);}%tikzmath
\pgfmatrixnextcell 
|(e)|\tikzmath[scale=\displscale]{\fill[vacuumcolor](0,0) rectangle (36,12);\draw[thick, double] (6,0)--(0,0)--(0,12)--(6,12);\fill[white](22,4) rectangle (26,8);\fill[white](10,4) rectangle (14,8);
\draw[ultra thin](6,12)--(18,12)(6,0)--(18,0)(10,0)--(10,12)(14,0)--(14,12)(10,4)--(14,4)(10,8)--(14,8);
\draw[thick](18,12)--(30,12)(18,0)--(30,0)(22,0)--(22,12)(26,0)--(26,12)(22,4)--(26,4)(22,8)--(26,8);\draw[ultra thick] (30,0)--(36,0)--(36,12)--(30,12);
\filldraw[thick, fill=vacuumcolor]  (22.7,4.7) rectangle (25.3,7.3);\filldraw[ultra thin, fill=vacuumcolor]  (10.4,4.4) rectangle (13.6,7.6);}%tikzmath
\pgfmatrixnextcell 
|(f)|\tikzmath[scale=\displscale]{\fill[vacuumcolor](0,0) rectangle (36,12);\draw[thick, double] (6,0)--(0,0)--(0,12)--(6,12);\fill[white](10,4) rectangle (14,8);
\draw[ultra thin](6,12)--(18,12)(6,0)--(18,0)(10,0)--(10,12)(14,0)--(14,12)(10,4)--(14,4)(10,8)--(14,8);\draw[thick](18,12)--(30,12)(18,0)--(30,0)(24,0)--(24,12);
\draw[ultra thick] (30,0)--(36,0)--(36,12)--(30,12);\filldraw[ultra thin, fill=vacuumcolor]  (10.4,4.4) rectangle (13.6,7.6);}\\%tikzmath
|(g)|\tikzmath[scale=\displscale]{\fill[vacuumcolor](0,0) rectangle (36,12);\draw[thick, double] (6,0)--(0,0)--(0,12)--(6,12);
\draw[ultra thin](6,12)--(18,12)(6,0)--(18,0)(12,0)--(12,12);\draw[thick](18,12)--(30,12)(18,0)--(30,0);\draw[ultra thick] (30,0)--(36,0)--(36,12)--(30,12);}%tikzmath
\pgfmatrixnextcell
|(h)|\tikzmath[scale=\displscale]{\fill[vacuumcolor](0,0) rectangle (36,12);\draw[thick, double] (6,0)--(0,0)--(0,12)--(6,12);\fill[white](22,4) rectangle (26,8);
\draw[ultra thin](6,12)--(18,12)(6,0)--(18,0)(12,0)--(12,12);\draw[thick](18,12)--(30,12)(18,0)--(30,0)(22,0)--(22,12)(26,0)--(26,12)(22,4)--(26,4)(22,8)--(26,8);
\draw[ultra thick] (30,0)--(36,0)--(36,12)--(30,12);\filldraw[thick, fill=vacuumcolor]  (22.7,4.7) rectangle (25.3,7.3);}%tikzmath
\pgfmatrixnextcell
|(i)|\tikzmath[scale=\displscale]{\fill[vacuumcolor](0,0) rectangle (36,12);\draw[thick, double] (6,0)--(0,0)--(0,12)--(6,12);
\draw[ultra thin](6,12)--(18,12)(6,0)--(18,0)(12,0)--(12,12);\draw[thick](18,12)--(30,12)(18,0)--(30,0)(24,0)--(24,12);\draw[ultra thick] (30,0)--(36,0)--(36,12)--(30,12);}\\%tikzmath
};
\foreach \source/\target/\location in {a/b/above,d/e/above,g/h/above,a/d/right,b/e/right,c/f/right}
{\draw[->] (\source) --node[\location]{$\scriptstyle\Psi$} (\target);}
\foreach \source/\target/\location in {b/c/above,e/f/above,h/i/above,d/g/right,e/h/right,f/i/right}
{\draw[->] (\source) --node[\location]{$\scriptstyle\Phi^{-1}$} (\target);}}
\]
The upper left square commutes by Lemma \ref{lem: Associativity of Psi} (see also Remark \ref{rem: we assumed irreducibility}).
The remaining three squares commute by the naturality of $\Phi^{-1}$.
\end{proof}

%==================================================================

\chapter{Haag duality for composition of defects}\label{sec: Haag duality for composition of defects}

Throughout this section we fix conformal nets $\cala$, $\calb$, and $\calc$, always assumed to be irreducible, and irreducible defects ${}_\cala D_\calb$ and ${}_\calb E_\calc$.
In our pictures, we will use the notation \tikz{\draw[thick, double] (0,0) -- (.5,0);}  for intervals on which we evaluate $\cala$, 
we will use \tikz{\draw (0,0) -- (.5,0);}  for intervals on which we evaluate $\calb$,
and \tikz{\draw[ultra thick] (0,0) -- (.5,0);}  for intervals on which we evaluate $\calc$.
We will also use \tikz{\draw[thick, double] (0,0) -- (.3,0);\draw (.3,0) -- (.6,0);} for bicolored intervals on which we evaluate $D$, and
\tikz{\draw (0,0) -- (.3,0);\draw[ultra thick] (.3,0) -- (.6,0);} for bicolored intervals on which we evaluate $E$.

{Let $S_l$ and $S_r$ be as in \eqref{eq: figures of intervals 3} and \eqref{eq: def colors of S and tildeS},
with intersection $I$ oriented like $S_r$.
\def\SingleLtwoWithTwoColors#1#2{L^2 \hspace{-.5ex}\left( \tikzmath[scale=\textscale] {\draw[#1] (12,6) -- (12,12) -- (18,12);\draw[#2] (18,12) -- (24,12) -- (24,6);} \right)}
As before, we use the notation $\tikzmath[scale=\textscale]{\fill[vacuumcolor] (0,0) rectangle (12,12);
\draw[double, thick](6,0) -- (0,0) -- (0,12) -- (6,12); \draw (6,0) -- (12,0) -- (12,12) -- (6,12);} := H_0(S_l,D) = \SingleLtwoWithTwoColors{double, thick}{}$,
similarly $\tikzmath[scale=\textscale]{\fill[vacuumcolor] (12,0) rectangle (24,12);
\draw(18,0) -- (12,0) -- (12,12) -- (18,12); \draw[ultra thick] (18,0) -- (24,0) -- (24,12) -- (18,12);} := H_0(S_r,E) = \SingleLtwoWithTwoColors{}{ultra thick}$,
and $\tikzmath[scale=\textscale]{\fill[vacuumcolor] (0,0) rectangle (24,12);
\draw[double, thick](6,0) -- (0,0) -- (0,12) -- (6,12); \draw (6,0) -- (12,0) -- (12,12) -- (6,12)(18,0) -- (12,0)(12,12) -- (18,12);
\draw[ultra thick] (18,0) -- (24,0) -- (24,12) -- (18,12);} := H_0(S_l,D)\boxtimes_{\calb(I)}H_0(S_r,E)$.
We will again be using the Notation~\ref{not:names-for-intervals}. 
Letting}
\begin{equation}\label{eq: I_1234}
\begin{split}
I_1:=\partial^{\,\tikzmath[scale=.15]{\draw(0,0)--(0,1)--(1,1);}}([0,1]\times[\tfrac12,1]) &\qquad (I_1)_\circ :=  (I_1)_{x\le \frac12}  \quad (I_1)_\bullet :=  (I_1)_{x\ge \frac12}  \\
I_2:=\partial^{\,\tikzmath[scale=.15]{\draw(1,0)--(1,1)--(0,1);}}([1,2]\times[\tfrac12,1]) &\qquad (I_2)_\circ :=  (I_2)_{x\le \frac32}  \quad (I_2)_\bullet :=  (I_2)_{x\ge \frac32}  \\
I_3:=\partial^{\,\tikzmath[scale=.15]{\draw(0,1)--(0,0)--(1,0);}}([0,1]\times[0,\tfrac12]) &\qquad (I_3)_\circ :=  (I_3)_{x\le \frac12}  \quad (I_3)_\bullet :=  (I_3)_{x\ge \frac12}  \\
I_4:=\partial^{\,\tikzmath[scale=.15]{\draw(0,0)--(1,0)--(1,1);}}([1,2]\times[\tfrac12,1]) &\qquad (I_4)_\circ :=  (I_4)_{x\le \frac32}  \quad (I_4)_\bullet :=  (I_4)_{x\ge \frac32},
\end{split}
\end{equation}
we will write 
$D ( \tikzmath[scale=\textscale]{\useasboundingbox (-2,0) rectangle (14,14); \draw[double, thick](0,6) -- (0,12) -- (6,12);\draw  (6,12) -- (12,12);}%tikzmath
)$, 
$E ( \tikzmath[scale=\textscale]{\useasboundingbox (10,0) rectangle (26,14); \draw  (12,12) -- (18,12);\draw[ultra thick](18,12) -- (24,12) -- (24,6);}%tikzmath
)$, 
$D ( \tikzmath[scale=\textscale]{\useasboundingbox (-2,-2) rectangle (14,12); \draw[double, thick](0,6) -- (0,0) -- (6,0);\draw  (6,0) -- (12,0);}%tikzmath
)$, 
$E ( \tikzmath[scale=\textscale]{\useasboundingbox (10,-2) rectangle (26,12); \draw  (12,0) -- (18,0);\draw[ultra thick](18,0) -- (24,0) -- (24,6);}%tikzmath
)$
for $D(I_1)$, $E(I_2)$, $D(I_3)$, $E(I_4)$, respectively.

\begin{maintheorem}\label{thm:Haag-duality-composition-defects}
Assuming $\calb$ has finite index, then on the Hilbert space 
$\tikzmath[scale=\textscale]{\fill[vacuumcolor] (0,0) rectangle (24,12);
\draw[double, thick](6,0) -- (0,0) -- (0,12) -- (6,12); \draw (6,0) -- (12,0) -- (12,12) -- (6,12)(18,0) -- (12,0)(12,12) -- (18,12);
\draw[ultra thick] (18,0) -- (24,0) -- (24,12) -- (18,12);}$\,,
we have 
\begin{equation}\label{eq:thm:Haag-duality-composition-defects}
D\, \Big(\, \tikzmath[scale=\displscale]{\useasboundingbox (-2,0) rectangle (14,14);\draw[double, thick](0,6) -- (0,12) -- (6,12);\draw  (6,12) -- (12,12);}%tikzmath
\,\Big) \vee E\, \Big(\, \tikzmath[scale=\displscale]{\useasboundingbox (10,0) rectangle (26,14);\draw  (12,12) -- (18,12);\draw[ultra thick](18,12) -- (24,12) -- (24,6);}%tikzmath
\,\Big) \;=\; \bigg(\! D\, \Big(\, \tikzmath[scale=\displscale] {\useasboundingbox (-2,-2) rectangle (14,12); \draw[double, thick](0,6) -- (0,0) -- (6,0);\draw  (6,0) -- (12,0);}%tikzmath
\,\Big) \vee E\, \Big(\, \tikzmath[scale=\displscale]{\useasboundingbox (10,-2) rectangle (26,12); \draw (12,0) -- (18,0);\draw[ultra thick](18,0) -- (24,0) -- (24,6);}%tikzmath
\,\Big)\bigg)'.
\end{equation}
\end{maintheorem}

\begin{proof}
Let us introduce some notation for various algebras that act on the Hilbert space $\tikzmath[scale=\textscale]{\fill[vacuumcolor] (0,0) rectangle (24,12);
\draw[double, thick](6,0) -- (0,0) -- (0,12) -- (6,12); \draw (6,0) -- (12,0) -- (12,12) -- (6,12)(18,0) -- (12,0)(12,12) -- (18,12);
\draw[ultra thick] (18,0) -- (24,0) -- (24,12) -- (18,12);}$\,.
The main algebras of interest are
$\tikzmath[scale=\textscale]{\useasboundingbox (-2,2) rectangle (26,14);\draw[thick, double] (0,6) -- (0,12) -- (6,12);\draw (6,12) -- (18,12);\draw[ultra thick] (18,12) -- (24,12) -- (24,6);}
=(D \circledast_{\calb} E)(S^1_\top)$
and
$\tikzmath[scale=\textscale]{\useasboundingbox (-2,-2) rectangle (26,12);\draw[thick, double] (0,6)--(0,0)--(6,0); \draw (6,0)--(18,0);\draw[ultra thick] (18,0)--(24,0)--(24,6);}
=(D \circledast_{\calb} E)(S^1_\bot)$,
and our goal is to show that the inclusion
\begin{equation}\label{eq: our goal is to show that the inclusion is an isomorphism}
\tikzmath[scale=\displscale]{\useasboundingbox (-2,-2) rectangle (26,14);\draw[thick, double] (0,6) -- (0,12) -- (6,12);\draw (6,12) -- (18,12);\draw[ultra thick] (18,12) -- (24,12) -- (24,6);}%tikzmath
\;\subseteq\;\left(\tikzmath[scale=\displscale]{\useasboundingbox (-2,-2) rectangle (26,14);\draw[thick, double] (0,6)--(0,0)--(6,0); \draw (6,0)--(18,0);\draw[ultra thick] (18,0)--(24,0)--(24,6);}%tikzmath
\right)'
\end{equation}
is an isomorphism.
Let us fix once and for all a small number $\epsilon$.
Consider the 1-manifolds
\begin{equation*} %\label{eq:JJJJ1234}
\begin{split}
J_0&:=\partial^{\,\tikzmath[scale=.15]{\draw(0,0)--(0,1)--(1,1);}}\big([0,\tfrac12+\epsilon]\times[\tfrac12,1]\big),\\
%(\{0\}\times [1/2,1])\cup([0,1/2+\epsilon]\times\{1\}),\\
J_1&:=\big([\tfrac12+\epsilon,\tfrac23]\cup[\tfrac56,1]\big)\times \{1\},\\
J_2&:=\big([1,\tfrac32-\epsilon]\times \{1\}\big)\cup \big([\tfrac76,\tfrac43]\times \{0\}\big),\\
J_3&:=\partial^{\,\tikzmath[scale=.15]{\draw(1,0)--(1,1)--(0,1);}}\big([\tfrac32-\epsilon,2]\times[\tfrac12,1]\big).
%([3/2-\epsilon,2]\times\{1\})\cup(\{2\}\times [1/2,1]).
\end{split}
\end{equation*} %!%CD: include pictures?
We will use the following algebras:
\begin{gather}
\label{eq: BOTTOM of 12345} \tikzmath[scale=\displscale]{\useasboundingbox (-2,-2) rectangle (32,14);\draw[thick, double]  (0,6) -- (0,12) -- (6,12);
\draw (6,12) -- (9,12) (12,12) -- (24,12) (18,0) -- (21,0);\draw[ultra thick]  (24,12) -- (30,12) -- (30,6);} %tikzmath
:= D(J_0)\vee \calb(J_1)\vee \calb(J_2)\vee E(J_3)=D(J_0\cup J_1)\vee E(J_2\cup J_3)\\
\label{eq: LOWER LEFT of 12345} \tikzmath[scale=\displscale]{\useasboundingbox (-2,-2) rectangle (32,14);\draw[thick, double]  (0,6) -- (0,12) -- (6,12);           
\draw (6,12) -- (9,12) (12,12) -- (24,12) (18,0) -- (21,0);\draw[ultra thick]   (24,12) -- (30,12) -- (30,6);\draw[densely dotted] (7.5,12) arc (180:360:3 and 4);} %tikzmath
:= D(J_0)\vee \hat\calb(J_1)\vee \calb(J_2)\vee E(J_3)=\hat D(J_0\cup J_1)\vee E(J_2\cup J_3)\\
\label{eq: RIGHT of 12345} \tikzmath[scale=\displscale]{\useasboundingbox (-2,-2) rectangle (32,14);\draw[thick, double]  (0,6) -- (0,12) -- (6,12);
\draw (6,12) -- (9,12) (12,12) -- (24,12) (18,0) -- (21,0);\draw[ultra thick]   (24,12) -- (30,12) -- (30,6);\draw[densely dotted]  (19.5,0) -- (19.5,12);} %tikzmath
:= D(J_0)\vee \calb(J_1)\vee \hat\calb(J_2)\vee E(J_3)=D(J_0\cup J_1)\vee \hat E(J_2\cup J_3)\\
\label{eq: TOP of 12345} \tikzmath[scale=\displscale]{\useasboundingbox (-2,-2) rectangle (32,14);\draw[thick, double]  (0,6) -- (0,12) -- (6,12);
\draw (6,12) -- (9,12) (12,12) -- (24,12) (18,0) -- (21,0);\draw[ultra thick]  (24,12) -- (30,12) -- (30,6);\draw[densely dotted]   (7.5,12) arc (180:360:3 and 4)(19.5,0) -- (19.5,12);} %tikzmath
:= D(J_0)\vee \hat\calb(J_1)\vee \hat\calb(J_2)\vee E(J_3)=\hat D(J_0\cup J_1)\vee \hat E(J_2\cup J_3).
\end{gather}
Here 
$\hat \calb$, $\hat D$, and $\hat E$ are as in \ref{not: Dhat},
and $J_0$ and $J_3$ are bicolored as in \eqref{eq: def colors of S and tildeS}.
By Lemma~\ref{lem: needed for dotted lines}, the algebras $\hat \calb(J_1)$ and $\hat \calb(J_2)$ act on
$\tikzmath[scale=\textscale]{\fill[vacuumcolor] (0,0) rectangle (12,12);
\draw[double, thick](6,0) -- (0,0) -- (0,12) -- (6,12); \draw (6,0) -- (12,0) -- (12,12) -- (6,12);}$
and
$\tikzmath[scale=\textscale]{\fill[vacuumcolor] (0,0) rectangle (12,12);
\draw (6,0) -- (0,0) -- (0,12) -- (6,12);
\draw[ultra thick] (6,0) -- (12,0) -- (12,12) -- (6,12);}$
respectively, and satisfy 
$D(J_0)\vee\hat\calb(J_1)=\hat D(J_0\cup J_1)$ and $\hat\calb(J_2)\vee E(J_3)=\hat E(J_2\cup J_3)$.
The equalities in \eqref{eq: LOWER LEFT of 12345}--\eqref{eq: TOP of 12345} 
%for actions on
%$\tikzmath[scale=\textscale]{\fill[vacuumcolor] (0,0) rectangle (24,12);
%\draw[double, thick](6,0) -- (0,0) -- (0,12) -- (6,12); \draw (6,0) -- (12,0) -- (12,12) -- (6,12)(18,0) -- (12,0)(12,12) -- (18,12);
%\draw[ultra thick] (18,0) -- (24,0) -- (24,12) -- (18,12);}$
follow.

In Section~\ref{sec:haaginclusion} below we will obtain some purchase on the Haag inclusion
$\tikzmath[scale=\textscale]{\useasboundingbox (-2,2) rectangle (26,14);\draw[thick, double] (0,6) -- (0,12) -- (6,12);\draw (6,12) -- (18,12);\draw[ultra thick] (18,12) -- (24,12) -- (24,6);}
\subseteq
(\tikzmath[scale=\textscale]{\useasboundingbox (-2,-2) rectangle (26,12);\draw[thick, double] (0,6)--(0,0)--(6,0); \draw (6,0)--(18,0);\draw[ultra thick] (18,0)--(24,0)--(24,6);})'$
by showing that its statistical dimension is the same as that of the inclusion
$\tikzmath[scale=\textscale]{\useasboundingbox (-2,-2) rectangle (32,14);\draw[thick, double]  (0,6) -- (0,12) -- (6,12);           
\draw (6,12) -- (9,12) (12,12) -- (24,12) (18,0) -- (21,0);\draw[ultra thick]   (24,12) -- (30,12) -- (30,6);\draw[dash pattern=on .4pt off .62pt] (7.5,12) arc (180:360:3 and 4);}
\subseteq
(\tikzmath[scale=\textscale]{\useasboundingbox (-2,-2) rectangle (32,14);\draw[thick, double]  (0,6) -- (0,0) -- (6,0);
\draw (6,0) -- (18,0) (21,0) -- (24,0) (9,12) -- (12,12);\draw[ultra thick]  (24,0) -- (30,0) -- (30,6);\draw[dash pattern=on .4pt off .62pt] (16.5,0) arc (180:0:3 and 4);})'$.  (Here the algebra $\tikzmath[scale=\textscale]{\useasboundingbox (-2,-2) rectangle (32,14);\draw[thick, double]  (0,6) -- (0,0) -- (6,0);
\draw (6,0) -- (18,0) (21,0) -- (24,0) (9,12) -- (12,12);\draw[ultra thick]  (24,0) -- (30,0) -- (30,6);\draw[dash pattern=on .4pt off .62pt] (16.5,0) arc (180:0:3 and 4);}$
is defined similarly to $\tikzmath[scale=\textscale]{\useasboundingbox (-2,-2) rectangle (32,14);\draw[thick, double]  (0,6) -- (0,12) -- (6,12);           
\draw (6,12) -- (9,12) (12,12) -- (24,12) (18,0) -- (21,0);\draw[ultra thick]   (24,12) -- (30,12) -- (30,6);\draw[dash pattern=on .4pt off .62pt] (7.5,12) arc (180:360:3 and 4);}$.) We can compute the statistical dimension of that latter inclusion by squeezing it into a sequence of simpler inclusions of von Neumann algebras, as follows:
\begin{equation*}    %\label{eq:proof-haag-duality-defects}
%TOP
\tikzmath[scale=\displscale]{\node (a) at (0,35) {$\tikzmath[scale=\displscale]{\useasboundingbox (-2,-2) rectangle (32,14);\draw[thick, double]  (0,6) -- (0,12) -- (6,12);
\draw (6,12) -- (9,12) (12,12) -- (24,12) (18,0) -- (21,0);\draw[ultra thick]  (24,12) -- (30,12) -- (30,6);\draw[densely dotted]   (7.5,12) arc (180:360:3 and 4)(19.5,0) -- (19.5,12); }$}; %tikzmath
%LOWER LEFT
\node (b) at (-60,-18){$\tikzmath[scale=\displscale]{\useasboundingbox (-2,-2) rectangle (32,14);\draw[thick, double]  (0,6) -- (0,12) -- (6,12);           
\draw (6,12) -- (9,12) (12,12) -- (24,12) (18,0) -- (21,0);\draw[ultra thick]   (24,12) -- (30,12) -- (30,6);\draw[densely dotted] (7.5,12) arc (180:360:3 and 4);}$}; %tikzmath
%UPPER LEFT
\node (c) at (-60,18){$\left(\tikzmath[scale=\displscale]{\useasboundingbox (-2,-2) rectangle (32,14);\draw[thick, double]  (0,6) -- (0,0) -- (6,0);
\draw (6,0) -- (18,0) (21,0) -- (24,0) (9,12) -- (12,12);\draw[ultra thick]  (24,0) -- (30,0) -- (30,6);\draw[densely dotted] (16.5,0) arc (180:0:3 and 4);}%tikzmath
\right)'$};
%RIGHT
\node (d) at (50,0){$\tikzmath[scale=\displscale]{\useasboundingbox (-2,-2) rectangle (32,14);\draw[thick, double]  (0,6) -- (0,12) -- (6,12);
\draw (6,12) -- (9,12) (12,12) -- (24,12) (18,0) -- (21,0);\draw[ultra thick]   (24,12) -- (30,12) -- (30,6);\draw[densely dotted]  (19.5,0) -- (19.5,12);}$}; %tikzmath
%BOTTOM
\node (e) at (0,-35){$\tikzmath[scale=\displscale]{\useasboundingbox (-2,-2) rectangle (32,14);\draw[thick, double]  (0,6) -- (0,12) -- (6,12);
\draw (6,12) -- (9,12) (12,12) -- (24,12) (18,0) -- (21,0);\draw[ultra thick]  (24,12) -- (30,12) -- (30,6);}$}; %tikzmath
\foreach \source/\target/\position/\number/\sep in {b/a/north west/2/2,b/c/east/5/5,c/a/south/6/5,d/a/south west/4/2,e/b/north/1/5,e/d/north west/3/2}
{\draw[->] (\source) --node[anchor=\position, inner sep = \sep]{$\scriptstyle (\number)$} (\target);}
}%tikzmath
\end{equation*}

Because $D$ and $E$ are irreducible, the algebra \eqref{eq: BOTTOM of 12345} is a factor.  Using Lemma~\ref{lem: needed for dotted lines}, note that the algebra 
\!
$\tikzmath[scale=\textscale]{\useasboundingbox (8.5,-2) rectangle (32,14);\draw(12,12) -- (24,12) (18,0) -- (21,0);
\draw[ultra thick](24,12) -- (30,12) -- (30,6);\draw[dash pattern=on .4pt off .62pt](19.5,0) -- (19.5,12);}$
(the right connected component of the picture \eqref{eq: RIGHT of 12345}) is the commutant of a factor acting on a vacuum sector for $E$; it follows that \eqref{eq: RIGHT of 12345} is a factor.  More difficult is the fact that \eqref{eq: TOP of 12345} is a factor---that is the content of Corollary \ref{cor: The algebras ... are factors}, following from Lemma \ref{lem:commutants} below.  The algebra \eqref{eq: LOWER LEFT of 12345} is not a factor, but combining Lemma \ref{lem:equality of XxX matrices} below and Theorem~\ref{thm: semi-simplicity of DoE}, we will learn that it does have finite-dimensional center; let $n$ be the dimension of this center.

Let $\nu_1,\ldots,\nu_6$ be the matrices of statistical dimensions 
(Appendix~\ref{subsec:stat-dim+minimal-index})
of the various inclusions in the above diagram:\medskip
\[\medskip
\begin{split}
\nu_1\,:=\, \left\llbracket\,
\tikzmath[scale=\planscale]{\useasboundingbox (-2,-2) rectangle (32,14);\draw[thick, double]  (0,6) -- (0,12) -- (6,12);           
\draw (6,12) -- (9,12) (12,12) -- (24,12) (18,0) -- (21,0);\draw[ultra thick]   (24,12) -- (30,12) -- (30,6);\draw[dash pattern=on .4pt off .62pt] (7.5,12) arc (180:360:3 and 4);}   \,:\,
\tikzmath[scale=\planscale]{\useasboundingbox (-2,-2) rectangle (32,14);\draw[thick, double]  (0,6) -- (0,12) -- (6,12);
\draw (6,12) -- (9,12) (12,12) -- (24,12) (18,0) -- (21,0);\draw[ultra thick]  (24,12) -- (30,12) -- (30,6);}
\,\right\rrbracket\qquad&\,
\nu_2\,:=\, \left\llbracket\,
\tikzmath[scale=\planscale]{\useasboundingbox (-2,-2) rectangle (32,14);\draw[thick, double]  (0,6) -- (0,12) -- (6,12);
\draw (6,12) -- (9,12) (12,12) -- (24,12) (18,0) -- (21,0);\draw[ultra thick]  (24,12) -- (30,12) -- (30,6);\draw[dash pattern=on .4pt off .62pt]   (7.5,12) arc (180:360:3 and 4)(19.5,0) -- (19.5,12);}   \,:\,
\tikzmath[scale=\planscale]{\useasboundingbox (-2,-2) rectangle (32,14);\draw[thick, double]  (0,6) -- (0,12) -- (6,12);           
\draw (6,12) -- (9,12) (12,12) -- (24,12) (18,0) -- (21,0);\draw[ultra thick]   (24,12) -- (30,12) -- (30,6);\draw[dash pattern=on .4pt off .62pt] (7.5,12) arc (180:360:3 and 4);}
\,\right\rrbracket\\
\nu_3\,:=\, \left\llbracket\,
\tikzmath[scale=\planscale]{\useasboundingbox (-2,-2) rectangle (32,14);\draw[thick, double]  (0,6) -- (0,12) -- (6,12);
\draw (6,12) -- (9,12) (12,12) -- (24,12) (18,0) -- (21,0);\draw[ultra thick]   (24,12) -- (30,12) -- (30,6);\draw[dash pattern=on .4pt off .62pt]  (19.5,0) -- (19.5,12);}   \,:\,
\tikzmath[scale=\planscale]{\useasboundingbox (-2,-2) rectangle (32,14);\draw[thick, double]  (0,6) -- (0,12) -- (6,12);
\draw (6,12) -- (9,12) (12,12) -- (24,12) (18,0) -- (21,0);\draw[ultra thick]  (24,12) -- (30,12) -- (30,6);}
\,\right\rrbracket\qquad&\,
\nu_4\,:=\, \left\llbracket\,
\tikzmath[scale=\planscale]{\useasboundingbox (-2,-2) rectangle (32,14);\draw[thick, double]  (0,6) -- (0,12) -- (6,12);
\draw (6,12) -- (9,12) (12,12) -- (24,12) (18,0) -- (21,0);\draw[ultra thick]  (24,12) -- (30,12) -- (30,6);\draw[dash pattern=on .4pt off .62pt]   (7.5,12) arc (180:360:3 and 4)(19.5,0) -- (19.5,12);}   \,:\,
\tikzmath[scale=\planscale]{\useasboundingbox (-2,-2) rectangle (32,14);\draw[thick, double]  (0,6) -- (0,12) -- (6,12);
\draw (6,12) -- (9,12) (12,12) -- (24,12) (18,0) -- (21,0);\draw[ultra thick]   (24,12) -- (30,12) -- (30,6);\draw[dash pattern=on .4pt off .62pt]  (19.5,0) -- (19.5,12);}
\,\right\rrbracket\\
\nu_5:= \Big\llbracket\,
\left(\tikzmath[scale=\planscale]{\useasboundingbox (-2,-2) rectangle (32,14);\draw[thick, double]  (0,6) -- (0,0) -- (6,0);
\draw (6,0) -- (18,0) (21,0) -- (24,0) (9,12) -- (12,12);\draw[ultra thick]  (24,0) -- (30,0) -- (30,6);\draw[dash pattern=on .4pt off .62pt] (16.5,0) arc (180:0:3 and 4);}
\right)'   \,:\,
\tikzmath[scale=\planscale]{\useasboundingbox (-2,-2) rectangle (32,14);\draw[thick, double]  (0,6) -- (0,12) -- (6,12);           
\draw (6,12) -- (9,12) (12,12) -- (24,12) (18,0) -- (21,0);\draw[ultra thick]   (24,12) -- (30,12) -- (30,6);\draw[dash pattern=on .4pt off .62pt] (7.5,12) arc (180:360:3 and 4);}
\,\Big\rrbracket\quad&
\nu_6:= \Big\llbracket\,
\tikzmath[scale=\planscale]{\useasboundingbox (-2,-2) rectangle (32,14);\draw[thick, double]  (0,6) -- (0,12) -- (6,12);
\draw (6,12) -- (9,12) (12,12) -- (24,12) (18,0) -- (21,0);\draw[ultra thick]  (24,12) -- (30,12) -- (30,6);\draw[dash pattern=on .4pt off .62pt]   (7.5,12) arc (180:360:3 and 4)(19.5,0) -- (19.5,12);}   \,:\,
\left(\tikzmath[scale=\planscale]{\useasboundingbox (-2,-2) rectangle (32,14);\draw[thick, double]  (0,6) -- (0,0) -- (6,0);
\draw (6,0) -- (18,0) (21,0) -- (24,0) (9,12) -- (12,12);\draw[ultra thick]  (24,0) -- (30,0) -- (30,6);\draw[dash pattern=on .4pt off .62pt] (16.5,0) arc (180:0:3 and 4);}
\right)'\,\Big\rrbracket.
\end{split}
\]
Note that $\nu_3$ and $\nu_4$ are scalars, $\nu_1$ is a row vector, 
$\nu_2$ and $\nu_6$ are column vectors, and $\nu_5$ is an 
$n\times n$ matrix.
As the matrix of statistical dimensions is 
multiplicative \eqref{eq:matrix-of-stat-dim-for-AcBcC}, it follows that
$\nu_1\,\nu_2=\nu_3\,\nu_4$, and that $\nu_2=\nu_5\,\nu_6$.

We will need the following facts about $\nu_1,\ldots,\nu_6$:
\begin{enumerate}
\item There is an equality of $n \times n$ matrices
\[
\nu_5 \equiv
\Big\llbracket\, \left(\tikzmath[scale=\planscale]{\useasboundingbox (-2,-2) rectangle (32,14);\draw[thick, double]  (0,6) -- (0,0) -- (6,0);
\draw (6,0) -- (18,0) (21,0) -- (24,0) (9,12) -- (12,12);\draw[ultra thick]  (24,0) -- (30,0) -- (30,6);\draw[dash pattern=on .4pt off .62pt] (16.5,0) arc (180:0:3 and 4);}%tikzmath
\right)'   \,:\, \tikzmath[scale=\planscale]{\useasboundingbox (-2,-2) rectangle (32,14);\draw[thick, double]  (0,6) -- (0,12) -- (6,12);           
\draw (6,12) -- (9,12) (12,12) -- (24,12) (18,0) -- (21,0);\draw[ultra thick]   (24,12) -- (30,12) -- (30,6);\draw[dash pattern=on .4pt off .62pt] (7.5,12) arc (180:360:3 and 4);}%tikzmath
\,\Big\rrbracket
\;=\;
\Big\llbracket
\left(\tikzmath[scale=\planscale]{\useasboundingbox (-2,-2) rectangle (26,14);\draw[thick, double] (0,6)--(0,0)--(6,0); \draw (6,0)--(18,0);\draw[ultra thick] (18,0)--(24,0)--(24,6);}%tikzmath
\right)':
\tikzmath[scale=\planscale]{\useasboundingbox (-2,-2) rectangle (26,14);\draw[thick, double] (0,6) -- (0,12) -- (6,12);\draw (6,12) -- (18,12);\draw[ultra thick] (18,12) -- (24,12) -- (24,6);}%tikzmath
\Big\rrbracket.
\]
This is proven in Lemma \ref{lem:equality of XxX matrices} below.
\item The map \eqref{eq: definition of Omega} exhibits $L^2(\tikzmath[scale=\textscale]{\useasboundingbox (-2,0) rectangle (26,14);\draw[thick, double] (0,6) -- (0,12) -- (6,12);\draw (6,12) -- (18,12);\draw[ultra thick] (18,12) -- (24,12) -- (24,6);})$ as a sub-bimodule of \,$\tikzmath[scale=\textscale]{\fill[vacuumcolor] (0,0) rectangle (24,12);
\draw[double, thick](6,0) -- (0,0) -- (0,12) -- (6,12); \draw (6,0) -- (12,0) -- (12,12) -- (6,12)(18,0) -- (12,0)(12,12) -- (18,12);
\draw[ultra thick] (18,0) -- (24,0) -- (24,12) -- (18,12);}$\,. 
Using the additivity of statistical dimension~\eqref{eq:dim-and-sum},
we obtain the \emph{entrywise} matrix inequality
\[
\nu_5\,=\, 
\Big\llbracket
\left(\tikzmath[scale=\planscale]{\useasboundingbox (-2,-2) rectangle (26,14);\draw[thick, double] (0,6)--(0,0)--(6,0); \draw (6,0)--(18,0);\draw[ultra thick] (18,0)--(24,0)--(24,6);}%tikzmath
\right)':
\tikzmath[scale=\planscale]{\useasboundingbox (-2,-2) rectangle (26,14);\draw[thick, double] (0,6) -- (0,12) -- (6,12);\draw (6,12) -- (18,12);\draw[ultra thick] (18,12) -- (24,12) -- (24,6);}%tikzmath
\Big\rrbracket \,\ge\, \text{\bf\large 1}_n\,,
\]
where $\mathbf{1}_n$ denotes the identity matrix. 
\item In Corollary \ref{cor: nu_1=nu_6^t}, we will show that $\nu_1$ is the transpose of $\nu_6$.
\item In Corollary \ref{cor:muB=mu3=mu4},
we will show that $\nu_3=\sqrt\mu$ and $\nu_4=\sqrt\mu$, where $\mu=\mu(\calb)$ is the index of the conformal net $\calb$.
\end{enumerate}

Using these results and the fact that the matrices of statistical dimensions have only non-negative entries, we can now compute that
\[
\|\,\nu_2\|^2\,=\,\nu_2^T\,\nu_2\,=\,\nu_6^T\,\nu_5^T\,\nu_5\,\nu_6\,=\,\nu_1\,\nu_5^T\,\nu_5\,\nu_6\,\ge\,\nu_1\,\nu_5\,\nu_6\,=\,\nu_3\,\nu_4\,=\mu\,,
\]
with equality if and only if $\nu_5=\mathbf 1_n$ (here, $^T$ denotes transpose).
However, by 
applying~\eqref{eq:DIS-727}
to the algebras $M:=\hat\calb(J_2)$, $N:=\calb(J_2)$, 
and $A:=D(J_0)\vee\hat\calb(J_1)\vee E(J_3)$,
we obtain the reverse inequality 
\[
\|\nu_2\|\le \sqrt\mu.
\]
It follows that $\nu_5=\mathbf 1_n$, and 
%by \cite[\propbasicpropertiesofindex]{BDH(Dualizability+Index-of-subfactors)} 
the inclusion~\eqref{eq: our goal is to show that the inclusion is an isomorphism} 
is therefore an isomorphism.
\end{proof}

In Theorem \ref{thm:Haag-duality-composition-defects}, the defects $D$ and $E$ were assumed to be irreducible, but
the statement holds in general:

\begin{corollary}\label{cor:fiber-product-and-nets} 
Let ${}_\cala D_\calb$ and ${}_\calb E_\calc$ be defects.
If the conformal net $\calb$ has finite index,
then the algebra 
$D \big( \tikzmath[scale=\textscale]{\useasboundingbox (-2,0) rectangle (14,14); \draw[double, thick](0,6) -- (0,12) -- (6,12);\draw  (6,12) -- (12,12);}%tikzmath
\big)\vee E \big( \tikzmath[scale=\textscale]{\useasboundingbox (10,0) rectangle (26,14); \draw  (12,12) -- (18,12);\draw[ultra thick](18,12) -- (24,12) -- (24,6);}%tikzmath
\big)$
is the commutant of 
$D \big( \tikzmath[scale=\textscale]{\useasboundingbox (-2,-2) rectangle (14,12); \draw[double, thick](0,6) -- (0,0) -- (6,0);\draw  (6,0) -- (12,0);}%tikzmath
\big)\vee E \big( \tikzmath[scale=\textscale]{\useasboundingbox (10,-2) rectangle (26,12); \draw  (12,0) -- (18,0);\draw[ultra thick](18,0) -- (24,0) -- (24,6);}%tikzmath
\big)$
on $\tikzmath[scale=\textscale]{\fill[vacuumcolor] (0,0) rectangle (24,12);
\draw[double, thick](6,0) -- (0,0) -- (0,12) -- (6,12); \draw (6,0) -- (12,0) -- (12,12) -- (6,12)(18,0) -- (12,0)(12,12) -- (18,12);
\draw[ultra thick] (18,0) -- (24,0) -- (24,12) -- (18,12);}$\,.%\medskip
\end{corollary}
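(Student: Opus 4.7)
The plan is to reduce the general case to the irreducible case already handled by Theorem~\ref{thm:Haag-duality-composition-defects}, by disintegrating $D$ and $E$ into irreducible defects. Since the ambient nets $\cala$, $\calb$, $\calc$ are irreducible (hence semisimple), Lemma~\ref{lem: disintegrate} lets us write
\[
D = \int^\oplus_{x\in X} D_x\, d\mu(x), \qquad E = \int^\oplus_{y\in Y} E_y\, d\nu(y),
\]
where each $D_x$ is an irreducible $\cala$-$\calb$-defect and each $E_y$ is an irreducible $\calb$-$\calc$-defect. I would then verify that all the Hilbert spaces and algebras appearing in the statement are compatible with this disintegration. Specifically, by Lemma~\ref{lem: vacuum * vacuum = vacuum -- with defects} and the compatibility of $L^2$ with direct integrals, the vacuum sectors decompose as $H_0(S_l,D) = \int^\oplus H_0(S_l,D_x)$ and $H_0(S_r,E) = \int^\oplus H_0(S_r,E_y)$, so that
\[
\tikzmath[scale=\textscale]{\fill[vacuumcolor] (0,0) rectangle (24,12);
\draw[double, thick](6,0) -- (0,0) -- (0,12) -- (6,12); \draw (6,0) -- (12,0) -- (12,12) -- (6,12)(18,0) -- (12,0)(12,12) -- (18,12);
\draw[ultra thick] (18,0) -- (24,0) -- (24,12) -- (18,12);} \;\cong\; \iint^\oplus_{X\times Y} H_0(S_l,D_x)\boxtimes_{\calb(I)}H_0(S_r,E_y)\, d(\mu\times\nu),
\]
the Connes fusion being taken fiberwise because $\calb(I)$ is fixed and acts diagonally.

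Next, I would show that the four algebras
$D(\tikzmath[scale=\textscale]{\useasboundingbox (-2,0) rectangle (14,14); \draw[double, thick](0,6) -- (0,12) -- (6,12);\draw  (6,12) -- (12,12);})$,
$E(\tikzmath[scale=\textscale]{\useasboundingbox (10,0) rectangle (26,14); \draw  (12,12) -- (18,12);\draw[ultra thick](18,12) -- (24,12) -- (24,6);})$,
$D(\tikzmath[scale=\textscale]{\useasboundingbox (-2,-2) rectangle (14,12); \draw[double, thick](0,6) -- (0,0) -- (6,0);\draw  (6,0) -- (12,0);})$, and
$E(\tikzmath[scale=\textscale]{\useasboundingbox (10,-2) rectangle (26,12); \draw  (12,0) -- (18,0);\draw[ultra thick](18,0) -- (24,0) -- (24,6);})$
decompose as direct integrals of the corresponding algebras for $D_x$ and $E_y$: this follows from the definition of disintegration of a defect. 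Consequently, the two algebras of interest in the statement,
\[
A := D(\text{top-left}) \vee E(\text{top-right}), \qquad B := D(\text{bottom-left}) \vee E(\text{bottom-right}),
\]
are each decomposable operators with respect to the decomposition above, and they decompose as $A = \iint^\oplus A_{x,y}$ and $B = \iint^\oplus B_{x,y}$, where $A_{x,y}$ and $B_{x,y}$ are the corresponding algebras for the irreducible defects $D_x$ and $E_y$.

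By Theorem~\ref{thm:Haag-duality-composition-defects} applied to each pair $(D_x, E_y)$ of irreducible defects (using that $\calb$ has finite index), we have $A_{x,y} = B_{x,y}'$ fiberwise. Invoking the standard fact that taking commutants commutes with direct integrals of von Neumann algebras (e.g., Takesaki, Vol.~I, Ch.~IV), we conclude
\[
B' = \Big(\iint^\oplus B_{x,y}\Big)' = \iint^\oplus B_{x,y}' = \iint^\oplus A_{x,y} = A,
\]
which is the desired equality. The main point requiring care is checking the measurability of the disintegration: that the structure homomorphisms $D(I)\to \bfB(H_0(S_l,D))$ really decompose into measurable fields of homomorphisms $D_x(I)\to \bfB(H_0(S_l,D_x))$, and similarly for $E$. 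This compatibility is exactly what the proof of Lemma~\ref{lem: disintegrate} provides (the nontrivial vacuum-sector axiom verification there), so no new work is needed. \qedhere
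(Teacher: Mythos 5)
Your proof is correct and follows essentially the same route as the paper's: disintegrate $D$ and $E$ into irreducible defects, decompose the fused vacuum sector and the four algebras as direct integrals over $X\times Y$, apply Theorem~\ref{thm:Haag-duality-composition-defects} fiberwise, and use that taking commutants commutes with direct integrals. The only slight misstep is citing Lemma~\ref{lem: vacuum * vacuum = vacuum -- with defects} for the decomposition of the vacuum sectors (that lemma concerns gluing, not disintegration; the decomposition follows directly from $H_0(S,D)=L^2(D(S_\top))$ and the compatibility of $L^2$ with direct integrals), but this does not affect the argument.
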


\begin{proof}
We need to show that $D(I_1)\vee E(I_2)=(D(I_3)\vee E(I_4))'$, where the intervals $I_1, I_2, I_3, I_4$ are as in \eqref{eq: I_1234}.
Disintegrating
\[
\textstyle D=\int^\oplus D_x \qquad\text{and}\qquad E=\int^\oplus E_y
\]
into irreducible defects, the Hilbert space
$\tikzmath[scale=\textscale]{\fill[vacuumcolor] (0,0) rectangle (24,12);
\draw[double, thick](6,0) -- (0,0) -- (0,12) -- (6,12); \draw (6,0) -- (12,0) -- (12,12) -- (6,12)(18,0) -- (12,0)(12,12) -- (18,12);
\draw[ultra thick] (18,0) -- (24,0) -- (24,12) -- (18,12);} = H_0(S_l,D)\boxtimes_{\calb(I)}H_0(S_r,E)$
decomposes correspondingly as $\iint^\oplus H_0(S_l,D_x)\boxtimes_{\calb(I)}H_0(S_r,E_y)$.
This induces direct integral decompositions 
\[
\begin{split}
D(I_1)\vee E(I_2)=&\textstyle \iint^\oplus D_x(I_1)\vee E_y(I_2),\\
D(I_3)\vee E(I_4)=&\textstyle \iint^\oplus D_x(I_3)\vee E_y(I_4),
\end{split}
\]
and therefore also
$(D(I_3)\vee E(I_4))'=\iint^\oplus(D_x(I_3)\vee E_y(I_4))' $,
where the commutant of $D_x(I_3)\vee E_y(I_4)$ is taken on $H_0(S_l,D_x)\boxtimes_{\calb(I)}H_0(S_r,E_y)$.
By Theorem~\ref{thm:Haag-duality-composition-defects}, 
we have $D_x(I_1)\vee E_y(I_2)=(D_x(I_3)\vee E_y(I_4))'$, 
and the result follows.
\end{proof}

\section{The dimension of the Haag inclusion} \label{sec:haaginclusion}

Recall the notion of the center of a defect from Section \ref{sec: examples of defects}: for a genuinely bicolored interval $I$,
the algebra $Z(D(I))$ is independent of $I$ (up to canonical isomorphism), and is denoted $Z(D)$.

From now on, the defects $D$ and $E$ are again assumed irreducible:

\begin{lemma} \label{lem:equality of XxX matrices}
Let $X$ %:=\mathrm{Spec}(Z(D\circledast_\calb E))$
 be the set of irreducible summands of ${D\circledast_\calb E}$. %\footnote{Here, we use ``Spec'' in the sense of algebraic geometry.} 
We have a canonical identification of centers
$
Z(D\circledast_\calb E)=
Z(\tikzmath[scale=\textscale]{\useasboundingbox (-2,-2) rectangle (32,12);\draw[thick, double] (0,6) -- (0,0) -- (6,0);\draw(6,0) -- (24,0);\draw[ultra thick] (24,0) -- (30,0) -- (30,6);})=
Z(\tikzmath[scale=\textscale]{\useasboundingbox (-2,0) rectangle (32,14);\draw[thick, double](0,6) -- (0,12) -- (6,12);\draw (6,12) -- (24,12);\draw[ultra thick](24,12) -- (30,12) -- (30,6);})=
Z(\tikzmath[scale=\textscale]{\useasboundingbox (-2,-2) rectangle (32,12);\draw[thick, double] (0,6) -- (0,0) -- (6,0);
\draw (6,0) -- (18,0) (21,0) -- (24,0) (9,10) -- (12,10);\draw[ultra thick] (24,0) -- (30,0) -- (30,6);\draw[dash pattern=on .4pt off .62pt](16.5,0) arc (180:0:3 and 4);})=
Z(\tikzmath[scale=\textscale]{\useasboundingbox (-2,0) rectangle (32,14);\draw[thick, double](0,6) -- (0,12) -- (6,12);
\draw (6,12) -- (9,12) (12,12) -- (24,12) (18,2) -- (21,2);\draw[ultra thick](24,12) -- (30,12) -- (30,6);\draw[dash pattern=on .4pt off .62pt] (7.5,12) arc (180:360:3 and 4);})
$,
and we have the following equality of $X\times X$ matrices:
\label{lem:mu=mu5}
\begin{equation*}
\Big\llbracket\Big(\tikzmath[scale=\planscale]
{\useasboundingbox (-2,-2) rectangle (32,14);\draw[thick, double] (0,6) -- (0,0) -- (6,0);\draw (6,0) -- (24,0);\draw[ultra thick] (24,0) -- (30,0) -- (30,6);} %tikzmath
\Big)' \; : \; \tikzmath[scale=\planscale]
{\useasboundingbox (-2,-2) rectangle (32,14);\draw[thick, double](0,6) -- (0,12) -- (6,12);\draw (6,12) -- (24,12);\draw[ultra thick](24,12) -- (30,12) -- (30,6);} %tikzmath
\Big\rrbracket \; = \; \Big\llbracket\Big(
\tikzmath[scale=\planscale]{\useasboundingbox (-2,-2) rectangle (32,14);\draw[thick, double] (0,6) -- (0,0) -- (6,0);
\draw (6,0) -- (18,0) (21,0) -- (24,0) (9,12) -- (12,12);\draw[ultra thick] (24,0) -- (30,0) -- (30,6);\draw[dash pattern=on .4pt off .62pt](16.5,0) arc (180:0:3 and 4);} %tikzmath
\Big)' \; : \;
\tikzmath[scale=\planscale]{\useasboundingbox (-2,-2) rectangle (32,14);\draw[thick, double](0,6) -- (0,12) -- (6,12);
\draw (6,12) -- (9,12) (12,12) -- (24,12) (18,0) -- (21,0);\draw[ultra thick](24,12) -- (30,12) -- (30,6);\draw[dash pattern=on .4pt off .62pt] (7.5,12) arc (180:360:3 and 4);} %tikzmath
\Big\rrbracket
\end{equation*}
\end{lemma}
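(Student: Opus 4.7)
The plan has two parts: first, establish the chain of canonical center identifications so that both matrices in the claimed equality are indexed by the same finite set; second, compare the matrices entry-by-entry via the common underlying Hilbert space $H := \tikzmath[scale=\textscale]{\fill[vacuumcolor] (0,0) rectangle (24,12); \draw[double, thick](6,0) -- (0,0) -- (0,12) -- (6,12); \draw (6,0) -- (12,0) -- (12,12) -- (6,12)(18,0) -- (12,0)(12,12) -- (18,12); \draw[ultra thick] (18,0) -- (24,0) -- (24,12) -- (18,12);}$, on which all four algebras in question naturally act.

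For the center identifications: since $\calb$ has finite index, Theorem~\ref{thm:fusion-of-defects-is-defect} gives that $D \circledast_\calb E$ is an $\cala$-$\calc$-defect, so by the construction of the center of a defect recalled in Section~\ref{sec: examples of defects} just before Lemma~\ref{lem: disintegrate}, there are canonical identifications $Z(D \circledast_\calb E) \cong Z((D \circledast_\calb E)(S^1_\top)) \cong Z((D \circledast_\calb E)(S^1_\bot))$, handling the two non-dotted algebras. For each dotted algebra, I would exhibit the finite inclusion of the corresponding non-dotted algebra (finiteness via the argument behind Corollary~\ref{cor: is a finite homomorphism of von Neumann algebra}) and argue that it induces an isomorphism on centers: the dotted algebra is generated from the non-dotted one by adjoining $\hat\calb$, which is a factor, and any element of the dotted algebra that commutes with $\hat\calb$ already commutes with $\calb \subset \hat\calb$; since the non-dotted algebra is generated by its image together with $\calb$, a central element of the dotted algebra is thus already central in the non-dotted.

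For the matrix equality, with $X$ now a common index set and minimal central projections coherently identified across all four algebras, the $(x,y)$-entry of either matrix is the statistical dimension of the corresponding irreducible summand of $H$, viewed as a bimodule over the appropriate pair of factors. I would argue that passing from non-dotted to dotted enlarges both the top and the bottom algebras by adjoining the same $\hat\calb(J_u)$ (the commutant of $\calb$ on the auxiliary vacuum sector $H_0(S_u,\calb)$), which appears as commuting copies of a single factor acting on $H$ from both sides. Since tensoring both sides of an irreducible bimodule between factors by a common factor preserves its statistical dimension (cf.\ Appendix~\ref{subsec:stat-dim+minimal-index}), the two matrices agree entry-wise.

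The main obstacle will be making the ``shared enlargement'' argument precise: identifying the top-enlargement by $\hat\calb(J_u)$ and the bottom-enlargement by the same $\hat\calb(J_u)$ as adjunctions of commuting copies of a common factor acting on $H$. This requires carefully tracking, via Haag duality for $\calb$ on the auxiliary vacuum sector and the isometric embedding $\Psi$ of Proposition~\ref{prop:G=L2} (together with its decomposition into keyhole and keystone fusion in Theorem~\ref{thm:G_0=L^2}), how $\hat\calb(J_u)$ sits inside the commutants of both top and bottom, and then applying the invariance of statistical dimension under such two-sided factor adjunctions to conclude equality rather than mere inequality of the matrices.
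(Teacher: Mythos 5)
There is a genuine gap, and it lies at the heart of your matrix-equality argument. You propose that the passage from the undotted to the dotted algebras "enlarges both the top and the bottom algebras by adjoining the same $\hat\calb(J_u)$," and that the equality of the two matrices then follows because tensoring both members of an inclusion by a common factor preserves statistical dimension. But the dotted algebras are \emph{not} obtained from the undotted ones by adjoining a common commuting factor: the dotted top replaces $\calb(J_1)$ by $\hat\calb(J_1)$ (a strictly larger algebra, hatted over the left circle) and simultaneously deletes one small top interval while adjoining one small bottom interval, and the dotted bottom is modified at a \emph{different} location by $\hat\calb(J_2)$ (hatted over the right circle). Moreover, the hatted algebras are precisely the objects for which the "free adjunction" principle fails: the inclusion $\calb(J_i)\subseteq\hat\calb(J_i)$ has index $\mu(\calb)$, the hat is not split from its complement, and replacing $\calb$ by $\hat\calb$ changes statistical dimensions by a factor of $\sqrt{\mu(\calb)}$ — this is exactly the content of Lemma~\ref{lem:index-is-sqrt-mu} and Corollaries~\ref{cor: [[ : ]] = sqrt mu(A)} and~\ref{cor:muB=mu3=mu4}. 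So the invariance you invoke, which is \eqref{eq:matrix-of-stat-dim-ox-C} and requires a genuine spatial tensor factor, simply does not apply to $\hat\calb$.

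The objects one can harmlessly adjoin are the \emph{small split interval algebras} $\calb\big([\tfrac76,\tfrac43]\times\{0\}\big)$ and $\calb\big([\tfrac23,\tfrac56]\times\{1\}\big)$: each is contained in one half-circle algebra, hence commutes with that algebra's commutant, and by the split property (applied to a slightly enlarged interval) the adjunction $-\vee\calb(J)$ is literally a spatial tensor product $-\,\bar\ox\,\calb(J)$, so \eqref{eq:matrix-of-stat-dim-ox-C} preserves the matrix. The hats then appear not by adjunction but through the commutant calculus: $(\text{bottom})'\vee\calb(J)=\big(\text{bottom}\cap\calb(J)'\big)'$, and Lemma~\ref{lem:commutant-spacial-vee 1} identifies $\text{bottom}\cap\calb(J)'$ with the hatted bottom; a second application on the other side, after transposing via \eqref{eq:matrix-of-stat-dim-commutants}, produces the second dotted arc. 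Your proposal inverts this logic — it tries to adjoin the hat and treat it as innocuous — and as written the key step would compute the wrong answer. (Your separate argument for the identification of centers also only shows that central elements of the dotted algebra commute with the undotted one, not that the two centers coincide; but this is secondary, since in the correct proof the identification of the index sets falls out of the same tensor-and-commutant manipulations.)
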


\begin{proof}
Note that
$\llbracket A : B\rrbracket = 
  \llbracket A \,\bar{\ox}\, C : B \,\bar{\ox}\, C\rrbracket$ 
whenever $C$ is a factor~\eqref{eq:matrix-of-stat-dim-ox-C}.
The algebras $\calb\big([\tfrac76, \tfrac43]\big)$ and ${\calb\big([\tfrac76-\epsilon, \tfrac43+\epsilon]\big)'}$ are split on 
$H_0(S_r,\calb)$, and hence on any $\calb\big([\tfrac76-\epsilon, \tfrac43+\epsilon]\big)$-module.
Since
$\big(\tikzmath[scale=\textscale]{\useasboundingbox (-2,-2) rectangle (32,12);\draw[thick, double] (0,6) -- (0,0) -- (6,0);\draw(6,0) -- (24,0);\draw[ultra thick] (24,0) -- (30,0) -- (30,6); } %tikzmath
\big)'\subset \calb\big([\tfrac76-\epsilon, \tfrac43+\epsilon]\big)'$
on \,$\tikzmath[scale=\textscale]{\fill[vacuumcolor] (0,0) rectangle (24,12);
\draw[double, thick](6,0) -- (0,0) -- (0,12) -- (6,12); \draw (6,0) -- (12,0) -- (12,12) -- (6,12)(18,0) -- (12,0)(12,12) -- (18,12);
\draw[ultra thick] (18,0) -- (24,0) -- (24,12) -- (18,12);}$\,, it follows that
\[
\Big(\,\tikzmath[scale=\planscale]{\useasboundingbox (-2,-2) rectangle (32,14);\draw[thick, double] (0,6) -- (0,0) -- (6,0);\draw(6,0) -- (24,0);\draw[ultra thick] (24,0) -- (30,0) -- (30,6); } %tikzmath
\,\Big)' \; \vee \; \tikzmath[scale=\planscale]{\useasboundingbox (10,-2) rectangle (27,14);\draw(18,0) -- (21,0);} %tikzmath
=\,\, \Big(\,\tikzmath[scale=\planscale]{\useasboundingbox (-2,-2) rectangle (32,14);\draw[thick, double] (0,6) -- (0,0) -- (6,0);\draw(6,0) -- (24,0);\draw[ultra thick] (24,0) -- (30,0) -- (30,6);} %tikzmath
\,\Big)' \; \bar{\ox} \; \tikzmath[scale=\planscale]{\useasboundingbox (10,-2) rectangle (27,14);\draw(18,0) -- (21,0);}, %tikzmath
\]
where the line 
$\tikzmath[scale=\planscale]{\useasboundingbox (-2,-1) rectangle (5,5);\draw(0,0) -- (3,0);}$ stands for $\calb\big([\tfrac76, \tfrac43] \times \{0\}\big)$.
We conclude that
\begin{eqnarray} \label{eq:mu=mu5:eins}
\lefteqn{   \Big\llbracket\Big(
\tikzmath[scale=\planscale]{\useasboundingbox (-2,-2) rectangle (32,14);\draw[thick, double] (0,6) -- (0,0) -- (6,0);\draw(6,0) -- (24,0);\draw[ultra thick] (24,0) -- (30,0) -- (30,6); } %tikzmath
\Big)' \; : \;
\tikzmath[scale=\planscale]{\useasboundingbox (-2,-2) rectangle (32,14);\draw[thick, double](0,6) -- (0,12) -- (6,12);\draw (6,12) -- (24,12);\draw[ultra thick](24,12) -- (30,12) -- (30,6);} %tikzmath
\Big\rrbracket    }\\ \nonumber & = & \Big\llbracket \Big(
\tikzmath[scale=\planscale]{\useasboundingbox (-2,-2) rectangle (32,14);\draw[thick, double] (0,6) -- (0,0) -- (6,0);\draw(6,0) -- (24,0);\draw[ultra thick] (24,0) -- (30,0) -- (30,6);} %tikzmath
\Big)' \; \bar{\ox} \; \tikzmath[scale=\planscale]{\useasboundingbox (10,-2) rectangle (27,14);\draw(18,0) -- (21,0);} %tikzmath
\; : \; \tikzmath[scale=\planscale]{\useasboundingbox (-2,-2) rectangle (32,14);\draw[thick, double](0,6) -- (0,12) -- (6,12);\draw (6,12) -- (24,12);\draw[ultra thick](24,12) -- (30,12) -- (30,6);}%tikzmath
\; \bar{\ox} \; \tikzmath[scale=\planscale]{\useasboundingbox (10,-2) rectangle (27,14);\draw(18,0) -- (21,0);} %tikzmath
\Big\rrbracket  \\ \nonumber & = & \Big\llbracket  \Big(
\tikzmath[scale=\planscale]{\useasboundingbox (-2,-2) rectangle (32,14);\draw[thick, double] (0,6) -- (0,0) -- (6,0);\draw(6,0) -- (24,0);\draw[ultra thick] (24,0) -- (30,0) -- (30,6); } %tikzmath
\Big)' \; \vee \; \tikzmath[scale=\planscale]{\useasboundingbox (10,-2) rectangle (27,14);\draw(18,0) -- (21,0);} %tikzmath
\; : \; \tikzmath[scale=\planscale]{\useasboundingbox (-2,-2) rectangle (32,14);\draw[thick, double](0,6) -- (0,12) -- (6,12);\draw (6,12) -- (24,12);
\draw[ultra thick](24,12) -- (30,12) -- (30,6); \draw(18,0) -- (21,0);} %tikzmath
\Big\rrbracket  \\ \nonumber & = & \Big\llbracket \Big(
\tikzmath[scale=\planscale]{\useasboundingbox (-2,-2) rectangle (32,14);\draw[thick, double] (0,6) -- (0,0) -- (6,0);\draw(6,0) -- (24,0);\draw[ultra thick] (24,0) -- (30,0) -- (30,6);} %tikzmath
\; \cap \; \big(\tikzmath[scale=\planscale]{\useasboundingbox (10,-2) rectangle (27,14);\draw(18,0) -- (21,0);} %tikzmath
\big)' \Big)' \; : \;
\tikzmath[scale=\planscale]{\useasboundingbox (-2,-2) rectangle (32,14);\draw[thick, double](0,6) -- (0,12) -- (6,12);\draw (6,12) -- (24,12);
\draw[ultra thick](24,12) -- (30,12) -- (30,6);\draw(18,0) -- (21,0);} %tikzmath
\Big\rrbracket  \\ \nonumber & = & \Big\llbracket \Big(
\tikzmath[scale=\planscale]{\useasboundingbox (-2,-2) rectangle (32,14);\draw[thick, double](0,6) -- (0,0) -- (6,0);\draw (6,0) -- (18,0) (21,0) -- (24,0);
\draw[ultra thick] (24,0) -- (30,0) -- (30,6);\draw[dash pattern=on .4pt off .62pt](16.5,0) arc (180:0:3 and 4); } %tikzmath
\Big)' \; : \; \tikzmath[scale=\planscale]{\useasboundingbox (-2,-2) rectangle (32,14);\draw[thick, double](0,6) -- (0,12) -- (6,12);\draw (6,12) -- (24,12);
\draw[ultra thick](24,12) -- (30,12) -- (30,6); \draw(18,0) -- (21,0);} %tikzmath
\Big\rrbracket ;
\end{eqnarray}
the last equality follows by applying Lemma~\ref{lem:commutant-spacial-vee 1} with
$B := \tikzmath[scale=\textscale]
             {\useasboundingbox (-2,-2) rectangle (32,9);
              \draw[thick, double](0,6) -- (0,0) -- (6,0);
              \draw (6,0) -- (16,0) (22,0) -- (24,0);
              \draw[ultra thick] (24,0) -- (30,0) -- (30,6);
               } %tikzmath
$,
$A := \tikzmath[scale=\textscale]{\useasboundingbox (10,-2) rectangle (27,9);\draw(16,0) -- (23,0);} %tikzmath
$,
$A_0 := \tikzmath[scale=\textscale]{\useasboundingbox (10,-2) rectangle (27,9);\draw(18,0) -- (21,0);} %tikzmath
$, and $A \cap A_0' = \tikzmath[scale=\textscale]
     {\useasboundingbox (10,-2) rectangle (27,9);
           \draw (16,0) -- (18,0) (21,0) -- (23,0);
         \draw[dash pattern=on .4pt off .62pt]
              (16.5,0) arc (180:0:3 and 4); } %tikzmath
$.
Similarly, we have
\begin{eqnarray} \label{eq:mu=mu5:zwei}
\lefteqn{\Big\llbracket\Big(
\tikzmath[scale=\planscale]{\useasboundingbox (-2,-2) rectangle (32,14);\draw[thick, double](0,6) -- (0,12) -- (6,12);
\draw (6,12) -- (24,12) (18,0) -- (21,0);\draw[ultra thick](24,12) -- (30,12) -- (30,6);}%tikzmath
\Big)'\; : \;\tikzmath[scale=\planscale]{\useasboundingbox (-2,-2) rectangle (32,14);\draw[thick, double] (0,6) -- (0,0) -- (6,0);\draw (6,0) -- (18,0) (21,0) -- (24,0);
\draw[ultra thick] (24,0) -- (30,0) -- (30,6);\draw[dash pattern=on .4pt off .62pt] (16.5,0) arc (180:0:3 and 4);}%tikzmath
\Big\rrbracket } \\ \nonumber & = & \Big\llbracket \Big(
\tikzmath[scale=\planscale]{\useasboundingbox (-2,-2) rectangle (32,14);\draw[thick, double](0,6) -- (0,12) -- (6,12);
\draw (6,12) -- (24,12) (18,0) -- (21,0);\draw[ultra thick](24,12) -- (30,12) -- (30,6);}%tikzmath
\Big)' \; \bar{\ox} \; \tikzmath[scale=\planscale]{\useasboundingbox (5,-2) rectangle (18,14);\draw(9,12) -- (12,12);}%tikzmath 
\; : \; \tikzmath[scale=\planscale]{\useasboundingbox (-2,-2) rectangle (32,14);\draw[thick, double] (0,6) -- (0,0) -- (6,0);
\draw (6,0) -- (18,0) (21,0) -- (24,0);\draw[ultra thick] (24,0) -- (30,0) -- (30,6);\draw[dash pattern=on .4pt off .62pt] (16.5,0) arc (180:0:3 and 4);}%tikzmath
\; \bar{\ox} \; \tikzmath[scale=\planscale]{\useasboundingbox (5,-2) rectangle (18,14);\draw(9,12) -- (12,12);} %tikzmath
\,\,\Big\rrbracket \\ \nonumber & = & \Big\llbracket \Big(
\tikzmath[scale=\planscale]{\useasboundingbox (-2,-2) rectangle (32,14);\draw[thick, double](0,6) -- (0,12) -- (6,12);
\draw (6,12) -- (24,12) (18,0) -- (21,0);\draw[ultra thick](24,12) -- (30,12) -- (30,6);}%tikzmath
\Big)' \; \vee \; \tikzmath[scale=\planscale]{\useasboundingbox (5,-2) rectangle (18,14);\draw(9,12) -- (12,12);} %tikzmath
\; : \; \tikzmath[scale=\planscale] {\useasboundingbox (-2,-2) rectangle (32,14); \draw[thick, double] (0,6) -- (0,0) -- (6,0);\draw (6,0) -- (18,0) (21,0) -- (24,0) (9,12) -- (12,12);
\draw[ultra thick] (24,0) -- (30,0) -- (30,6);\draw[dash pattern=on .4pt off .62pt] (16.5,0) arc (180:0:3 and 4); }%tikzmath
\Big\rrbracket  \\ \nonumber & = & \Big\llbracket \Big(\tikzmath[scale=\planscale]{\useasboundingbox (-2,-2) rectangle (32,14);
\draw[thick, double](0,6) -- (0,12) -- (6,12);\draw (6,12) -- (24,12) (18,0) -- (21,0);\draw[ultra thick](24,12) -- (30,12) -- (30,6);} %tikzmath
\; \cap \; \big(\, \tikzmath[scale=\planscale] { \useasboundingbox (5,-2) rectangle (18,14);\draw(9,12) -- (12,12);}%tikzmath 
\big)' \Big)' \; : \; \tikzmath[scale=\planscale]{\useasboundingbox (-2,-2) rectangle (32,14);\draw[thick, double] (0,6) -- (0,0) -- (6,0);
\draw (6,0) -- (18,0) (21,0) -- (24,0) (9,12) -- (12,12);\draw[ultra thick] (24,0) -- (30,0) -- (30,6);\draw[dash pattern=on .4pt off .62pt] (16.5,0) arc (180:0:3 and 4); }%tikzmath
\Big\rrbracket   \\ \nonumber & = & \Big\llbracket \Big( \tikzmath[scale=\planscale]{\useasboundingbox (-2,-2) rectangle (32,14);\draw[thick, double](0,6) -- (0,12) -- (6,12);
\draw (6,12) -- (9,12) (12,12) -- (24,12) (18,0) -- (21,0); \draw[ultra thick](24,12) -- (30,12) -- (30,6);\draw[dash pattern=on .4pt off .62pt] (7.5,12) arc (180:360:3 and 4);} %tikzmath
\Big)' \; : \; \tikzmath[scale=\planscale] {\useasboundingbox (-2,-2) rectangle (32,14);\draw[thick, double] (0,6) -- (0,0) -- (6,0);\draw (6,0) -- (18,0) (21,0) -- (24,0) (9,12) -- (12,12);
\draw[ultra thick] (24,0) -- (30,0) -- (30,6);\draw[dash pattern=on .4pt off .62pt] (16.5,0) arc (180:0:3 and 4); }%tikzmath
\Big\rrbracket .
\end{eqnarray}
Taking commutants transposes the matrix of statistical
dimensions~\eqref{eq:matrix-of-stat-dim-commutants}.
%\cite[\corindexofcommutants]
%     {BDH(Dualizability+Index-of-subfactors)}.
Thus~\eqref{eq:mu=mu5:zwei} implies
\[
\Big\llbracket \Big(
\tikzmath[scale=\planscale]{\useasboundingbox (-2,-2) rectangle (32,14);\draw[thick, double](0,6) -- (0,0) -- (6,0);\draw (6,0) -- (18,0) (21,0) -- (24,0);
\draw[ultra thick] (24,0) -- (30,0) -- (30,6);\draw[dash pattern=on .4pt off .62pt](16.5,0) arc (180:0:3 and 4); } %tikzmath
\Big)' \; : \; \tikzmath[scale=\planscale]{\useasboundingbox (-2,-2) rectangle (32,14);\draw[thick, double](0,6) -- (0,12) -- (6,12);\draw (6,12) -- (24,12);
\draw[ultra thick](24,12) -- (30,12) -- (30,6); \draw(18,0) -- (21,0);} %tikzmath
\Big\rrbracket \;=\; \Big\llbracket\Big(
\tikzmath[scale=\planscale]{\useasboundingbox (-2,-2) rectangle (32,14);\draw[thick, double] (0,6) -- (0,0) -- (6,0);
\draw (6,0) -- (18,0) (21,0) -- (24,0) (9,12) -- (12,12);\draw[ultra thick] (24,0) -- (30,0) -- (30,6);\draw[dash pattern=on .4pt off .62pt](16.5,0) arc (180:0:3 and 4);} %tikzmath
\Big)' \; : \; \tikzmath[scale=\planscale]{\useasboundingbox (-2,-2) rectangle (32,14);\draw[thick, double](0,6) -- (0,12) -- (6,12);
\draw (6,12) -- (9,12) (12,12) -- (24,12) (18,0) -- (21,0);\draw[ultra thick](24,12) -- (30,12) -- (30,6);\draw[dash pattern=on .4pt off .62pt] (7.5,12) arc (180:360:3 and 4);} %tikzmath
\Big\rrbracket
\]
which, combined with~\eqref{eq:mu=mu5:eins}, proves the Lemma. 
\end{proof}

\section{The double bridge algebra is a factor}

Let $S_l$ and $S_r$ be as in \eqref{eq: figures of intervals 3} and now let $\tilde S_l:=\partial([0,\tfrac54]\times[0,1])$ and $\tilde S_r:={\partial([\tfrac54,2]\times[0,1])}$; give these circles the bicoloring
\begin{equation*}
\begin{split}
(S_l)_\circ := (S_l)_{x\le \frac12}\quad
(S_l)_\bullet := (S_l)_{x\ge \frac12}&\quad
(S_r)_\circ := (S_r)_{x\le \frac32}\quad
(S_r)_\bullet := (S_r)_{x\ge \frac32}\\
(\tilde S_l)_\circ := (\tilde S_l)_{x\le \frac12}\quad
(\tilde S_l)_\bullet := (\tilde S_l)_{x\ge \frac12}&\quad
(\tilde S_r)_\circ := (\tilde S_r)_{x\le \frac32}\quad
(\tilde S_r)_\bullet := (\tilde S_r)_{x\ge \frac32}.
\end{split}
\end{equation*}
%the same formulas as \eqref{eq: def colors of S and tildeS}
%(even tough their meaning in \eqref{eq: def colors of S and tildeS} is different).
Let $I:=S_l\cap S_r$ and $\tilde I:=\tilde S_l\cap \tilde S_r$.
Recall from~\eqref{eq:v_I} that for any conformal circle $S$ and any interval $I\subset S$,
the vacuum sector $H_0(S,\calb)$ is a unit for Connes fusion over $\calb(I)$.
By applying this fact twice, we can construct a non-canonical
isomorphism
\[
H_0(S_l,D)\circledast_{\calb(I)}H_0(S_r,E)\,\cong\, H_0(\tilde S_l,D)\circledast_{\calb(\tilde I)}H_0(\tilde S_r,E),
\]
equivariant with respect to the actions of $D(J)$ for every
$J\subset \tikzmath[scale=\textscale]{\useasboundingbox (-2,0) rectangle (26,12);\draw[thick, double](6,0)--(0,0)--(0,12)--(6,12);\draw(6,12)--(18,12);\draw[->](0,5.4) -- (0,5.3);}$
and $J\subset \tikzmath[scale=\textscale]{\useasboundingbox (-2,0) rectangle (26,12);\draw[thick, double](6,0)--(0,0)--(0,12)--(6,12);\draw(6,0)--(18,0);\draw[->](0,5.4) -- (0,5.3);}$,
and with respect to the actions of $E(J)$ for every
$J\subset \tikzmath[scale=\textscale]{\useasboundingbox (-2,0) rectangle (26,12);\draw[ultra thick](18,0)--(24,0)--(24,12)--(18,12);\draw(6,12)--(18,12);\draw[->, thick](24,6.8) -- (24,6.9);}$
and $J\subset \tikzmath[scale=\textscale]{\useasboundingbox (-2,0) rectangle (26,12);\draw[ultra thick](18,0)--(24,0)--(24,12)--(18,12);\draw(6,0)--(18,0);\draw[->, thick](24,6.8) -- (24,6.9);}$\,.

Recall the fiber product operation $\ast$ from Appendix~\ref{subsec:fusion+fiber-prod}.
Let
\[
\begin{split}
J:=[\tfrac76,\tfrac43]\times\{0,1\},\quad &K_l:=\partial^\sqsupset\big([\tfrac76,\tfrac54]\!\times\![0,1]\big),\quad K_r:=\partial^\sqsubset \big([\tfrac54,\tfrac43]\!\times\![0,1]\big),\\
K_l':=\partial^\sqsubset \big([0,&\tfrac76]\!\times\![0,1]\big),\qquad K_r':=\partial^\sqsupset \big([\tfrac43,2]\!\times\![0,1]\big),
\end{split}
\]
which we draw for convenience:
\[
J=\tikzmath[scale=\planscale]{\useasboundingbox (30,-2) rectangle (6,14);\draw (18,12)--(21,12)(18,0)--(21,0);}\,%tikzmath
,\quad K_l = \tikzmath[scale=\planscale]{\useasboundingbox (30,-2) rectangle (6,14);\draw (18,0) -- (19.5,0) -- (19.5,12) -- (18,12);}\,%tikzmath
,\quad K_r = \tikzmath[scale=\planscale]{\useasboundingbox (30,-2) rectangle (6,14);\draw (21,0) -- (19.5,0) -- (19.5,12) -- (21,12);}\,%tikzmath
,\quad K_l' = \tikzmath[scale=\planscale]{\useasboundingbox (30,-2) rectangle (6,14);\draw (18,0) -- (6,0) -- (6,12) -- (18,12);}\,%tikzmath
,\quad K_r'= \tikzmath[scale=\planscale]{\useasboundingbox (30,-2) rectangle (6,14);\draw (21,0) -- (30,0) -- (30,12) -- (21,12);}\,.%tikzmath
\]
We then have $\tilde S_l = K_l \cup K'_l$ and 
$\tilde S_r = K_r \cup K'_r$.
We use $H=H_0(\calb,\tilde S_l)$ and $K=H_0(\calb,\tilde S_r)$
in the definition \ref{def:fiber-product} of the fiber product $\calb(K_l)\ast_{\calb(\tilde I)}\calb(K_r) = \big(\calb(K_l)'\vee\calb(K_r)'\big)'$.
By Haag duality,
we have $\calb(K_l)'= \calb(K_l')$ and $\calb(K_r)' = \calb(K_r')$.
These algebras all act on  
$H_0(\calb,\tilde S_l) \boxtimes_{\calb(\tilde I)} 
     H_0(\calb,\tilde S_r)$, which can be identified with
$H_0(\calb; S_b)$ by~\eqref{eq:non-canonical-upsilon}.  
%\cite[\corvacuumvacuumvacuum]{BDH(nets)}.
Altogether we obtain
\[
\hat\calb(J)=\big(\calb(K_l')\vee\calb(K_r')\big)'=\big(\calb(K_l)'\vee\calb(K_r)'\big)'=\calb(K_l)\ast_{\calb(\tilde I)}\calb(K_r).
\]
We denote the above equation graphically by
\(
\tikzmath[scale=\textscale]{\useasboundingbox (16,-2) rectangle (23,14);\draw (18,12)--(21,12)(18,0)--(21,0);\draw[dash pattern=on .4pt off .62pt] (19.5,0) -- (19.5,12);}\,%tikzmath
=
\tikzmath[scale=\textscale]{\useasboundingbox (16,-2) rectangle (21.5,14);\draw (18,12) -- (19.5,12) -- (19.5,0) -- (18,0);}%tikzmath
\!\underset{\tikzmath[scale=.02]{\useasboundingbox (17.5,6) rectangle (21.5,14);\draw (19.5,0) -- (19.5,12);}%tikzmath
}{\ast}\!\tikzmath[scale=\textscale]{\useasboundingbox (17.5,-2) rectangle (23,14);\draw (21,12) -- (19.5,12) -- (19.5,0) -- (21,0);}%tikzmath
\)\,.
\vspace{.1cm}

\begin{lemma}  \label{lem:commutants}
We have the following equality of subalgebras of $\bfB\big(\tikzmath[scale=\textscale]{\fill[vacuumcolor] (0,0) rectangle (24,12);
\draw[double, thick](6,0) -- (0,0) -- (0,12) -- (6,12); \draw (6,0) -- (12,0) -- (12,12) -- (6,12)(18,0) -- (12,0)(12,12) -- (18,12);
\draw[ultra thick] (18,0) -- (24,0) -- (24,12) -- (18,12);}\big)$:
\begin{gather}\label{lem:commutants:line}
	\left(\,\tikzmath[scale=\planscale]{\useasboundingbox (-2,-2) rectangle (32,14);\draw[thick, double] (0,6) -- (0,0) -- (6,0);\draw (6,0) -- (18,0) (21,0) -- (24,0) (9,12) -- (12,12);
	\draw[ultra thick] (24,0) -- (30,0) -- (30,6);\draw[dash pattern=on .4pt off .62pt](10.5,0) -- (10.5,12);}%tikzmath
	\,\right)'\; = \;\tikzmath[scale=\planscale]{\useasboundingbox (-2,-2) rectangle (32,14);\draw[thick, double](0,6) -- (0,12) -- (6,12);\draw (6,12) -- (9,12) (12,12) -- (24,12) (18,0) -- (21,0);
	\draw[ultra thick](24,12) -- (30,12) -- (30,6);\draw[dash pattern=on .4pt off .62pt] (19.5,0) -- (19.5,12);}\,,%tikzmath
\\
%end{equation}
%\begin{equation}
\label{lem:commutants:line+halfcircle}
	\left(\,\tikzmath[scale=\planscale]{\useasboundingbox (-2,-2) rectangle (32,14);\draw[thick, double](0,6) -- (0,0) -- (6,0);
	\draw (6,0) -- (18,0) (21,0) -- (24,0) (9,12) -- (12,12);\draw[ultra thick] (24,0) -- (30,0) -- (30,6);}%tikzmath
	\,\right)' \; = \; \tikzmath[scale=\planscale]{\useasboundingbox (-2,-2) rectangle (32,14);\draw[thick, double](0,6) -- (0,12) -- (6,12);\draw (6,12) -- (9,12) (12,12) -- (24,12) (18,0) -- (21,0);
	\draw[ultra thick](24,12) -- (30,12) -- (30,6);\draw[dash pattern=on .4pt off .62pt] (7.5,12) arc (180:360:3 and 4)(19.5,0) -- (19.5,12);}\,.%tikzmath
\end{gather}
\end{lemma}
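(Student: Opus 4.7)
The plan is to reduce both equalities to Haag duality for the individual defects $D$ and $E$ (Proposition \ref{prop: [Haag duality for defects]}) and for the conformal net $\calb$, using the fiber product identification
\[
\hat\calb(J)=\calb(K_l)\ast_{\calb(\tilde I)}\calb(K_r)=\bigl(\calb(K_l')\vee\calb(K_r')\bigr)'
\]
recorded just before the lemma, together with the decomposition $\hat D(J_0\cup J_1)=D(J_0)\vee\hat\calb(J_1)$ from Lemma \ref{lem: needed for dotted lines} and its analog for $E$. Both sides of each equation live in $\bfB(H_0(D)\boxtimes_{\calb(I)}H_0(E))$ and are joins of algebras associated to pieces of the outer bicolored circle, with certain pieces replaced by their ``hatted'' versions; a dotted feature in the picture records one application of the fiber product formula, and the task is to track how these features migrate under the commutant operation.

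For \eqref{lem:commutants:line} I would first expand the LHS algebra as a join of ordinary $D$, $\calb$, $E$ algebras on explicit subintervals, replacing the dotted bridge $\hat\calb$ by the commutant of a join of $\calb$ algebras on complementary intervals via the fiber product formula. Taking the commutant of this join turns it into an intersection of individual commutants by the standard rule $(\bigvee_i X_i)'=\bigcap_i X_i'$. Each of these individual commutants can then be computed via Haag duality applied to the appropriate half-circle vacuum sector of $D$, of $E$, or of $\calb$. A final application of Lemma \ref{lem:commutant-spacial-vee 1} and Lemma \ref{lem:commutant-spacial-vee 2} (the formal commutativity between $\vee$ and $\cap$ under split hypotheses) then rearranges the intersection into the RHS form, with the dotted bridge now sitting in the complementary location on the opposite half-circle, as dictated by Haag duality for $\calb$.

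For \eqref{lem:commutants:line+halfcircle} the strategy is analogous but more elaborate, since the RHS now contains two dotted features (a half-circle arc at the top-left and a vertical bridge at the middle-right) rather than one. The extra dotted arc will appear naturally when Haag duality for the defect $D$ is applied to the corner region near the color-change point of the left half-circle, producing an $\hat D$; the extra vertical bridge will appear from the fiber product formula applied to the second $\calb$ bridge region on the right. Both sides can again be rewritten as joins of ordinary $D$, $\calb$, $E$ algebras and commuted through as above.

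The main technical obstacle will not be the abstract algebraic manipulation but the careful bookkeeping required to ensure that the intervals on which the various fiber products are taken match up exactly when the commutants on the two halves of the circle are compared. This bookkeeping relies on strong additivity for $D$, $E$, and $\calb$, which allows the outer boundary to be subdivided as finely as needed without changing the generated algebras, and on the split property, which lets the relevant pieces of the Hilbert space be identified with tensor products of smaller vacuum sectors for $\calb$ adjacent to the bridges. I expect the overall argument to parallel the classical derivation of Haag duality for a fiber product of von Neumann algebras, but with the added ingredient that bridges on opposite halves of the circle can be exchanged using the Haag duality of the intermediate net $\calb$ applied fiberwise across the Connes fusion.
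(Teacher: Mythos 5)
There is a genuine gap, and it lies in the step where you propose to take the commutant of the left-hand side via $(\bigvee_i X_i)'=\bigcap_i X_i'$ and then ``compute each individual commutant via Haag duality applied to the appropriate half-circle vacuum sector of $D$, of $E$, or of $\calb$.'' The commutant in the statement is taken on the \emph{fused} Hilbert space $H_0(\tilde S_l,D)\boxtimes_{\calb(\tilde I)}H_0(\tilde S_r,E)$, and the commutant there of an algebra acting through only one tensor factor is not computed by Haag duality on that factor's own vacuum sector: the fusion enlarges the commutant in a way controlled precisely by the fiber-product formalism, in which the inner commutants of Definition \ref{def:fiber-product} are taken on the separate modules and only the outer commutant is taken on the fusion. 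Moreover, an intersection of several von Neumann algebras is in general intractable, and Lemmas \ref{lem:commutant-spacial-vee 1} and \ref{lem:commutant-spacial-vee 2} only handle specific two-algebra split configurations; they will not by themselves reassemble your intersection into the stated join.

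The paper's proof runs in the opposite direction and hinges on a tool your proposal omits: Lemma \ref{lem:add-to-fusion-of-algebras}, which gives $A_1\vee(A_0\ast_C B_0)\vee B_1=(A_1\vee A_0)\ast_C(B_0\vee B_1)$ under a split hypothesis. Starting from the \emph{right}-hand side, one replaces the dotted bridge by the fiber product $\hat\calb(J)=\calb(K_l)\ast_{\calb(\tilde I)}\calb(K_r)$, absorbs the adjacent $D$- and $E$-pieces into a single fiber product over $\calb(\tilde I)$ via that lemma, and then unwinds the definition of $\ast$: the result is by construction the commutant (on the fused space) of the join of two algebras, each of which is identified with the corresponding half of the left-hand side by the preliminary commutant computations on $H_0(\tilde S_l,D)$ and $H_0(\tilde S_r,E)$ coming from Lemma \ref{lem: needed for dotted lines} and Proposition \ref{prop: [Haag duality for defects]}. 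If you want to salvage your outline, you must either adopt this absorption-into-a-fiber-product step or supply an independent argument for computing commutants of one-sided algebras on the Connes fusion; as written, the proposal assumes exactly the point that needs proving.
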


\begin{proof}
By Lemma \ref{lem: needed for dotted lines} and Proposition \ref{prop: [Haag duality for defects]}, respectively, we have
\begin{equation*}\medskip
\tikzmath[scale=\planscale]{ \useasboundingbox (-2,-2) rectangle (21.5,14);\draw[thick, double] (0,6) -- (0,0) -- (6,0);
\draw (6,0) -- (18,0)(9,12) -- (12,12);\draw[dash pattern=on .4pt off .62pt](10.5,0) -- (10.5,12); }%tikzmath
\; = \;\left(\tikzmath[scale=\planscale]{\useasboundingbox (-2,-2) rectangle (21.5,14);
\draw[thick, double](0,6) -- (0,12) -- (6,12);\draw (6,12) -- (9,12) (12,12) -- (19.5,12) -- (19.5,0) -- (18,0);}%tikzmath
\right)'\; \text{on}\,\, \;\tikzmath[scale=\planscale]{\useasboundingbox (-2,-2) rectangle (21.5,14);
\fill[vacuumcolor] (0,0) rectangle (19.5,12);\draw[thick, double] (6,12) -- (0,12) -- (0,0) -- (6,0);\draw (6,12) -- (19.5,12) -- (19.5,0) -- (6,0);}\,,%tikzmath 
\quad\,\,\, \text{and}\quad \;\tikzmath[scale=\planscale]{\useasboundingbox (17.5,-2) rectangle (32,14);\draw(21,0) -- (24,0);\draw[ultra thick] (24,0) -- (30,0) -- (30,6);}%tikzmath
\; = \; \left(\tikzmath[scale=\planscale]{\useasboundingbox (17.5,-2) rectangle (32,14);\draw (21,0) -- (19.5,0) -- (19.5,12) -- (24,12);\draw[ultra thick](24,12) -- (30,12) -- (30,6);}%tikzmath
\right)'\; \text{on}\,\, \;\tikzmath[scale=\planscale]{\useasboundingbox (17.5,-2) rectangle (32,14);\fill[vacuumcolor] (19.5,0) rectangle (30,12);
\draw(24,0) -- (19.5,0) -- (19.5,12) -- (24,12);\draw[ultra thick] (24,0) -- (30,0) -- (30,12) -- (24,12);}\,,%tikzmath
\end{equation*}
where $\tikzmath[scale=\textscale]{\useasboundingbox (-2,-2) rectangle (21.5,14);
\fill[vacuumcolor] (0,0) rectangle (19.5,12);\draw[thick, double] (6,12) -- (0,12) -- (0,0) -- (6,0);\draw (6,12) -- (19.5,12) -- (19.5,0) -- (6,0);}$
stands for $H_0(\tilde S_l,D)$, and $\tikzmath[scale=\textscale]{\useasboundingbox (17.5,-2) rectangle (32,14);\fill[vacuumcolor] (19.5,0) rectangle (30,12);
\draw(24,0) -- (19.5,0) -- (19.5,12) -- (24,12);\draw[ultra thick] (24,0) -- (30,0) -- (30,12) -- (24,12);}$ stands for $H_0(\tilde S_r,E)$.

We have the following sequence of equalities
\begin{equation*}
\begin{split}
\tikzmath[scale=\planscale]{\useasboundingbox (-2,-2) rectangle (32,14);\draw[thick, double](0,6) -- (0,12) -- (6,12);\draw (6,12) -- (9,12) (12,12) -- (24,12) (18,0) -- (21,0);
\draw[ultra thick](24,12) -- (30,12) -- (30,6);\draw[dash pattern=on .4pt off .62pt] (19.5,0) -- (19.5,12); }%tikzmath
\;&=\;\Big(\tikzmath[scale=\planscale]{\useasboundingbox (-2,-2) rectangle (20,14);\draw[thick, double](0,6) -- (0,12) -- (6,12);\draw (6,12) -- (9,12) (12,12) -- (18,12);}\Big)%tikzmath
\vee\Big(\,\,\tikzmath[scale=\planscale]{\useasboundingbox (16,-2) rectangle (23,14);\draw (18,12)--(21,12)(18,0)--(21,0);\draw[dash pattern=on .4pt off .62pt] (19.5,0) -- (19.5,12);}\,\,\Big)%tikzmath
\vee\Big(\tikzmath[scale=\planscale]{\useasboundingbox (19,-2) rectangle (32,14);\draw(21,12) -- (24,12);\draw[ultra thick](24,12) -- (30,12) -- (30,6);}\Big)\\%tikzmath
\;&=\;\Big(\tikzmath[scale=\planscale]{\useasboundingbox (-2,-2) rectangle (20,14);\draw[thick, double](0,6) -- (0,12) -- (6,12);\draw (6,12) -- (9,12) (12,12) -- (18,12);}\Big)%tikzmath
\vee\Big(\tikzmath[scale=\planscale]{\useasboundingbox (16,-2) rectangle (21.5,14);\draw (18,12) -- (19.5,12) -- (19.5,0) -- (18,0);}%tikzmath
\!\underset{\tikzmath[scale=.025]{\useasboundingbox (17.5,6) rectangle (21.5,14);\draw (19.5,0) -- (19.5,12);}%tikzmath
}{\ast}\!\tikzmath[scale=\planscale]{\useasboundingbox (17.5,-2) rectangle (23,14);\draw (21,12) -- (19.5,12) -- (19.5,0) -- (21,0);}\Big)%tikzmath
\vee\Big(\tikzmath[scale=\planscale]{\useasboundingbox (19,-2) rectangle (32,14);\draw(21,12) -- (24,12);\draw[ultra thick](24,12) -- (30,12) -- (30,6);}\Big)%tikzmath
\;=\;\tikzmath[scale=\planscale]{ \useasboundingbox (-2,-2) rectangle (21.5,14);\draw[thick, double](0,6) -- (0,12) -- (6,12);\draw (6,12) -- (9,12) (12,12) -- (19.5,12) -- (19.5,0) -- (18,0);}%tikzmath
\underset{\tikzmath[scale=.025]{\useasboundingbox (17.5,6) rectangle (21.5,14);\draw (19.5,0) -- (19.5,12);}%tikzmath
}{\ast}\tikzmath[scale=\planscale]{\useasboundingbox (17.5,-2) rectangle (32,14);\draw (21,0) -- (19.5,0) -- (19.5,12) -- (24,12);\draw[ultra thick](24,12) -- (30,12) -- (30,6);}\\%tikzmath
&\hspace{4.5cm}= \;\Big(\Big(\tikzmath[scale=\planscale]{ \useasboundingbox (-2,-2) rectangle (21.5,14);\draw[thick, double] (0,6) -- (0,0) -- (6,0);
\draw (6,0) -- (18,0)(9,12) -- (12,12);\draw[dash pattern=on .4pt off .62pt](10.5,0) -- (10.5,12);}\Big)%tikzmath
\vee\Big(\tikzmath[scale=\planscale]{\useasboundingbox (17.5,-2) rectangle (32,14);\draw(21,0) -- (24,0);\draw[ultra thick] (24,0) -- (30,0) -- (30,6);}\Big)\Big)'%tikzmath
=\,\Big(\tikzmath[scale=\planscale]{\useasboundingbox (-2,-2) rectangle (32,14);\draw[thick, double] (0,6) -- (0,0) -- (6,0);\draw (6,0) -- (18,0) (21,0) -- (24,0) (9,12) -- (12,12);
\draw[ultra thick] (24,0) -- (30,0) -- (30,6);\draw[dash pattern=on .4pt off .62pt](10.5,0) -- (10.5,12);}\Big)'\,.%tikzmath
\end{split}
\end{equation*}
Here the third equality uses Lemma~\ref{lem:add-to-fusion-of-algebras}.
By Lemma~\ref{lem: needed for dotted lines}, we also have
\begin{equation*}
\tikzmath[scale=\planscale]{\useasboundingbox (-2,-2) rectangle (21.5,14);\draw[thick, double](0,6) -- (0,12) -- (6,12);
\draw (6,12) -- (9,12)(12,12) -- (19.5,12) -- (19.5,0) -- (18,0);\draw[dash pattern=on .4pt off .62pt](7.5,12) arc (180:360:3 and 4);}%tikzmath
\; = \; \left(\tikzmath[scale=\planscale]{\useasboundingbox (-2,-2) rectangle (21.5,14);\draw[thick, double] (0,6) -- (0,0) -- (6,0);\draw (6,0) -- (18,0)(9,12) -- (12,12); }%tikzmath
\right)'\; \text{on}\,\, \;\tikzmath[scale=\planscale]{\useasboundingbox (-2,-2) rectangle (21.5,14);\fill[vacuumcolor] (0,0) rectangle (19.5,12);
\draw[thick, double] (6,12) -- (0,12) -- (0,0) -- (6,0);\draw (6,12) -- (19.5,12) -- (19.5,0) -- (6,0);}\,. %tikzmath
\end{equation*}
We therefore similarly have
\begin{equation*}
\begin{split}
\tikzmath[scale=\planscale]{\useasboundingbox (-2,-2) rectangle (32,14);\draw[dash pattern=on .4pt off .62pt](7.5,12) arc (180:360:3 and 4);\draw[thick, double](0,6) -- (0,12) -- (6,12);
\draw (6,12) -- (9,12) (12,12) -- (24,12) (18,0) -- (21,0);\draw[ultra thick](24,12) -- (30,12) -- (30,6);\draw[dash pattern=on .4pt off .62pt] (19.5,0) -- (19.5,12); }%tikzmath
\;&=\;\Big(\tikzmath[scale=\planscale]{\useasboundingbox (-2,-2) rectangle (20,14);\draw[dash pattern=on .4pt off .62pt](7.5,12) arc (180:360:3 and 4);
\draw[thick, double](0,6) -- (0,12) -- (6,12);\draw (6,12) -- (9,12) (12,12) -- (18,12);}\Big)%tikzmath
\vee\Big(\,\,\tikzmath[scale=\planscale]{\useasboundingbox (16,-2) rectangle (23,14);\draw (18,12)--(21,12)(18,0)--(21,0);\draw[dash pattern=on .4pt off .62pt] (19.5,0) -- (19.5,12);}\,\,\Big)%tikzmath
\vee\Big(\tikzmath[scale=\planscale]{\useasboundingbox (19,-2) rectangle (32,14);\draw(21,12) -- (24,12);\draw[ultra thick](24,12) -- (30,12) -- (30,6);}\Big)\\%tikzmath
\;&=\;\Big(\tikzmath[scale=\planscale]{\useasboundingbox (-2,-2) rectangle (20,14);\draw[dash pattern=on .4pt off .62pt](7.5,12) arc (180:360:3 and 4);
\draw[thick, double](0,6) -- (0,12) -- (6,12);\draw (6,12) -- (9,12) (12,12) -- (18,12);}\Big)%tikzmath
\vee\Big(\tikzmath[scale=\planscale]{\useasboundingbox (16,-2) rectangle (21.5,14);\draw (18,12) -- (19.5,12) -- (19.5,0) -- (18,0);}
\!\underset{\tikzmath[scale=.025]{\useasboundingbox (17.5,6) rectangle (21.5,14);\draw (19.5,0) -- (19.5,12);}%tikzmath
}{\ast}\!\tikzmath[scale=\planscale]{\useasboundingbox (17.5,-2) rectangle (23,14);\draw (21,12) -- (19.5,12) -- (19.5,0) -- (21,0);}\Big)%tikzmath
\vee\Big(\tikzmath[scale=\planscale]{\useasboundingbox (19,-2) rectangle (32,14);\draw(21,12) -- (24,12);\draw[ultra thick](24,12) -- (30,12) -- (30,6);}\Big)%tikzmath
\;=\;\tikzmath[scale=\planscale]{ \useasboundingbox (-2,-2) rectangle (21.5,14);\draw[dash pattern=on .4pt off .62pt](7.5,12) arc (180:360:3 and 4);
\draw[thick, double](0,6) -- (0,12) -- (6,12);\draw (6,12) -- (9,12) (12,12) -- (19.5,12) -- (19.5,0) -- (18,0);}%tikzmath
\underset{\tikzmath[scale=.025]{\useasboundingbox (17.5,6) rectangle (21.5,14);\draw (19.5,0) -- (19.5,12);}%tikzmath
}{\ast}\tikzmath[scale=\planscale]{\useasboundingbox (17.5,-2) rectangle (32,14);\draw (21,0) -- (19.5,0) -- (19.5,12) -- (24,12);\draw[ultra thick](24,12) -- (30,12) -- (30,6);}\\%tikzmath
&\hspace{4.5cm}= \;\Big(\Big(\tikzmath[scale=\planscale]{ \useasboundingbox (-2,-2) rectangle (21.5,14);\draw[thick, double] (0,6) -- (0,0) -- (6,0);
\draw (6,0) -- (18,0)(9,12) -- (12,12);}\Big)%tikzmath
\vee\Big(\tikzmath[scale=\planscale]{\useasboundingbox (17.5,-2) rectangle (32,14);\draw(21,0) -- (24,0);\draw[ultra thick] (24,0) -- (30,0) -- (30,6);}\Big)\Big)'%tikzmath
=\,\Big(\tikzmath[scale=\planscale]{\useasboundingbox (-2,-2) rectangle (32,14);\draw[thick, double] (0,6) -- (0,0) -- (6,0);\draw (6,0) -- (18,0) (21,0) -- (24,0) (9,12) -- (12,12);
\draw[ultra thick] (24,0) -- (30,0) -- (30,6);}\Big)'.%tikzmath
\end{split}
\end{equation*}
\end{proof}

\begin{corollary}\label{cor: The algebras ... are factors}
The algebra\, $ \tikzmath[scale=\textscale]{\useasboundingbox (-2,-2) rectangle (32,14);\draw[thick, double](0,6) -- (0,12) -- (6,12);\draw (6,12) -- (9,12) (12,12) -- (24,12) (18,0) -- (21,0);
\draw[ultra thick](24,12) -- (30,12) -- (30,6);\draw[dash pattern=on .4pt off .62pt] (7.5,12) arc (180:360:3 and 4)(19.5,0) -- (19.5,12);} $\, is a factor. \qed
\end{corollary}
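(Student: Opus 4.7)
The plan is to deduce factoriality from the general ``hat makes a factor'' principle that was already used in the proof of Lemma~\ref{lem: Associativity of Psi}. Concretely, let $N$ denote the algebra in question. By Lemma~\ref{lem: needed for dotted lines} (applied to each of the two dotted regions separately), one has the equality
\[
N \;=\; \hat D(J_0\cup J_1) \,\vee\, \hat E(J_2\cup J_3),
\]
with the two outer factors $\hat D(J_0\cup J_1)$ and $\hat E(J_2\cup J_3)$ being themselves factors: indeed, they are of the form $(\cdots)'$ on a vacuum sector, and by Lemma~\ref{lem: tildeD} (and its obvious analog) they are factors. The only subtlety is that these two factors sit on a \emph{fused} Hilbert space, and their join is taken over the intermediate $\calb$-algebra on the middle bridge $J_2$.

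First, I would apply Lemma~\ref{lem: factoriality of alg with dotted lines} to the middle bridge. Taking $M := \calb(J_2)$ (the bridge algebra), $M_0 := \calb([\frac{7}{6},\frac{4}{3}]\times\{0\})$ together with the top piece of the bridge, $A := D(J_0\cup J_1)\,\bar\otimes\, E(J_3)$, and $H$ a vacuum sector for $\calb$ on the middle circle, the hypotheses match the set-up of Proposition~\ref{prop:hat-M}, and the conclusion is that the algebra with the middle dotted bridge in place (i.e.\ with $\hat\calb(J_2)$ substituted for $\calb(J_2)$) is a factor.

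Second, I would iterate: starting from the factor just produced, apply Lemma~\ref{lem: factoriality of alg with dotted lines} again, this time to the top-left arc. Take $M := \calb(J_1)$, $M_0$ the appropriate small sub-interval, $A$ the join of $D(J_0)$ with all the pieces already present on the right of the bridge (which by the first step is a factor), and $H$ the $D$-vacuum sector on the left circle. The output is exactly $N$, and by Lemma~\ref{lem: factoriality of alg with dotted lines} it is a factor.

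The main obstacle, and the step that warrants care, is verifying the hypothesis of Lemma~\ref{lem: factoriality of alg with dotted lines} at each iteration; one must check that the ``ambient'' algebra $A$ at each stage is a factor that acts faithfully on the correct Hilbert space, and that the fusion Hilbert space on which everything acts is indeed faithful for both $M$ and $M_0$. The second iteration is the trickier one because the ambient algebra at that stage has already absorbed the hat construction from the first iteration; factoriality there depends on the output of the first application, not on a ``bare'' defect algebra. A direct alternative, if this turns out to be cumbersome, is to combine Lemma~\ref{lem:commutants}\eqref{lem:commutants:line+halfcircle} (which identifies $N$ with the commutant of an algebra $M$ on the fused vacuum) with a direct analysis of $M\cap M'$ via strong additivity and Haag duality for defects (Proposition~\ref{prop: [Haag duality for defects]}): any central element of $N$ lies in both $M$ and $M'$, commutes with $D(J_0)$, $E(J_3)$, and both $\calb$-bridges, and so by strong additivity commutes with the full algebra generated by a circle of local algebras, forcing it to be a scalar by irreducibility of $D$ and $E$.
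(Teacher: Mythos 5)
Your ``alternative'' route is the one the paper actually takes, but it is your primary route that you develop, and neither version closes the argument. On the primary route: Lemma~\ref{lem: factoriality of alg with dotted lines} is tied to the two-sided fusion set-up of Proposition~\ref{prop:hat-M}, in which the ambient Hilbert space has the form $H\boxtimes_A L^2(M)\boxtimes_A\overline H$ and the hatted algebra is $M_0\vee B$ with $B$ the commutant of $A$ on $H$. The Hilbert space relevant here is the one-sided fusion $H_0(S_l,D)\boxtimes_{\calb(I)}H_0(S_r,E)$ of two different vacuum sectors, which is not of that form, so neither application of that lemma is licensed; the hats in the Corollary are the ones of Notation~\ref{not: Dhat} (commutants of two-interval algebras on a single vacuum sector), which only match the Proposition~\ref{prop:hat-M} construction in the keyhole situation. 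Moreover, even granting that each of $\hat D(J_0\cup J_1)$ and $\hat E(J_2\cup J_3)$ is a factor, the join of two commuting factors on the fused Hilbert space need not be a factor --- that is precisely the difficulty, and the closely related algebra \eqref{eq: LOWER LEFT of 12345} is explicitly \emph{not} a factor in general.

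On the alternative route: you correctly reduce, via \eqref{lem:commutants:line+halfcircle}, to showing $M\cap M'=\IC$ where $N=M'$, but your concluding step is invalid. An operator commuting with the local algebras $D(I)$, $E(I)$, $\calb(I)$ for a covering family of intervals of the circle lies in the commutant of $(D\circledast_\calb E)(S^1_\top)\vee(D\circledast_\calb E)(S^1_\bot)$, which by Theorems~\ref{thm:Haag-duality-composition-defects} and~\ref{thm: semi-simplicity of DoE} is the (generally nontrivial) center of the fused defect: irreducibility of $D$ and $E$ does not make $D\circledast_\calb E$ irreducible (cf.\ Lemma~\ref{lem:equality of XxX matrices}), so ``commutes with a circle of local algebras'' does not force a scalar. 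The intended one-line argument is different and simpler: the algebra whose commutant is taken in \eqref{lem:commutants:line+halfcircle} is the \emph{plain, un-hatted} bottom algebra --- the mirror image of \eqref{eq: BOTTOM of 12345}, namely $D(J_0\cup J_1)\vee E(J_2\cup J_3)$ with no hats at all --- and that algebra is a factor directly from the irreducibility of $D$ and $E$ (as asserted at the start of the proof of Theorem~\ref{thm:Haag-duality-composition-defects}); the commutant of a factor is a factor, and the Corollary follows with no further analysis of $M\cap M'$.
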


\begin{corollary}\label{cor: nu_1=nu_6^t}
We have
\begin{equation}\label{eq: cor: nu_1=nu_6^t}
\left\llbracket\,
\tikzmath[scale=\planscale]{\useasboundingbox (-2,-2) rectangle (32,14);\draw[thick, double]  (0,6) -- (0,12) -- (6,12);           
\draw (6,12) -- (9,12) (12,12) -- (24,12) (18,0) -- (21,0);\draw[ultra thick]   (24,12) -- (30,12) -- (30,6);\draw[dash pattern=on .4pt off .62pt] (7.5,12) arc (180:360:3 and 4);}   \,:\,
\tikzmath[scale=\planscale]{\useasboundingbox (-2,-2) rectangle (32,14);\draw[thick, double]  (0,6) -- (0,12) -- (6,12);
\draw (6,12) -- (9,12) (12,12) -- (24,12) (18,0) -- (21,0);\draw[ultra thick]  (24,12) -- (30,12) -- (30,6);}
\,\right\rrbracket
\;=\;
\Big\llbracket\,
\tikzmath[scale=\planscale]{\useasboundingbox (-2,-2) rectangle (32,14);\draw[thick, double]  (0,6) -- (0,12) -- (6,12);
\draw (6,12) -- (9,12) (12,12) -- (24,12) (18,0) -- (21,0);\draw[ultra thick]  (24,12) -- (30,12) -- (30,6);\draw[dash pattern=on .4pt off .62pt]   (7.5,12) arc (180:360:3 and 4)(19.5,0) -- (19.5,12);}   \,:\,
\left(\tikzmath[scale=\planscale]{\useasboundingbox (-2,-2) rectangle (32,14);\draw[thick, double]  (0,6) -- (0,0) -- (6,0);
\draw (6,0) -- (18,0) (21,0) -- (24,0) (9,12) -- (12,12);\draw[ultra thick]  (24,0) -- (30,0) -- (30,6);\draw[dash pattern=on .4pt off .62pt] (16.5,0) arc (180:0:3 and 4);}
\right)'\,\Big\rrbracket^T
\end{equation}
\end{corollary}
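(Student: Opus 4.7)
The plan is to compute $\nu_6^T$ by the commutant rule~\eqref{eq:matrix-of-stat-dim-commutants}, which turns an inclusion $B\subseteq A$ into $A'\subseteq B'$ and transposes the matrix of statistical dimensions. Writing $\nu_6 = \llbracket A_6 : B_6\rrbracket$ with
\[
A_6 = \tikzmath[scale=\planscale]{\useasboundingbox (-2,-2) rectangle (32,14);\draw[thick, double]  (0,6) -- (0,12) -- (6,12);
\draw (6,12) -- (9,12) (12,12) -- (24,12) (18,0) -- (21,0);\draw[ultra thick]  (24,12) -- (30,12) -- (30,6);\draw[densely dotted]   (7.5,12) arc (180:360:3 and 4)(19.5,0) -- (19.5,12);}, \qquad
B_6 = \Big(\tikzmath[scale=\planscale]{\useasboundingbox (-2,-2) rectangle (32,14);\draw[thick, double]  (0,6) -- (0,0) -- (6,0);
\draw (6,0) -- (18,0) (21,0) -- (24,0) (9,12) -- (12,12);\draw[ultra thick]  (24,0) -- (30,0) -- (30,6);\draw[densely dotted] (16.5,0) arc (180:0:3 and 4);}\Big)',
\]
the identity~\eqref{lem:commutants:line+halfcircle} of Lemma~\ref{lem:commutants} gives $A_6' = \tikzmath[scale=\textscale]{\useasboundingbox (-2,-2) rectangle (32,14);\draw[thick, double](0,6) -- (0,0) -- (6,0);\draw (6,0) -- (18,0) (21,0) -- (24,0) (9,12) -- (12,12);\draw[ultra thick] (24,0) -- (30,0) -- (30,6);}$, while taking the commutant of the defining expression of $B_6$ yields $B_6' = \tikzmath[scale=\textscale]{\useasboundingbox (-2,-2) rectangle (32,14);\draw[thick, double]  (0,6) -- (0,0) -- (6,0);\draw (6,0) -- (18,0) (21,0) -- (24,0) (9,12) -- (12,12);\draw[ultra thick]  (24,0) -- (30,0) -- (30,6);\draw[densely dotted] (16.5,0) arc (180:0:3 and 4);}$. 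Hence
\[
\nu_6^T \;=\; \llbracket B_6' : A_6' \rrbracket \;=\; \bigg\llbracket \tikzmath[scale=\planscale]{\useasboundingbox (-2,-2) rectangle (32,14);\draw[thick, double]  (0,6) -- (0,0) -- (6,0);\draw (6,0) -- (18,0) (21,0) -- (24,0) (9,12) -- (12,12);\draw[ultra thick]  (24,0) -- (30,0) -- (30,6);\draw[densely dotted] (16.5,0) arc (180:0:3 and 4);} \;:\; \tikzmath[scale=\planscale]{\useasboundingbox (-2,-2) rectangle (32,14);\draw[thick, double](0,6) -- (0,0) -- (6,0);\draw (6,0) -- (18,0) (21,0) -- (24,0) (9,12) -- (12,12);\draw[ultra thick] (24,0) -- (30,0) -- (30,6);} \bigg\rrbracket.
\]

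The inclusion on the right is the horizontal reflection of the inclusion
$\tikzmath[scale=\textscale]{\useasboundingbox (-2,-2) rectangle (32,14);\draw[thick, double]  (0,6) -- (0,12) -- (6,12);
\draw (6,12) -- (9,12) (12,12) -- (24,12) (18,0) -- (21,0);\draw[ultra thick]   (24,12) -- (30,12) -- (30,6);\draw[densely dotted] (7.5,12) arc (180:360:3 and 4);} \supseteq \tikzmath[scale=\textscale]{\useasboundingbox (-2,-2) rectangle (32,14);\draw[thick, double]  (0,6) -- (0,12) -- (6,12);\draw (6,12) -- (9,12) (12,12) -- (24,12) (18,0) -- (21,0);\draw[ultra thick]  (24,12) -- (30,12) -- (30,6);}$
whose matrix of statistical dimensions is $\nu_1$. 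To finish the proof, I will produce an antiunitary isomorphism from the ambient Hilbert space $\tikzmath[scale=\textscale]{\fill[vacuumcolor] (0,0) rectangle (24,12);\draw[double, thick](6,0) -- (0,0) -- (0,12) -- (6,12); \draw (6,0) -- (12,0) -- (12,12) -- (6,12)(18,0) -- (12,0)(12,12) -- (18,12);\draw[ultra thick] (18,0) -- (24,0) -- (24,12) -- (18,12);}$ to itself that interchanges the top and bottom halves and conjugates the two inclusions appropriately. Concretely, applying Lemma~\ref{lem: non canonical vacuum -- defects} to the defect $D$ on $S_l$ together with the involution $j$ that reflects across $y=\tfrac12$, and analogously to $E$ on $S_r$, produces antiunitaries $H_0(S_l,D)\to H_0(S_l,D)$ and $H_0(S_r,E)\to H_0(S_r,E)$ implementing the reflection; their $\calb(I)$-equivariant tensor product descends to the Connes fusion and exchanges the top- and bottom-half algebras for $D\circledast_\calb E$. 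Since it is an isomorphism of the algebras together with their action on a shared Hilbert space, it preserves the matrix of statistical dimensions of any inclusion.

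The only delicate point is checking that this reflection antiunitary intertwines the two dotted (``$\,\hat{\phantom{\calb}}\,$'') algebras with their horizontally reflected counterparts. But each algebra of the form $\hat\calb(J_i)$ is defined as a relative commutant inside the vacuum sector of~$\calb$ on an appropriate bicolored circle, and the reflection on that circle (which exchanges $J_1\leftrightarrow J_2$ in the notation of the proof of Theorem~\ref{thm:Haag-duality-composition-defects}) is implemented by the modular conjugation on $H_0(S_u,\calb)$ and $H_0(S_d,\calb)$, so it commutes with the formation of the commutant. This is the main technical step, but it follows routinely from Tomita--Takesaki theory once the bookkeeping of bicolorings and orientations is in place. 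Combining these observations yields $\nu_1=\nu_6^T$, as claimed.
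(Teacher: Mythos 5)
Your first step coincides exactly with the paper's: both apply \eqref{eq:matrix-of-stat-dim-commutants} together with Lemma~\ref{lem:commutants} to rewrite $\nu_6^T$ as the matrix of statistical dimensions of the ``bottom-half'' inclusion $\hat D(J_0\cup J_1)\vee E(J_2\cup J_3)\supseteq D(J_0\cup J_1)\vee E(J_2\cup J_3)$ (with all intervals reflected into the lower halves of $S_l$ and $S_r$). Where you diverge is in comparing this bottom inclusion with the top inclusion defining $\nu_1$. The paper does this purely algebraically: the orientation-reversing reflection of the underlying $1$-manifolds induces isomorphisms of each bottom algebra with the \emph{opposite} of the corresponding top algebra (since the functors send orientation-reversing embeddings to antihomomorphisms), and one concludes with the invariance $\llbracket A:B\rrbracket=\llbracket A^{\op}:B^{\op}\rrbracket$. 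No spatial implementation on the fused Hilbert space is ever needed, and in particular nothing has to be checked about how anything interacts with the Connes fusion over $\calb(I)$.

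Your route instead builds an antiunitary on $H_0(S_l,D)\boxtimes_{\calb(I)}H_0(S_r,E)$ implementing the reflection. This can be made to work, but be aware of what it costs. First, the descent of $J_D\otimes J_E$ to the Connes fusion is not automatic: the algebra $\calb(I)$ being fused over straddles both the top and bottom halves of the circles (this is exactly the difficulty the paper flags in its introduction), so you must check that the two modular conjugations twist the right and left $\calb(I)$-actions by the \emph{same} antilinear antiautomorphism $\sigma=\calb(j|_I)(\,\cdot\,)^*$ before the balanced tensor relation $J_D(\xi b)\otimes J_E(\eta)\sim J_D(\xi)\otimes J_E(b\eta)$ holds; you assert this descent in one clause without argument. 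Second, conjugation by an antiunitary is an \emph{anti}-isomorphism of the inclusion, not an isomorphism, so you still need $\llbracket A:B\rrbracket=\llbracket \bar A:\bar B\rrbracket$ --- i.e.\ the very fact the paper uses --- to conclude that statistical dimensions are preserved. Third, your ``delicate point'' about the hatted algebras cites the keyhole circles $S_u$, $S_d$ from Section~4, which play no role here; the relevant fact is Lemma~\ref{lem: needed for dotted lines} / Notation~\ref{not: Dhat}, which exhibits $\hat D(J_0\cup J_1)$ as a commutant of an undotted algebra acting on $H_0(S_l,D)$ itself, after which compatibility with conjugation is immediate. In short: the skeleton is right and the conclusion is reachable along your path, but the paper's opposite-algebra argument is both shorter and avoids the one genuinely subtle verification your version leaves to the reader.
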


\begin{proof}
By~\eqref{eq:matrix-of-stat-dim-commutants} and by Lemma~\ref{lem:commutants}, 
the right-hand side of \eqref{eq: cor: nu_1=nu_6^t} is equal to
\[
\Big\llbracket\,
\tikzmath[scale=\planscale]{\useasboundingbox (-2,-2) rectangle (32,14);\draw[thick, double]  (0,6) -- (0,0) -- (6,0);
\draw (6,0) -- (18,0) (21,0) -- (24,0) (9,12) -- (12,12);\draw[ultra thick]  (24,0) -- (30,0) -- (30,6);\draw[dash pattern=on .4pt off .62pt] (16.5,0) arc (180:0:3 and 4);}   \,:\,
\left(\tikzmath[scale=\planscale]{\useasboundingbox (-2,-2) rectangle (32,14);\draw[thick, double]  (0,6) -- (0,12) -- (6,12);
\draw (6,12) -- (9,12) (12,12) -- (24,12) (18,0) -- (21,0);\draw[ultra thick]  (24,12) -- (30,12) -- (30,6);\draw[dash pattern=on .4pt off .62pt]   (7.5,12) arc (180:360:3 and 4)(19.5,0) -- (19.5,12);}
\right)'\,\Big\rrbracket
\;=\;
\Big\llbracket\,
\tikzmath[scale=\planscale]{\useasboundingbox (-2,-2) rectangle (32,14);\draw[thick, double]  (0,6) -- (0,0) -- (6,0);
\draw (6,0) -- (18,0) (21,0) -- (24,0) (9,12) -- (12,12);\draw[ultra thick]  (24,0) -- (30,0) -- (30,6);\draw[dash pattern=on .4pt off .62pt] (16.5,0) arc (180:0:3 and 4);}   \,:\,
\tikzmath[scale=\planscale]{\useasboundingbox (-2,-2) rectangle (32,14);\draw[thick, double]  (0,6) -- (0,0) -- (6,0);
\draw (6,0) -- (18,0) (21,0) -- (24,0) (9,12) -- (12,12);\draw[ultra thick]  (24,0) -- (30,0) -- (30,6);}
\Big\rrbracket.
\]
The algebras
$\tikzmath[scale=\textscale]{\useasboundingbox (-2,-2) rectangle (32,14);\draw[thick, double]  (0,6) -- (0,0) -- (6,0);
\draw (6,0) -- (18,0) (21,0) -- (24,0) (9,12) -- (12,12);\draw[ultra thick]  (24,0) -- (30,0) -- (30,6);\draw[dash pattern=on .4pt off .62pt] (16.5,0) arc (180:0:3 and 4);}$
and
$\tikzmath[scale=\textscale]{\useasboundingbox (-2,-2) rectangle (32,14);\draw[thick, double]  (0,6) -- (0,0) -- (6,0);
\draw (6,0) -- (18,0) (21,0) -- (24,0) (9,12) -- (12,12);\draw[ultra thick]  (24,0) -- (30,0) -- (30,6);}$
are related to those on the left-hand side of \eqref{eq: cor: nu_1=nu_6^t} by the action of orientation reversing diffeomorphisms of the underlying 1-manifolds: these diffeomorphisms induce algebra isomorphisms 
$\tikzmath[scale=\textscale]{\useasboundingbox (-2,-2) rectangle (32,14);\draw[thick, double]  (0,6) -- (0,0) -- (6,0);
\draw (6,0) -- (18,0) (21,0) -- (24,0) (9,12) -- (12,12);\draw[ultra thick]  (24,0) -- (30,0) -- (30,6);\draw[dash pattern=on .4pt off .62pt] (16.5,0) arc (180:0:3 and 4);}
\,\cong\, \big(\tikzmath[scale=\textscale]{\useasboundingbox (-2,-2) rectangle (32,14);\draw[thick, double]  (0,6) -- (0,12) -- (6,12);           
\draw (6,12) -- (9,12) (12,12) -- (24,12) (18,0) -- (21,0);\draw[ultra thick]   (24,12) -- (30,12) -- (30,6);\draw[dash pattern=on .4pt off .62pt] (7.5,12) arc (180:360:3 and 4);}{\big)\!}^\op$
and $\tikzmath[scale=\textscale]{\useasboundingbox (-2,-2) rectangle (32,14);\draw[thick, double]  (0,6) -- (0,0) -- (6,0);
\draw (6,0) -- (18,0) (21,0) -- (24,0) (9,12) -- (12,12);\draw[ultra thick]  (24,0) -- (30,0) -- (30,6);}
\,\cong\, \big(\tikzmath[scale=\textscale]{\useasboundingbox (-2,-2) rectangle (32,14);\draw[thick, double]  (0,6) -- (0,12) -- (6,12);           
\draw (6,12) -- (9,12) (12,12) -- (24,12) (18,0) -- (21,0);\draw[ultra thick]   (24,12) -- (30,12) -- (30,6);}{\big)\!}^\op$.
The result now follows since $\llbracket A:B\rrbracket=\llbracket A^\op:B^\op\rrbracket$.
\end{proof}

\section{The dimension of the bridge inclusions}

\begin{lemma} \label{lem:index-is-sqrt-mu}
We have the following equalities of statistical dimensions:
\begin{equation*}
\begin{split}
\left\llbracket \tikzmath[scale=\planscale]{\useasboundingbox (4,-2) rectangle (32,14);\draw (12,12) -- (24,12) (18,0) -- (21,0) (6,12) -- (9,12);
\draw[ultra thick](24,12) -- (30,12) -- (30,6);\draw[dash pattern=on .4pt off .62pt] (19.5,0) -- (19.5,12);} %tikzmath
\; : \; \tikzmath[scale=\planscale]{\useasboundingbox (4,-2) rectangle (32,14);\draw (12,12) -- (24,12) (18,0) -- (21,0) (6,12) -- (9,12);\draw[ultra thick](24,12) -- (30,12) -- (30,6);} %tikzmath
\right\rrbracket   \!=\!  \left\llbracket \tikzmath[scale=\planscale]{\useasboundingbox (4,-2) rectangle (32,14);\draw (12,12) -- (24,12) (18,0) -- (21,0) (6,12) -- (9,12);
\draw[ultra thick](24,12) -- (30,12) -- (30,6);\draw[dash pattern=on .4pt off .62pt](7.5,12) arc (180:360:3 and 4);} %tikzmath
\; : \; \tikzmath[scale=\planscale]{\useasboundingbox (4,-2) rectangle (32,14);\draw (12,12) -- (24,12) (18,0) -- (21,0) (6,12) -- (9,12);\draw[ultra thick](24,12) -- (30,12) -- (30,6);} %tikzmath
\right\rrbracket \;\,= \hspace*{3cm}\\
\left\llbracket \tikzmath[scale=\planscale]{\useasboundingbox (4,-2) rectangle (32,14);\draw (12,12) -- (24,12) (18,0) -- (21,0) (6,12) -- (9,12);
\draw[ultra thick](24,12) -- (30,12) -- (30,6);\draw[dash pattern=on .4pt off .62pt](7.5,12) arc (180:360:3 and 4)(19.5,0) -- (19.5,12);} %tikzmath
\; : \; \tikzmath[scale=\planscale]{\useasboundingbox (4,-2) rectangle (32,14);\draw (12,12) -- (24,12) (18,0) -- (21,0) (6,12) -- (9,12);
\draw[ultra thick](24,12) -- (30,12) -- (30,6);\draw[dash pattern=on .4pt off .62pt](7.5,12) arc (180:360:3 and 4);} %tikzmath
\right\rrbracket  \!=\!  \left\llbracket \tikzmath[scale=\planscale]{\useasboundingbox (4,-2) rectangle (32,14);\draw (12,12) -- (24,12) (18,0) -- (21,0) (6,12) -- (9,12);
\draw[ultra thick](24,12) -- (30,12) -- (30,6);\draw[dash pattern=on .4pt off .62pt](7.5,12) arc (180:360:3 and 4)(19.5,0) -- (19.5,12);} %tikzmath
\; : \; \tikzmath[scale=\planscale]{\useasboundingbox (4,-2) rectangle (32,14);\draw (12,12) -- (24,12) (18,0) -- (21,0) (6,12) -- (9,12);
\draw[ultra thick](24,12) -- (30,12) -- (30,6);\draw[dash pattern=on .4pt off .62pt] (19.5,0) -- (19.5,12);} %tikzmath
\right\rrbracket\;\, =\, \sqrt{\mu(\calb)}.
\end{split}
\end{equation*}
\end{lemma}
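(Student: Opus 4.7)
The plan is to reduce each of the four statistical dimensions in the display to the single quantity $\llbracket\hat\calb(I_1\cup I_3):\calb(I_1\cup I_3)\rrbracket$ for an appropriate $4$-interval partition $S=I_1\cup I_2\cup I_3\cup I_4$ of an auxiliary bicolored circle $S$ as in Notation~\ref{not: Dhat}.  That quantity equals $\sqrt{\mu(\calb)}$ by Lemma~\ref{lem: [ : ] < mu(A)} applied to the identity defect $1_\calb$, combined with the (equality) strengthening recorded as Corollary~\ref{cor: [[ : ]] = sqrt mu(A)}.

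First, I would remove the short bottom segment $\tikzmath[scale=\textscale]{\useasboundingbox (16,-2) rectangle (23,2);\draw(18,0)--(21,0);}$ from all four expressions.  This segment is a common spatial tensor factor in both the numerator and the denominator of every one of the four statistical dimensions (neither dotted decoration touches it), since by the split property for $\calb$ the corresponding algebra enters as a $\bar\otimes$-factor.  Using the identity $\llbracket A\,\bar\otimes\,C : B\,\bar\otimes\,C\rrbracket=\llbracket A:B\rrbracket$ for a factor $C$ (as in the proof of Lemma~\ref{lem:equality of XxX matrices}), each statistical dimension in question is unchanged if we delete the short bottom interval.

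Second, with the short bottom segment removed, each remaining picture describes an inclusion of subalgebras of $\bfB(H_0(\calb,S))$ for a suitable vacuum sector of $\calb$ on a circle $S$ obtained by thickening the horizontal top intervals into the boundary of a planar region.  Under this identification, the vertical dotted bridge $\tikzmath[scale=\textscale]{\useasboundingbox (17,0) rectangle (22,14);\draw (18,12) -- (21,12)(18,0) -- (21,0);\draw[densely dotted] (19.5,0) -- (19.5,12);}$ and the arc dotted bridge $\tikzmath[scale=\textscale]{\useasboundingbox (5,0) rectangle (13,14);\draw (6,12) -- (9,12)(12,12) -- (13,12);\draw[densely dotted] (7.5,12) arc (180:360:3 and 4);}$ each correspond to a pair of complementary subintervals of $S$ whose joint commutant is precisely $\hat\calb$ of that pair (cf. Lemma~\ref{lem: needed for dotted lines}).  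Hence both the first and second statistical dimensions on the top line of the display are instances of $\llbracket\hat\calb(I_1\cup I_3):\calb(I_1\cup I_3)\rrbracket$ for $4$-interval splittings of $S$, and by Corollary~\ref{cor: [[ : ]] = sqrt mu(A)} both equal $\sqrt{\mu(\calb)}$.

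Third, I would handle the two statistical dimensions on the bottom line by showing that the two bridges are independent of one another.  The key point is that the pair of intervals carrying the arc bridge lies entirely in a region of $S$ disjoint from the pair carrying the vertical bridge; by Haag duality for the net $\calb$ (Proposition~\ref{prop: [Haag duality for defects]-duality-nets}), the associated $\hat\calb$ algebras are likewise supported in disjoint regions, and the split property makes their joint configuration a spatial tensor product.  Consequently, passing from $\calb$ to $\hat\calb$ at the arc bridge simply tensors both sides of the vertical-bridge inclusion by a common factor, leaving the statistical dimension equal to the single-bridge value $\sqrt{\mu(\calb)}$ by the same $\bar\otimes$-identity used in Step~1.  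The analogous statement with the roles swapped handles the fourth statistical dimension.

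The main obstacle will be a careful bookkeeping exercise: verifying that after removing the short bottom segment and capping off appropriately, the resulting subalgebras of $\bfB(H_0(\calb,S))$ for the chosen circle $S$ really are $\calb(I_1\cup I_3)$ and $\hat\calb(I_1\cup I_3)$ for the intended partitions, so that Corollary~\ref{cor: [[ : ]] = sqrt mu(A)} genuinely applies.  In particular, I would want to check that the $\hat\calb(J_u)$-type enlargements used in~\eqref{eq: LOWER LEFT of 12345}--\eqref{eq: TOP of 12345} coincide under this reduction with the relative-commutant definition of Notation~\ref{not: Dhat}; this is a direct consequence of strong additivity and Haag duality but requires tracking the four complementary intervals carefully.
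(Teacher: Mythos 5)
There is a genuine circularity in your plan. You propose to reduce all four statistical dimensions to quantities of the form $\llbracket\hat\calb(I_1\cup I_3):\calb(I_1\cup I_3)\rrbracket$ and then invoke ``Lemma~\ref{lem: [ : ] < mu(A)} \emph{combined with} Corollary~\ref{cor: [[ : ]] = sqrt mu(A)}.'' But Lemma~\ref{lem: [ : ] < mu(A)} only gives the \emph{inequality} $\le\sqrt{\mu(\calb)}$, and Corollary~\ref{cor: [[ : ]] = sqrt mu(A)} --- the upgrade to equality in the presence of a defect --- is deduced \emph{from} the present lemma (it appears immediately after it, and the dependency is L5.19 $\Rightarrow$ C5.20). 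Your reductions do not land in the pure-net case where the equality holds by the definition of $\mu(\calb)$: the vertical dotted bridge sits at $J_2$, and by Lemma~\ref{lem: needed for dotted lines} the enlarged algebra is $\hat\calb(J_2)\vee E(J_3)=\hat E(J_2\cup J_3)$, a hatted algebra for the genuinely bicolored defect $E$ --- exactly the quantity for which only ``$\le$'' is available at this stage. Your third step is also too optimistic: $\hat\calb(J_1)$ is not localized in $J_1$ (it is a commutant of complementary intervals, equivalently a fiber product $\calb(K_l)\ast_{\calb(\tilde I)}\calb(K_r)$), so passing from $\calb(J_1)$ to $\hat\calb(J_1)$ does not merely tensor both sides of the $J_2$-inclusion by a common factor; the paper needs Lemmas~\ref{lem:add-to-fusion-of-algebras} and~\ref{lem:[A * Bhat : A * B] = [Bhat : B]} to control this interaction.

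The paper's actual route is different in an essential way and worth internalizing: it computes only \emph{one} of the four quantities directly, namely $\llbracket\,\text{both dots}:\text{vertical dot}\,\rrbracket$, which after taking commutants (Lemma~\ref{lem:commutants} for the identity defect) and stripping tensor factors reduces to the pure-net four-interval index, equal to $\sqrt{\mu(\calb)}$ \emph{by definition}. It then shows the remaining three quantities all equal a single auxiliary number $\nu$, using Lemma~\ref{lem: [ : ] < mu(A)} only to guarantee $\nu<\infty$, and finally pins $\nu$ down by multiplicativity~\eqref{eq:matrix-of-stat-dim-for-AcBcC}: the identity $\nu\cdot\sqrt{\mu(\calb)}=\nu\cdot\nu$ forces $\nu=\sqrt{\mu(\calb)}$. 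If you want to salvage your approach, you must replace the appeal to Corollary~\ref{cor: [[ : ]] = sqrt mu(A)} by some such bootstrapping, since no direct computation of the defect-case hatted index is available before this lemma.
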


\begin{proof}

We have
\begin{eqnarray*}
\left\llbracket\tikzmath[scale=\textscale]{\useasboundingbox (4,-2) rectangle (32,14);\draw (12,12) -- (24,12) (18,0) -- (21,0) (6,12) -- (9,12);
\draw[ultra thick](24,12) -- (30,12) -- (30,6);\draw[dash pattern=on .4pt off .62pt](7.5,12) arc (180:360:3 and 4)(19.5,0) -- (19.5,12);} %tikzmath
\; : \; \tikzmath[scale=\textscale]{\useasboundingbox (4,-2) rectangle (32,14);\draw (12,12) -- (24,12) (18,0) -- (21,0) (6,12) -- (9,12);
\draw[ultra thick](24,12) -- (30,12) -- (30,6);\draw[dash pattern=on .4pt off .62pt] (19.5,0) -- (19.5,12);} %tikzmath
\right\rrbracket & = & \left\llbracket \tikzmath[scale=\textscale]
{\useasboundingbox (-2,-2) rectangle (32,14);\draw (0,6) -- (0,12) -- (9,12) (12,12) -- (24,12)(18,0) -- (21,0);
\draw[ultra thick](24,12) -- (30,12) -- (30,6);\draw[dash pattern=on .4pt off .62pt](7.5,12) arc (180:360:3 and 4)(19.5,0) -- (19.5,12);} %tikzmath
\; : \; \tikzmath[scale=\textscale]{\useasboundingbox (-2,-2) rectangle (32,14);\draw (0,6) -- (0,12) -- (9,12) (12,12) -- (24,12)(18,0) -- (21,0);
\draw[ultra thick](24,12) -- (30,12) -- (30,6);\draw[dash pattern=on .4pt off .62pt] (19.5,0) -- (19.5,12);} %tikzmath
\right\rrbracket \\ & = & \left\llbracket \tikzmath[scale=\textscale]{\useasboundingbox (-2,-2) rectangle (32,14);
\draw (0,6) -- (0,0) -- (18,0) (21,0) -- (24,0)(9,12) -- (12,12);\draw[ultra thick](24,0) -- (30,0) -- (30,6);\draw[dash pattern=on .4pt off .62pt](10.5,0) -- (10.5,12);} %tikzmath
\; : \; \tikzmath[scale=\textscale]{\useasboundingbox (-2,-2) rectangle (32,14);\draw (0,6) -- (0,0) -- (18,0) (21,0) -- (24,0)(9,12) -- (12,12); \draw[ultra thick](24,0) -- (30,0) -- (30,6);} %tikzmath
\right\rrbracket \\ & = & \left\llbracket
\tikzmath[scale=\textscale]{\useasboundingbox (-2,-2) rectangle (32,14);\draw (0,6) -- (0,0) -- (18,0) (9,12) -- (12,12);\draw[dash pattern=on .4pt off .62pt](10.5,0) -- (10.5,12);} %tikzmath
\; : \; \tikzmath[scale=\textscale]{\useasboundingbox (-2,-2) rectangle (32,14);\draw (0,6) -- (0,0) -- (18,0) (9,12) -- (12,12);} %tikzmath
\right\rrbracket \; = \; \sqrt{\mu(\calb)},
\end{eqnarray*}
where the first equality is obtained by using an appropriate diffeomorphism,
the second one follows from~\eqref{eq:matrix-of-stat-dim-commutants}
and the special case of Lemma~\ref{lem:commutants} when 
$D$ is an identity defect, 
and the third one uses~\eqref{eq:matrix-of-stat-dim-ox-C}.

Let us introduce the auxiliary quantity
\begin{equation*}
\nu\;:=\;\left\llbracket\,\tikzmath[scale=\planscale]{\useasboundingbox (10,-2) rectangle (32,14);\draw (12,12) -- (24,12) (18,0) -- (21,0);
\draw[ultra thick](24,12) -- (30,12) -- (30,6);\draw[densely dotted] (19.5,0) -- (19.5,12);} %tikzmath
\;:\;\tikzmath[scale=\planscale]{\useasboundingbox (10,-2) rectangle (32,14);\draw (12,12) -- (24,12) (18,0) -- (21,0);
\draw[ultra thick](24,12) -- (30,12) -- (30,6);}%tikzmath
\,\right\rrbracket.
\end{equation*}
By Lemma \ref{lem: [ : ] < mu(A)}, we know that $\nu\le \sqrt{\mu(\calb)}$; in particular $\nu<\infty$.
By~\eqref{eq:matrix-of-stat-dim-ox-C}, we have
\[
\left\llbracket \tikzmath[scale=\planscale]{\useasboundingbox (4,-2) rectangle (32,14);\draw (12,12) -- (24,12) (18,0) -- (21,0) (6,12) -- (9,12);
\draw[ultra thick](24,12) -- (30,12) -- (30,6);\draw[dash pattern=on .4pt off .62pt] (19.5,0) -- (19.5,12);} %tikzmath
\; : \; \tikzmath[scale=\planscale]{\useasboundingbox (4,-2) rectangle (32,14);\draw (12,12) -- (24,12) (18,0) -- (21,0) (6,12) -- (9,12);\draw[ultra thick](24,12) -- (30,12) -- (30,6);} %tikzmath
\right\rrbracket \;=\; \left\llbracket\,\tikzmath[scale=\planscale]{\useasboundingbox (10,-2) rectangle (32,14);\draw (12,12) -- (24,12) (18,0) -- (21,0);
\draw[ultra thick](24,12) -- (30,12) -- (30,6);\draw[densely dotted] (19.5,0) -- (19.5,12);} %tikzmath
\;:\;\tikzmath[scale=\planscale]{\useasboundingbox (10,-2) rectangle (32,14);\draw (12,12) -- (24,12) (18,0) -- (21,0);
\draw[ultra thick](24,12) -- (30,12) -- (30,6);}%tikzmath
\,\right\rrbracket \;=\; \nu
\]
and
\[
\hspace{.15cm}\left\llbracket \tikzmath[scale=\planscale]{\useasboundingbox (4,-2) rectangle (32,14);\draw (12,12) -- (24,12) (18,0) -- (21,0) (6,12) -- (9,12);
\draw[ultra thick](24,12) -- (30,12) -- (30,6);\draw[dash pattern=on .4pt off .62pt](7.5,12) arc (180:360:3 and 4);} %tikzmath
\; : \; \tikzmath[scale=\planscale]{\useasboundingbox (4,-2) rectangle (32,14);\draw (12,12) -- (24,12) (18,0) -- (21,0) (6,12) -- (9,12);\draw[ultra thick](24,12) -- (30,12) -- (30,6);} %tikzmath
\right\rrbracket \,=\,\left\llbracket \tikzmath[scale=\planscale]{\useasboundingbox (4,-2) rectangle (32,14);\draw (12,12) -- (24,12)(6,12) -- (9,12);
\draw[ultra thick](24,12) -- (30,12) -- (30,6);\draw[dash pattern=on .4pt off .62pt](7.5,12) arc (180:360:3 and 4);} %tikzmath
\; : \; \tikzmath[scale=\planscale]{\useasboundingbox (4,-2) rectangle (32,14);\draw (12,12) -- (24,12)(6,12) -- (9,12);\draw[ultra thick](24,12) -- (30,12) -- (30,6);} %tikzmath
\right\rrbracket \,=\, \nu.
\]
Next observe that\smallskip
\[
\begin{split}
\left\llbracket \tikzmath[scale=\planscale]{\useasboundingbox (4,-2) rectangle (32,14);\draw (12,12) -- (24,12) (18,0) -- (21,0) (6,12) -- (9,12);
\draw[ultra thick](24,12) -- (30,12) -- (30,6);\draw[dash pattern=on .4pt off .62pt](7.5,12) arc (180:360:3 and 4)(19.5,0) -- (19.5,12);} %tikzmath
\; : \; \tikzmath[scale=\planscale]{\useasboundingbox (4,-2) rectangle (32,14);\draw (12,12) -- (24,12) (18,0) -- (21,0) (6,12) -- (9,12);
\draw[ultra thick](24,12) -- (30,12) -- (30,6);\draw[dash pattern=on .4pt off .62pt](7.5,12) arc (180:360:3 and 4);} %tikzmath
\right\rrbracket
\;=&\;
\left\llbracket\, \tikzmath[scale=\planscale]{\useasboundingbox (10,-2) rectangle (32,14);\draw (12,12) -- (24,12) (18,0) -- (21,0) (12,0) -- (15,0);
\draw[ultra thick](24,12) -- (30,12) -- (30,6);\draw[dash pattern=on .4pt off .62pt](13.5,0) -- (13.5,12)(19.5,0) -- (19.5,12);} %tikzmath
\; : \; \tikzmath[scale=\planscale]{\useasboundingbox (10,-2) rectangle (32,14);\draw (12,12) -- (24,12) (18,0) -- (21,0) (12,0) -- (15,0);
\draw[ultra thick](24,12) -- (30,12) -- (30,6);\draw[dash pattern=on .4pt off .62pt](13.5,0) -- (13.5,12);} %tikzmath
\,\right\rrbracket\\
\;=&\;
\left\llbracket\, \tikzmath[scale=\planscale]{\useasboundingbox (10,-2) rectangle (17,14);\draw(12,0) -- (15,0)(12,12) -- (15,12);
\draw[dash pattern=on .4pt off .62pt](13.5,0) -- (13.5,12);} %tikzmath
\vee \tikzmath[scale=\planscale]{\useasboundingbox (13,-2) rectangle (32,14);\draw (15,12) -- (24,12) (18,0) -- (21,0);
\draw[ultra thick](24,12) -- (30,12) -- (30,6);\draw[dash pattern=on .4pt off .62pt](19.5,0) -- (19.5,12);} %tikzmath
\; : \; \tikzmath[scale=\planscale]{\useasboundingbox (10,-2) rectangle (17,14);\draw(12,0) -- (15,0)(12,12) -- (15,12);
\draw[dash pattern=on .4pt off .62pt](13.5,0) -- (13.5,12);} %tikzmath
\vee \tikzmath[scale=\planscale]{\useasboundingbox (13,-2) rectangle (32,14);\draw (15,12) -- (24,12) (18,0) -- (21,0);
\draw[ultra thick](24,12) -- (30,12) -- (30,6);} %tikzmath
\,\right\rrbracket \\
\;=&\;\,
\Big\llbracket\,
\Big(\tikzmath[scale=\planscale]{\useasboundingbox (16,-2) rectangle (21.5,14);\draw (18,12) -- (19.5,12) -- (19.5,0) -- (18,0);}
\!\underset{\tikzmath[scale=.025]{\useasboundingbox (17.5,6) rectangle (21.5,14);\draw (19.5,0) -- (19.5,12);}%tikzmath
}{\ast}\!\tikzmath[scale=\planscale]{\useasboundingbox (17.5,-2) rectangle (23,14);\draw (21,12) -- (19.5,12) -- (19.5,0) -- (21,0);}\Big)%tikzmath
\vee \Big(\tikzmath[scale=\planscale]{\useasboundingbox (13,-2) rectangle (32,14);\draw (15,12) -- (24,12) (18,0) -- (21,0);
\draw[ultra thick](24,12) -- (30,12) -- (30,6);\draw[dash pattern=on .4pt off .62pt](19.5,0) -- (19.5,12);}\Big) %tikzmath
\, : \, 
\Big(\tikzmath[scale=\planscale]{\useasboundingbox (16,-2) rectangle (21.5,14);\draw (18,12) -- (19.5,12) -- (19.5,0) -- (18,0);}
\!\underset{\tikzmath[scale=.025]{\useasboundingbox (17.5,6) rectangle (21.5,14);\draw (19.5,0) -- (19.5,12);}%tikzmath
}{\ast}\!\tikzmath[scale=\planscale]{\useasboundingbox (17.5,-2) rectangle (23,14);\draw (21,12) -- (19.5,12) -- (19.5,0) -- (21,0);}\Big)%tikzmath
\vee \Big(\tikzmath[scale=\planscale]{\useasboundingbox (13,-2) rectangle (32,14);\draw (15,12) -- (24,12) (18,0) -- (21,0);
\draw[ultra thick](24,12) -- (30,12) -- (30,6);}\Big)\Big\rrbracket %tikzmath
\\
\;=&\;\,
\Big\llbracket\,
\tikzmath[scale=\planscale]{\useasboundingbox (16,-2) rectangle (21.5,14);\draw (18,12) -- (19.5,12) -- (19.5,0) -- (18,0);}
\!\underset{\tikzmath[scale=.025]{\useasboundingbox (17.5,6) rectangle (21.5,14);\draw (19.5,0) -- (19.5,12);}%tikzmath
}{\ast}\!\tikzmath[scale=\planscale]{\useasboundingbox (11.5,-2) rectangle (32,14);\draw (24,12) -- (13.5,12) -- (13.5,0) -- (15,0)(18,0) -- (21,0);
\draw[ultra thick](24,12) -- (30,12) -- (30,6);\draw[dash pattern=on .4pt off .62pt](19.5,0) -- (19.5,12);} %tikzmath
\; : \; 
\tikzmath[scale=\planscale]{\useasboundingbox (16,-2) rectangle (21.5,14);\draw (18,12) -- (19.5,12) -- (19.5,0) -- (18,0);}
\!\underset{\tikzmath[scale=.025]{\useasboundingbox (17.5,4) rectangle (21.5,14);\draw (19.5,0) -- (19.5,12);}%tikzmath
}{\ast}\!\tikzmath[scale=\planscale]{\useasboundingbox (11.5,-2) rectangle (32,14);\draw (24,12) -- (13.5,12) -- (13.5,0) -- (15,0)(18,0) -- (21,0);
\draw[ultra thick](24,12) -- (30,12) -- (30,6);} %tikzmath
\,\Big\rrbracket %tikzmath
\\
\;=&\;\,
\Big\llbracket\,
\tikzmath[scale=\planscale]{\useasboundingbox (11.5,-2) rectangle (32,14);\draw (24,12) -- (13.5,12) -- (13.5,0) -- (15,0)(18,0) -- (21,0);
\draw[ultra thick](24,12) -- (30,12) -- (30,6);\draw[dash pattern=on .4pt off .62pt](19.5,0) -- (19.5,12);} %tikzmath
\; : \; 
\tikzmath[scale=\planscale]{\useasboundingbox (11.5,-2) rectangle (32,14);\draw (24,12) -- (13.5,12) -- (13.5,0) -- (15,0)(18,0) -- (21,0);
\draw[ultra thick](24,12) -- (30,12) -- (30,6);} %tikzmath
\,\Big\rrbracket %tikzmath
\,=\, \nu,
\end{split}
\]
where the fourth and fifth equalities follow from Lemmas \ref{lem:add-to-fusion-of-algebras} and \ref{lem:[A * Bhat : A * B] = [Bhat : B]}, respectively.

We conclude the argument by noting that, by \eqref{eq:matrix-of-stat-dim-for-AcBcC},\smallskip
\[\smallskip
\left\llbracket \tikzmath[scale=\planscale]{\useasboundingbox (4,-2) rectangle (32,14);\draw (12,12) -- (24,12) (18,0) -- (21,0) (6,12) -- (9,12);
\draw[ultra thick](24,12) -- (30,12) -- (30,6);\draw[dash pattern=on .4pt off .62pt] (19.5,0) -- (19.5,12);} %tikzmath
\, : \tikzmath[scale=\planscale]{\useasboundingbox (4,-2) rectangle (32,14);\draw (12,12) -- (24,12) (18,0) -- (21,0) (6,12) -- (9,12);\draw[ultra thick](24,12) -- (30,12) -- (30,6);} %tikzmath
\right\rrbracket\left\llbracket \tikzmath[scale=\planscale]{\useasboundingbox (4,-2) rectangle (32,14);\draw (12,12) -- (24,12) (18,0) -- (21,0) (6,12) -- (9,12);
\draw[ultra thick](24,12) -- (30,12) -- (30,6);\draw[dash pattern=on .4pt off .62pt](7.5,12) arc (180:360:3 and 4)(19.5,0) -- (19.5,12);} %tikzmath
\, : \tikzmath[scale=\planscale]{\useasboundingbox (4,-2) rectangle (32,14);\draw (12,12) -- (24,12) (18,0) -- (21,0) (6,12) -- (9,12);
\draw[ultra thick](24,12) -- (30,12) -- (30,6);\draw[dash pattern=on .4pt off .62pt] (19.5,0) -- (19.5,12);} %tikzmath
\right\rrbracket
=
\left\llbracket \tikzmath[scale=\planscale]{\useasboundingbox (4,-2) rectangle (32,14);\draw (12,12) -- (24,12) (18,0) -- (21,0) (6,12) -- (9,12);
\draw[ultra thick](24,12) -- (30,12) -- (30,6);\draw[dash pattern=on .4pt off .62pt](7.5,12) arc (180:360:3 and 4);} %tikzmath
\, : \tikzmath[scale=\planscale]{\useasboundingbox (4,-2) rectangle (32,14);\draw (12,12) -- (24,12) (18,0) -- (21,0) (6,12) -- (9,12);\draw[ultra thick](24,12) -- (30,12) -- (30,6);} %tikzmath
\right\rrbracket
\left\llbracket \tikzmath[scale=\planscale]{\useasboundingbox (4,-2) rectangle (32,14);\draw (12,12) -- (24,12) (18,0) -- (21,0) (6,12) -- (9,12);
\draw[ultra thick](24,12) -- (30,12) -- (30,6);\draw[dash pattern=on .4pt off .62pt](7.5,12) arc (180:360:3 and 4)(19.5,0) -- (19.5,12);} %tikzmath
\, : \tikzmath[scale=\planscale]{\useasboundingbox (4,-2) rectangle (32,14);\draw (12,12) -- (24,12) (18,0) -- (21,0) (6,12) -- (9,12);
\draw[ultra thick](24,12) -- (30,12) -- (30,6);\draw[dash pattern=on .4pt off .62pt](7.5,12) arc (180:360:3 and 4);} %tikzmath
\right\rrbracket.
\]
In light of the above computations, that equation gives $\nu\sqrt{\mu(\calb)}=\nu^2$;
since $\nu$ is finite, we must have $\nu=\sqrt{\mu(\calb)}$, as required.
\end{proof}

As a corollary, we obtain the following improvement on Lemma \ref{lem: [ : ] < mu(A)}:

\begin{corollary}\label{cor: [[ : ]] = sqrt mu(A)}
We have
\begin{flalign*}
&&
\left\llbracket\,\tikzmath[scale=\planscale]{\useasboundingbox (10,-2) rectangle (32,14);\draw (12,12) -- (24,12) (18,0) -- (21,0);
\draw[ultra thick](24,12) -- (30,12) -- (30,6);\draw[dash pattern=on .4pt off .62pt] (19.5,0) -- (19.5,12);} %tikzmath
\;:\;\tikzmath[scale=\planscale]{\useasboundingbox (10,-2) rectangle (32,14);\draw (12,12) -- (24,12) (18,0) -- (21,0);
\draw[ultra thick](24,12) -- (30,12) -- (30,6);}%tikzmath
\,\right\rrbracket\;=\;\sqrt{\mu(\calb)}.&&\Box
\end{flalign*}
\end{corollary}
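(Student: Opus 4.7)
The plan is to observe that the quantity on the left-hand side is exactly the auxiliary number $\nu$ that appears in the proof of Lemma \ref{lem:index-is-sqrt-mu}, and then to extract the value of $\nu$ from that proof. Lemma \ref{lem: [ : ] < mu(A)} already gives the upper bound $\nu \le \sqrt{\mu(\calb)}$, so one direction is essentially free; the content is the reverse inequality.

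More concretely, I would first note that by~\eqref{eq:matrix-of-stat-dim-ox-C}, tensoring with the factor $\calb$ on an additional disjoint interval $\tikzmath[scale=\textscale]{\useasboundingbox (4,-2) rectangle (10,14);\draw (6,12) -- (9,12);}$ does not change the statistical dimension of an inclusion, so
\[
\left\llbracket\,\tikzmath[scale=\planscale]{\useasboundingbox (10,-2) rectangle (32,14);\draw (12,12) -- (24,12) (18,0) -- (21,0);
\draw[ultra thick](24,12) -- (30,12) -- (30,6);\draw[dash pattern=on .4pt off .62pt] (19.5,0) -- (19.5,12);}\;:\;\tikzmath[scale=\planscale]{\useasboundingbox (10,-2) rectangle (32,14);\draw (12,12) -- (24,12) (18,0) -- (21,0);\draw[ultra thick](24,12) -- (30,12) -- (30,6);}\,\right\rrbracket
\,=\,
\left\llbracket \tikzmath[scale=\planscale]{\useasboundingbox (4,-2) rectangle (32,14);\draw (12,12) -- (24,12) (18,0) -- (21,0) (6,12) -- (9,12);\draw[ultra thick](24,12) -- (30,12) -- (30,6);\draw[dash pattern=on .4pt off .62pt] (19.5,0) -- (19.5,12);}\;:\;\tikzmath[scale=\planscale]{\useasboundingbox (4,-2) rectangle (32,14);\draw (12,12) -- (24,12) (18,0) -- (21,0) (6,12) -- (9,12);\draw[ultra thick](24,12) -- (30,12) -- (30,6);}\right\rrbracket.
\]
The right-hand side was computed in Lemma~\ref{lem:index-is-sqrt-mu} to equal $\sqrt{\mu(\calb)}$.

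Alternatively (and more in keeping with the structure of the preceding argument), the corollary may be read off directly from the closing computation of Lemma~\ref{lem:index-is-sqrt-mu}: the auxiliary quantity $\nu$ introduced there is precisely the left-hand side of the present statement, and the final paragraph of that proof establishes $\nu^2 = \nu\sqrt{\mu(\calb)}$ by combining Lemmas \ref{lem:add-to-fusion-of-algebras} and \ref{lem:[A * Bhat : A * B] = [Bhat : B]} with the multiplicativity of the matrix of statistical dimensions~\eqref{eq:matrix-of-stat-dim-for-AcBcC}. Since Lemma \ref{lem: [ : ] < mu(A)} guarantees $\nu < \infty$, one can cancel a factor of $\nu$ to conclude $\nu = \sqrt{\mu(\calb)}$.

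In either guise the corollary is essentially a repackaging of material already in hand, so there is no real obstacle; the proof is a single line citing Lemma~\ref{lem:index-is-sqrt-mu} (or its proof) together with~\eqref{eq:matrix-of-stat-dim-ox-C}. The only minor care required is to check that the intervals in the graphical notation really do denote the same algebras as those used to define $\nu$, which is immediate from the conventions set up in Section~\ref{sec: Haag duality for composition of defects}.
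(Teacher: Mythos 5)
Your proposal is correct and matches the paper's intent exactly: the corollary is stated with a terminal $\Box$ precisely because it follows immediately from Lemma~\ref{lem:index-is-sqrt-mu}, whose proof introduces the auxiliary quantity $\nu$ (equal to the left-hand side of the corollary via~\eqref{eq:matrix-of-stat-dim-ox-C}) and concludes $\nu=\sqrt{\mu(\calb)}$ from $\nu\sqrt{\mu(\calb)}=\nu^2$ and the finiteness bound of Lemma~\ref{lem: [ : ] < mu(A)}. Both of your readings (citing the lemma's statement after stripping the disjoint tensor factor, or reading $\nu$ off from its proof) are valid and are the same argument the paper relies on.
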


\begin{corollary}\label{cor:muB=mu3=mu4}
We have the following two equalities:
\begin{eqnarray*}
\left\llbracket \tikzmath[scale=\planscale]{\useasboundingbox (-2,-2) rectangle (32,14);\draw[thick, double](0,6) -- (0,12) -- (6,12);
\draw (6,12) -- (9,12) (12,12) -- (24,12) (18,0) -- (21,0);\draw[ultra thick](24,12) -- (30,12) -- (30,6);\draw[dash pattern=on .4pt off .62pt] (19.5,0) -- (19.5,12);} %tikzmath
\; : \; \tikzmath[scale=\planscale]{\useasboundingbox (-2,-2) rectangle (32,14);\draw[thick, double](0,6) -- (0,12) -- (6,12);
\draw (6,12) -- (9,12) (12,12) -- (24,12) (18,0) -- (21,0);\draw[ultra thick](24,12) -- (30,12) -- (30,6);} %tikzmath
\right\rrbracket & \; = \; & \sqrt{\mu(\calb)}\,,\\
\left\llbracket \tikzmath[scale=\planscale]{\useasboundingbox (-2,-2) rectangle (32,14);\draw[thick, double](0,6) -- (0,12) -- (6,12);\draw (6,12) -- (9,12) (12,12) -- (24,12) (18,0) -- (21,0);
\draw[ultra thick](24,12) -- (30,12) -- (30,6);\draw[dash pattern=on .4pt off .62pt] (7.5,12) arc (180:360:3 and 4)(19.5,0) -- (19.5,12);} %tikzmath
\; : \; \tikzmath[scale=\planscale] {\useasboundingbox (-2,-2) rectangle (32,14);\draw[thick, double](0,6) -- (0,12) -- (6,12);\draw (6,12) -- (9,12) (12,12) -- (24,12) (18,0) -- (21,0);
\draw[ultra thick](24,12) -- (30,12) -- (30,6);\draw[dash pattern=on .4pt off .62pt] (19.5,0) -- (19.5,12);} %tikzmath
\right\rrbracket & \; = \; & \sqrt{\mu(\calb)}\,. 
\end{eqnarray*}
\end{corollary}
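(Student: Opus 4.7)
The plan is to reduce each of the statistical dimensions $\nu_3$ and $\nu_4$---which involve the defect $D$ in addition to the net $\calb$ and the defect $E$---to the corresponding index computations in Lemma~\ref{lem:index-is-sqrt-mu} (which involve only $\calb$ and $E$), by cancelling a common tensor factor of $D(J_0)$.

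First, the intervals $J_0, J_1, J_2, J_3$ are pairwise disjoint, and by Lemma~\ref{lem: needed for dotted lines} each $\hat\calb(J_i)$ is localized inside a $\calb$-subalgebra on a small thickening of $J_i$; shrinking $\epsilon$ if necessary, we may keep all of these thickenings disjoint from $J_0$. The split property for defects (Proposition~\ref{prop:split-property-defects}), combined with the standard split property for conformal nets, then gives spatial tensor product decompositions of each of the algebras \eqref{eq: BOTTOM of 12345}, \eqref{eq: LOWER LEFT of 12345}, \eqref{eq: RIGHT of 12345}, and \eqref{eq: TOP of 12345} as $D(J_0)\,\bar\ox\,Y$, where $Y$ involves only the algebras $\calb(J_i)$, $\hat\calb(J_i)$, and $E(J_3)$.

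Next, since $D$ is irreducible, $D(J_0)$ is a factor. By~\eqref{eq:matrix-of-stat-dim-ox-C}, tensoring both sides of an inclusion by a common factor preserves the matrix of statistical dimensions, so we may cancel the $D(J_0)$ tensor factor to obtain
\[
\nu_3 \,=\, \bigl\llbracket \calb(J_1)\,\bar\ox\,\hat\calb(J_2)\,\bar\ox\,E(J_3) : \calb(J_1)\,\bar\ox\,\calb(J_2)\,\bar\ox\,E(J_3) \bigr\rrbracket,
\]
\[
\nu_4 \,=\, \bigl\llbracket \hat\calb(J_1)\,\bar\ox\,\hat\calb(J_2)\,\bar\ox\,E(J_3) : \calb(J_1)\,\bar\ox\,\hat\calb(J_2)\,\bar\ox\,E(J_3) \bigr\rrbracket.
\]
These are precisely the first and fourth statistical dimensions appearing in Lemma~\ref{lem:index-is-sqrt-mu}, each equal to $\sqrt{\mu(\calb)}$.

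The argument is largely mechanical; the only step requiring care is the application of the split property in the presence of the $\hat\calb$-algebras, which is handled by the localization afforded by Lemma~\ref{lem: needed for dotted lines}.
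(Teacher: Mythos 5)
There is a genuine gap. Your whole reduction rests on writing each of the algebras \eqref{eq: BOTTOM of 12345}--\eqref{eq: TOP of 12345} as a spatial tensor product $D(J_0)\,\bar\ox\,Y$, but no such decomposition exists: the intervals $J_0$ and $J_1$ are \emph{adjacent}, sharing the boundary point $(\tfrac12+\epsilon,1)$, and this cannot be remedied by shrinking $\epsilon$ since $J_0$ ends and $J_1$ begins at that same point by construction. The split property only applies to disjoint closed intervals, so $D(J_0)$ and $\calb(J_1)$ (hence also $\hat\calb(J_1)\supseteq\calb(J_1)$) are not split, and \eqref{eq:matrix-of-stat-dim-ox-C} cannot be invoked to cancel $D(J_0)$. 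Your appeal to Lemma~\ref{lem: needed for dotted lines} also misreads what that lemma provides: it shows $D(J_0)\vee\hat\calb(J_1)=\hat D(J_0\cup J_1)$ and that $\hat\calb(J_1)$ sits inside $\calb$ of the \emph{convex hull} of $J_1$ (the relative commutant of the gap), not inside a small thickening of the disconnected set $J_1$; that convex hull still touches $J_0$.

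The failure is most serious for the second equality. For the first one, a cancellation argument of the kind you intend does work, but the factor to cancel is $D(J_0\cup J_1^{(1)})$ (where $J_1^{(1)}=[\tfrac12+\epsilon,\tfrac23]\times\{1\}$ is the first component of $J_1$), split off at the genuine gap $(\tfrac23,\tfrac56)$ between the two components of $J_1$ --- this is in effect how the paper deduces the first equality from Corollary~\ref{cor: [[ : ]] = sqrt mu(A)}. For the second equality this route is unavailable: the algebra $\hat\calb(J_1)$ in \eqref{eq: TOP of 12345} bridges precisely that gap, so $\hat D(J_0\cup J_1)$ admits no split tensor factor of the form $D(J_0\cup J_1^{(1)})\,\bar\ox\,(\cdots)$ common to both \eqref{eq: TOP of 12345} and \eqref{eq: RIGHT of 12345}. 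The paper instead passes to commutants using Lemma~\ref{lem:commutants}, which identifies the inclusion $\eqref{eq: RIGHT of 12345}\subseteq\eqref{eq: TOP of 12345}$ with the mirror-image inclusion in which the four-interval bridging occurs on the $D$-side, and then applies the $\cala\leftrightarrow\calc$-reflected version of Corollary~\ref{cor: [[ : ]] = sqrt mu(A)}. Some such duality step is unavoidable here; Lemma~\ref{lem:index-is-sqrt-mu} alone, combined with tensor-factor cancellation, does not reach the second equality.
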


\begin{proof}
The first equality follows immediately from Corollary \ref{cor: [[ : ]] = sqrt mu(A)}.
For the second equality, note that 
$\big\llbracket\, \tikzmath[scale=\textscale]{\useasboundingbox (-2,-2) rectangle (32,14);\draw[thick, double](0,6) -- (0,12) -- (6,12);\draw (6,12) -- (9,12) (12,12) -- (24,12) (18,0) -- (21,0);
\draw[ultra thick](24,12) -- (30,12) -- (30,6);\draw[dash pattern=on .4pt off .62pt] (7.5,12) arc (180:360:3 and 4)(19.5,0) -- (19.5,12);} %tikzmath
\, : \, \tikzmath[scale=\textscale] {\useasboundingbox (-2,-2) rectangle (32,14);\draw[thick, double](0,6) -- (0,12) -- (6,12);\draw (6,12) -- (9,12) (12,12) -- (24,12) (18,0) -- (21,0);
\draw[ultra thick](24,12) -- (30,12) -- (30,6);\draw[dash pattern=on .4pt off .62pt] (19.5,0) -- (19.5,12);} %tikzmath
\,\big\rrbracket \; = \; \big\llbracket\,\tikzmath[scale=\textscale]{\useasboundingbox (-2,-2) rectangle (32,14);\draw[thick, double] (0,6) -- (0,0) -- (6,0);\draw (6,0) -- (18,0) (21,0) -- (24,0) (9,12) -- (12,12);
\draw[ultra thick] (24,0) -- (30,0) -- (30,6);\draw[dash pattern=on .4pt off .62pt](10.5,0) -- (10.5,12);}%tikzmath
\, : \, \tikzmath[scale=\textscale]{\useasboundingbox (-2,-2) rectangle (32,14);\draw[thick, double] (0,6) -- (0,0) -- (6,0);\draw (6,0) -- (18,0) (21,0) -- (24,0) (9,12) -- (12,12);
\draw[ultra thick] (24,0) -- (30,0) -- (30,6);}%tikzmath
\,\big\rrbracket$ by Lemma \ref{lem:commutants}; the result follows by a version of Corollary \ref{cor: [[ : ]] = sqrt mu(A)} in which the roles of the nets $\cala$ and $\calc$ have been interchanged.
\end{proof}

\addtocontents{toc}{\protect\newpage}

\chapter{The $1 \boxtimes 1$-isomorphism}
  \label{sec:1-box-1}

We are now in a position to prove that the map $\Omega$ \eqref{eq: definition of Omega}, from the vacuum sector of the composition of two defects to the fusion of the vacuum sectors of the individual defects, is an isomorphism.
This isomorphism provides the modification\vspace{.2cm}
\begin{equation}\label{eq: 1x1iso}
\left(\tikzmath{\node (a) at (0,0) {$\cala$};\node (b) at (2,0) {$\calb$};\node (c) at (4,0) {$\calc$};
\draw[->] (a) to[out=45, in=135]node[above]{$\scriptstyle D$} (b);\draw[->] (a) to[out=-45, in=-135]node[below]{$\scriptstyle D$} (b);
\draw[->] (b) to[out=45, in=135]node[above]{$\scriptstyle E$} (c);\draw[->] (b) to[out=-45, in=-135]node[below]{$\scriptstyle E$} (c);
\node at (1,0) {$\Downarrow$};\node at (1.3,0) {$\scriptstyle 1_D$};\node at (3,0) {$\Downarrow$};\node at (3.3,0) {$\scriptstyle 1_E$};
}%tikzmath
\right)\quad\tikzmath{\useasboundingbox (-.4,-.3) rectangle (.4,.3);\node at (0,0) {$\Rrightarrow$};\node at (0,.35) {$\scriptstyle \Omega^{-1}_{D,E}$};} %tikzmath
\quad\left(\tikzmath{\node (a') at (0,0) {$\cala$};\node (c') at (3.8,0) {$\calc$};
\draw[->] (a') .. controls (1,.65) and (2.8,.65) .. node[above]{$\scriptstyle D\circledast_\calb E$} (c');
\draw[->] (a') .. controls (1,-.65) and (2.8,-.65) .. node[below]{$\scriptstyle D\circledast_\calb E$} (c');
\node at (1.9,0) {$\Downarrow$};\node at (2.55,0) {$\scriptstyle 1_{D\circledast_\calb E}$};}%tikzmath
\right)\vspace{.2cm}
\end{equation}
that one expects in any 3-category. More importantly, it also provides the foundation for our construction of the fundamental interchange modification
\[
\left(
\tikzmath[scale=1]{\node[inner sep=5] (a) at (0,0) {$\cala$};\node[inner sep=5] (b) at (2,0) {$\calb$};\node[inner sep=5] (bb) at (2.75,0) {$\calb$}; \node[inner sep=5] (c) at (4.75,0) {$\calc$};
\draw[->] (a) to[out=55, in=125]node[above]{$\scriptstyle D$} (b);\draw[->] (a) to[out=-55, in=-125]node[below]{$\scriptstyle P$} (b);
\draw[->] (a) -- node[fill=white, inner sep=.5]{$\scriptstyle F$} (b);\draw[->] (bb) -- node[fill=white, inner sep=.5]{$\scriptstyle G$} (c);
\draw[->] (bb) to[out=55, in=125]node[above]{$\scriptstyle E$} (c);\draw[->] (bb) to[out=-55, in=-125]node[below]{$\scriptstyle Q$} (c);
\node at (2.375,0) {$\circ$};
\node at (.95,.35) {$\scriptstyle\Downarrow$};\node at (1.15,.35) {$\scriptstyle H$};\node at (3.7,.35) {$\scriptstyle\Downarrow$};\node at (3.9,.35) {$\scriptstyle K$};
\node at (.95,-.35) {$\scriptstyle\Downarrow$};\node at (1.15,-.35) {$\scriptstyle L$};\node at (3.7,-.35) {$\scriptstyle\Downarrow$};\node at (3.9,-.35) {$\scriptstyle M$};
}
\right)
\quad\tikzmath{\useasboundingbox (-.4,-.3) rectangle (.4,.3);\node at (0,0) {$\Rrightarrow$};\node at (0,.35) {};} %tikzmath
\quad
\left(
\tikzmath[scale=1]{\node[inner sep=5] (a) at (0,0) {$\cala$};\node[inner sep=5] (b) at (2,0) {$\calb$};\node[inner sep=5] (c) at (4,0) {$\calc$};
\node[inner sep=5] (aa) at (0,-.75) {$\cala$};\node[inner sep=5] (bb) at (2,-.75) {$\calb$};\node[inner sep=5] (cc) at (4,-.75) {$\calc$}; 
\node at (2,-.375) {$\circ$};
\draw[->] (a) to[out=55, in=125]node[above]{$\scriptstyle D$} (b);\draw[->] (aa) to[out=-55, in=-125]node[below]{$\scriptstyle P$} (bb);
\draw[->] (a) -- node[fill=white, inner sep=.5]{$\scriptstyle F$} (b);
\draw[->] (aa) -- node[fill=white, inner sep=.5]{$\scriptstyle F$} (bb);
\draw[->] (b) -- node[fill=white, inner sep=.5]{$\scriptstyle G$} (c);
\draw[->] (bb) -- node[fill=white, inner sep=.5]{$\scriptstyle G$} (cc);
\draw[->] (b) to[out=55, in=125]node[above]{$\scriptstyle E$} (c);
\draw[->] (bb) to[out=-55, in=-125]node[below]{$\scriptstyle Q$} (cc);
\node at (.95,.35) {$\scriptstyle\Downarrow$};\node at (1.15,.35) {$\scriptstyle H$};\node at (2.95,.35) {$\scriptstyle\Downarrow$};\node at (3.15,.35) {$\scriptstyle K$};
\node at (.95,-1.1) {$\scriptstyle\Downarrow$};\node at (1.15,-1.1) {$\scriptstyle L$};\node at (2.95,-1.1) {$\scriptstyle\Downarrow$};\node at (3.15,-1.1) {$\scriptstyle M$};
}
\right)
\]
present in any 3-category; see Section~\ref{subsec:interchange}.

\section{The $1 \boxtimes 1$-map is an isomorphism}

Let $\cala$, $\calb$, and $\calc$ be conformal nets, always assumed to be irreducible, and let ${}_\cala D_\calb$ and ${}_\calb E_\calc$ be defects.
Assume furthermore that $\calb$ has finite index.
As before, we let $\tikzmath[scale=\textscale]{\fill[vacuumcolor]  (0,0) rectangle (24,12);\draw[thick, double]  (6,0) -- (0,0) -- (0,12) -- (6,12);
\draw (6,12) -- (18,12)  (6,0) -- (18,0);\draw[ultra thick] (18,0) -- (24,0) -- (24,12) -- (18,12);}$
denote the Hilbert space $L^2((D\circledast E)(S^1_\top))$, and we let
$\tikzmath[scale=\textscale] {\fill[vacuumcolor] (0,0) rectangle (24,12);\draw[thick, double]  (6,0) -- (0,0) -- (0,12) -- (6,12);
\draw (6,12) -- (12,12) -- (12,0) -- (6,0)(18,12) -- (12,12) -- (12,0) -- (18,0);\draw[ultra thick] (18,0) -- (24,0) -- (24,12) -- (18,12);}$
denote the fusion $L^2(D(S^1_\top))\boxtimes_{\calb(I)}L^2(E(S^1_\top))$, where $I$ is the middle vertical interval as in~\eqref{eq: figures of intervals 1}.

\begin{maintheorem}\label{thm: Omega is an iso}
Let $\cala$, $\calb$, and $\calc$ be irreducible conformal nets, and let ${}_\cala D_\calb$ and ${}_\calb E_\calc$ be defects.
The map
\begin{equation}\label{eq: Re: Omega}
\Omega_{D,E}\;:\;
\tikzmath[scale=\displscale]{\fill[vacuumcolor]  (0,0) rectangle (24,12);\draw[thick, double]  (6,0) -- (0,0) -- (0,12) -- (6,12);
\draw (6,12) -- (18,12)  (6,0) -- (18,0);\draw[ultra thick] (18,0) -- (24,0) -- (24,12) -- (18,12);} %tikzmath
\;\;\longrightarrow\;\;
\tikzmath[scale=\displscale] {\fill[vacuumcolor] (0,0) rectangle (24,12);\draw[thick, double]  (6,0) -- (0,0) -- (0,12) -- (6,12);
\draw (6,12) -- (12,12) -- (12,0) -- (6,0)(18,12) -- (12,12) -- (12,0) -- (18,0);\draw[ultra thick] (18,0) -- (24,0) -- (24,12) -- (18,12);} %tikzmath
\end{equation}
constructed in \eqref{eq: definition of Omega} is a unitary isomorphism.
\end{maintheorem}

\begin{proof}
Because the constructions of the source and target of $\Omega_{D,E}$ commute with direct integrals, we may assume without loss of generality that $D$ and $E$ are irreducible.
By construction the map $\Omega_{D,E}$ is an isometry.
The algebras 
\,$\tikzmath[scale=\textscale] {\useasboundingbox (-2,0) rectangle (26,14);\draw[thick, double] (0,6) -- (0,12) -- (6,12);\draw (6,12) -- (18,12);\draw[ultra thick] (18,12) -- (24,12) -- (24,6);}
=(D\circledast E)(S^1_\top)$\,
and \,$\tikzmath[scale=\textscale] {\useasboundingbox (-2,-2) rectangle (26,12);\draw[thick, double] (0,6) -- (0,0) -- (6,0);\draw (6,0) -- (18,0);\draw[ultra thick] (18,0) -- (24,0) -- (24,6);}
=(D\circledast E)(S^1_\bot)$\,
act faithfully on both sides of \eqref{eq: Re: Omega}.
They are certainly each other's commutants on
$\tikzmath[scale=\textscale]{\fill[vacuumcolor]  (0,0) rectangle (24,12);\draw[thick, double]  (6,0) -- (0,0) -- (0,12) -- (6,12);
\draw (6,12) -- (18,12)  (6,0) -- (18,0);\draw[ultra thick] (18,0) -- (24,0) -- (24,12) -- (18,12);}$\,, and the \linebreak$(D\circledast E)(S^1_\top)$\,--\,$(D\circledast E)(S^1_\bot)$-bimodule 
$\tikzmath[scale=\textscale]{\fill[vacuumcolor]  (0,0) rectangle (24,12);\draw[thick, double]  (6,0) -- (0,0) -- (0,12) -- (6,12);
\draw (6,12) -- (18,12)  (6,0) -- (18,0);\draw[ultra thick] (18,0) -- (24,0) -- (24,12) -- (18,12);}$\,
has the identity matrix as its matrix of statistical dimensions.
We already know (by Propositions~\ref{prop:G=L2} and~\ref{prop: local-fusion}) that $\Omega_{D,E}$ is an embedding.
By Theorem~\ref{thm:Haag-duality-composition-defects}, the algebras 
\,$\tikzmath[scale=\textscale] {\useasboundingbox (-2,0) rectangle (26,14);\draw[thick, double] (0,6) -- (0,12) -- (6,12);\draw (6,12) -- (18,12);\draw[ultra thick] (18,12) -- (24,12) -- (24,6);}$\,
and \,$\tikzmath[scale=\textscale] {\useasboundingbox (-2,-2) rectangle (26,12);\draw[thick, double] (0,6) -- (0,0) -- (6,0);\draw (6,0) -- (18,0);\draw[ultra thick] (18,0) -- (24,0) -- (24,6);}$\, 
are also each other's commutants on
$\tikzmath[scale=\textscale] {\fill[vacuumcolor] (0,0) rectangle (24,12);\draw[thick, double]  (6,0) -- (0,0) -- (0,12) -- (6,12);
\draw (6,12) -- (12,12) -- (12,0) -- (6,0)(18,12) -- (12,12) -- (12,0) -- (18,0);\draw[ultra thick] (18,0) -- (24,0) -- (24,12) -- (18,12);}$\,.
It follows (since statistical dimension is additive and every nonzero bimodule has nonzero statistical dimension) that the
$(D\circledast E)(S^1_\top)$\,--\,$(D\circledast E)(S^1_\bot)$-bimodule
$\tikzmath[scale=\textscale] {\fill[vacuumcolor] (0,0) rectangle (24,12);\draw[thick, double]  (6,0) -- (0,0) -- (0,12) -- (6,12);
\draw (6,12) -- (12,12) -- (12,0) -- (6,0)(18,12) -- (12,12) -- (12,0) -- (18,0);\draw[ultra thick] (18,0) -- (24,0) -- (24,12) -- (18,12);}$\,
also has the identity matrix as its matrix of statistical dimensions,
and that $\Omega_{D,E}$ is an isomorphism.
\end{proof}

Given the crucial importance of the ``$1$ times $1$ isomorphism'' $\Omega$,
we collect in one place the main ingredients used in its definition.
These are the unitary isomorphisms $\Psi$ from \eqref{eq: def of Psi} and $\Phi$ from \eqref{eq: Upsilon -- v_K -- Psi}; the $1 \boxtimes 1$-isomorphism is the composite $\Omega$ from \eqref{eq: definition of Omega}:\label{THE RECAP OF OMEGA}
\[
\tikzmath{
\node[draw, inner ysep=6, inner xsep=10, scale=.95] at (0,0) {\parbox{12.5cm}{
{\it Definition of the unitary\, $\Omega:\,\tikzmath[scale=\textscale]{\fill[vacuumcolor]  (0,0) rectangle (24,12);\draw[thick, double]  (6,0) -- (0,0) -- (0,12) -- (6,12);
\draw (6,12) -- (18,12)  (6,0) -- (18,0);\draw[ultra thick] (18,0) -- (24,0) -- (24,12) -- (18,12);}\xrightarrow{\scriptscriptstyle\cong}
\tikzmath[scale=\textscale] {\fill[vacuumcolor] (0,0) rectangle (24,12);\draw[thick, double]  (6,0) -- (0,0) -- (0,12) -- (6,12);
\draw (6,12) -- (12,12) -- (12,0) -- (6,0)(18,12) -- (12,12) -- (12,0) -- (18,0);\draw[ultra thick] (18,0) -- (24,0) -- (24,12) -- (18,12);}$\,,\, quick summary:}\vspace{.1cm}
$$
\Psi:\!
\tikzmath{
\node[inner sep =5] (a) at (-4.8,0) {$
\tikzmath[scale=\displscale]{\fill[vacuumcolor]  (0,0) rectangle (24,12);\draw[thick, double]  (6,0) -- (0,0) -- (0,12) -- (6,12);
\draw (6,12) -- (18,12)  (6,0) -- (18,0);\draw[ultra thick] (18,0) -- (24,0) -- (24,12) -- (18,12);} %tikzmath
$};
\node[inner sep =5] (b) at (-2.1,0) {$L^2 \left(
\tikzmath[scale=\displscale]{\useasboundingbox (-2,0) rectangle (26,14);\draw[thick, double] (0,6) -- (0,12) -- (6,12);\draw (6,12) -- (18,12)(10,6) -- (10,8) -- (14,8) -- (14,6);
\draw[ultra thick] (18,12) -- (24,12) -- (24,6);} %tikzmath
\right)\! :\tikzmath[scale=\displscale]{\filldraw[fill=vacuumcolor](10.5,4.5) rectangle (13.5,7.5);} %tikzmath
$};
\node[inner sep =5] (c) at (2.1,0) {$L^2 \left(
\tikzmath[scale=\displscale]{\useasboundingbox (-2,0) rectangle (26,14);\draw[thick, double] (0,6) -- (0,12) -- (6,12);\draw (6,12) -- (18,12)(10,6) -- (10,8) -- (14,8) -- (14,6);
\draw[ultra thick] (18,12) -- (24,12) -- (24,6);\draw[densely dotted] (12,8) -- (12,12);} %tikzmath
\right)\! :\tikzmath[scale=\displscale]{\filldraw[fill=vacuumcolor](10.5,4.5) rectangle (13.5,7.5);} %tikzmath
$};
\node[inner sep =5] (d) at (5.3,0) {$
\tikzmath[scale=\displscale]
{\fill[vacuumcolor] (0,0) rectangle (10,12)(14,0) rectangle (24,12); \draw (6,12) -- (10,12) -- (10,0) -- (6,0) (18,12) -- (14,12) -- (14,0) -- (18,0);
\draw[thick, double] (6,12) -- (0,12) -- (0,0) -- (6,0);\draw[ultra thick] (18,12) -- (24,12) -- (24,0) -- (18,0);\fill[vacuumcolor](10,0) rectangle (14,4)(10,8) rectangle (14,12);
\draw (10,0) rectangle (14,4) (10,8) rectangle (14,12);\filldraw[fill=vacuumcolor](10.5,4.5) rectangle (13.5,7.5);} %tikzmath
$};
\draw [->] ($(a)!.5!(b)$) (a) --  (b);
\draw [->] ($(b)!.5!(c)$) node[above, yshift=-2]{$\scriptstyle L^2_\mathrm{iso}(\iota)\,:\,\tikzmath[scale=\textscale]{\filldraw[fill=vacuumcolor](10.5,4.5) rectangle (13.5,7.5);}$} (b) -- (c);
\draw [->] ($(c)!.5!(d)$) node[above, yshift=-2, xshift=10]{$\scriptstyle \Psi_0:\,\tikzmath[scale=\textscale]{\filldraw[fill=vacuumcolor](10.5,4.5) rectangle (13.5,7.5);}$} (c) --  (d);
} % tikzmath
\vspace{.4cm}$$
$$
\Phi\;:\; \left\{
\tikzmath{\useasboundingbox (-4.3,-1.4) rectangle (4.3,.4);
\node (a) at (-3.5,0) {$
\tikzmath[scale=\displscale] {\fill[vacuumcolor]  (0,0) rectangle (12,12) (12,0) rectangle (24,12);\draw (0,0) rectangle (12,12) (12,0) rectangle (24,12);} %tikzmath
$};
\node (b) at (0,0) {$
\tikzmath[scale=\displscale] {\fill[vacuumcolor] (0,0) rectangle  (24,12);\draw (0,0) rectangle  (24,12);} %tikzmath
$};
\node (c) at (3.5,0) {$
\tikzmath[scale=\displscale] {\fill[vacuumcolor] (0,0) rectangle (10,12) (14,0) rectangle (24,12) (10,0) rectangle (14,4) (10,8) rectangle (14,12) (10.5,4.5) rectangle (13.5,7.5);
\draw  (0,0) rectangle (10,12) (14,0) rectangle (24,12)(10,0) rectangle (14,4) (10,8) rectangle (14,12) (10.5,4.5) rectangle (13.5,7.5);} %tikzmath
$};
\node (d) at (-3.5,-1) {$
\tikzmath[scale=\displscale] {\draw (8,12) -- (16,12) (8,0) -- (16,0) (12,0) -- (12,12); \draw[ultra thin, dash pattern=on .5pt off 1pt]
(8,12) -- (0,12) -- (0,0) -- (8,0) (16,12) -- (24,12) -- (24,0) -- (16,0); \draw (6,6) node {$H_r$} (18,6) node {$H_l$};} %tikzmath
$};
\node (e) at (3.5,-1) {$
\tikzmath[scale=\displscale]
{\draw (8,12) -- (10,12) -- (10,0) -- (8,0) (16,12) -- (14,12) -- (14,0) -- (16,0);
\draw[ultra thin, dash pattern=on .5pt off 1pt] (8,12) -- (0,12) -- (0,0) -- (8,0) (16,12) -- (24,12) -- (24,0) -- (16,0); \draw (5,6) node {$H_r$} (19,6) node {$H_l$};
\fill[vacuumcolor] (10,0) rectangle (14,4) (10,8) rectangle (14,12) (10.5,4.5) rectangle (13.5,7.5);
\draw (10,0) rectangle (14,4)(10,8) rectangle (14,12) (10.5,4.5) rectangle (13.5,7.5);} %tikzmath 
$};
\draw [->] ($(a)!.5!(b)$) node[above, yshift=-2]{$\scriptstyle \Upsilon$} (a) --  (b);
\draw [->] ($(b)!.5!(c)$) node[above, yshift=-2] {$\scriptstyle \Psi_{1,1}$} (b) -- (c);
\draw [->] ($(d.east)+(-.25,0)$)  --  ($(e.west)+(.3,0)$);
} %tikzmath
\right.\vspace{.3cm}
$$
$$
\Omega:
\tikzmath{
\node (a) at (-4.3,0) {$
\tikzmath[scale=\displscale]{\fill[vacuumcolor]  (0,0) rectangle (24,12);\draw[thick, double]  (6,0) -- (0,0) -- (0,12) -- (6,12);
\draw (6,12) -- (18,12)  (6,0) -- (18,0);\draw[ultra thick] (18,0) -- (24,0) -- (24,12) -- (18,12);} %tikzmath
$};
\node (b) at (0,0) {$
\tikzmath[scale=\displscale]{\fill[vacuumcolor] (0,0) rectangle (10,12)(14,0) rectangle (24,12);\draw (6,12) -- (10,12) -- (10,0) -- (6,0)(18,12) -- (14,12) -- (14,0) -- (18,0);
\draw[thick, double] (6,12) -- (0,12) -- (0,0) -- (6,0);\draw[ultra thick]  (18,12) -- (24,12) -- (24,0) -- (18,0);\fill[vacuumcolor]  (10,0) rectangle (14,4)(10,8) rectangle (14,12);
\draw (10,0) rectangle (14,4) (10,8) rectangle (14,12);\fill[vacuumcolor]  (10.5,4.5) rectangle (13.5,7.5);\draw (10.5,4.5) rectangle (13.5,7.5);} %tikzmath
$};
\node (c) at (4.3,0) {$
\tikzmath[scale=\displscale] {\fill[vacuumcolor] (0,0) rectangle (24,12);\draw[thick, double]  (6,0) -- (0,0) -- (0,12) -- (6,12);
\draw (6,12) -- (12,12) -- (12,0) -- (6,0)(18,12) -- (12,12) -- (12,0) -- (18,0);\draw[ultra thick] (18,0) -- (24,0) -- (24,12) -- (18,12);} %tikzmath
$};
\draw [->] ($(a)!.5!(b)$) node[above, yshift=-2]{$\scriptstyle \Psi_{D,E}$} (a) --  (b);
\draw [->] ($(b)!.5!(c)$) node[above, yshift=-2]{$\scriptstyle \Phi^{-1}$} (b) -- (c);
} %tikzmath 
\vspace{.2cm}$$
Here, the symbol ``\,$:\tikzmath[scale=\displscale]{\filldraw[fill=vacuumcolor](10.5,4.5) rectangle (13.5,7.5);}$\,'' is an abbreviation for $\def\thinsquare{\tikzmath[scale=\textscale]{\draw[ultra thin] (0,0) rectangle (4,4);}}
\def\thinsqfill{\tikzmath[scale=\textscale]{\filldraw[ultra thin, fill=vacuumcolor] (0,0) rectangle (4,4);}}
\boxtimes_{\thinsquare}\thinsqfill$, that is $\boxtimes_{\calb(S_m)} H_0(S_m,\calb)$.
}};
}\medskip
\]

\subsection*{\hspace*{-18pt}The $1\boxtimes 1$ isomorphism is monoidal}  We now address the compatibility of the isomorphism $\Omega$ with the symmetric monoidal 
  structure on conformal nets and defects.
  Let  $_{\cala_ 1} (D_ 1)_{\calb_ 1}$, $_{\calb_ 1} (E_ 1)_{\calc_ 1}$,
  $_{\cala_ 2} (D_ 2)_{\calb_ 2}$ and $_{\calb_ 2} (E_ 2)_{\calc_ 2}$ be defects.
  We abbreviate $D_ 1 := D_ 1(S^1_{\top})$, $D_ 2 := D_ 2(S^1_{\top})$,
  $D_ 1 \otimes D_ 2 := (D_ 1 \otimes D_ 2)(S^1_{\top})$,
  $E_ 1 := E_ 1(S^1_{\top})$, $E_ 2 := E_ 2(S^1_{\top})$,
  $E_ 1 \otimes E_ 2 := (E_ 1 \otimes E_ 2)(S^1_{\top})$, 
  and $B_ 1 := \calb_ 1(I)$, $B_ 2 := \calb_ 2(I)$,
  $B_ 1 \otimes B_ 2 := (\calb_ 1 \otimes \calb_ 2)(I)$, where as before $I$ is the vertical unit interval.
  Combining the various isomorphisms from 
  Appendix~\ref{subsec:compatibility-ox}  
  we have canonical isomorphisms
  \begin{eqnarray}
    \label{eq:circledast-ox}
    \lefteqn{
     L^2(D_ 1 \circledast_{B_ 1} E_ 1)  \otimes  L^2(D_ 2 \circledast_{B_ 2} E_ 2)} \\
    & \hspace{ 26ex}  \cong  &
    L^2( (D_ 1 \otimes D_ 2) \circledast_{B_ 1  \otimes B_ 2} (E_ 1  \otimes E_ 2) ),
    \nonumber \\ 
    \label{eq:boxtimes-ox}
    \lefteqn{
    (L^2(D_ 1) \boxtimes_{B_ 1} L^2(E_ 1))  \otimes  
                   (L^2(D_ 2) \boxtimes_{B_ 2} L^2(E_ 2))} \\
    & \hspace{ 26ex} \cong  &
    L^2(D_ 1  \otimes D_ 2) \boxtimes_{B_ 1 \otimes B_ 2} 
              L^2(E_ 1  \otimes E_ 2). \nonumber 
  \end{eqnarray}

  \begin{proposition}
    \label{prop:omega-is-monodial}
    The unitary isomorphism $\Omega$ is compatible with the symmetric monoidal 
    structure on conformal nets and defects.
    More precisely, if $\calb_ 1$ and $\calb_ 2$ have finite index then 
    \begin{equation} \label{eq:omega-ox}
      {\eqref{eq:boxtimes-ox}}  \circ\, (\Omega_{D_ 1,E_ 1} \otimes \,\Omega_{D_ 2,E_ 2})  
      = \Omega_{D_ 1 \otimes D_ 2, E_ 1  \otimes E_ 2} \circ {\eqref{eq:circledast-ox}}.
    \end{equation}
  \end{proposition}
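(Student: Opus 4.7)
The strategy is to trace through the construction of $\Omega_{D,E} = \Phi^{-1} \circ \Psi_{D,E}$ from \eqref{eq: definition of Omega} and verify that every constituent map commutes with the canonical tensor product identifications of Appendix~\ref{subsec:compatibility-ox}. Concretely, $\Psi_{D,E}$ is built from the unitor $U$, the isometry $L^2(\iota)_\mathrm{iso}\otimes\mathrm{id}$, and $\Psi_0\otimes\mathrm{id}$, while $\Phi$ is built from $\Upsilon$, $v_{S_{b,\top}}$, and $\Psi_{1,1}$. I would check each of these six pieces is monoidal; then \eqref{eq:omega-ox} follows by pasting the resulting commuting squares.

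For most of the pieces, monoidality is essentially built in. The canonical isomorphism $\Upsilon$ from \eqref{eq:Upsilon} is defined in terms of $L^2$-spaces of net algebras on a conformal circle, and the canonical identifications $v_I$ from \eqref{eq:v_I} are equally canonical; both therefore respect tensor products by the monoidality of $L^2$ (Appendix~\ref{subsec:compatibility-ox}) and of the vacuum sector construction. The unitor $U$ in \eqref{eq: def of Psi} is built from the split property isomorphism $\tikzmath[scale=\textscale]{\draw[thick, double] (0,6) -- (0,12) -- (6,12);\draw (6,12) -- (18,12) (10,6) -- (10,8) -- (14,8) -- (14,6);\draw[ultra thick] (18,12) -- (24,12) -- (24,6);}\cong\tikzmath[scale=\textscale]{\draw[thick, double] (0,6) -- (0,12) -- (6,12);\draw (6,12) -- (18,12);\draw[ultra thick] (18,12) -- (24,12) -- (24,6);}\,\bar\otimes\,\calb(S_m)^\op$ together with Lemma~\ref{lem: Hbar x_B(S_m) H = C}; both are compatible with tensor products of $\calb_1$ and $\calb_2$, since the split isomorphism is canonical and the collapse $\overline{H_0(S_m)}\boxtimes_{\calb(S_m)}H_0(S_m)\cong\IC$ factors through the type~$I$ summand $\bfB(H_0(S_m))$, which tensors correctly under $\calb_1\otimes\calb_2$. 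The isomorphism $\Psi_0$ of Theorem~\ref{thm:G_0=L^2} is the instance of Proposition~\ref{prop:hat-M} applied to the data \eqref{eq: M, M0, A, H}; since that Appendix proposition is purely a statement about $L^2$-spaces and commutants of tensor-factored data, it is evidently monoidal. The keystone fusion isomorphism $\Psi_{1,1}$ for the identity defects is then monoidal for the same reason.

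The main obstacle is the monoidality of the isometric part of the polar decomposition, $L^2(\iota)_\mathrm{iso}$, where $\iota$ is the inclusion of Corollary~\ref{cor: is a finite homomorphism of von Neumann algebra}. What I would prove is the following general fact: if $\iota_k\colon A_k\hookrightarrow B_k$ ($k=1,2$) are finite homomorphisms into factors with finite-dimensional centered domains, and $\iota_1\otimes\iota_2\colon A_1\,\bar\otimes\,A_2\hookrightarrow B_1\,\bar\otimes\,B_2$, then under the canonical identification $L^2(A_1\,\bar\otimes\,A_2)\cong L^2(A_1)\otimes L^2(A_2)$ (and similarly for $B$) the maps satisfy
\[
L^2(\iota_1\otimes\iota_2)=L^2(\iota_1)\otimes L^2(\iota_2),\qquad L^2(\iota_1\otimes\iota_2)_\mathrm{iso}=L^2(\iota_1)_\mathrm{iso}\otimes L^2(\iota_2)_\mathrm{iso}.
\]
The first equality is the monoidality of the $L^2$ functor on finite homomorphisms, which is part of the formalism of~\cite{BDH(Dualizability+Index-of-subfactors)} and is already invoked implicitly in Appendix~\ref{subsec:compatibility-ox}. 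The second equality follows from the first together with the general fact that polar decomposition is multiplicative on tensor products: if $T_k=V_k|T_k|$ are polar decompositions then $T_1\otimes T_2=(V_1\otimes V_2)(|T_1|\otimes|T_2|)$, and this is the polar decomposition of $T_1\otimes T_2$ because $V_1\otimes V_2$ is a partial isometry whose initial and final projections are the tensor products of those of $V_1$ and $V_2$.

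Once monoidality of each piece is in hand, \eqref{eq:omega-ox} is a diagram chase: the identification \eqref{eq:circledast-ox} is by definition the composite of the six monoidal structure maps associated to the six steps of $\Omega$ on the domain side, and similarly \eqref{eq:boxtimes-ox} on the codomain side; so compatibility follows by pasting. In practice I would display one commuting square per step, beginning with $(\Psi_{D_1,E_1}\otimes\Psi_{D_2,E_2})\circ\eqref{eq:circledast-ox}=(\text{intermediate iso})\circ\Psi_{D_1\otimes D_2,E_1\otimes E_2}$ and ending with the analogous statement for $\Phi^{-1}$.
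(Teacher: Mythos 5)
Your proposal is correct and follows essentially the same route as the paper, whose entire proof is the one-line remark that each step in the construction of $\Omega$ admits an analogous tensor-compatibility isomorphism and that checking equation \eqref{eq:omega-ox} step by step is ``a lengthy, but not difficult, exercise.'' You have in fact carried out more of that exercise than the paper does, and your treatment of the one genuinely delicate point --- that $L^2(\iota)_\mathrm{iso}$ is monoidal because polar decomposition is multiplicative under tensor products --- is sound.
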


  \begin{proof}
    For every step in the construction of $\Omega$ there is an isomorphism analogous to~\eqref{eq:boxtimes-ox}
    or~\eqref{eq:circledast-ox}.
    It is a lengthy, but not difficult, exercise to check
    that for each step in the construction of $\Omega$ the
    corresponding version of equation~\eqref{eq:omega-ox}
    holds.
  \end{proof}

\begin{warning} \label{warning:not-natural!}
\emph{It appears that $\Omega$ is not a natural transformation!}
More precisely, there seem to exist irreducible defects 
${}_\cala D_\calb$, ${}_\calb E_\calc$, ${}_\cala F_\calb$, ${}_\calb G_\calc$
and finite natural transformations $\tau:D\to F$ and $\sigma:E\to G$ (see Definition \ref{def:finite defects maps}) for which the following diagram
fails to be commutative:
\smallskip
\begin{equation}\label{eq: the criminal}
\tikzmath{
\matrix [matrix of math nodes,column sep=1.3cm,row sep=7mm]
{ 
|(a)| H_0(D\circledast_\calb E) \pgfmatrixnextcell |(b)| H_0(D)\boxtimes_\calb H_0(E)\\ 
|(c)| H_0(F\circledast_\calb G) \pgfmatrixnextcell |(d)| H_0(F)\boxtimes_\calb H_0(G)\\ 
}; 
\draw[->] (a) -- node [above]	{$\scriptstyle \Omega_{D,E}$} (b);
\draw[->] (c) -- node [above]	{$\scriptstyle \Omega_{F,G}$} (d);
\draw[->] (a) -- node [left]		{$\scriptstyle H_0(\tau \circledast \sigma)$} (c);
\draw[->] (b) -- node [right]	{$\scriptstyle H_0(\tau)\,\boxtimes\, H_0(\sigma)$} (d);
}\smallskip
\end{equation}
Here, $H_0(\tau)$ is the value of the functor $L^2$ on the map $D(S^1_\top)\to F(S^1_\top)$
induced by $\tau$ (and similarly for $H_0(\sigma)$ and $H_0(\tau \circledast \sigma)$).
This problem can be blamed on the bad functorial properties of $L^2_\mathrm{iso}$ (used in the definition of $\Psi$).
However, $\Omega$ is still natural with respect to natural isomorphisms of defects.

There are two ways of dealing with the above situation:
{1.} Restrict to the groupoid part of $\CN_0$ and of $\CN_1$;
{2.} Replace the $L^2_\mathrm{iso}$ by $L^2$ in the definition of $\Psi$---the price to pay for this change is that $\Omega$ is then no longer unitary.  Both options seem to have shortcomings---in our exposition, we have opted for the first option.
One unfortunate consequence of the failure of commutativity 
of~\eqref{eq: the criminal} is that
given defects $D$, $E$, $F$, $G$ as above, and 
given dualizable sectors ${}_DH_F$ and ${}_EK_G$ with 
normalized duals\footnote{See equations \eqdualityequations \, 
  and \eqdualitynormalization \,
  in~\cite{BDH(Dualizability+Index-of-subfactors)}.}
$(\bar H,r_H,s_H)$ and $(\bar K,r_K,s_K)$
the horizontal composition $(\bar H\boxtimes_\calb \bar K, r, s)$ of those two normalized duals is not a normalized dual for $H\boxtimes_\calb K$.
(Here, the unit and counit $r$ and $s$ are given by the obvious formulas in terms of $r_H$, $r_K$, $s_H$, and $s_K$.)
\end{warning}

\section{The $1 \boxtimes 1$-isomorphism for an identity defect}\label{sec: Compatibility with left and right units}

We now show that, if one of the two defects in \eqref{eq: Re: Omega} is an identity defect,
then the $1 \boxtimes 1$-isomorphism admits a much simpler description, in terms of certain natural transformations
\begin{equation*}
\Upsilon^l \colon \tikzmath[scale=\displscale] 
{\fill[vacuumcolor] (0,0) rectangle (12,12);\draw (0,0) rectangle (12,12) (18,0) -- (12,0) -- (12,12) -- (18,12);\draw[dotted] (18,0) -- (24,0) -- (24,12) -- (18,12); } %tikzmath
\; \xrightarrow{\cong} \; \tikzmath[scale=\displscale] {\draw (18,0) -- (0,0) -- (0,12) -- (18,12); \draw[dotted](18,0) -- (24,0) -- (24,12) -- (18,12); } %tikzmath
\quad \text{and} \quad \Upsilon^r\colon
\tikzmath[scale=\displscale]{\fill[vacuumcolor] (12,0) rectangle (24,12);\draw[dotted](6,0) -- (0,0) -- (0,12) -- (6,12);\draw (6,12) -- (12,12) -- (12,0) -- (6,0)(12,0) rectangle (24,12);} %tikzmath
\; \xrightarrow{\cong} \; \tikzmath[scale=\displscale]{\draw[dotted] (6,0) -- (0,0) -- (0,12) -- (6,12);\draw (6,12) -- (24,12) -- (24,0) -- (6,0);} %tikzmath
\end{equation*}
that we describe below.  This section and the following Section~\ref{sec:quasi-identities} are concerned with the behavior of horizontal units in the 3-category of conformal nets; they are more technical and are not needed for the subsequent treatment of the fundamental interchange isomorphism in Section~\ref{subsec:interchange}.

\subsection*{\hspace*{-18pt}Transformations for fusion of vacuum sectors}

Let $S_l$, $S_r$, $S_b$, $I$, $j_l$, and $j_r$ be as in Section \ref{sec: Comparison between F and G}, and let $I_l:=j_l(I)$ and $I_r:=j_r(I)$.
We draw them here for convenience:
\begin{equation} \label{eq:circles-interval-involutions} 
\begin{split} 
S_l = \tikzmath[scale=\displscale]{\useasboundingbox (0,0) rectangle (24,12); \draw (0,0) rectangle (12,12);\draw[->] (5.6,12) -- (5.5,12);}& %tikzmath 
\; , \,\,\,\, 
S_r = \tikzmath[scale=\displscale] {\useasboundingbox (0,0) rectangle (24,12); \draw (12,0) rectangle (24,12); \draw[->] (17.6,12) -- (17.5,12);}\,, %tikzmath
\,\,\,\, 
S_b = \tikzmath[scale=\displscale] {\draw (0,0) rectangle (24,12); \draw[->] (11.6,12) -- (11.5,12);}\;,  %tikzmath
\,\,\,\, 
I \, = \tikzmath[scale=\displscale] {\useasboundingbox (0,0) rectangle (24,12);\draw (12,0) -- (12,12); \draw[->] (12,5.6) -- (12,5.5);}\;,  %tikzmath
\\ %\,\,\,\, 
j_l:\,\tikzmath[scale=\displscale]{\useasboundingbox (0,0) rectangle (24,12);\draw (-.3,0) rectangle (11.8,12);\draw[<->] (6.5,.5) to[in=170, out=70] (11,2);
\draw[<->] (1.5,1) to[in=175, out=50] (11,4);\draw[<->] (.3,6) -- (11,6);\draw[<->] (1.5,11) to[in=-175, out=-50] (11,8);\draw[<->] (6.5,11.5) to[in=-170, out=-70] (11,10);}&\;, %tikzmath
\,\,\,\, 
I_l \, = \tikzmath[scale=\displscale]{\useasboundingbox (0,0) rectangle (24,12); \draw (12,12) -- (0,12) -- (0,0) -- (12,0);\draw[->] (5.6,12) -- (5.5,12);}\,, %tikzmath 
\,\,\,\,\, 
j_r \,  :\,\,\, \tikzmath[scale=\displscale]{\useasboundingbox (0,0) rectangle (-24,12);\draw (.3,0) rectangle (-11.8,12);\draw[<->] (-6.5,.5) to[in=10, out=110] (-11,2);
\draw[<->] (-1.5,1) to[in=5, out=130] (-11,4);\draw[<->] (-.3,6) -- (-11,6);\draw[<->] (-1.5,11) to[in=-5, out=-130] (-11,8);\draw[<->] (-6.5,11.5) to[in=-10, out=-110] (-11,10);}\;, %tikzmath
\,\,\,\, 
I_r = \tikzmath[scale=\displscale] {\useasboundingbox (0,0) rectangle (24,12);\draw (12,12) -- (24,12) -- (24,0) -- (12,0); \draw[->] (17.6,12) -- (17.5,12);}\;. %tikzmath
\end{split}
\end{equation}
Recall that we equipped $S_b$ with a conformal structure that makes
$j_l|_{I_l}\cup \mathrm{Id_{I_r}} \colon S_b \to S_r$ and
$\mathrm{Id_{I_l}} \cup j_r|_{I_r} \colon S_b \to S_l$ conformal 
(and therefore smooth).

Fix a small number $\varepsilon$.
Then $\Upsilon^l$ and $\Upsilon^r$ are invertible natural transformations
$$
\modules{\cala(\partial^\sqsubset([1,3/2 - \e]\!\times\![0,1]))} \;\tworarrow\; \modules{\cala(\partial^\sqsubset([0,3/2 - \e]\!\times\![0,1]))}
$$
and
$$
\modules{\cala(\partial^\sqsupset([1/2+\e,1]\!\times\![0,1]))}\;\tworarrow\; \modules{\cala(\partial^\sqsupset([1/2+\e,2]\!\times\![0,1]))};
$$
that is,
\begin{equation} \label{eq:left+right-Upsilon}
\begin{split}
\Upsilon^l\;\;:\;\;\;
\modules{\cala\big(
\tikzmath[scale=\planscale]{\useasboundingbox (-2,0) rectangle (14,12);\draw (6,12) -- (0,12) -- (0,0) -- (6,0);\draw[->] (2.51,12) -- (2.5,12);} %tikzmath
\big)}\;\;&\tworarrow\,\;\;\modules{\cala(
\tikzmath[scale=\planscale]{\useasboundingbox (-2,0) rectangle (26,12);\draw (18,12) -- (0,12) -- (0,0) -- (18,0);\draw[->] (8.51,12) -- (8.5,12);} %tikzmath
)}
\\
\Upsilon^r\;\;:\;\;\;
\modules{\cala\big(
\tikzmath[scale=\planscale]{\useasboundingbox (-2,0) rectangle (14,12);\draw (6,12) -- (12,12) -- (12,0) -- (6,0);\draw[->] (8.51,12) -- (8.5,12);}
\big)}\;\;&\tworarrow\,\;\;\modules{\cala(
\tikzmath[scale=\planscale]{\useasboundingbox (-2,0) rectangle (26,12);\draw (6,12) -- (24,12) -- (24,0) -- (6,0);\draw[->] (14.51,12) -- (14.5,12);}
)}.\end{split}
\end{equation}
The natural transformation $\Upsilon^l$ goes from the functor
\(
H_0(S_r,\cala) \boxtimes_{\cala(I)} \!- 
\)
to the functor of restriction along $\cala(j_l|_{I_l}\cup \mathrm{Id}):\cala(
\tikzmath[scale=\textscale]{\useasboundingbox (-2,0) rectangle (26,12);\draw (18,12) -- (0,12) -- (0,0) -- (18,0);\draw[->] (8.51,12) -- (8.5,12);} %tikzmath
)\xrightarrow\cong\cala(
\tikzmath[scale=\textscale]{\useasboundingbox (-2,0) rectangle (14,12);\draw (6,12) -- (0,12) -- (0,0) -- (6,0);\draw[->] (2.51,12) -- (2.5,12);} %tikzmath
)$.
Its value on an $\cala(
\tikzmath[scale=\textscale]{\useasboundingbox (-2,0) rectangle (14,12);\draw (6,12) -- (0,12) -- (0,0) -- (6,0);\draw[->] (2.51,12) -- (2.5,12);} %tikzmath
)$-module $K$ is given by
\[
\Upsilon^l_K\,\,\colon\, H_0(S_l,\cala) \boxtimes_{\cala(I)} K \,\xrightarrow{\,\,\,w\otimes 1\,\,\,}\, 
L^2(\cala(I)) \boxtimes_{\cala(I)} K \,\cong\, K,
\]
where $w$ is the isomorphism obtained by composing the canonical identification
$v_{I_l}:H_0(S_l,\cala)\to L^2(\cala(I_l))$ 
from Appendix~\ref{subsec:vacuum-sector-net}
and the map $L^2(\cala(I_l))\to L^2(\cala(I))$ induced by $j_l:I_l\to I$.
Similarly, the natural transformation $\Upsilon^r$ goes from the functor
$-\boxtimes_{\cala(I)}H_0(S_r,\cala)$
to the functor of restriction along $\cala(\mathrm{Id} \cup j_r|_{I_r}):\cala(
\tikzmath[scale=\textscale]{\useasboundingbox (-2,0) rectangle (26,12);\draw (6,12) -- (24,12) -- (24,0) -- (6,0);\draw[->] (13.51,12) -- (13.5,12);}
)\xrightarrow\cong\cala(
\tikzmath[scale=\textscale]{\useasboundingbox (-2,0) rectangle (14,12);\draw (6,12) -- (12,12) -- (12,0) -- (6,0);\draw[->] (8.01,12) -- (8,12);}
)$. 
It is given by
\[
\Upsilon^r_K\,\,\colon\, K \boxtimes_{\cala(I)} H_0(S_r,\cala) \,\xrightarrow{\,\,\,1\otimes v\,\,\,}\, 
K \boxtimes_{\cala(I)} L^2(\cala(I)) \,\cong\, K,
\]
where $v=v_I$ is the identification $H_0(S_r,\cala)\cong L^2(\cala(I))$ 
from Appendix~\ref{subsec:vacuum-sector-net}.

The transformations $\Upsilon^l$ and $\Upsilon^r$ generalize the map $\Upsilon\!:\tikzmath[scale=\textscale] {\fill[vacuumcolor] (0,0) rectangle (24,12); \draw (0,0) rectangle (24,12) (12,0) -- (12,12);} %tikzmath
\to \tikzmath[scale=\textscale] {\filldraw[fill=vacuumcolor] (0,0) rectangle (24,12);}\,$ from \eqref{eq: Upsilon display} and \eqref{eq:Upsilon}:

\begin{lemma}\label{lem: Ups=Ups=Ups}
Let $\epsilon_l:= j_l|_I\cup \mathrm{Id}_{I_r}\in\Conf(S_r, S_b)$ and $\epsilon_r:= \mathrm{Id}_{I_l} \cup j_r|_I\in\Conf(S_l, S_b)$ be as in Section \ref{sec: Comparison between F and G}.
The two maps
\[
H_0(\epsilon_l,\cala)\circ\Upsilon^l_{H_0(S_r,\cala)}\,,\,\,\,\, H_0(\epsilon_r,\cala)\circ\Upsilon^r_{H_0(S_l,\cala)} \;\colon\;
\tikzmath[scale=\displscale] {\fill[vacuumcolor] (0,0) rectangle (24,12); \draw (0,0) rectangle (24,12) (12,0) -- (12,12);} %tikzmath
\;\; \xrightarrow{\,\,\cong\,\,} \;\; \tikzmath[scale=\displscale]{\filldraw[fill=vacuumcolor] (0,0) rectangle (24,12);} %tikzmath
\]
are equal to each other, and are equal to $\Upsilon$.
\end{lemma}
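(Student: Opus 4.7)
The plan is to recognize all three maps as the same canonical gluing isomorphism for vacuum sectors of conformal nets. By its definition in~\eqref{eq:Upsilon}, $\Upsilon$ is the canonical unitary $H_0(S_l,\cala)\boxtimes_{\cala(I)}H_0(S_r,\cala)\xrightarrow{\cong}H_0(S_b,\cala)$ of $S_b$-sectors provided by the conformal net specialization of Lemma~\ref{lem: vacuum * vacuum = vacuum -- with defects} (see also~\cite[\corvacuumvacuumvacuum]{BDH(nets)}). The proof of that lemma gives a manifestly left--right asymmetric presentation: having identified $H_0(S_l,\cala)$ with $L^2(\cala(I_l))$ via $v_{I_l}$, one uses $L^2(\cala(j_l))$ to view it as $L^2(\cala(I))$, applies the canonical identification $L^2(\cala(I))\boxtimes_{\cala(I)}H_0(S_r,\cala)\cong H_0(S_r,\cala)$, and transports along $\epsilon_l$ via $H_0(\epsilon_l,\cala)$. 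Unwinding the definition of $\Upsilon^l_{H_0(S_r,\cala)}$, this composite is precisely $H_0(\epsilon_l,\cala)\circ\Upsilon^l_{H_0(S_r,\cala)}$, so the first equality with $\Upsilon$ is immediate from the definitions.

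By the symmetric construction, identifying $H_0(S_r,\cala)$ with $L^2(\cala(I))$ via $v_I$, absorbing the result into $H_0(S_l,\cala)$ via $H_0(S_l,\cala)\boxtimes_{\cala(I)}L^2(\cala(I))\cong H_0(S_l,\cala)$, and transporting along $\epsilon_r$, yields $H_0(\epsilon_r,\cala)\circ\Upsilon^r_{H_0(S_l,\cala)}$. To finish, we must verify that the left and right presentations yield the same unitary. Both are unitary homomorphisms of $S_b$-sectors: equivariance for all $\cala(J)$ with $J\subset S_b$ follows from the naturality of $v_{I_l}$ and $v_I$ combined with Haag duality and strong additivity for $\cala$. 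Since $H_0(S_b,\cala)$ is irreducible as an $S_b$-sector, the two maps must differ at worst by a phase in $\U(1)$.

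The main obstacle is pinning this phase down to $1$. The crucial geometric input is that the conformal structure on $S_b$ was specifically designed so that both $\epsilon_l\colon S_r\to S_b$ and $\epsilon_r\colon S_l\to S_b$ are conformal; this renders $v_{S_{b,\top}}$ compatible with $v_{I_l}$ on $S_l$ and with $v_I$ on $S_r$ under their respective gluings. Composing both candidate maps with $v_{S_{b,\top}}\colon H_0(S_b,\cala)\xrightarrow{\cong}L^2(\cala(S_{b,\top}))$ reduces each to the canonical identification $L^2(\cala(I_l))\boxtimes_{\cala(I)}L^2(\cala(I))\cong L^2(\cala(S_{b,\top}))$ dictated by the inclusions $I_l,I\hookrightarrow S_{b,\top}$, and the uniqueness of Haagerup's standard form (as determined by its bimodule structure, modular conjugation, and self-dual cone) forces the phase to equal~$1$.
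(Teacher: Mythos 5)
There is a genuine gap, in two places. First, your opening claim that the equality $H_0(\epsilon_l,\cala)\circ\Upsilon^l_{H_0(S_r,\cala)}=\Upsilon$ is ``immediate from the definitions'' is not correct: the definition~\eqref{eq:Upsilon} of $\Upsilon$ is the manifestly \emph{right}-handed composite $v_{I_l}^*\circ(\mathrm{mult})\circ(v_{I_l}\boxtimes v_I)$, which after one application of the naturality square~\eqref{eq:naturality-v_I} (using $\epsilon_r|_{I_l}=\mathrm{id}$) is seen to equal $H_0(\epsilon_r,\cala)\circ\Upsilon^r_{H_0(S_l,\cala)}$. The left-handed presentation, which routes the left factor through $L^2(\cala(j_l))\circ v_{I_l}$ and absorbs it into $H_0(S_r,\cala)$, is \emph{not} the definition; reconciling the two presentations is exactly the nontrivial content of the lemma. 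The paper does this by an explicit diagram chase built around the conformal map $\tau:=j_l|_{I_l}\cup j_r|_I=\epsilon_l^{-1}\circ\epsilon_r\in\Conf_+(S_l,S_r)$ and repeated use of~\eqref{eq:naturality-v_I}; no irreducibility and no phase ambiguity ever enter.

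Second, your mechanism for pinning the phase does not work as stated. The identification you invoke, $L^2(\cala(I_l))\boxtimes_{\cala(I)}L^2(\cala(I))\cong L^2(\cala(S_{b,\top}))$, does not typecheck: $I_l$ is the closure of $S_l\setminus I$ (a three-sided arc), $I_l\cup I$ is all of $S_l$ rather than $S_{b,\top}$, and the unit isomorphism of Connes fusion lands in $L^2(\cala(I_l))$, not in $L^2$ of a larger algebra. Moreover, an appeal to the uniqueness of Haagerup's standard form would require verifying that both candidate unitaries intertwine the modular conjugations \emph{and} preserve the self-dual cones --- precisely the kind of verification you have not supplied, and the paper's introductory remark explicitly cautions that a compatible self-dual cone on a fusion of vacuum sectors is not available from the cones of the factors. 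If you want to keep the Schur-plus-phase strategy (which does require irreducibility of $\cala$, implicitly assumed here), you must pin the phase by a concrete computation --- e.g.\ by tracking an explicit vector through both composites --- rather than by the standard-form uniqueness theorem; the cleaner route is the paper's direct naturality argument, which avoids the phase issue altogether.
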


\begin{proof} 
The equality $\Upsilon=H_0(\epsilon_r)\circ\Upsilon^r_{H_0(S_l)}$ follows from the commutativity of the diagram
\[
\tikzmath{
\matrix [matrix of math nodes,column sep=1cm,row sep=4mm]
{|(a)| H_0(S_l)\underset{\cala(I)}\boxtimes H_0(S_r)
\pgfmatrixnextcell[.3cm] |(b)| L^2(\cala(I_l)) \underset{\cala(I)}\boxtimes L^2(\cala(I))\cong L^2(\cala(I_l))
\pgfmatrixnextcell |(c)| H_0(S_b) \\
|(d)| H_0(S_l)\underset{\cala(I)}\boxtimes H_0(S_r)
\pgfmatrixnextcell |(e)| H_0(S_l) \underset{\cala(I)}\boxtimes L^2(\cala(I))\,\cong\, H_0(S_l)
\pgfmatrixnextcell |(f)| H_0(S_b)
\\
};
\draw[->](c -| a.east)--node[above]{$\scriptstyle v_{I_l}\otimes v_I$}(c -| b.west);
\draw[->](c -| b.east)--node[above]{$\scriptstyle v_{I_l}^*$}(c);
\draw[->](f -| d.east)--node[above]{$\scriptstyle 1\otimes v\raisebox{-1.9pt}{$\scriptscriptstyle I$}$}(f -| e.west);
\draw[->](f -| e.east)--node[above]{$\scriptstyle H_0(\epsilon_r)$}(f);
\draw[double, double distance = 2](a)--(d);
\draw[line width = 2, white]($(a.south)+(0,0.01)$)--($(d.north)-(0,0.01)$);
\draw[double, double distance = 2](c)--(f);
\draw[line width = 2, white]($(c.south)+(0,0.01)$)--($(f.north)-(0,0.01)$);
\draw[->]($(b.south)+(1.7,.3)$) --node[left]{$\scriptstyle v_{I_l}^*$} ($(e.north)+(1.7,0)$);
}
\]
where the top row is $\Upsilon$ and the bottom row is $H_0(\epsilon_r,\cala)\circ\Upsilon^r_{H_0(S_l)}$.
The rightmost square commutes by the naturality of the maps $v_I$,
see~\eqref{eq:naturality-v_I}.

To see that $H_0(\epsilon_l)\circ\Upsilon^l_{H_0(S_r)}=H_0(\epsilon_r)\circ\Upsilon^r_{H_0(S_l)}$, one contemplates the diagram
\[
\tikzmath{
\node (a) at (.2,-.2) {$H_0(S_l)\underset{\cala(I)}\boxtimes H_0(S_r)$};
\node (c) at (5,-.2) {$H_0(S_l)\underset{\cala(I)}\boxtimes L^2(\cala(I))\,\cong\, H_0(S_l)$};
\node (d) at (5,1.2) {$L^2(\cala(I_l))\underset{\cala(I)}\boxtimes L^2(\cala(I))\cong L^2(\cala(I_l))$};
\node (e) at (5,2.6) {$L^2(\cala(I))\underset{\cala(I)}\boxtimes L^2(\cala(I))\cong L^2(\cala(I))$};
\node (f) at (.2,3.9) {$L^2(\cala(I_l))\!\underset{\cala(I)}\boxtimes\! H_0(S_r)$};
\node (g) at (6.4,3.9) {$L^2(\cala(I))\!\underset{\cala(I)}\boxtimes\! H_0(S_r)\cong H_0(S_r)$};
\node (h) at (10.5,-.1) {$H_0(S_b)$};
\node (h') at (10.5,4) {$H_0(S_b)$};

\draw[->](h -| a.east) --node[above]{$\scriptstyle 1\otimes v_I$} (h -| c.west);
\draw[->](a) --node[left, pos=.45]{$\scriptstyle v_{I_l}\otimes 1$} (f);
\draw[->](f) to[bend right = 10] node[left, pos=.7]{$\scriptstyle 1\otimes v_I$} ($(d.north)+(-2.85,0)$);
\draw[->](h' -| f.east) --node[above]{$\scriptstyle L^2(\cala(j_l))\otimes 1$} (h' -| g.west);
\draw[->]($(c.north)+(-.95,0)$) --node[left]{$\scriptstyle v_{I_l}\scriptscriptstyle\!\otimes 1$} ($(d.south)+(-.95,0)$);
\draw[->]($(d.north)+(-.95,0)$) --node[left, yshift=2]{$\scriptscriptstyle L^2(\cala(j_l))\otimes 1$} ($(e.south)+(-.95,0)$);
\draw[->]($(c.north)+(1.8,0)$) --node[left]{$\scriptstyle v_{I_l}$} ($(d.south)+(1.8,.2)$);
\draw[->]($(d.north)+(1.8,0)$) --node[left]{$\scriptstyle L^2(\cala(j_l))$} ($(e.south)+(1.8,.2)$);
\draw[<-]($(e.north)+(2.2,0)$) --node[right, yshift=-3]{$\scriptstyle v_I$} ($(g.south)+(1.4,.2)$);
\draw[<-]($(e.north)-(.5,0)$) -- node[left, yshift=3]{$\scriptstyle 1\otimes v_I$} ($(g.south)+(-1.3,.15)$);
\draw[<-]($(g.south)+(2,.2)$) to[bend left = 30] node[right, yshift=-3]{$\scriptstyle H_0(\tau)$} ($(c.north)+(2.5,.-.1)$);
\draw[->](h -| c.east)--node[above]{$\scriptstyle H_0(\epsilon_r)$}(h);
\draw[->](h' -| g.east)--node[above]{$\scriptstyle H_0(\epsilon_l)$}(h');
\draw[double, double distance = 2](h)--(h');\draw[line width = 2, white]($(h.north)-(0,0.01)$)--($(h'.south)+(0,0.01)$);
}
\]
where $\tau:=j_l|_{I_l}\cup j_r|_{I}=\epsilon_l^{-1}\circ\epsilon_r\in\Conf_+(S_l,S_r)$.
\end{proof}

\begin{lemma}\label{lem: associativity of Upsilon}
The map $\Upsilon$ satisfies the following version of associativity:
\begin{equation}\label{eq: Assoc of Upsilon}
\tikzmath{
\matrix [matrix of math nodes,column sep=2cm,row sep=8mm]
{ |(a)|\tikzmath[scale=\displscale]{\filldraw[fill=vacuumcolor]  (0,0) rectangle (36,12);\draw (12,0) -- (12,12)(24,0) -- (24,12);} %tikzmath
\pgfmatrixnextcell |(b)|\tikzmath[scale=\displscale]{\filldraw[fill=vacuumcolor]  (0,0) rectangle (36,12);\draw (12,0) -- (12,12);} %tikzmath
\\ |(c)|\tikzmath[scale=\displscale]{\filldraw[fill=vacuumcolor]  (0,0) rectangle (36,12);\draw(24,0) -- (24,12);} %tikzmath
\pgfmatrixnextcell |(d)|\tikzmath[scale=\displscale]{\filldraw[fill=vacuumcolor]  (0,0) rectangle (36,12);} %tikzmath
\\ }; 
\draw[->] (a) -- node [above]	{$\scriptstyle 1\otimes\Upsilon$} (b);
\draw[->] (c) -- node [above]	{$\scriptstyle H_0(\gamma_r,\cala)\circ\Upsilon^r_{\tikzmath[scale=.1]{\filldraw[fill=vacuumcolor] (0,0) rectangle (2,1);}}$} (d);
\draw[->] (a) -- node [left]		{$\scriptstyle \Upsilon\otimes 1$} (c);
\draw[->] (b) -- node [right]	{$\scriptstyle H_0(\gamma_l,\cala)\circ\Upsilon^l_{\tikzmath[scale=.1]{\filldraw[fill=vacuumcolor] (0,0) rectangle (2,1);}}$} (d);
}
\end{equation}
where $\gamma_l:\partial([1,3]\times[0,1])\to\partial([0,3]\times[0,1])$ and
$\gamma_r:\partial([0,2]\times[0,1])\to\partial([0,3]\times[0,1])$
are the maps
\[
\gamma_l:\tikzmath[scale=\displscale]{
\draw  (-.3,-.3) rectangle (36.3,12.3)(12.3,0.3) rectangle (35.7,11.7);
\draw[<-] (6,.5) to[in=170, out=70] (11,2);
\draw[<-] (1,1) to[in=175, out=50] (11,4);
\draw[<-] (.3,6) -- (11,6);
\draw[<-] (1,11) to[in=-175, out=-50] (11,8);
\draw[<-] (6,11.5) to[in=-170, out=-70] (11,10);
} %tikzmath
\qquad \gamma_r: \tikzmath[scale=\displscale]{
\draw  (-.3,-.3) rectangle (36.3,12.3)(0.3,0.3) rectangle (23.7,11.7);
\draw[<-] (30,.5) to[in=10, out=110] (25,2);
\draw[<-] (35,1) to[in=5, out=130] (25,4);
\draw[<-] (35.7,6) -- (25,6);
\draw[<-] (35,11) to[in=-5, out=-130] (25,8);
\draw[<-] (30,11.5) to[in=-10, out=-110] (25,10);
} %tikzmath
\]
given by 
$\gamma_l=j_l\cup \mathrm{Id}_{\,\tikzmath[scale=.1]{\draw (0,0) -- (2,0) -- (2,1) -- (0,1);}}$ and
$\gamma_r=\mathrm{Id}_{\,\tikzmath[scale=.1]{\draw (2,0) -- (0,0) -- (0,1) -- (2,1);}}\cup j_r^+$\,, 
and $j_r^+$ is obtained by conjugating $j_r$ by $(x,y)\mapsto(x+1,y)$.
\end{lemma}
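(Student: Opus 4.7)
The plan is to reduce the commutativity of the diagram \eqref{eq: Assoc of Upsilon} to an identity between two explicitly computed unitaries, both of which identify the triple Connes fusion $H_0(S_l)\boxtimes_{\cala(I_1)}H_0(S_m)\boxtimes_{\cala(I_2)}H_0(S_r)$ (where $S_l,S_m,S_r$ are the three $12\times 12$ squares and $I_1,I_2$ are the vertical intervals separating them) with the vacuum sector $H_0(S_{\mathrm{bb}},\cala)$ of the large $36\times 12$ rectangle $S_{\mathrm{bb}}$.

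First, I would apply Lemma \ref{lem: Ups=Ups=Ups} to rewrite each occurrence of the map $\Upsilon$ in terms of $\Upsilon^l$ or $\Upsilon^r$, plus the action $H_0(-)$ of a suitable piecewise-conformal diffeomorphism $\epsilon_l$ or $\epsilon_r$. Doing so converts both legs of \eqref{eq: Assoc of Upsilon} into compositions of the form $H_0(\psi,\cala)\circ L^2(v)$, where the inner step is a canonical identification built from the vacuum-sector identifications $v_I\colon H_0(S,\cala)\xrightarrow{\cong}L^2(\cala(I))$ (see \eqref{eq:v_I}) and the outer step is the action of an orientation-preserving diffeomorphism on the upper half of the relevant circle.

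Next, I would invoke the naturality of $v_I$ with respect to orientation-preserving diffeomorphisms (equation \eqref{eq:naturality-v_I}) to commute the $L^2$-of-diffeomorphism factors past the $v_I$-identifications. This allows me to assemble both composites into a single formula of the form $v_{S_{\mathrm{bb},\top}}^{-1}\circ L^2(\cala(\varphi))\circ w$, where $w$ is the canonical $v$-type identification of the triple Connes fusion with $L^2$ of the algebra of the upper half of the three abutting squares, and $\varphi$ is a diffeomorphism from that upper half to $S_{\mathrm{bb},\top}$. It remains to verify that the diffeomorphisms $\varphi$ arising from the two composites coincide: clockwise one obtains (on the upper half) the composite $\gamma_l\circ(\epsilon_r\cup\mathrm{Id})$ suitably translated, while counterclockwise one obtains $\gamma_r\circ(\mathrm{Id}\cup\epsilon_l^+)$ (where $\epsilon_l^+$ is the conjugate of $\epsilon_l$ by the translation $(x,y)\mapsto(x+1,y)$); both reduce to the same conformal map because $\epsilon_l$ and $\epsilon_r$ are defined precisely to make the corresponding halved reflections into conformal identifications.

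The main obstacle is the careful bookkeeping of the various diffeomorphisms: each invocation of Lemma \ref{lem: Ups=Ups=Ups} introduces a specific $\epsilon_l$ or $\epsilon_r$ whose constant-speed parametrization differs from the canonical one, and these must be tracked and shown to combine compatibly with $\gamma_l$ and $\gamma_r$. No deep new ingredient is needed beyond the naturality of $v_I$, Haag duality, and strong additivity. Alternatively one could short-circuit the bookkeeping by observing that, after multiplication by the inverse of one of the two composites, the resulting endomorphism of $H_0(S_{\mathrm{bb}},\cala)$ commutes with all actions of $\cala(J)$ for $J\subset S_{\mathrm{bb}}$ coming from subintervals of the three constituent $12\times 12$ squares; irreducibility of $\cala$ then forces this endomorphism to be a scalar, and the scalar is pinned down to $1$ by its action on a suitably normalized cyclic vector coming from the vacuum.
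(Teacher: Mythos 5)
Your main route is essentially the paper's argument: the proof in the text also begins by applying Lemma \ref{lem: Ups=Ups=Ups} twice to replace each $\Upsilon$ by $H_0(\epsilon_l)\circ\Upsilon^l$ or $H_0(\epsilon_r)\circ\Upsilon^r$, and then reduces everything to the identity $\gamma_l\circ\epsilon_r=\gamma_r\circ\epsilon_l$ of conformal maps, handled by the functoriality \eqref{eq:vacuum-sector-functor} of $H_0$. The only difference is in the middle: where you propose to unwind all maps into explicit $v_I$-identifications via \eqref{eq:naturality-v_I} and track the diffeomorphisms by hand, the paper instead expands the square into a $3\times 3$ grid and disposes of three of the four small squares in one stroke by invoking the naturality of $\Upsilon^l$ and $\Upsilon^r$ as natural transformations (applied to the module morphisms $\Upsilon^r_{\tikzmath[scale=.1]{\filldraw[fill=vacuumcolor] (0,0) rectangle (1,1);}}$ and $H_0(\epsilon_l)$, $H_0(\epsilon_r)$); this is the same content but eliminates exactly the bookkeeping you identify as the main obstacle, since the naturality is built into the definition \eqref{eq:left+right-Upsilon}. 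One caution about your proposed short-circuit: the irreducibility argument does show the discrepancy is a scalar (strong additivity lets the interval algebras of the three constituent squares generate all of $\cala(J)$, $J\subset S_{\mathrm{bb}}$, and the vacuum is irreducible), but pinning that scalar to $1$ is not routine here --- there is no canonical positive cone or distinguished vector on an iterated Connes fusion of vacuum sectors (the paper explicitly notes this difficulty in the introduction), and the connectedness trick used for the canonical quantization in Lemma \ref{lem: canonical quantization of the symmetric diffeomorphism} is unavailable since there is no continuous family. So the short-circuit would in practice force you back to the explicit computation, and the naturality route is the one to take.
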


\begin{proof}
Using Lemma \ref{lem: Ups=Ups=Ups} twice, we can expand \eqref{eq: Assoc of Upsilon} into the following diagram:
\[
\tikzmath{
\matrix [matrix of math nodes,column sep=2cm,row sep=8mm]
{ |(a)|\tikzmath[scale=\displscale]{\filldraw[fill=vacuumcolor]  (0,0) rectangle (36,12);\draw (12,0) -- (12,12)(24,0) -- (24,12);} %tikzmath
\pgfmatrixnextcell |(b)|\tikzmath[scale=\displscale]{\filldraw[fill=vacuumcolor]  (0,0) rectangle (24,12);\draw (12,0) -- (12,12);} %tikzmath
\pgfmatrixnextcell |(c)|\tikzmath[scale=\displscale]{\filldraw[fill=vacuumcolor]  (0,0) rectangle (36,12);\draw (12,0) -- (12,12);} %tikzmath
\\ |(d)|\tikzmath[scale=\displscale]{\filldraw[fill=vacuumcolor]  (0,0) rectangle (24,12);\draw(12,0) -- (12,12);} %tikzmath
\pgfmatrixnextcell |(e)|\tikzmath[scale=\displscale]{\filldraw[fill=vacuumcolor]  (0,0) rectangle (12,12);} %tikzmath
\pgfmatrixnextcell |(f)|\tikzmath[scale=\displscale]{\filldraw[fill=vacuumcolor]  (0,0) rectangle (24,12);} %tikzmath
\\ |(g)|\tikzmath[scale=\displscale]{\filldraw[fill=vacuumcolor]  (0,0) rectangle (36,12);\draw(24,0) -- (24,12);} %tikzmath
\pgfmatrixnextcell |(h)|\tikzmath[scale=\displscale]{\filldraw[fill=vacuumcolor]  (0,0) rectangle (24,12);} %tikzmath
\pgfmatrixnextcell |(i)|\tikzmath[scale=\displscale]{\filldraw[fill=vacuumcolor]  (0,0) rectangle (36,12);} %tikzmath
\\ }; 
\draw[->] (a) -- node [above]	{$\scriptstyle 1\otimes\Upsilon^r_{\tikzmath[scale=.1]{\filldraw[fill=vacuumcolor] (0,0) rectangle (1,1);}}\,=\, \Upsilon^r_{\tikzmath[scale=.1]{\filldraw[fill=vacuumcolor] (0,0) rectangle (2,1);\draw[-](1,0)--(1,1);}}$} (b); 
\draw[->] (b) -- node [above]	{$\scriptstyle 1\otimes H_0(\epsilon_r)$} (c);
\draw[->] (a) -- node[fill=white,inner sep=0,xshift=-2.5]		{$\scriptstyle \Upsilon^l_{\tikzmath[scale=.1]{\filldraw[fill=vacuumcolor] (0,0) rectangle (1,1);}}\otimes 1\,=\, \Upsilon^l_{\tikzmath[scale=.1]{\filldraw[fill=vacuumcolor] (0,0) rectangle (2,1);\draw[-](1,0)--(1,1);}}$} (d);
\draw[->] (b) -- node [left]		{$\scriptstyle \Upsilon^l_{\tikzmath[scale=.1]{\filldraw[fill=vacuumcolor] (0,0) rectangle (1,1);}}$} (e);
\draw[->] (c) -- node [right]		{$\scriptstyle \Upsilon^l_{\tikzmath[scale=.1]{\filldraw[fill=vacuumcolor] (0,0) rectangle (2,1);}}$} (f);
\draw[->] (d) -- node [above]	{$\scriptstyle \Upsilon^r_{\tikzmath[scale=.1]{\filldraw[fill=vacuumcolor] (0,0) rectangle (1,1);}}$} (e);
\draw[->] (e) -- node [above]	{$\scriptstyle H_0(\epsilon_r)$} (f);
\draw[->] (d) -- node [left]		{$\scriptstyle H_0(\epsilon_l)\otimes 1$} (g);
\draw[->] (e) -- node [left]		{$\scriptstyle H_0(\epsilon_l)$} (h);
\draw[->] (f) -- node [right]		{$\scriptstyle H_0(\gamma_l)$} (i);
\draw[->] (g) -- node [above]	{$\scriptstyle \Upsilon^r_{\tikzmath[scale=.1]{\filldraw[fill=vacuumcolor] (0,0) rectangle (2,1);}}$} (h);
\draw[->] (h) -- node [above]	{$\scriptstyle H_0(\gamma_r)$} (i);
}
\]
The lower right square commutes by the functoriality of $H_0$, 
see~\eqref{eq:vacuum-sector-functor}.
The remaining three squares commute by the fact that 
$\Upsilon^l$ and $\Upsilon^r$ are natural transformations.
\end{proof}

\subsection*{\hspace*{-18pt}The $(1_\mathrm{id} \boxtimes 1_D)$-isomorphism as a vacuum fusion transformation}

Let $\epsilon_l$ and $\epsilon_r$ be as above, 
and let $\epsilon_{l,\top}:S_{r,\top}\to S_{b,\top}$ and $\epsilon_{r,\top}:S_{l,\top}\to S_{b,\top}$ 
be their restrictions to the upper halves of $S_r$ and $S_l$, respectively.

\begin{lemma} \label{lem:omega-and-upsilon}
Let $\cala$ be a conformal net with finite index, and let ${}_\cala D_\calb$ be an irreducible defect.
Let $H_r:=H_0(S_r,D)$, where the circle $S_r$ is bicolored as in \eqref{eq: def colors of S and tildeS}.
Then the map 
$\Omega_{\mathrm{id}_\cala,D}: \tikzmath[scale=\textscale] {\fill[vacuumcolor] (0,0) rectangle  (24,12);\draw (0,0) rectangle  (24,12); \draw[ultra thick] (18,0) -- (24,0) -- (24,12) -- (18,12);} %tikzmath
\to \tikzmath[scale=\textscale] {\fill[vacuumcolor]  (0,0) rectangle (12,12) (12,0) rectangle (24,12);
\draw (0,0) rectangle (12,12) (12,0) rectangle (24,12);\draw[ultra thick] (18,0) -- (24,0) -- (24,12) -- (18,12);} %tikzmath
$
is the inverse of $L^2(D(\epsilon_{l,\top}))\circ\Upsilon^l_{\!H_r}$\!.

Similarly, assuming instead that $\calb$ has finite index, the map $\Omega_{D,\mathrm{id}_\calb}: 
\tikzmath[scale=\textscale] {\fill[vacuumcolor] (0,0) rectangle  (24,12);\draw (0,0) rectangle  (24,12);
\draw[ultra thick] (6,0) -- (24,0) -- (24,12) -- (6,12);} %tikzmath
\to \tikzmath[scale=\textscale] {\fill[vacuumcolor]  (0,0) rectangle (12,12) (12,0) rectangle (24,12);
\draw (0,0) rectangle (12,12);\draw[ultra thick] (6.3,0) -- (24,0) -- (24,12) -- (6.3,12)(12.3,0) -- (12.3,12);} %tikzmath
$ is the inverse of $L^2(D(\epsilon_{r,\top}))\circ\Upsilon^{\raisebox{1pt}{$\scriptstyle r$}}\!\!\!\!{}_{H_l},$ where $H_l := H_0(S_l,D)$.
\end{lemma}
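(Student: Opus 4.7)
The plan is to unwind the definition $\Omega_{\mathrm{id}_\cala,D} = \Phi^{-1} \circ \Psi_{\mathrm{id}_\cala,D}$ and show that, when one of the two defects is an identity defect, the composite collapses onto the asserted inverse of $L^2(D(\epsilon_{l,\top})) \circ \Upsilon^l_{H_r}$.

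First I would observe that, since $\mathrm{id}_\cala$ is the identity defect, the vacuum sector $H_0(S_l,\mathrm{id}_\cala)$ is precisely $L^2(\cala(S_{l,\top}))$, the (white) vacuum sector of $\cala$, and the intermediate net over which the fusion takes place is $\cala$ itself.  In particular, on the left side of the keystone fusion picture the algebra $\calb$ of the general construction is replaced by $\cala$, and the map $\Psi_{1,1}$ occurring in the definition of $\Phi$ (see \eqref{eq: Upsilon -- v_K -- Psi}) is unitary by Lemma \ref{lem: Psi_1,1 is an iso}.

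Next I would exploit the construction of $\Phi$ given in Proposition \ref{prop: local-fusion}: on vacuum sectors, $\Phi$ equals $\Psi_{1,1}\circ v_{S_{b,\top}}\circ \Upsilon$, and on arbitrary modules it is extended by naturality via Lemma \ref{lem: Mod(A1) x Mod(A2) --> Mod(B)}.  By Lemma \ref{lem: Ups=Ups=Ups}, $\Upsilon = H_0(\epsilon_l,\cala) \circ \Upsilon^l_{H_0(S_r,\cala)}$, so, using the naturality of $\Upsilon^l$ (as a natural transformation of functors on module categories) together with the naturality of $\Phi$, the map $\Phi$ restricted to modules of the form $(H_0(S_l,\mathrm{id}_\cala), H_r)$ can be rewritten as $\Psi_{1,1}\circ v_{S_{b,\top}} \circ H_0(\epsilon_l,\cala)\circ \Upsilon^l_{H_r}$.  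Inverting, $\Phi^{-1}$ on the corresponding keystone fusion factors through $(\Upsilon^l_{H_r})^{-1}$.

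The remaining step is to identify the composite $v_{S_{b,\top}}^{-1}\circ \Psi_{1,1}^{-1}\circ \Psi_{\mathrm{id}_\cala,D}$ with the coordinate change $L^2(D(\epsilon_{l,\top}))^{-1}$.  Here I would use Lemma \ref{lem: unitor}, which identifies $(\mathrm{id}_\cala \circledast D)(S^1_\top)$ with $D$ evaluated on the inflated top interval, and this inflated interval is exactly $S_{b,\top}$ up to the diffeomorphism $\epsilon_{l,\top}$.  Tracing through the definition of $\Psi$ in \eqref{eq: def of Psi}, the map $L^2(\iota)_{\mathrm{iso}}\circ U$ becomes (under this identification) the image under $L^2$ of the natural inclusion of the inflated top algebra, and combined with $\Psi_0$ gives back $\Psi_{1,1}\circ v_{S_{b,\top}}\circ L^2(D(\epsilon_{l,\top}))$.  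Solving for $\Omega_{\mathrm{id}_\cala,D}=\Phi^{-1}\circ\Psi_{\mathrm{id}_\cala,D}$ then yields the inverse of $L^2(D(\epsilon_{l,\top}))\circ \Upsilon^l_{H_r}$, as desired.  The case of $\Omega_{D,\mathrm{id}_\calb}$ is entirely analogous, using the $\Upsilon^r$ identity from Lemma \ref{lem: Ups=Ups=Ups} in place of the $\Upsilon^l$ identity, and with $\calb$ now playing the role of the middle net of finite index.

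The main obstacle will be the bookkeeping of the various canonical identifications: matching the diffeomorphism $\epsilon_{l,\top}\colon S_{r,\top}\to S_{b,\top}$ with the interval inflation coming from the definition of fused defects in \eqref{eq:def-fusion-Defect}, and tracking how the natural transformation $\Upsilon^l$ (a priori defined on $\cala$-modules) interacts with the richer $D$-module structure on $H_r=H_0(S_r,D)$ via restriction along the embedding of the white portion of $S_r$.  Compatibility results already established---the associativity of $\Psi_0$ from \eqref{eq: Assoc of Psi_0} and the naturality of $\Phi$ on the monoidal category of module pairs---should make this bookkeeping routine once the pieces are assembled.
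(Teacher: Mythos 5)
Your overall strategy---unwind $\Omega_{\mathrm{id}_\cala,D}=\Phi^{-1}\circ\Psi_{\mathrm{id}_\cala,D}$ and match the pieces against $L^2(D(\epsilon_{l,\top}))\circ\Upsilon^l_{H_r}$---is the natural first thing to try, and your treatment of the special case $D=\mathrm{id}_\cala$ agrees with the paper: there the definitions collapse to $\Omega_{\mathrm{id},\mathrm{id}}=\Upsilon^{-1}$, and Lemma \ref{lem: Ups=Ups=Ups} finishes. The gap is in the general case, at the step you describe as identifying $v_{S_{b,\top}}^{-1}\circ\Psi_{1,1}^{-1}\circ\Psi_{\mathrm{id}_\cala,D}$ with $L^2(D(\epsilon_{l,\top}))^{-1}$ and then dismiss as bookkeeping. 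The map $\Phi_{H_0(S_l,\cala),H_r}$ is only defined \emph{implicitly}: by Lemma \ref{lem: Mod(A1) x Mod(A2) --> Mod(B)} it is the unique natural extension of $\Phi_{H_0(S_l,\cala),H_0(S_r,\cala)}$, obtained by embedding $H_r=H_0(S_r,D)$ as a summand of an amplification of the $\cala$-vacuum $H_0(S_r,\cala)$. There is no direct formula comparing this with $\Psi_{\mathrm{id}_\cala,D}$, which is built from the $L^2$-space of $(\mathrm{id}_\cala\circledast D)(S^1_\top)$, the polar isometry $L^2(\iota)_{\mathrm{iso}}$, and the standard-form identification $\Psi_0$ for the defect $D$. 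What your argument yields for free is only this: since $D$ is irreducible, $H_0(S_l,\cala)\boxtimes_{\cala(I)}H_r$ is an irreducible sector, so the composite $\Omega_{\mathrm{id}_\cala,D}\circ L^2(D(\epsilon_{l,\top}))\circ\Upsilon^l_{H_r}$ is multiplication by some scalar $\lambda$ of modulus one. The entire content of the lemma is that $\lambda=1$, and that is not routine.

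The paper pins down $\lambda$ by a rigidity argument absent from your proposal. It applies the associativity of $\Omega$ (Proposition \ref{prop: associativity of Omega}) to the triple $(\mathrm{id}_\cala,\mathrm{id}_\cala,D)$, together with naturality of $\Omega$ under auxiliary diffeomorphisms $\varphi_l$, $\psi_m$, $\chi$, to produce a commuting heptagon in which the unknown triangle (the assertion of the lemma) appears twice with one orientation and once with the opposite orientation. The outer boundary of that heptagon is shown to commute by recognizing it as an instance of a diagram of natural transformations between functors on $\cala\big(\tikzmath[scale=\textscale]{\useasboundingbox (-2,0) rectangle (14,12);\draw (6,12) -- (0,12) -- (0,0) -- (6,0);}\big)$-modules, verified on the faithful vacuum module via the identity-defect case; making this a diagram of natural transformations requires realizing $L^2(D(\tau))$ and $L^2(D(\sigma))$ as actions of canonical quantizations $u,v$ of symmetric diffeomorphisms (Lemma \ref{lem: canonical quantization of the symmetric diffeomorphism}), so that Lemma \ref{lem: NT between module categories} applies. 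The parity count then forces $\lambda=1$. Without an argument of this kind---or an explicit computation tracing $\Psi_0$ through the projection realizing $H_r$ inside the amplified $\cala$-vacuum, which the authors evidently did not find feasible---your proof is incomplete.
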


\begin{proof}
We only treat the first equation
$\Omega^{-1}_{\mathrm{id}_\cala,D}=L^2(D(\epsilon_{l,\top}))\circ\Upsilon^l_{\!H_r}$.
We first prove it in the case when $D=\mathrm{id}_\cala$.
By definition, $\Omega_{\mathrm{id}_\cala, \mathrm{id}_\cala}$ is the composite
\begin{equation*}
\tikzmath[scale=\displscale]{\fill[vacuumcolor]  (0,0) rectangle (24,12);\draw  (6,0) -- (0,0) -- (0,12) -- (6,12);
\draw (6,12) -- (18,12)  (6,0) -- (18,0);\draw (18,0) -- (24,0) -- (24,12) -- (18,12);} %tikzmath
\;\;\xrightarrow{\,\,\Psi_{\mathrm{id}_\cala, \mathrm{id}_\cala\,\,}}\;\;
\tikzmath[scale=\displscale]{\fill[vacuumcolor] (0,0) rectangle (10,12)(14,0) rectangle (24,12);\draw (6,12) -- (10,12) -- (10,0) -- (6,0)(18,12) -- (14,12) -- (14,0) -- (18,0);
\draw (6,12) -- (0,12) -- (0,0) -- (6,0);\draw  (18,12) -- (24,12) -- (24,0) -- (18,0);\fill[vacuumcolor]  (10,0) rectangle (14,4)(10,8) rectangle (14,12);
\draw (10,0) rectangle (14,4) (10,8) rectangle (14,12);\fill[vacuumcolor]  (10.5,4.5) rectangle (13.5,7.5);\draw (10.5,4.5) rectangle (13.5,7.5);} %tikzmath
\;\;\xrightarrow{\Phi^{-1}_{H_0(\cala),H_0(\cala)}}\;\;
\tikzmath[scale=\displscale] {\fill[vacuumcolor] (0,0) rectangle (24,12);\draw  (6,0) -- (0,0) -- (0,12) -- (6,12);
\draw (6,12) -- (12,12) -- (12,0) -- (6,0)(18,12) -- (12,12) -- (12,0) -- (18,0);\draw (18,0) -- (24,0) -- (24,12) -- (18,12);}\,, %tikzmath
\end{equation*}
and $\Phi_{H_0(\cala),H_0(\cala)}$ is the composite 
\begin{equation*}
\tikzmath[scale=\displscale] {\fill[vacuumcolor]  (0,0) rectangle (12,12) (12,0) rectangle (24,12);\draw (0,0) rectangle (12,12) (12,0) rectangle (24,12);} %tikzmath
\;\;\xrightarrow{\hspace{5.55mm}\Upsilon\hspace{5.55mm}}\;\;
\tikzmath[scale=\displscale] {\fill[vacuumcolor] (0,0) rectangle  (24,12);\draw (0,0) rectangle  (24,12);} %tikzmath
\;\;\xrightarrow{\hspace{2.95mm}\Psi_{\id_\cala,\id_\cala}\hspace{2.95mm}}\;\;
\tikzmath[scale=\displscale] {\fill[vacuumcolor] (0,0) rectangle (10,12) (14,0) rectangle (24,12) (10,0) rectangle (14,4) (10,8) rectangle (14,12) (10.5,4.5) rectangle (13.5,7.5);
\draw  (0,0) rectangle (10,12) (14,0) rectangle (24,12)(10,0) rectangle (14,4) (10,8) rectangle (14,12) (10.5,4.5) rectangle (13.5,7.5);}\,. %tikzmath
\end{equation*}
It follows that $\Omega_{\mathrm{id}_\cala, \mathrm{id}_\cala}=\Upsilon^{-1}$ and we are done by Lemma~\ref{lem: Ups=Ups=Ups}.

We now treat the general case.
By Proposition \ref{prop: associativity of Omega} 
(with the defects $\mathrm{id}_\cala$, $\mathrm{id}_\cala$, and $D$),
we have the commutativity of the following diagram:
\begin{equation}\label{eq: square with four Omegas}
\hspace{.9cm}\tikzmath{
\matrix [matrix of math nodes,column sep=1.8cm,row sep=5mm]
{|(a)|\tikzmath[scale=\displscale]{\fill[vacuumcolor](0,0) rectangle (36,12);\draw(6,0)--(0,0)--(0,12)--(6,12);
\draw(6,12)--(18,12)(6,0)--(18,0);\draw(18,12)--(30,12)(18,0)--(30,0);\draw[ultra thick] (30,0)--(36,0)--(36,12)--(30,12);}%tikzmath
\pgfmatrixnextcell |(b)|\tikzmath[scale=\displscale]{\fill[vacuumcolor](0,0) rectangle (36,12);\draw(6,0)--(0,0)--(0,12)--(6,12);
\draw(6,12)--(18,12)(6,0)--(18,0);\draw(18,12)--(30,12)(18,0)--(30,0)(24,0)--(24,12);\draw[ultra thick] (30,0)--(36,0)--(36,12)--(30,12);}\\%tikzmath
|(c)|\tikzmath[scale=\displscale]{\fill[vacuumcolor](0,0) rectangle (36,12);\draw(6,0)--(0,0)--(0,12)--(6,12);
\draw(6,12)--(18,12)(6,0)--(18,0)(12,0)--(12,12);\draw(18,12)--(30,12)(18,0)--(30,0);\draw[ultra thick] (30,0)--(36,0)--(36,12)--(30,12);}%tikzmath
\pgfmatrixnextcell |(d)|\tikzmath[scale=\displscale]{\fill[vacuumcolor](0,0) rectangle (36,12);\draw(6,0)--(0,0)--(0,12)--(6,12);
\draw(6,12)--(18,12)(6,0)--(18,0)(12,0)--(12,12);\draw(18,12)--(30,12)(18,0)--(30,0)(24,0)--(24,12);\draw[ultra thick] (30,0)--(36,0)--(36,12)--(30,12);}\\%tikzmath
};
\draw[->](a)--node[above]{$\scriptstyle\Omega_{\mathrm{id}_\cala\circledast \mathrm{id}_\cala,D}$}(b);\draw[->](c)--node[above]{$\scriptstyle 1\otimes\Omega_{\mathrm{id}_\cala,D}$}(d);
\draw[->](a)--node[left]{$\scriptstyle\Omega_{\mathrm{id}_\cala,\mathrm{id}_\cala\circledast D}$}(c);
\draw[->](b)--node[right]{$\scriptstyle\Omega_{\mathrm{id}_\cala,\mathrm{id}_\cala}\otimes 1\;\;=\,\;\Upsilon^{-1}\otimes 1$}(d);}
\end{equation}
Consider the circles $S_l:=\partial( [0,1]\!\times\![0,1])$, 
$S_m:=\partial( [1,2]\!\times\![0,1])$,
$S_r:=\partial( [2,3]\!\times\![0,1])$,
$S_{lm}:=\partial( [0,2]\!\times\![0,1])$, 
$S_{mr}:=\partial( [1,3]\!\times\![0,1])$, 
$S_{lmr}:=\partial( [0,3]\!\times\![0,1])$, 
and the corresponding half-circles $S_{\alpha,\top}:=(S_\alpha)_{y\ge \frac12}$ for $\alpha\in\{l,m,r,lm,mr,lmr\}$.
Let $\varphi:[\frac12-\varepsilon,\frac12+\varepsilon]\to[\frac12-\varepsilon,\frac32+\varepsilon]$ and 
$\psi:[\frac32-2\varepsilon,\frac32-\varepsilon]\to[\frac32-2\varepsilon,\frac52-\varepsilon]$ be diffeomorphisms whose derivative is $1$ in a neighborhood of the boundary, where $\varepsilon$ is a fixed small number.
These extend to diffeomorphisms
\[
\begin{split}
\varphi_{lm}:S_{lm,\top}\to S_{lmr,\top}\,,&\qquad\qquad\varphi_l:S_{l,\top}\to S_{lm,\top}\,,\\
\psi_{lm}:S_{lm,\top}\to S_{lmr,\top}\,,&\qquad\qquad\psi_m:S_{m,\top}\to S_{mr,\top}\,,
\end{split}
\]
whose derivative is $1$ outside the domains of $\varphi$ and $\psi$, respectively.
Let also $\chi:=\psi_{lm}^{-1}\circ\varphi_{lm}$.
Note that  $\chi(x,y) = (x,y)$ for $x \geq \frac{3}{2}$.
{\def\squared#1{\tikz{\useasboundingbox (-.13,-.11) rectangle (.13,.12);\node[draw, inner sep = 1.5]{\tiny #1};}}
Since the construction of $\Omega$ is natural with respect to isomorphisms, the following squares \squared1 are commutative:
\begin{equation}\label{eq: diagram a bunch of Omega's}
\tikzmath{
\matrix [matrix of math nodes,column sep=1.8cm,row sep=6mm]
{\pgfmatrixnextcell |(aa)|\tikzmath[scale=\displscale]{\fill[vacuumcolor](12,0) rectangle (36,12);\draw(30,12)--(12,12)--(12,0)--(30,0);\draw[ultra thick] (30,0)--(36,0)--(36,12)--(30,12);}%tikzmath
\pgfmatrixnextcell |(bb)|
\tikzmath[scale=\displscale]{\fill[vacuumcolor](12,0) rectangle (36,12);\draw(30,12)--(12,12)--(12,0)--(30,0)(24,0)--(24,12);\draw[ultra thick] (30,0)--(36,0)--(36,12)--(30,12);}\\%tikzmath
|(aaa)|\tikzmath[scale=\displscale]{\fill[vacuumcolor](12,0) rectangle (36,12);\draw(30,12)--(12,12)--(12,0)--(30,0);\draw[ultra thick] (30,0)--(36,0)--(36,12)--(30,12);}%tikzmath
\pgfmatrixnextcell |(a)|\tikzmath[scale=\displscale]{\fill[vacuumcolor](0,0) rectangle (36,12);\draw(6,0)--(0,0)--(0,12)--(6,12);
\draw(6,12)--(18,12)(6,0)--(18,0);\draw(18,12)--(30,12)(18,0)--(30,0);\draw[ultra thick] (30,0)--(36,0)--(36,12)--(30,12);}%tikzmath
\pgfmatrixnextcell |(b)|\tikzmath[scale=\displscale]{\fill[vacuumcolor](0,0) rectangle (36,12);\draw(6,0)--(0,0)--(0,12)--(6,12);
\draw(6,12)--(18,12)(6,0)--(18,0);\draw(18,12)--(30,12)(18,0)--(30,0)(24,0)--(24,12);\draw[ultra thick] (30,0)--(36,0)--(36,12)--(30,12);}\\%tikzmath
|(bbb)| \tikzmath[scale=\displscale]{\fill[vacuumcolor](12,0) rectangle (36,12);\draw(30,12)--(12,12)--(12,0)--(30,0)(24,0)--(24,12);\draw[ultra thick] (30,0)--(36,0)--(36,12)--(30,12);}%tikzmath
\pgfmatrixnextcell 
|(c)|\tikzmath[scale=\displscale]{\fill[vacuumcolor](0,0) rectangle (36,12);\draw(6,0)--(0,0)--(0,12)--(6,12);
\draw(6,12)--(18,12)(6,0)--(18,0)(12,0)--(12,12);\draw(18,12)--(30,12)(18,0)--(30,0);\draw[ultra thick] (30,0)--(36,0)--(36,12)--(30,12);}%tikzmath
\pgfmatrixnextcell |(d)|\tikzmath[scale=\displscale]{\fill[vacuumcolor](0,0) rectangle (36,12);\draw(6,0)--(0,0)--(0,12)--(6,12);
\draw(6,12)--(18,12)(6,0)--(18,0)(12,0)--(12,12);\draw(18,12)--(30,12)(18,0)--(30,0)(24,0)--(24,12);\draw[ultra thick] (30,0)--(36,0)--(36,12)--(30,12);}\\%tikzmath
};
\draw[->](aaa)--node[left]{$\scriptstyle\Omega_{\mathrm{id}_\cala,D}$}(bbb);
\draw[->](aa)--node[above]{$\scriptstyle\Omega_{\mathrm{id}_\cala,D}$}(bb);
\draw[->](a)--node[below]{$\scriptstyle\Omega_{\mathrm{id}_\cala\circledast \mathrm{id}_\cala,D}$}(b);
\draw[->](c)--node[below]{$\scriptstyle 1\otimes\Omega_{\mathrm{id}_\cala,D}$}(d);
\draw[->](a)--node[fill=white, inner sep=0]{$\scriptstyle\Omega_{\mathrm{id}_\cala,\mathrm{id}_\cala\circledast D}$}(c);
\draw[->](b)--node[fill=white, inner sep=0]{\,\,\,$\scriptstyle\Upsilon^{-1}\otimes 1$}(d);
\draw[->](aa)--node[right]{$\scriptstyle L^2(D(\varphi_{lm}))$}(a);
\draw[->](bb)--node[right]{$\scriptstyle L^2(\cala(\varphi_{l}))\otimes 1$}(b);
\draw[->](aaa)--node[below]{$\scriptstyle L^2(D(\psi_{lm}))$}(a);
\draw[->](bbb)--node[below]{$\scriptstyle1\otimes L^2(D(\psi_{m}))$}(c);
\draw[<-, rounded corners=3](aaa) -- (aa-|aaa) -- node[above, pos=.45]{$\scriptstyle L^2(D(\chi))$}(aa);
\node at ($(aa)!0.5!(b)+(0.1,0)$) {\squared1};
\node at ($(aaa)!0.5!(c)+(-0.1,-0.1)$) {\squared1};
}
\end{equation}
The remaining two squares of \eqref{eq: diagram a bunch of Omega's} are commutative by \eqref{eq: square with four Omegas}, and by the definition of $\chi$.

We now consider the following diagram\medskip
\begin{equation}\label{eq: first big diagram -- no defect}
\tikzmath{
\node at (4.2,2) {\squared 2};
\node at (.7,3.15) {\squared 3};
\node at (.65,.35) {\squared 3};
\node at (8.5,1.2) {\squared 3};
\node (a) at (-2,3.5) {$\tikzmath[scale=\displscale]{\filldraw[fill=vacuumcolor](0,0) rectangle (12,12);}$};
\node (b) at (4,3.5) {$\tikzmath[scale=\displscale]{\filldraw[fill=vacuumcolor](0,0) rectangle (24,12);\draw(12,0)--(12,12);}$};
\node (c) at (9.5,3.5) {$\tikzmath[scale=\displscale]{\filldraw[fill=vacuumcolor](0,0) rectangle (36,12);\draw(24,0)--(24,12);}$};
\node (d) at (.7,2.5) {$\tikzmath[scale=\displscale]{\filldraw[fill=vacuumcolor](0,0) rectangle (24,12);}$};
\node (e) at (9.5,2) {$\tikzmath[scale=\displscale]{\filldraw[fill=vacuumcolor](0,0) rectangle (36,12);\draw(12,0)--(12,12)(24,0)--(24,12);}$};
\node (f) at (.7,1) {$\tikzmath[scale=\displscale]{\filldraw[fill=vacuumcolor](0,0) rectangle (24,12);}$};
\node (g) at (6,1) {$\tikzmath[scale=\displscale]{\filldraw[fill=vacuumcolor](0,0) rectangle (36,12);\draw(12,0)--(12,12);}$};
\node (h) at (-2,0) {$\tikzmath[scale=\displscale]{\filldraw[fill=vacuumcolor](0,0) rectangle (12,12);}$};
\node (i) at (2.8,0) {$\tikzmath[scale=\displscale]{\filldraw[fill=vacuumcolor](0,0) rectangle (24,12);\draw(12,0)--(12,12);}$};
\node (j) at (9.5,0) {$\scriptscriptstyle\tikzmath[scale=\displscale]{\filldraw[fill=vacuumcolor](0,0) rectangle (12,12);\filldraw[fill=vacuumcolor](15,0) rectangle (27,12);
\draw (12,.5) to[in=170, out=70] (15,4);\draw (12,3.5) to[in=175, out=50] (15,5);\draw (12,6) -- (15,6);\draw (12,8.5) to[in=-175, out=-50] (15,7);\draw (12,11.5) to[in=-170, out=-70] (15,8);}$};

\draw[->](a)--node[above]{$\scriptstyle (\Upsilon_{\tikzmath[scale=.1]{\filldraw[fill=vacuumcolor] (0,0) rectangle (1,1);}}^l)^{-1}$}(b);
\draw[->](b)--node[above]{$\scriptstyle L^2(\cala(\varphi_l))\otimes 1$}(c);
\draw[->](a)--node[below,xshift=-3,yshift=-1]{$\scriptstyle L^2(\cala(\epsilon_{l,\top}))$}(d);
\draw[->](c)--node[right]{$\scriptstyle\Upsilon^{-1}\otimes 1$}(e);
\draw[->](d)--node[below,xshift=3,yshift=1]{$\scriptstyle\Upsilon^{-1}$}(b);
\draw[->](d)--node[right]{$\scriptstyle L^2(\cala(\chi))$}(f);
\draw[->](a)--node[xshift=11,fill=white,inner sep=1]{$\scriptstyle L^2(\cala(\tau))$}(h);
\draw[->](g)--node[above,xshift=-3]{$\scriptstyle1\otimes\Upsilon^{-1}$}(e);
\draw[->](h)--node[above,xshift=-3,yshift=2]{$\scriptstyle L^2(\cala(\epsilon_{l,\top}))$}(f);
\draw[->](f)--node[above,xshift=3]{$\scriptstyle\Upsilon^{-1}$}(i);
\draw[->](h)--node[below]{$\scriptstyle (\Upsilon_{\tikzmath[scale=.1]{\filldraw[fill=vacuumcolor] (0,0) rectangle (1,1);}}^l)^{-1}$}(i);
\draw[->](i)--node[below]{$\scriptstyle 1\otimes L^2(\cala(\sigma))$}(j);
\draw[->](j)--node[right,yshift=2]{$\scriptstyle 1\otimes(\Upsilon_{\tikzmath[scale=.1]{\filldraw[fill=vacuumcolor] (0,0) rectangle (1,1);}}^l)^{-1}$}(e);
\draw[->](i)--node[above,xshift=-12,yshift=1]{$\scriptstyle 1\otimes L^2(\cala(\psi_m))$}(g);
\draw[->](j)--node[above,xshift=14,yshift=-1]{$\scriptstyle 1\otimes L^2(\cala(\epsilon_{l,\top}))$}(g);
}
\end{equation}
where $\tau=\epsilon_{l,\top}^{-1}\circ\chi\circ\epsilon_{l,\top}$, $\sigma=\epsilon_{l,\top}^{-1}\circ \psi_m$, the lower right corner 
stands for the fusion of $H_0(S_l)$ with $H_0(S_r)$ along $\cala(j_l^+|_{\{1\}\times[0,1]})$, and $j_l^+$ is %as in Lemma~\ref{lem: associativity of Upsilon}:
obtained by conjugating $j_l$ by $(x,y)\mapsto (x+1,y)$:
\[
j_l^+:\tikzmath[scale=\displscale]{
\draw (-12.5,0) rectangle (23.5,12) (-.5,0)--(-.5,12) (11.5,0)--(11.5,12);
\draw[<->] (6.5,.5) to[in=170, out=70] (11,2);\draw[<->] (1.5,1) to[in=175, out=50] (11,4);\draw[<->] (.3,6) -- (11,6);\draw[<->] (1.5,11) to[in=-175, out=-50] (11,8);\draw[<->] (6.5,11.5) to[in=-170, out=-70] (11,10);
} %tikzmath
\,,\qquad\qquad j_l^+|_{\{1\}\times[0,1]}:\tikzmath[scale=\displscale]{
\draw (-12.5,0) rectangle (-.5,12) (11.5,0) rectangle (23.5,12) (0,0)--(11,0)(0,12)--(11,12);
\draw[->] (1,1) to[in=175, out=50] (11,4);\draw[->] (.5,6) -- (11,6);\draw[->] (1,11) to[in=-175, out=-50] (11,8);
}\,. %tikzmath
\]
Using the identity $\Omega_{\mathrm{id}_\cala, \mathrm{id}_\cala}=\Upsilon^{-1}$ proved earlier, the case $D=\mathrm{id}_\cala$ of \eqref{eq: diagram a bunch of Omega's}
implies the commutativity of \squared2\,.
\!The triangles \squared3 commute by virtue of Lemma~\ref{lem: Ups=Ups=Ups},
and so the whole diagram \eqref{eq: first big diagram -- no defect} is commutative.

Let $\hat\tau,\hat\sigma\in\Diff(\partial[0,1]^2)$ be the symmetric extensions of $\tau$ and $\sigma$, so that $\hat\tau|_{S^1_\top}=\tau$ and $\hat\sigma|_{S^1_\top}=\sigma$.
Note that by definition, both $\hat\tau$ and $\hat\sigma$ commute with $(x,y)\mapsto (x,1-y)$.
From the fact that $\chi(x,y) = (x,y)$ for $x \geq \frac{3}{2}$, it follows that
$\hat\sigma(x,y) = \hat\tau(x,y) = (x,y)$ for $x \geq \frac{1}{2}$.
%Therefore, by Lemma~\ref{lem: canonical quantization of the symmetric diffeomorphism}
Let $u\in\cala(\tikzmath[scale=\textscale]{\useasboundingbox (-2,0) rectangle (14,12);\draw (6,12) -- (0,12) -- (0,0) -- (6,0);})$ and $v\in\cala(\tikzmath[scale=\textscale]{\useasboundingbox (-2,0) rectangle (14,12);\draw (6,12) -- (0,12) -- (0,0) -- (6,0);})$ 
be the canonical quantizations, as in the discussion following equation \eqref{eq: music sign},
of the symmetric diffeomorphisms $\hat\tau$ and $\hat\sigma$.
By the definition of these quantizations, we have $L^2(\cala(\tau))=\pi(u)$ and $L^2(\cala(\sigma))=\pi(v)$, where $\pi$ is the action of $\cala(
\tikzmath[scale=\textscale]{\useasboundingbox (-2,0) rectangle (14,12);\draw (6,12) -- (0,12) -- (0,0) -- (6,0);} %tikzmath
)$ on $\tikzmath[scale=\textscale]{\filldraw[fill=vacuumcolor](0,0) rectangle (12,12);}=H_0(\cala)$.

We now consider the following diagram of natural transformations between functors from 
$\cala(
\tikzmath[scale=\textscale]{\useasboundingbox (-2,0) rectangle (14,12);\draw (6,12) -- (0,12) -- (0,0) -- (6,0);} %tikzmath
)$-modules to Hilbert spaces:
\begin{equation}\label{eq:diagram of natural transformations}
\tikzmath{
\node (a) at (0,1.5) {$\tikzmath[scale=\displscale]{\draw(6,0) -- (0,0) -- (0,12) -- (6,12);\draw[dotted] (6,0) -- (12,0) -- (12,12) -- (6,12);}$};
\node (b) at (4,1.5) {$\tikzmath[scale=\displscale]{\filldraw[fill=vacuumcolor](0,0) rectangle (12,12);\draw(12,0)--(18,0)(12,12)--(18,12);\draw[dotted] (18,0) -- (24,0) -- (24,12) -- (18,12);}$};
\node (c) at (9,1.5) {$\tikzmath[scale=\displscale]{\filldraw[fill=vacuumcolor](0,0) rectangle (24,12);\draw(24,0)--(30,0)(24,12)--(30,12);\draw[dotted] (30,0) -- (36,0) -- (36,12) -- (30,12);}$};
\node (e) at (9,0) {$\tikzmath[scale=\displscale]{\filldraw[fill=vacuumcolor](0,0) rectangle (24,12);\draw(24,0)--(30,0)(24,12)--(30,12);\draw[dotted] (30,0) -- (36,0) -- (36,12) -- (30,12);\draw(12,0)--(12,12);}$};
\node (h) at (0,0) {$\tikzmath[scale=\displscale]{\draw(6,0) -- (0,0) -- (0,12) -- (6,12);\draw[dotted] (6,0) -- (12,0) -- (12,12) -- (6,12);}$};
\node (i) at (2.5,0) {$\tikzmath[scale=\displscale]{\filldraw[fill=vacuumcolor](0,0) rectangle (12,12);\draw(12,0)--(18,0)(12,12)--(18,12);\draw[dotted] (18,0) -- (24,0) -- (24,12) -- (18,12);}$};
\node (j) at (5.5,0) {$\tikzmath[scale=\displscale]{\filldraw[fill=vacuumcolor](0,0) rectangle (12,12);\draw(21,0) -- (15,0) -- (15,12) -- (21,12);\draw[dotted] (21,0) -- (28,0) -- (28,12) -- (21,12);\draw (12,.5) to[in=170, out=70] (15,4);\draw (12,3.5) to[in=175, out=50] (15,5);\draw (12,6) -- (15,6);\draw (12,8.5) to[in=-175, out=-50] (15,7);\draw (12,11.5) to[in=-170, out=-70] (15,8);
}$};

\draw[->](a)--node[above]{$\scriptstyle (\Upsilon^l)^{-1}$}(b);
\draw[->](b)--node[above]{$\scriptstyle L^2(\cala(\varphi_l))\otimes 1$}(c);
\draw[->](c)--node[right]{$\scriptstyle\Upsilon^{-1}\otimes 1$}(e);
\draw[->](a)--node[left]{$\scriptstyle u$}(h);
\draw[->](h)--node[above]{$\scriptstyle (\Upsilon^l)^{-1}$}(i);
\draw[->](i)--node[above]{$\scriptstyle 1\otimes v$}(j);
\draw[->](j)--node[above]{$\scriptstyle 1\otimes(\Upsilon^l)^{-1}$}(e);
}
\end{equation}
When evaluated on $H_0(\cala)$, the above diagram commutes by \eqref{eq: first big diagram -- no defect}. 
Therefore, by Lemma~\ref{lem: NT between module categories}, since $H_0(\cala)$ is a faithful $\cala(
\tikzmath[scale=\textscale]{\useasboundingbox (-2,0) rectangle (14,12);\draw (6,12) -- (0,12) -- (0,0) -- (6,0);} %tikzmath
)$-module, the diagram \eqref{eq:diagram of natural transformations} commutes 
regardless of the module one evaluates it on.
We now consider the following variant of diagram \eqref{eq: first big diagram -- no defect}:
\[
\tikzmath{
\node at (4.2,2) {\squared 4};
\node at (.7,3.15) {\squared 5};
\node at (.65,.35) {\squared 5};
\node at (8.5,1.2) {\squared 5};
\node (a) at (-2,3.5) {$\tikzmath[scale=\displscale]{\filldraw[fill=vacuumcolor](0,0) rectangle (12,12);\draw[ultra thick] (6,0) -- (12,0) -- (12,12) -- (6,12);}$};
\node (b) at (4,3.5) {$\tikzmath[scale=\displscale]{\filldraw[fill=vacuumcolor](0,0) rectangle (24,12);\draw(12,0)--(12,12);\draw[ultra thick] (18,0) -- (24,0) -- (24,12) -- (18,12);}$};
\node (c) at (9.5,3.5) {$\tikzmath[scale=\displscale]{\filldraw[fill=vacuumcolor](0,0) rectangle (36,12);\draw(24,0)--(24,12);\draw[ultra thick] (30,0) -- (36,0) -- (36,12) -- (30,12);}$};
\node (d) at (.7,2.5) {$\tikzmath[scale=\displscale]{\filldraw[fill=vacuumcolor](0,0) rectangle (24,12);\draw[ultra thick] (18,0) -- (24,0) -- (24,12) -- (18,12);}$};
\node (e) at (9.5,2) {$\tikzmath[scale=\displscale]{\filldraw[fill=vacuumcolor](0,0) rectangle (36,12);\draw(12,0)--(12,12)(24,0)--(24,12);\draw[ultra thick] (30,0) -- (36,0) -- (36,12) -- (30,12);}$};
\node (f) at (.7,1) {$\tikzmath[scale=\displscale]{\filldraw[fill=vacuumcolor](0,0) rectangle (24,12);\draw[ultra thick] (18,0) -- (24,0) -- (24,12) -- (18,12);}$};
\node (g) at (6,1) {$\tikzmath[scale=\displscale]{\filldraw[fill=vacuumcolor](0,0) rectangle (36,12);\draw(12,0)--(12,12);\draw[ultra thick] (30,0) -- (36,0) -- (36,12) -- (30,12);}$};
\node (h) at (-2,0) {$\tikzmath[scale=\displscale]{\filldraw[fill=vacuumcolor](0,0) rectangle (12,12);\draw[ultra thick] (6,0) -- (12,0) -- (12,12) -- (6,12);}$};
\node (i) at (2.8,0) {$\tikzmath[scale=\displscale]{\filldraw[fill=vacuumcolor](0,0) rectangle (24,12);\draw(12,0)--(12,12);\draw[ultra thick] (18,0) -- (24,0) -- (24,12) -- (18,12);}$};
\node (j) at (9.5,0) {$\tikzmath[scale=\displscale]{\filldraw[fill=vacuumcolor](0,0) rectangle (12,12);\filldraw[fill=vacuumcolor](15,0) rectangle (27,12);
\draw (12,.5) to[in=170, out=70] (15,4);\draw (12,3.5) to[in=175, out=50] (15,5);\draw (12,6) -- (15,6);\draw (12,8.5) to[in=-175, out=-50] (15,7);\draw (12,11.5) to[in=-170, out=-70] (15,8);
\draw[ultra thick] (21,0) -- (27,0) -- (27,12) -- (21,12);}$};

\draw[->](a)--node[above]{$\scriptstyle (\Upsilon_{\tikzmath[scale=.1]{\filldraw[fill=vacuumcolor] (0,0) rectangle (1,1);\draw[-][thick] (.5,0) -- (1,0) -- (1,1) -- (.5,1);}}^l)^{-1}$}(b);
\draw[->](b)--node[above]{$\scriptstyle L^2( \cala(\varphi_l))\otimes 1$}(c);
\draw[->](a)--node[below,xshift=-3,yshift=-1]{$\scriptstyle L^2( D(\epsilon_{l,\top}))$}(d);
\draw[->](c)--node[right]{$\scriptstyle\Upsilon^{-1}\otimes 1$}(e);
\draw[->](d)--node[below,xshift=5]{$\scriptstyle\Omega_{\id_\cala,D}$}(b);
\draw[->](d)--node[right]{$\scriptstyle L^2( D(\chi))$}(f);
\draw[->](a)--node[xshift=11,fill=white,inner sep=1]{$\scriptstyle L^2( D(\tau))$}(h);
\draw[->](g)--node[above,xshift=-3]{$\scriptstyle1\otimes\Omega_{\id_\cala,D}$}(e);
\draw[->](h)--node[above,xshift=-3,yshift=2]{$\scriptstyle L^2( D(\epsilon_{l,\top}))$}(f);
\draw[->](f)--node[above,xshift=5]{$\scriptstyle\Omega_{\id_\cala,D}$}(i);
\draw[->](h)--node[below]{$\scriptstyle (\Upsilon_{\tikzmath[scale=.1]{\filldraw[fill=vacuumcolor] (0,0) rectangle (1,1);\draw[-][thick] (.5,0) -- (1,0) -- (1,1) -- (.5,1);}}^l)^{-1}$}(i);
\draw[->](i)--node[below]{$\scriptstyle 1\otimes L^2( D(\sigma))$}(j);
\draw[->](j)--node[right,yshift=2]{$\scriptstyle 1\otimes(\Upsilon_{\tikzmath[scale=.1]{\filldraw[fill=vacuumcolor] (0,0) rectangle (1,1);\draw[-][thick] (.5,0) -- (1,0) -- (1,1) -- (.5,1);}}^l)^{-1}$}(e);
\draw[->](i)--node[above,xshift=-12,yshift=1]{$\scriptstyle 1\otimes L^2( D(\psi_m))$}(g);
\draw[->](j)--node[above,xshift=14,yshift=-2]{$\scriptstyle 1\otimes L^2( D(\epsilon_{l,\top}^{-1}))$}(g);
}
\]
Our goal is to show is that the triangles \squared5 are commutative.
Since $D$ is irreducible, there exists an invertible complex number $\lambda$ such that 
$$\Omega_{\mathrm{id}_\cala,D}\circ L^2(D(\epsilon_{l,\top}))= \lambda\, (\Upsilon_{\tikzmath[scale=.15]{\filldraw[fill=vacuumcolor] (0,0) rectangle (1,1);\draw[-][thick] (.5,0) -- (1,0) -- (1,1) -- (.5,1);}}^l)^{-1}.$$
The 7-gon \squared4 is simply \eqref{eq: diagram a bunch of Omega's}, and it is therefore commutative.
The triangle \squared5 occurs two times with a given orientation, and once with the opposite orientation:
the outer 7-gon therefore commutes up to a factor of $\lambda$.
But using Lemma \ref{lem: canonical quantization of the symmetric diffeomorphism} note that outer 7-gon is an instance of \eqref{eq:diagram of natural transformations}, and is therefore commutative.
It follows that $\lambda=1$.
} %ends the { that started before the def of \squared
\end{proof}

%==================================================================
\section{Unitors for horizontal fusion of sectors}
  \label{sec:quasi-identities}

In this section, we will introduce variants of the transformations $\Upsilon^l$ and $\Upsilon^r$
that function as unitors for horizontal fusion and that will be more convenient than $\Upsilon^l$ and $\Upsilon^r$ for verifying (in \cite{BDH(3-category)}) that conformal nets form a 3-category
(more precisely, an internal dicategory in the 2-category of symmetric monoidal categories \cite[Def. 3.3]{Douglas-Henriques(Internal-bicategories)}; see Footnote \textsuperscript{\ref{footn:dicategory}}).

We will again be using the circles $S_l$, $S_r$, $S_b$, the intervals $I$, $I_l$, $I_r$,
and the involutions $j_l \in \Conf_-(S_l)$, $j_r \in \Conf_-(S_r)$ from~\eqref{eq:circles-interval-involutions}.
Let $\alpha_l:=j_l|_{I_l} \cup \id_{ I_r} \colon S_b \to S_r$ and $\alpha_r:=\id_{I_l} \cup j_r|_{I} \colon S_b \to S_l$ be
the diffeomorphisms used in the definition of $\Upsilon^l$ and $\Upsilon^r$---their inverses appeared in Lemma~\ref{lem: Ups=Ups=Ups} under the names $\epsilon_l$ and $\epsilon_r$.

The restriction $\alpha_l|_{\{0\} \x [0,1]}$ is not the map $(0,y)\mapsto(1,y)$ and,
as a consequence, the way $\Upsilon^l$ interacts with horizontal fusion is somewhat complicated to describe.
(see the lower right corner of \eqref{eq: first big diagram -- no defect} and the vertical arrow from it);
a similar story holds for the restriction of $\alpha_r$ to $\{2\} \x [0,1]$.
Our next goal is to introduce diffeomorphisms $\beta_l\colon S_b \to S_r$ and $\beta_r\colon S_b \to S_l$ to replace $\alpha_l$ and $\alpha_r$,
so that the corresponding variants of $\Upsilon^l$ and $\Upsilon^r$ avoid this complication with horizontal fusion.

Pick intervals $I^+_l \subset S_b$ and $I^+_r  \subset S_b$ that are sightly larger than $I_l$ and $I_r$, and
diffeomorphisms $\beta_l$ and $\beta_r$ that satisfy
\[
\begin{split}
\beta_l(0,y) = (1,y),\,\quad \beta_l\circ j=j\circ \beta_l\,&,\quad \beta_l|_{S_b\setminus I_l^+}=\mathrm{id}\\
\beta_r(2,y) = (1,y),\quad \beta_r\circ j=j\circ \beta_r&,\quad \beta_r|_{S_b\setminus I_r^+}=\mathrm{id},
\end{split}
\]
where $j(x,y)=(x,1-y)$:
\begin{equation*} %\label{eq: baba}
\begin{split}
\alpha_l \, : \, 
\tikzmath[scale=\displscale]{\draw(-.5,-.3) rectangle (24.3,12.3)(12.3,0.3) rectangle (23.7,11.7);\draw[->] (6,.5) to[in=170, out=70] (11,2);\draw[->] (1,1) to[in=175, out=50] (11,4);
\draw[->] (.7,6) -- (11,6);\draw[->] (1,11) to[in=-175, out=-50] (11,8);\draw[->] (6,11.5) to[in=-170, out=-70] (11,10);} \,\hspace{.1mm} ,\hspace{.3mm} %tikzmath
\qquad \beta_l \, :& \, 
\tikzmath[scale=\displscale]{\draw(-.5,-.3) rectangle (24.3,12.3)(12.3,0.3) rectangle (23.7,11.7);\draw[->] (.7,2) -- (11,2);\draw[->] (.7,4) -- (11,4);\draw[->] (.7,6) -- (11,6);
\draw[->] (.7,8) -- (11,8);\draw[->] (.7,10) -- (11,10);} \, , %tikzmath
\qquad I_l \, = \, \tikzmath[scale=\displscale]{\useasboundingbox (-2,0) rectangle (26,12);\draw (12,12) -- (0,12) -- (0,0) -- (12,0);} %tikzmath
\subset \,I^+_l\, = \, \tikzmath[scale=\displscale]{\useasboundingbox (-2,0) rectangle (26,12);\draw (14,12) -- (0,12) -- (0,0) -- (14,0);} %tikzmath
\\ \alpha_r \, : \, 
\tikzmath[scale=\displscale]{\draw(-.3,-.3) rectangle (24.5,12.3) (0.3,0.3) rectangle (11.7,11.7);\draw[->] (18,.5) to[in=10, out=110] (13,2);\draw[->] (23,1) to[in=5, out=130] (13,4);
\draw[->] (23.3,6) -- (13,6);\draw[->] (23,11) to[in=-5, out=-130] (13,8);\draw[->] (18,11.5) to[in=-10, out=-110] (13,10);} \, , %tikzmath
\qquad \beta_r \, :& \, 
\tikzmath[scale=\displscale]{\draw(-.3,-.3) rectangle (24.5,12.3) (0.3,0.3) rectangle (11.7,11.7);\draw[->] (23.3,2) -- (13,2);\draw[->] (23.3,4) -- (13,4);\draw[->] (23.3,6) -- (13,6);
\draw[->] (23.3,8) -- (13,8);\draw[->] (23.3,10) -- (13,10);} \, , %tikzmath
\qquad I_r \, = \, \tikzmath[scale=\displscale]{\useasboundingbox (-2,0) rectangle (26,12);\draw (12,12) -- (24,12) -- (24,0) -- (12,0);} %tikzmath
\subset \, I^+_r \, = \, \tikzmath[scale=\displscale]{\useasboundingbox (-2,0) rectangle (26,12);\draw (10,12) -- (24,12) -- (24,0) -- (10,0);}%tikzmath
\,. \end{split}
\end{equation*}
The illustrations of $\beta_l$ and $\beta_r$ are somewhat rough, in that they do not record the nontrivial contraction on the horizontal segments of $I^+_l$ and $I^+_r$.

The composition $\beta_l \circ \alpha_l^{-1} \colon S_r \to S_r$ is symmetric with respect to $j$, and restricts to the identity on the complement of $\alpha_l(I_l^+)$. 
As explained in the discussion preceding Lemma~\ref{lem: canonical quantization of the symmetric diffeomorphism},
there is a canonical implementation $u_l$ of the diffeomorphism $\beta_l \circ \alpha_l^{-1}$ on $H_0(S_r)$.
Similarly, there is a canonical implementation $u_r$ of $\beta_r \circ \alpha_r^{-1}$ on $H_0(S_l)$.

Given a $D$-$E$-sector $H$ between $\cala$-$\calb$-defects $D$ and $E$,
pulling back $H$ along $\cala({\alpha_l})$ produces an $(\id_\cala \circledast D)$-$(\id_\cala \circledast E)$-sector; cf. Lemma~\ref{lem: unitor} and the subsequent discussion.
This operation is a functor $\alpha_l^*:\CN_2 \to \CN_2$.
Similarly, we have functors $\beta_l^*$, $\alpha_r^*$, $\beta_r^*:\CN_2 \to \CN_2$. 
Multiplication by $u_l$ and $u_r$ then provide natural isomorphisms
\[
U^l:\alpha_l^*\cong \beta_l^*:\CN_2 \tworarrow \CN_2\,,\qquad\,\,\,\,\,
U^r:\alpha_r^*\cong \beta_r^*:\CN_2 \tworarrow \CN_2
\]
between the functors $\alpha_l^*$ and $\beta_l^*$, and between the functors $\alpha_r^*$ and $\beta_r^*$.
Recall that $\Upsilon^l$ and $\Upsilon^r$ are natural isomorphisms $H_0(\cala) \boxtimes_\cala - \cong {\alpha_l^*}$ and
$- \boxtimes_\calb H_0(\calb) \cong {\alpha_r^*}$ of functors from $\CN_2$ to $\CN_2$.

\begin{definition} \label{def:quasi-id}
The left and right \emph{unitors} are the natural transformations
$\hat\Upsilon^l:=U^l \circ \Upsilon^l \colon H_0(\cala) \boxtimes_\cala - \to \beta_l^*$
and $\hat\Upsilon^r:=U^r \circ \Upsilon^r \colon -\boxtimes_\calb H_0(\calb)  \to\beta_r^*$.
We draw them as
\[
\hat \Upsilon^l_H \colon \, \tikzmath[scale=\displscale]{\fill[fill=vacuumcolor](0,0) rectangle (12,12);\fill[fill=spacecolor](12,0) rectangle (24,12);\draw (0,0) rectangle (12,12)(18,12) -- (12,12)(12,0) -- (18,0);
\draw (18,6) node {$H$};\draw[ultra thick] (18,12) -- (24,12) -- (24,0) -- (18,0);} %tikzmath
\, \to \,  \tikzmath[scale=\displscale]{\draw (-1.5,-1.5) rectangle (25.5,13.5); \fill[fill=spacecolor](12,0) rectangle (24,12);\draw (18,6) node {$H$};\draw(18,12) -- (12,12) -- (12,0) -- (18,0);
\draw[ultra thick] (18,12) -- (24,12) -- (24,0) -- (18,0); \draw[->] (.3,1) -- (11,1);\draw[->] (.3,3.5) -- (11,3.5);\draw[->] (.3,6) -- (11,6);\draw[->] (.3,8.5) -- (11,8.5);\draw[->] (.3,11) -- (11,11);} %tikzmath
\qquad\quad\hat \Upsilon^r_H \colon \,\tikzmath[scale=\displscale]{\fill[fill=vacuumcolor](24,0) rectangle (36,12);\fill[fill=spacecolor](12,0) rectangle (24,12);\draw (18,0) -- (12,0)-- (12,12) -- (18,12);
\draw[ultra thick] (24,0) rectangle (36,12) (18,12) -- (24,12)(18,0) -- (24,0); \draw (18,6) node {$H$};} %tikzmath
\, \to \, \tikzmath[scale=\displscale]{\draw (10.5,-1.5) rectangle (37.5,13.5); \fill[fill=spacecolor](12,0) rectangle (24,12);\draw (18,0) -- (12,0)-- (12,12) -- (18,12);
\draw[ultra thick] (18,12) -- (24,12) -- (24,0) -- (18,0);\draw[<-] (25,1) -- (35.7,1);\draw[<-] (25,3.5) -- (35.7,3.5);
\draw[<-] (25,6) -- (35.7,6);\draw[<-] (25,8.5) -- (35.7,8.5);\draw[<-] (25,11) -- (35.7,11);\draw (18,6) node {$H$};} \; . %tikzmath
\]
(Of course, the precise transformations $\hat\Upsilon^l$ and $\hat\Upsilon^r$ depend on our unspecified choices of $\beta_l$ and $\beta_r$.)
\end{definition}
  
We record some properties of the unitors that will be used in our later paper~\cite{BDH(3-category)}.
There is the following compatibility with the $1 \boxtimes 1$-isomorphism $\Omega$: 
  
\begin{lemma}
\label{lem:quasi-id-and-1x1-left}
Let $\calb$ be a conformal net with finite index, and let
$_\cala D_\calb$ and $_\calb E_\calc$ be defects. The unitors commute with the $1 \boxtimes 1$-isomorphism in the sense that
\begin{eqnarray*}
\beta_l^*\,\Omega_{D,E}\circ \hat \Upsilon^l_{H_0(D \circledast_\calb E)}  & = & \hat \Upsilon^l_{H_0(D) \boxtimes_\calb H_0(E)} \circ \big(1_{H_0(\cala)} \boxtimes_\cala \Omega_{D,E}\big),\\
\beta_r^*\,\Omega_{D,E}\circ \hat \Upsilon^r_{H_0(D \circledast_\calb E)} & = & \hat \Upsilon^r_{H_0(D) \boxtimes_\calb H_0(E)} \circ \big(\Omega_{D,E} \boxtimes_\calc1_{H_0(\calc)}\big). 
\end{eqnarray*}
That is, the two diagrams
\begin{equation*}
\tikzmath{\matrix [matrix of math nodes,column sep=1cm,row sep=8mm]
{ |(a)| \tikzmath[scale=\displscale]{\fill[fill=vacuumcolor](0,0) rectangle (36,12);\draw[thick, double] (0,0) rectangle (12,12) (18,12) -- (12,12)(12,0) -- (18,0);
\draw (18,12) -- (30,12)(18,0) -- (30,0);\draw[ultra thick] (30,12) -- (36,12) -- (36,0) -- (30,0); } %tikzmath
\pgfmatrixnextcell |(b)| \tikzmath[scale=\displscale]{\draw (-1.5,-1.5) rectangle (37.5,13.5); \fill[fill=vacuumcolor](12,0) rectangle (36,12);
\draw[thick, double] (18,12) -- (12,12) -- (12,0) -- (18,0);\draw (18,12) -- (30,12)(18,0) -- (30,0);\draw[ultra thick] (30,12) -- (36,12) -- (36,0) -- (30,0);
\draw[->] (.3,1) -- (11,1);\draw[->] (.3,3.5) -- (11,3.5);\draw[->] (.3,6) -- (11,6);\draw[->] (.3,8.5) -- (11,8.5);\draw[->] (.3,11) -- (11,11);} %tikzmath
\\  |(c)| \tikzmath[scale=\displscale]{\fill[fill=vacuumcolor](0,0) rectangle (36,12); \draw[thick, double] (0,0) rectangle (12,12)(18,12) -- (12,12)(12,0) -- (18,0);
\draw (18,12) -- (30,12) (24,12) -- (24,0)(18,0) -- (30,0);\draw[ultra thick] (30,12) -- (36,12) -- (36,0) -- (30,0);} %tikzmath
\pgfmatrixnextcell |(d)| \tikzmath[scale=\displscale]{\fill[fill=vacuumcolor](12,0) rectangle (36,12);\draw[thick, double] (18,12) -- (12,12) -- (12,0) -- (18,0);
\draw (18,12) -- (30,12) (24,12) -- (24,0)(18,0) -- (30,0);\draw[ultra thick] (30,12) -- (36,12) -- (36,0) -- (30,0); \draw (-1.5,-1.5) rectangle (37.5,13.5); 
\draw[->] (.3,1) -- (11,1);\draw[->] (.3,3.5) -- (11,3.5);\draw[->] (.3,6) -- (11,6);\draw[->] (.3,8.5) -- (11,8.5);\draw[->] (.3,11) -- (11,11);} %tikzmath
\\ }; %matrix 
\draw[->] (a) -- node [above]{$\scriptstyle \hat\Upsilon^l$} (b); \draw[->] (c) -- node [above]{$\scriptstyle \hat\Upsilon^l$} (d);
\draw[->] (a) -- node [left]{$\scriptstyle 1 \boxtimes \Omega$} (c);\draw[->] (b) -- node [right] {$\scriptstyle \beta_l^* \Omega $} (d);}%tikzmath
\qquad \tikzmath{ \matrix [matrix of math nodes,column sep=1cm,row sep=8mm]
{ |(a)| \tikzmath[scale=\displscale]{\fill[fill=vacuumcolor](0,0) rectangle (36,12);\draw[thick, double] (6,12) -- (0,12) -- (0,0) -- (6,0);\draw (6,12) -- (18,12)(6,0) -- (18,0);
\draw[ultra thick] (24,0) rectangle (36,12) (18,12) -- (24,12)(18,0) -- (24,0); } %tikzmath
\pgfmatrixnextcell |(b)| \tikzmath[scale=\displscale]{\draw (-1.5,-1.5) rectangle (37.5,13.5); \fill[fill=vacuumcolor](0,0) rectangle (24,12);
\draw[thick, double] (6,12) -- (0,12) -- (0,0) -- (6,0);\draw (6,12) -- (18,12)(18,0) -- (6,0);\draw[ultra thick] (18,12) -- (24,12) -- (24,0) -- (18,0);
\draw[<-] (25,1) -- (35.7,1);\draw[<-] (25,3.5) -- (35.7,3.5);\draw[<-] (25,6) -- (35.7,6);\draw[<-] (25,8.5) -- (35.7,8.5);\draw[<-] (25,11) -- (35.7,11);} %tikzmath
\\ |(c)| \tikzmath[scale=\displscale]{\fill[fill=vacuumcolor](0,0) rectangle (36,12); \draw[thick, double] (6,12) -- (0,12) -- (0,0) -- (6,0);
\draw (6,12) -- (18,12) (12,0) -- (12,12) (6,0) -- (18,0); \draw[ultra thick] (24,0) rectangle (36,12) (18,12) -- (24,12)(18,0) -- (24,0); } %tikzmath
\pgfmatrixnextcell |(d)| \tikzmath[scale=\displscale]{ \draw (-1.5,-1.5) rectangle (37.5,13.5); \fill[fill=vacuumcolor](0,0) rectangle (24,12);
\draw[thick, double] (6,12) -- (0,12) -- (0,0) -- (6,0);\draw (6,12) -- (18,12) (12,0) -- (12,12)(18,0) -- (6,0);\draw[ultra thick] (18,12) -- (24,12) -- (24,0) -- (18,0);
\draw[<-] (25,1) -- (35.7,1);\draw[<-] (25,3.5) -- (35.7,3.5);\draw[<-] (25,6) -- (35.7,6);\draw[<-] (25,8.5) -- (35.7,8.5);\draw[<-] (25,11) -- (35.7,11);} %tikzmath
\\}; %matrix 
\draw[->] (a) -- node [above]{$\scriptstyle \hat\Upsilon^r$} (b); \draw[->] (c) -- node [above]{$\scriptstyle \hat\Upsilon^r$} (d);
\draw[->] (a) -- node [left] {$\scriptstyle \Omega \boxtimes 1$} (c); \draw[->] (b) -- node [right]	 {$\scriptstyle \beta_r^* \Omega $} (d);}%tikzmath 
\end{equation*}
commute.
\end{lemma}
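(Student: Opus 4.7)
The plan is to derive both identities from the associativity of $\Omega$ (Proposition~\ref{prop: associativity of Omega}) combined with the explicit formula for $\Omega$ when one of its two defects is an identity (Lemma~\ref{lem:omega-and-upsilon}). I will discuss only the first (left-hand) identity; the second follows by the same argument with associativity applied to $(D,E,\id_\calc)$ instead of $(\id_\cala,D,E)$, and using the second half of Lemma~\ref{lem:omega-and-upsilon} in place of the first.

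First, I would reduce to the analogous claim with $\Upsilon^l$ in place of $\hat\Upsilon^l$. Recall $\hat\Upsilon^l = U^l\circ\Upsilon^l$, where $U^l$ is multiplication by the canonical quantization $u_l\in\cala(I_l^+)$ of the $j$-symmetric diffeomorphism $\beta_l\circ\alpha_l^{-1}$ of $S_r$. Since this diffeomorphism is supported away from the middle interval $I$, the element $u_l$ lies in a subalgebra whose action on the relevant Hilbert spaces factors through a left-quadrant algebra that is intertwined by $\Omega_{D,E}$ thanks to the equivariance statement in Proposition~\ref{prop:G=L2}. Multiplication by $u_l$ therefore commutes both through $1_{H_0(\cala)}\boxtimes_\cala\Omega_{D,E}$ and through $\beta_l^*\Omega_{D,E}$, and it suffices to prove the $\Upsilon^l$-version
$$\alpha_l^*\Omega_{D,E}\circ\Upsilon^l_{H_0(D\circledast_\calb E)} \,=\, \Upsilon^l_{H_0(D)\boxtimes_\calb H_0(E)}\circ\bigl(1_{H_0(\cala)}\boxtimes_\cala\Omega_{D,E}\bigr).$$

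Second, I would apply Proposition~\ref{prop: associativity of Omega} to the triple $(\id_\cala,D,E)$, obtaining the associativity square
$$\bigl(\Omega_{\id_\cala,D}\boxtimes_\calb 1\bigr)\circ\Omega_{\id_\cala,D\circledast_\calb E} \,=\, \bigl(1\boxtimes_\cala\Omega_{D,E}\bigr)\circ\Omega_{\id_\cala\circledast_\cala D,E}.$$
Lemma~\ref{lem:omega-and-upsilon} identifies both $\Omega_{\id_\cala,D}^{-1}$ and $\Omega_{\id_\cala,D\circledast_\calb E}^{-1}$ with composites of a $\Upsilon^l$-map with an action of the form $L^2((-)(\epsilon_{l,\top}))$. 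Substituting these identifications into the associativity square and using the functoriality of $\circledast$ on diffeomorphisms to collapse the two $L^2(\epsilon_{l,\top})$ factors into a single one, the left-hand side of the sought $\Upsilon^l$-identity is expressed directly in terms of $\Omega_{\id_\cala\circledast_\cala D,E}$.

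Third and most delicately, the right-hand side of the associativity square involves $\Omega_{\id_\cala\circledast_\cala D,E}$ rather than $\Omega_{D,E}$. By Lemma~\ref{lem: unitor}, $\id_\cala\circledast_\cala D$ is the weak identity $I\mapsto D(I_\circ\cup[0,1]\cup I_\bullet)$, not $D$ itself. I would rewrite $\Omega_{\id_\cala\circledast_\cala D,E}$ as $\Omega_{D,E}$ conjugated by the $L^2$-action of the diffeomorphism that collapses the inserted white interval, exactly as in the passage from the special case $D=\id_\cala$ to arbitrary $D$ in the proof of Lemma~\ref{lem:omega-and-upsilon}. Matching the surviving $L^2$-of-diffeomorphism factors on the two sides of the associativity square then yields the $\Upsilon^l$-identity of Step~1.

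The main obstacle is Step~3: the comparison between $\Omega_{\id_\cala\circledast_\cala D,E}$ and $\Omega_{D,E}$ requires tracking the weak-identity diffeomorphism through each stage of the construction of $\Omega$, namely through the isometric embedding $\Psi$ (via the keyhole-to-keystone passage) and the keystone-to-fusion isomorphism $\Phi^{-1}$. This is essentially a one-level-higher analogue of the diagram chase already carried out in the proof of Lemma~\ref{lem:omega-and-upsilon}, and I expect it to reduce, via Lemma~\ref{lem: canonical quantization of the symmetric diffeomorphism}, to a compatibility between canonical quantizations of $j$-symmetric diffeomorphisms and the actions of $D$, $E$, and $\calb$ that enter the definition of $\Omega$.
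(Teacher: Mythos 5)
Your proposal misses the one-line argument that the paper actually uses, and the route you substitute for it has an unfilled hole. The key observation is that $\hat\Upsilon^l$ and $\hat\Upsilon^r$ are, by construction, natural transformations between functors defined on the categories $\modules{\cala\big(\partial^\sqsubset([1,\frac32-\e]\times[0,1])\big)}$ and $\modules{\calc\big(\partial^\sqsupset([\frac12+\e,1]\times[0,1])\big)}$ respectively (see \eqref{eq:left+right-Upsilon} and Definition~\ref{def:quasi-id}), and that $\Omega_{D,E}$ is a \emph{morphism in those module categories}: it intertwines the actions of $\cala(J)$ for $J$ in the white part of $S_r$ and of $\calc(J)$ for $J$ in the black part, by the equivariance statements built into Proposition~\ref{prop:G=L2} and Proposition~\ref{prop: local-fusion} (together with strong additivity for intervals crossing the corners). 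Both squares in the lemma are then literally the naturality squares of $\hat\Upsilon^l$ and $\hat\Upsilon^r$ evaluated on the morphism $\Omega_{D,E}$, and there is nothing more to prove. Your Step~1 already contains the germ of this: you commute the canonical quantization $u_l$ past $\Omega_{D,E}$ using exactly this equivariance. Had you applied the same reasoning to $\Upsilon^l$ itself rather than only to the correction factor $U^l$, Steps~2 and~3 would be unnecessary.

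As written, your Step~3 is a genuine gap, not merely an inefficiency. The associativity square for $(\id_\cala,D,E)$ produces $\Omega_{\id_\cala\circledast_\cala D,\,E}$, and $\id_\cala\circledast_\cala D$ is the weak identity $I\mapsto D(I_\circ\cup[0,1]\cup I_\bullet)$, which by Remark~\ref{rem : weak identity} and Example~\ref{ex: 1=/=1*1} is \emph{not} isomorphic to $D$ in $\CN_1$; so there is no isomorphism of defects along which the naturality of $\Omega$ (which, per Warning~\ref{warning:not-natural!}, only holds for isomorphisms) can be invoked to convert $\Omega_{\id_\cala\circledast_\cala D,\,E}$ into $\Omega_{D,E}$. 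The comparison you want must instead be mediated by a collapsing diffeomorphism of underlying $1$-manifolds, and verifying that every stage of the construction of $\Omega$ ($U$, $L^2(\iota)_{\mathrm{iso}}$, $\Psi_0$, $\Phi^{-1}$) commutes with it is a substantial diagram chase of the same order as the proof of Lemma~\ref{lem:omega-and-upsilon} itself --- you state only that you ``expect'' it to reduce to a compatibility of canonical quantizations. Until that step is carried out, the argument is incomplete; and even if completed, it would rest on Proposition~\ref{prop: associativity of Omega} and Lemma~\ref{lem:omega-and-upsilon}, two of the heaviest results in this part of the paper, to establish a statement that follows formally from naturality.
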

  
\begin{proof}
These equalities follow from the naturality of $\hat \Upsilon^l$ and $\hat \Upsilon^r$, specifically by viewing them as a natural transformations
\begin{equation*} 
\begin{split}
\hat \Upsilon^l\;\;:\;\;\; \modules{\cala\big(
\tikzmath[scale=\planscale]{\useasboundingbox (-2,0) rectangle (14,12);\draw (6,12) -- (0,12) -- (0,0) -- (6,0);\draw[->] (2.51,12) -- (2.5,12);} %tikzmath
\big)}\;\;&\tworarrow\,\;\;\modules{\cala(
\tikzmath[scale=\planscale]{\useasboundingbox (-2,0) rectangle (26,12);\draw (18,12) -- (0,12) -- (0,0) -- (18,0);\draw[->] (8.51,12) -- (8.5,12);} %tikzmath
)}
\\
\hat \Upsilon^r\;\;:\;\;\; \modules{\calc\big(
\tikzmath[scale=\planscale]{\useasboundingbox (-2,0) rectangle (14,12);\draw (6,12) -- (12,12) -- (12,0) -- (6,0);\draw[->] (8.51,12) -- (8.5,12);}
\big)}\;\;&\tworarrow\,\;\;\,\modules{\calc(
\tikzmath[scale=\planscale]{\useasboundingbox (-2,0) rectangle (26,12);\draw (6,12) -- (24,12) -- (24,0) -- (6,0);\draw[->] (14.51,12) -- (14.5,12);}
)}.\qedhere\end{split}
\end{equation*}
\end{proof}

Let $\alpha_{l,\top},\beta_{l,\top}:S_{b,\top}\to S_{r,\top}$ and $\alpha_{r,\top}, \beta_{r,\top}:S_{b,\top}\to S_{l,\top}$ be the restrictions 
of $\alpha_l$, $\beta_l$, $\alpha_r$, $\beta_r$ to the upper half of $S_b$.

\begin{lemma} \label{lem:omega-and-hat-upsilon}
Let ${}_\cala D_\calb$ be a defect, color the circles $S_r$ and $S_l$ as in~\eqref{eq: def colors of S and tildeS}, and let $H_r:=H_0(S_r,D)$ and $H_l:=H_0(S_l,D)$.

Then the map $\hat \Upsilon^l_{H_r} \circ \Omega_{\mathrm{id}_\cala,D}:
\tikzmath[scale=\textscale]{\fill[vacuumcolor] (0,0) rectangle(24,12);\draw (0,0) rectangle(24,12); \draw[ultra thick] (18,0) -- (24,0) -- (24,12) -- (18,12);} %tikzmath
\to \tikzmath[scale=\textscale]{\draw (-1.5,-1.5) rectangle (25.5,13.5); \fill[fill=vacuumcolor](12,0) rectangle (24,12);
\draw(18,12) -- (12,12) -- (12,0) -- (18,0);\draw[ultra thick] (18,12) -- (24,12) -- (24,0) -- (18,0);\draw[->] (.3,1) -- (11,1);
\draw[->] (.3,3.5) -- (11,3.5);\draw[->] (.3,6) -- (11,6);\draw[->] (.3,8.5) -- (11,8.5);\draw[->] (.3,11) -- (11,11);} %tikzmath
$
coincides with $L^2(D(\beta_{l,\top}))$,
and $\hat\Upsilon^r_{H_l}\circ \Omega_{D,\mathrm{id}_\calb}: 
\tikzmath[scale=\textscale]{
\fill[vacuumcolor] (0,0) rectangle(24,12);\draw (0,0) rectangle(24,12);\draw[ultra thick] (6,0) -- (24,0) -- (24,12) -- (6,12);} %tikzmath
\to \tikzmath[scale=\textscale]{\draw (10.5,-1.5) rectangle (37.5,13.5); \fill[fill=vacuumcolor](12,0) rectangle (24,12);
\draw (18,0) -- (12,0)-- (12,12) -- (18,12);\draw[ultra thick] (18,12) -- (24,12) -- (24,0) -- (18,0);
\draw[<-] (25,1) -- (35.7,1);\draw[<-] (25,3.5) -- (35.7,3.5);\draw[<-] (25,6) -- (35.7,6);\draw[<-] (25,8.5) -- (35.7,8.5);\draw[<-] (25,11) -- (35.7,11);} %tikzmath
$ 
coincides with $L^2(D(\beta_{r,\top}))$.
\end{lemma}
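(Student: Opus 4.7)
The plan is to combine the already-established Lemma~\ref{lem:omega-and-upsilon} (which describes $\Omega_{\mathrm{id}_\cala,D}$ in terms of $\Upsilon^l_{H_r}$ and a pullback along $\epsilon_{l,\top}$) with Lemma~\ref{lem: canonical quantization of the symmetric diffeomorphism} (which identifies the action of a canonical quantization with the $L^2$ of the restricted diffeomorphism). Because the unitor was \emph{defined} as $\hat\Upsilon^l = U^l \circ \Upsilon^l$ with $U^l$ the action of the canonical implementation $u_l$ of $\beta_l \circ \alpha_l^{-1}$, the lemma reduces to a purely diffeomorphism-theoretic identity relating $\alpha_{l,\top}$, $\beta_{l,\top}$, and the quantization of their composite.

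First I would recall from the discussion following Definition~\ref{def:quasi-id} that $\alpha_l = \epsilon_l^{-1}$, whence $\epsilon_{l,\top} = \alpha_{l,\top}^{-1}$. Functoriality of $L^2 \circ D$ and Lemma~\ref{lem:omega-and-upsilon} then give
\[
\Upsilon^l_{H_r} \circ \Omega_{\mathrm{id}_\cala,D} \,=\, L^2(D(\epsilon_{l,\top}))^{-1} \,=\, L^2(D(\alpha_{l,\top})).
\]
Next I would analyze the support of the symmetric diffeomorphism $\beta_l \circ \alpha_l^{-1} \in \Diff(S_r)$. Since $\beta_l$ is supported in $I_l^+$ and $\alpha_l$ is the identity on $I_r \subset S_b$, the support of $\beta_l \circ \alpha_l^{-1}$ is $\alpha_l(I_l^+) \subset S_r$, an interval around $I$ that spills slightly into $I_r$ only near the points $(1,0)$ and $(1,1)$. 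These points lie in $(S_r)_{x < 3/2} = (S_r)_\circ$, so provided $I_l^+$ is chosen small enough, the support sits in a $j$-symmetric white subinterval $K \subset (S_r)_\circ$.

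With $K$ in hand, Lemma~\ref{lem: canonical quantization of the symmetric diffeomorphism}, applied to the net $\cala$ acting through the defect $D$ with $\varphi = \beta_l \circ \alpha_l^{-1}$ and $I = S_{r,\top}$, yields
\[
U^l_{H_r} \,=\, \rho_K(u_l) \,=\, L^2\bigl(D((\beta_l \circ \alpha_l^{-1})_0)\bigr) \,=\, L^2(D(\beta_{l,\top})) \circ L^2(D(\alpha_{l,\top}))^{-1},
\]
where the last step uses that $\beta_l \circ \alpha_l^{-1}$ is symmetric, so its restriction to the top half is $\beta_{l,\top} \circ \alpha_{l,\top}^{-1}$, together with functoriality. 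Composing the two displayed identities produces the desired equation
\[
\hat\Upsilon^l_{H_r} \circ \Omega_{\mathrm{id}_\cala,D} \,=\, U^l_{H_r} \circ \Upsilon^l_{H_r} \circ \Omega_{\mathrm{id}_\cala,D} \,=\, L^2(D(\beta_{l,\top})).
\]

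The second equation follows by the mirror argument: now $\beta_r \circ \alpha_r^{-1} \in \Diff(S_l)$ is supported near $I \subset S_l$, which lies in the black part $(S_l)_\bullet = (S_l)_{x \ge 1/2}$, so one applies the evident black-side analog of Lemma~\ref{lem: canonical quantization of the symmetric diffeomorphism} (with the roles of $\cala$ and $\calb$ swapped) together with the right-handed half of Lemma~\ref{lem:omega-and-upsilon}. The only real subtlety in the whole argument is the color-bookkeeping verifying that the supports of $\beta_l \circ \alpha_l^{-1}$ and $\beta_r \circ \alpha_r^{-1}$ are monochromatic (white and black respectively); once that is in place, the conclusion drops out of the defining formula for $\hat\Upsilon^{l/r}$ and the two cited lemmas.
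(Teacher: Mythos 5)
Your argument is correct and is essentially the paper's own proof: both reduce the claim to Lemma~\ref{lem:omega-and-upsilon} (giving $\Upsilon^{l}_{H_r}\circ\Omega_{\mathrm{id}_\cala,D}=L^2(D(\alpha_{l,\top}))$), identify $U^{l}_{H_r}=L^2(D(\beta_{l,\top}\circ\alpha_{l,\top}^{-1}))$ via Lemma~\ref{lem: canonical quantization of the symmetric diffeomorphism}, and compose using $\hat\Upsilon^{l}=U^{l}\circ\Upsilon^{l}$ and functoriality. The only difference is that you spell out the monochromaticity of the support of $\beta_l\circ\alpha_l^{-1}$ (resp.\ $\beta_r\circ\alpha_r^{-1}$), which the paper leaves implicit in the setup preceding Definition~\ref{def:quasi-id}; this is a welcome clarification, not a divergence.
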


\begin{proof}
From Lemma~\ref{lem:omega-and-upsilon} (recall that $\epsilon_l=\alpha_l^{-1}$ and $\epsilon_r=\alpha_r^{-1}$), we know that
\[
\Upsilon^l_{H_r} \circ \Omega_{\mathrm{id}_\cala,D}=L^2(D(\alpha_{l,\top}))\,\,,\,\qquad
\Upsilon^r_{H_l}\circ \Omega_{D,\mathrm{id}_\calb}=L^2(D(\alpha_{r,\top})).
\]
We also have
$U^l_{H_r}=L^2(D(\beta_{l,\top} \circ \alpha_{l,\top}^{-1}))$ and $U^r_{H_l}=L^2(D(\beta_{r,\top} \circ \alpha_{r,\top}^{-1}))$ by Lemma~\ref{lem: canonical quantization of the symmetric diffeomorphism}.
Finally recall that $\hat\Upsilon^l=U^l \circ \Upsilon^l$ and $\hat\Upsilon^r=U^r \circ \Upsilon^r$.
We therefore have
\[
\begin{split}
\hat \Upsilon^l_{H_r} \circ \Omega_{\mathrm{id}_\cala,D}
&= U^l_{H_r} \circ \Upsilon^l_{H_r} \circ \Omega_{\mathrm{id}_\cala,D} \\
&= L^2(D( \beta_{l,\top} \circ \alpha^{-1}_{l,\top})) \circ L^2(D(\alpha_{l,\top}))
= L^2(D( \beta_{l,\top}))
\end{split}
\]
and
\[
\begin{split}
\hat \Upsilon^r_{H_l} \circ \Omega_{D,\mathrm{id}_\calb}
&= U^r_{H_l} \circ \Upsilon^r_{H_l} \circ \Omega_{D,\mathrm{id}_\calb} \\
&= L^2(D( \beta_{r,\top} \circ \alpha^{-1}_{r,\top})) \circ L^2(D(\alpha_{r,\top}))
= L^2(D( \beta_{r,\top})).\qedhere
\end{split}
\]
%\begin{equation*}
%\hat \Upsilon^l_{H_r} = L^2(D( \beta_{l,\top} \circ \alpha^{-1}_{l,\top})) \circ \Upsilon^l_{H_r}
%\,\qquad
%\hat \Upsilon^r_{H_l} = L^2(D( \beta_{r,\top} \circ \alpha^{-1}_{r,\top})) \circ \Upsilon^r_{H_l}
%\end{equation*}
%and the result follows.
\end{proof}

Combining this lemma with the factorization of $L^2(D(\beta_{l,\top}))$ and $L^2(D(\beta_{r,\top}))$ from~\eqref{eq:L2(f)-factorization},
we obtain the commutativity of the following two diagrams:
\begin{equation}  \label{eq:lemma-M} %has a number because we want to cite it in cn-proof for lemma M
\tikzmath{ \matrix [matrix of math nodes,column sep=1cm,row sep=8mm]
{|(a)| \tikzmath[scale=\displscale]{\fill[fill=vacuumcolor](0,0) rectangle (24,12);\draw(18,12) -- (0,12) -- (0,0) -- (18,0);
\draw[ultra thick] (18,12) -- (24,12) -- (24,0) -- (18,0);} %tikzmath
\pgfmatrixnextcell |(b)|\tikzmath[scale=\displscale]{\begin{scope} [yshift=7cm]
\draw (-1.5,-1.5) rectangle (25.5,13.5); \fill[fill=vacuumcolor](12,0) rectangle (24,12);\draw(18,12) -- (12,12) -- (12,0) -- (18,0);\draw[ultra thick] (18,12) -- (24,12) -- (24,0) -- (18,0);
\draw[->] (.3,1) -- (11,1); \draw[->] (.3,3.5) -- (11,3.5); \draw[->] (.3,6) -- (11,6); \draw[->] (.3,8.5) -- (11,8.5); \draw[->] (.3,11) -- (11,11);\end{scope}
\begin{scope} [yshift=-8cm]\fill[fill=vacuumcolor](0,0) rectangle (24,12);\draw(18,12) -- (0,12) -- (0,0) -- (18,0);\draw[ultra thick] (18,12) -- (24,12) -- (24,0) -- (18,0);\end{scope}} %tikzmath
\\ |(c)| \tikzmath[scale=\displscale]{\fill[fill=vacuumcolor](0,0) rectangle (24,12);\draw (0,0) rectangle (12,12)(18,12) -- (12,12)(12,0) -- (18,0);
\draw[ultra thick] (18,12) -- (24,12) -- (24,0) -- (18,0);} %tikzmath
\pgfmatrixnextcell |(d)| \tikzmath[scale=\displscale]{\draw (-1.5,-1.5) rectangle (25.5,13.5); \fill[fill=vacuumcolor](12,0) rectangle (24,12);
\draw(18,12) -- (12,12) -- (12,0) -- (18,0);\draw[ultra thick] (18,12) -- (24,12) -- (24,0) -- (18,0);\draw[->] (.3,1) -- (11,1);
\draw[->] (.3,3.5) -- (11,3.5);\draw[->] (.3,6) -- (11,6);\draw[->] (.3,8.5) -- (11,8.5);\draw[->] (.3,11) -- (11,11);} %tikzmath
\\ }; %matrix 
\draw[->] (a) -- node [above] {$\scriptstyle \cong$} (b);\draw[->] (c) -- node [above] {$\scriptstyle \hat\Upsilon^l_{H_r}$} (d);
\draw[->] (a) -- node [left] {$\scriptstyle \Omega_{\id_\cala,D}$} (c);\draw[->] (b) -- node [right] {$\scriptstyle \cong$} (d);}%tikzmath
\qquad \quad
\tikzmath{\matrix [matrix of math nodes,column sep=1cm,row sep=8mm]
{|(a)|\tikzmath[scale=\displscale]{\fill[fill=vacuumcolor](12,0) rectangle (36,12);\draw (18,0) -- (12,0)-- (12,12) -- (18,12);\draw[ultra thick] (18,12) -- (36,12) -- (36,0) -- (18,0); }%tikzmath
\pgfmatrixnextcell |(b)|\tikzmath[scale=\displscale]{\begin{scope} [yshift=7cm]\draw (10.5,-1.5) rectangle (37.5,13.5); \fill[fill=vacuumcolor](12,0) rectangle (24,12);\draw (18,0) -- (12,0)-- (12,12) -- (18,12);\draw[ultra thick] (18,12) -- (24,12) -- (24,0) -- (18,0);\draw[<-] (25,1) -- (35.7,1);\draw[<-] (25,3.5) -- (35.7,3.5);\draw[<-] (25,6) -- (35.7,6);\draw[<-] (25,8.5) -- (35.7,8.5);\draw[<-] (25,11) -- (35.7,11);\end{scope}\begin{scope} [yshift=-8cm]\fill[fill=vacuumcolor](12,0) rectangle (36,12);\draw (18,0) -- (12,0)-- (12,12) -- (18,12);\draw[ultra thick] (18,12) -- (36,12) -- (36,0) -- (18,0); \end{scope}} %tikzmath
\\ |(c)| \tikzmath[scale=\displscale]{\fill[fill=vacuumcolor](12,0) rectangle (36,12);\draw (18,0) -- (12,0)-- (12,12) -- (18,12);
\draw[ultra thick] (24,0) rectangle (36,12) (18,12) -- (24,12)(18,0) -- (24,0); }%tikzmath
\pgfmatrixnextcell|(d)|\tikzmath[scale=\displscale]{\draw (10.5,-1.5) rectangle (37.5,13.5); \fill[fill=vacuumcolor](12,0) rectangle (24,12);\draw (18,0) -- (12,0)-- (12,12) -- (18,12);\draw[ultra thick] (18,12) -- (24,12) -- (24,0) -- (18,0);\draw[<-] (25,1) -- (35.7,1);\draw[<-] (25,3.5) -- (35.7,3.5);\draw[<-] (25,6) -- (35.7,6);\draw[<-] (25,8.5) -- (35.7,8.5);\draw[<-] (25,11) -- (35.7,11);} %tikzmath
\\ }; %matrix 
\draw[->] (a) -- node [above]{$\scriptstyle \cong$} (b);\draw[->] (c) -- node [above]{$\scriptstyle \hat\Upsilon^r_{H_l}$} (d);
\draw[->] (a) -- node [left]{$\scriptstyle \Omega_{D,\id_{\calb}}$} (c);\draw[->] (b) -- node [right]{$\scriptstyle \cong$} (d);} %tikzmath 
\end{equation}
Here, the stacked pictures in the upper right-hand corners signify the Connes  fusion along the algebras 
$D(\tikzmath[scale=\textscale]{\draw (0,0) -- (18,0); \draw[ultra thick] (18,0) -- (24,0);})$ and 
$D(\tikzmath[scale=\textscale]{\draw (0,0) -- (6,0); \draw[ultra thick] (6,0) -- (24,0);})$ respectively, and we have suppressed
the isomorphism~\eqref{eq:fusion-over-A-or-B}.

\section{The interchange isomorphism}
\label{subsec:interchange}

In a 2-category, the {\em interchange law} says that the two ways of evaluating the diagram
\[
\tikzmath[scale=1.7]{\node[inner sep=5] (a) at (0,0) {$\cala$};\node[inner sep=5] (b) at (2,0) {$\calb$};\node[inner sep=5] (c) at (4,0) {$\calc$};
\draw[->] (a) to[out=55, in=125]node[above]{$\scriptstyle D$} (b);\draw[->] (a) to[out=-55, in=-125]node[below]{$\scriptstyle P$} (b);
\draw[->] (a) -- node[fill=white, inner sep=.5]{$\scriptstyle F$} (b);\draw[->] (b) -- node[fill=white, inner sep=.5]{$\scriptstyle G$} (c);
\draw[->] (b) to[out=55, in=125]node[above]{$\scriptstyle E$} (c);\draw[->] (b) to[out=-55, in=-125]node[below]{$\scriptstyle Q$} (c);
\node at (.95,.3) {$\Downarrow$};\node at (1.15,.3) {$ H$};\node at (2.95,.3) {$\Downarrow$};\node at (3.15,.3) {$ K$};
\node at (.95,-.3) {$\Downarrow$};\node at (1.15,-.3) {$ L$};\node at (2.95,-.3) {$\Downarrow$};\node at (3.15,-.3) {$ M$};
}%tikzmath
\]
are equal to each other:
if one first performs the two vertical compositions 
and then composes horizontally, or one first composes horizontally and then vertically, 
one should obtain the same result.
In our case, in a 3-category, the two ways of fusing four sectors
\begin{equation}\label{eq:   4squares}
\def\h{10}
\tikzmath[scale=.09]{
\useasboundingbox (-5.5,-23) rectangle (35,17);
\fill[spacecolor](0,0) rectangle(12,12);
\draw[thick, double] (6,0) -- (0,0) -- (0,12) -- (6,12);
\draw(6,12) -- (12,12) -- (12,0) -- (6,0); 
\draw (-3,6) node {$\cala$}(6,15) node {$D$}(6,-3) node {$F$}(6,6)node {$H$};\pgftransformxshift{\h}\draw(15,6) node {$\calb$};\pgftransformxshift{\h}
\pgftransformxshift{515}     \fill[spacecolor](0,0) rectangle(12,12);\draw (6,0) -- (0,0) -- (0,12) -- (6,12);
\draw[ultra thick](6,12) -- (12,12) -- (12,0) -- (6,0); 
\draw (15,6) node {$\calc$}(6,15) node {$E$}(6,6)node {$K$};
\pgftransformyshift{-513}     \fill[spacecolor](0,0) rectangle(12,12);
\draw (6,0) -- (0,0) -- (0,12) -- (6,12);
\draw[ultra thick](6,12) -- (12,12) -- (12,0) -- (6,0); 
\draw (-3,6) (15,6) node {$\calc$}(6,15) node {$G$}(6,-3) node {$Q$}(6,6)node {$M$};
\pgftransformxshift{-\h}\pgftransformxshift{-\h}
\pgftransformxshift{-515}     \fill[spacecolor](0,0) rectangle(12,12);
\draw[thick, double] (6,0) -- (0,0) -- (0,12) -- (6,12);\draw(6,12) -- (12,12) -- (12,0) -- (6,0); 
\draw (-3,6) node {$\cala$}(6,-3) node {$P$}(6,6)node {$L$};\pgftransformxshift{\h}\draw(15,6) node {$\calb$};}\,\,,
\end{equation}
namely
\(
\tikzmath[scale=\squarescale]
{
\fill[spacecolor] (0,3) -- (0,15) -- (12,15) -- (12,3);
\draw[thick, double] (6,3) -- (0,3) -- (0,15) -- (6,15);
\draw (6,15) -- (12,15) -- (12,3) -- (6,3); 
\fill[spacecolor] (0,-15) -- (0,-3) -- (12,-3) -- (12,-15);
\draw[thick, double] (6,-15) -- (0,-15) -- (0,-3) -- (6,-3);
\draw (6,-3) -- (12,-3) -- (12,-15) -- (6,-15); 
\draw (6,9)node {$H$} (6,-9)node {$L$}; 
\draw[<->] (-.6,9) to[out=180,in=180] (-.6,-9); 
\draw[<->] (-.6,6) to[out=200,in=160] (-.6,-6); 
\draw[<->] (-.6,2.5) to[out=230,in=130] (-.6,-2.5); 
\draw[<->] (2.7,2.5) to[out=255,in=105] (2.7,-2.5); 
\draw[<->] (12.2,9) to[out=0,in=0] (12.2,-9); 
\draw[<->] (12.2,6) to[out=-20,in=20] (12.2,-6); 
\draw[<->] (12.2,2.7) to[out=-50,in=50] (12.2,-2.7); 
\draw[<->] (9.3,2.7) to[out=-75,in=75] (9.3,-2.7); 
\draw[<->] (6,2.5) -- (6,-2.5); 
\draw[stealth-stealth] (12.4,15) -- (27.9,15);
\draw[stealth-stealth] (12.4,11) -- (27.9,11);
\draw[stealth-stealth] (16.1,7) -- (24.2,7);
\draw[stealth-stealth] (17.6,2.35) -- (22.7,2.35);
\draw[stealth-stealth] (12.4,-15) -- (27.9,-15);
\draw[stealth-stealth] (12.4,-11) -- (27.9,-11);
\draw[stealth-stealth] (16.1,-7) -- (24.2,-7);
\draw[stealth-stealth] (17.6,-2.35) -- (22.7,-2.35); 
\pgftransformxshift{805} 
\fill[spacecolor] (0,3) -- (0,15) -- (12,15) -- (12,3);
\draw (6,3) -- (0,3) -- (0,15) -- (6,15);
\draw[ultra thick](6,15) -- (12,15) -- (12,3) -- (6,3); 
\fill[spacecolor] (0,-15) -- (0,-3) -- (12,-3) -- (12,-15);
\draw (6,-15) -- (0,-15) -- (0,-3) -- (6,-3);
\draw[ultra thick](6,-3) -- (12,-3) -- (12,-15) -- (6,-15); 
\draw (6,9)node {$K$} (6,-9)node {$M$}; 
\draw[<->] (-.2,9) to[out=180,in=180] (-.2,-9); 
\draw[<->] (-.2,6) to[out=200,in=160] (-.2,-6); 
\draw[<->] (-.2,2.7) to[out=230,in=130] (-.2,-2.7); 
\draw[<->] (2.7,2.7) to[out=255,in=105] (2.7,-2.7); 
\draw[<->] (12.5,9) to[out=0,in=0] (12.5,-9); 
\draw[<->] (12.5,6) to[out=-20,in=20] (12.5,-6); 
\draw[<->] (12.5,2.6) to[out=-50,in=50] (12.5,-2.6);
\draw[<->] (9.3,2.6) to[out=-75,in=75] (9.3,-2.6); 
\draw[<->] (6,2.5) -- (6,-2.5);} %tikzmath
\) versus \(
\tikzmath[scale=\squarescale]{ 
\fill[spacecolor] (0,3) -- (0,15) -- (12,15) -- (12,3);
\draw[thick, double] (6,3) -- (0,3) -- (0,15) -- (6,15);
\draw (6,15) -- (12,15) -- (12,3) -- (6,3); 
\fill[spacecolor] (0,-15) -- (0,-3) -- (12,-3) -- (12,-15);
\draw[thick, double] (6,-15) -- (0,-15) -- (0,-3) -- (6,-3);
\draw (6,-3) -- (12,-3) -- (12,-15) -- (6,-15); 
\draw (6,9)node {$H$} (6,-9)node {$L$};
\draw[<->] (-.6,9) to[out=180,in=180] (-.6,-9); 
\draw[<->] (-.6,6) to[out=200,in=160] (-.6,-6); 
\draw[<->] (-.6,2.5) to[out=230,in=130] (-.6,-2.5); 
\draw[<->] (2.7,2.5) to[out=255,in=105] (2.7,-2.5); 
\draw[stealth-stealth] (12.4,15) -- (20.7,15);
\draw[stealth-stealth] (12.4,12.15) -- (20.7,12.15);
\draw[stealth-stealth] (12.4,9.3) -- (20.7,9.3);
\draw[stealth-stealth] (12.4,6.4) -- (20.7,6.4);
\draw[stealth-stealth] (12.4,3.5) -- (20.7,3.5);
\draw[stealth-stealth] (12.4,-15) -- (20.7,-15);
\draw[stealth-stealth] (12.4,-12.15) -- (20.7,-12.15);
\draw[stealth-stealth] (12.4,-9.3) -- (20.7,-9.3);
\draw[stealth-stealth] (12.4,-6.4) -- (20.7,-6.4);
\draw[stealth-stealth] (12.4,-3.5) -- (20.7,-3.5);  
\pgftransformxshift{15}\draw[<->] (6,2.8) -- (6,-2.8); 
\pgftransformxshift{15}\draw[<->] (9.5,2.8) -- (9.5,-2.8); 
\pgftransformxshift{15}\draw[<->] (13,2.8) -- (13,-2.8); 
\pgftransformxshift{15}\draw[<->] (16.55,2.8) -- (16.55,-2.8); 
\pgftransformxshift{15}\draw[<->] (20.1,2.8) -- (20.1,-2.8); 
\pgftransformxshift{15}\draw[<->] (23.6,2.8) -- (23.6,-2.8); 
\pgftransformxshift{510} 
\fill[spacecolor] (0,3) -- (0,15) -- (12,15) -- (12,3);
\draw (6,3) -- (0,3) -- (0,15) -- (6,15);
\draw[ultra thick](6,15) -- (12,15) -- (12,3) -- (6,3); 
\fill[spacecolor] (0,-15) -- (0,-3) -- (12,-3) -- (12,-15);
\draw (6,-15) -- (0,-15) -- (0,-3) -- (6,-3);
\draw[ultra thick](6,-3) -- (12,-3) -- (12,-15) -- (6,-15); 
\draw (6,9)node {$K$} (6,-9)node {$M$}; 
\draw[<->] (12.5,9) to[out=0,in=0] (12.5,-9); 
\draw[<->] (12.5,6) to[out=-20,in=20] (12.5,-6); 
\draw[<->] (12.5,2.6) to[out=-50,in=50] (12.5,-2.6); 
\draw[<->] (9.3,2.6) to[out=-75,in=75] (9.3,-2.6);} %tikzmath
\),\medskip\\
should yield the same answer up to natural isomorphism. 
In other words, we need a unitary natural transformation\footnote{Here, as in the $1\boxtimes 1$-isomorphism $\Omega$, we restrict to the groupoid parts of $\CN_1$ and $\CN_0$.}
\begin{equation} \label{Exc}
\big( \CN_2\times_{ \CN_1^f}  \CN_2\big)\times_{ \CN_0^f}
\big( \CN_2\times_{ \CN_1^f}  \CN_2\big)\,\tworarrow\,  \CN_2
\end{equation}
between the functors
$\mathsf{fusion_h}\circ (\mathsf{fusion_v}\times \mathsf{fusion_v})$ and $\mathsf{fusion_v}\circ (\mathsf{fusion_h}\times \mathsf{fusion_h})\circ \tau$,
where
\[
\begin{split}
\tau\,\,\,:\,\,\,\big( \CN_2\times_{ \CN_1^f} & \CN_2\big)\times_{ \CN_0^f}
\big( \CN_2\times_{ \CN_1^f}  \CN_2\big)\\
\to
\big( \CN_2&\times_{ \CN_0^f}  \CN_2\big)\times_{ \CN_1^f\times_{ \CN_0^f}  \CN_1^f}
\big( \CN_2\times_{ \CN_0^f}  \CN_2\big)
\end{split}
\]
is the isomorphism that exchanges the two middle factors.

More concretely, given sectors ${}_DH_F$, ${}_EK_G$, ${}_FL_P$, ${}_GM_Q$ as in (\ref{eq:   4squares}),
we are looking for a unitary isomorphism
\begin{equation}\label{eq: interchange law -- concretely}
\big(H\boxtimes_F L\big)\boxtimes_{\calb} \big(K\boxtimes_G M\big)
\stackrel{\scriptscriptstyle \cong}{\longrightarrow} 
\big(H\boxtimes_{\calb} K\big)\boxtimes_{F\circledast_\calb G} \big(L\boxtimes_{\calb} M\big)
\end{equation}
of $D\circledast_\calb E$\,-\,$P\circledast_\calb Q$\,-sectors.

We may view $(H, K, L, M)$ as an object of the category
\[
\mathsf C:=\quad
\tikzmath{
\node[anchor=east] at (0,1) {$
F\Big(\,\,\tikzmath[scale=\displscale]
{\draw[thick, double] (6,0) -- (0,0) -- (0,6);\draw  (12,12) -- (12,0) -- (6,0); }%tikzmath      
\,\,\Big)\text{-modules}\,\,\,\times\,\,\,
G\Big(\,\,
\tikzmath[scale=\displscale]
{\draw (6,0) -- (0,0) -- (0,12);\draw[ultra thick]  (12,6) -- (12,0) -- (6,0); }%tikzmath      
\,\,\Big)\text{-modules}$};
\node[anchor=east] at (0,0) {$ \times\,\,\,\,\,
F\Big(\,\,\tikzmath[scale=\displscale]
{\draw[thick, double] (0,6) -- (0,12) -- (6,12);\draw  (6,12) -- (12,12) -- (12,0); }%tikzmath      
\,\,\Big)\text{-modules}\,\,\,\times\,\,\,
G\Big(\,\,\tikzmath[scale=\displscale]
{\draw (6,12) -- (0,12) -- (0,0);\draw[ultra thick]  (12,6) -- (12,12) -- (6,12); }%tikzmath      
\,\,\Big)\text{-modules}$};
}\,\,\,.
\]
The forgetful functor $( \CN_2\times_{\{F\}}  \CN_2)\times ( \CN_2\times_{\{G\}}  \CN_2)\to \mathsf C$ is faithful.
In order to construct the natural transformation \eqref{Exc}, it is therefore enough
to produce corresponding natural transformations
\begin{equation}\label{CtwoHS}
\mathsf C\hspace{.15cm} \tworarrow\hspace{.2cm} \text{Hilbert Spaces}
\end{equation}
for every $F$ and $G$.
The fact that \eqref{eq: interchange law -- concretely} 
intertwines the actions of 
$D(\,\tikzmath[scale=\textscale]
{\useasboundingbox(0,3) rectangle (12,12);
\draw[thick, double] (6,12) -- (0,12) -- (0,6);\draw  (12,12) -- (6,12);}\,)$, %tikzmath 
$E(\,\tikzmath[scale=\textscale]
{\useasboundingbox(0,3) rectangle (12,12);
\draw[ultra thick] (6,12) -- (12,12) -- (12,6);\draw  (0,12) -- (6,12);}\,)$ %tikzmath 
$P(\,\tikzmath[scale=\textscale]
{\useasboundingbox(0,0) rectangle (12,9);
\draw[thick, double] (6,0) -- (0,0) -- (0,6);\draw  (12,0) -- (6,0);}\,)$, and %tikzmath 
$Q(\,\tikzmath[scale=\textscale]
{\useasboundingbox(0,0) rectangle (12,9);
\draw[ultra thick] (6,0) -- (12,0) -- (12,6);\draw  (0,0) -- (6,0);}\,)$, %tikzmath 
i.e., that it is a morphism of $D\circledast_\calb E$\,-\,$P\circledast_\calb Q$\,-sectors,
will then follow from the naturality of \eqref{CtwoHS}.

The object $\big(H_0(F),\,H_0(G),\,H_0(F),\,H_0(G)\big)\in\mathsf C$
consists of faithful modules; the obvious analog of Lemma \ref{lem: Mod(A1) x Mod(A2) --> Mod(B)} (itself a generalization of \ref{lem: NT between module categories}) applies,
and so it is enough to construct the natural transformation (\ref{CtwoHS}) on this object.
Using the isomorphisms \eqref{eq: Re: Omega} and (\ref{eabd}), the required transformation is given by
\[
\begin{split}
\big(H_0&(F)\boxtimes_F H_0(F)\big)\boxtimes_{\calb} \big(H_0(G)\boxtimes_G H_0(G)\big)\xrightarrow{\scriptscriptstyle \cong}\,\,
H_0(F)\boxtimes_{\calb} H_0(G)\\
\xrightarrow{\,\Omega^{-1}}\,\,&
H_0\big(F\circledast_\calb G\big)
\xrightarrow{\scriptscriptstyle \cong}\,\,
H_0\big(F\circledast_\calb G\big)\boxtimes_{F\circledast_\calb G} H_0\big(F\circledast_\calb G\big)\\
\xrightarrow{\Omega \,\boxtimes\, \Omega}\,&
\big(H_0(F)\boxtimes_{\calb} H_0(G)\big)\boxtimes_{F\circledast_\calb G} \big(H_0(F)\boxtimes_{\calb} H_0(G)\big).
\end{split}
\]
Using the compatibility of $\Omega$ with the monoidal structure (Proposition~\ref{prop:omega-is-monodial}),
the same can be deduced for the interchange isomorphism: \eqref{Exc} is a monoidal natural transformation.

 %\ignore{  %ignore3

%---------appendix----------
%---------appendix----------
%---------appendix----------
\appendix 

\renewcommand{\thesection}{\Alph{chapter}.{\Roman{section}}}

%---------appendix----------
%---------appendix----------
%---------appendix----------

\chapter{Components for the 3-category of conformal nets}
%%\addtocontents{toc}{\SkipTocEntry}

The purpose of our next paper \cite{BDH(3-category)} is to construct the symmetric monoidal 3-category of conformal nets.
More precisely, we will construct an internal dicategory object $(\mathsf C_0,\mathsf C_1,\mathsf C_2)$ in the 2-category of symmetric monoidal categories \cite[Def. 3.3]{Douglas-Henriques(Internal-bicategories)}.\footnote{\label{footn:dicategory} Following \cite{Douglas-Henriques(Internal-bicategories)}, a dicategory is a bicategory where the associators $(fg)h\Rightarrow f(gh)$ are strict (i.e., they are identity 2-morphisms) but the unitors $1f\Rightarrow f$ and $f1\Rightarrow f$ are not necessarily strict.}
%\AB{Why do we use $\mathsf C$ and not $\CN$ here?}
In this book, we have developed the essential ingredients of that 3-category.
These ingredients are:

\begin{list}{$\bullet$}{\itemsep=1ex \leftmargin=3ex \labelsep=1.5ex \topsep=2ex}
\item A symmetric monoidal category $\mathsf C_0$ whose objects are the conformal nets with finite index, and whose morphisms are the isomorphisms between them.
\item A symmetric monoidal category $\mathsf C_1$ whose objects are the defects between conformal nets of finite index, and whose morphisms are the isomorphisms.
\item A symmetric monoidal category $\mathsf C_2$ whose objects are sectors (between defects between conformal nets of finite index), and whose morphisms are those homomorphisms of 
sectors that cover isomorphisms of defects and of conformal nets.
\item These come with source and target functors
$\mathsf{s},\mathsf{t}:\mathsf C_1\to \mathsf C_0$ and $\mathsf{s},\mathsf{t}:\mathsf C_2\to \mathsf C_1$
subject to the identities $\mathsf{s}\circ\mathsf{s}=\mathsf{s}\circ\mathsf{t}$ and $\mathsf{t}\circ\mathsf{s}=\mathsf{t}\circ\mathsf{t}$.
\item A symmetric monoidal functor $\mathsf{composition}: \mathsf C_1 \times_{\mathsf C_0} \mathsf C_1 \to \mathsf C_1$
that describes the composition (or fusion) of defects \eqref{eq: composition -- this time between the correct categories}.
That the composition of defects exists is the content of Theorem \ref{thm:fusion-of-defects-is-defect}.
\item A symmetric monoidal functor
$\mathsf{fusion_h}: \mathsf C_2 \times_{\mathsf C_0} \mathsf C_2 \to \mathsf C_2$
providing the horizontal composition of sectors \eqref{eq: The functor fusion_h}.
\item A symmetric monoidal functor $\mathsf{fusion_v}: \mathsf C_2 \times_{\mathsf C_1} \mathsf C_2 \to \mathsf C_2$
providing the vertical composition of sectors \eqref{eq: functor of vertical fusion}.
\item Symmetric monoidal functors $\mathsf{identity}:\mathsf C_0 \to \mathsf C_1$ and $\mathsf{identity_v}:\mathsf C_1 \to \mathsf C_2$ providing the identity defects \eqref{eq: identity functor} and identity sectors \eqref{eq: functor identity_v}.
\item A monoidal natural transformation
$\mathsf{associator}\colon \mathsf C_1 \times_{\mathsf C_0} \mathsf C_1\times_{\mathsf C_0} \mathsf C_1\, \tworarrow\, \mathsf C_1$
that is an associator for $\mathsf{composition}$ \eqref{eq: associator natural transformation}.
\item A monoidal natural transformation
$\mathsf{associator_h}:\mathsf C_2\times_{\mathsf C_0}\mathsf C_2\times_{\mathsf C_0}\mathsf C_2\,\tworarrow\, \mathsf C_2$
that is an associator for $\mathsf{fusion_h}$ \eqref{eq: nt associator_h}
\item A monoidal natural transformation
$\mathsf{associator_v}:\mathsf C_2\times_{\mathsf C_1}\mathsf C_2\times_{\mathsf C_1}\mathsf C_2\,\tworarrow\, \mathsf C_2$
that is an associator for $\mathsf{fusion_v}$ \eqref{eq: nt associator_v}.
\item Two monoidal natural transformations
$\mathsf{unitor}_\mathsf{t}, \mathsf{unitor}_\mathsf{b}:\mathsf C_2\tworarrow \mathsf C_2$
that relate $\mathsf{fusion_v}$ and $\mathsf{identity_v}$ \eqref{eq: `top' and `bottom' identity nt}.
\item The\vspace{-.5ex} coherences for $\mathsf{composition}$ and $\mathsf{identity}$ are ``weak'':
instead of natural transformations $\mathsf C_1\tworarrow \mathsf C_1$, we have four
functors $\mathsf{unitor}_\mathsf{tl},\mathsf{unitor}_\mathsf{tr},\mathsf{unitor}_\mathsf{bl},\mathsf{unitor}_\mathsf{br}:\mathsf C_1 \to \mathsf C_2$ (\ref{eq: unitor_tl}, \ref{eq: unitor_tl+}).
This weakness, which is an intrinsic feature of conformal nets and defects, is what forces us to use the notion of internal dicategory \cite[Def.\!~3.3]{Douglas-Henriques(Internal-bicategories)} instead of the simpler notion of internal 2-category \cite[Def.\!~3.1]{Douglas-Henriques(Internal-bicategories)}.
\item The coherence between $\mathsf{composition}$ and $\mathsf{identity_v}$ is a monoidal natural\vspace{-.5ex} transformation $\mathsf C_1\times_{\mathsf C_0} \mathsf C_1\, \tworarrow\, \mathsf C_2$ (\ref{eq: 1x1iso}, \ref{eq: Re: Omega}). The difficult construction of this coherence forces us to restrict the morphisms in the category $\mathsf C_1$ of defects to be isomorphisms.  We have called this natural transformation the ``$1 \boxtimes 1$-isomorphism'' because its domain is a Connes fusion of two identity sectors.
\item Finally, the fundamental interchange isomorphism, a coherence between $\mathsf{fusion_h}$ and $\mathsf{fusion_v}$, is a monoidal natural transformation 
\[
\big(\mathsf C_2\times_{\mathsf C_1} \mathsf C_2\big)\times_{\mathsf C_0}
\big(\mathsf C_2\times_{\mathsf C_1} \mathsf C_2\big)\,\tworarrow\, \mathsf C_2
\quad \eqref{Exc}.
\]
Its definition relies crucially on the $1 \boxtimes 1$-isomorphism.
%\item A compatibility
%  between the $1 \boxtimes 1$-isomorphism and the unit map for
%  identity defects (Lemma~\ref{lem:omega-and-upsilon}).
%\item Section~\ref{sec:quasi-identities} contains the construction of two further
%  structure maps concerning units that will be
%  useful in the sequel~\cite{BDH(3-category)}. 
\end{list}

\chapter{Von Neumann algebras}
\label{app:vN-algebras}

    Given a Hilbert space $H$, we let $\bfB(H)$ denote its algebra of 
    bounded operators. 
    The ultraweak topology on $\bfB(H)$ is the topology of pointwise 
    convergence with respect to the pairing with its predual, 
    the trace class operators.

  \begin{definition}
    A von Neumann algebra, is a topological *-algebra (without any compatibility between the topology and the algebra structure) that is embeddable 
    as closed subalgebra of $\bfB(H)$ with respect to the ultraweak topology.
  \end{definition}

  The spatial tensor product $A_1\bar\otimes A_2$ of von Neumann algebras 
  $A_i\subset \bfB(H_i)$ is the closure in $\bfB(H_1\otimes H_2)$ of 
  their algebraic tensor product $A_1\otimes_\alg A_2$.

  \begin{definition}
    Let $A$ be a von Neumann algebra.
    A left (right) $A$-module is a Hilbert space $H$ equipped with a 
    continuous homomorphism from $A$ (respectively $A^\op$) to $\bfB(H)$.
    We will use the notation ${}_AH$ (respectively $H_A$) to denote the 
    fact that $H$ is a left (right) $A$-module.
  \end{definition}
 
We now concisely recapitulate those aspects of  \cite{BDH(Dualizability+Index-of-subfactors), BDH(nets), BDH(modularity)} that are used in the present book,
along with some other general facts about von Neumann algebras.
For further details, we refer the reader
to \cite[\S 2 and \S 6]{BDH(Dualizability+Index-of-subfactors)} for Section \ref{subsec:Haagerup-L^2},
to \cite[\S 3]{BDH(Dualizability+Index-of-subfactors)} for Section \ref{subsec:Connes-fusion},
to \cite[Appendix A]{BDH(modularity)} for Section \ref{subsec:cyclic-fusion}, 
to \cite[\S1.3]{BDH(nets)} for Section \ref{subsec:fusion+fiber-prod}, 
to \cite[\S 4]{BDH(Dualizability+Index-of-subfactors)} for Section \ref{subsec:dualizability}, 
and to \cite[\S 5]{BDH(Dualizability+Index-of-subfactors)} for Section \ref{subsec:stat-dim+minimal-index}.

\section{The Haagerup $L^2$-space}
    \label{subsec:Haagerup-L^2}
%\addtocontents{toc}{\SkipTocEntry}
%    (See~\cite[\S 2]{BDH(Dualizability+Index-of-subfactors)} for further details.) 
    A faithful left module $H$ for a von Neumann algebra $A$ is called a 
    \emph{standard form} if it comes equipped with 
    an antilinear isometric involution $J$ and a selfdual cone $P\subset H$ 
    subject to the properties
    \begin{enumerate}
	\item $J A J = A'$ on $H$,
	\item $J c J = c^*$ for all $c \in Z(A)$,
	\item $J \xi = \xi$ for all $\xi \in P$,
	\item $a J a J (P) \subseteq P$ for all $a \in A$
    \end{enumerate}
    where $A'$ denotes the commutant of $A$.
    The operator $J$ is called the \emph{modular conjugation}.
    The standard form is an $A$--$A$-bimodule, 
    with right action $\xi a := J a^* J \xi$.
    It is  unique up to unique unitary 
    isomorphism~\cite{Haagerup(1975standard-form)}.
    
    The space of continuous linear functionals $A\to \IC$ forms a Banach 
    space $A_*=L^1(A)$ called the predual of $A$.
    It comes with a positive cone 
    $L^1_+(A):=\{\phi\in A_*\,|\,\phi(x)\ge0\,\, \forall x\in A_+\}$ 
    and two commuting $A$-actions given by $(a\phi b)(x):=\phi(bxa)$.
    Given a von Neumann algebra $A$ there is a canonical
    construction of a standard form 
    for $A$~\cite{Kosaki(PhD-thesis)}.
    It is the completion of 
    $$\bigoplus_{\phi\in L^1_+(A)} \IC\textstyle\sqrt{\phi}$$
    with respect to some pre-inner product, and is denoted $L^2(A)$.
    The positive cone in $L^2A$ is given by 
    $L^2_+(A):=\{\sqrt\phi\,\,|\,\phi\in L^1_+(A)\}$.
    The modular conjugation $J_A$ 
    sends $\lambda\sqrt{\phi}$ to $\bar\lambda\sqrt{\phi}$ for 
    $\lambda\in\IC$.

    If $f \colon A \to B$ is an isomorphism,
    then we write $L^2(f) \colon L^2(A) \xrightarrow{\cong} L^2(B)$
    for the induced unitary isomorphism.
    The standard form is in fact functorial with respect to a bigger class
    of maps; see Section~\ref{subsec:dualizability}. 

    %\begin{remark} \label{rem: L^2(A^op)}
      %There is an isomorphism $L^2(A) \cong L^2(A^\op)$ under which the 
      %left action of $A$ on $L^2A$
      %corresponds to the right action of $A^\op$ on $L^2(A^\op)$, 
      %and the right action of $A$ on $L^2A$
      %corresponds to the left action of $A^\op$ on $L^2(A^\op)$.
    %\end{remark}

\section{Connes fusion} \label{subsec:Connes-fusion}
%\addtocontents{toc}{\SkipTocEntry}
%   (See~\cite[\S 3]{BDH(Dualizability+Index-of-subfactors)} for further details.)
   \begin{definition}
     Given two modules $H_A$ and ${}_AK$ over a von Neumann algebra $A$,
     their Connes fusion $H\boxtimes_A K$ is the 
     completion of
     \begin{equation}\label{eq:def of CFus}
         \mathrm{Hom}\big(L^2(A)_A,H_A\big)\otimes_A L^2(A)\otimes_A 
                \mathrm{Hom}\big({}_AL^2(A),{}_AK\big)
     \end{equation}
     with respect to the inner product
     $\big\langle\phi_1\otimes \xi_1\otimes 
        \psi_1,\,\phi_2\otimes \xi_2\otimes \psi_2\big\rangle:=
      \big\langle(\phi_2^*\phi_1)\xi_1(\psi_1\psi_2^*),\xi_2\big\rangle$ \cite{Connes(Geometrie-non-commutative), 
       Sauvageot(Sur-le-produit-tensoriel-relatif), 
       Wassermann(Operator-algebras-and-conformal-field-theory)}.
     Here, we have written the action of $\psi_i$ on the right, 
     which means that $\psi_1\psi_2^*$ stands for the composite 
     $L^2(A)\xrightarrow{\psi_1} K\xrightarrow{\psi_2^*} L^2(A)$.
   \end{definition}

   The $L^2$ space is a unit for Connes fusion in the sense that there are 
   canonical unitary isomorphisms 
   \begin{equation}\label{eq: unitality of CFusion}
     {}_A L^2(A) \boxtimes_A H \cong {}_A H \qquad \text{and} 
          \qquad H\boxtimes_A L^2(A)_A \cong H_A
   \end{equation}
   defined by $\phi \otimes \xi \otimes \psi \mapsto (\phi \xi) \psi$
   and $\phi \otimes \xi \otimes \psi \mapsto \phi (\xi \psi)$.
   If $f \colon A \to B$ is an isomorphism of von Neumann algebras,
   $H_A$ and ${}_B K$ are modules, then 
   \begin{equation} \label{eq:fusion-over-A-or-B}
      (H_A)_{f^{-1}} \boxtimes_B {}_B K  \cong  H_A \boxtimes_A {}_f({}_B K)
   \end{equation}
   via $\phi \otimes \xi \otimes \psi \mapsto 
   (\phi\circ L^2(f)) \otimes L^2(f)^{-1}(\xi) \otimes (\psi \circ L^2(f))$.
   Here the indices ${}_{f^{-1}}$ and ${}_f$ indicate restrictions
   of actions along the isomorphisms $f$ and $f^{-1}$. 
   Using~\eqref{eq: unitality of CFusion}  $L^2(f)$ can be expressed as
   \begin{equation}
     \label{eq:L2(f)-factorization}
     L^2(A) \cong L^2(A)_{f^{-1}} \boxtimes_B L^2(B) \cong 
        L^2(A) \boxtimes_A {}_f L^2(B) \cong L^2(B). 
   \end{equation}
   
\section{Cyclic fusion}
    \label{subsec:cyclic-fusion}
%\addtocontents{toc}{\SkipTocEntry}
  
  Let $n\ge 2$ be some number.
  For $i\in\{1,\ldots,n\}$, let $A_i$ be a von Neumann algebras, 
  and let $H_i$ be 
  $A_i^{\phantom{\op}}\!\!\!\bar\otimes\, A_{i+1}^\op$-modules (cyclic numbering).
  Then for each $i,j\in\{1,\ldots,n\}$, we can form the fusion of
  $H_i\boxtimes_{A_{i+1}}\ldots\boxtimes_{A_{j-1}}H_{j-1}$
  (cyclic numbering) with
  $H_j\boxtimes_{A_{j+1}}\ldots\boxtimes_{A_{i-1}}H_{i-1}$
  over the algebra $A_i^\op\,\bar\otimes\, A_j^{\phantom{\op}}\!\!\!$.
  The Hilbert space
  \[
   \big(H_i\boxtimes_{A_{i+1}}\ldots\boxtimes_{A_{j-1}}H_{j-1}\big)
   \underset{A_i^\op\,\bar\otimes\, A_j^{\phantom{\op}}\!\!\!}\boxtimes
   \big(H_j\boxtimes_{A_{j+1}}\ldots\boxtimes_{A_{i-1}}H_{i-1}\big)
  \]
  is independent, up to canonical unitary isomorphism, 
  of the choices of $i$ and $j$~\cite[Appendix A]{BDH(modularity)}.
%  \ABcomm{Add precise reference.}
  We call the above Hilbert space the cyclic fusion of the $H_i$'s, 
  and denote it by
  \begin{equation*}
   \label{eq:cyc-fus}
    \tikzmath{
        \node (a) at (0,0) 
         {$H_1\,\boxtimes_{A_2}\cdots\,\boxtimes_{A_n}
            \!H_n\,\,\boxtimes_{A_1}$};
        \def\dd{.4}
        \def\ll{.35}
        \def\rr{.25}
        \draw[dashed, rounded corners = 5] 
          (a.east) -- ++(\rr,0) -- ++(0,-\dd) -- 
              ($(a.west) + (-\ll,-\dd)$) -- +(0,\dd) -- (a.west);
    } \quad .
  \end{equation*}

   %\begin{lemma}
   %  [{\cite[Lemma~2.6]{Haagerup(1975standard-form)}}]
   %   \label{lem: L^2(pAp) = pL^2(A)p}
   %  Given any projection $p\in A$, there is a canonical unitary 
   %  isomorphism $L^2(pAp)\cong p(L^2A)p$
   %  sending $\sqrt\phi\in L^2(pAp)$ to $\sqrt{\phi\circ E}$, 
   %  where $E(a)=pap$.
   %\end{lemma}

\section{Fusion and fiber product of von Neumann algebras}
    \label{subsec:fusion+fiber-prod}
%\addtocontents{toc}{\SkipTocEntry}
  
  \begin{definition}\label{def: Fusion of vN alg}
    Let $A\leftarrow C^\op$, $C\to B$ be two homomorphisms between von 
    Neumann algebras, and let ${}_AH$ and ${}_BK$ be faithful modules.
    Viewing $H$ as a right $C$-module, we may form the Connes 
    fusion $H\boxtimes_C K$.
    One then defines the fusion of $A$ and $B$ over $C$ as 
   \begin{equation}\label{eq:def fusion of algebras}
      A \circledast_C B := (A\cap {C^\op\hspace{.2mm}}'\hspace{.2mm})
           \vee(C'\cap B)\,\subset\, \bfB(H\boxtimes_C K),
   \end{equation}
   where the commutants of $C^\op$ and $C$ are taken in $H$ and $K$, 
   respectively. %, and algebra generated by $A^\op\cap C'$ and $B\cap C'$ 
   %is computed in $\bfB(H\boxtimes_C K)$.
  \end{definition}

  The fusion is independent, up to canonical isomorphism, of the choice of modules $H$ and 
  $K$~\cite[\propAoastCBindependentofHandK]{BDH(nets)}.
  If those modules are not faithful, 
  then there is still an action, albeit non-faithful, 
  of $A \circledast_C B$ on 
  $H\boxtimes_C K$~\cite[\lemextendstoACB]{BDH(nets)}.
  %The operation $\circledast$ is compatible with spatial tensor 
  %product in the sense that given algebras 
  %$A_1$, $B_1$, $C_1$, $A_2$, $B_2$, $C_2$ and homomorphisms 
  %$A_1\leftarrow C_1^\op$, $C_1\to B_1$, $A_2\leftarrow C_2^\op$, 
  %$C_2\to B_2$,
  %there is a canonical isomorphism
  %\begin{equation}\label{eq: compatibility between circledast and bartimes}
  % (A_1\,\bar\otimes\,A_2) \circledast_{C_1\,\bar\otimes\,C_2} 
  %        (B_1\,\bar\otimes\,B_2) \cong
  % (A_1 \circledast_{C_1} B_1)\,\bar\otimes\,(A_2 \circledast_{C_2} B_2).
  %\end{equation}
  Note that the operation $\circledast$ is not associative~\cite[\warnpoorformalproperties]{BDH(nets)}. 
  
  The fiber product of von Neumann algebras (introduced 
  in~\cite{Sauvageot(1985Produits-tensoriels)} when $C$ is abelian, and in~\cite[Def.~10.2.4]{Timmermann(Invitation-2-quantum-groups)} in general)
  is better behaved:
  
  \begin{definition} 
    \label{def:fiber-product}
    In the situation of Definition~\ref{def: Fusion of vN alg},
    the fiber product of $A$ and $B$ over $C$ is given by
    \[
        A \ast_C B := (A'\otimes_\alg B')',
    \]
    where the commutants $A'$ and $B'$ are taken in $\bfB(H)$ and 
    $\bfB(K)$ respectively, while the last one is taken in
    $\bfB(H\boxtimes_C K)$. 
  \end{definition}

  The fiber product is independent of the choice of modules $H$ and $K$ and there is an associator $A \ast_C (B \ast_D E) \to  (A \ast_C B) \ast_D E$
  that satisfies the pentagon identity~\cite[Prop.~10.2.8]{Timmermann(Invitation-2-quantum-groups)}.

  If $C = \IC$, then $A \ast_\IC B = A \circledast_\IC B$
  is the spatial tensor product $A \,\bar \ox\, B$ of von Neumann algebras.
  
\section{Compatibility with tensor products}
      \label{subsec:compatibility-ox}
%\addtocontents{toc}{\SkipTocEntry}
  There is a canonical isomorphism~\cite{Miura-Tomiyama(1984),
                                     Schmitt-Wittstock(1982)} 
  \begin{equation*}
        L^2(A) \otimes L^2(B)  \cong L^2(A \,\bar \ox\, B)
  \end{equation*}
  that sends $\sqrt{\phi} \,\otimes \sqrt{\psi}$ to 
  $\sqrt{\phi \otimes \psi}$. 
  This isomorphism provides a natural compatibility between Connes fusion and
  tensor products,
  \begin{equation*}
    (H_1 \boxtimes_A H_2) \otimes (K_1 \boxtimes_B K_2) 
      \cong (H_1 \otimes K_1) \boxtimes_{A \bar \ox B} (H_2 \otimes K_2).
  \end{equation*}
  This isomorphism can then be used to construct
  natural compatibility isomorphisms between the 
  spatial tensor product and the fusion, respectively the fiber product,
  of von Neumann algebras:
  \begin{eqnarray*}
    (A_1 \circledast_{C_1} B_1) \,\bar \ox\, (A_2 \circledast_{C_2} B_2) 
     & \cong & (A_1 \,\bar \ox\, A_2) \circledast_{C_1 \bar \ox C_2} 
           (B_1 \,\bar \ox\, B_2), \\ 
    (A_1 \ast_{C_1} B_1) \,\bar \ox\, (A_2 \ast_{C_2} B_2) 
     & \cong & (A_1 \,\bar \ox\, A_2) \ast_{C_1 \bar \ox C_2} (B_1 \,\bar \ox\, B_2).
  \end{eqnarray*}
  Here, those isomorphisms also rely on the equation
  $(A \,\bar \ox\, B)' = A' \,\bar \ox\, B'$ (Tomita's commutator theorem \cite[Thm.~12.3]{Takesaki:TomitasTheory}).
  
\section{Dualizability} \label{subsec:dualizability}
%\addtocontents{toc}{\SkipTocEntry}
%  (See~\cite[\S 4]{BDH(Dualizability+Index-of-subfactors)} for further details.)
  A von Neumann algebra whose center is $\IC$ is called a \emph{factor}.
  Von Neumann algebras with finite-dimensional center are finite direct sums of factors.
  \begin{definition}\label{def:dual}
    Let $A$ and $B$ be von Neumann algebras with finite-dimensional center. Given an $A$--$B$-bimodule $H$, we say that a 
    $B$--$A$-bimodule $\bar H$ is dual to $H$ if it comes equipped with maps
    \begin{equation}\label{eq:duality maps}
       R \colon {}_AL^2(A)_A \rightarrow {}_AH\boxtimes_B \bar H_A\qquad\quad
       S \colon {}_BL^2(B)_B \rightarrow  {}_B\bar H\boxtimes_A H_B
    \end{equation}
    subject to the duality equations $(R^*\otimes 1)(1\otimes S)=1$, 
    $(S^*\otimes 1)(1\otimes R)=1$, and to
    the normalization ${R^*(x\otimes 1)R} = {S^*(1\otimes x)S}$ for all 
    $x\in \mathrm{End}({}_AH_B)$.
    A bimodule whose dual module exists is called \emph{dualizable}.
  \end{definition}

  If ${}_AH_B$ is a dualizable bimodule, 
  then its dual bimodule is well defined up to canonical unitary 
  isomorphism~\cite[Thm. 4.22]{BDH(Dualizability+Index-of-subfactors)}.
  Moreover, the dual bimodule is canonically isomorphic to the complex 
  conjugate Hilbert space $\overline H$, with the actions 
  $b\bar \xi a:= \overline{a^* \xi b^*}$~\cite[Cor. 6.12]
         {BDH(Dualizability+Index-of-subfactors)}.

  A homomorphism $f \colon A \to B$ between von Neumann algebras with 
  finite-dim\-ensional center is said to be \emph{finite} if the associated 
  bimodule ${}_A L^2(B)_B$ is dualizable.
  If $f \colon A \to B$ is a finite homomorphism, then there is an induced map 
  $L^2(f) \colon L^2(A) \to L^2(B)$, and we have 
  $L^2(f \circ g) = L^2(f) \circ L^2(g)$.
  In other words,  Haagerup's $L^2$-space is functorial with  respect to finite
  homomorphisms~\cite{BDH(Dualizability+Index-of-subfactors)}.
  The map $L^2(f)$ is bounded and $A$-$A$-bilinear,
  but usually not isometric.  

  %\Begin{lemma}[{\cite[Lemma 4.6]{BDH(Dualizability+Index-of-subfactors)}}]
  % \label{lem: characterization of duals}
  % Let ${}_AH_B$ and ${}_BK_A$ be dualizable irreducible bimodules. Then
  % \[
  %   \mathrm{Hom}_{A,A}\big(H\boxtimes_BK,L^2(A)\big)=\begin{cases}
  %      \IC&\text{if ${}_BK_A\cong{}_B\bar H_A$}\\
  %      0&\text{otherwise}\\
  %   \end{cases}
  % \]
  %\end{lemma}

  % \begin{lemma}[{\cite[Lemma 4.10]{BDH(Dualizability+Index-of-subfactors)}}]
  %     \label{lem: endolgebra dim<oo}
  %    If ${}_AH_B$ is a dualizable bimodule, then its algebra of 
  %    $A$-$B$-bilinear endomorphisms is finite-dimensional.
  % \end{lemma}

\section{Statistical dimension and minimal index}
     \label{subsec:stat-dim+minimal-index}
%\addtocontents{toc}{\SkipTocEntry}
%    (See~\cite[\S 5]{BDH(Dualizability+Index-of-subfactors)} for further details.)
    The statistical dimension of a dualizable bimodule ${}_AH_B$ between factors is given by
    \begin{equation*}
      \qquad\dim ({}_A H_B) := R^*R = S^* S\,\in\,\IR_{\ge 0}
    \end{equation*}
    where $R$ and $S$ are as in \eqref{eq:duality maps}.
    For non-dualizable bimodules, one declares $\dim({}_AH_B)$ to be $\infty$.
    If $A = \oplus A_i$ and $B = \oplus B_j$ are finite direct sums
    of factors, then we can 
    decompose $H = \oplus H_{ij}$ as a direct sum of $A_i$--$B_j$-bimodules
    and define the matrix-valued statistical dimension
    \[
      \dim({}_AH_B)_{ij} := \dim({}_{A_i}{H_{ij}}\,{}_{B_j}).
    \]
    This matrix-valued dimension is additive with respect to addition of modules
    and multiplicative with respect to Connes fusion~\cite[\S 5]{BDH(Dualizability+Index-of-subfactors)}:
    \begin{gather}
      \label{eq:dim-and-sum}
      \dim ({}_AH_B \oplus {}_AK_B ) = \dim ({}_AH_B) + \dim ( {}_AK_B )
\\
      \label{eq:dim-and-Connes-fusion}
      \dim ({}_AH_B \boxtimes_B {}_BK_C ) 
             = \dim ({}_AH_B) \cdot \dim ( {}_BK_C ).
    \end{gather} 
   Given a finite homomorphism $f:A\to B$ between von Neumann 
    algebras with 
    finite-dimensional center, we let 
    \begin{equation*}
       \llbracket B : A\rrbracket:=\dim({}_AL^2B_B)
    \end{equation*} 
    denote the matrix of statistical dimensions of ${}_AL^2B_B$.
    If ${}_AH_B$ is a bimodule where $B$ acts faithfully, then
    by~\cite[\leminddim]{BDH(Dualizability+Index-of-subfactors)}
    \begin{equation}
      \label{eq:dim-on-H-is-dim-on-L2}
      \dim({}_AH_B) = \llbracket B':A\rrbracket
    \end{equation}
    where $B'$ is the commutant of $B$ on $H$.
    If $A$ also acts faithfully, then
    \begin{equation}
      \label{eq:matrix-of-stat-dim-commutants}
      \llbracket B : A\rrbracket = \llbracket A' : B' \rrbracket^T,
    \end{equation}
    where $^T$ denotes the transposed 
    matrix~\cite[\corindexofcommutants]{BDH(Dualizability+Index-of-subfactors)}.
    The minimal index $[B:A]$ of an inclusion of factors 
    ${\iota \colon A \to B}$ is the square of the statistical 
    dimension of ${}_AL^2B_B$ \cite[Def.~5.10]{BDH(Dualizability+Index-of-subfactors)}.
    For inclusions $A \subseteq B \subseteq C$ of  von Neumann algebras
    with finite-dimensional center we have
    \begin{equation}
        \label{eq:matrix-of-stat-dim-for-AcBcC}
        \llbracket C : A\rrbracket = \llbracket B : A\rrbracket
            \cdot \llbracket C : B \rrbracket,
    \end{equation}
    by~\cite[\eqpropertiesofmatrixofstatdim]
             {BDH(Dualizability+Index-of-subfactors)}.
    Moreover, the inclusion map $A\to B$ is an isomorphism if and only if
    $\llbracket B : A\rrbracket$ is a permutation matrix~\cite[\S 5]{BDH(Dualizability+Index-of-subfactors)}.    
    If $C$ is a factor, then
    \begin{equation}
      \label{eq:matrix-of-stat-dim-ox-C}
      \llbracket B \,\bar \ox\, C : A \,\bar\ox\, C \rrbracket
          = \llbracket B : A\rrbracket. 
    \end{equation}
    
    We recall two further results~\cite[\corbeforefinal, \corfinal]{BDH(Dualizability+Index-of-subfactors)} that are crucial for the proof of Theorem~\ref{thm:Haag-duality-composition-defects}.
Let $\big\| . \big\|_2$ stand for the $l^2$-norm of a vector.
    Let $N\subset M\subset  \bfB(H)$ be factors such that the inclusion $N\subset M$ has finite index.
    If $M \subset A \subset \bfB(H)$ is such that 
    one of the two relative commutants $N'\cap A$ or $M'\cap A$ 
    is a factor and the other has finite-dimensional center,
    then
    \begin{equation}
      \label{eq:DIS-726}
        \big\| \llbracket N'\cap A : M'\cap A\rrbracket 
             \big\|_2\, \,\,\le\,\, \llbracket M:N\rrbracket.
    \end{equation}
    Similarly, if $N \subset M \subset \bfB(H)$ are factors with $N \subset M$ of finite index, and $A\subset M'\subset \bfB(H)$ is
    such that one of the two algebras $N\vee A$ or $M\vee A$ is a factor
    and the other has finite-dimensional center, then
    \begin{equation}
      \label{eq:DIS-727}
       \big\| \llbracket M\vee A:N\vee A \rrbracket \big\|_2\, \,\,\le\,\, 
             \llbracket M:N\rrbracket.
    \end{equation}

  %As a corollary of Lemma \ref{lem: endolgebra dim<oo}, we have:

  %\begin{lemma}[{\cite[Lemma 5.15]{BDH(Dualizability+Index-of-subfactors)}}]
  %  \label{lem: rel comm is finite dim}
  %  Let $\iota:A\to B$ be a finite homomorphism between factors.
  %  Then the relative commutant of $\iota(A)$ in $B$ is finite-dimensional.
  %\end{lemma}

\section{Functors between module categories}
\label{subsec: Functors between module categories}
%\addtocontents{toc}{\SkipTocEntry}

The first lemma below is well known 
(\cite[Rem.~2.1.3.~(iii)]{Jones-Sunder(Intro-to-subfactors)}).
It is the main distinguishing feature of the representation 
theory of von Neumann algebras.
Here, $\ell^2$ stands for $\ell^2(\mathbb N)$ (or possibly $\ell^2(X)$ for a set $X$ of sufficiently large cardinality if the Hilbert spaces we deal with are not separable).

\begin{lemma}
  \label{lem:modules-are-summands}
  Let $A$ be a von Neumann algebra and let $H$ and $K$ be 
  faithful left $A$-modules.
  Then $H\otimes \ell^2\cong K\otimes \ell^2$.
  In particular, any $A$-module is isomorphic to a direct summand of 
  $H\otimes \ell^2$.
\end{lemma}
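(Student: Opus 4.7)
The plan is to prove the stronger statement that if $H$ is any faithful $A$-module (with separable predual, as assumed), then $H \otimes \ell^2 \cong L^2(A) \otimes \ell^2$ as $A$-modules. The first assertion $H \otimes \ell^2 \cong K \otimes \ell^2$ then follows by transitivity, applying this to both $H$ and $K$.

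First, I would realize $H$ as a sub-$A$-module of $L := L^2(A) \otimes \ell^2$. This uses that $L$ is a ``universal'' representation: any cyclic $A$-module embeds in $L^2(A)$ (via GNS applied to the appropriate normal state), and any $A$-module with separable predual is a countable direct sum of cyclic modules. Thus $H \cong pL$ for some projection $p$ in the commutant of $A$ on $L$, which by Tomita's commutator theorem is $A' \,\bar\otimes\, \bfB(\ell^2)$, with $A' \subset \bfB(L^2(A))$ the commutant of the standard action.

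Next, I would observe that the faithfulness of $H$ forces $p$ to have central support $1$ in $A' \,\bar\otimes\, \bfB(\ell^2)$. Indeed, $Z(A' \,\bar\otimes\, \bfB(\ell^2)) = Z(A') \,\bar\otimes\, \IC = Z(A)$, and the kernel of the $A$-action on $pL$ is exactly the complement of the central support of $p$; vanishing of this kernel is the condition of faithfulness. The main step is then to invoke Murray--von Neumann comparison in the properly infinite von Neumann algebra $A' \,\bar\otimes\, \bfB(\ell^2)$ (proper infiniteness comes for free from the $\bfB(\ell^2)$ factor): any projection with central support $1$ in such an algebra is equivalent to the identity, by applying the standard comparison theorem over each central summand. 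Pick a partial isometry $v \in A' \,\bar\otimes\, \bfB(\ell^2)$ with $v^*v = p$ and $vv^* = 1$; then $v$ gives an $A$-linear unitary $pL \cong L$, hence $H \cong L = L^2(A) \otimes \ell^2$. Tensoring with another $\ell^2$ and using $\ell^2 \otimes \ell^2 \cong \ell^2$ yields $H \otimes \ell^2 \cong L^2(A) \otimes \ell^2$ as desired.

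For the ``in particular'' statement, given any $A$-module $K$, set $H' := H \oplus K$. Since $H$ is faithful, so is $H'$, hence by the first part $H \otimes \ell^2 \cong H' \otimes \ell^2 = (H \otimes \ell^2) \oplus (K \otimes \ell^2)$, exhibiting $K$ as a direct summand of $H \otimes \ell^2$ (via the summand $K \otimes \IC \subset K \otimes \ell^2$).

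The main obstacle is the Murray--von Neumann equivalence step when $A$ is not a factor: the algebra $A' \,\bar\otimes\, \bfB(\ell^2)$ has nontrivial center $Z(A)$, and one must verify that ``full central support together with proper infiniteness'' is enough to ensure equivalence with the identity without recourse to factoriality. This is handled by disintegrating over the center, where on each fiber the relevant commutant becomes a properly infinite factor and any two projections with the same (nonzero) central trace are equivalent; assembling these equivalences measurably in $v$ is routine but is the only point that requires more than abstract nonsense.
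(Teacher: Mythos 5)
Your overall strategy (embed $H$ into the universal module $L := L^2(A)\otimes\ell^2$ as $pL$ for a projection $p$ in the commutant, use faithfulness to see that $p$ has central support $1$, then compare $p$ with $1$) is the standard one, and the reduction of the second assertion to the first via $H':=H\oplus K$ is fine. (The paper itself does not prove this lemma; it cites Jones--Sunder, Rem.~2.1.3(iii).) But the key comparison step as you state it is false, and it leads you to an intermediate conclusion that cannot be true. You claim that in a properly infinite von Neumann algebra any projection with central support $1$ is equivalent to the identity. A rank-one projection in $\bfB(\ell^2)$ has central support $1$ but is finite, hence not equivalent to $1$; proper infiniteness of the \emph{ambient algebra} does not make every full-central-support projection properly infinite. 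The generic comparison theorem only gives you a central projection $z$ with $zp\precsim z$ and $(1-z)\precsim(1-z)p$, so you can conclude $p\sim 1$ only on the central summand where $1\precsim p$ already holds. Concretely, your intermediate conclusion ``$H\cong L^2(A)\otimes\ell^2$ for every faithful $H$'' fails already for $A=\IC$ and $H=\IC$: there $L=\ell^2$ and $\IC\not\cong\ell^2$. If your argument were correct, the lemma would say $H\cong K$ for any two faithful modules, which is false and is exactly why the statement carries the $\otimes\,\ell^2$.

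The repair is to amplify \emph{before} comparing rather than after. Replace $p$ by $p\otimes 1_{\ell^2}$ inside $\bigl(A'\,\bar\otimes\,\bfB(\ell^2)\bigr)\,\bar\otimes\,\bfB(\ell^2)$: this projection is properly infinite (it is an infinite orthogonal sum of mutually equivalent nonzero subprojections on every central summand where it is nonzero) and still has central support $1$, as does $1\otimes 1$. The comparison theorem for properly infinite, countably decomposable projections with equal central supports then yields $p\otimes 1\sim 1\otimes 1$, whence $H\otimes\ell^2\cong L\otimes\ell^2\cong L^2(A)\otimes\ell^2$. In your write-up the final ``tensoring with another $\ell^2$'' is presented as cosmetic bookkeeping, when it is in fact the ingredient that makes the Murray--von Neumann step legitimate. (Your closing worry about disintegrating over the center is, by contrast, unnecessary: the properly-infinite comparison theorem is proved directly from the comparison theorem plus the halving lemma and needs no direct-integral argument.)
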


%If the Hilbert spaces $H$ and $K$ are separable, 
%then $\ell^2$ can be taken to mean $\ell^2(\mathbb N)$.
%Otherwise, the lemma is true for $\ell^2=\ell^2(X)$, with $X$ a set of sufficiently large cardinality.

  Let $A$ and $B$ be  von Neumann algebras. 
  We call a functor 
  $F:\modules{A} \,\to\,\, \modules{B}$
  \emph{normal} if it is continuous with respect to the 
  ultra-weak topology on hom-spaces,
  preserves adjoints $F(f^*) = F(f)^*$, and is additive in the following 
  sense: for $A$-modules $M_i$ the  map
  $\oplus F(\iota_i) \colon \oplus F(M_i) \to F( \oplus M_i)$
  induced by the inclusions $\iota_i \colon M_i \to \oplus_k M_k$
  is a unitary isomorphism.
Such functors are uniquely determined by their value on a single faithful $A$-module:

\begin{lemma}\label{lem: Functors between module categories}
Let $A$ and $B$ be von Neumann algebras.
Let $M$ be a faithful $A$-module, let $N$ be an arbitrary $B$-module, and
let
\[
F_1:\mathrm{End}_A(M)\to \mathrm{End}_B(N)
\]
be a morphism of von Neumann algebras.
Then the assignment $F(M):=N$, $F(f):=F_1(f)$
extends uniquely (up to unique unitary isomorphism) to a normal functor $F$ 
from the category of $A$-modules to the category of $B$-modules.
\end{lemma}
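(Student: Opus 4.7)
The plan is to bootstrap from the given data on the single module $M$ to all $A$-modules, using Lemma~\ref{lem:modules-are-summands} as the bridge. First I would extend $F_1$ from $\mathrm{End}_A(M)$ to $\mathrm{End}_A(M\otimes \ell^2)$. Since $M$ is faithful, we have a natural identification $\mathrm{End}_A(M\otimes \ell^2)\cong \mathrm{End}_A(M)\,\bar{\otimes}\,\bfB(\ell^2)$, and similarly $\mathrm{End}_B(N\otimes \ell^2)\cong \mathrm{End}_B(N)\,\bar{\otimes}\,\bfB(\ell^2)$. The spatial tensor product $\tilde F_1:=F_1\,\bar{\otimes}\,\mathrm{id}_{\bfB(\ell^2)}$ gives a normal $*$-homomorphism between these algebras, and it extends $F_1$ in the sense that its restriction to the corner $\mathrm{End}_A(M)\otimes p_0\cong \mathrm{End}_A(M)$, where $p_0$ is a rank-one projection in $\bfB(\ell^2)$, recovers $F_1$.

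Next I would define $F$ on an arbitrary $A$-module $K$ by choosing, via Lemma~\ref{lem:modules-are-summands}, an isometry $v\colon K\hookrightarrow M\otimes\ell^2$ with range projection $p_K=vv^*\in\mathrm{End}_A(M\otimes\ell^2)$, and setting $F(K):=\tilde F_1(p_K)(N\otimes\ell^2)$. On morphisms $f\colon K\to K'$ I would set $F(f):=\tilde F_1(v_{K'}fv_K^*)$, viewed as a map $F(K)\to F(K')$. Functoriality, preservation of adjoints, and ultraweak continuity then follow directly from the corresponding properties of $\tilde F_1$. Additivity in the sense stipulated before the lemma amounts to the statement that orthogonal decompositions $\oplus p_i=1_{M\otimes\ell^2}$ are preserved by $\tilde F_1$, which is clear since $\tilde F_1$ is a normal $*$-homomorphism.

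The subtle step is independence of choices and uniqueness. If $v$ and $v'$ are two isometric embeddings of $K$ into $M\otimes \ell^2$, then $u:=v'v^*$ is a partial isometry in $\mathrm{End}_A(M\otimes \ell^2)$ from $p_K$ to $p_K'$, and $\tilde F_1(u)$ is the required unitary $F(K)\to F'(K)$ between the two definitions of $F(K)$; the coherence under change of $v$ reduces to a computation with partial isometries in $\tilde F_1$'s domain. For uniqueness of the extension, given any two normal functors $F,F'$ agreeing with the prescribed data on $M$, one shows that they agree on $M\otimes \ell^2$ (using additivity and normality to extend $F$ from $M$ to countable direct sums), and then transports the resulting comparison isomorphism along any $v\colon K\hookrightarrow M\otimes\ell^2$. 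The main obstacle, and the step that merits the most care, is verifying that the comparison isomorphisms $F(K)\to F'(K)$ so obtained are independent of the choice of $v$ and are natural in $f\in\mathrm{Hom}_A(K,K')$; this is a matter of showing that $\tilde F_1(v_{K'}fv_K^*)$ intertwines the comparison isomorphisms, which in turn comes down to the fact that both $F$ and $F'$ restrict to the \emph{same} homomorphism $F_1\otimes\mathrm{id}$ on $\mathrm{End}_A(M\otimes \ell^2)$, by normality and additivity applied to $F|_M=F'|_M=F_1$.
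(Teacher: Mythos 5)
Your proposal is correct and follows essentially the same route as the paper: identify an arbitrary module with the range of a projection $p$ in $\mathrm{End}_A(M\otimes\ell^2)\cong\mathrm{End}_A(M)\,\bar\otimes\,\bfB(\ell^2)$ via Lemma~\ref{lem:modules-are-summands}, apply $F_1\otimes\mathrm{Id}_{\ell^2}$ to $p$, and define $F$ on morphisms by conjugating with the (co)isometries. The paper explicitly leaves uniqueness and well-definedness to the reader, so your sketch of the comparison partial isometries $\tilde F_1(v'v^*)$ is a welcome supplement rather than a deviation.
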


\begin{proof}
We prove existence and leave uniqueness to the reader.
Given an $A$-module $H$, by Lemma \ref{lem:modules-are-summands} 
we may pick an isomorphism
\begin{equation}\label{shds}
H\,\cong\,\mathrm{im}\big(p:M\otimes \ell^2\to M\otimes \ell^2\big)
\end{equation}
of $H$ with the image of a projection $p\in \mathrm{End}_A(M)\,\bar\otimes\,\bfB(\ell^2)$.
We can then define 
\begin{equation*}
F(H)\,:=\,\mathrm{im}\big((F_1\otimes \mathrm{Id}_{\ell^2})(p):N\otimes \ell^2\to N\otimes \ell^2\big),
\end{equation*}

For morphisms, if $H\cong\mathrm{im}(p)$ and $K\cong\mathrm{im}(q)$ are $A$-modules given as above, then the image under $F$ of an $A$-linear map $r:H\to K$
is the unique map $F(r):F(H)\to F(K)$ for which the composite
\[
N\otimes \ell^2\twoheadrightarrow F(H)\xrightarrow{F(r)} F(K)\hookrightarrow N\otimes \ell^2
\]
is the image under $F_1\otimes \mathrm{Id}_{\ell^2}$ of the map
$M\otimes \ell^2\twoheadrightarrow H\xrightarrow{\,r\,} K\hookrightarrow M\otimes \ell^2.$
\end{proof}

A similar result holds for natural transformations.

\begin{lemma}\label{lem: NT between module categories}
Let
\(
F,G:\modules{A} \to \modules{B}
\)
be two normal functors and let $M$ be a faithful $A$-module.
Then, in order to uniquely define a natural transformation $a:F\to G$, it is enough to specify its
value on $M$, and to check that for each $r\in\mathrm{End}_A(M)$, the diagram
\[
%\xymatrix{
%F(M)\ar[r]^{F(r)}\ar[d]_{a_M}&F(M)\ar[d]^{a_M}\\
%G(M)\ar[r]_{G(r)}&G(M)&
%}
\tikzmath{
\matrix [matrix of math nodes,column sep=1cm,row sep=5mm]
{ 
|(a)| F(M) \pgfmatrixnextcell |(b)| F(M)\\ 
|(c)| G(M) \pgfmatrixnextcell |(d)| G(M)\\ 
}; 
\draw[->] (a) -- node [above]	{$\scriptstyle F(r)$} (b);
\draw[->] (c) -- node [above]	{$\scriptstyle G(r)$} (d);
\draw[->] (a) -- node [left]		{$\scriptstyle a_{M}$} (c);
\draw[->] (b) -- node [right]	{$\scriptstyle a_{M}$} (d);
}
\]
commutes.
\end{lemma}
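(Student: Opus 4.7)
The plan is to extend the partial datum $a_M$ to all $A$-modules by writing every module as a direct summand of an amplification $M \otimes \ell^2$ of the faithful module $M$, paralleling the proof of Lemma \ref{lem: Functors between module categories}.

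First I would recall from Lemma \ref{lem:modules-are-summands} that every $A$-module $H$ is isomorphic to $\mathrm{im}(p)$ for some projection $p \in \mathrm{End}_A(M) \,\bar\otimes\, \bfB(\ell^2) = \mathrm{End}_A(M \otimes \ell^2)$. By normality (additivity) of $F$ and $G$, we have identifications $F(M \otimes \ell^2) = F(M) \otimes \ell^2$ and $G(M \otimes \ell^2) = G(M) \otimes \ell^2$, and the induced homomorphisms $F_1, G_1 \colon \mathrm{End}_A(M) \to \mathrm{End}_B(F(M)), \mathrm{End}_B(G(M))$ amplify to normal homomorphisms on $\mathrm{End}_A(M)\,\bar\otimes\, \bfB(\ell^2)$.

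The crucial step is to promote the given intertwining property from $\mathrm{End}_A(M)$ to $\mathrm{End}_A(M) \,\bar\otimes\, \bfB(\ell^2)$. On the algebraic tensor product $\mathrm{End}_A(M) \otimes_{\mathit{alg}} \bfB(\ell^2)$ the equation $G_1(r) \circ (a_M \otimes \mathrm{id}_{\ell^2}) = (a_M \otimes \mathrm{id}_{\ell^2}) \circ F_1(r)$ is immediate from the hypothesis applied entry-wise. Because $F_1$ and $G_1$ are ultraweakly continuous (this is where normality of $F$ and $G$ is used) and because left/right multiplication by a fixed bounded operator is ultraweakly continuous, both sides of the intertwining equation are ultraweakly continuous in $r$; since the algebraic tensor product is ultraweakly dense in the spatial one, the relation extends to all $r \in \mathrm{End}_A(M \otimes \ell^2)$.

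Given this extension, I would define $a_H \colon F(H) \to G(H)$ as the restriction of $a_M \otimes \mathrm{id}_{\ell^2}$ to $\mathrm{im}(F_1(p)) = F(H) \subset F(M) \otimes \ell^2$; the intertwining property just proved ensures this restriction lands in $\mathrm{im}(G_1(p)) = G(H)$. Independence of the choice of projection $p$ realizing $H$ follows from the fact that two such presentations differ by a partial isometry $u \in \mathrm{End}_A(M \otimes \ell^2)$, and the intertwining relation applied to $u$ identifies the two restrictions. Naturality of the resulting family $\{a_H\}$ is then a direct consequence of the explicit formula for $F(r)$ and $G(r)$ on morphisms $r \colon H \to K$ (as in the proof of Lemma \ref{lem: Functors between module categories}), together with the same intertwining property.

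The main obstacle is precisely the amplification step, that is, extending the intertwining from $\mathrm{End}_A(M)$ to its spatial tensor product with $\bfB(\ell^2)$; every subsequent construction is formal once this is available. Uniqueness is forced: any natural transformation $a'$ agreeing with $a_M$ on $M$ must, by naturality applied to the projection $p$ and the inclusion $\mathrm{im}(p) \hookrightarrow M \otimes \ell^2$ (together with additivity), agree with our $a_H$ on every $H$.
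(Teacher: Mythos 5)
Your proof is correct and takes essentially the same route as the paper's (which consists of two sentences): realize an arbitrary $H$ as a summand of $M\otimes\ell^2$ and define $a_H$ as the restriction of $a_M\otimes\mathrm{Id}_{\ell^2}$. The amplification step you spell out --- extending the intertwining relation from $\mathrm{End}_A(M)$ to $\mathrm{End}_A(M)\,\bar\otimes\,\bfB(\ell^2)$ via normality and ultraweak density of the algebraic tensor product --- is precisely the detail the paper leaves implicit, and your verification of it is sound.
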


\begin{proof}
Given an $A$-module $H$ along with an isomorphism \eqref{shds},
one uses the natural inclusion $F(H)\subset F(M\otimes \ell^2)\cong F(M)\otimes \ell^2$ to
define
\[
a_H := (a_M\otimes \mathrm{Id}_{\ell^2})|_{F(H)}.
\]
This prescription is independent of the choice of isomorphism.
\end{proof}

\section{The split property}
  \label{subsec:split-property}
%\addtocontents{toc}{\SkipTocEntry}

\begin{definition} \label{def:split}
Given two commuting von Neumann algebras $A$ and $B$ acting on a Hilbert space $H$, we say that
$A$ and $B$ are {\it split on $H$} if the natural map $A \otimes_\alg B\to \bfB(H)$ extends to a homomorphism $A \,\bar\otimes\, B\to \bfB(H)$. %!!not always an isomorphism $A \,\bar\otimes\, B\to A\vee B!!$
We also say that an inclusion $A_0\hookrightarrow A$ is split if there exists a (equivalently, for any) faithful $A$-module $H$
such that $A_0$ and $A'$ are split on $H$.
\end{definition}
 
\begin{lemma}
\label{lem:commutant-spacial-vee 1}
Let $A_0 \subseteq A$ be von Neumann algebras acting faithfully on a Hilbert space $H$, and let $B \subseteq A'$ be an algebra that commutes with $A$.  If the inclusion $A_0\hookrightarrow A$ is split, then we have
	\begin{equation}\label{eq: lemma appendix A2}
	B \vee ( A \cap A_0') = (B \vee A) \cap A_0'.
	\end{equation}
\end{lemma}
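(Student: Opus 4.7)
The plan is to establish the non-trivial inclusion $(B \vee A) \cap A_0' \subseteq B \vee (A \cap A_0')$ by using the intermediate type I factor provided by the split property to reduce everything to a calculation in a spatial tensor product.

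First, the inclusion $B \vee (A \cap A_0') \subseteq (B \vee A) \cap A_0'$ is immediate: since $B \subseteq A'$ commutes with $A_0 \subseteq A$, we have $B \subseteq A_0'$, so $B \vee (A \cap A_0') \subseteq A_0'$, and the containment $B \vee (A \cap A_0') \subseteq B \vee A$ is obvious.

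For the harder direction, I would invoke the Doplicher--Longo characterization of split inclusions, which provides a type I factor $N$ with $A_0 \subseteq N \subseteq A$. Such an $N$ induces a tensor decomposition $H \cong H_1 \otimes H_2$ under which $N = \bfB(H_1) \,\bar\ox\, \IC$ and $N' = \IC \,\bar\ox\, \bfB(H_2)$. Under this identification, $A_0 \subseteq N$ forces $A_0 = A_0^{(1)} \,\bar\ox\, \IC$ for some $A_0^{(1)} \subseteq \bfB(H_1)$; $N \subseteq A$ gives $A' \subseteq N'$, hence $A = \bfB(H_1) \,\bar\ox\, A^{(2)}$ for some $A^{(2)} \subseteq \bfB(H_2)$; and $B \subseteq A' \subseteq N'$ gives $B = \IC \,\bar\ox\, B^{(2)}$.

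With these decompositions in hand, a direct computation yields $A_0' = (A_0^{(1)})' \,\bar\ox\, \bfB(H_2)$, $A \cap A_0' = (A_0^{(1)})' \,\bar\ox\, A^{(2)}$, and therefore
$$B \vee (A \cap A_0') \,=\, (A_0^{(1)})' \,\bar\ox\, (A^{(2)} \vee B^{(2)}).$$
Similarly, $B \vee A = \bfB(H_1) \,\bar\ox\, (A^{(2)} \vee B^{(2)})$, so applying the tensor-product intersection identity
$$(\bfB(H_1) \,\bar\ox\, M) \cap (C \,\bar\ox\, \bfB(H_2)) \,=\, C \,\bar\ox\, M$$
(valid for $C \subseteq \bfB(H_1)$, $M \subseteq \bfB(H_2)$, and a standard consequence of Tomita's commutant theorem) gives $(B \vee A) \cap A_0' = (A_0^{(1)})' \,\bar\ox\, (A^{(2)} \vee B^{(2)})$, which matches the previous expression. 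This proves~\eqref{eq: lemma appendix A2}.

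The main non-routine input is the existence of the intermediate type I factor, which is the substantive content of the split property; the remainder is a structural reduction followed by two tensor-product commutation identities, both of which are classical.
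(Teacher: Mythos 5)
Your trivial inclusion and the tensor-factor computation at the end are both correct, and the overall architecture (reduce to a product situation, then apply Tomita's commutant theorem) is sound. The problem is the very first substantive step. The notion of ``split'' used here (Definition~\ref{def:split}) is \emph{not} the Doplicher--Longo one: an inclusion $A_0\hookrightarrow A$ is split when the multiplication map $A_0\ox_{\alg}A'\to\bfB(H)$ extends to a normal homomorphism of the spatial tensor product $A_0\,\barox\,A'$; it is not required that this extension be injective, nor that an intermediate type I factor exist. These notions genuinely differ: take $A_0=A=A'$ to be the diagonal subalgebra $\IC^2\subseteq M_2(\IC)$ acting on $H=\IC^2$. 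The map $a\ox b\mapsto ab$ extends (trivially) to the four-dimensional spatial tensor product, so the inclusion is split in the sense used here, yet there is no type I factor $N$ with $\IC^2\subseteq N\subseteq\IC^2$. So the type I factor you need is simply not supplied by the hypothesis. The equivalence you invoke is the D'Antoni--Longo/Doplicher--Longo theorem, which needs the extension to be an isomorphism onto $A_0\vee A'$ (automatic when the algebras are factors) together with separability of $H$ --- neither of which is part of the lemma's hypotheses. In the factor case on a separable space your argument could be repaired by citing that theorem, but that is a far heavier input than the lemma warrants, and it would not cover the general statement.

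The paper's own proof never produces an intermediate type I factor. It uses the split hypothesis only to regard $H$ as an $A'\,\barox\,A_0$-module, invokes Lemma~\ref{lem:modules-are-summands} to identify $H\otimes\ell^2$ with $H\otimes L^2(A_0)\otimes\ell^2$ as $A'\,\barox\,A_0$-modules, and then checks that both sides of \eqref{eq: lemma appendix A2}, after tensoring with $\bfB(\ell^2)$, become $(B\vee A)\,\barox\,A_0'$ acting on $H\otimes K$ with $K=L^2(A_0)\otimes\ell^2$. That computation has exactly the flavour of your last two displayed identities, but it is carried out without ever needing $A_0$ to sit inside a type I factor: the amplification by $\ell^2$ is the move that converts the weak split hypothesis into the tensor-product picture your calculation requires. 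If you want to keep your structure, replace ``intermediate type I factor'' by this amplification trick.
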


\begin{proof}
Consider $H \otimes L^2(A_0)$ as an $A' \,\barox\, A_0$-module, where $A'$ acts on the first factor and $A_0$ acts on the second factor.
Both $H$ and $H \otimes L^2(A_0)$ are faithful $A' \,\barox\, A_0$-modules.
 
So we may pick an $A' \,\barox\, A_0$-module isomorphism between $H\otimes \ell^2$ and $H \otimes L^2(A_0)\otimes \ell^2$.
Let $K:=L^2(A_0)\otimes \ell^2$, so we have $H\otimes \ell^2\cong H\otimes K$.
Under this identification, the subalgebra
\[
\big(B \vee ( A \cap A_0')\big)\,\bar\otimes\, \bfB(\ell^2) = (B\,\bar\otimes\, 1) \vee \big( (A\,\bar\otimes\, \bfB(\ell^2)) \cap (A_0\,\bar\otimes\, 1)'\big)
\]
of $\bfB(H\otimes \ell^2)$ corresponds to
\[
(B\,\bar\otimes\, 1) \vee \big( (A\,\bar\otimes\, \bfB(K)) \cap (1\,\bar\otimes\, A_0)'\big) = (B\,\bar\otimes\, 1) \vee (A\,\bar\otimes\, A_0') = (B\vee A) \,\bar\otimes\, A_0'
\]
in $\bfB(H\otimes K)$. Similarly,
\[
\big((B \vee A) \cap A_0'\big)\,\bar\otimes\, \bfB(\ell^2) = \big((B\,\bar\otimes\, 1) \vee (A\,\bar\otimes\, \bfB(\ell^2))\big) \cap (A_0\,\bar\otimes\, 1)'
\]
corresponds to
\[
\big((B\,\bar\otimes\, 1) \vee (A\bar\otimes \bfB(K))\big) \cap (1\,\bar\otimes\, A_0)' = \big((B \vee A) \bar\otimes \bfB(K)\big) \cap \big(\bfB(H)\bar\otimes A_0'\big)
= (B\vee A) \,\bar\otimes\, A_0'.
\]
The algebras \eqref{eq: lemma appendix A2} agree after tensoring with $\bfB(\ell^2)$, so they are equal.
\end{proof}

\begin{lemma}
\label{lem:commutant-spacial-vee 2}
Let $A\subset \bfB(H)$ be a factor, and let $A_0$ be a subalgebra of $A$.
If the inclusion $A_0\hookrightarrow A$ is split, then
	\begin{equation}\label{eq: lemma appendix A2 (bis)}
	( A_0 \vee A')\cap A = A_0.
	\end{equation}
\end{lemma}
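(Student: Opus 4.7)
The plan is to derive this identity formally from Lemma~\ref{lem:commutant-spacial-vee 1} by a short commutant manipulation, using only that $A$ is a factor. I would apply Lemma~\ref{lem:commutant-spacial-vee 1} with the choice $B := A'$, which is admissible since $A' \subseteq A'$ and $A'$ of course commutes with $A$; the split hypothesis is exactly the one we already have. This yields
\[
A' \vee (A \cap A_0') \;=\; (A' \vee A) \cap A_0'.
\]
Since $A$ is a factor, $Z(A) = A \cap A' = \IC$, so by the double commutant theorem $A \vee A' = (A \cap A')' = \bfB(H)$, and the right-hand side collapses to $A_0'$. Thus
\[
A' \vee (A \cap A_0') \;=\; A_0'.
\]

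Next I would take commutants of this identity in $\bfB(H)$. The right-hand side becomes $(A_0')' = A_0$ by the double commutant theorem (since $A_0$ is a von Neumann subalgebra). For the left-hand side, using $(B_1 \vee B_2)' = B_1' \cap B_2'$ and $A'' = A$, we obtain
\[
A \cap (A \cap A_0')' \;=\; A_0.
\]

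To conclude, the remaining task is to recognize $(A \cap A_0')'$ as $A_0 \vee A'$. This is purely formal: one computes $(A_0 \vee A')' = A_0' \cap A'' = A_0' \cap A = A \cap A_0'$, and then, since $A_0 \vee A'$ is already a von Neumann algebra, the double commutant theorem gives $(A \cap A_0')' = (A_0 \vee A')'' = A_0 \vee A'$. Substituting into the previous display yields $A \cap (A_0 \vee A') = A_0$, as required.

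There is no real obstacle here: the entire argument is essentially a one-line application of the preceding lemma followed by taking commutants. All uses of the split property are already absorbed into Lemma~\ref{lem:commutant-spacial-vee 1}, and the factor hypothesis enters solely through the identity $A \vee A' = \bfB(H)$.
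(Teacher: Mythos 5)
Your argument is correct, but it takes a genuinely different route from the paper's. The paper proves this lemma by re-running the tensor trick from the proof of Lemma~\ref{lem:commutant-spacial-vee 1}: it picks an $A'\,\barox\,A_0$-linear identification $H\otimes\ell^2\cong H\otimes K$ with $K=L^2(A_0)\otimes\ell^2$ and checks that both sides of \eqref{eq: lemma appendix A2 (bis)} become $1\,\barox\,A_0$ after tensoring, the factoriality of $A$ entering through $(A'\,\barox\,A_0)\cap(A\,\barox\,\bfB(K))=Z(A)\,\barox\,A_0$. You instead deduce the statement formally from Lemma~\ref{lem:commutant-spacial-vee 1} itself: taking $B=A'$ there (which satisfies all its hypotheses, since $A_0\subseteq A\subseteq\bfB(H)$ act faithfully and $A'\subseteq A'$ commutes with $A$) and using $A\vee A'=Z(A)'=\bfB(H)$ gives $A'\vee(A\cap A_0')=A_0'$; passing to commutants and identifying $(A\cap A_0')'=(A_0\vee A')''=A_0\vee A'$ then yields \eqref{eq: lemma appendix A2 (bis)}. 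All the algebras involved are unital von Neumann algebras, so the double commutant manipulations are legitimate, and the split hypothesis is consumed entirely by the earlier lemma. Your derivation is shorter and makes the logical dependence on Lemma~\ref{lem:commutant-spacial-vee 1} explicit, whereas the paper's proof is self-contained modulo the tensor-identification technique; both use the factor hypothesis in an essential way, yours through $A\vee A'=\bfB(H)$ and the paper's through $Z(A)=\IC$.
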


\begin{proof}
As in the previous lemma, we pick an isomorphism $H\otimes \ell^2\cong H\otimes K$ of  
$A' \,\barox\, A_0$-modules, where $K=L^2(A_0)\otimes \ell^2$.
  Under that isomorphism, the algebras $A_0\,\bar\otimes\, 1$ and
  \[
  \big(( A_0 \vee A')\cap A\big)\,\bar\otimes\, 1 
  = \big((A_0\,\bar\otimes\, 1) \vee (A'\,\bar\otimes\, 1)\big)\cap (A\,\bar\otimes\, \bfB(\ell^2))
  \]
  correspond to $1\,\bar\otimes\, A_0$ and
  \[
  \big((1\,\bar\otimes\, A_0) \vee (A'\,\bar\otimes\, 1)\big)\cap (A\,\bar\otimes\, \bfB(K))
  =  (A'\,\bar\otimes\, A_0) \cap (A\,\bar\otimes\, \bfB(K)) = 1\,\bar\otimes\, A_0.
  \]
  Since their images in $\bfB(H\otimes K)$ agree, the two algebras \eqref{eq: lemma appendix A2 (bis)} are equal.
\end{proof}

Recall the fiber product operation $\ast$ from Definition~\ref{def:fiber-product}.

\begin{lemma}\label{lem:[A * Bhat : A * B] = [Bhat : B]}
Let $A$, $B$, and $C$ be factors, and let $A^\op\leftarrow C \to B$ be homomorphisms.
Let $B\subset \hat B$ be a subfactor of finite index.
Assuming that the inclusion $C\to A^\op$ is split, then $A\ast_C \hat B$ and $A\ast_C B$ are factors, and we have
\begin{equation}\label{eq:[A * Bhat : A * B] = [Bhat : B]}
[A\ast_C \hat B:A\ast_C B] = [\hat B : B].
\end{equation}
\end{lemma}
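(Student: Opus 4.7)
The plan is to realize both fiber products on a single Hilbert space, use the splitness hypothesis to identify their commutants as \emph{spatial} tensor products, and then reduce the index computation to the behavior of minimal index under $\bar\otimes$ with a factor (equation~\eqref{eq:matrix-of-stat-dim-ox-C}) together with the commutant formula $[M\!:\!N] = [N'\!:\!M']$. Concretely, I fix a faithful $\hat B$-module $K$, which is automatically a faithful $B$-module, and realize $A\ast_C B \subseteq A\ast_C\hat B \subseteq \bfB(\mathcal H)$ with $\mathcal H := H \boxtimes_C K$. The inclusion is immediate from $\hat B'\subseteq B'$ on $K$, hence $A'\vee \hat B'\subseteq A'\vee B'$ on $\mathcal H$, so taking commutants reverses the containment.

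The key geometric step uses the split hypothesis. Since $C \to A^\op$ is split on $H$, there is a type~$I$ factor $N\subseteq \bfB(H)$ with $C \subseteq N$ (as a subalgebra of $A$) and $A'\subseteq N'$, yielding a unitary identification $H \cong H_1\otimes H_2$ under which $N = \bfB(H_1)\otimes 1$ and $N' = 1\otimes \bfB(H_2)$. Because $C\subseteq N$ acts trivially on the $H_2$ factor, I get a canonical isomorphism
\begin{equation*}
\mathcal H \;=\; (H_1\otimes H_2)\boxtimes_C K \;\cong\; (H_1\boxtimes_C K)\otimes H_2,
\end{equation*}
under which $A'$ acts on the second tensor factor and $B'$ (or $\hat B'$) acts on the $K$-variable in the first. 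In particular the algebraic tensor products $A'\otimes_{\alg}B'$ and $A'\otimes_{\alg}\hat B'$ extend to spatial tensor products on $\mathcal H$, giving
\begin{equation*}
(A\ast_C B)' \;=\; A'\,\bar\otimes\,B', \qquad (A\ast_C\hat B)' \;=\; A'\,\bar\otimes\,\hat B'
\end{equation*}
on $\mathcal H$. Since $A$ and $\hat B$ are factors, so are $A'$ and $\hat B'$ (and $B'$), so both commutants are spatial tensor products of factors, hence factors; consequently $A\ast_C B$ and $A\ast_C\hat B$ are themselves factors.

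The index formula then falls out in three strokes: the commutant formula gives
\begin{equation*}
[A\ast_C\hat B : A\ast_C B] = [A'\,\bar\otimes\,B' : A'\,\bar\otimes\,\hat B'];
\end{equation*}
equation~\eqref{eq:matrix-of-stat-dim-ox-C} applied with the factor $A'$ reduces this to $[B':\hat B']$; and a final application of the commutant formula on $K$ identifies this with $[\hat B : B]$. The one genuine technical point is the middle step: justifying carefully that the factorization $\mathcal H\cong (H_1\boxtimes_C K)\otimes H_2$ puts the actions of $A'$ and $B'$ (resp.\ $\hat B'$) in honest tensor-product position, and that $A'\subseteq \bfB(H_2)$ and $B'\subseteq \bfB(H_1\boxtimes_C K)$ remain faithful representations, so that Tomita's commutator theorem applies to compute the commutant as $P\,\bar\otimes\,Q$ with both factors; everything else is a bookkeeping exercise using results already established in the paper.
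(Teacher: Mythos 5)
Your proposal is correct and follows essentially the same route as the paper: both arguments show that the commutants on $H\boxtimes_C K$ are the spatial tensor products $A'\,\bar\otimes\,B'$ and $A'\,\bar\otimes\,\hat B'$ (hence factors), then compute the index via $[M:N]=[N':M']$ and~\eqref{eq:matrix-of-stat-dim-ox-C}. The only cosmetic difference is that you unpack the split hypothesis with an explicit intermediate type~$I$ factor and the tensor decomposition $H\cong H_1\otimes H_2$, whereas the paper routes it through the commutant ${}'C$ of $C^{\op}$ on $H$ and the general fact that a split inclusion stays split on any faithful module of the larger algebra; these are the same mechanism.
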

\begin{proof}
Let $H$ be a faithful $A$-module and $K$ a faithful $\hat B$-module.
Let $A'$ be the commutant of $A$ on $H$, and let $B'$ and $\hat B'$ be the commutants of $B$ and $\hat B$ on $K$.
Finally, let $C'$ be the commutant of $C$ on $K$, and let $ {}'\hspace{-.3mm}C$ be the commutant of $C^\op$ on $H$.

Since the inclusion of $C$ into $A^\op$ is split, so is the inclusion $A'\hookrightarrow  {}'\hspace{-.3mm}C$.
The algebra $C'$ is $ {}'\hspace{-.3mm}C$'s commutant on $H\boxtimes_C K$, and so
$A'$ and $C'$ are split on $H\boxtimes_C K$.
Finally, $B'$ and $\hat B'$ being subalgebras of $C'$, we conclude that $A'$ and $B'$, and also $A'$ and $\hat B'$, are split on $H\boxtimes_C K$.
It follows that the algebras
\[
\begin{split}
(A\ast_C B)' = A' \vee B' = A' \,\bar\otimes\, B'\quad\text{and}\quad
(A\ast_C \hat B)' = A' \vee \hat B' = A' \,\bar\otimes\, \hat B'\\
\end{split}
\]
are factors, and thus so are $A\ast_C B$ and $A\ast_C \hat B$.
Finally, we have
\[
[A\ast_C \hat B:A\ast_C B] = [(A\ast_C B)' : (A\ast_C \hat B)'] =[A' \,\bar\otimes\, B' : A' \,\bar\otimes\, \hat B'] = [B' : \hat B'] = [\hat B : B]
\]
by~\eqref{eq:matrix-of-stat-dim-commutants} and~\eqref{eq:matrix-of-stat-dim-ox-C}.
\end{proof}

\begin{lemma} \label{lem:add-to-fusion-of-algebras}
  Let $A_0$ and $A_1$ be commuting subalgebras of $\bfB(H)$, and let $B_0$ and $B_1$ be commuting subalgebras of $\bfB(K)$.
  Let $C^{op} \to A_0$ and $C \to B_0$ be injective homomorphisms.
  If $C^{op}$ and $A_0'$ are split on $H$, then we have
  \begin{equation}\label{eq: (A_1 vee A_0) circledast_C (B_0 vee B_1)}
  A_1 \vee (A_0 \ast_C B_0) \vee B_1 =  (A_1 \vee A_0) \ast_C (B_0 \vee B_1) 
  \end{equation}
  on $H \boxtimes_C K$.
\end{lemma}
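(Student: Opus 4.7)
The plan is to use the split hypothesis to stably identify $H \boxtimes_C K$ with $H \otimes K$, and then verify the desired equality on this simpler model using standard commutant theorems.

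First, I would construct the key isomorphism $\phi \colon (H \boxtimes_C K) \otimes \ell^2 \xrightarrow{\cong} H \otimes K \otimes \ell^2$.  Since $C^{op}$ commutes with $A_0'$ and the inclusion $C^{op} \hookrightarrow A_0$ is split, $H$ is a faithful $C^{op} \,\bar\otimes\, A_0'$-module.  Another such faithful module is $L^2(C^{op}) \otimes H$ (with $C^{op}$ acting on the standard-form factor and $A_0'$ on $H$), so by Lemma \ref{lem:modules-are-summands} there is a $C^{op} \,\bar\otimes\, A_0'$-equivariant isomorphism $H \otimes \ell^2 \cong L^2(C^{op}) \otimes H \otimes \ell^2$.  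Fusing with $K$ over $C$, invoking the unit property $L^2(C^{op}) \boxtimes_C K \cong K$, and reordering tensor factors yields $\phi$, which intertwines the actions of $A_0'$ on the $H$-factor and of $C'_K$ on the $K$-factor (and is trivially equivariant for $\bfB(\ell^2)$ on the third factor).

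Second, I would apply $\phi$ to both sides of the desired equation after amplifying by $\bfB(\ell^2)$.  The hypotheses place $A_1 \subseteq A_0'$ and $B_1 \subseteq B_0' \subseteq C'_K$ in the equivariance range of $\phi$, so these transport to $A_1 \otimes 1$ and $1 \otimes B_1$.  The fiber product $A_0 \ast_C B_0$, while not itself equivariant, is determined by its commutant $A_0' \vee B_0'$, which is in the equivariance range; under $\phi$ this commutant becomes the spatial tensor product $A_0' \,\bar\otimes\, B_0'$ (a join of commuting algebras on different tensor factors), and Tomita's commutator theorem then gives $\phi\bigl((A_0 \ast_C B_0) \,\bar\otimes\, 1\bigr) = A_0 \,\bar\otimes\, B_0 \,\bar\otimes\, 1$.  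Joining the pieces,
\[
\phi\bigl(\mathrm{LHS} \,\bar\otimes\, \bfB(\ell^2)\bigr) \,=\, (A_1 \vee A_0) \,\bar\otimes\, (B_0 \vee B_1) \,\bar\otimes\, \bfB(\ell^2).
\]
The same recipe applied to $(A_1 \vee A_0) \ast_C (B_0 \vee B_1)$, characterized by the commutant $(A_1 \vee A_0)' \vee (B_0 \vee B_1)'$ whose summands again lie inside $A_0'$ and $B_0'$ and thus in $\phi$'s equivariance range, produces the same algebra.  Since the operation $M \mapsto M \,\bar\otimes\, \bfB(\ell^2)$ is injective on von Neumann subalgebras of $\bfB(H \boxtimes_C K)$, the original equality $\mathrm{LHS} = \mathrm{RHS}$ follows.

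The main obstacle is the careful bookkeeping of $\phi$'s equivariance: the algebras $A_0$ and $B_0$ themselves do not act on $H \boxtimes_C K$ (because $C^{op}$ is generally not central in $A_0$), so they cannot be transported through $\phi$ directly.  The device that makes the plan work is that $A_0$ and $B_0$ enter the statement of the lemma only through the fiber products, and those are in turn characterized by commutants of algebras ($A_0'$, $B_0'$, $(A_1 \vee A_0)'$, $(B_0 \vee B_1)'$) which do lie in the equivariance range of $\phi$; Tomita's commutator theorem then supplies the spatial tensor product structure on both sides and closes the argument.
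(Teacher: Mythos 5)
Your proof is correct and follows essentially the same route as the paper's: reduce to $H \otimes K \otimes \ell^2$, transport the commutants $A_0'$, $B_0'$, $(A_1\vee A_0)'$, $(B_0\vee B_1)'$ (and $A_1$, $B_1$) through the identification, and let Tomita's commutator theorem turn the fiber products into spatial tensor products. Two execution-level remarks. First, you assert that $H$ is a \emph{faithful} $C^{op}\,\bar\otimes\,A_0'$-module; this is not immediate from Definition~\ref{def:split} (the normal extension of $C^{op}\otimes_{\alg}A_0'\to\bfB(H)$ could a priori have a kernel supported on a central projection), and the paper's own proof of this lemma sidesteps the point: it first shows that $A_0'$ and $B_0'$ are split on $H\boxtimes_C K$, then uses only the ``in particular'' clause of Lemma~\ref{lem:modules-are-summands} to get an $A_0'\,\bar\otimes\,B_0'$-linear isometric \emph{embedding} $H\boxtimes_C K\hookrightarrow H\otimes K\otimes\ell^2$ with range projection $p$, and runs the identical commutant computation in the corner $p\big(\bfB(H)\,\bar\otimes\,\bfB(K)\,\bar\otimes\,\bfB(\ell^2)\big)p$. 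Your argument survives if faithfulness fails, by downgrading $\phi$ to an equivariant isometry and working in the corner in the same way, so this is a repairable imprecision rather than a gap. Second, the intermediate formula $\phi\big((A_0\ast_C B_0)\,\bar\otimes\,1\big)=A_0\,\bar\otimes\,B_0\,\bar\otimes\,1$ should have $\bfB(\ell^2)$ in place of $1$ on both sides, since the commutant of $A_0'\,\bar\otimes\,B_0'\,\bar\otimes\,\IC$ is $A_0\,\bar\otimes\,B_0\,\bar\otimes\,\bfB(\ell^2)$; your subsequent displayed equation already has the correct form, so this is only a slip.
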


\begin{proof}
As in the proof of the previous lemma, the algebras $A_0'$ and $C'$ are split on $H \boxtimes_C K$. 
%  Since $A_0'$ and $C$ are split on $H$, the actions of $A_0'$ and $C'$ on $H \boxtimes_C K$ induce an action of $A_0' \,\barox\, C'$.
  In particular, the actions of $A_0'$ on $H$ and of $B_0'$ on $K$ induce an action of $A_0' \,\barox\, B_0'$ on $H \boxtimes_C K$.

  Consider $H \otimes K$ as an $A_0' \,\barox\, B_0'$-module, where $A_0'$ acts on $H$  and $B_0'$ acts on $K$.
  Since this is a faithful module, we can find an $A_0'\,\bar\otimes\, B_0'$-linear isometry
  \[
  H \boxtimes_C K \hookrightarrow H \otimes K \otimes  \ell^2.
  \]
  Let $p \in \bfB(H \otimes K \otimes \ell^2)$ be its range projection.
  Under the induced isomorphism
  \[
  \alpha:\bfB(H \boxtimes_C K) \,\xrightarrow{\scriptscriptstyle \cong}\, p \Big( \bfB(H) \,\barox\, \bfB(K) \,\barox\, \bfB(\ell^2) \Big) p,
  \]
we have
\[
  \begin{split}
  \alpha(A_0') = \big(\,   A_0' \,\barox\, \IC \,\barox\, \IC   \,\big)  p,&  \qquad\quad 
  \alpha(B_0') = \big(\,   \IC \,\barox\, B_0' \,\barox\, \IC   \,\big)  p,  \\
  \alpha(A_1)  = \big(\,   A_1 \,\barox\, \IC \,\barox\, \IC   \,\big)  p,&  \qquad\quad
  \alpha(B_1)  = \big(\,  \IC \,\barox\, B_1 \,\barox\, \IC    \,\big) p.
\end{split}
\]
Recalling the definition $A_0 \ast_C B_0:=(A_0'\vee B_0')'$, we then see that\smallskip
\[
\hspace{.32cm}\alpha(A_0 \ast_C B_0) = p \big(\, A_0 \,\barox\, B_0 \,\barox\, \bfB(\ell^2)  \,\big)  p.
\]
Similarly, 
\(
  \alpha ((A_1 \vee A_0) \ast_C (B_0 \vee B_1)) = p \big((A_1 \vee A_0) \,\barox\, (B_0 \vee B_1) \,\barox\, \bfB(\ell^2)\big)p,
\)
and equation \eqref{eq: (A_1 vee A_0) circledast_C (B_0 vee B_1)} follows since 
\[
  (A_1 \barox \IC \barox \IC ) \vee 
  (A_0 \barox B_0  \barox \bfB(\ell^2)) \vee 
  (\IC \barox B_1 \barox \IC)         
   = (A_1 \vee A_0) \barox (B_0 \vee B_1) \barox \bfB(\ell^2).\qedhere
\]
\end{proof}

\section{Two-sided fusion on $L^2$-spaces}
\label{subsec:symmetric-fusion-on-standard-forms}
%\addtocontents{toc}{\SkipTocEntry}

Let $M$ be a von Neumann algebra, and let $M_0$ and $A$ be two commuting subalgebras such that $M_0 \vee A = M$.
Let $H_A$ be a faithful right $A$-module, and let $B$ be its commutant, acting on $H$ on the left.
Then $H$ is naturally a $B$--$A$-bimodule, and its conjugate $\overline H$ is an $A$--$B$-bimodule.
Consider the Hilbert space
\[
\widehat H \,:=\, H \boxtimes_A L^2(M) \boxtimes_A \overline H,
\]
which is a completion of $\hom(L^2A_A,H_A) \otimes_A L^2(M) \otimes_A \hom({}_AL^2A,{}_A\overline H)$.

Let us denote by $J_A$ and $J_M$ the modular conjugations on $L^2A$ and $L^2M$.
There is an antilinear involution $\widehat J: \widehat H \to \widehat H$ given by
\[
\widehat J\,\big(\varphi \otimes \xi \otimes \psi\big)\,=\, \bar\psi \otimes J_M(\xi) \otimes \bar\varphi,
\]
where $\xi\in L^2M$ is a vector, and
for $\varphi\in \hom(L^2A_A,H_A)$ and $\psi\in\hom({}_AL^2A,{}_A\overline H)$, 
the maps
\[
\bar\varphi\in \hom({}_AL^2A,{}_A\bar H),\qquad \bar\psi\in\hom(L^2A_A,H_A)
\]
are given by $\bar\varphi=I\circ \varphi \circ J_A$ and $\bar\psi=I\circ \psi\circ J_A$, where $I$ is the identity map between $H$ and $\overline H$.
There are natural left and right actions of $B$ on $\widehat H$ coming from its actions on $H$ and $\overline H$.
Moreover, the left and right actions of $M$ on $L^2(M)$ induce actions of $M_0$ on $\widehat H$.
The left and right actions of $M_0$ and $B$ are interchanged (up to a star) by $\widehat J$,
and so the algebra $\,\widehat {\!M\!}\, := M_0 \vee B$ generated by them in their left action
on $\widehat H$ is isomorphic to the algebra generated by them in their right action on $\widehat H$.
From this discussion, we see that $\widehat H$ is an $\,\widehat {\!M\!}\,$--$\,\widehat {\!M\!}\,$-bimodule with
an involution $\widehat J$ that satisfies $\widehat J(a\xi b)=b^* \widehat J(\xi) a^*$.

\begin{proposition} \label{prop:hat-M}
In the above situation, there is a canonical positive cone $\widehat P$ in $\widehat H := H \boxtimes_A L^2M \boxtimes_A \overline H$ such
that $(\widehat H , \widehat J, \widehat P)$ is a standard form for $\,\widehat {\!M\!}\,=M_0\vee B$.
\end{proposition}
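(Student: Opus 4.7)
The plan is to prove the proposition by a reduction-and-base-case argument: first verify the base case $H = L^2(A)$ by iterated unitality of Connes fusion, and then reduce the general case via an amplification and compression based on Lemma~\ref{lem:modules-are-summands}.

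For the base case $H = L^2(A)$, the commutant of the right $A$-action on $L^2(A)$ is the left $A$-action, so $B \cong A$ acting from the left. Iterated unitality of Connes fusion then yields a canonical unitary
\[
\widehat H \,=\, L^2(A) \boxtimes_A L^2(M) \boxtimes_A \overline{L^2(A)} \,\cong\, L^2(M).
\]
The unitality iso $L^2(A) \boxtimes_A L^2(M) \cong L^2(M)$ sends the bounded vector $\xi_a \otimes \eta \mapsto a \cdot \eta$ and therefore intertwines left $A$-actions; similarly for the second fusion. So under the iso $\widehat H \cong L^2(M)$, the left action of $B \cong A$ on the first factor transports to left multiplication by $A \subseteq M$, and the left action of $M_0$ on the middle factor transports to left multiplication by $M_0 \subseteq M$. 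Thus $\widehat M = M_0 \vee B$ acts on $\widehat H \cong L^2(M)$ as the left action of $M_0 \vee A = M$, and the base case reduces to the tautology that $L^2(M)$ is a standard form for $M$. I would then trace through the defining formula $\bar\varphi = I \circ \varphi \circ J_A$ to verify that $\widehat J$ corresponds to $J_M$ under this iso, and define $\widehat P$ as the preimage of $L^2_+(M)$.

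For the general case, use Lemma~\ref{lem:modules-are-summands} to realize $H \otimes \ell^2 \cong L^2(A) \otimes \ell^2$ as right $A$-modules, under which $H$ corresponds to the range of some projection $p \in A \,\bar\otimes\, \bfB(\ell^2)$ in the commutant of right $A$. Connes fusion distributes cleanly over amplification by a Hilbert space, yielding
\[
\widehat{H \otimes \ell^2} \,\cong\, \widehat H \otimes L^2(\bfB(\ell^2))
\qquad \text{and} \qquad
\widehat M_{H \otimes \ell^2} \,\cong\, \widehat M_H \,\bar\otimes\, \bfB(\ell^2).
\]
Since standard forms multiply under spatial tensor products, the proposition for $H$ is equivalent, after amplification by $\bfB(\ell^2)$, to the proposition for $H \otimes \ell^2$. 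It therefore suffices to prove the proposition for $H = L^2(A) \otimes \ell^2$, which is just the base case amplified: $\widehat{L^2(A) \otimes \ell^2} \cong L^2(M \,\bar\otimes\, \bfB(\ell^2))$ is a standard form for $M \,\bar\otimes\, \bfB(\ell^2) \cong \widehat M_{L^2(A) \otimes \ell^2}$.

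The main obstacle will be tracking the behavior of $\widehat J$ and the positive cone $\widehat P$ through the amplification/compression. One must identify the ``conjugate'' projection on the $\overline H$ side as the $\widehat J$-conjugate of $p$, verify that the combined projection cutting $\widehat H$ out of $\widehat{L^2(A) \otimes \ell^2}$ is $\widehat J$-invariant so that $\widehat J$ descends, and check that the restriction of $L^2_+(M \,\bar\otimes\, \bfB(\ell^2))$ by this projection defines a self-dual cone in $\widehat H$. Granted those compatibilities, the four axioms of a standard form for $\widehat M$ descend from the amplified version, completing the proof.
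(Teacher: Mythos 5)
Your proposal is correct and is essentially the paper's own argument: the paper likewise realizes $H$ as the range of a projection $p = uu^*$ in $\bfB(\ell^2)\,\barox\,A$ via an isometry $u\colon H_A \hookrightarrow \ell^2\otimes L^2(A)_A$, identifies $\widehat H$ with the corner $q\,L^2(\bfB(\ell^2)\,\barox\,M)$ for $q = p\,J\,p\,J$ and $\,\widehat{\!M\!}\,$ with $q(\bfB(\ell^2)\,\barox\,M)q$, and checks that the isometry intertwines $\widehat J$ with $qJ$. The compatibilities you defer at the end are exactly handled by Haagerup's compression lemma (a cut-down $(qL^2N,\,qJ,\,qP)$ of a standard form by a projection of the form $q=pJpJ$ is a standard form for $qNq$), which is the one external input the paper cites.
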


\noindent
In the following proof, as in Section \ref{subsec: Functors between module categories}, 
$\ell^2$ stands for $\ell^2(\mathbb N)$, or perhaps $\ell^2(X)$ for a set $X$ of sufficiently large cardinality.
If $H$ admits a cyclic vector for $A$ then we can replace $\ell^2$ by $\mathbb C$ everywhere, and the proof simplifies.

\begin{proof}  
Pick an %not surjective!
$A$-linear isometry $u \colon H_A \to \ell^2 \otimes L^2(A)_A$ (Lemma \ref{lem:modules-are-summands}) and let
\[
\bar u:=(1\otimes J_A)\circ u\circ I: {}_A\overline H\to  \overline{\ell^2}\otimes L^2(A)\cong {}_AL^2(A)\otimes \overline{\ell^2}.
\] 
The endomorphism algebra of $\ell^2 \otimes L^2(A)_A$ can be identified with $\bfB(\ell^2) \,\barox\, A$.
In particular, the range projection $p := uu^*$ is in $\bfB(\ell^2) \,\barox\, A$.

Let us define $M_1:= \bfB(\ell^2) \,\barox\, M$, with associated standard form $(L^2M_1,J_{M_1},P_{M_1})$
and let $q:= p\, J_{M_1} p\, J_{M_1}\in\bfB(L^2M_1)$ or, equivalently, $q(\xi):= p\,\xi\, p$.
Composing $u \boxtimes \id_{L^2(M)} \boxtimes\, \bar u$ with the obvious identifications
$
(\ell^2\otimes L^2A ) \boxtimes_A L^2M \boxtimes_A  (\, L^2A\otimes \overline{\displaystyle \ell^2}\, )
\,\cong\, \ell^2\otimes L^2M \otimes \overline{\displaystyle\ell^2} \,\cong\, L^2M_1,
$
we get an isometry
\[
\begin{split}       v \,:\, 
\widehat H = \,&H \boxtimes_A L^2M \boxtimes_A \overline H \rightarrow  L^2M_1\\
v\big(\varphi\,\otimes \,&\xi \otimes \psi\big)=(u\, \varphi)\cdot \xi\cdot( \bar u\, \psi)
\end{split}
\]
with range projection $vv^*=q$.
%\marginpar{.
%\\ $\varphi\in \hom(L^2A_A,H_A)$\smallskip\\  $\psi\in\hom({}_AL^2A,{}_A\bar H)$\smallskip\\
%$\bar\varphi\in \hom({}_AL^2A,{}_A\bar H)$\smallskip\\ $\bar\psi\in\hom(L^2A_A,H_A)$\smallskip\\
%$\mbox{$u\in\hom(H_A, \ell^2 \otimes L^2(A)_A)$}$\smallskip\\ $\mbox{$\bar u\in\hom({}_A\bar H, {}_AL^2(A)\otimes \overline{\ell^2})$}$.}
The resulting isomorphism $\widehat H\cong q(L^2M_1)$ intertwines $\widehat J$ and $q J_{M_1}$, as can be seen from the commutativity of the following diagram:
\[
\tikzmath{
\matrix [matrix of math nodes,column sep=1cm,row sep=5mm]
{ 
|(a)| \varphi\otimes \xi \otimes \psi				\pgfmatrixnextcell |(b)| \bar\psi\otimes J_M(\xi)\otimes \bar \varphi\\
										\pgfmatrixnextcell |(d)| (u\,\bar\psi)\cdot J_M(\xi) \cdot (\bar u\,\bar\varphi)\\
|(c)| (u\, \varphi)\cdot \xi\cdot( \bar u\, \psi)		\pgfmatrixnextcell |(e)| \,(\bar u\,\psi)^* \cdot J_M(\xi)\cdot (u\,\varphi)^*\\
}; 
\draw[->] (a) -- node [above]	{$\scriptstyle \widehat J$} (b);
\draw[->] (c) -- node [above]	{$\scriptstyle J_{M_1}$} (e);
\draw[->] (a) -- node [left]		{$\scriptstyle v$} (c);
\draw[->] (b) -- node [right]	{$\scriptstyle v$} (d);
\draw[double, double distance = 1.5] (d) -- (e);
\foreach \n/\loc/\x/\y in {a/east/0/.07, a/south/.07/0, b/south/.07/0, c/east/0/.07}
{\draw ($(\n.\loc) + (\x,\y)$) -- ($(\n.\loc) - (\x,\y)$);}
}
\]
Here, the last equality holds because the preimage of
$u\,\bar\psi$ under the left action map $\ell^2\otimes A\to \hom(L^2A_A,\ell^2 \otimes L^2A_A)$ agrees with the 
preimage of $(\bar u\,\psi)^*$ under the right action map $\ell^2\otimes A\to \hom({}_AL^2A\otimes\overline{\ell^2},  {}_AL^2A)$,
and the preimage of $\bar u\,\bar\varphi$ under the right action map agrees with the preimage of $(u\,\varphi)^*$ under the left action map.

  Recall that $B$ is the commutant of $A$ on $H$.
  In its action on $L^2M_1=\ell^2\otimes L^2M\otimes \overline{\ell^2}$, we have $B\equiv vBv^*=q(\bfB(\ell^2)\,\barox\,A)q$, and so it follows that
\begin{equation}\label{eq: Mhat is a corner}
  \begin{split}
  \,\widehat{\!M\!}\,\equiv v\,\,\widehat{\!M\!}\,\,v^*\,
  &= v(M_0\vee B)v^*\\
  &= qM_0\vee q(\bfB(\ell^2)\,\barox\,A)q\\
  &=q\big(\bfB(\ell^2)\,\barox\,(M_0\vee A)\big)q
  =q\big(\bfB(\ell^2)\,\barox\,M\big)q
  = q\,  M_1\, q.
  \end{split}
\end{equation}\pagebreak

\noindent  Now by~\cite[Lem.~2.6]{Haagerup(1975standard-form)}, we know that $\big(q(L^2M_1), q J_{M_1},q(P_{M_1})\big)$ is a standard form for $q M_1 q$.
  Therefore, by letting $\widehat P := v^{-1} (q (P_{M_1}))$, we have that $(\widehat H, \widehat J, \widehat P)$ is a standard form for $\,\widehat{\!M\!}\,$.
\end{proof}

\begin{lemma}\label{lem: factoriality of alg with dotted lines}
Let $M$, $M_0$, $A$, $\,\widehat {\!M\!}\,$ be as in the previous lemma.
%above. Let $H_A$ be a non-zero $A$-module, and let $\,\widehat {\!M\!}\,$ be the von Neumann algebra generated by the left actions of
%$M_0$ and $A'$ on $H\boxtimes_A L^2(M)$.
Then if $M$ is a factor, so is $\,\widehat {\!M\!}\,$.
\end{lemma}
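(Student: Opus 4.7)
The plan is to exploit the identification $v\widehat M v^* = qM_1q$ established at the end of the proof of Proposition~\ref{prop:hat-M} (see the chain of equalities in~\eqref{eq: Mhat is a corner}), where $M_1 = \bfB(\ell^2)\,\bar\otimes\,M$, $p = uu^* \in M_1$ is the range projection of the isometry $u$, and $q = p J_{M_1} p J_{M_1}$. Since $M$ is a factor by hypothesis, the spatial tensor product $M_1 = \bfB(\ell^2)\,\bar\otimes\,M$ is a factor as well. It therefore suffices to show that the compression $qM_1q$, acting on $qL^2M_1$, is a factor.

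First, I would unwind the compression concretely. Since $p$ is selfadjoint, $J_{M_1}pJ_{M_1} \in M_1'$ acts on $L^2M_1$ by right multiplication by $p$, so $qL^2M_1 = pL^2M_1 p$. A short calculation, using only the left action of $M_1$ and the right action of $M_1'$ on $L^2M_1$, shows that for $a \in M_1$ and $\xi = p\eta p \in pL^2M_1p$ one has $qaq\cdot \xi = pap\cdot \xi$. Hence the representation of $qM_1q$ on $qL^2M_1$ factors through $pM_1p$ and is realized as left multiplication by $pM_1p$ on the subspace $pL^2M_1p \subseteq pL^2M_1$.

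Next, I would invoke two classical compression-of-factor facts. First: if $N$ is a factor and $e \in N$ is a nonzero projection, then $eNe$ is a factor; this rests on the fact that, $N$ being a factor, the central support of $e$ in $N$ is $1$. Second: if $N$ is a factor acting on $H$ and $e' \in N'$ is a nonzero projection, then the restriction map $N \to N|_{e'H}$ is an injective $*$-homomorphism onto a factor, since the central support of $e'$ in the factor $N'$ is $1$. Applying the first fact to $M_1$ and $p$ shows that $pM_1p$ is a factor acting standardly on $pL^2M_1$, with commutant $M_1'|_{pL^2M_1}$ (itself a factor, by the second fact applied to $M_1'$ and $p$). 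The operator $J_{M_1}pJ_{M_1}|_{pL^2M_1}$ is a nonzero projection in this commutant, and the second fact applied with $N = pM_1p$ and $e' = J_{M_1}pJ_{M_1}|_{pL^2M_1}$ identifies $qM_1q$ (i.e., $pM_1p$ restricted to $pL^2M_1p = e'(pL^2M_1)$) with $pM_1p$ itself, a factor.

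I do not expect a serious obstacle: each step is a direct application of standard facts about reductions of von Neumann factors to corners by projections in the algebra and in its commutant, and the only computation involved is the elementary verification that $qaq$ acts as left multiplication by $pap$ on $pL^2M_1p$. The conceptual content has already been packaged into the corner description of $\widehat M$ in Proposition~\ref{prop:hat-M}.
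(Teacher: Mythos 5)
Your proof is correct and follows essentially the same route as the paper: the paper's argument is just the identification $v\,\widehat{\!M\!}\,v^* = qM_1q$ from \eqref{eq: Mhat is a corner} followed by the remark that corners of factors are factors. You have merely spelled out the (slightly nonstandard) step that the paper leaves implicit, namely that $q = pJ_{M_1}pJ_{M_1}$ is a product of a projection in $M_1$ and one in $M_1'$, so the compression $qM_1q$ is isomorphic to the genuine corner $pM_1p$ of the factor $M_1 = \bfB(\ell^2)\,\bar\otimes\,M$.
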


\begin{proof}
We have seen in \eqref{eq: Mhat is a corner} that $\,\widehat {\!M\!}\,=q\,  M_1\, q=q(\bfB(\ell^2) \,\barox\, M)q$.
The result follows since corners of factors are factors.
\end{proof}

The isomorphism constructed by Proposition~\ref{prop:hat-M} satisfies the following version of associativity.
Let $M=M_{0}\vee A_1\vee A_2$ be a von Neumann algebra, where $M_{0}$, $A_1$, and $A_2$ are commuting subalgebras of $M$.
Let $H_i$ be faithful right $A_i$-modules, and let $B_i$ be their commutants.
Then we can form the Hilbert spaces
\[
\widehat H_1:= H_1\boxtimes_{A_1} L^2M \boxtimes_{A_1} \overline H_1
\quad\text{and}\quad
\widehat H_2:= H_2\boxtimes_{A_2} L^2M \boxtimes_{A_2} \overline H_2
\]
on which the algebras $\,\widehat {\!M\!}_1\,:=M_0\vee B_1\vee A_2$ and $\,\widehat {\!M\!}_2\,:=M_0\vee A_1\vee B_2$ act.
By Proposition~\ref{prop:hat-M}, we have canonical isomorphisms $\widehat H_1\cong L^2 \,\widehat {\!M\!}_1$ and $\widehat H_2\cong L^2 \,\widehat {\!M\!}_2$.

Furthermore, we can form the Hilbert spaces
\def\doublehatH {\widehat{\phantom{\big ||}}\hspace{-.285cm}{\widehat H}}
\def\doublehatM {\,\widehat{\phantom{\big ||}}\hspace{-.24cm}{\widehat {\!M\!}}\,}
\[
\doublehatH_1:= H_2\boxtimes_{A_2} L^2\,\widehat {\!M\!}_1 \boxtimes_{A_2} \overline H_2
\quad\text{and}\quad
\doublehatH_2:= H_1\boxtimes_{A_1} L^2\,\widehat {\!M\!}_2 \boxtimes_{A_1} \overline H_1,
\]
on which the algebra $\doublehatM:=M_{0}\vee B_1\vee B_2$ acts.
Again by Proposition~\ref{prop:hat-M}, we then have canonical isomorphisms\, $\doublehatH_1\cong L^2 \doublehatM \,\cong\,\, \doublehatH_2$.

\begin{proposition}\label{prop: associativity of Psi (Appendix)}
In the above situation, the following diagram is commutative:
\begin{equation}\label{eq: hH,H...}
\quad
\tikzmath{
\node (f) at (0,0) {$H_2\underset{A_2}\boxtimes L^2 \,\widehat {\!M\!}_1\underset{A_2}\boxtimes\overline H_2$};
\node (g) at (2,0.11) {$=\,\doublehatH_1\phantom{=}$};
\node (e) at (-3.3,0) {$H_2\underset{A_2}\boxtimes \widehat H_1\underset{A_2}\boxtimes\overline H_2$};
\node (d) at (2,1.36) {$L^2 \doublehatM$};
\node (c) at (-3.3,2.5) {$H_1\underset{A_1}\boxtimes \widehat H_2\underset{A_1}\boxtimes\overline H_1$};
\node (a) at (0,2.5) {$H_1\underset{A_1}\boxtimes L^2 \,\widehat {\!M\!}_2\underset{A_1}\boxtimes\overline H_1$};
\node (b) at (2,2.61) {$=\,\doublehatH_2\phantom{=}$};
\draw[->] ($(c.east)+(0,0.05)$) -- ($(a.west)+(0,0.05)$);
\draw[->] (b) -- (d);
\draw[->] (g) -- (d);
\draw[->] ($(e.east)+(0,0.05)$) -- ($(f.west)+(0,0.05)$);
\draw ($(c.south)+(0,0.1)$) --node[left] {$\scriptstyle\cong$} (e);
} % tikzmath
\end{equation}
\end{proposition}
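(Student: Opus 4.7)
The plan is to realize both $\doublehatH_1$ and $\doublehatH_2$ as the \emph{same} corner of a common standard form, and then read off the commutativity of \eqref{eq: hH,H...} from that identification.

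First, I would choose $A_i$-linear isometries $u_i \colon H_i \hookrightarrow \ell^2_i \otimes L^2(A_i)$ (as in the proof of Proposition~\ref{prop:hat-M}) with range projections $p_i := u_i u_i^* \in \bfB(\ell^2_i)\,\bar\otimes\, A_i$. Set $M_{12} := \bfB(\ell^2_1)\,\bar\otimes\,\bfB(\ell^2_2)\,\bar\otimes\, M$, and regard $p_1 \in \bfB(\ell^2_1)\,\bar\otimes\, A_1$ and $p_2 \in \bfB(\ell^2_2)\,\bar\otimes\, A_2$ as commuting projections in $M_{12}$. Let $J := J_{M_{12}}$ and define $q_i := p_i J p_i J \in \bfB(L^2 M_{12})$. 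Because $A_1$ commutes with $A_2$ and $J$ sends $M_{12}$ to its commutant, the four projections $p_1, p_2, J p_1 J, J p_2 J$ pairwise commute; in particular $q_1 q_2 = q_2 q_1$.

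Second, I would apply the construction of Proposition~\ref{prop:hat-M} three times. A single application to $H_1$, $A_1$, $M$ gives an isometry $v_1 \colon \widehat H_1 \hookrightarrow L^2(\bfB(\ell^2_1)\,\bar\otimes\, M)$ with range $q_1 L^2(\bfB(\ell^2_1)\,\bar\otimes\, M)$ and a compatible identification $\widehat M_1 \cong q_1 (\bfB(\ell^2_1)\,\bar\otimes\, M) q_1$ under which $A_2 \subset \widehat M_1$ is sent to $q_1(1 \otimes A_2)q_1$. Applying the construction again to $H_2$, $A_2$, $\widehat M_1$ (using $u_2$), and invoking Haagerup's identification $L^2(q N q) \cong q L^2(N)$ for the projection $1 \otimes q_1 \in \bfB(\ell^2_2)\,\bar\otimes\,(\bfB(\ell^2_1)\,\bar\otimes\, M)$, I would obtain
\[
\doublehatH_1 \,\cong\, q_2\,(1 \otimes q_1)\, L^2 M_{12} \,=\, q_1 q_2 \, L^2 M_{12}.
\]
Completely symmetrically (swapping the roles of $1$ and $2$), the same construction gives $\doublehatH_2 \cong q_1 q_2\, L^2 M_{12}$.

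Third, I would trace the diagram~\eqref{eq: hH,H...}. The right-hand horizontal maps are by definition the single-step isometries $v$ from Proposition~\ref{prop:hat-M} applied to $A_2$ (respectively $A_1$) inside $\widehat M_1$ (respectively $\widehat M_2$). Under the identifications of the previous paragraph, both right-hand isomorphisms $\doublehatH_i \to L^2 \doublehatM$ factor through the single identification $L^2 \doublehatM \cong q_1 q_2 L^2 M_{12}$ coming from Proposition~\ref{prop:hat-M} applied to $\doublehatM = M_0 \vee B_1 \vee B_2 \subset M_{12}$, because the order in which one contracts the two isometries $u_1, u_2$ does not affect the final corner of $L^2 M_{12}$. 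Meanwhile, the left-hand vertical map is the composite of the associativity/commutativity isomorphisms for Connes fusion that rewrite
\[
H_1 \boxtimes_{A_1}\!\bigl(H_2 \boxtimes_{A_2} L^2 M \boxtimes_{A_2} \overline H_2\bigr)\!\boxtimes_{A_1}\!\overline H_1 \,\cong\, H_2 \boxtimes_{A_2}\!\bigl(H_1 \boxtimes_{A_1} L^2 M \boxtimes_{A_1} \overline H_1\bigr)\!\boxtimes_{A_2}\!\overline H_2,
\]
and, writing out both sides using the isometries $u_1$ and $u_2$, this map acts by sending a representative $\varphi_1 \otimes \varphi_2 \otimes \xi \otimes \psi_2 \otimes \psi_1$ to $\varphi_2 \otimes \varphi_1 \otimes \xi \otimes \psi_1 \otimes \psi_2$ (simply reordering commuting tensor factors). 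Under the embedding into $L^2 M_{12}$, both orderings produce the same vector $(u_1 \varphi_1)(u_2 \varphi_2) \cdot \xi \cdot (\bar u_2 \psi_2)(\bar u_1 \psi_1)$, because the ranges of $u_1$ and $u_2$ live in commuting tensor factors of $M_{12}$. This proves the commutativity of \eqref{eq: hH,H...}.

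The main obstacle I anticipate is the third paragraph: one must carefully check that the two distinct applications of Proposition~\ref{prop:hat-M} (first over $A_1$ then $A_2$, versus first over $A_2$ then $A_1$) really do furnish the \emph{same} standard form structure on $q_1 q_2 L^2 M_{12}$, not merely canonically isomorphic ones. This reduces to the fact that the two iterated compressions $p_1 J p_1 J p_2 J p_2 J$ and $p_2 J p_2 J p_1 J p_1 J$ coincide (by commutativity), and that under the bimodule structure, $J$-equivariance, and positive-cone identifications, the resulting standard forms for $\doublehatM$ agree strictly. Once this verification is in place, uniqueness of the standard form of $\doublehatM$ and the explicit formula for the rearrangement map complete the argument.
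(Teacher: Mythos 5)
Your proposal is correct and follows essentially the same route as the paper: both arguments embed everything into the ambient algebra $\bfB(\ell^2_1)\,\bar\otimes\, M\,\bar\otimes\,\bfB(\ell^2_2)$ via the isometries $u_i$, identify each Hilbert space in the diagram with the corner of an $L^2$-space cut out by the commuting projections $q_i = p_i J p_i J$, and observe that under these identifications all the maps become evident inclusions and reorderings of commuting tensor factors. The paper organizes this as one large diagram of corners whose interior cells visibly commute, whereas you carry out the same bookkeeping in prose.
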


\begin{proof}
Let $\ell_1$ and $\ell_2$ be two copies of $\ell^2$.
Pick isometries $u_i \colon (H_i)_{A_i} \hookrightarrow (\ell_i \otimes L^2A_i)_{A_i}$,
so as to identify $\widehat H_1$ with $L^2(p_1(\bfB(\ell_1)\,\barox\, M)p_1)$,
and $\widehat H_2$ with $L^2(p_2(M\,\barox\, \bfB(\ell_2))p_2)$,
for $p_i:=u_iu_i^*$.
Here, we have $p_1\in \bfB(\ell_1)\,\barox\, M$ and $p_2\in M\,\barox\, \bfB(\ell_2)$.
Let us also define the projections $q_1$ on $L^2(\bfB(\ell_1)\,\barox\, M)\cong\ell_1\otimes \overline{\ell_1}\otimes L^2M$ and $q_2$ on $L^2(M \,\barox\, \bfB(\ell_2))\cong L^2M\otimes \ell_2\otimes \overline{\ell_2}$ by $q_i(\xi)=p_i\xi p_i$.
\pagebreak

Given the above notations, consider the following diagram:

{
\def \bo#1 {\underset{A_#1}\boxtimes}
\def \n#1 #2 #3 {\node[inner sep = 1] (#1) at (#2) {$\scriptstyle #3$};}
\def \Bla {\bfB(\ell_1)}
\def \Blb {\bfB(\ell_2)}
\def \B {\big}
\def \b#1 {{\textstyle #1}}
\[
\tikzmath{
\n a {8.3,7.8} {L^2\b( (p_1\ox 1)(1\ox p_2)\b( \Bla\barox M\barox \Blb\b) (1\ox p_2)(p_1\ox 1)\b) }
\n b {8.7,6.6} {L^2\b( \Bla\barox\b( p_2(M\barox \Blb)p_2\b) \b) }
\n c {1,7.8} {H_1 \bo1 L^2\b( p_2(M\barox \Blb)p_2\b) \bo1 H_1}
\n d {4.5,7.1} {\ell_1\otimes \bar\ell_1\otimes L^2\b( p_2(M\barox \Blb)p_2\b) }
\n e {1,5.8} {H_1\bo1 \b( q_2(L^2M\otimes \ell_2\otimes\bar\ell_2)\b) \bo1 \bar H_1}
\n f {2.5,6.5} {H_1\bo1 L^2\b( M\barox \Blb\b) \bo1 \bar H_1}
\n g {6.4,5.8} {\ell_1\otimes \bar\ell_1\otimes L^2\b( M\barox \Blb\b) }
\n h {9,3.9} {L^2\b( \Bla\barox M\barox \Blb\b) }
\n i {1,4.3} {H_1\bo1 \big(H_2\bo2 L^2M\bo2 \bar H_2\big)\bo1 \bar H_1}
\n j {4.3,5.1} {H_1\bo1 \big(L^2M\otimes \ell_2\otimes\bar\ell_2\big)\bo1 \bar H_1}
\n k {6.5,4.3} {\ell_1\otimes \bar\ell_1\otimes \b( L^2M\otimes \ell_2\otimes\bar\ell_2\b) }
\n l {1,3.5} {H_2\bo2 \big(H_1\bo1 L^2M\bo1 \bar H_1\big)\bo2 \bar H_2}
\n m {4.3,2.7} {H_2\bo2 \big(\ell_1\otimes \bar\ell_1\otimes L^2M\big)\bo2 \bar H_2}
\n n {6.5,3.5} {\b( \ell_1\otimes \bar\ell_1\otimes L^2M\b) \otimes \ell_2\otimes\bar\ell_2}
\n o {1,2} {H_2\bo2 \b( q_1(\ell_1\otimes \bar\ell_1\otimes L^2M)\b) \bo2 \bar H_2}
\n p {2.5,1.3} {H_2\bo2 L^2\b( \Bla\barox M\b) \bo2 \bar H_2}
\n q {6.4,2} {L^2\b( \Bla\barox M\b) \otimes \ell_2\otimes \bar\ell_2}
\n r {1,0} {H_2 \bo2 L^2\b( p_1(\Bla\barox M)p_1\b) \bo2 H_2}
\n s {4.5,0.7} {L^2\b( p_1(\Bla\barox M)p_1\b) \otimes \ell_2\otimes\bar\ell_2}
%\n t {8.7,2.2} {L^2\b( \Bl\barox M\barox \Bl\b) }
\n u {8.7,1.2} {L^2\b( \b( p_1( \Bla\barox M)p_1\b) \barox\Blb\b) }
\n v {8.3,0} {L^2\b( (1\ox p_2)(p_1\ox 1)\b( \Bla\barox M\barox \Blb\b) (p_1\ox 1)(1\ox p_2)\b) }
\foreach \source/\target in {c/a, d/b, h/g, j/f, k/g, n/k, p/m, q/n,  h/q, u/s, v/r}
{\draw (\source) -- (\target);}
\foreach \source/\target in {a/b, c/d, c/f, f/g, e/j, d/g,  i/j, j/k,  l/m, m/n, o/m, p/q, r/p, s/q, r/s, v/u}
{\draw[->] (\source) -- (\target);}
\draw ($(v.north)+(2.7,0)$) --node[below, fill=white, pos=.54]{$\scriptstyle L^2 \,\widehat{\phantom{\textstyle t}}\hspace{-.16cm}{\widehat {\!M\!}}\,$} ($(a.south)+(2.7,0)$);
%\draw ($(t.north)+(.3,0)$) -- ($(h.south)+(.3,0)$);
\draw[->] ($(b.south)+(.05,0)$) -- ($(h.north)+(.15,0)$);
\draw[->] ($(u.north)+(.05,0)$) -- ($(h.south)+(.15,0)$);
\foreach \source/\target in {r/o, o/l, l/i, i/e, e/c}
{\draw ($(\source.north)-(.5,0)$) -- ($(\target.south)-(.5,0)$);}
%\foreach \p in {a,b,c,d,e,f,g,h,i,j,k,l,m,n,o,p,q,r,s,u,v}{\node[red, scale=.5, fill=white] at (\p) {\p};}
}
\]
\medskip

\noindent Here, arrows denote inclusions and lines denote isomorphisms.
One recognizes \eqref{eq: hH,H...} as the outside of the above diagram, and each one of the interior cells commutes for obvious reasons.
}
\end{proof}

\chapter{Conformal nets}
\label{app:nets}

\section{Axioms for conformal nets} \label{subsec:defnets}
%\addtocontents{toc}{\SkipTocEntry}

  Let $\VN$ be the category whose objects are von Neumann 
  algebras with separable preduals, 
  and whose morphisms are $\IC$-linear homomorphisms 
  and $\IC$-linear antihomomorphisms.
  A \emph{net} is a covariant functor 
  $\cala \colon \INT \to \VN$ taking 
  orientation-preserving embeddings to injective homomorphisms and
  orientation-reversing embeddings to injective antihomomorphisms.
  We call a net \emph{continuous} if for any intervals $I$ and $J$, 
  the  map 
  $\mathrm{Hom}_{\INT}(I,J)\to \mathrm{Hom}_{\VN}(\cala(I),\cala(J))$,
  $\varphi \mapsto \cala(\varphi)$ is continuous 
  for the $\mathcal C^\infty$ topology on $\mathrm{Hom}_{\INT}(I,J)$ and 
  Haagerup's $u$-topology on 
  $\mathrm{Hom}_{\VN}(\cala(I),\cala(J))$~\cite[Appendix]{BDH(nets)}.
  Given a subinterval $I \subseteq K$, we will 
  often not distinguish between $\cala(I)$ and its image in $\cala(K)$.
  
  A \emph{conformal net} is a continuous net $\cala$ subject to the 
  following conditions.
  Here, $I$ and $J$ are subintervals of an interval $K$: 
  \begin{enumerate}
  \item \emph{Locality:} If $I \subset K$ and $J\subset K$ have disjoint interiors, then 
     $\cala(I)$ and $\cala(J)$ are commuting subalgebras of $\cala(K)$.
  \item \emph{Strong additivity:} If $K = I \cup J$, then $\cala(K)$ 
     is generated as a von Neumann algebra by the two subalgebras: 
     $\cala(K) = \cala(I) \vee \cala(J)$.
  \item \emph{Split property:} 
    If $I\subset K$ and $J\subset K$ are disjoint, then the map from the algebraic tensor 
    product $\cala(I) \ox_{\alg} \cala(J) \to \cala(K)$ extends to a map 
    from the spatial tensor product
    $\cala(I) \, \bar{\ox} \, \cala(J) \to \cala(K)$.
  \item \emph{Inner covariance:} 
    If $\varphi\in\Diff_+(I)$ restricts to the identity in a neighborhood of 
    $\partial I$, then $\cala(\varphi)$ is an inner automorphism 
    of $\cala(I)$. 
    (A unitary $u \in \cala(I)$ with $\Ad(u) = \cala(\varphi)$
    is said to \emph{implement} $\varphi$.)
  \item %\label{def:conformal-net:vacuum} 
    \emph{Vacuum sector:} 
    Suppose that $J \subsetneq I$ contains the boundary point
    $p \in \dd I$, and let $\bar{J}$ denote $J$ with the reversed 
    orientation; $\cala(J)$ acts on $L^2(\cala(I))$ 
    via the left action of $\cala(I)$, and 
    $\cala(\bar{J}) \cong \cala(J)^\op$ acts on $L^2(\cala(I))$ 
    via the right action of $\cala(I)$.
    In this case, we require that the action of
    $\cala(J) \ox_{\alg} \cala( \bar{J} )$ on $L^2(\cala(I))$
    extends to an action of $\cala(J \cup_p \bar{J})$:
    \begin{equation*} %\label{eq: Vaccum sector axiom for nets}
      \qquad\tikzmath{
            \matrix [matrix of math nodes,column sep=1cm,row sep=5mm]
                { 
                       |(a)| \cala(J) \ox_{\alg} \cala( \bar{J} ) 
                       \pgfmatrixnextcell |(b)| \bfB(L^2\cala(I))\\ 
                       |(c)| \cala(J \cup_p \bar{J}) \\ 
                }; 
             \draw[->] (a) -- (b);
             \draw[->] (a) -- (c);
             \draw[->,dashed] (c) -- (b);
       } %tikzmath
    \end{equation*} 
    Here, $J \cup_p \bar{J}$ is equipped with any smooth structure extending 
    the given smooth structures on $J$ and $\bar J$, and for which 
    the orientation-reversing involution that exchanges $J$ and $\bar{J}$ 
    is smooth.
  \end{enumerate}
  
  A conformal net $\cala$ is called \emph{irreducible} if the 
  algebras $\cala(I)$ are factors.
  As discussed in Correction~\ref{corr:no-nets-disintegration}, contrary to our claim in~\cite[\subseccentraldecomposition, Eq~1.42]{BDH(nets)}, we do not know whether an arbitrary conformal net 
 decomposes as a direct integral of irreducible ones.  A conformal net is called \emph{semisimple} if it is a finite direct sum of irreducible conformal nets.  We denote by $\CN_0$ the symmetric monoidal category of semisimple conformal nets and their natural transformations.    The tensor product of nets $\cala$ and $\calb$ is defined using
  the spatial tensor product of von Neumann algebras:
  $(\cala  \otimes \calb)(I) := \cala(I) \,\bar \ox\, \calb(I)$.  A natural transformation $\tau: \cala \to \calb$ between semisimple conformal nets is called \emph{finite} if for all intervals $I$, the map $\tau_I : \cala(I) \to \calb(I)$ is a finite homomorphism (Appendix~\ref{subsec:dualizability}).  
%We will denote by $\CN_0^f \subset \CN_0$ the symmetric monoidal category of semisimple conformal nets all of whose irreducible summands have finite index (Appendix~\ref{subsec:finite-nets}), with their finite natural transformations. 
  
\section{The vacuum sector}
%\addtocontents{toc}{\SkipTocEntry}

    \label{subsec:vacuum-sector-net}
  A \emph{conformal circle} $S$ is a circle $S$ together with a diffeomorphism
  $S \to S^1$ that is specified up to a (not-necessarily orientation preserving) M\"obius transformation of 
  $S^1$~\cite[\defconformalcircle]{BDH(nets)};
  here, $S^1$ denotes the standard circle $\{z\in \mathbb C\,:\,|z|=1\}$.
  The set of conformal maps $S \to S'$ is denoted by $\Conf(S,S')$.
  If $S$ and $S'$ are oriented, then we denote by $\Conf_+(S,S')$ and $\Conf_-(S,S')$ the subsets of orientation preserving and orientation reversing maps.
  From now on, all our circles are implicitly assumed to be oriented.
  
  For a conformal net $\cala$ there is a 
  functor~\cite[\thmVaccumSector]{BDH(nets)}
  \begin{equation} \label{eq:vacuum-sector-functor}
    S \mapsto H_0(S,\cala)
  \end{equation}
  from the category of oriented conformal circles to the category
  of Hilbert spaces.
  It sends orientation preserving conformal maps
  to unitary operators and orientation reversing conformal maps
  to anti-unitary operators. %\footnote{The adjoint $u^*$ of an antilinear operator 
 %    $u$ is defined by 
 %  $\langle u\xi,\eta\rangle=\overline{\langle\xi,u^*\eta\rangle}$.
 % If $u^* = u^{-1}$ then $u$ is an anti-unitary.} operators.    - No need to explain, this is a standard notion
  The Hilbert space $H_0(S,\cala)$ is called 
  the \emph{vacuum sector of $\cala$ on $S$}, and comes equipped with compatible actions of 
  the algebras $\cala(I)$ for any subinterval $I$ of $S$.
  
  For $\varphi \in \Conf(S,S')$, the operator $H_0(\varphi,\cala)$ implements the diffeomorphism $\varphi$, that is:
  \begin{equation*}
      \begin{split}
        \cala(\varphi) (a)
          =  H_0(\varphi,\cala)\, a\, H_0(\varphi,\cala)^* & 
           \quad \text{if} \; \varphi \in \Conf_+(S,S')  \\ 
        \cala(\varphi) (a) = 
         H_0(\varphi,\cala)\, a^* H_0(\varphi,\cala)^*   & 
           \quad \text{if} \; \varphi \in \Conf_-(S,S')
       \end{split}
  \end{equation*}   
  for any $I\subset S$ and $a \in \cala(I)$.

  Moreover, for every interval $I\subset S$, there is a canonical unitary identification 
  \begin{equation}
    \label{eq:v_I}
    v_I \colon H_0(S,\cala) \to L^2(\cala(I)).
  \end{equation}
  These unitaries are such that for $\varphi \in \Conf_+(S,S')$ and $\psi \in \Conf_-(S,S')$, the diagrams
  \begin{equation} \label{eq:naturality-v_I}
    \tikzmath{
       \matrix [matrix of math nodes,column sep=1.4cm,row sep=7mm]
          { 
              |(a)| H_0(S,\cala) \pgfmatrixnextcell 
              |(c)| L^2(\cala(I))\\ 
              |(b)| H_0(S',\cala) \pgfmatrixnextcell 
              |(d)| L^2(\cala(\varphi(I)))\\ 
          }; 
          \draw[->] (a) -- node [left, xshift=-1]	
              {$\scriptstyle H_0(\varphi,\cala)$} (b);
          \draw[->] (c) -- node [left, xshift=1]	
              {$\scriptstyle L^2(\cala(\varphi))$} (d);
          \draw[->] (a) -- node [above]	{$\scriptstyle v_{I}$} (c);
          \draw[->] (b) -- node [above]	{$\scriptstyle v_{\varphi(I)}$} (d);
          }%\quad
    \tikzmath{
        \matrix [matrix of math nodes,column sep=1.4cm,row sep=7mm]
           {       
               |(a)| H_0(S,\cala) \pgfmatrixnextcell 
               |(c)| L^2(\cala(I))\\ 
               |(b)| H_0(S',\cala) \pgfmatrixnextcell 
               |(d)| L^2(\cala(\psi(I')))\\ 
           }; 
        \draw[->] (a) -- node [left, xshift=-1]	
            {$\scriptstyle H_0(\psi,\cala)$} (b);
        \draw[->] (c) -- node [left, xshift=1]	
            {$\scriptstyle L^2(\cala(\psi j))\circ J$} (d);
        \draw[->] (a) -- node [above]	{$\scriptstyle v_{I}$} (c);
        \draw[->] (b) -- node [above]	{$\scriptstyle v_{\psi (I')}$} (d);
      }
  \end{equation}
  commute,
  where $J$ is the modular conjugation on $L^2(\cala(I))$,
  $j \in \Conf_-(S)$ is the involution that fixes $\dd I$, and $I'=j(I)$ is the closure of $S\setminus I$.
  Taking $\psi := j$ in the second diagram, we recover the modular conjugation as
  $J = v_I H_0(j_I,\cala) v_I^*$.
  
  If $S$ is a circle without a conformal structure, then it is still
  possible to define $H_0(S,\cala)$ as $L^2(\cala(I))$ of some interval $I\subset S$,
  but this only defines $H_0(S,\cala)$ up to non-canonical
  unitary isomorphism~\cite[\defnoncanonicalvacuum]{BDH(nets)}.  
  We will sometimes abbreviate $H_0(S,\cala)$ by $H_0(S)$.

  \begin{proposition}{\rm (Haag duality for conformal nets \cite[\propHaagduality]{BDH(nets)})}\label{prop: [Haag duality for defects]-duality-nets}
     Let $\cala$ be a conformal net, and let $S$ be a circle.
     Then for any $I\subset S$, the algebra $\cala(I')$ is the 
     commutant of $\cala(I)$ on $H_0(S,\cala)$.
           
     If $J\subset K$ are intervals such that $J^c$, the closure of 
     $K\setminus J$, is itself an interval, then the relative commutant of 
     $\cala(J)$ in $\cala(K)$ is $\cala(J^c)$.
  \end{proposition}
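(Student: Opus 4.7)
The plan is to reduce both statements to properties of the Haagerup standard form via the canonical identification $v_I : H_0(S,\cala) \xrightarrow{\cong} L^2(\cala(I))$ from \eqref{eq:v_I}, together with the vacuum sector axiom, strong additivity, and the bicommutant theorem. The whole argument is a cleaner analog of what Proposition~\ref{prop: [Haag duality for defects]} does in the presence of defects.

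For part (1), I would first reduce to the case where $I' = \overline{S \setminus I}$, so $I$ and $I'$ share their two boundary points and are exchanged by a unique involution $j \in \Conf_-(S)$ fixing $\partial I$. Specializing the second diagram in \eqref{eq:naturality-v_I} to $\psi = j$ yields
\[
v_I H_0(j,\cala) v_I^* = J_{\cala(I)},
\]
the modular conjugation of the standard form. Next I would argue that under $v_I$, the action of $\cala(I')$ on $H_0(S,\cala)$ corresponds to the right action of $\cala(I)$ on $L^2(\cala(I))$: for any subinterval $J \subset I'$ sharing a boundary point with $I$, the vacuum sector axiom directly identifies the action of $\cala(J)$ with the right action of $\cala(\bar J) \subset \cala(I)$, and for general $J \subset I'$ one uses an inner-covariance/canonical-implementation argument like \eqref{eq:diff trick for nets ---- bis} (or observes that $J$ can be reached by $\cala(j)$ applied to a subinterval of $I$ and by the displayed relation that gets conjugated into the right action via $J_{\cala(I)}$). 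Since $\cala(I)$ and its right action on $L^2(\cala(I))$ are each other's commutants by Tomita–Takesaki theory, part (1) follows.

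For part (2), I would embed $K$ into a circle $S$ by attaching a complementary interval $K'$, so that $\cala(K)' = \cala(K')$ on $H_0(S,\cala)$ by part (1). Using the bicommutant theorem,
\[
\cala(J)' \cap \cala(K) \;=\; \bigl(\cala(J) \vee \cala(K)'\bigr)' \;=\; \bigl(\cala(J) \vee \cala(K')\bigr)'.
\]
Since $J^c$ is an interval, $J$ and $K'$ share exactly one boundary point in $S$, so $J \cup K'$ is itself a subinterval of $S$. Strong additivity identifies $\cala(J) \vee \cala(K') = \cala(J \cup K')$, and part (1) applied to $J \cup K' \subset S$ (whose complement in $S$ is precisely $J^c$) gives
\[
\bigl(\cala(J \cup K')\bigr)' \;=\; \cala(J^c),
\]
finishing the proof.

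The main obstacle is the identification used in part (1) of the action of $\cala(I')$ on $H_0(S,\cala)$ (coming from the vacuum sector axiom and the diffeomorphism-transport formula) with the right action of $\cala(I)$ on $L^2(\cala(I))$. One has to check that the canonical implementations used to extend the actions from subintervals of $I'$ touching $\partial I$ to all of $I'$ are compatible with the modular conjugation $J_{\cala(I)}$; once this compatibility is in place, everything else reduces either to standard-form theory or to strong additivity plus the bicommutant theorem, and part (2) drops out mechanically.
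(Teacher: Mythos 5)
Your argument is correct and is essentially the route the paper takes: part (1) amounts, by the very construction of the vacuum sector (the action of $\cala(I')$ is \emph{defined} via $\cala(j)$ followed by the right action of $\cala(I)$ on $L^2(\cala(I))$, so the compatibility with $J_{\cala(I)}$ you flag as the main obstacle is built into the definition rather than something to verify), to the fact that the left and right actions of $\cala(I)$ on its standard form are each other's commutants, and part (2) is exactly the bicommutant-plus-strong-additivity computation $\cala(J)'\cap\cala(K)=(\cala(J)\vee\cala(K'))'=\cala(J\cup K')'=\cala(J^c)$ that the paper uses for the defect analogue, Proposition~\ref{prop: [Haag duality for defects]}. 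Note that the paper does not reprove this net-level statement but cites it from \cite{BDH(nets)}; the proof of the defect version is the template, and your argument reproduces it faithfully.
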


\section{Gluing vacuum sectors}
%\addtocontents{toc}{\SkipTocEntry}  
      \label{subsec:glueing}
     Consider a theta-graph $\Theta$, and let $S_1$, $S_2$, $S_3$ be its three circle subgraphs with orientations as drawn below:
     \begin{equation*} %\label{eq: Theta-graph}
\Theta:\, \tikzmath[scale=.055]{ \draw (60:14) arc (60:300:14); \draw (14,0) +(120:14) arc (120:-120:14); \draw (0,0) +(60:14) -- +(300:14);
%\node at (4,10) {$\scriptstyle p$}; \node at (4,-10) {$\scriptstyle q$};
}%tikzmath
\,\,\,\,\,\,\qquad \tikzmath[scale=.033]{ \useasboundingbox (-14,-30) rectangle (28,24); \draw[line width=.7] (60:14) arc (60:300:14); \draw[densely dotted] (14,0) +(120:14) arc (120:-120:14);
\draw[line width=.7] (0,0) +(60:14) -- +(300:14); \node at (8,-24) {$S_1$}; \draw[->] (90:14) ++ (-.5,0) -- +(-.1,0); }%tikzmath
\,\,,\quad \tikzmath[scale=.033]{ \useasboundingbox (-14,-30) rectangle (28,24); \draw[densely dotted] (60:14) arc (60:300:14); \draw[line width=.7] (14,0) +(120:14) arc (120:-120:14);
\draw[line width=.7] (0,0) +(60:14) -- +(300:14); \node at (8,-24) {$S_2$}; \draw[->] (14,14) ++ (-.5,0) -- +(-.1,0); }%tikzmath
\,\,,\quad \tikzmath[scale=.033]{ \useasboundingbox (-14,-30) rectangle (28,24); \draw[line width=.7] (60:14) arc (60:300:14); \draw[line width=.7] (14,0) +(120:14) arc (120:-120:14);
\draw[densely dotted] (0,0) +(60:14) -- +(300:14); \node at (8,-24) {$S_3$}; \draw[->] (14,14) ++ (-.5,0) -- +(-.1,0); }%tikzmath
\,\,.\end{equation*}
  (\hspace{.2mm}Elsewhere in this book, we more often depict circles as squares:
  \begin{equation*}\label{eq: Theta-graph-square}
  \Theta:
   \tikzmath[scale=0.11] { 
      \useasboundingbox (-3,-3) rectangle (30,15);
      \draw (0,0) rectangle (24,12) (12,0) -- (12,12);
%      \node at (10,10) {$\scriptstyle p$};    \node at (10,2) {$\scriptstyle q$};
   } %tikzmath
   \quad
   \tikzmath[scale=\displscale] { 
      \useasboundingbox (-5,-5) rectangle (27,17);
      \draw[line width=.7] (0,0) rectangle (12,12) ;
      \draw[densely dotted] (12,0) -- (24,0) -- (24,12) -- (12,12);
      \node at (12.5,-6) {$S_1$};
   }  , %tikzmath
   \quad
   \tikzmath[scale=\displscale] { 
      \useasboundingbox (-5,-5) rectangle (27,17);
      \draw[line width=.7] (12,0) rectangle (24,12) ;
      \draw[densely dotted] (12,0) -- (0,0) -- (0,12) -- (12,12);
      \node at (12.5,-6) {$S_2$};
   }  ,%tikzmath
   \quad
    \tikzmath[scale=\displscale] { 
      \useasboundingbox (-5,-5) rectangle (27,17);
      \draw[line width=.7] (0,0) rectangle (24,12);
      \draw[densely dotted] (12,0) --  (12,12);
      \node at (12.5,-6) {$S_3$};
   } .)%tikzmath
  \end{equation*}
  We equip $\Theta$ with a `smooth structure' in the sense of \cite[Def. 1.4]{BDH(modularity)} and let
%   $S_1$, $S_2$, $S_3$ with smooth structures for which there exists 
%  an action of the symmetric group $\mathfrak S_3$ on $\Theta$
%  that fixes $p$ and $q$, permutes the three circles, and such 
%  that $\pi|_{S_a}$ is smooth for every $\pi\in \mathfrak S_3$ and 
%  $a\in \{1,2,3\}$.
  \[
   I:=S_1\cap S_2,\quad K:=S_1\cap S_3,\quad L:=S_2\cap S_3.
  \]
  Let us give $K$ the orientation coming from $S_1$, and let us give $I$ and $L$ the orientations coming from $S_2$. 
  
  Then given a conformal net $\cala$, there is a non-canonical isomorphism~
     \cite[\corvacuumvacuumvacuum]{BDH(nets)}   
  \begin{equation}
       \label{eq:non-canonical-upsilon}
       H_0(S_1,\cala) \boxtimes_{\cala(I)} H_0(S_2,\cala) 
        \,\,\cong\,\, H_0(S_3,\cala),
  \end{equation}
  compatible with the actions of $\cala(K)$ and $\cala(L)$.
  Moreover, in the presence of suitable conformal structures,
  this isomorphism can be constructed canonically:
  equip $S_1$ and $S_2$ with conformal structures, and let $j_1 \in \Conf_-(S_1)$, $j_2 \in \Conf_-(S_2)$ be the unique involutions fixing $\dd I$.
  Then there is a unique conformal structure on $S_3$ for which 
  $j_2|_I\cup \mathrm{Id}_{K}:S_1\to S_3$ and $j_1|_I\cup \mathrm{Id}_{L}:S_2\to S_3$ are conformal.
  We can then use~\eqref{eq:v_I} to obtain the canonical isomorphism~\cite[\corConformalversionofvacuumvacuumvacuum]{BDH(nets)}   
  \begin{equation}
       \label{eq:Upsilon}
    \begin{split}
      \Upsilon \colon H_0(S_1,\cala) \boxtimes_{\cala(I)} H_0(S_2,\cala)
           \xrightarrow{v_K\boxtimes\hspace{.3mm} 
                 v_{I}} \,\,& 
         L^2(\cala(K))\boxtimes_{\cala(I)}L^2(\cala(I))\\
           \xrightarrow{\cong}\,\, &
          L^2(\cala(K)) \xrightarrow{v_{K}^*}H_0(S_3,\cala).
    \end{split}
  \end{equation}

\section{Finite-index conformal nets}
%\addtocontents{toc}{\SkipTocEntry}
      \label{subsec:finite-nets}
  Let $S$ be a circle, split into four intervals 
  $I_1$, $I_2$, $I_3$, $I_4$ as follows:
%  such that each $I_i$ intersects $I_{i+1}$ (cyclic numbering) in a single point:
  \begin{equation} \label{eq:circle-4-intervals}
        \tikzmath[scale=.07]
             { \useasboundingbox (-20,-22) rectangle (20,22);
                  \draw (0,0) circle (15);
                  \foreach \thet in {45,135, ..., 315} 
                      { \draw[thick] (\thet:14) -- (\thet:16);}
                  \foreach \ii in {1, ..., 4} 
                      {  \node at (\ii*90-90:19) {$I_\ii$}; }}
  \end{equation} 
  Given an irreducible conformal net $\cala$,
  the algebras $\cala(I_1\cup I_3)= \cala(I_1)\,\bar\otimes\,\cala(I_3)$ 
  and $\cala(I_2\cup I_4)= \cala(I_2)\,\bar\otimes\,\cala(I_4)$ act on
  $H_0(S,\cala)$ and commute with each other.
  The \emph{index} $\mu(A)$ of %the conformal net 
  $\cala$ is then defined to be minimal index (see Appendix~\ref{subsec:stat-dim+minimal-index}) of the inclusion 
  $\cala(I_1\cup I_3)\subseteq \cala(I_2\cup I_4)'$ \cite{Kawahigashi-Longo-Mueger(2001multi-interval), Xu(Jones-Wassermann-subfactors)}:
  \begin{equation*}
           \mu(\cala) := [\cala(I_2\cup I_4)':\cala(I_1\cup I_3)],
  \end{equation*}
  where the commutant is taken on $H_0(S,\cala)$.  

\section{Sectors and the Hilbert space of the annulus}
%\addtocontents{toc}{\SkipTocEntry}
     \label{subsec:sectors+KLM}
  
  Let $\cala$ be an irreducible conformal net and let $S$ be a 
  circle (always oriented).
  An $S$-sector of $\cala$ is a Hilbert space $H$ together 
  with homomorphisms
  \[
  \rho_I \colon \cala(I) \to \bfB(H),\quad I\subset S
  \]
  subject to the compatibility condition $\rho_I|_{\cala(J)}=\rho_J$ 
  whenever $J \subset I$.
  
  Let us write $\Delta$ for the set of isomorphism classes
  of irreducible $S$-sectors of $\cala$.
  The vacuum sector discussed before is an example of a sector and
  we write $0$ for the corresponding element of $\Delta$.
  As all circles are diffeomorphic, $\Delta$ does not depend on the
  choice of circle $S$.
  There is an involution $\lambda \mapsto \bar \lambda$ on $\Delta$
  given by sending an $S$-sector to its pull back  along
  an orientation reversing diffeomorphism of $S$, as defined in \cite[(1.13)]{BDH(nets)}.
  For $\lambda \in \Delta$, we write $H_\lambda(S)$ 
  for a representative of $\lambda$ as an $S$-sector.
  Of course, $H_\lambda(S)$ is only determined up to
  non-canonical isomorphism.

  Let $S_l$ be a circle, decomposed into four intervals $I_1, \ldots, I_4$
  as in~\eqref{eq:circle-4-intervals}, 
  and let $S_r$ be another circle, similarly decomposed into four 
  intervals $I_5, \ldots, I_8$.
  Let $\varphi \colon I_5\to I_1$ and $\psi \colon I_7\to I_3$ 
  be orientation-reversing diffeomorphisms.  
  These diffeomorphisms equip $H_0(S_l)$ with the structure of a right 
  $\cala(I_5)\,\bar\otimes\,\cala(I_7)$-module.
  We are interested in the Hilbert space
  \[
     H_{\Sigma}\,\,:=\,\,\,\,H_0(S_l)\!
        \underset{\cala(I_5)\bar\otimes\cala(I_7)}\boxtimes\! H_0(S_r)
       \cong 
       \tikzmath{\node (A) at (0,0) 
         {$H_0(S_l) \boxtimes_{\cala(I_5)} H_0(S_r) \boxtimes_{\cala(I_3)}$};
       \def\dd{.3}\def\ll{.1}\def\rr{.15}
       \draw[dashed, rounded corners = 5] 
           (A.east) -- ++(\rr,0) -- ++(0,-\dd) -- 
          ($(A.west) + (-\ll,-\dd)$) -- +(0,\dd) -- (A.west);}
  \]
  This space is associated to the annulus 
  $\Sigma=\ID_l\cup_{I_5\cup I_7} \ID_r$, where $\ID_l$ and $\ID_r$ are 
  disks bounding $S_l$ and $S_r$.
  (As $H_0(S_l)$ and $H_0(S_r)$ are only determined up to
  non-canonical isometric isomorphism, the same Hilbert space $H_\Sigma$ is, at this point, also only determined up to non-canonical isometric isomorphism.)
  Let $S_b:=I_2\cup I_8$ and $S_m:=I_4\cup I_6$ be the two boundary 
  circles of this annulus.
\[
\tikzmath[scale=.25]{\coordinate (a) at (0,0);\coordinate (b) at (.15,1);\coordinate (c) at (-.2,2);\coordinate (d) at (0,3);\coordinate (e) at (-5,.4);\coordinate (f) at (-5,2.6);\coordinate (g) at (-1,1.2);\coordinate (h) at (-1,2);\coordinate (a') at (d);\coordinate (b') at (c);\coordinate (c') at (b);\coordinate (d') at (a);\begin{scope}[yshift = 85, rotate= 180]\coordinate (e') at (-5,.4);\coordinate (f') at (-5,2.6);\coordinate (g') at (-1,1.2);\coordinate (h') at (-1,2);\end{scope}
\draw[line width=.7] (b) to (a) to [out = 180, in = -45, looseness=1.1] (e) 
	to [out = -45 + 180, in = 225, looseness=1.1] (f) to [out = 225 + 180, in = 180, looseness=1.1] node (a1) [pos = .37] {}
	(d) to [looseness=0] (c) to [out = 180, in = 45, looseness=1.1] 
	(h) to [out = 45 + 180, in = -225, looseness=1.1] (g) to [out = -225 + 180, in = 180, looseness=1.1] (b);
\draw[densely dotted] (a') to [out = 0, in = -45 + 180, looseness=1.1]node (a1') [pos = .6] {} (e') to [out = -45, in = 225 + 180, looseness=1.1]  
	(f') to [out = 225, in = 0, looseness=1.1] (d');
\draw[densely dotted] (c') to [out = 0, in = 45 + 180, looseness=1.1] (h') to [out = 45, in = -225 + 180, looseness=1.1]
	(g') to [out = -225, in = 0, looseness=1.1] node (b1) [pos = .37] {} (b');
\node at (0,-1.5) {$S_l$};
\draw[->] (a1.center) -- ++ (180:0.01);
} % tikzmath
\quad\tikzmath[scale=.25]{\coordinate (a) at (0,0);\coordinate (b) at (.15,1);\coordinate (c) at (-.2,2);\coordinate (d) at (0,3);\coordinate (e) at (-5,.4);\coordinate (f) at (-5,2.6);\coordinate (g) at (-1,1.2);\coordinate (h) at (-1,2);\coordinate (a') at (d);\coordinate (b') at (c);\coordinate (c') at (b);\coordinate (d') at (a);\begin{scope}[yshift = 85, rotate= 180]\coordinate (e') at (-5,.4);\coordinate (f') at (-5,2.6);\coordinate (g') at (-1,1.2);\coordinate (h') at (-1,2);\end{scope}
\draw[densely dotted] (a) to [out = 180, in = -45, looseness=1.1] (e) 
	to [out = -45 + 180, in = 225, looseness=1.1] (f) to [out = 225 + 180, in = 180, looseness=1.1] node (a1) [pos = .37] {} (d);
\draw[densely dotted] (c) to [out = 180, in = 45, looseness=1.1] 
	(h) to [out = 45 + 180, in = -225, looseness=1.1] (g) to [out = -225 + 180, in = 180, looseness=1.1] (b);
\draw[line width=.7] (b') to (a') to [out = 0, in = -45 + 180, looseness=1.1]node (a1') [pos = .6] {} (e') to [out = -45, in = 225 + 180, looseness=1.1]  
	(f') to [out = 225, in = 0, looseness=1.1] (d') to [looseness=0] (c') to [out = 0, in = 45 + 180, looseness=1.1] (h') to [out = 45, in = -225 + 180, looseness=1.1]
	(g') to [out = -225, in = 0, looseness=1.1] node (b1) [pos = .37] {} (b');
\node at (0,-1.5) {$S_r$};
\draw[->] (a1'.center) -- ++ (180:0.01);
} % tikzmath
\quad\tikzmath[scale=.25]{\coordinate (a) at (0,0);\coordinate (b) at (.15,1);\coordinate (c) at (-.2,2);\coordinate (d) at (0,3);\coordinate (e) at (-5,.4);\coordinate (f) at (-5,2.6);\coordinate (g) at (-1,1.2);\coordinate (h) at (-1,2);\coordinate (a') at (d);\coordinate (b') at (c);\coordinate (c') at (b);\coordinate (d') at (a);\begin{scope}[yshift = 85, rotate= 180]\coordinate (e') at (-5,.4);\coordinate (f') at (-5,2.6);\coordinate (g') at (-1,1.2);\coordinate (h') at (-1,2);\end{scope}
\draw[line width=.7] (a) to [out = 180, in = -45, looseness=1.1] (e) 
	to [out = -45 + 180, in = 225, looseness=1.1] (f) to [out = 225 + 180, in = 180, looseness=1.1] node (a1) [pos = .37] {} (d);
\draw[densely dotted] (c) to [out = 180, in = 45, looseness=1.1] 
	(h) to [out = 45 + 180, in = -225, looseness=1.1] (g) to [out = -225 + 180, in = 180, looseness=1.1] (b);
\draw[line width=.7] (a') to [out = 0, in = -45 + 180, looseness=1.1]node (a1') [pos = .6] {} (e') to [out = -45, in = 225 + 180, looseness=1.1]  
	(f') to [out = 225, in = 0, looseness=1.1] (d');
\draw[densely dotted] (c') to [out = 0, in = 45 + 180, looseness=1.1] (h') to [out = 45, in = -225 + 180, looseness=1.1]
	(g') to [out = -225, in = 0, looseness=1.1] node (b1) [pos = .37] {} (b');
\draw[densely dotted] (b') to (a')  (d') to (c');
\node at (0,-1.5) {$S_b$};
\draw[->] (a1'.center) -- ++ (180:0.01);
} % tikzmath
\quad\tikzmath[scale=.25]{\coordinate (a) at (0,0);\coordinate (b) at (.15,1);\coordinate (c) at (-.2,2);\coordinate (d) at (0,3);\coordinate (e) at (-5,.4);\coordinate (f) at (-5,2.6);\coordinate (g) at (-1,1.2);\coordinate (h) at (-1,2);\coordinate (a') at (d);\coordinate (b') at (c);\coordinate (c') at (b);\coordinate (d') at (a);\begin{scope}[yshift = 85, rotate= 180]\coordinate (e') at (-5,.4);\coordinate (f') at (-5,2.6);\coordinate (g') at (-1,1.2);\coordinate (h') at (-1,2);\end{scope}
\draw[densely dotted] (a) to [out = 180, in = -45, looseness=1.1] (e) 
	to [out = -45 + 180, in = 225, looseness=1.1] (f) to [out = 225 + 180, in = 180, looseness=1.1] node (a1) [pos = .37] {} (d);
\draw[line width=.7] (c) to [out = 180, in = 45, looseness=1.1] 
	(h) to [out = 45 + 180, in = -225, looseness=1.1] (g) to [out = -225 + 180, in = 180, looseness=1.1] (b);
\draw[densely dotted] (a') to [out = 0, in = -45 + 180, looseness=1.1]node (a1') [pos = .6] {} (e') to [out = -45, in = 225 + 180, looseness=1.1]  
	(f') to [out = 225, in = 0, looseness=1.1] (d');
\draw[line width=.7] (c') to [out = 0, in = 45 + 180, looseness=1.1] (h') to [out = 45, in = -225 + 180, looseness=1.1]
	(g') to [out = -225, in = 0, looseness=1.1] node (b1) [pos = .37] {} (b');
\draw[densely dotted] (b') to (a')  (d') to (c');
\node at (0,-1.5) {$S_m$};
\draw[->] (b1.center) -- ++ (0:0.01);
} % tikzmath
\]

  The Hilbert space $H_{\Sigma}$ is an $S_m$-$S_b$-sector, which means that it is equipped with compatible actions of the 
  algebras $\cala(J)$ associated to all subintervals of $S_m$ and $S_b$~\cite[\secHilbertspaceforannulus]{BDH(nets)}.
  
  We finish by stating the computation of the annular Hilbert space, which, formulated in a different language, is due to~\cite{Kawahigashi-Longo-Mueger(2001multi-interval)}:

  \begin{theorem}[{\cite[\thmKLM, \thmKLMallirreduciblesectorsarefinite]{BDH(nets)}}]
    \label{thm:KLM}
    If the conformal net $\cala$ has finite index, then the set $\Delta$ is finite, and there is a unitary 
    isomorphism of $S_m$-$S_l$-sectors
    \begin{equation*}
      H_{\Sigma} \cong 
        \bigoplus_{\lambda\in\Delta} H_\lambda(S_m) \otimes H_{\bar \lambda}(S_b).
    \end{equation*}
  \end{theorem}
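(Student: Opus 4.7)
My plan is to deduce the theorem from a structural decomposition of the circle algebra $\cala(S)$ (of the form stated in Theorem~\ref{thm:cala(S)}, already used in the excerpt) together with a Haag-duality-type commutant statement for the annulus. First I would establish that $\cala(S_m)$ contains, and in fact coincides up to a central summand with, $\bigoplus_{\lambda\in\Delta^f} \bfB(H_\lambda(S_m))$, where $\Delta^f$ is the subset of sectors of finite statistical dimension. The input is that $\cala(S_m)$ is generated by the local algebras $\cala(J)$ for $J\subset S_m$, and that any irreducible normal representation of $\cala(S_m)$ restricts on each such $\cala(J)$ to a localized endomorphism; standard DHR/Jones-index reasoning then shows that each irreducible summand corresponds to a unique $\lambda\in\Delta$ and appears with multiplicity equal to $\dim H_\lambda(S_m)$.

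Next I would analyze the representation of $\cala(S_m)\,\barox\,\cala(S_b)$ on $H_\Sigma=H_0(S_l)\boxtimes_{\cala(I_5)\barox\cala(I_7)}H_0(S_r)$. The module $H_\Sigma$ is manifestly a faithful sector on both boundary circles. The key commutant step is to show that the images of $\cala(S_m)$ and $\cala(S_b)$ on $H_\Sigma$ are each other's commutants. This uses the four-interval Haag duality packaged by the index hypothesis: the algebra $\cala(I_1\cup I_3)\subseteq \cala(I_2\cup I_4)'$ has minimal index $\mu(\cala)<\infty$, which, transported across the Connes fusion defining $H_\Sigma$, becomes the assertion that the bimodule ${}_{\cala(S_m)}H_\Sigma{}_{\cala(S_b)}$ has finite statistical dimension equal to $\mu(\cala)$ and that the two algebras generate each other's commutants.

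From these two ingredients, the decomposition is extracted as follows. As a left $\cala(S_m)$-module, the faithful normal representation $H_\Sigma$ splits as $\bigoplus_{\lambda\in\Delta} H_\lambda(S_m)\otimes K_\lambda$ for uniquely determined Hilbert spaces $K_\lambda$, with $K_\lambda\neq 0$ only for finitely many $\lambda$ (which a posteriori will be all of $\Delta$, forcing $\Delta$ to be finite). The commutant of the $\cala(S_m)$-action on this direct sum is $\bigoplus_\lambda \IC\otimes\bfB(K_\lambda)$. Since that commutant is exactly the image of $\cala(S_b)$, the decomposition of $\cala(S_b)\cong\bigoplus_{\mu\in\Delta}\bfB(H_\mu(S_b))$ from the first step identifies each $K_\lambda$ as an irreducible $S_b$-sector. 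It remains to match $\lambda\leftrightarrow\bar\lambda$: an orientation-reversing diffeomorphism $S_m\to S_b$ (gluing the two disks $\ID_l,\ID_r$ into the annulus) together with the modular conjugation intertwining the left and right actions on $L^2$ of the boundary algebras exhibits the pairing, giving $K_\lambda\cong H_{\bar\lambda}(S_b)$.

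The main obstacle will be the commutant step, the Haag-duality for the annulus. The subtle point is that after Connes fusion over the two disjoint intervals $I_5\cup I_7$, the resulting bimodule can carry nontrivial central projections coming from the non-factoriality of $\cala(I_1\cup I_3)$ inside $\cala(I_2\cup I_4)'$, and these must precisely account for the direct summands indexed by $\Delta$. Controlling this requires the finite-index matrix-of-statistical-dimensions estimates~\eqref{eq:DIS-726} and~\eqref{eq:DIS-727} together with equality in the inequalities, which is what simultaneously yields that $\Delta$ is finite and that the sum above exhausts $H_\Sigma$.
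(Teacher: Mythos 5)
You should first be aware that the paper does not prove this theorem at all: it is imported verbatim from the first paper of the series (\thmKLM{} and \thmKLMallirreduciblesectorsarefinite{} of \cite{BDH(nets)}) and is ultimately attributed to \cite{Kawahigashi-Longo-Mueger(2001multi-interval)}; in the dependency diagram of Appendix~\ref{app:depend} it appears as a leaf with no incoming arrows. So there is no internal proof to compare against, and any proof you supply must avoid results that sit downstream of it. This is where your first step runs into trouble: the decomposition $\cala(S_m)\cong\bigoplus_{\lambda\in\Delta}\bfB(H_\lambda(S_m))$ of Theorem~\ref{thm:cala(S)} is, in this framework, \emph{derived from} Theorem~\ref{thm:KLM} (it is \thmcomputebfBnet{} of \cite{BDH(modularity)}, whose proof uses the KLM decomposition of the annulus space), so invoking it --- or re-deriving it by ``standard DHR/Jones-index reasoning'' --- as your first ingredient is circular unless you supply an independent argument.

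The more substantive gap is that the crux of the theorem is exactly the part you compress into one sentence: that every irreducible sector has finite statistical dimension and that there are only finitely many of them. Finiteness of $\mu(\cala)$ alone does not give this; the Kawahigashi--Longo--M\"uger argument uses the split property and strong additivity essentially to show that the two-interval inclusion $\cala(I_1\cup I_3)\subseteq\cala(I_2\cup I_4)'$ is a Longo--Rehren inclusion for the full system of sectors, whence the quantitative identity $\mu(\cala)=\sum_{\lambda\in\Delta}d(\lambda)^2$. It is this identity --- not merely the inequalities \eqref{eq:DIS-726} and \eqref{eq:DIS-727} with ``equality'' asserted --- that simultaneously forces $\Delta$ to be finite, every $d(\lambda)$ to be finite, and the direct sum to exhaust $H_\Sigma$. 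Your remaining architecture (faithfulness of the two boundary actions, mutual commutants, identification of the multiplicity spaces $K_\lambda$ through the commutant, and the pairing $\lambda\leftrightarrow\bar\lambda$ via an orientation-reversing diffeomorphism and modular conjugation) is the correct skeleton of the known proof, but without an honest derivation of the Longo--Rehren step the proposal does not yet constitute a proof.
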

  
\section{Extension of conformal nets to all $1$-manifolds}
%\addtocontents{toc}{\SkipTocEntry}
      \label{subsec:cala(1-mfd)}
  
  A priori, the only manifolds on which a conformal net $\cala \colon \INT \to \VN$ can be evaluated are intervals.
  However, the functor $\cala$ can be extended, in a canonical way, to the larger category $\mathsf{1MAN}$ of compact oriented $1$-manifolds~\cite[\thmextendAtohatA]{BDH(modularity)}.
  We denote the extension $\mathsf{1MAN} \to \VN$ by the same letter $\cala$.

  For $S$ a circle, %the construction of $\cala(S)$ depends on the Hilbert space $H_\Sigma$ associated to the annulus
  %$\Sigma := S \x [0,1]$---see~\cite[~(1.6) and Thm. 1.38]{BDH(modularity)} for a refinement of the construction
  %presented in Appendix \ref{subsec:sectors+KLM} above.
  the algebra $\cala(S)$ is defined to be the subalgebra of $\bfB(H_\Sigma)$ generated by $\cala(I \x \{ 0 \})$ for all $I\subset S$,
  where $S$ is one of the two boundaries of the annulus $\Sigma := S \x [0,1]$.

  \begin{theorem}[{\cite[\thmcomputebfBnet]{BDH(modularity)}}]
    \label{thm:cala(S)}
    Let $\cala$ be a conformal net with finite index and let $S$ be a circle.
    Then there is a canonical isomorphism
    \begin{equation}\label{eq: A(S)=(+)} 
        \cala(S) \,\cong\, \bigoplus_{\lambda\in\Delta} \bfB(H_\lambda(S)).
    \end{equation}
  \end{theorem}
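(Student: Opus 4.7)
The plan is to identify $\cala(S)$ via a commutant calculation on the annular Hilbert space $H_\Sigma$. First I would fix conformal structures on the boundary circles $S_m$ and $S_b$ of $\Sigma = S\times[0,1]$ (identifying $S = S_m$ via $S\times\{0\}$) and apply Theorem \ref{thm:KLM} to obtain a canonical unitary
\[
H_\Sigma \;\cong\; \bigoplus_{\lambda\in\Delta} H_\lambda(S_m)\otimes H_{\bar\lambda}(S_b).
\]
The finite-index hypothesis enters here, both to guarantee finiteness of $\Delta$ (via Theorem \ref{thm:KLMallirreduciblesectorsarefinite}) and to apply Theorem \ref{thm:KLM}.

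By the very construction of $H_\Sigma$ as a sector for the boundary circle $S_m$, each subalgebra $\cala(I\times\{0\})$ acts on $H_\Sigma$ through the $S_m$-sector structure. Under the KLM decomposition, this action is diagonal and of the form $a\mapsto \bigoplus_\lambda \rho^{H_\lambda}_I(a)\otimes \mathrm{id}_{H_{\bar\lambda}(S_b)}$, where $\rho^{H_\lambda}$ is the $S_m$-sector structure on $H_\lambda(S_m)$. This gives the inclusion
\[
\cala(S) \;\subseteq\; \bigoplus_{\lambda\in\Delta} \bfB(H_\lambda(S_m))\otimes \mathrm{id}_{H_{\bar\lambda}(S_b)},
\]
and hence an upper bound for $\cala(S)$.

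The reverse inclusion I would obtain from a double commutant argument. A bounded operator $T\in\bfB(H_\Sigma)$ commutes with every $\cala(I\times\{0\})$ iff its matrix entries $T_{\mu\lambda}\colon H_\lambda(S_m)\otimes H_{\bar\lambda}(S_b) \to H_\mu(S_m)\otimes H_{\bar\mu}(S_b)$ intertwine the $S_m$-sector structures in the first tensor factor. Since the $H_\lambda(S_m)$ are irreducible and pairwise non-isomorphic $S$-sectors, the space of such intertwiners vanishes for $\mu\neq\lambda$, and for $\mu=\lambda$ any intertwiner of the first factor is a scalar (Schur). Hence
\[
\cala(S)' \;=\; \bigoplus_{\lambda\in\Delta} \mathrm{id}_{H_\lambda(S_m)}\otimes \bfB(H_{\bar\lambda}(S_b)),
\]
and taking the commutant once more, together with $\cala(S)=\cala(S)''$ (since $\cala(S)$ is by definition a von Neumann subalgebra of $\bfB(H_\Sigma)$), yields the desired equality. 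Canonicity of the resulting isomorphism follows from the canonicity of the KLM decomposition up to the ambiguity already present in choosing representatives $H_\lambda(S)$.

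The main obstacle is the Schur-type step: confirming that irreducibility and pairwise non-isomorphism of the $H_\lambda(S_m)$, as sectors for the family $\{\cala(I)\}_{I\subset S_m}$, is strong enough to conclude triviality of all off-diagonal intertwiners in $\bfB(H_\Sigma)$. Once this is secured—via the standard commutant theorem for irreducible representations of a family of von Neumann algebras and the observation that distinct $H_\lambda(S_m)$ cannot share a nonzero intertwiner without being isomorphic as sectors—the decomposition follows cleanly from $\cala(S)=\cala(S)''$.
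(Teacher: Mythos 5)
The paper does not prove this statement itself---it is imported verbatim from the companion paper cited in the theorem header---so there is no in-text proof to compare against; I can only assess your argument on its own terms, and it is correct and is the expected derivation from Theorem~\ref{thm:KLM}. The one step you rightly flag as the crux is handled by standard disjointness of inequivalent irreducible representations: for $\mu=\lambda$, irreducibility of $H_\lambda(S_m)$ as an $S_m$-sector means the von Neumann algebra generated by the $\rho^{H_\lambda}_I(\cala(I))$ is all of $\bfB(H_\lambda(S_m))$, so the commutation theorem $(\bfB(H_\lambda(S_m))\,\bar\otimes\,\IC)'=\IC\,\bar\otimes\,\bfB(H_{\bar\lambda}(S_b))$ gives the diagonal blocks of $\cala(S)'$; for $\mu\neq\lambda$, the polar decomposition of a nonzero intertwiner $T_{\mu\lambda}$ has initial and final projections lying in $\IC\otimes\bfB(H_{\bar\lambda}(S_b))$ and $\IC\otimes\bfB(H_{\bar\mu}(S_b))$ respectively, so its partial isometry part would unitarily identify a nonzero multiple of $\rho_\lambda$ with a nonzero multiple of $\rho_\mu$, contradicting $\lambda\neq\mu$. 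With $\cala(S)'=\bigoplus_\lambda \IC\otimes\bfB(H_{\bar\lambda}(S_b))$ established (and noting the block projections lie in this algebra, so its commutant is block-diagonal), the double commutant gives $\cala(S)=\bigoplus_\lambda\bfB(H_\lambda(S_m))\otimes\IC$ as you claim, and canonicity is inherited from the canonicity of the Kawahigashi--Longo--M\"uger decomposition together with the fact that $\bfB(H_\lambda(S))$ is well defined up to canonical isomorphism even though $H_\lambda(S)$ is not.
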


  Note that even though $H_\lambda(S)$ is only defined up to non-canonical isomorphism,
  its algebra of bounded operators is defined up to canonical isomorphism.
  It therefore makes sense for the isomorphism \eqref{eq: A(S)=(+)} to be canonical.
  \newpage
%}

\chapter{Diagram of dependencies}%
\label{app:depend}
\thispagestyle{empty}

\newgeometry{top=2cm,bottom=0cm}
\vspace*{1cm}
%\begin{sideways}
%\begin{minipage}{9in}
\[\hspace{-1cm}
\tikzmath[xscale=1.3,yscale=2.5]{
\useasboundingbox (-10,9) rectangle (1,1);
\pgftransformrotate{90}
\node[rotate=90] (T142) at (2,10)  {\parbox{3cm}{ \center Thm 1.44 \\ \center (Fusion of defects)}};
\node[rotate=90] (T62) at (4.8,10)  {\parbox{2cm}{ \center Thm 6.2 \\ \center ($1 \boxtimes 1 = 1$)}};
\node[rotate=90] (T52) at (6.7,8.4)  {\parbox{4.7cm}{ \center Thm 5.2 \\ \center (Defect fusion Haag duality)}};
\node[rotate=90] (6D) at (6.9,10)  {\parbox{4.7cm}{ \center Eq (6.25) \\ \center (Interchange isomorphism)}};
\node[rotate=90] (P429) at (4.6,8.3)  {\parbox{2cm}{ \center Prop 4.29 \\ \center 
$\tikzmath[scale=\textscale]
     { \draw (8,12) -- (16,12) (8,0) -- (16,0) (12,0) -- (12,12); \draw[ultra thin, dash pattern=on .5pt off 1pt]
         (8,12) -- (0,12) -- (0,0) -- (8,0) (16,12) -- (24,12) -- (24,0) -- (16,0);
     }
    =  
     \tikzmath[scale=\textscale]
     { \draw (8,12) -- (10,12) -- (10,0) -- (8,0) (16,12) -- (14,12) -- (14,0) -- (16,0);
       \draw[ultra thin, dash pattern=on .5pt off 1pt] (8,12) -- (0,12) -- (0,0) -- (8,0) (16,12) -- (24,12) -- (24,0) -- (16,0);
       \fill[vacuumcolor] (10,0) rectangle (14,4) (10,8) rectangle (14,12) (10.5,4.5) rectangle (13.5,7.5);
       \draw (10,0) rectangle (14,4)  (10,8) rectangle (14,12) (10.5,4.5) rectangle (13.5,7.5);
     }$
}};
\node[rotate=90] (L421) at (3.3,7.75)  {\parbox{2cm}{ \center Lma 4.21 \\ \center
$\tikzmath[scale=\textscale]{\fill[vacuumcolor]  (0,0) rectangle (24,12);\draw  (6,0) -- (0,0) -- (0,12) -- (6,12);
\draw (6,12) -- (18,12)  (6,0) -- (18,0);\draw (18,0) -- (24,0) -- (24,12) -- (18,12);} 
=
\tikzmath[scale=\textscale]{\fill[vacuumcolor] (0,0) rectangle (10,12)(14,0) rectangle (24,12);\draw (6,12) -- (10,12) -- (10,0) -- (6,0)(18,12) -- (14,12) -- (14,0) -- (18,0);
\draw (6,12) -- (0,12) -- (0,0) -- (6,0);\draw  (18,12) -- (24,12) -- (24,0) -- (18,0);\fill[vacuumcolor]  (10,0) rectangle (14,4)(10,8) rectangle (14,12);
\draw (10,0) rectangle (14,4) (10,8) rectangle (14,12);\fill[vacuumcolor]  (10.5,4.5) rectangle (13.5,7.5);\draw (10.5,4.5) rectangle (13.5,7.5);}$ 
}};
\node[rotate=90] (P418) at (2,6.9)  {\parbox{2.5cm}{ \center Prop 4.18 \\ \center
$\tikzmath[scale=\textscale]{\fill[vacuumcolor]  (0,0) rectangle (24,12);\draw[thick, double]  (6,0) -- (0,0) -- (0,12) -- (6,12);
\draw (6,12) -- (18,12)  (6,0) -- (18,0);\draw[ultra thick] (18,0) -- (24,0) -- (24,12) -- (18,12);} 
\hookrightarrow
\tikzmath[scale=\textscale]{\fill[vacuumcolor] (0,0) rectangle (10,12)(14,0) rectangle (24,12);\draw (6,12) -- (10,12) -- (10,0) -- (6,0)(18,12) -- (14,12) -- (14,0) -- (18,0);
\draw[thick, double] (6,12) -- (0,12) -- (0,0) -- (6,0);\draw[ultra thick]  (18,12) -- (24,12) -- (24,0) -- (18,0);\fill[vacuumcolor]  (10,0) rectangle (14,4)(10,8) rectangle (14,12);
\draw (10,0) rectangle (14,4) (10,8) rectangle (14,12);\fill[vacuumcolor]  (10.5,4.5) rectangle (13.5,7.5);\draw (10.5,4.5) rectangle (13.5,7.5);}$ 
}};
\node[rotate=90] (Z) at (3.3,9)  {\parbox{2.5cm}{ \center Eq (4.31) \\ \center
$\tikzmath[scale=\textscale]{\fill[vacuumcolor]  (0,0) rectangle (24,12);\draw[thick, double]  (6,0) -- (0,0) -- (0,12) -- (6,12);
\draw (6,12) -- (18,12)  (6,0) -- (18,0);\draw[ultra thick] (18,0) -- (24,0) -- (24,12) -- (18,12);} 
\hookrightarrow
\tikzmath[scale=\textscale]{\fill[vacuumcolor] (0,0) rectangle (24,12);
\draw[double, thick](6,0) -- (0,0) -- (0,12) -- (6,12); \draw (6,0) -- (12,0) -- (12,12) -- (6,12)(18,0) -- (12,0)(12,12) -- (18,12);
\draw[ultra thick] (18,0) -- (24,0) -- (24,12) -- (18,12);}$ 
}};
\draw[->] (P418) -- (Z);\draw[->] (P429) -- (Z);\draw[<-] (T52.west)+(.1,.1) to[bend right=14] (Z);
\draw[->] (T62) -- (6D);
\node[rotate=90] (T151) at (8.6,8.4)  {\parbox{1.9cm}{ \center Thm 1.53 \\ \center $\circledast = \ast$}};
\node[rotate=90] (X) at (9,9.75)  {\parbox{2.5cm}{ \center Eq (1.55) \\ \center (Associativity of fusion)}};
\draw[->] (T151) -- (X);
\node[rotate=90] (T411) at (1,5.75) {\parbox{3cm}{ \center Thm 4.11 \\ \center
$L^2 \!\left( \tikzmath[scale=\textscale]
               { \useasboundingbox (-2,0) rectangle (26,14);
                 \draw[thick, double] (0,6) -- (0,12) -- (6,12);
                 \draw (6,12) -- (18,12) (10,6) -- (10,8) -- (14,8) -- (14,6);
                 \draw[ultra thick] (18,12) -- (24,12) -- (24,6); 
                 \draw[densely dotted] (12,8) -- (12,12); 
               } %tikzmath
        \right)
    =
      \tikzmath[scale=\textscale]
      {    \fill[vacuumcolor] (0,0) rectangle (10,12)
                             (14,0) rectangle (24,12); 
           \draw (6,12) -- (10,12) -- (10,0) -- (6,0) 
                 (18,12) -- (14,12) -- (14,0) -- (18,0);
           \draw[thick, double] (6,12) -- (0,12) -- (0,0) -- (6,0);
           \draw[ultra thick] (18,12) -- (24,12) -- (24,0) -- (18,0);
           \fill[vacuumcolor]  (10,0) rectangle (14,4)  
                              (10,8) rectangle (14,12);
           \draw (10,0) rectangle (14,4) (10,8) rectangle (14,12);
      }$
}};
\node[rotate=90] (P44) at (3.3,6.3) {\parbox{2.5cm}{ \center Prop 4.4 \\ \center
$\exists \; \tikzmath[scale=\textscale]
     { \draw (8,12) -- (16,12) (8,0) -- (16,0) (12,0) -- (12,12); \draw[ultra thin, dash pattern=on .5pt off 1pt]
         (8,12) -- (0,12) -- (0,0) -- (8,0) (16,12) -- (24,12) -- (24,0) -- (16,0);
     }
    \cong  
     \tikzmath[scale=\textscale]
     { \draw (8,12) -- (10,12) -- (10,0) -- (8,0) (16,12) -- (14,12) -- (14,0) -- (16,0);
       \draw[ultra thin, dash pattern=on .5pt off 1pt] (8,12) -- (0,12) -- (0,0) -- (8,0) (16,12) -- (24,12) -- (24,0) -- (16,0);
       \fill[vacuumcolor] (10,0) rectangle (14,4) (10,8) rectangle (14,12) (10.5,4.5) rectangle (13.5,7.5);
       \draw (10,0) rectangle (14,4)  (10,8) rectangle (14,12) (10.5,4.5) rectangle (13.5,7.5);
     }$
}};
\node[rotate=90] (TC8) at (3.3,4.8)  {\parbox{2.5cm}{ \center Thm C.8 \\ \center
$\tikzmath[scale=\textscale]
{\draw[fill=vacuumcolor] (0,0) circle (8);
\draw[fill=white] (0,0) circle (3);}
= 
\bigoplus\tikzmath[scale=\textscale]
{\draw[fill=spacecolor] (0,0) circle (3);\useasboundingbox;\node[scale=.8] at (0,-6) {$\scriptstyle \lambda$};}
\,\raisebox{1pt}{$\scriptscriptstyle\otimes$}\,
\tikzmath[scale=\textscale]
{\draw[fill=spacecolor] (0,0) circle (8);\node[scale=.8] at (0,0) {$\scriptstyle \bar\lambda$};}
$}};

\node[rotate=90] (C416) at (2,4.5)  {\parbox{2.5cm}{ \center Cor 4.16 \\ \center
$\tikzmath[scale=\textscale]
	{\draw[thick, double] (0,6) -- (0,12) -- (6,12);
	\draw (6,12) -- (18,12) (10,6) -- (10,8) -- (14,8) -- (14,6);
	\draw[ultra thick] (18,12) -- (24,12) -- (24,6);} \,\overset{\textrm{\tiny{fin}}}{\hookrightarrow}\,
\tikzmath[scale=\textscale]
	{\draw[thick, double] (0,6) -- (0,12) -- (6,12);
	\draw (6,12) -- (18,12) (10,6) -- (10,8) -- (14,8) -- (14,6);
	\draw[dash pattern=on .4pt off .62pt] (12,8) -- (12,12);
	\draw[ultra thick] (18,12) -- (24,12) -- (24,6);}$
	}};
\node[rotate=90] (C37) at (1,3) {\parbox{3cm}{\center Cor 3.7 \\ \center (Fused defect \\ \center is semisimple)}};
\node[rotate=90] (T36) at (3,2.125) {\parbox{3cm}{\center Thm 3.6 \\ \center(Fused algebra \\ \center is semisimple)}};
\node[rotate=90] (L316) at (3,0.525) {\parbox{2.5cm}{ \center Lma 3.17 \\ \center
$\tikzmath[scale=\textscale]
	{\draw[fill=vacuumcolor,line width=0] (0,0) rectangle (12,12); %\draw[ultra thick] (6,0) -- (12,0) -- (12,12) -- (6,12);
	\draw (6,-1.5) -- (-1.5,-1.5) -- (-1.5,13.5) -- (6,13.5);
	\draw[ultra thick] (6,13.5) -- (12,13.5) (13.5,12) -- (13.5,0) (12,-1.5) -- (6,-1.5);
	}$
finite
%$\tikzmath[scale=\textscale]
%	{\draw[fill=vacuumcolor] (0,0) rectangle (12,12); \draw[ultra thick] %(6,0) -- (11,0) (12,1) -- (12,11) (11,12) -- (6,12);}$
}};
\node[rotate=90] (L314) at (3,-1) {\parbox{2.5cm}{ \center Lma 3.15 \\ \center
$\tikzmath[scale=.015]
{\useasboundingbox (-15,-7) rectangle (15,7);
\draw[ultra thick] (-90:15) arc (-90:-4:15) (4:15) arc (4:90:15);
\draw (90:15) arc (90:127:15) (135:15) arc (135:176:15) (-90:15) arc (-90:-127:15) (-135:15) arc (-135:-176:15);}$ finite
}};
\node[rotate=90] (L519) at (5,0.5) {\parbox{7cm}{ \center Lma 5.19 \\ \center
$
\left\llbracket \tikzmath[scale=\textscale]{\draw (12,12) -- (24,12) (18,0) -- (21,0) (6,12) -- (9,12);
\draw[ultra thick](24,12) -- (30,12) -- (30,6);\draw[dash pattern=on .4pt off .62pt] (19.5,0) -- (19.5,12);} %tikzmath
 :  \tikzmath[scale=\textscale]{\draw (12,12) -- (24,12) (18,0) -- (21,0) (6,12) -- (9,12);\draw[ultra thick](24,12) -- (30,12) -- (30,6);} %tikzmath
\right\rrbracket   \!=\!  \left\llbracket \tikzmath[scale=\textscale]{\draw (12,12) -- (24,12) (18,0) -- (21,0) (6,12) -- (9,12);
\draw[ultra thick](24,12) -- (30,12) -- (30,6);\draw[dash pattern=on .4pt off .62pt](7.5,12) arc (180:360:3 and 4);} %tikzmath
 :  \tikzmath[scale=\textscale]{\draw (12,12) -- (24,12) (18,0) -- (21,0) (6,12) -- (9,12);\draw[ultra thick](24,12) -- (30,12) -- (30,6);} %tikzmath
\right\rrbracket =  \sqrt{\mu(\calb)}
$
\\ \center
$
\left\llbracket \tikzmath[scale=\textscale]{\draw (12,12) -- (24,12) (18,0) -- (21,0) (6,12) -- (9,12);
\draw[ultra thick](24,12) -- (30,12) -- (30,6);\draw[dash pattern=on .4pt off .62pt](7.5,12) arc (180:360:3 and 4)(19.5,0) -- (19.5,12);} %tikzmath
 :  \tikzmath[scale=\textscale]{\draw (12,12) -- (24,12) (18,0) -- (21,0) (6,12) -- (9,12);
\draw[ultra thick](24,12) -- (30,12) -- (30,6);\draw[dash pattern=on .4pt off .62pt](7.5,12) arc (180:360:3 and 4);} %tikzmath
\right\rrbracket  \!=\!  \left\llbracket \tikzmath[scale=\textscale]{\draw (12,12) -- (24,12) (18,0) -- (21,0) (6,12) -- (9,12);
\draw[ultra thick](24,12) -- (30,12) -- (30,6);\draw[dash pattern=on .4pt off .62pt](7.5,12) arc (180:360:3 and 4)(19.5,0) -- (19.5,12);} %tikzmath
 :  \tikzmath[scale=\textscale]{\draw (12,12) -- (24,12) (18,0) -- (21,0) (6,12) -- (9,12);
\draw[ultra thick](24,12) -- (30,12) -- (30,6);\draw[dash pattern=on .4pt off .62pt] (19.5,0) -- (19.5,12);} %tikzmath
\right\rrbracket = \sqrt{\mu(\calb)}
$
}};
\node[rotate=90] (C520) at (5,2.25) {\parbox{4cm}{ \center Cor 5.20 \\ \center
$
\left\llbracket\,\tikzmath[scale=\textscale]{\draw (12,12) -- (24,12) (18,0) -- (21,0);
\draw[ultra thick](24,12) -- (30,12) -- (30,6);\draw[dash pattern=on .4pt off .62pt] (19.5,0) -- (19.5,12);} %tikzmath
: \tikzmath[scale=\textscale]{\draw (12,12) -- (24,12) (18,0) -- (21,0);
\draw[ultra thick](24,12) -- (30,12) -- (30,6);}%tikzmath
\,\right\rrbracket\;=\;\sqrt{\mu(\calb)}
$
}};
\node[rotate=90] (C521) at (5.5,4) {\parbox{5cm}{ \center Cor 5.21 \\ \center
$
\left\llbracket \tikzmath[scale=\textscale]{\draw[thick, double](0,6) -- (0,12) -- (6,12);
\draw (6,12) -- (9,12) (12,12) -- (24,12) (18,0) -- (21,0);\draw[ultra thick](24,12) -- (30,12) -- (30,6);\draw[dash pattern=on .4pt off .62pt] (19.5,0) -- (19.5,12);} %tikzmath
 :  \tikzmath[scale=\textscale]{\draw[thick, double](0,6) -- (0,12) -- (6,12);
\draw (6,12) -- (9,12) (12,12) -- (24,12) (18,0) -- (21,0);\draw[ultra thick](24,12) -- (30,12) -- (30,6);} %tikzmath
\right\rrbracket   =   \sqrt{\mu(\calb)}
$
\\ \center
$
\left\llbracket \tikzmath[scale=\textscale]{\draw[thick, double](0,6) -- (0,12) -- (6,12);\draw (6,12) -- (9,12) (12,12) -- (24,12) (18,0) -- (21,0);
\draw[ultra thick](24,12) -- (30,12) -- (30,6);\draw[dash pattern=on .4pt off .62pt] (7.5,12) arc (180:360:3 and 4)(19.5,0) -- (19.5,12);} %tikzmath
 :  \tikzmath[scale=\textscale] {\draw[thick, double](0,6) -- (0,12) -- (6,12);\draw (6,12) -- (9,12) (12,12) -- (24,12) (18,0) -- (21,0);
\draw[ultra thick](24,12) -- (30,12) -- (30,6);\draw[dash pattern=on .4pt off .62pt] (19.5,0) -- (19.5,12);} %tikzmath
\right\rrbracket  =  \sqrt{\mu(\calb)}
$}};
\node[rotate=90] (L513) at (8,-1) {\parbox{3.5cm}{ \center Lma 5.13 \\ \center
$
	\left(\,\tikzmath[scale=\textscale]{\draw[thick, double] (0,6) -- (0,0) -- (6,0);\draw (6,0) -- (18,0) (21,0) -- (24,0) (9,12) -- (12,12);
	\draw[ultra thick] (24,0) -- (30,0) -- (30,6);\draw[dash pattern=on .4pt off .62pt](10.5,0) -- (10.5,12);}%tikzmath
	\,\right)'\; = \;\tikzmath[scale=\textscale]{\draw[thick, double](0,6) -- (0,12) -- (6,12);\draw (6,12) -- (9,12) (12,12) -- (24,12) (18,0) -- (21,0);
	\draw[ultra thick](24,12) -- (30,12) -- (30,6);\draw[dash pattern=on .4pt off .62pt] (19.5,0) -- (19.5,12);}%tikzmath
$ \\ \center
$
	\left(\,\tikzmath[scale=\textscale]{\draw[thick, double](0,6) -- (0,0) -- (6,0);
	\draw (6,0) -- (18,0) (21,0) -- (24,0) (9,12) -- (12,12);\draw[ultra thick] (24,0) -- (30,0) -- (30,6);}%tikzmath
	\,\right)' \; = \; \tikzmath[scale=\textscale]{\draw[thick, double](0,6) -- (0,12) -- (6,12);\draw (6,12) -- (9,12) (12,12) -- (24,12) (18,0) -- (21,0);
	\draw[ultra thick](24,12) -- (30,12) -- (30,6);\draw[dash pattern=on .4pt off .62pt] (7.5,12) arc (180:360:3 and 4)(19.5,0) -- (19.5,12);}%tikzmath
$
}};
\node[rotate=90] (C517) at (7.35,4) {\parbox{5cm}{ \center Cor 5.17 \\ \center
$
\llbracket\,
\tikzmath[scale=\textscale]{\draw[thick, double]  (0,6) -- (0,12) -- (6,12);
\draw (6,12) -- (9,12) (12,12) -- (24,12) (18,0) -- (21,0);\draw[ultra thick]  (24,12) -- (30,12) -- (30,6);\draw[dash pattern=on .4pt off .62pt]   (7.5,12) arc (180:360:3 and 4)(19.5,0) -- (19.5,12);}   \,:\,
\left(\tikzmath[scale=\textscale]{\draw[thick, double]  (0,6) -- (0,0) -- (6,0);
\draw (6,0) -- (18,0) (21,0) -- (24,0) (9,12) -- (12,12);\draw[ultra thick]  (24,0) -- (30,0) -- (30,6);\draw[dash pattern=on .4pt off .62pt] (16.5,0) arc (180:0:3 and 4);}
\right)'\,\rrbracket^T
$
\\ \center
$
\;\;=
\llbracket\,
\tikzmath[scale=\textscale]{\draw[thick, double]  (0,6) -- (0,12) -- (6,12);           
\draw (6,12) -- (9,12) (12,12) -- (24,12) (18,0) -- (21,0);\draw[ultra thick]   (24,12) -- (30,12) -- (30,6);\draw[dash pattern=on .4pt off .62pt] (7.5,12) arc (180:360:3 and 4);}   \,:\,
\tikzmath[scale=\textscale]{\draw[thick, double]  (0,6) -- (0,12) -- (6,12);
\draw (6,12) -- (9,12) (12,12) -- (24,12) (18,0) -- (21,0);\draw[ultra thick]  (24,12) -- (30,12) -- (30,6);}
\,\rrbracket
$
}};
\node[rotate=90] (C516) at (8.75,4) {\parbox{2.5cm}{ \center Cor 5.16 \\ \center
$ \tikzmath[scale=\textscale]{\useasboundingbox (-2,-2) rectangle (32,14);\draw[thick, double](0,6) -- (0,12) -- (6,12);\draw (6,12) -- (9,12) (12,12) -- (24,12) (18,0) -- (21,0);
\draw[ultra thick](24,12) -- (30,12) -- (30,6);\draw[dash pattern=on .4pt off .62pt] (7.5,12) arc (180:360:3 and 4)(19.5,0) -- (19.5,12);} $ factor
}};
\node[rotate=90] (L510) at (9,6.5) {\parbox{4cm}{ \center Lma 5.10 \\ \center
$\llbracket \left(\tikzmath[scale=\textscale]
{\draw[thick, double] (0,6) -- (0,0) -- (6,0);\draw (6,0) -- (24,0);\draw[ultra thick] (24,0) -- (30,0) -- (30,6);} %tikzmath
 \right)'  :  \tikzmath[scale=\textscale]
{\draw[thick, double](0,6) -- (0,12) -- (6,12);\draw (6,12) -- (24,12);\draw[ultra thick](24,12) -- (30,12) -- (30,6);} %tikzmath
\,\rrbracket =$ \\ \center
$  \;\;\;\llbracket \left(
\tikzmath[scale=\textscale]{\draw[thick, double] (0,6) -- (0,0) -- (6,0);
\draw (6,0) -- (18,0) (21,0) -- (24,0) (9,12) -- (12,12);\draw[ultra thick] (24,0) -- (30,0) -- (30,6);\draw[dash pattern=on .4pt off .62pt](16.5,0) arc (180:0:3 and 4);} %tikzmath
 \right)'  : 
\tikzmath[scale=\textscale]{\draw[thick, double](0,6) -- (0,12) -- (6,12);
\draw (6,12) -- (9,12) (12,12) -- (24,12) (18,0) -- (21,0);\draw[ultra thick](24,12) -- (30,12) -- (30,6);\draw[dash pattern=on .4pt off .62pt] (7.5,12) arc (180:360:3 and 4);} %tikzmath
\,\rrbracket
$
}};

\draw[->] (T142) to[bend right=35](C37);

\draw[->] (T52)--(T62);
\draw[->] (T52)--(T151);
\draw[->] (T411)--(P418);
\draw[->] (P418)--(T142);
\draw[->] (P418)--(L421);
%\draw[->] (P418) to[bend left=25](T62);
\draw[->] (L421)--(P429);
%\draw[->] (P429)--(T62);
\draw[->] (Z)--(T62);
\draw[->] (P44)--(L421);
\draw[->] (TC8)--(P44);

\draw[->] (C416)--(P418);
\draw[->] (T36)--(T52);
\draw[->] (C521)--(T52);
\draw[->] (C517)--(T52);
\draw[->] (C516)--(T52);
\draw[->] (L510)--(T52);
\draw[->] (T36)--(C416);
\draw[->] (T36)--(C37);
\draw[->] (L316)--(T36);
\draw[->] (L314)--(L316);
\draw[->] (L314)--(L519);
\draw[->] (L519)--(C520);
\draw[->] (C520)--(C521);
\draw[->] (L513)--(L519);
\draw[->] (L513)--(C521);
\draw[->] (L513)--(C517);
\draw[->] (L513)--(C516);

}
\]
%\end{minipage}
%\end{sideways}

\restoregeometry

\backmatter

\bibliographystyle{amsalpha}
\bibliography{db-cn3-fix}

\providecommand{\bysame}{\leavevmode\hbox to3em{\hrulefill}\thinspace}
\providecommand{\MR}{\relax\ifhmode\unskip\space\fi MR }
% \MRhref is called by the amsart/book/proc definition of \MR.
\providecommand{\MRhref}[2]{%
  \href{http://www.ams.org/mathscinet-getitem?mr=#1}{#2}
}
\providecommand{\href}[2]{#2}
\begin{thebibliography}{BKLR14}

\bibitem[BDH11]{BDH(Dualizability+Index-of-subfactors)}
A.~Bartels, C.~L. Douglas, and A.~Henriques, \emph{Dualizability and index of
  subfactors}, Quantum Topology \textbf{5} (2014), 289--345, arXiv:1110.5671.

\bibitem[BDH13]{BDH(nets)}
\bysame, \emph{Conformal nets {I}: {C}oordinate-free nets}, Int. Math. Res.
  Not. \textbf{13} (2015), 4975--5052, arXiv:1302.2604v2.

\bibitem[BDH14]{BDH(modularity)}
\bysame, \emph{Conformal nets {II}: {C}onformal blocks}, Comm. Math. Phys.
  \textbf{354} (2017), 393--458, arXiv:1409.8672.

\bibitem[BDH16]{BDH(3-category)}
\bysame, \emph{Conformal nets {IV}: {T}he 3-category}, arxiv:1605.00662, 2016.

\bibitem[BE98]{Bockenhauer-Evans(Modular-invariants-graphs-and-alpha-induction-for-nets-of-subfactors-I)}
J.~B{\"o}ckenhauer and D.~E. Evans, \emph{Modular invariants, graphs and
  {$\alpha$}-induction for nets of subfactors. {I}}, Comm. Math. Phys.
  \textbf{197} (1998), no.~2, 361--386. \MR{1652746 (2000c:46121)}

\bibitem[BKLR14]{Bischoff-Kawahigashi-Longo-Rehren(Phase-boundaries-in-algebraic-conformal-QFT)}
Marcel Bischoff, Yasuyuki Kawahigashi, Roberto Longo, and Karl-Henning Rehren,
  \emph{Phase boundaries in algebraic conformal {Q}{F}{T}}, arXiv:1405.7863,
  2014.

\bibitem[BMT88]{Buchholz-Mack-Todorov(1988current-alg)}
Detlev Buchholz, Gerhard Mack, and Ivan Todorov, \emph{The current algebra on
  the circle as a germ of local field theories}, Nuclear Phys. B Proc. Suppl.
  \textbf{5B} (1988), 20--56, Conformal field theories and related topics
  (Annecy-le-Vieux, 1988). \MR{1002955 (90h:81067)}

\bibitem[Con90]{Connes(Geometrie-non-commutative)}
Alain Connes, \emph{G\'eom\'etrie non commutative}, InterEditions, Paris, 1990.
  \MR{1079062 (92e:58016)}

\bibitem[DH12]{Douglas-Henriques(Internal-bicategories)}
C.~L. Douglas and A.~Henriques, \emph{Internal bicategories}, arXiv:1206:4284,
  2012.

\bibitem[FSV13]{Fuchs-Schweigert-Valentino(Bicats-bound-cond-surface-defects)}
J.~Fuchs, C.~Schweigert, and A.~Valentino, \emph{Bicategories for boundary
  conditions and for surface defects in 3-d {T}{F}{T}}, arXiv:1203.4568, 2013.

\bibitem[GF93]{Gabbiani-Froehlich(OperatorAlg-CFT)}
Fabrizio Gabbiani and J{\"u}rg Fr{\"o}hlich, \emph{Operator algebras and
  conformal field theory}, Comm. Math. Phys. \textbf{155} (1993), no.~3,
  569--640. \MR{1231644 (94m:81090)}

\bibitem[Haa75]{Haagerup(1975standard-form)}
Uffe Haagerup, \emph{The standard form of von {N}eumann algebras}, Math. Scand.
  \textbf{37} (1975), no.~2, 271--283. \MR{0407615 (53 \#11387)}

\bibitem[JS97]{Jones-Sunder(Intro-to-subfactors)}
V.~Jones and V.~S. Sunder, \emph{Introduction to subfactors}, London
  Mathematical Society Lecture Note Series, vol. 234, Cambridge University
  Press, Cambridge, 1997. \MR{1473221 (98h:46067)}

\bibitem[Kaw02]{Kawahigashi(Generalized-Longo-Rehren-subfactors-and-alpha-induction)}
Yasuyuki Kawahigashi, \emph{Generalized {L}ongo-{R}ehren subfactors and
  {$\alpha$}-induction}, Comm. Math. Phys. \textbf{226} (2002), no.~2,
  269--287. \MR{1892455 (2003i:46067)}

\bibitem[KL04]{Kawahigashi-Longo(2004classification)}
Yasuyuki Kawahigashi and Roberto Longo, \emph{Classification of local conformal
  nets. {C}ase {$c<1$}}, Ann. of Math. (2) \textbf{160} (2004), no.~2,
  493--522. \MR{2123931 (2006i:81119)}

\bibitem[KLM01]{Kawahigashi-Longo-Mueger(2001multi-interval)}
Yasuyuki Kawahigashi, Roberto Longo, and Michael M{\"u}ger,
  \emph{Multi-interval subfactors and modularity of representations in
  conformal field theory}, Comm. Math. Phys. \textbf{219} (2001), no.~3,
  631--669. \MR{MR1838752 (2002g:81059)}

\bibitem[Kos80]{Kosaki(PhD-thesis)}
Hideki Kosaki, \emph{Canonical ${L}^p$-spaces associated with an arbitrary
  abstract von {N}eumann algebra}, Ph.D. thesis, UCLA (1980).

\bibitem[Kos86]{Kosaki(Extension-of-Jones-index-to-arbitrary-factors)}
\bysame, \emph{Extension of {J}ones' theory on index to arbitrary factors}, J.
  Funct. Anal. \textbf{66} (1986), no.~1, 123--140. \MR{829381 (87g:46093)}

\bibitem[KS11]{Kapustin-Saulina(Suface-ops-in-3dTFT-2dRCFT)}
Anton Kapustin and Natalia Saulina, \emph{Surface operators in 3d topological
  field theory and 2d rational conformal field theory}, Mathematical
  foundations of quantum field theory and perturbative string theory, PSPM,
  vol.~83, AMS, Providence, RI, 2011, pp.~175--198. \MR{2742429 (2012j:81202)}

\bibitem[Lon94]{Longo(A-duality-for-Hopf-algebras-and-for-subfactors)}
Roberto Longo, \emph{A duality for {H}opf algebras and for subfactors. {I}},
  Comm. Math. Phys. \textbf{159} (1994), no.~1, 133--150. \MR{1257245
  (95h:46097)}

\bibitem[LR95]{Longo-Rehren(Nets-of-subfactors)}
R.~Longo and K.-H. Rehren, \emph{Nets of subfactors}, Rev. Math. Phys.
  \textbf{7} (1995), no.~4, 567--597, Workshop on Algebraic Quantum Field
  Theory and Jones Theory (Berlin, 1994). \MR{1332979 (96g:81151)}

\bibitem[LX04]{Longo-Xu(dichotomy)}
Roberto Longo and Feng Xu, \emph{Topological sectors and a dichotomy in
  conformal field theory}, Comm. Math. Phys. \textbf{251} (2004), no.~2,
  321--364. \MR{2100058 (2005i:81087)}

\bibitem[MT84]{Miura-Tomiyama(1984)}
Yasuhide Miura and Jun Tomiyama, \emph{On a characterization of the tensor
  product of self-dual cones associated to the standard von {N}eumann
  algebras}, Sci. Rep. Niigata Univ. Ser. A (1984), no.~20, 1--11. \MR{743064
  (85j:46102)}

\bibitem[QRW07]{Quella-Runkel-Watts(Reflection-and-transmission)}
Thomas Quella, Ingo Runkel, and G{\'e}rard M.~T. Watts, \emph{Reflection and
  transmission for conformal defects}, J. High Energy Phys. (2007), no.~4, 095.
  \MR{2318788 (2008g:81231)}

\bibitem[Sau83]{Sauvageot(Sur-le-produit-tensoriel-relatif)}
Jean-Luc Sauvageot, \emph{Sur le produit tensoriel relatif d'espaces de
  {H}ilbert}, J. Operator Theory \textbf{9} (1983), no.~2, 237--252. \MR{703809
  (85a:46034)}

\bibitem[Sau85]{Sauvageot(1985Produits-tensoriels)}
\bysame, \emph{Produits tensoriels de {${Z}$}-modules et applications},
  Operator algebras and their connections with topology and ergodic theory
  ({B}u\c steni, 1983), Lecture Notes in Math., vol. 1132, Springer, Berlin,
  1985, pp.~468--485. \MR{799587 (87d:46067)}

\bibitem[Seg04]{Segal(Def-CFT)}
Graeme Segal, \emph{The definition of conformal field theory}, Topology,
  geometry and quantum field theory, London Math. Soc. Lecture Note Ser., vol.
  308, Cambridge Univ. Press, Cambridge, 2004, pp.~421--577. \MR{2079383
  (2005h:81334)}

\bibitem[SFR06]{Schweigert-Fuchs-Runkel(ICM)}
Christoph Schweigert, J{\"u}rgen Fuchs, and Ingo Runkel, \emph{Categorification
  and correlation functions in conformal field theory}, International
  {C}ongress of {M}athematicians. {V}ol. {III}, Eur. Math. Soc., Z\"urich,
  2006, pp.~443--458. \MR{2275690 (2008e:81129)}

\bibitem[SW82]{Schmitt-Wittstock(1982)}
Lothar~M. Schmitt and Gerd Wittstock, \emph{Kernel representation of completely
  positive {H}ilbert-{S}chmidt operators on standard forms}, Arch. Math.
  (Basel) \textbf{38} (1982), no.~5, 453--458. \MR{666920 (84a:46132)}

\bibitem[Tak70]{Takesaki:TomitasTheory}
M.~Takesaki, \emph{Tomita's theory of modular {H}ilbert algebras and its
  applications}, Lecture Notes in Mathematics, Vol. 128, Springer-Verlag,
  Berlin, 1970. \MR{0270168 (42 \#5061)}

\bibitem[Tim08]{Timmermann(Invitation-2-quantum-groups)}
Thomas Timmermann, \emph{An invitation to quantum groups and duality}, EMS
  Textbooks in Mathematics, European Mathematical Society, Z\"urich, 2008.
  \MR{2397671 (2009f:46079)}

\bibitem[Was95]{Wassermann(ICM)}
Antony~J. Wassermann, \emph{Operator algebras and conformal field theory},
  Proceedings of the {I}nternational {C}ongress of {M}athematicians, {V}ol.\ 1,
  2 ({Z}\"urich, 1994) (Basel), Birkh\"auser, 1995, pp.~966--979. \MR{1403996
  (97e:81143)}

\bibitem[Was98]{Wassermann(Operator-algebras-and-conformal-field-theory)}
Antony Wassermann, \emph{Operator algebras and conformal field theory. {III}.
  {F}usion of positive energy representations of {${\rm LSU}(N)$} using bounded
  operators}, Invent. Math. \textbf{133} (1998), no.~3, 467--538. \MR{1645078
  (99j:81101)}

\bibitem[Xu00]{Xu(Jones-Wassermann-subfactors)}
Feng Xu, \emph{Jones-{W}assermann subfactors for disconnected intervals},
  Commun. Contemp. Math. \textbf{2} (2000), no.~3, 307--347. \MR{1776984
  (2001f:46094)}

\end{thebibliography}

\printindex

\end{document}